\numberwithin{equation}{section}
\newcommand{\N}{\mathbb{N}}
\newcommand{\R}{\mathbb{R}}
\newcommand{\Q}{\mathbb{Q}}
\newcommand{\Z}{\mathbb{Z}}
\newcommand{\F}{\mathcal{F}}
\newcommand{\I}{\mathcal{I}}
\newcommand{\ve}{\varepsilon}
\newcommand{\Ll}{\mathcal{L}}
\newcommand{\Pp}{\mathbb P}
\newcommand{\f}{\frac}
\newcommand{\mbf}{\mathbf}
\newcommand{\wt}{\widetilde}
\newcommand{\Rup}{\R_{{\scaleobj{0.7}{\uparrow}}}^4}
\newcommand{\W}{W}
\newcommand{\NU}{\operatorname{NU}}
\newcommand{\DLBusedc}{\Xi}
\newcommand{\dir}{\xi}
\newcommand{\Split}{\mathfrak S}
\newcommand{\Branch}{\mathfrak B}
\newcommand{\h}{\mathfrak h}
\newcommand{\graph}{\mathcal G}
\newcommand{\UC}{\operatorname{UC}}
\newcommand{\fxi}{\mathcal{F}^\xi}
\newcommand{\sig}{{\scaleobj{0.8}{\boxempty}}} 
\newcommand{\sigg}{{\scaleobj{0.9}{\boxempty}}} 
\newcommand{\kpzs}{\mathfrak{h}} 
\newcommand{\intc}{\tau}
\newcommand{\ltriv}{\nu_{-\infty}}
\newcommand{\rtriv}{\nu_{+\infty}}
\newcommand{\be}{\begin{equation}}
\newcommand{\ee}{\end{equation}}
\DeclareMathOperator*{\argmax}{arg\,max}   
\def\Intc{\mathcal{T}}
\def\Intr{\mathfrak{I}}
\def\a{\mathfrak{a}}
\def\b{\mathfrak{b}}
\newtheorem{theorem}{Theorem}[section]
\newtheorem{proposition}[theorem]{Proposition}
\newtheorem{corollary}[theorem]{Corollary}
\newtheorem{lemma}[theorem]{Lemma}
\theoremstyle{definition}
\newtheorem{definition}[theorem]{Definition}
\theoremstyle{remark}
\newtheorem{remark}[theorem]{Remark}
\title{Exceptional force, uncountably many solutions in the KPZ fixed point }
\author{Sudeshna Bhattacharjee, Ofer Busani, and Evan Sorensen}
\address{Sudeshna Bhattacharjee\\Department of Mathematics\\Indian Institute of Science\\CV Raman Road\\Bengaluru, 560012\\Karnataka, India}
\email{sudeshnab@iisc.ac.in}
\urladdr{https://sites.google.com/view/sudeshnabhattacharjee/home}
\address{Ofer Busani \\ University of Edinburgh\\ School of Mathematics \\ 5321, James Clerk Maxwell Building\\
Peter Guthrie Tait Road\\
Edinburgh, EH9 3FD}
\email{obusani@ed.ac.uk}
\urladdr{https://oferbusani.github.io/}
\address{Evan Sorensen \\ Columbia University \\ Department of Mathematics \\ Room 624, MC 4432, 2990 Broadway, New York, NY 10027, USA}
\email{evan.sorensen@columbia.edu}
\urladdr{https://sites.google.com/view/evan-sorensen}
\begin{document}

\begin{abstract} We give a complete characterization of all eternal solutions $b(x,t)$ of the KPZ fixed point satisfying the asymptotic slope condition $\lim_{|x| \to \infty} \frac{b(x,0)}{x} = 2\xi$. For fixed $\xi$, there is exactly one eternal solution with probability one. However, in the second and third authors' work with Sepp\"al\"ainen, it was shown that there exists a random, countably infinite set of slopes, for which there exist at least two eternal solutions. These correspond to two non-coalescing families of infinite geodesics in the same direction for the directed landscape. We denote the two eternal solutions as $b^{\xi-}$ and $b^{\xi +}$. In the present paper, we show that, for the exceptional slopes, there are in fact uncountably many eternal solutions. To give the characterization, we show that these eternal solutions are in bijection with a certain set of bi-infinite competition interfaces. Each bi-infinite interface separates the plane into two connected components--a left component and a right component. A general eternal solution with slope $\xi$ is equal to $b^{\xi-}$ on the  left component and equal to $b^{\xi +}$ on the right component. For these bi-infinite interfaces in the exceptional directions, we uncover new geometric phenomena that is not present for directed landscape geodesics. Additionally, we show that this set of eternal solutions appears as the Busemann limits $\Ll(\mbf v_n;\mbf p) - \Ll(\mbf v_n;\mbf q)$ for sequences $\mbf v_n$ going to $-\infty$ in direction $\dir$. 
\end{abstract}

\maketitle

\tableofcontents

\section{Introduction} 
\subsection{Invariant measures and eternal solutions of Hamilton-Jacobi equations} One of the most fundamental and natural questions for a given Markov process is the identification of its invariant measures. We obtain a very interesting class of examples when we consider Hamilton-Jacobi equations with random forcing. For the $1 + 1$-dimensional Burger's equation with random periodic forcing, in both the viscous and inviscid cases, the existence of such measures was obtained in \cite{Sinai1991,Sinai1996,Iturriaga-Khanin-2003} and \cite{EKMS-1997,EKMS-2000,Iturriaga-Khanin-2003,GIKP-2005,TR22}, respectively. In non-compact settings, this has been achieved in \cite{Bakhtin-Cator-Konstantin-2014,bakhtin2019thermodynamic,dunlap2021stationary,Janj-Rass-Sepp-22}. A fundamental idea common to each of these works is known as the `one force--one solution (1F1S) principle: if we start with an initial condition with a fixed conserved property at a time in the distant past, the system will converge almost surely to the unique eternal space-time solution associated with the same preserved quantity.

\subsection{1F1S and the KPZ fixed point} 
In the present paper, we study a Markov process known as the \textbf{KPZ fixed point}. This process does not arise directly from a Hamilton-Jacobi equation, but can be solved via a Lax-Oleinik-type formula. That is, if we start from an initial condition $f$ at time $s$, the solution at time $t > s$ and location $x$ is given as 
\be \label{eqref:kpzs_def}
\kpzs_t(x;f,s)=\sup_{z\in\R}\{f(z)+\Ll(z, s; x,t)\}.
\ee
The object $\Ll$ is a random function known as the \textbf{directed landscape (DL)}, and is discussed further in Section \ref{sec:DL}. In this setting, a eternal solution to the KPZ fixed point is a function $b:\R^2 \to \R$ so that for any $s < t$ and $x \in \R$,
\be \label{b_global}
b(x,t) = \sup_{z\in\R} \{b(z,s) + \Ll(z,s;x,t)\}.
\ee
In words, $b$ is an eternal solution because we can start from any time in the past and evolve the equation to any time in the future. The 1F1S principle for the KPZ fixed point is as follows: Given an upper-semi-continuous initial condition $f:\R\to \R$ satisfying the asymptotic slope condition 
\be \label{eq:slope_cond}
\lim_{|x| \to \infty} \f{f(x)}{x} = 2\dir,
\ee
for some $\dir \in \R$, the recentered solution
\be \label{hxminh0}
(x,t) \mapsto \kpzs_t(x;f,s) - \kpzs_t(0;f,s)
\ee
converges, as $s \to -\infty$, to an eternal solution $b^\dir$ satisfying
\be \label{eq:global_soln_slope}
\lim_{|x| \to \infty} \f{b^\dir(x,t)}{x} = 2\dir,\quad\text{for all}\quad t \in \R.
\ee
This convergence was proved in \cite[Theorem 5.1(viii)]{Busa-Sepp-Sore-22a}. This slope condition is a manifestation of the fact that asymptotic slope is a conserved quantity for the KPZ fixed point \cite[Lemma B.9]{Busa-Sepp-Sore-22a}. In showing convergence to the eternal solution, the slope assumption \eqref{eq:slope_cond} can be relaxed slightly. See Equation (2.4) in \cite{Busa-Sepp-Sore-22a} for the more general condition. When an eternal solution satisfies the slope condition \eqref{eq:global_soln_slope} for $t = 0$, we call it a \textbf{$\dir$-eternal solution.} 

We recenter the function $\kpzs$ in \eqref{hxminh0} because the KPZ fixed point is a growth model; without recentering, the solution will explode as $t \to \infty$. The spatial derivative of $\kpzs_t$ can be thought of as the solution to an analogue of the stochastic Burgers equation. More precisely, the KPZ fixed point has been shown to be the scaling limit of the KPZ equation \cite{KPZ_equation_convergence,heat_and_landscape,Wu-23}, and the spatial derivative of the KPZ equation formally solves the viscous stochastic Burgers' equation. More generally, the KPZ fixed point is conjectured to arise as the universal scaling limit of random growth models in the KPZ universality class. 

The eternal solution $b^\dir$ we obtain here is a random object, and for all $t \in \R$, the function $x \mapsto b^\dir(x,t) - b^\dir(0,t)$ is a two-sided Brownian motion with drift $2\dir$. Since the function $b^\dir$ evolves in time as the KPZ fixed point, we see that Brownian motion with drift is invariant (up to height shifts) for the KPZ fixed point. This invariance was previously known from the first construction of the KPZ fixed point \cite{KPZfixed}. One of the contributions of \cite{Busa-Sepp-Sore-22a} is that the 1F1S principle gives uniqueness of the invariant measure under the slope condition.

In \cite{Busa-Sepp-Sore-22a}, the second and third author, along with Sepp\"al\"ainen, constructed a family of eternal solutions $\{b^{\dir \sig}: \dir \in \R, \sigg \in \{-,+\}\}$. The process $\dir \mapsto b^{\dir -}$ is left-continuous and the process $\dir \mapsto b^{\dir +}$ is right-continuous (with respect to the topology of uniform convergence on compact sets). We let $\DLBusedc$ be the set of $\dir \in \R$ so that $b^{\dir -} \neq b^{\dir +}$. With probability one, $\DLBusedc$ is countably infinite and dense in $\R$, and for any fixed $\dir \in \R$, $\Pp(\dir \in \DLBusedc) = 0$ \cite[Theorem 5.5]{Busa-Sepp-Sore-22a}. Thus, for $\dir \in \DLBusedc$, there are at least two eternal solutions with the same asymptotic slope.  We note here, however, that $b^{\dir -}$ and $b^{\dir +}$ do \textit{not} correspond to distinct invariant measures for the KPZ fixed point, as $b^{\dir-} = b^{\dir +}$ with probability one. Specifically, the problem of existence of these two eternal solutions is of a different nature than the problem of finding invariant measures. In particular, on an event of full probability, given a realization of the directed landscape, we seek to understand the set of continuous functions $b:\R^2 \to \R$ satisfying the conditions \eqref{b_global} and \eqref{eq:global_soln_slope}. This is a related, but notably distinct problem than that of classifying the invariant measures. For work on classifying invariant measures in this context, see \cite{Dunlap-Sorensen-2024}, which characterizes the invariant measures for the KPZ equation by resolving a case left open in the previous work \cite{Janj-Rass-Sepp-22}.

In the KPZ equation, the works \cite{Janj-Rass-Sepp-22,GRASS-23} proved an analogous statement; that is, there exists a countably infinite dense set of directions for which there are at least two eternal solutions with the same asymptotic slope. Analogous  results have also been shown for exponential last-passage percolation \cite{Janjigian-Rassoul-Seppalainen-19}, Brownian last-passage percolation \cite{Seppalainen-Sorensen-21b}, and the inverse-gamma polymer \cite{Bates-Fan-Seppalainen-2025}. In the context of the KPZ equation, it was presented as an open problem in \cite{Janj-Rass-Sepp-22} whether, for $\dir \in \DLBusedc$, there exist more than two eternal solutions satisfying the condition \eqref{eq:global_soln_slope}. This problem remains open for all of these models, except the KPZ fixed point, which is handled in the present paper. We discuss our main result in the following section.

\subsection{Summary of the main result} Our first main result obtains, on an event of probability one, a complete characterization of $\dir$-eternal solutions, simultaneously for all $\dir \in \R$. In particular, our result states that for $\dir \in \DLBusedc$, there are in fact, uncountably many eternal solutions to the KPZ fixed point satisfying the condition \eqref{eq:global_soln_slope}. We emphasize again that this result is not a result about the invariant measures of the KPZ fixed point. Instead, it characterizes, on an event of probability one, the continuous functions $b:\R^2 \to \R$ satisfying the eternal solution evolution \eqref{b_global} and the slope condition \eqref{eq:global_soln_slope}.

Loosely speaking, the characterization states that each eternal solution of the KPZ fixed point is a patching together of the two eternal solutions $b^{\dir -}$ and $b^{\dir +}$. In particular, we consider continuous functions $\intc:\R\rightarrow\R$ whose graph $(\intc(t),t)$ forms a bi-infinite path in the plane and therefore separates $\R^2$ into two infinite connected sets. Let us denote by $H_-$ and $H_+$, respectively, the sets that are to the left and right of the bi-infinite path. We now define the function
\begin{equation}\label{eq62}
b=b^{\dir-}|_{H_-}+b^{\dir+}|_{H_+},
\end{equation}
that is, our new solution is the sum of the  restriction of $b^{\dir-}$ on $H_-$ and the restriction of $b^{\dir+}$ on $H_+$. 

Such a function is not a $\dir$-eternal solution for any bi-infinite path, but will be shown to be such when we let $\intc$ be a bi-infinite interface for a mixed Busemann initial condition. These interfaces are discussed in more detail in Section \ref{sec:eternal}. Key to the proof is showing that such bi-infinite interfaces do, in fact, exist and that there are uncountably many of them. We work with a particular type of interfaces, which we term mixed $\dir$-Busemann bi-infinite interfaces, defined precisely in Section \ref{sec:eternal} below.

Loosely stated, our main result, stated precisely in Theorem \ref{thm:mr}, is the following. 
\begin{align*}
    &\text{For all $\dir \in \DLBusedc$,  there exists a one-to-one correspondence between the set of $\dir$-eternal}\\
    &\text{   solutions of the KPZ fixed-point and the set  of mixed $\dir$-Busemann bi-infinite interfaces. } 
\end{align*}
The existence of these bi-infinite interfaces is a new and quite surprising phenomenon, given its connection to the question of bi-infinite geodesics. It is widely conjectured that, with probability one, bi-infinite geodesics do not exist in models of the KPZ universality class \cite{Newman-97,Damron_Hanson2016,wehr97}. This conjecture is motivated by the physical explanation that the nonexistence of bi-infinite geodesics for first-passage percolation (FPP) is equivalent to the nonexistence of more than two ground states in disordered ferromagnetic models.
The conjecture has been resolved for some exactly solvable last-passage percolation (LPP) and polymer models \cite{SlyNonexistenceOB,bala-busa-sepp-20,Groathouse-Janjigian-Rassoul-21,Busani-Seppalainen-2020,Sweeney-Rassoul-2024} and for the directed landscape in the work \cite{Bha24}. In FPP, it is known that there are no bi-infinite geodesics in directions of differentiability of the shape function \cite{licea1996,Damron_Hanson2016,Ahlberg_Hoffman}. The recent work \cite{Bhatia-2025} studied a dynamical version of exponential LPP and obtained a $O(\f{1}{\log n})$ lower bound for the probability that there exists an exceptional time where there exists a geodesic of length $n$ passing through the origin at its midpoint. If one could improve this to an $O(1)$ bound, it would prove the existence of exceptional times with the existence of a bi-infinite geodesic.  

The connection to competition interfaces comes initially from the works \cite{Ferrari-Martin-Pimentel-2009,pimentel2016} in exponential LPP, where it is shown that the tree of semi-infinite geodesics in a given direction is equal in distribution to a dual tree consisting of competition interfaces. Before the works \cite{SlyNonexistenceOB,bala-busa-sepp-20,Sweeney-Rassoul-2024} proved the nonexistence of any bi-infinite geodesics in exponential and Brownian last-passage percolation, the works \cite{Timo_Coalescence,Seppalainen-Sorensen-21} used this duality to rule out the existence of bi-infinite geodesics in fixed directions. The work of \cite{Bha24} constructed a limiting duality between competition interfaces and infinite geodesics in the directed landscape. 

In light of these results, we point out here that the only way that the existence of these bi-infinite competition interfaces can be possible is that they occur in exceptional directions, and therefore do not exhibit a probabilistic duality with a tree of geodesics in the directed landscape. In these exceptional directions, we see additional unique behavior of these interfaces that is not exhibited by geodesics. In particular, Theorem  \ref{thm:int1}\ref{int1:it4} below states that there are points for which these interfaces form interior bubbles. That is, they may travel together for some time, split apart, then come back together and stick together for a nonzero amount of time (potentially to split again). This is in contrast to the results for geodesics, where, with probability one, there are no such geodesic bubbles in the directed landscape \cite{Bha24,Dauvergne-23}. Although the collection of interfaces in this case is not equal in law to a tree of geodesics, there is a geometric sense in which duality holds. For example, the points along these interfaces correspond to splitting points of infinite geodesics (Theorem \ref{thm:BiInt} and Theorem \ref{thm:int1}\ref{int1:it2}). In particular, Theorem \ref{thm:int1}\ref{int1:it2} generalizes a result from \cite{Bhatia-2025} to all directions simultaneously (see Remark \ref{rmk:branchNU}). Proposition \ref{p: interfaces do not meet} (See also Figure \ref{fig:interfaces can split}) describes a duality between splitting of interfaces and splitting of geodesics. 

The connection between points along interfaces and splitting points of semi-infinite geodesics bears some resemblance to the third author's work with Sepp\"al\"ainen \cite{Seppalainen-Sorensen-21b} for Brownian LPP. In particular, Theorem 2.10 in \cite{Seppalainen-Sorensen-21b} states that the set of splitting points of semi-infinite geodesics is the same as the set of points for which there is a nontrivial competition interface. The structure of competition interfaces in Brownian LPP and its relation to points of instability was studied further in \cite{Sweeney-Rassoul-2024}. However, we remark that the competition interfaces considered in \cite{Seppalainen-Sorensen-21b,Sweeney-Rassoul-2024} are different from those studied in the present paper; in \cite{Seppalainen-Sorensen-21b,Sweeney-Rassoul-2024}, interfaces from a Dirac delta initial condition were considered, while we consider interfaces from a continuous initial condition.   

\subsection{Eternal solutions and interfaces} \label{sec:eternal}
The eternal solutions $b^{\dir \sig}$ constructed in \cite{Busa-Sepp-Sore-22a} are known as Busemann functions. These are defined as follows. For a fixed direction $\dir \in \R$, the following limit was shown to exist almost surely in \cite[Proposition 3.21]{Rahman-Virag-21}:
  \begin{equation} \label{eq:Buselim1}
    W^{\dir}(z,s;x,s)=\lim_{t\rightarrow -\infty}\Ll(-t\dir,t;x,s)-\Ll(-t\dir,t;z,s).
\end{equation}
We then set
 \begin{equation}
     b^\dir(x,t) :=W^{\dir}(0,0;x,t).
 \end{equation}
 In \cite{Busa-Sepp-Sore-22a}, to construct the Busemann function in all directions, the procedure was to construct the Busemann function $W^\dir$ for all $\dir$ in a countable dense set, then define $W^{\dir \sig}$ for all $\dir \in \R$ and $\sigg \in \{-,+\}$ by taking left an right limits in the direction parameter $\dir$.  The fact that these functions are indeed eternal solutions with the specified slopes was shown first for fixed directions $\dir$ in \cite{Rahman-Virag-21}, then simultaneously for all directions in \cite{Busa-Sepp-Sore-22a}. For $\DLBusedc$, the limit in \eqref{eq:Buselim1} need not exist, but the sequence is bounded, and we show in Section \ref{sec:limits} that the limit points are exactly the differences of $\dir$-eternal solutions.

The paths $\intc$ discussed in the previous section are competition interfaces for a specialized initial condition. Competition interfaces in the directed landscape were first studied in \cite{Rahman-Virag-21}, which we review now. An \textbf{initial condition} $f: \R \rightarrow \R \cup \{ -\infty\}$  is an upper semi continuous function, such that $f(x)$ that is finite at some point and satisfies
\begin{equation}\label{eq:IC}
    f(x) \leq c(1+|x|)
\end{equation}
for all $x$ and some constant $c$.  Consistent with the notations in \cite{Rahman-Virag-21}, for $t>s$ and a continuous function $f$, we define
  \begin{equation}\label{eq:d}
     d_{(x_0,s)}(f;x,t)=\sup_{z \geq x_0} \left\{f(z)+\Ll(z,s; x,t) \right\}-\sup_{z \leq x_0} \left\{f(z)+\Ll(z,s; x,t)\right\}. 
  \end{equation}
It follows immediately from the definition that $x \mapsto d_{(x_0,s)}(f;x,t)$ is nondecreasing.  For an initial condition $f$ and $t > s$, let us define 
 \be \label{eq:taupm_def}
\begin{aligned}
& \intc^-_{f;(x_0,s)}(t)=\inf \{ x \in \R : d_{(x_0,s)}(f;x,t) \geq 0\},\\
& \intc^+_{f;(x_0,s)}(t)=\sup \{x \in \R: d_{(x_0,s)}(f;x,t) \leq 0 \},
\end{aligned}
\ee
while for $t = s$, we define
\be \label{intcat0}
 \intc^-_{f;(x_0,s)}(s) = \intc^+_{f;(x_0,s)}(s)=x_0
\ee
so that the interface starts at the point $(x_0,s)$. We  call $\intc^-_{f;(x_0,s)}$ (resp.\ $\intc^+_{f;(x_0,s)}$) the \textbf{\textit{leftmost} (resp.\ \textit{rightmost}) competition interface} from initial condition $f$. The monotonicity of $d_{(x_0,s)}(f;x,t)$ implies that  $\intc^-_{f;(x_0,s)}(t) \le \intc^+_{f;(x_0,s)}(t)$ for all $t \ge s$.

In the present paper, we work with what we call mixed $\dir$-Busemann interfaces. To define these, we choose a direction $\dir \in \R$ and an initial point $(x_0,s)$, and we take our initial condition to be
\begin{equation}
\label{eq: initial condition}
    f^{\dir}_{(x_0,s)}(z):=
    \begin{cases}
         W^{\dir+}(x_0,s;z,s) & z\geq x_0,\\
         W^{\dir-}(x_0,s;z,s) & z \le x_0.
     \end{cases},\qquad \text{ for } (x_0,s)\in\R^2, \dir\in\R.
\end{equation}
When $\dir \notin \DLBusedc$, we have $\W^{\dir -} = W^{\dir +}$, so $f_{(x,s)}^\dir(z) = W^\dir(x_0,s;z,s)$ in this case. 

We now define the \textbf{leftmost and rightmost mixed $\dir$-Busemann interfaces} as 
\be \label{eq:Buse_interface}
\tau_{(x_0,s)}^{\dir,\sig} := \intc^\sig_{f^{\dir}_{(x_0,s)};(x_0,s)},\quad\text{for}\quad \sigg \in \{-,+\}.
\ee
In words, started at the space-time point $(x_0,s)\in\R^2$, the interface `sees' the initial condition $W^{\dir-}$ to its left and $W^{\dir+}$ to its right. We also make the abbreviations
\be \label{kpzs_abbr}
\kpzs^\dir_{(x_0,s),t}(x) := \kpzs_{t}(x;f^\dir_{(x_0,s)},s) ,\quad\text{and}\quad d^\dir_{(x_0,s)}(x,t):= d_{(x_0,s)}(f^\dir_{(x_0,s)};x,t),
\ee
where $\kpzs_t$ is defined in \eqref{eqref:kpzs_def}, and $d_{(x_0,s)}$ is defined in \eqref{eq:d}. 
In the case $\dir \notin \DLBusedc$, then  $f_{(x,s)}^\dir(z) = W^\dir(x_0,s;z,s)$, and we call $\tau_{(x_0,s)}^{\dir,-}$ and $\tau_{(x_0,s)}^{\dir,+}$ the leftmost and rightmost $\dir$-Busemann interfaces. These were studied extensively in \cite{Bha24}.

A main difference between the interfaces studied in this work compared to those studied in \cite{Rahman-Virag-21,Bha24} comes from  our motivation behind studying such interfaces in the first place: their connection to eternal solutions of the KPZ fixed point. As opposed to  studying the interaction of interfaces started from different initial points  and a fixed  initial condition,  we are interested in the dynamics between interfaces when the initial condition varies as the starting  point of the interface varies.

We show in Proposition \ref{p:rest} that the mixed $\dir$-Busemann interfaces satisfy the following consistency property: for $s \le r \le t$, we have
\[
\intc_{(x_0,s)}^{\dir,-}(t) = \intc_{(w,r)}^{\dir,-}(t), \quad\text{where}\quad w = \intc_{(x_0,s)}^{\dir,-}(r).
\]
and the same holds for $-$ replaced by $+$. This naturally brings up the question of the existence of bi-infinite interfaces, which we now define.

 \begin{definition}\label{def:biInf}
A \textbf{leftmost mixed $\dir$-Busemann bi-infinite interface} is a continuous function $\intc:\R \to \R$ such that, for every $s \in \R$, 
\[
\intc|_{[s,\infty)} = \intc_{(\intc(s),s)}^{\dir,-}.
\]
A \textbf{rightmost mixed $\dir$-Busemann bi-infinite interface}  is defined similarly, except that we have $\intc|_{[s,\infty)} = \intc_{(\intc(s),s)}^{\dir,+}$. We let $\Intc^{\dir,-}$ denote the set of leftmost mixed $\dir$-Busemann bi-infinite interfaces, and let $\Intc^{\dir,+}$ denote the set of  rightmost mixed $\dir$-Busemann bi-infinite interfaces.

 Furthermore, we define the sets
\be \label{eq:T-+triv}
\overline \Intc^{\dir,-} := \Intc^{\dir,-} \cup\{\ltriv,\rtriv\},\quad\text{and}\quad \overline \Intc^{\dir,+} := \Intc^{\dir,+} \cup\{\ltriv,\rtriv\},
\ee
where $\ltriv:\R \to \{-\infty\}$ is the trivial path $\ltriv(t) \equiv -\infty$, and $\rtriv:\R \to \{+\infty\}$ is the trivial path $\rtriv(t) \equiv +\infty$.  
\end{definition}
It is shown in Proposition \ref{p: existence of bi-infinite competetion interfaces} that the sets $\Intc^{\dir,-}$ and $\Intc^{\dir,+}$ are nonempty, and, more specifically uncountable. In particular, for each $t \in \R$, these sets are in bijection with particular subsets of $\R$, named $\Split_{t,\dir}^L$ and $\Split_{t,\dir}^R$. See Theorem \ref{thm:BiInt} below. The set $\Split_{t,\dir} = \Split_{t,\dir}^L \cup \Split_{t,\dir}^R$ has Hausdorff dimension $\f{1}{2}$, as proved in \cite{Busa-Sepp-Sore-22a} and recorded here as Proposition \ref{prop:Haus12}.

 It is immediate from the definition that, if $b$ is an eternal solution, then for any constant $C \in \R$, $(x,t) \mapsto b(x,t) + C$ is also an eternal solution. We define an equivalence class on the set of eternal solutions, where $b_1 \sim b_2$ if the difference $(x,t) \mapsto b_1(x,t) - b_2(x,t)$ is a constant. 
Our first main result is the following. 
\begin{theorem}\label{thm:mr}
    On a single event of probability one, for every $\dir\in\DLBusedc$, there exists a bijection between the set of equivalence classes of  $\dir$-eternal solutions of the KPZ fixed-point and the set $\overline \Intc^{\dir,-}$. Similarly, there is a bijection between this set of equivalence classes of $\dir$-eternal solutions and $\overline \Intc^{\dir,+}$. These bijections satisfy the following:
    \begin{enumerate}[label=(\roman*), font=\normalfont]
    \item(Trivial interfaces are mapped to the $\dir \pm$ Busemann functions) \label{it:mainthmit1} Under both bijections, $\ltriv$ is mapped to  the equivalence class of $(x,t) \mapsto W^{\dir +}(0,0;x,t)$, and $\rtriv$ is mapped to the equivalence class of $(x,t) \mapsto W^{\dir-}(0,0;x,t)$.
    \item(General $\dir$-eternal solutions are a stitching together of the two Busemann functions) \label{it:mainthmit2} If $\intc \in \Intc^{\dir,-}$ (resp. $\intc \in \Intc^{\dir +}$), then the image under each of these bijections is the equivalence class of the function
        \be \label{b_def_intro}
        b(x,t)=\begin{cases}
            W^{\dir-}(\intc_0,0;x,t) \text{ if } x \leq \intc_t\\
             W^{\dir+}(\intc_0,0;x,t) \text{ if } x \geq \intc_t,
        \end{cases}
        \ee
        where $\intc_s = \intc(s)$ for $s \in \R$.
    \end{enumerate}
\end{theorem}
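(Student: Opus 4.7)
The strategy is to define the two candidate maps $\Phi^\sig: \overline{\Intc}^{\dir,\sig} \to \{\text{equivalence classes of $\dir$-eternal solutions}\}$ explicitly by sending $\ltriv$ to the equivalence class of $(x,t)\mapsto W^{\dir+}(0,0;x,t)$, sending $\rtriv$ to that of $(x,t)\mapsto W^{\dir-}(0,0;x,t)$, and sending $\intc\in\Intc^{\dir,\sig}$ to the equivalence class of the stitched function $b$ from \eqref{b_def_intro}. I would then verify well-definedness, injectivity, and surjectivity separately, working on a single event of full probability that already carries the simultaneous existence of $W^{\dir\pm}$ and of the mixed $\dir$-Busemann interfaces for every $\dir\in\R$.

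For well-definedness, the main task is to check that, for $\intc\in\Intc^{\dir,\sig}$, the stitched function $b$ is a continuous $\dir$-eternal solution. Continuity at the seam $(\intc_t,t)$ reduces to continuity of the initial condition $f^\dir_{(\intc_s,s)}$ at $\intc_s$ combined with continuity of the KPZ fixed point in the spatial argument, and the slope condition \eqref{eq:global_soln_slope} transfers from the corresponding properties of $W^{\dir\pm}$. The key content is the global evolution \eqref{b_global}: for $s<t$ and $x<\intc_t$, the consistency relation of Proposition \ref{p:rest} lets me restart the interface at $(\intc_s,s)$ and conclude from the defining leftmost (respectively rightmost) property that the supremum $\sup_z\{b(z,s)+\Ll(z,s;x,t)\}$ is achieved at some $z\leq\intc_s$. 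On that half-line one has $b(z,s)=W^{\dir-}(\intc_0,0;z,s)$, so the eternal property of $W^{\dir-}$ itself collapses the supremum to $W^{\dir-}(\intc_0,0;x,t)=b(x,t)$. The case $x>\intc_t$ is symmetric, and the seam case follows by continuity. The trivial paths $\ltriv,\rtriv$ are immediate since $W^{\dir\pm}$ are themselves $\dir$-eternal solutions.

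For injectivity, if $\intc^{(1)}\neq\intc^{(2)}$ in $\overline{\Intc}^{\dir,\sig}$, pick a time $t_0$ at which they disagree. Then on an open interval $(a,b)$ at time $t_0$ one stitched function uses $W^{\dir-}(\cdot,t_0)$ while the other uses $W^{\dir+}(\cdot,t_0)$, after a global constant shift absorbing the different reference points. Since $\dir\in\DLBusedc$ and the two Busemann profiles cannot agree up to a constant on any open interval without agreeing everywhere (else one would contradict the eternal evolution and produce $b^{\dir-}=b^{\dir+}$), the two stitched functions lie in distinct equivalence classes.

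The principal obstacle is surjectivity: given an arbitrary $\dir$-eternal solution $b$, I need to exhibit a mixed $\dir$-Busemann bi-infinite interface $\intc\in\overline{\Intc}^{\dir,\sig}$ whose stitched function is equivalent to $b$. My plan is to define $\intc(t)$ as the cut point at time $t$ separating the region on which $b(\cdot,t)-b(\intc(t),t)$ coincides with $W^{\dir-}(\intc(t),t;\cdot,t)$ from the region on which it coincides with $W^{\dir+}(\intc(t),t;\cdot,t)$. To show such a cut exists, I would use the Lax-Oleinik representation of $b(x,t)$ as $s\to-\infty$: the asymptotic slope $2\dir$ forces the argmax locations in $\sup_z\{b(z,s)+\Ll(z,s;x,t)\}$ to drift in direction $\dir$, so $b(x,t)-b(0,t)$ is itself a limit point of differences of the form $\Ll(\mbf v_n;x,t)-\Ll(\mbf v_n;0,t)$ for some sequence $\mbf v_n\to-\infty$ in direction $\dir$. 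The identification of such limit points with functions of the form $f^\dir_{(y,t)}$ promised in Section \ref{sec:limits} produces the cut at each time. The hardest subtask is to prove that $t\mapsto\intc(t)$ is continuous and evolves according to the mixed Busemann competition interface rule rather than being a merely measurable separator; this would rely on the consistency property of Proposition \ref{p:rest} together with monotonicity of the competition interface evolution in both initial condition and starting point.
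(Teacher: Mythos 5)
Your overall strategy—define the maps explicitly, then check well-definedness, injectivity, surjectivity—is the same as the paper's (Proposition \ref{p:correspondence}), and the well-definedness sketch is essentially correct, though the step where you collapse $\sup_{z\le\intc_s}$ to $W^{\dir-}(\intc_0,0;x,t)$ implicitly uses the nontrivial fact that the unrestricted supremum of $W^{\dir-}(\intc_0,0;z,s)+\Ll(z,s;x,t)$ is attained on the half-line $z\le\intc_s$; this is precisely Lemma \ref{l:MV}\ref{it:Bu2} and needs to be cited rather than taken for granted.

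The injectivity argument as stated is wrong. You claim ``the two Busemann profiles cannot agree up to a constant on any open interval without agreeing everywhere (else one would contradict the eternal evolution and produce $b^{\dir-}=b^{\dir+}$),'' but this is false: the profiles $x\mapsto W^{\dir-}(0,t;x,t)$ and $x\mapsto W^{\dir+}(0,t;x,t)$ differ by a constant exactly on each closed interval $\Intr^{t,\dir}_\alpha$ where $D^\dir_t$ is constant, and these intervals can have nonempty interior when $\dir\in\DLBusedc$. The correct reason the stitched functions from two distinct interfaces $\intc^{(1)},\intc^{(2)}$ cannot be equivalent is that both $\intc^{(1)}(t_0)$ and $\intc^{(2)}(t_0)$ lie in $\Split^L_{t_0,\dir}$ (Proposition \ref{Cond_bi-inf}), so by Lemma \ref{l:equ} the difference profile $D^\dir_{t_0}$ strictly increases at the right endpoint; that contradicts constancy of $D^\dir_{t_0}$ on $[\intc^{(1)}(t_0),\intc^{(2)}(t_0)]$, which is what equivalence of the two stitched functions would force. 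You also omit the comparison of a nontrivial interface with $\ltriv$ and $\rtriv$; the paper handles this separately using $\lim_{x\to\pm\infty}D^\dir_t(x)=\pm\infty$ (Lemma \ref{lem:dif_to_infinity}).

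The surjectivity sketch is the more serious gap. You propose to use the material from Section \ref{sec:limits}—specifically the claim that any eternal solution $b$ can be realized as a Busemann limit along a $\dir$-directed sequence $\mbf v_n$—to produce the cut point. But that claim is Theorem \ref{thm:existence_of_v_seq}, whose proof in the paper explicitly invokes the bijection of Theorem \ref{thm:mr} and Theorem \ref{thm:BiInt}; using it here is circular. Moreover, the implication ``argmax locations drift in direction $\dir$, so $b(x,t)-b(0,t)$ is a limit of $\Ll(\mbf v_n;x,t)-\Ll(\mbf v_n;0,t)$'' is not immediate: the maximizer in the Lax–Oleinik formula depends on $x$, while you need a single sequence $\mbf v_n$ that works for all $x$ in a compact interval, and when $x$ lies on the opposite side of the cut from $0$ the maximizers separate rather than coalesce. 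The paper's actual surjectivity argument avoids this entirely: it defines the cut $\tau_t:=\tau_t^{b,R}$ directly via $b$-geodesics (the supremum of $x$ for which $g^{b,R}_{(x,t)}=g^{\dir-,R}_{(x,t)}$, \eqref{tauLR_Def}), and establishes through the sequence Lemma \ref{lem:geod_IC}, Lemma \ref{lem:+-ord}, Corollary \ref{c:cutoff_geod}, Lemma \ref{lem:eq_split}, and Lemma \ref{lem:bfin} that $\tau_t$ is finite, that the increments of $b$ on either side of it are the two Busemann profiles, and that $t\mapsto\tau_t$ is a genuine mixed $\dir$-Busemann interface. That geodesic-side machinery is the content your sketch is missing; you correctly flag ``the hardest subtask'' but do not supply a mechanism for it.
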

\begin{remark}
It is not immediately obvious that the function $b$ in \eqref{b_def_intro} is well-defined and continuous. This is due to a particular fact about mixed Busemann interfaces proved as Lemma \ref{l:MV}\ref{it:Bu3}, which gives us that $W^{\dir -}(\intc_0,0;\intc_t,t) = W^{\dir+}(\intc_0,0;\intc_t,t)$.
\end{remark}
Theorem \ref{thm:mr} is restated and proved as the first two items of Proposition \ref{p:correspondence}. The following is a corollary of the theorem, and is proved in Section \ref{sec:most_main_proofs}. The fact that there are uncountably many equivalence classes of $\dir$-eternal solutions comes by proving another bijection between the set of mixed $\dir$-Busemann bi-infinite interfaces and a certain uncountable subset of $\R$, recorded below as Theorem \ref{thm:BiInt}.
\begin{corollary}\label{cor:mr}
    With probability one, the following hold for all $\dir \in \R$.
    \begin{enumerate} [label=(\roman*), font=\normalfont]
        \item \label{it:1F1S} When $\dir \notin \DLBusedc$, there is exactly one equivalence class of $\dir$-eternal solutions, namely that of $(x,t) \mapsto W^\dir(0,0;x,t)$.
        \item \label{it:1FinfS} When $\dir\in\DLBusedc$, the set of equivalence classes of $\dir$-eternal solutions to the KPZ fixed point is uncountably infinite.
        \item \label{it:not_constant} If $b_1$ and $b_2$ are two non-equivalent $\dir$-eternal solutions, then for every $t\in\R$ the function $x\mapsto b_1(x,t) - b_2(x,t)$ is not constant.
    \end{enumerate} 
\end{corollary}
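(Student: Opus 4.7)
The plan is to derive Corollary \ref{cor:mr} from Theorem \ref{thm:mr}, the 1F1S principle recalled in the introduction, and the non-meeting property of bi-infinite mixed Busemann interfaces (Proposition \ref{p: interfaces do not meet}).

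For part \ref{it:1F1S}, fix $\dir\notin\DLBusedc$ so that $W^{\dir-}=W^{\dir+}=W^\dir$. The map $(x,t)\mapsto W^\dir(0,0;x,t)$ is a $\dir$-eternal solution. For any other $\dir$-eternal solution $b$, the eternal-solution equation \eqref{b_global} gives the identity
\[
\kpzs_t(\cdot;b(\cdot,s),s)-\kpzs_t(0;b(\cdot,s),s)=b(\cdot,t)-b(0,t)
\]
for every $s<t$. The left side converges as $s\to-\infty$ to $W^\dir(0,0;\cdot,t)-W^\dir(0,0;0,t)$ by the 1F1S convergence recalled after \eqref{hxminh0} (from \cite[Theorem 5.1(viii)]{Busa-Sepp-Sore-22a}), while the right side is independent of $s$, forcing the identity to hold exactly. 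Substituting back into \eqref{b_global} and using that $W^\dir(0,0;\cdot,\cdot)$ is itself an eternal solution shows that $t\mapsto b(0,t)-W^\dir(0,0;0,t)$ is constant, so $b$ is equivalent to $W^\dir(0,0;\cdot,\cdot)$. Part \ref{it:1FinfS} is then immediate from Theorem \ref{thm:mr} together with the uncountability of $\Intc^{\dir,-}$ asserted in Proposition \ref{p: existence of bi-infinite competetion interfaces} and the bijection with the uncountable set $\Split_{t,\dir}^L$ from Theorem \ref{thm:BiInt}.

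For part \ref{it:not_constant}, the case $\dir\notin\DLBusedc$ is vacuous by \ref{it:1F1S}. For $\dir\in\DLBusedc$ and non-equivalent $b_1,b_2$, Theorem \ref{thm:mr} maps these to distinct $\intc_1,\intc_2\in\overline\Intc^{\dir,-}$. I would invoke Proposition \ref{p: interfaces do not meet} to conclude that $\intc_1(t)\neq\intc_2(t)$ for every $t\in\R$; the trivial paths $\ltriv,\rtriv$ are automatically distinct from any bi-infinite interface at every $t$ since their values are $\pm\infty$. Assuming $\intc_1(t)<\intc_2(t)$, the representation \eqref{b_def_intro} gives, for $x$ in the nondegenerate interval $(\intc_1(t),\intc_2(t))$,
\[
b_1(x,t)-b_2(x,t)=W^{\dir+}(\intc_1(0),0;x,t)-W^{\dir-}(\intc_2(0),0;x,t).
\]
The conclusion then follows from the fact that this function of $x$ is non-constant on $(\intc_1(t),\intc_2(t))$: if it were constant, then using \eqref{b_global} to propagate the equality would force $W^{\dir-}$ and $W^{\dir+}$ to be equivalent $\dir$-eternal solutions, contradicting the definition of $\DLBusedc$.

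The main obstacle lies in part \ref{it:not_constant} and relies on two structural inputs: (a) the non-meeting property of distinct bi-infinite mixed Busemann interfaces (Proposition \ref{p: interfaces do not meet}), a phenomenon with no analog for directed landscape geodesics and presumably where the main technical content of the paper is concentrated; and (b) the propagation argument that upgrades a local constancy of the difference $W^{\dir+}-W^{\dir-}$ on a single time slice to a global equivalence of the two Busemann functions. Granted these, the rest of the corollary is a direct reading of the bijection in Theorem \ref{thm:mr} combined with the cocycle property of Busemann functions.
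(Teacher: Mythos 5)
Your argument for part \ref{it:1F1S} misapplies the 1F1S convergence. The cited convergence \cite[Theorem 5.1(viii)]{Busa-Sepp-Sore-22a} holds for a \emph{fixed} initial condition $f$ as $s\to-\infty$; here the initial condition $b(\cdot,s)$ varies with $s$, so the theorem gives no information. Since $\kpzs_t(\cdot;b(\cdot,s),s)-\kpzs_t(0;b(\cdot,s),s)$ equals the $s$-independent function $b(\cdot,t)-b(0,t)$, there is literally nothing converging; what is needed is a uniform-over-initial-data statement, and that is precisely what the paper's Lemma \ref{lem:mont}, Equation \eqref{eq:bW}, provides via geodesic coalescence (Lemma \ref{lem:dir_from_global}, Proposition \ref{prop:Busani_N3G}, Proposition \ref{prop:DL_all_coal}), not via 1F1S. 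Additionally, the hypothesis \eqref{eq:b_slope} only controls the slope of $b(\cdot,0)$; you implicitly use a slope condition for $b(\cdot,s)$ at all negative $s$, which is exactly the point the paper flags as not immediate at the start of Section \ref{sec:gen_soln}.

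Part \ref{it:1FinfS} is fine and matches the paper, modulo citing Proposition \ref{prop:Haus12}, Remark \ref{rmk:countable}, and Proposition \ref{theorem: local variation and splitting points} for uncountability of $\Split_{t,\dir}^L$. Part \ref{it:not_constant} has two gaps. First, the disjointness of distinct bi-infinite interfaces is Corollary \ref{cor:disInt}\ref{it:Split_disj}, not Proposition \ref{p: interfaces do not meet} (that result is about interior bubbles and does not say that distinct elements of $\Intc^{\dir,-}$ never meet). Second, your non-constancy step is only sketched: you claim that constancy on the slice would ``propagate'' to equivalence of $W^{\dir-}$ and $W^{\dir+}$, but you never give the propagation. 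The paper instead computes directly, using additivity, that for $x\in(\intc_1(t),\intc_2(t))$ the difference equals $D_t^\dir(x)$ up to an $x$-independent constant, and then uses that $\intc_2(t)\in\Split_{t,\dir}^L$ is a point of increase of $D_t^\dir$ (Lemma \ref{l:equ}). Finally, when one of the associated interfaces is $\ltriv$ or $\rtriv$, the paper handles this separately with Lemma \ref{lem:neq}, which upgrades a discrepancy at a single $(x_0,t_0)$ to a discrepancy on every time slice; your observation that the trivial paths have values $\pm\infty$ tells you the interfaces differ, not that the difference $b_1(\cdot,t)-b_2(\cdot,t)$ is non-constant.
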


We now briefly explain in broad strokes where the connection between interfaces and eternal solutions comes from. We show in Lemma \ref{lem:geodesics_from_b} that every $\dir$-eternal solution $b$ gives rise to a forest of semi-infinite geodesics, which we call $b$-geodesics. A semi-infinite geodesic rooted at a point $(x,t) \in \R^2$ is a backwards infinite planar path encoded by a continuous function $g:(-\infty,t] \to \R$ such that the portion of the path between any two of its points is a point-to-point geodesic for $\Ll$. See Section \ref{sec:SIG} for details. For $s < t$, the spatial location, $g(s)$, of a $b$-geodesic rooted at $(x,t)$ is a maximizer of the function
\be \label{bmax1}
z \mapsto b(z,s) + \Ll(z,s;x,t).
\ee
When there is more than one maximizer, some care is needed to ensure that the choices of maximizers at each $s$ indeed gives a continuous function $g$. See Lemma \ref{lem:geodesics_from_b} and Remark \ref{rmk:b_global_solutions} for details. It is shown in  Lemma \ref{lem:dir_from_global} that if $b$ satisfies the slope condition \eqref{eq:slope_cond}, then the associated semi-infinite geodesics all have direction $\dir$; i.e.,
\[
\lim_{s \to -\infty} \f{g(s)}{|s|} = \dir.
\]

When $b \sim W^{\dir -}(0,0;\cdot,\cdot)$ or $b \sim W^{\dir +}(0,0;\cdot,\cdot)$, we call the associated geodesics $\dir-$ geodesics and $\dir +$ geodesics, respectively. It was shown in \cite{Busa-Sepp-Sore-22a} that, when $\dir \in \DLBusedc$, the $\dir-$ and $\dir +$ geodesics form two coalescing families of semi-infinite geodesics. That is, every $\dir -$ geodesic eventually separates from every $\dir +$ geodesic, and the $\dir -$ geodesics all coalesce, while the $\dir +$ geodesics all coalesce  (recorded here as Proposition \ref{prop:DL_all_coal}). In \cite{Busani_N3G}, it was shown that \textit{every} $\dir$-directed geodesic is either a $\dir-$ geodesic or a $\dir +$ geodesic.

These $\dir \pm$ geodesics obey an ordering property that implies the following: if $b$ is an eternal solution, then traveling from left to right on each time level $\{(x,t):x\in\R\}$ there must be a point  $\intc_t\in \R\cup\{\pm\infty\}$ where $b$-geodesics rooted at $(x,t)$ switch from $\dir-$ geodesics to $\dir+$ geodesics. This point is the location of a bi-infinite interface at time $t$.  This is made precise in the following theorem. 

\begin{figure}
    \centering
    \includegraphics[width=0.3\linewidth]{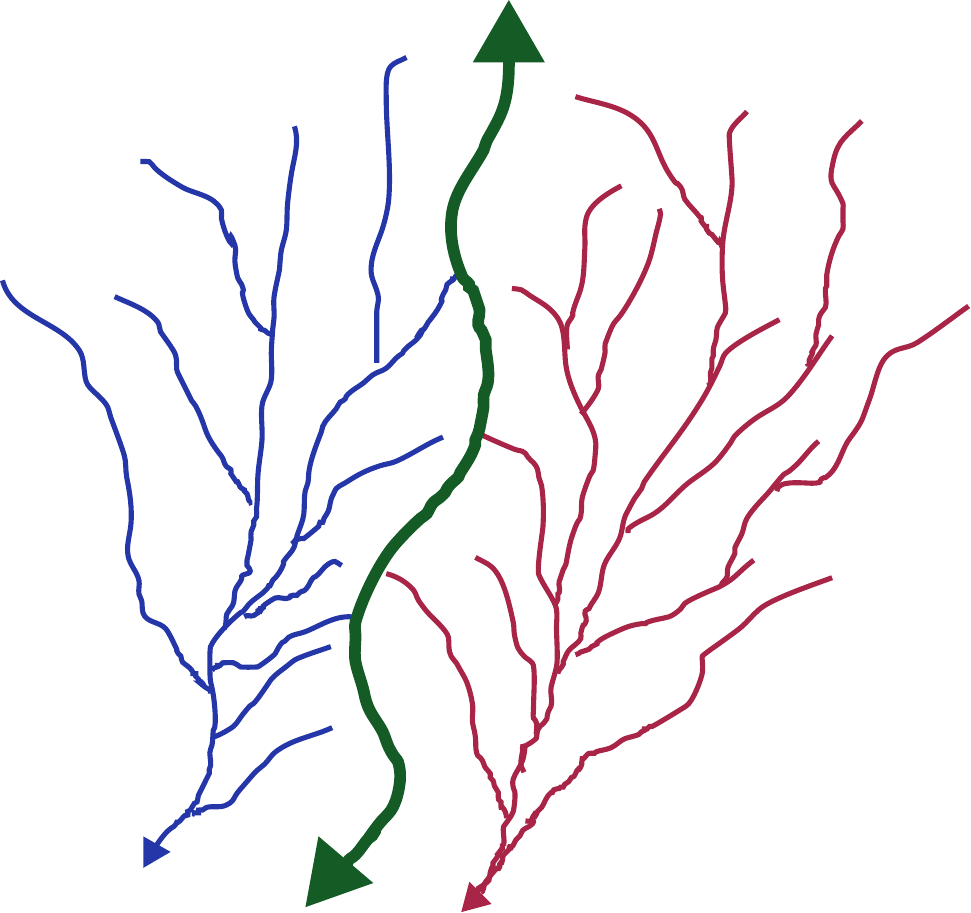}
    \caption{\small A bi-infinite interface separating two coalescing families of backwards semi-infinite geodesics}
    \label{fig:two_coal_fam}
\end{figure}
\begin{theorem} \label{thm:intGeo}
   On an event of probability one, the following holds for every $\dir\in\DLBusedc$. Let $b$ be a $\dir$-eternal solution, and let $\intc^-$ and $\intc^+$ be the associated elements of $\overline \Intc^-$ and $\overline \Intc^+$ from the bijection in Theorem \ref{thm:mr}. 
    \begin{enumerate} [label=(\roman*), font=\normalfont]
        \item If $p$ lies strictly to the left of $\intc^-$, then the set of $b$-geodesics equals the set of all $\dir-$ geodesics starting from $p$, and all $b$-geodesics from $p$ lie strictly to the left of $\intc^-$.
        \item If $p$ lies strictly to the right of $\intc^+$, then the set of $b$-geodesics started from $p$ equals the set of all $\dir+$ geodesics starting from $p$, and all $b$-geodesics from $p$ lie strictly to the right of $\intc^+$.
        \item If $p$ lies  between $\intc^-$ and $\intc^+$ (possibly on one of the interfaces), then the set of $b$-geodesics  started from $p$ equals the union of the set of all $\dir-$ geodesics and the set of all $\dir+$  geodesics starting from $p$.
    \end{enumerate}
\end{theorem}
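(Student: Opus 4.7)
The plan is to combine the explicit structural formula \eqref{b_def_intro} from Theorem \ref{thm:mr}\ref{it:mainthmit2} with the defining property of the mixed $\dir$-Busemann interfaces through the sign of $d^\dir_{(x_0,s)}$ in \eqref{eq:taupm_def}. Recall from Lemma \ref{lem:geodesics_from_b} that a $b$-geodesic from $p=(x,t)$ is a continuous backwards path whose spatial location $g(s)$ is a maximizer of $z\mapsto b(z,s)+\Ll(z,s;x,t)$, and that a $\dir\sigg$-geodesic from $p$ is the same object with $b$ replaced by $W^{\dir\sigg}(0,0;\cdot,\cdot)$. For each reference interface $\intc^\sigg$, the strategy is to split this supremum at $\intc^\sigg(s)$, use Theorem \ref{thm:mr}\ref{it:mainthmit2} to rewrite each half as a $W^{\dir\pm}$-supremum up to a shared additive constant, and then appeal to the sign of $d^\dir_{(\intc^\sigg(s),s)}(x,t)$ to decide which half dominates.

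Concretely, for part (i) I fix $p=(x,t)$ with $x<\intc^-(t)$ and work with $\intc=\intc^-$. The Busemann cocycle rewrites the left and right halves of the split supremum precisely as the two terms defining $d^\dir_{(\intc^-(s),s)}(x,t)$ in \eqref{eq:d}, up to the common additive constant $W^{\dir-}(\intc^-_0,0;\intc^-(s),s)$; the two choices of this constant agree by Lemma \ref{l:MV}\ref{it:Bu3}, so the split is consistent. The bi-infinite consistency from Definition \ref{def:biInf} together with \eqref{eq:taupm_def} and $x<\intc^-(t)$ force $d^\dir_{(\intc^-(s),s)}(x,t)<0$, so the left supremum strictly dominates, and moreover the global maximum cannot be attained at $z=\intc^-(s)$ itself (else $d^\dir$ would vanish). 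Hence all maximizers of $z\mapsto b(z,s)+\Ll(z,s;x,t)$ lie strictly to the left of $\intc^-(s)$, and via the Busemann rewrite they coincide with the maximizers of $z\mapsto W^{\dir-}(0,0;z,s)+\Ll(z,s;x,t)$, i.e., spatial locations of $\dir-$-geodesics from $p$. This yields simultaneously the equality of families and the strict leftward confinement asserted in (i). Part (ii) is the mirror argument using $\intc^+$ and the strict inequality $d^\dir_{(\intc^+(s),s)}(x,t)>0$ obtained from the rightmost interface clause of \eqref{eq:taupm_def}.

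For part (iii), with $\intc^-(t)\le x\le \intc^+(t)$, the same analysis with reference $\intc^-$ gives $d^\dir_{(\intc^-(s),s)}(x,t)\ge 0$, so the right half is weakly dominant and its maximizers are $\dir+$-geodesic locations from $p$; with reference $\intc^+$ one obtains $d^\dir\le 0$, which identifies the $\dir-$-geodesic locations as $b$-geodesic locations. Any global maximizer $z^*$ of $z\mapsto b(z,s)+\Ll(z,s;x,t)$ satisfies either $z^*\le \intc^+(s)$ (and is a $\dir-$ location via the $\intc^+$-representation) or $z^*\ge \intc^-(s)$ (and is a $\dir+$ location via the $\intc^-$-representation), so these two families exhaust the $b$-geodesics and no extra geodesics appear. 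The principal difficulty I expect is the careful bookkeeping of strict versus non-strict inequalities at the boundary cases (when $p$ sits on one of the interfaces, or at past times $s$ where the two interfaces coincide at level $s$ but not at level $t$), together with the promotion of the pointwise maximizer correspondence to an identification at the level of $\dir\pm$-geodesics viewed as continuous paths; the latter will require leaning on the coalescence and non-crossing properties of $\dir\pm$-geodesics recorded in Proposition \ref{prop:DL_all_coal} and on the path-consistency statements underlying Lemma \ref{lem:geodesics_from_b}.
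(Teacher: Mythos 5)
Your route is essentially the paper's: Theorem~\ref{thm:mr}\ref{it:mainthmit2} identifies $z\mapsto b(z,s)$ (up to an $s$-dependent additive constant) with $f^{\dir}_{(\intc^\pm(s),s)}$, so $b$-geodesics restricted to $[s,t]$ are exactly $(f^{\dir}_{(\intc^\pm(s),s)},s)$-to-$(x,t)$ geodesics, and the conclusion then reduces to a statement about how such geodesics relate to the interfaces $\intc^{\dir,\pm}_{(\cdot,\cdot)}$. The paper simply cites its Lemma~\ref{lem:geod} at this point; you re-derive its content inline, but the decomposition and the use of the sign of $d^\dir$ are the same.

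There is one genuine gap, located precisely at ``via the Busemann rewrite they coincide with the maximizers of $z\mapsto W^{\dir-}(0,0;z,s)+\Ll(z,s;x,t)$.'' You have established that every maximizer of $z\mapsto b(z,s)+\Ll(z,s;x,t)$ lies strictly in $\{z<\intc^-(s)\}$, where $b(\cdot,s)$ agrees with $W^{\dir-}$ up to a constant. But to conclude that the two maximizer sets coincide you also need that every global maximizer of the $W^{\dir-}$-objective lies in $\{z<\intc^-(s)\}$; a priori one could sit in $\{z\ge\intc^-(s)\}$, where $b$ agrees with $W^{\dir+}$, not $W^{\dir-}$. The missing step is the monotonicity argument in Lemma~\ref{lem:geod}\ref{it1}: if some $z^\star\ge\intc^-(s)$ maximized $z\mapsto W^{\dir-}(\intc^-(s),s;z,s)+\Ll(z,s;x,t)$, then since $W^{\dir+}(\intc^-(s),s;z^\star,s)\ge W^{\dir-}(\intc^-(s),s;z^\star,s)$, the point $z^\star$ would make the right half of the $d^\dir$ split at least as large as the left half, contradicting $d^\dir_{(\intc^-(s),s)}(x,t)<0$. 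Without this the word ``coincide'' is an assertion rather than a deduction, and the same gap recurs symmetrically in your parts (ii) and (iii). Two smaller omissions: your argument silently assumes $\intc^\pm\in\Intc^{\dir,\pm}$, so the trivial cases $\intc^\pm\in\{\ltriv,\rtriv\}$ (i.e.\ $b\sim W^{\dir\mp}(0,0;\cdot,\cdot)$) need a short separate treatment, as in the paper; and the promotion from per-time maximizer sets to identity of geodesic \emph{families} is handled by Proposition~\ref{prop:DL_SIG_cons_intro}\ref{itm:arb_geod_cons} together with Lemma~\ref{lem:geodesics_from_b}\ref{itm: general}, which you should invoke rather than Proposition~\ref{prop:DL_all_coal} (coalescence is not what is needed here).
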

\begin{remark}
    The interface $\intc^-$ separates the rightmost $b$-geodesics, while the interface $\intc^+$ separates the leftmost $b$-geodesics (i.e., leftmost/rightmost maximizers in \eqref{bmax1}). See Proposition \ref{p:correspondence}\ref{it:Requiveq}. See \eqref{tauLR_Def}, Corollary \ref{c:cutoff_geod}, and for precise statements on what happens for points that lie on each of these interfaces. The $\dir-$ geodesics all coalesce, while the $\dir+$ geodesics all coalesce. See Figure \ref{fig:two_coal_fam} for a visual depiction.  
\end{remark}
Theorem \ref{thm:intGeo} is proved in Section \ref{sec:most_main_proofs}. Given the bijection between $\dir$-eternal solutions and bi-infinite interfaces, the key is Lemma \ref{lem:geod} in Section \ref{s:G_and_I}.

\subsection{Geometry of interfaces}
As we have seen, the study of competition interfaces is closely connected with that of infinite geodesics in the directed landscape.  There are four particular $\dir-$ and $\dir +$ geodesics that we make particular use of,  namely
\[
g_{(x,t)}^{\dir-,L}, g_{(x,t)}^{\dir-,R}, g_{(x,t)}^{\dir +,L},g_{(x,t)}^{\dir +,R}.
\]
These are defined and discussed more in Section \ref{sec:SIG}. For $\sigg \in \{-,+\}$, $g_{(x,t)}^{\dir\sig,L}$ is the the leftmost $\dir \sig$ geodesic, and $g_{(x,t)}^{\dir\sig,R}$ is the rightmost $\dir \sig$ geodesic. Furthermore, $g_{(x,t)}^{\dir-,L}$ is the leftmost $\dir$-directed geodesic, and $g_{(x,t)}^{\dir +,R}$ is the rightmost $\dir$-directed geodesic. As we have discussed previously, when $\dir \in \DLBusedc$,  the $\dir-$ and $\dir +$ geodesics each eventually split from each other. We say that two geodesics $g_1,g_2$ rooted at $(x,t)$ are disjoint if they only share the common initial point $(x,t)$. In other words, $g_1(s) \neq g_2(s)$ for all $s < t$. See Figure \ref{fig:split_points}.  We define the following sets of \textbf{splitting points:}
\be
\begin{aligned}
    &\Split_{t,\dir} := \{x \in \R:  \exists \text{
    disjoint}   \text{   }\text{semi-infinite  geodesics from  }(x,t) \text{ in direction }\dir\}, \label{Split_sdir}\\
    &\Split^L_{t,\dir}:= \{ x \in \R: g^{\dir-,L}_{(x,t)}(s) < g^{\dir+,L}_{(x,t)}(s) \quad \forall s < t\}, \\
    &\Split^R_{t,\dir}:= \{ x \in \R: g^{\dir-,R}_{(x,t)}(s) < g^{\dir+,R}_{(x,t)}(s) \quad \forall s < t\},\\
    &\Split^M_{t,\dir}=\Split^L_{t,\dir} \cap \Split^R_{t,\dir},\\
    &\Split^S_{\dir}=\bigcup_{t\in\R}(\Split^S_{t,\dir} \times \{t\}),\quad S \in \{L,R,M\}.
    \end{aligned}
    \ee

\begin{figure*}[t!]
    \centering
    \begin{subfigure}[t]{0.3\textwidth}
        \centering
        \includegraphics[height = 4cm]{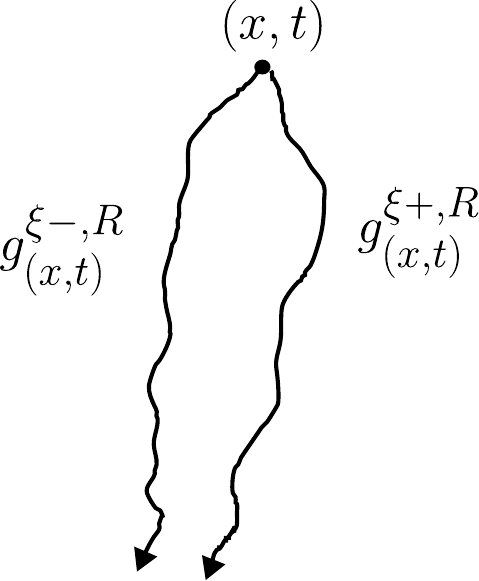}

        \caption{$(x,t) \in \Split_{\dir}^R$}
    \end{subfigure}
    ~ 
    \hspace{2pt}
    \begin{subfigure}[t]{0.3\textwidth}
        \centering
        \includegraphics[height = 4cm]{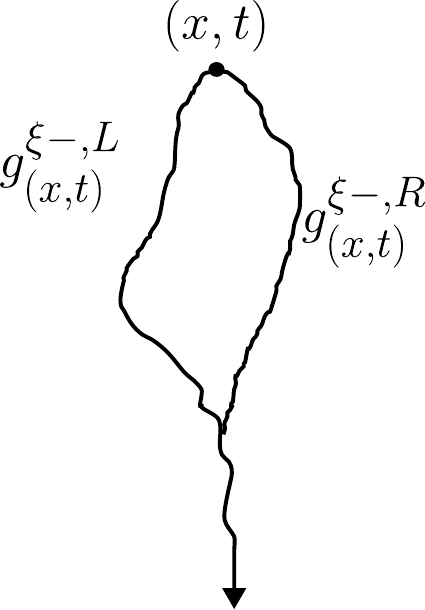}

        \caption{$(x,t) \in \NU_{\dir-}$}
    \end{subfigure}
     ~ 
    \hspace{2pt}
    \begin{subfigure}[t]{0.3\textwidth}
        \centering
        \includegraphics[height = 4cm]{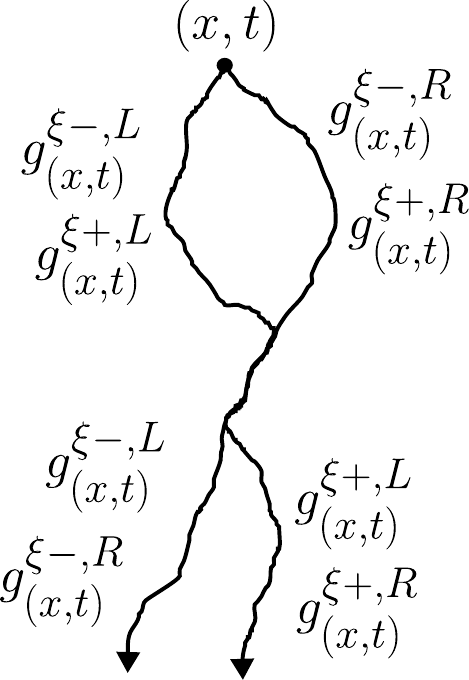}

        \caption{$(x,t) \in \Branch_\dir$}
    \end{subfigure}
    \caption{Examples of points in the sets $\Split_\dir^R,\NU_{\dir-}$, and $\Branch_\dir$}\label{fig:split_points}
\end{figure*}

    By  Lemma \ref{l:equ}, for each $t \in \R$, the set $\Split^L_{t,\dir}$ (resp. $\Split^R_{t,\dir}$) is the set of points of increase of a certain nondecreasing continuous function $D^\dir_t$ defined below in \eqref{Dfunc}. It is shown in  Proposition \ref{theorem: local variation and splitting points} that $\Split_{t,\dir} = \Split_{t,\dir}^L \cup \Split_{t,\dir}^R$.
    Therefore, for each $t \in \R$, we can write 
    \begin{equation}\label{eq77}
        \R=\bigcup_{\alpha\in \I^{t,\dir}} \Intr^{t,\dir}_\alpha,
    \end{equation}
    for some (uncountable) index set $\I^{t,\dir}$, where  $\Intr^{t,\dir}_\alpha=[\a^t_\alpha,\b^t_\alpha]$ are mutually disjoint closed intervals (possibly containing just a single point) such that $\a^t_\alpha\in \Split^L_{t,\dir}$, $\b^t_\alpha\in \Split^R_{t,\dir}$ and $(\a^t_\alpha,\b^t_\alpha)\subseteq \Split^c_{t,\dir}$.  Two points $x,y$ lie in the same interval $\Intr_\alpha^{t,\dir}$ if and only if $D_t^\dir(x) = D_t^\dir(y)$. We denote this family of intervals as $\Intr^{t,\dir}=\{\Intr^{t,\dir}_\alpha\}_{\alpha \in \I^{t,\dir}}$.

    Our next theorem describes the points through which bi-infinite interfaces traverse. Item \ref{itm:Interface_splitting} states that the set $\Split^L_{t,\dir}$ (resp. $\Split_{t,\dir}^R)$ is exactly the set of points at which interfaces in $\Intc^{\dir,-}$ (resp. $\Intc^{\dir,+}$) travel through at time $t$. Item \ref{BiInt:it4} states that for a $\dir$-eternal solution $b$, the time $t$ locations of the associated leftmost and rightmost bi-infinite interfaces are the endpoints of one of the intervals $\Intr_\alpha^{t,\dir}$.
\begin{theorem}\label{thm:BiInt}
    With probability one the following holds for every $\dir\in\DLBusedc$ 
    \begin{enumerate} [label=(\roman*), font=\normalfont]
       \item \label{itm:Interface_splitting} (Bi-infinite interfaces live on the set of splitting points) For each $t \in \R$, the map $\intc \mapsto \intc(t)$ is a bijection between the set $\Intc^{\dir,-}$ (resp. $\Intc^{\dir,+}$) and the set $\Split_{t,\dir}^L$ (resp. $\Split_{t,\dir}^R$). In particular, the associated paths in $\Intc^{\dir,-}$ live in the set $\Split_\dir^L$, and the associated paths in $\Intc^{\dir,+}$ live in the set $\Split_\dir^R$.
        \item (Left and right bi-infinite interfaces corresponding to the same eternal solution are the endpoints of an interval in $\Intr^{t,\dir})$ \label{BiInt:it4} Let $b$ be a $\dir$-eternal solution and let $\intc^- \in \Intc^{\dir,-},\intc^+ \in \Intc^{\dir,+}$ be the associated interfaces from the bijections in Theorem \ref{thm:mr}. Assume that $\intc^- \in \Intc^{\dir,-}$ and $\intc^+ \in \Intc^{\dir,+}$  (i.e., $\intc^-$ and $\intc^+$ are not $\ltriv$ or $\rtriv$). Then, for each $t \in \R$, there exists an index $\alpha \in \mathcal \I^{t,\dir}$ such that 
        \[
        \intc^-(t) = \a^t_\alpha,\quad\text{and}\quad \intc^+(t) = \b^t_\alpha.
        \]
    \end{enumerate}
\end{theorem}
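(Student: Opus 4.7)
I would prove part \ref{itm:Interface_splitting} first by establishing well-definedness, injectivity, and surjectivity of the map $\intc \mapsto \intc(t)$ into $\Split_{t,\dir}^L$, and then derive part \ref{BiInt:it4} from part \ref{itm:Interface_splitting} together with the stitched formula \eqref{b_def_intro}.

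For \emph{well-definedness}, given $\intc \in \Intc^{\dir,-}$ with $\intc(t) = x$, let $b$ be the $\dir$-eternal solution associated with $\intc$ via Theorem \ref{thm:mr}. Theorem \ref{thm:intGeo} tells me that the $b$-geodesics from $(x,t)$ are either $\dir-$ or $\dir+$ geodesics, separated spatially by $\intc$ at every earlier time. From this I would deduce that the leftmost $\dir-$ and leftmost $\dir+$ geodesics from $(x,t)$ remain strictly separated by $\intc$ for all $s < t$, so they cannot coalesce, which places $x \in \Split_{t,\dir}^L$. For \emph{injectivity}, Definition \ref{def:biInf} forces two bi-infinite interfaces agreeing at time $t$ to agree on $[t,\infty)$, since both then equal $\intc_{(x,t)}^{\dir,-}$; for the backward direction, the monotonicity $y_1 \le y_2 \Rightarrow \intc_{(y_1,s)}^{\dir,-}(r) \le \intc_{(y_2,s)}^{\dir,-}(r)$ combined with the splitting hypothesis at $x$ (which forbids merging at time $t$) propagates coincidence to $(-\infty,t]$. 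For \emph{surjectivity}, given $x \in \Split_{t,\dir}^L$, I would construct a backward extension by
\[
\intc(s) := \sup\bigl\{y \in \R : \intc_{(y,s)}^{\dir,-}(t) < x\bigr\}, \qquad s < t,
\]
and verify that this function is continuous on all of $\R$ and satisfies $\intc|_{[s,\infty)} = \intc_{(\intc(s),s)}^{\dir,-}$ for every $s$, using the semigroup property of the interface operator. The main technical obstacle is the continuity and finiteness of this extension; here the splitting hypothesis $x \in \Split_{t,\dir}^L$ is crucial to preclude collapse onto a coalescent trajectory, while the asymptotic slope properties of the Busemann functions (equation \eqref{eq:global_soln_slope}) preclude escape to $\pm\infty$.

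For part \ref{BiInt:it4}, part \ref{itm:Interface_splitting} and its $+$-analog give $\intc^-(t) \in \Split_{t,\dir}^L$ and $\intc^+(t) \in \Split_{t,\dir}^R$, while the pointwise inequality $\intc_{(y,s)}^{\dir,-} \le \intc_{(y,s)}^{\dir,+}$ (inherited from the corresponding inequality $\intc^-_{f;(x_0,s)} \le \intc^+_{f;(x_0,s)}$) yields $\intc^-(t) \le \intc^+(t)$. Applying Theorem \ref{thm:mr}\ref{it:mainthmit2} to the same equivalence class $[b]$ in both its $\Intc^{\dir,-}$ and $\Intc^{\dir,+}$ incarnations, the two expressions for $b(z,t)$ must agree up to a constant on the overlap $[\intc^-(t), \intc^+(t)]$, yielding
\[
W^{\dir+}(\intc^-_0, 0; z, t) - W^{\dir-}(\intc^+_0, 0; z, t) \equiv \mathrm{const}, \qquad z \in [\intc^-(t), \intc^+(t)],
\]
where $\intc^\pm_0 = \intc^\pm(0)$. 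By the Busemann cocycle property this says that $z \mapsto W^{\dir+}(0,0;z,t) - W^{\dir-}(0,0;z,t)$ is constant on $[\intc^-(t), \intc^+(t)]$, and hence $D_t^\dir$ is constant on this interval. Because the open interiors of the $\Intr_\alpha^{t,\dir}$ avoid $\Split_{t,\dir} = \Split_{t,\dir}^L \cup \Split_{t,\dir}^R$, the interval $[\intc^-(t), \intc^+(t)]$ must be contained in a single $\Intr_\alpha^{t,\dir}$, and the endpoint memberships $\intc^-(t) \in \Split_{t,\dir}^L$ and $\intc^+(t) \in \Split_{t,\dir}^R$ together with $\a_\alpha^t \in \Split_{t,\dir}^L$, $\b_\alpha^t \in \Split_{t,\dir}^R$, and $(\a_\alpha^t, \b_\alpha^t) \subseteq \Split_{t,\dir}^c$ force $\intc^-(t) = \a_\alpha^t$ and $\intc^+(t) = \b_\alpha^t$.
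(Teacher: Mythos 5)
Your plan for part \ref{itm:Interface_splitting} has the right shape (well-definedness, injectivity, surjectivity), but each step has a concrete gap. For \emph{well-definedness}: Lemma \ref{lem:Splt}\ref{itm:G1} gives $g^{\dir-,L}_{(x,t)}(s)<\intc(s)$ for all $s<t$, and Corollary \ref{cor:LRmost_pi} gives $g^{\dir+,R}_{(x,t)}(s)\ge \intc(s)$, but there is no direct reason for $g^{\dir+,L}_{(x,t)}$ to stay weakly to the right of $\intc$; your claim that ``the leftmost $\dir-$ and leftmost $\dir+$ geodesics from $(x,t)$ remain strictly separated by $\intc$'' does not follow from Theorem \ref{thm:intGeo}, which controls $b$-geodesics (i.e.\ extremal maximizers) rather than the separate extremal $\dir+$ geodesics. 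The paper's Proposition \ref{Cond_bi-inf} instead argues by contradiction: if $x\notin\Split_{t,\dir}^L$, then $D_t^\dir$ is flat to the left of $x$, one finds $a_0>0$ with $W^{\dir+}(x-a_0,t;x,t)=W^{\dir-}(x-a_0,t;x,t)$, and coalescence of $\dir+$ geodesics forces $g^{\dir+,R}_{(x-a_0,t)}$ to cross $\intc$, after which one shows the crossing segment is also a restriction of a $\dir-$ geodesic, contradicting Lemma \ref{lem:Splt}\ref{itm:G1}. For \emph{injectivity}: your phrase ``the splitting hypothesis at $x$ forbids merging'' is not a proof; what is actually needed is Lemma \ref{lem:coal_int}\ref{Itco2} (interfaces from points in distinct $\Intr^{s,\dir}$ intervals stay strictly disjoint), applied to the observation that two distinct points of $\Split_{s,\dir}^L$ necessarily lie in distinct intervals. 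For \emph{surjectivity}: your supremum candidate $\intc(s):=\sup\{y:\intc_{(y,s)}^{\dir,-}(t)<x\}$ need not satisfy $\intc_{(\intc(s),s)}^{\dir,-}(t)=x$, because the monotone map $y\mapsto\intc_{(y,s)}^{\dir,-}(t)$ has no established one-sided continuity, so the value at the supremum could undershoot or overshoot $x$. The paper's Lemma \ref{l:LRSP} instead constructs the exact pre-image via the level set of $D_s^\dir$ (the point $x_0$ in \eqref{x0}), which is what guarantees exactness and that $x_0\in\Split_{s,\dir}^L$.

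For part \ref{BiInt:it4}, your argument is a genuinely different and more direct route than the paper's, and the core of it is correct: equating the two stitched formulas from Theorem \ref{thm:mr}\ref{it:mainthmit2} on the overlap forces $D_t^\dir$ to be constant on $[\intc^-(t),\intc^+(t)]$, which confines the interval to a single $\Intr_\alpha^{t,\dir}$, and the endpoint memberships in $\Split_{t,\dir}^L$ and $\Split_{t,\dir}^R$ pin down $\intc^-(t)=\a_\alpha^t$, $\intc^+(t)=\b_\alpha^t$. The paper instead runs a contradiction through Corollary \ref{cor:mr}\ref{it:not_constant} and the auxiliary solution $\wt b$. However, your justification for the prerequisite $\intc^-(t)\le\intc^+(t)$ is wrong: it is \emph{not} ``inherited from $\intc^-_{f;(x_0,s)}\le\intc^+_{f;(x_0,s)}$,'' because $\intc^-$ and $\intc^+$ are \emph{different} bi-infinite interfaces, not the left/right interfaces from a common initial point. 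The correct reason is Proposition \ref{p:correspondence}\ref{it:Requiveq} (which identifies $\intc^-(t)=\intc_t^{b,R}$ and $\intc^+(t)=\intc_t^{b,L}$) combined with Corollary \ref{c:Im_cutoff} ($\intc_t^{b,R}\le\intc_t^{b,L}$). With that citation repaired, your part \ref{BiInt:it4} argument is a clean alternative.
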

Item \ref{itm:Interface_splitting} of Theorem \ref{thm:BiInt} is restated in Section \ref{sec:along_interface} as Corollary \ref{cor:disInt}. 
 Item \ref{BiInt:it4} is proved in Section \ref{sec:most_main_proofs}. 

Our next theorem describes more general geometric properties of both bi-infinite and semi-infinite interfaces. While the bi-infinite interfaces live on the set of splitting points, the semi-infinite interfaces live on a larger set, known as the set of \textbf{branching points}. For $t \in \R$, we   define this set as
\begin{equation}
\label{eq:bran_def}
\begin{aligned}
    \Branch_{t,\dir}&=\{x \in\R: \exists \delta>0 \text{ so that } g^{\dir-,L}_{(x,t)}(s) < g^{\dir+,R}_{(x,t)}(s) \quad \forall s \in(t-\delta,t)\},\quad\text{and} \\
    \Branch_\dir &= \bigcup_{t \in \R} (\Branch_{t,\dir} \times\{t\}).
    \end{aligned}
\end{equation}
We observe first that $\Split_{t,\dir} \subseteq \Branch_{t,\dir}$. Indeed, if there are disjoint $\dir$-directed geodesics from $(x,t)$, then since $g_{(x,t)}^{\dir-,L}$ and $g_{(x,t)}^{\dir+,R}$ are the leftmost and rightmost such geodesics, we have that $g_{(x,t)}^{\dir-,L}(s) < g_{(x,t)}^{\dir+,R}(s)$ for \textit{all} $s < t$.  We show in Remark \ref{rmk:strictsub}  that $\Split_{t,\dir}$ is in fact a strict subset of $\Branch_{t,\dir}$. To understand the set $\Branch_{t,\dir} \setminus \Split_{t,\dir}$, we define the following sets of points, originally from \cite{Busa-Sepp-Sore-22a}: 
\be \label{eq:NUdef}
   \begin{aligned}
       &\NU_{t,\dir+}:=\{x \in \R: \exists \delta > 0 \text{ so that } g_{(x,t)}^{\dir+,L}(s)<g_{(x,t)}^{\dir+,R}(s) \text{ for all } s \in (t-\delta,t)\}. \\
       & \NU_{t,\dir-}:=\{x \in \R: \exists \delta > 0 \text{ so that } g_{(x,t)}^{\dir-,L}(s)<g_{(x,t)}^{\dir-,R}(s) \text{ for all } s \in (t-\delta,t)\}, \quad\text{and} \\
       &\NU_{\dir \sig} := \bigcup_{t \in \R} \NU_{t,\dir \sig} \times \{t\}\quad \text{for }\sigg \in \{-,+\}.
   \end{aligned}  
   \ee
   Equivalently, as shown in \cite{Bha24}, $\NU_{\dir \pm}$ are precisely the sets of points $(x,t) \in \R^2$ such that $g_{(x,t)}^{\dir \pm,L} \neq g_{(x,t)}^{\dir \pm,R}$. All $\dir -$ geodesics coalesce, and all $\dir +$ geodesics coalesce, so the separation of geodesics from points in $\NU_{\dir \sig}$ is only temporary. See Figure \ref{fig:split_points}. 
   
   If $x \in \Branch_{t,\dir}\setminus \Split_{t,\dir}$, then $g_{(x,t)}^{\dir-,R}$ and $g_{(x,t)}^{\dir+,R}$ are not disjoint, and $g_{(x,t)}^{\dir-,L}$ and $g_{(x,t)}^{\dir+,L}$ are also not disjoint. It turns out that each of these pairs of geodesics cannot split and then come back together, so there is some $\delta > 0$ such that 
   \[
   g_{(x,t)}^{\dir-,L}(s) = g_{(x,t)}^{\dir+,L}(s) < g_{(x,t)}^{\dir-,R}(s) = g_{(x,t)}^{\dir+,R}(s),\quad \forall s \in (t-\delta,t).
   \]
   Thus, $\Branch_{t,\dir} \setminus \Split_{t,\dir} \subseteq \NU_{t,\dir-} \cap \NU_{t,\dir +}$. For each $t \in \R$, this is a countable set (Lemma \ref{lem:NUcount} below).

\begin{theorem}\label{thm:int1}
   On a single event of full probability, the following holds for all $\dir \in \R$:  
    \begin{enumerate} [label=(\roman*), font=\normalfont]
        \item (Interfaces have directions) \label{int1:it1}   For every and $(x_0,s)\in\R^2$ the interfaces $\intc_{(x_0,s)}^{\dir,-}$ and $\intc_{(x_0,s)}^{\dir,+}$ are $-\dir$-directed continuous functions,   i.e.\
        \be \label{eq:forw_dir}
            \lim_{t\rightarrow \infty}\intc_{(x_0,s)}^{\dir,\sig}(t)/t=-\dir, \qquad \sig\in\{-,+\}.
        \ee
        Furthermore, if $\intc \in \Intc^{\dir,-} \cup \Intc^{\dir,+}$, then  we also have
        \be \label{eq:back_dir}
        \lim_{t \to -\infty} \f{\intc(t)}{t} = -\dir.
        \ee
        \item \label{int1:it2}(Interfaces  live on the set of branching points $\Branch_{\dir}$) For all $t \in \R$ and $x \in \R$, there exists $s < t$, $x_0 \in \R$, and $\sigg \in \{-,+\}$ such that  $x = \intc_{(x_0,s)}^{\dir, \sig}(t)$ if and only if $x \in \Branch_{t,\dir}$.
        \item \label{int1:it3} (Interfaces are ordered) For $s\in\R$, if $x_0<y_0$, then
        \[
            \intc^{\dir,-}_{(x_0,s)}(r)\leq \intc^{\dir,-}_{(y_0,s)}(r), \qquad\text{and}\qquad\intc^{\dir,+}_{(x_0,s)}(r)\leq \intc^{\dir,+}_{(y_0,s)}(r), \quad \forall r>s.
        \]
        \item (Interfaces through common intervals $\Intr^{s,\dir}$ coalesce) \label{BiInt:it3} If $\dir \in \DLBusedc$, then for any $s\in \R$ and  $\alpha\in \I^{s,\dir}$, if $x_0,y_0 \in \Intr_\alpha^{s,\dir}$, then there exists $t > s$ such that
        \[
        \intc_{(x_0,s)}^{\dir,-}(r) = \intc_{(y_0,s)}^{\dir,-}(r),\quad\text{and}\quad \intc_{(x_0,s)}^{\dir,+}(r) = \intc_{(y_0,s)}^{\dir,+}(r),\quad\text{for all }r \ge t
        \]
        Additionally, in this case, there exists an infinite sequence $\{t_n\}$ with $t_n \to \infty$ such that 
        \[
        \intc_{(x_0,s)}^{\dir,-}(t_n) = \intc_{(y_0,s)}^{\dir,+}(t_n)\quad \text{for all }n \in \N.
        \]
        \item \label{itm:disj_int_disj}(Interfaces through different intervals $\Intr^{s,\dir}$ are disjoint).
        If $\dir \in \DLBusedc$ and $x_0 < y_0$, where $x_0 \in \Intr_\alpha^{s,\dir}$ and $y_0 \in \Intr_\beta^{s,\dir}$ for $\alpha \neq \beta$, then for all $t > s$,
        \[
        \intc_{(x_0,s)}^{\dir,-}(t) \le \intc_{(x_0,s)}^{\dir,+}(t) < \intc_{(y_0,s)}^{\dir,-}(t) \le \intc_{(y_0,s)}^{\dir,+}(t).
        \]
        In particular, combined with the previous item, the set of all leftmost $\dir$-mixed Busemann interfaces and the set of all rightmost $\dir$-mixed Busemann interfaces are forests,  and for each $(x_0,s) \in \R^2$, there is a unique $\intc^- \in \Intc^{\dir,-}$ such that $\intc_{(x_0,s)}^{\dir,-}$ coalesces with $\intc^-$ and a unique $\intc^+ \in \Intc^{\dir,+}$ such that $\intc_{(x_0,s)}^{\dir,+}$ coalesces with $\intc^+$.  
        \item \label{int1:it4} (Interfaces form interior bubbles)  If $\dir \in \DLBusedc$, there exist infinitely many points $(x_0,s)\in\Split_{\dir}^L$ (and also infinitely many points in $\Split_\dir^R$) such that the following holds:  there exist $t_2 > t_1 > s$ and $\delta > 0$ so that
        \begin{enumerate}  [label=(\alph*), font=\normalfont]
        \item $\intc_{(x_0,s)}^{\dir,-}(r)=\intc_{(x_0,s)}^{\dir,+}(r)$ for $r \in [t_1-\delta,t_1]$,
        \item $\intc_{(x_0,s)}^{\dir,-}(r)=\intc_{(x_0,s)}^{\dir,+}(r)$ for $r \in [t_2,t_2 + \delta]$, and
        \item $\intc_{(x_0,s)}^{\dir,-}(r)<\intc_{(x_0,s)}^{\dir,+}(r)$,  for all $r\in(t_1,t_2)$.
        \end{enumerate} 
        See Figure \ref{fig:bubbles} for an illustration. 
    \end{enumerate}
\end{theorem}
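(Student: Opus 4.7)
The strategy is to prove items (i)--(v) from the monotone structure of the Lax-Oleinik formula, the coalescence of $\dir\pm$ semi-infinite geodesics (Proposition \ref{prop:DL_all_coal}), and the interval decomposition \eqref{eq77} coming from the function $D_t^\dir$. Item (vi), the existence of interior bubbles, is the main novelty and will require a separate probabilistic existence argument.

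For item (i), the initial condition $f^{\dir}_{(x_0,s)}$ inherits two-sided asymptotic slope $2\dir$ from $W^{\dir\pm}$ via \eqref{eq:global_soln_slope}, and the direction $-\dir$ of the associated competition interface then follows by the standard argument going back to \cite{Rahman-Virag-21}: maximizers of the Lax-Oleinik supremum in \eqref{eqref:kpzs_def} at time $t$ are asymptotically of order $-\dir t$, forcing $\intc^{\dir,\sig}_{(x_0,s)}(t)/t \to -\dir$. The backward direction for bi-infinite interfaces follows from item (ii) combined with the asymptotic concentration of $\Branch_{t,\dir}$ near the line $x = -\dir t$ as $t \to -\infty$, itself a consequence of the direction property of $\dir\pm$ geodesics.

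Items (ii), (iii), and (v) combine definitions with monotonicity. For (ii), I would characterize interface points at level $t$ as those where both left and right suprema in \eqref{eq:d} are attained, so that tracing these suprema backwards in time produces the required pair of $\dir-$ and $\dir+$ geodesics identifying the point with an element of $\Branch_{t,\dir}$; the converse uses the freedom to choose the starting point $(x_0,s)$ along one of the ancestral geodesics. Item (iii) is a direct consequence of the monotonicity of $d^\dir_{(x_0,s)}(x,t)$ in $x$, together with a comparison of the mixed initial conditions $f^\dir_{(x_0,s)}$ versus $f^\dir_{(y_0,s)}$ built out of $W^{\dir\pm}$. Item (v) follows from this ordering together with the strict separation of the $\dir+$ geodesic through the right endpoint $\b^s_\alpha$ of $\Intr^{s,\dir}_\alpha$ from the $\dir-$ geodesic through the left endpoint $\a^s_\beta$ of $\Intr^{s,\dir}_\beta$ whenever $\alpha \neq \beta$. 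For item (iv), since $D_s^\dir$ is constant on each $\Intr^{s,\dir}_\alpha$, the relevant $\dir-$ and $\dir+$ geodesic families issuing from points in this interval must coalesce after a finite time by Proposition \ref{prop:DL_all_coal}; this forces the corresponding interfaces to merge, yielding the coalescence claim, and the infinite sequence $\{t_n\}$ on which the left and right interfaces cross is extracted along the coalescence chain using the equality $W^{\dir-}(\intc_0,0;\intc_t,t)=W^{\dir+}(\intc_0,0;\intc_t,t)$ from Lemma \ref{l:MV}.

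The hard part is item (vi). By item (ii), the interfaces $\intc^{\dir,-}_{(x_0,s)}$ and $\intc^{\dir,+}_{(x_0,s)}$ live on $\Branch_\dir$, which strictly contains $\Split_\dir$: at branching-but-not-splitting points lying in $\NU_{t,\dir-} \cap \NU_{t,\dir+}$ the two interfaces coincide without permanently merging, while at a splitting point between two such times they can separate. The plan is to construct infinitely many $(x_0,s)\in\Split_\dir^L$ whose common interface trajectory passes through $\NU$-type branching points at two distinct later times $t_1 < t_2$ with genuine separation on $(t_1,t_2)$. I would exploit the density of non-trivial intervals in the decomposition $\Intr^{t,\dir}$ together with the forest and coalescence structure from items (iv)--(v): starting from a countable dense family of directions $\dir_0 \in \DLBusedc$ and invoking stationarity of $\Ll$, a zero-one type argument should show that the intricate merge-split behavior occurs at positive asymptotic density along generic interface paths, delivering not only existence but infinitely many bubble-forming starting points.
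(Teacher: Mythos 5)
The main concern is item (vi), which you yourself flag as the hard part but for which your plan is genuinely off-track. You propose a "zero-one type argument" using stationarity of $\Ll$ and density of nontrivial intervals to show bubbles occur "at positive asymptotic density along generic interface paths." There is no ergodic or zero-one law set up anywhere in this framework for the direction-dependent object $\dir\in\DLBusedc$ (a random set), and you would face the obstacle that any such argument must be made uniformly over $\dir$ in the exceptional set on a single full-probability event. The paper's actual proof is deterministic and constructive: Proposition \ref{p: interfaces do not meet} shows that from any $(x,t)\notin\Split_\dir^L$ one can locate a starting point $(x_1,s_1)\in\Split_\dir^L$ from which the left and right interfaces coincide at the geodesic-splitting time $t_1$ and then separate up to time $t$; Proposition \ref{p:fin_bubble} then shows (via non-existence of interior geodesic bubbles, Lemma \ref{lem:no_bubbles}) that interfaces which meet after splitting must stay together for a positive duration on both sides. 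Combining these two gives the bubble. Your plan replaces a clean pathwise construction with a probabilistic argument that would require substantial new machinery and which, as stated, would not yield the required "all $\dir\in\DLBusedc$ simultaneously" conclusion.

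There are smaller gaps elsewhere. For item (iv), you write that coalescence of $\dir\pm$ geodesics "forces the corresponding interfaces to merge" — but that inference is not direct; the actual proof of Lemma \ref{lem:coal_int}\ref{Itco1} uses the non-existence of bi-infinite geodesics (Proposition \ref{prop:bigeod_non}, via Lemma \ref{p:noBub}) to rule out interfaces staying forever apart, and the infinite sequence $\{t_n\}$ is produced by repeatedly applying Lemma \ref{p:noBub}, not by the Busemann equality in Lemma \ref{l:MV}. For item (ii), your sketch of "tracing the suprema backwards" omits the genuine content: showing that an interface point is a branching point (Lemma \ref{lem:Splt}\ref{itm:branching}) requires the overlap lemma and a careful swap argument, and the converse on $\Branch_{t,\dir}\setminus\Split_{t,\dir}$ requires the funnel structure of Propositions \ref{p:IntfDist}, \ref{p:>interface}, and Corollary \ref{c:in between points} (this is Proposition \ref{p:BP}). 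For the backward direction in item (i), your appeal to "asymptotic concentration of $\Branch_{t,\dir}$ near the line $x=-\dir t$" is not established in the paper; the paper instead sandwiches the bi-infinite interface between $g^{\dir-,L}_{(\intc(0),0)}$ and $g^{\dir+,R}_{(\intc(0),0)}$ via Corollary \ref{cor:LRmost_pi} and uses the directedness of those geodesics directly.
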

\begin{figure*}[t!]
    \centering
        \includegraphics[page=1, scale=0.5, trim=50 250 100 200, clip]{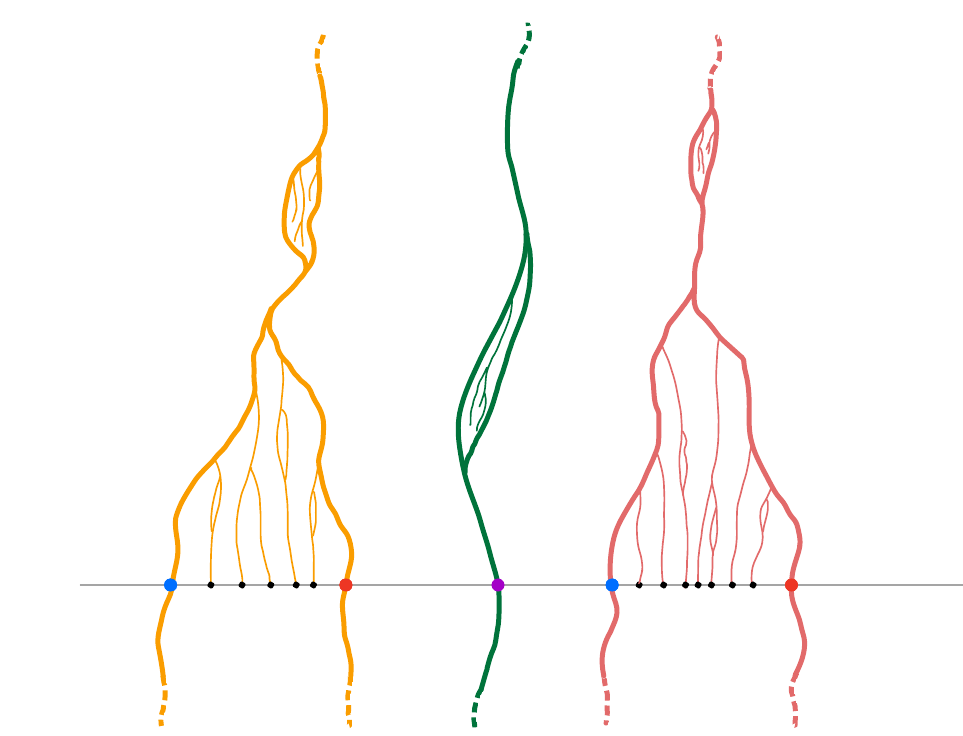}

        \caption{Three networks of competition interfaces in yellow, green and red intersecting a time horizon at time $t$ (black line). Bi-infinite interfaces are in bold and live on points in $\Split_{t,\dir}$ (colored points).  Leftmost and rightmost interfaces cross the time horizon at $\Split^L_{t,\dir}$ and $\Split^R_{t,\dir}$ respectively (points in blue and red respectively). The points where both leftmost and rightmost interfaces cross are $\Split^M_{t,\dir}$ (purple). By Theorems \ref{thm:BiInt}\ref{BiInt:it4}-\ref{itm:disj_int_disj} and \ref{thm:int1}\ref{BiInt:it3}, if $\intc^- \in \Intc^{\dir,-}$ and $\intc^+ \in \Intc^{\dir,+}$ are interfaces corresponding to the same eternal solution $b$, then $\intc^-$ and $\intc^+$ meet infinitely often and are disjoint from all other bi-infinite interfaces. }
    \label{Fig1}
\end{figure*}

Figure \ref{Fig1} gives a visual depiction of the interface network described in Theorem \ref{thm:int1}. The theorem  is proved in Section  \ref{sec:geom_proof}. Equation \eqref{eq:forw_dir} of Item \ref{int1:it1} follows from \cite[Corollary 4.14]{Rahman-Virag-21}, which is actually a more general fact about competition interface directions from sloped initial conditions. Some additional justification is needed to show that the slope condition holds for interfaces from all initial points simultaneously, and we make some brief remarks on this in the proof.

\begin{figure}
    \centering
    \includegraphics[width=5 cm]{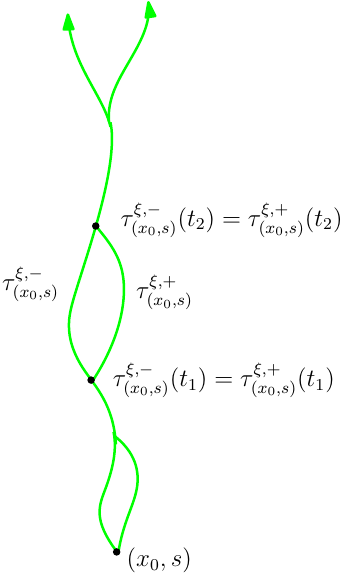}
    \caption{There are infinitely many points from which the leftmost and rightmost interfaces form interior bubbles. In particular, the interfaces stay together for some time, then split, and then come back together and stay together for some time, possibly to separate again. }
    \label{fig:bubbles}
\end{figure}
\begin{remark}
Item \ref{int1:it4} of Theorem \ref{thm:int1} shows a stark difference between the behavior of Busemann interfaces and mixed Busemann interfaces.  Indeed, it was shown in \cite{Bha24} that, for a given $\dir \in \R$, the collection of $\dir$-Busemann interfaces are distributed as infinite Busemann geodesics, and the latter do not form interior bubbles. Since the points $(x_0,s)$ lie on $\Split_\dir^L$ and $\Split_\dir^R$, Theorem \ref{thm:BiInt}\ref{itm:Interface_splitting} implies that these interior bubbles occur on bi-infinite interfaces. See Proposition \ref{p: interfaces do not meet} for a precise description of how we obtain these points where bubbles form.  We currently do not know whether all elements of $\Intc^{\dir,\pm}$ form bubbles, whether these bi-infinite interfaces form more than one bubble, or whether the number of bubbles formed on a given bi-interface is finite. We do know, however from Proposition \ref{p:fin_bubble} that if interfaces split and then meet, they must stay together for some time before splitting again. 
\end{remark}

\begin{remark} \label{rmk:branchNU}
In the case when $\dir \notin \DLBusedc$, Item \ref{int1:it2} is previously known. Indeed, when $\dir \notin \DLBusedc$, $\Branch_{t,\dir}$ is exactly the set $\NU_{t.\dir-} = \NU_{t,\dir+}$. It was shown in \cite[Theorem 5]{Bha24} that these are exactly the points along 
$\dir$-Busemann interfaces for $\dir \notin \DLBusedc$.
\end{remark}

\begin{remark} \label{rmk:strictsub}
Item \ref{int1:it2} of Theorem \ref{thm:int1} along with Theorem \ref{thm:BiInt} imply that $\Split_{t,\dir} \neq \Branch_{t,\dir} $. Indeed, for $s \in \R$, take an index $\alpha \in \I^{s,\dir}$ such that $\a_\alpha^s < \b_\alpha^s$, and choose $x \in (\a_\alpha^s,\b_\alpha^s)$. Let $\intc^-,\intc^+$ be the elements of $\Intc^{\dir,-},\Intc^{\dir,+}$ passing through the endpoints $\a_\alpha^s,\b_\alpha^s$ at time $s$.   Then, by Theorem \ref{thm:BiInt}\ref{BiInt:it4} and continuity, for all $t > s$ with $t$ sufficiently close to $s$, we have \[
\a_\beta^t = \intc^-(t) < \intc_{(x,s)}^{\dir,-}(t) < \intc^{+}(t) = \b_\beta^t.
\]
for some $\beta \in \I^{t,\dir}$. 
Then, we have $\intc_{(x,s)}^{\dir,-}(t) \in \Branch_{t,\dir}$ by Theorem \ref{thm:int1}\ref{int1:it2}, but $\intc_{(x,s)}^{\dir,-}(t)\notin \Split_{t,\dir}$ because there are no points of $\Split_{t,\dir}$ in the open interval $(\a_\beta^t,\b_\beta^t)$.  
\end{remark}

\subsection{Eternal solutions as Busemann limits} \label{sec:limits}

In \cite[Theorem 5.1(vi)]{Busa-Sepp-Sore-22a}, it is shown that, whenever $\dir \notin \DLBusedc$, for any sequence $\mbf v_n = (x_n,t_n)$ with $t_n \to -\infty$ and $\f{x_n}{|t_n|} \to \dir$, we have 
\[
W^{\dir}(\mbf p;\mbf q) = \lim_{n \to \infty} \Ll(\mbf v_n;\mbf q) - \Ll(\mbf v_n;\mbf p),\quad\text{for all }\mbf p,\mbf q \in \R^2.
\]
In this section, we describe the limit points of such sequences when $\dir \in \DLBusedc$. The results of this section are proved in Section \ref{sec:lim_proofs}. In proper metric spaces, the set of these Busemann limits is exactly the max-plus Martin boundary (see \cite[page 4]{Akian-Gaubert-Walsh-2009}).

\begin{theorem} \label{thm:bn_gen_lim}
    The following holds on a single event of full probability. Let $\dir \in \DLBusedc$, and let $\mbf v_n = (x_n,t_n)$ be a sequence with $t_n \to -\infty$ and $\f{x_n}{|t_n|} \to \dir$ as $n \to \infty$. For $n \ge 1$, define the function $b_n:\R^2 \to \R$ by
    \[
    b_n(x,t) := \Ll(\mbf v_n;x,t) - \Ll(\mbf v_n;0,0),
    \]
    for $t \ge t_n + 1$, and define $b_n$ for $t < t_n + 1$ in an arbitrary way so that $b_n$ is continuous. 
    Then, the following hold.
    \begin{enumerate}[label=(\roman*), font=\normalfont]
    \item \label{itm:bn_precompact} The sequence $(b_n)_{n \ge 1}$ is precompact in the uniform-on-compacts topology.
    \item \label{itm:limits_eternal} All subsequential limits of $(b_n)_{n \ge 1}$ are $\dir$-eternal solutions. 
    \end{enumerate}
\end{theorem}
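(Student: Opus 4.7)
The plan is to prove precompactness via a Busemann sandwich combined with local regularity of $\Ll$, and then to pass the Lax--Oleinik composition identity for $\Ll$ to subsequential limits to obtain both the eternal solution equation and the slope condition.

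For part \ref{itm:bn_precompact}, I would fix a compact set $K \subset \R^2$ and choose auxiliary directions $\dir_1, \dir_2 \in \R \setminus \DLBusedc$ with $\dir_1 < \dir < \dir_2$. For $n$ large enough, the base point $\mbf v_n$ sits between the $\dir_1$- and $\dir_2$-directions relative to $K$. Using the planar ordering of point-to-point geodesics in $\Ll$ --- which gives monotonicity of $\Ll(\mbf v; \mbf q) - \Ll(\mbf v; \mbf p)$ in the spatial coordinate of $\mbf v$ when $\mbf v$ is far below $\mbf p, \mbf q$ --- and comparing $\mbf v_n$ to auxiliary sequences on pure $\dir_i$-rays sharing the time coordinate of $\mbf v_n$, I obtain
\[
    W^{\dir_1}(\mbf p; \mbf q) - \eta_n \le \Ll(\mbf v_n; \mbf q) - \Ll(\mbf v_n; \mbf p) \le W^{\dir_2}(\mbf p; \mbf q) + \eta_n
\]
uniformly for $\mbf p, \mbf q \in K$, where $\eta_n \to 0$ by the ray-based Busemann convergence \cite[Theorem 5.1(vi)]{Busa-Sepp-Sore-22a} applied to the auxiliary sequences. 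Taking $\mbf p = (0,0)$ yields uniform bounds on $b_n$ over $K$. Equicontinuity on $K$ is inherited from the H\"older-$\tfrac12^-$ (space) and H\"older-$\tfrac13^-$ (time) regularity of $\mbf q \mapsto \Ll(\mbf v_n; \mbf q)$, which holds uniformly in $n$ since the time separation between $K$ and $\mbf v_n$ is bounded below as $t_n \to -\infty$. Arzel\`a--Ascoli then gives precompactness.

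For part \ref{itm:limits_eternal}, any subsequential limit $b$ inherits the sandwich $W^{\dir_1}(\mbf p; \mbf q) \le b(\mbf q) - b(\mbf p) \le W^{\dir_2}(\mbf p; \mbf q)$ for all $\dir_1 < \dir < \dir_2$ avoiding $\DLBusedc$. The slope condition \eqref{eq:global_soln_slope} for $b$ follows by letting $\dir_1 \uparrow \dir$ and $\dir_2 \downarrow \dir$ and using that $W^{\dir_i}(0,0;\cdot,0)$ has asymptotic slope $2\dir_i$. For the eternal solution identity \eqref{b_global}, the metric composition law for $\Ll$ gives, whenever $t_n + 1 < s < t$,
\[
    b_n(x, t) = \sup_{z \in \R}\bigl\{ b_n(z, s) + \Ll(z, s; x, t) \bigr\}.
\]
The parabolic decay $\Ll(z, s; x, t) \le -(z - x)^2/(t - s) + C$ combined with the at-most-linear growth of $b_n(\cdot, s)$ forced by the sandwich confines the maximizers to a compact $z$-interval, uniformly in $n$. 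Uniform convergence on this interval lets me pass the identity to $b$, yielding \eqref{b_global}.

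The main obstacle is establishing the uniform sandwich in part \ref{itm:bn_precompact}: the base point $\mbf v_n$ is only approximately $\dir$-directional, whereas \cite[Theorem 5.1(vi)]{Busa-Sepp-Sore-22a} supplies convergence along genuinely $\dir_i$-directional rays. Bridging this gap is the comparison-to-rays argument via planar monotonicity outlined above; once it is in place, the remainder of the proof is a routine combination of Arzel\`a--Ascoli with passing to the limit in a supremum confined to a compact set.
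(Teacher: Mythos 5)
Your proposal is correct in outline and reaches the same theorem, but it takes a genuinely different route from the paper. The paper's proof of both items hinges on Lemma~\ref{lem:gvk_lim}, which exploits the fine geodesic structure ($\dir$-directed geodesics coalesce into one of two families, by Propositions~\ref{prop:Busani_N3G} and~\ref{prop:DL_all_coal}) to show that for $n$ large, the leftmost geodesic from $\mbf v_n$ to any $\mbf p\in K$ passes through one of exactly two points at a fixed time level $T$. This immediately collapses $b_n(\mbf p)-b_n(\mbf q)$ to a $\min/\max$ of two explicit $\Ll$-increments, from which boundedness, equicontinuity, and the eternal-solution identity all fall out with no further estimates. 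You instead build a soft Busemann sandwich $W^{\dir_1}(\mbf p;\mbf q)-\eta_n \le b_n(\mbf q)-b_n(\mbf p) \le W^{\dir_2}(\mbf p;\mbf q)+\eta_n$ via planar monotonicity and ray convergence, which is essentially a re-derivation of a weaker form of the paper's Proposition~\ref{prop:Buse_basic_properties}\ref{it:Busliminfsup} (Lemma~5.12 of \cite{Busa-Sepp-Sore-22a}). Your approach is more elementary and avoids the heavy machinery of \cite{Busani_N3G}; the paper's approach is sharper and feeds directly into the interface geometry used in the rest of the paper.

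Two steps in your proposal are glossed over and deserve flagging. First, the ``planar ordering gives monotonicity of $\Ll(\mbf v;\mbf q)-\Ll(\mbf v;\mbf p)$ in the spatial coordinate of $\mbf v$'' claim only follows from the standard crossing/quadrangle inequality when $\mbf p$ and $\mbf q$ lie at the same time level; for $\mbf p,\mbf q$ at different heights the two geodesics need not cross, and the monotonicity can fail. Your uses of the sandwich (pointwise boundedness, slope condition, linear growth for maximizer confinement) can all be reduced to same-time-level increments via the metric composition of $\Ll$, but this reduction should be made explicit. Second, uniform equicontinuity of $\mbf q\mapsto\Ll(\mbf v_n;\mbf q)$ on $K$ does not follow directly from the global modulus bound of Lemma~\ref{lem:Landscape_global_bound}, since the constant there depends on the diameter of the set containing both $\mbf v_n$ and $K$, which diverges; one instead needs to first write $\Ll(\mbf v_n;\mbf q)=\sup_z\{\Ll(\mbf v_n;z,r)+\Ll(z,r;\mbf q)\}$ for a time level $r$ just below $K$, confine the maximizer to a compact $z$-interval uniformly in $n$, and then use the local regularity of $\Ll(z,r;\cdot)$. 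With these two steps repaired, your argument stands.
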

\begin{remark}
Theorem \ref{thm:bn_gen_lim} implies the following weaker statement: For $\mbf p,\mbf q \in \R^2$, the $\R$-valued sequence 
\[
\Ll(\mbf v_n;\mbf q) - \Ll(\mbf v_n;\mbf p)
\]
is bounded, and if $\wt b(\mbf p;\mbf q)$ is a subsequential limit, then there exists a $\dir$-eternal solution $b$ such that $b(\mbf q) - b(\mbf p) = \wt b(\mbf p;\mbf q)$. Indeed, let $\wt b(\mbf p;\mbf q)$ be any subsequential limit. Then, by Theorem \ref{thm:bn_gen_lim}, there exists a further subsequence along which the function $b_n$ converges with respect to the uniform-on-coompact topology to a $\dir$-eternal solution $b$. That function $b$ must then satisfy $b(\mbf q) - b(\mbf p) = \wt b(\mbf p;\mbf q)$. By Theorem \ref{thm:BiInt}, for any such $\mbf p,\mbf q$, there exists $\mbf r \in \Split_\dir \cup\{\mbf p,\mbf q\}$ such that
\[
\text{Either}\quad\wt b(\mbf p;\mbf q) = W^{\dir-}(\mbf p;\mbf r) + W^{\dir +}(\mbf r;\mbf q),\quad\text{ or }\quad \wt b(\mbf p;\mbf q) = W^{\dir+}(\mbf p;\mbf r) + W^{\dir -}(\mbf r;\mbf q).
\]
Note that $\mbf r$ could be $\mbf p$ or $\mbf q$, in which case $\wt b(\mbf p;\mbf q)$ is one of the Busemann functions $W^{\dir-}(\mbf p;\mbf q)$ or $W^{\dir +}(\mbf p;\mbf q)$. \
\end{remark}

The next result gives a partial converse to Theorem \ref{thm:bn_gen_lim}. That is, given any $\dir$-eternal solution $b$, it can be realized as a limit of the sequence $\Ll(\mbf v_n;\mbf p) - \Ll(\mbf v_n;\mbf q)$ for appropriate choices of the sequence $\mbf v_n$ and $\mbf p,\mbf q$.
\begin{theorem} \label{thm:existence_of_v_seq}
The following holds on a single event of full probability. Let $\dir \in \R$, and let $b$ be a $\dir$-eternal solution. Then, for all $t \in \R$ and compact interval $[x,y] \in \R$, there exists a sequence $\mbf v_n = (x_n,t_n)$ with $t_n \to -\infty$ and $\f{x_n}{|t_n|} \to \dir$ and $N \in \N$ so that
\[
b(w,t) - b(x,t) = \Ll(\mbf v_n;w,t) - \Ll(\mbf v_n;x,t),\quad\text{ for all }w\in [x,y] \text{ and } n \ge N. 
\]
\end{theorem}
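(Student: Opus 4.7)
The plan is to produce $\mbf v_n$ as a point far in the past lying on a backward semi-infinite $b$-geodesic from every $(w,t)$ with $w\in[x,y]$. Once such a $\mbf v_n$ is found, the Lax--Oleinik identity gives $b(w,t)=b(\mbf v_n)+\Ll(\mbf v_n;w,t)$ for each such $w$, and subtracting the same identity at $x$ yields the conclusion (the $b(\mbf v_n)$ contribution cancels). So the task reduces to finding, for each large $n$, a single point $\mbf v_n$ that is simultaneously a backward $b$-geodesic ancestor of every $(w,t)$ in $[x,y]\times\{t\}$.

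The main construction uses the $b$-geodesic classification. By Lemma \ref{lem:geodesics_from_b}, each $(w,t)$ admits at least one semi-infinite $b$-geodesic, and these are $\dir$-directed by Lemma \ref{lem:dir_from_global}. Theorem \ref{thm:intGeo} identifies each such $b$-geodesic as either a $\dir-$ or a $\dir+$ geodesic of the directed landscape, according to the position of $w$ relative to the bi-infinite interfaces $\intc^-,\intc^+$ attached to $b$ by Theorem \ref{thm:mr}. By Proposition \ref{prop:DL_all_coal}, the $\dir-$ geodesics coalesce pairwise, and likewise the $\dir+$ geodesics. Thus, when $[x,y]\subseteq(-\infty,\intc^+(t)]$, every $w\in[x,y]$ admits a $\dir-$ $b$-geodesic; combining pairwise coalescence with the compactness of $[x,y]$ produces some $s^*<t$ and a common backward trajectory $g^*:(-\infty,s^*]\to\R$ such that every selected $\dir-$ $b$-geodesic from $(w,t)$ coincides with $g^*$ on $(-\infty,s^*]$. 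Taking $\mbf v_n=(g^*(-n),-n)$ for $n\ge\max(1,-s^*)$ then works: the direction condition $x_n/|t_n|\to\dir$ follows from the $\dir$-directedness of the coalesced trunk, and the desired equality is immediate from Lax--Oleinik. The case $[x,y]\subseteq[\intc^-(t),\infty)$ is handled symmetrically using $\dir+$ in place of $\dir-$.

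The main obstacle is the remaining case $x<\intc^-(t)$ and $y>\intc^+(t)$: here $(x,t)$ admits only $\dir-$ $b$-geodesics while $(y,t)$ admits only $\dir+$ $b$-geodesics, and by Proposition \ref{prop:DL_all_coal} these two families live on trees that eventually separate, so no single-trunk $\mbf v_n$ can serve both endpoints. I plan to address this by subdividing $[x,y]$ at the interface locations $\intc^-(t),\intc^+(t)$, applying the one-sided construction on each subinterval, and then reconciling the two selections using the merger structure from Theorem \ref{thm:int1}\ref{BiInt:it3} together with the Busemann identity $W^{\dir-}(\intc_0,0;\intc_t,t)=W^{\dir+}(\intc_0,0;\intc_t,t)$ from Lemma \ref{l:MV}\ref{it:Bu3}. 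At times where $\intc^-$ and $\intc^+$ meet, the two families can be identified along the interface, allowing a single sequence $\mbf v_n$ chosen adjacent to the interface to match the Lax--Oleinik maximizers on both sides of the split; the direction condition then follows from \eqref{eq:back_dir} in Theorem \ref{thm:int1}\ref{int1:it1}, which guarantees $\intc(t_n)/|t_n|\to\dir$ as $t_n\to-\infty$.
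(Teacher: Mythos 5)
Your decomposition into three cases (interval entirely left of $\intc^+(t)$, entirely right of $\intc^-(t)$, or straddling) is correct, and the first two cases are handled essentially as the paper does: take the coalesced $\dir-$ or $\dir+$ trunk and march along it. The third case, however, contains a genuine gap.

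You say you will pick $\mbf v_n$ ``adjacent to the interface'' and reconcile the two sides using Theorem~\ref{thm:int1}\ref{BiInt:it3} and Lemma~\ref{l:MV}\ref{it:Bu3}. But this does not work as stated. If one literally places $\mbf v_n=(\intc^-(t_n),t_n)$ on the interface, the desired identity fails: for $w<z:=\intc^-(t)$ one needs $\Ll(\mbf v_n;w,t)=W^{\dir-}(\mbf v_n;w,t)$, which by Lemma~\ref{lem:Buse_eq} requires $\mbf v_n$ to lie on a $\dir-$ geodesic emanating from $(w,t)$. But Lemma~\ref{lem:Splt}\ref{itm:G1} shows the $\dir-$ geodesics from the interface point $(z,t)$ pass \emph{strictly to the left} of $\intc^-_r$ for $r<t$, so the interface point at time $t_n$ does not lie on those geodesics, and $\Ll(\mbf v_n;w,t)<W^{\dir-}(\mbf v_n;w,t)$ in general. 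The Busemann identity $W^{\dir-}(\intc_0,0;\intc_t,t)=W^{\dir+}(\intc_0,0;\intc_t,t)$ is a statement about Busemann functions, not about $\Ll$ itself, and Theorem~\ref{thm:int1}\ref{BiInt:it3} concerns coalescence of interfaces, not of geodesics, so neither closes this gap.

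What the paper actually does is quite different and is the crux of the proof. After fixing a coalescence time $S<t$ (and a further time $T<S$ guaranteed by a compactness argument), one chooses $x_n$ to be the \emph{supremum} of $w\in[g^{\dir-}(t_n),g^{\dir+}(t_n)]$ whose leftmost point-to-point geodesic to $(z,t)$ passes through $g^{\dir-}(S)$. At this transition value the point $\mbf v_n=(x_n,t_n)$ is \emph{bivalent}: its leftmost geodesic to $(z,t)$ goes through $g^{\dir-}(S)$ while its rightmost geodesic goes through $g^{\dir+}(S)$ (Equation~\eqref{eq:g_split_geod} in the paper, proved by taking limits from the left and right). Having two geodesics to $(z,t)$ landing on the two different trunks is exactly what makes the ``reconciliation'' work, because then for $w\le z$ the landscape realizes the $W^{\dir-}$ value and for $w\ge z$ it realizes the $W^{\dir+}$ value, and the cross term decomposes via $z$. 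Your proposal never identifies this bivalence property nor the supremum construction that produces it; without it the straddling case does not go through.
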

\begin{remark}
    For each $\dir$-eternal solution $b$, there is an associated bi-infinite interface $\intc^- \in \Intc^-$ by Theorem \ref{thm:mr}. Let $t \in \R$, and $z = \intc^-(t)$. Then, $(z,t)$ is a splitting point of $\dir$-directed infinite geodesics by Theorem \ref{thm:BiInt}. In the proof of Theorem \ref{thm:existence_of_v_seq}, the sequence $\mbf v_n$ is a particularly chosen sequence that lies between these two geodesics (see Figure \ref{fig:converse_subsequence_problem}). If we take our compact interval $[x,y]$ to contain $z$, then 
    \[
    b(y,t) - b(x,t) = b(y,t) - b(z,t) + b(z,t) - b(x,t) = W^{\dir-}(x,t;z,t) + W^{\dir+}(z,t;y,t).
    \]
    By Corollary \ref{cor:mr}\ref{it:not_constant}, for $x$ sufficiently less than $z$ and $y$ sufficiently greater than $z$, 
    \[
    b(y,t) - b(x,t) \notin \{W^{\dir-}(x,t;y,t),W^{\dir+}(x,t;y,t)\}.
    \]
    In particular, the Busemann functions $W^{\dir-}(x,t;y,t)$ and $W^{\dir+}(x,t;y,t)$ are not the only limits of differences
    $\Ll(\mbf v_n;y,t) - \Ll(\mbf v_n;x,t)$,
    where $\mbf v_n$ is a $\dir$-directed sequence. In fact, there are uncountably many possible limit points (across all possible sequences $\mbf v_n$).
\end{remark}

\subsection{Organization of the paper}
The starting point for this paper are the previous results about geodesics in the directed landscape from \cite{Directed_Landscape,Dauvergne-Sarkar-Virag-2022}, as well as detailed information on the Busemann process and semi-infinite geodesics from \cite{Busa-Sepp-Sore-22a}. The needed inputs are described in Section \ref{sec:prelim}, with some more technical results given in the Appendix. Section \ref{sec:interface_start} develops the basic algebraic properties of mixed $\dir$-Busemann interfaces, culminating in the proof of the existence of bi-infinite interfaces in Proposition \ref{p: existence of bi-infinite competetion interfaces}. Section  \ref{s:G_and_I} describes the relationship between infinite geodesics and mixed $\dir$-Busemann interfaces and provides the groundwork for the proof of Theorem \ref{thm:intGeo}. Section \ref{s:int_for} describes the network of mixed $\dir$-Busemann interfaces by describing when interfaces intersect and the points through which bi-infinite interfaces travel. This section lays the groundwork for the proof of Theorem \ref{thm:BiInt} and Items \ref{BiInt:it3}-\ref{itm:disj_int_disj} of Theorem \ref{thm:int1}. Section \ref{sec:gen_soln} builds the theory of general $\dir$-eternal solutions, which leads into the proof of Theorem \ref{thm:mr} in Section \ref{sec:global_proofs}. Section \ref{sec:most_main_proofs} completes the proofs of Corollary \ref{cor:mr}, Theorem \ref{thm:intGeo}, and Theorem \ref{thm:BiInt}. Section \ref{sec:geom_fine_prop} develops the finer geometric properties of interfaces needed to complete the proof of Theorem \ref{thm:int1}.  The results of Section \ref{sec:limits} are proved in Section \ref{sec:lim_proofs}. 

\subsection{Acknowledgments and funding}
We wish to thank Manan Bhatia, Riddhipratim Basu, Alexander Dunlap, Chris Janjigian, and Firas Rassoul-Agha for helpful conversations. We especially thank Chris Janjigian and Ivan Corwin for bringing questions about Busemann limits to our attention that led us to prove the results in Section \ref{sec:limits} and Chris Janjigian for pointing out the reference \cite{Akian-Gaubert-Walsh-2009}. E.S.\ was partially supported by the Fernholz foundation and by Ivan Corwin's Simons Investigator Grant 929852.  S.B. is supported by scholarship from National Board for Higher Mathematics (NBHM) (ref no: 0203/13(32)/2021-R\&D-II/13158). Part of this work was discussed during separate trips that S.B. and E.S. made to the University of Edinburgh, whom we thank for their hospitality.  During his trip to Edinburgh, E.S. was supported by AMS-Simons travel grant AMMS CU23-2401. Additions to the second version of this paper were discussed during a trip E.S. and S.B. made to the workshop on stochastic interacting particle systems and random matrices at the R\'enyi institute in Budapest, Hungary in June 2025. The travel of E.S. during that trip was supported by the same AMS-Simons travel grant. Travels of S.B. were supported by GARP travel funding by Indian Institute of Science.

\section{Preliminaries} \label{sec:prelim}
\subsection{The directed landscape} \label{sec:DL}
Let $\Rup = \{(x,s;y,t) \in \R^4: s < t\}$. The directed landscape is a random continuous function $\Ll:\Rup \to \R$ that was first introduced in \cite{Directed_Landscape}. It satisfies the following \textbf{metric composition property}: For $s < r < t$ and $x,y \in \R$,
\be \label{L_comp}
\Ll(x,s;y,t) = \sup_{z \in \R}\{\Ll(x,s;z,r) + \Ll(z,r;y,t)\},
\ee
which implies the reverse triangle inequality for $\Ll$: for $s < r < t$ and $x,y,z \in \R$,
\be \label{triangle}
\Ll(x,s;y,t) \ge \Ll(x,s;z,r) + \Ll(z,r;y,t).
\ee
While a key feature of the directed landscape, this metric composition property does not uniquely define its law; we refer the reader to \cite{Directed_Landscape} for a precise characterization. We let $(\Omega,\F,\Pp)$ be a probability space on which $\Ll$ is defined.

The directed landscape gives rise to a notion of geodesics.  We consider a continuous function $\gamma:[s,t] \to \R$ as a directed path in the plane. For such a path, define it's $\Ll$-length as
\[
\Ll(\gamma) = \inf_{k \in \Z_{>0}} \; \inf_{s = t_0 < t_1 < \cdots < t_k = t} \sum_{i = 1}^k \Ll\bigl(\gamma(t_{i - 1}),t_{i - 1};\gamma(t_i),t_i\bigr).
\]
Note that the $\Ll$-length of a path $\gamma$ can be $-\infty$. A path $\gamma$ is called a \textbf{point-to-point geodesic} from $(x,s)$ to $(y,t)$ if its $\Ll$ length is maximal among all paths $\gamma:[s,t] \to \R$ with $\gamma(s) = x$ and $\gamma(t) = y$. Equivalently, 
\[
\Ll(\gamma) = \sum_{i = 1}^k \Ll\bigl(\gamma(t_{i - 1}),t_{i - 1};\gamma(t_i),t_i\bigr)
\]
for all $k \in \Z_{>0}$ and partitions $s = t_0 < t_1 < \cdots < t_k = t$. In particular, if $s < r <t$, a point $z \in \R$ satisfies $\gamma(r) = z$ for some geodesic from $(x,s)$ to $(y,t)$ if and only if equality holds in \eqref{triangle}. See Figure \ref{fig:DL_geod}.
\begin{figure}
    \centering
    \includegraphics[width=0.5\linewidth]{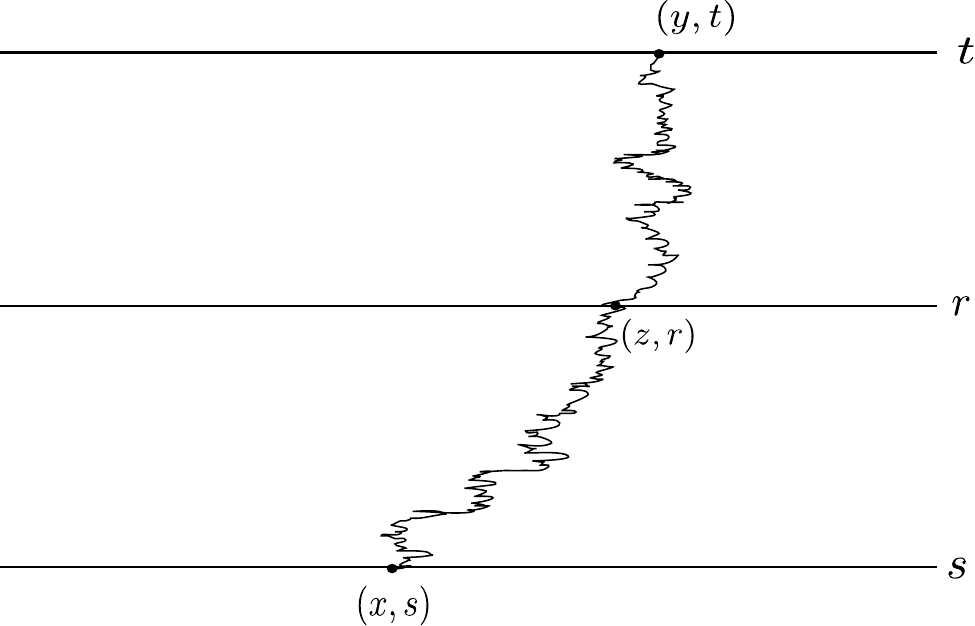}
    \caption{\small A directed landscape geodesic from $(x,s)$ to $(y,t)$ passing through $(z,r)$ at time level (horizontal line) $r$.}
    \label{fig:DL_geod}
\end{figure}

The fact that geodesics exist between any pair of points $(x,s)$ and $(y,t)$ with $s < t$ is proved in \cite[Lemma 13.2]{Directed_Landscape}. More specifically, if we define the path $\gamma^R:[s,t] \to \R$ by setting $\gamma^R(s) = x,\gamma^R(t) = y$, and $\gamma^R(r)$ to be the rightmost maximizer in \eqref{L_comp} for all $r \in (s,t)$, then $\gamma^R$ is continuous and is the rightmost geodesic from $(s,x)$ to $(y,t)$ in the sense that, for any other geodesic $\gamma$, $\gamma(r) \le \gamma^R(r)$ for $r \in [s,t]$. Similarly, one can define the path $\gamma^L$ by choosing leftmost maximizers, and this forms the leftmost geodesic between the two points.

For two fixed points $(x,s)$ and $(y,t)$, there is a unique geodesic between the points, almost surely \cite[Theorem 12.1]{Directed_Landscape}. However, there are exceptional pairs of points for which the geodesic is not unique. In this case, we know that that geodesics do not form interior bubbles. We state this precisely as the following lemma. 
\begin{lemma}\cite[Theorem 1]{Bha24}, \cite[Lemma 3.3]{Dauvergne-23} \label{lem:no_bubbles}
With probability one, there exist no points $(x,s;y,t) \in \Rup$ and distinct geodesics $\gamma_1,\gamma_2$ from $(x,s)$ to $(y,t)$, such that, for some $\delta > 0$, $\gamma_1$ and $\gamma_2$ agree on the set $[s,s+\delta] \cap [t-\delta,t]$.
 \end{lemma}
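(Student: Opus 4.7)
I would proceed by contradiction: suppose there exist $(x,s;y,t) \in \Rup$, distinct geodesics $\gamma_1,\gamma_2$ from $(x,s)$ to $(y,t)$, and $\delta > 0$ such that $\gamma_1 = \gamma_2$ on $[s,s+\delta] \cup [t-\delta,t]$. Set
\[
r_1 := \sup\{r \in [s,t] : \gamma_1|_{[s,r]} = \gamma_2|_{[s,r]}\}, \qquad r_2 := \inf\{r \in [s,t] : \gamma_1|_{[r,t]} = \gamma_2|_{[r,t]}\}.
\]
Continuity and the bubble hypothesis yield $s+\delta \le r_1 < r_2 \le t-\delta$, and letting $u := \gamma_1(r_1) = \gamma_2(r_1)$ and $v := \gamma_1(r_2) = \gamma_2(r_2)$, the metric composition property \eqref{L_comp} forces $\gamma_1|_{[r_1,r_2]}$ and $\gamma_2|_{[r_1,r_2]}$ to be two distinct point-to-point geodesics from $(u,r_1)$ to $(v,r_2)$ that agree only at their endpoints. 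Pick a rational $r^* \in (r_1,r_2)$ with $w_1 := \gamma_1(r^*) \neq \gamma_2(r^*) =: w_2$, and without loss of generality assume $w_1 < w_2$.

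The plan is to use this interior bubble as a witness for the failure of geodesic uniqueness at some \emph{fixed} pair of rational spacetime endpoints, which is a null event by Theorem~12.1 of \cite{Directed_Landscape}, and then union bound over rationals. Choose rationals $s^* < r_1$ and $t^* > r_2$ arbitrarily close to $r_1,r_2$, and rationals $a,b \in \Q$ close to $u,v$, so that $p := (a,s^*)$ and $q := (b,t^*)$ lie in $\Q^4 \cap \Rup$. I would then invoke the a.s.\ continuity of the leftmost and rightmost point-to-point geodesics in their endpoints (which follows from monotonicity together with standard inputs on $\Ll$ from \cite{Directed_Landscape}) to argue that, for fine enough rational approximations, the leftmost geodesic $\gamma^L_{p,q}$ from $p$ to $q$ passes through a point arbitrarily close to $w_1$ at time $r^*$, while the rightmost geodesic $\gamma^R_{p,q}$ passes arbitrarily close to $w_2$. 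Since $w_1 \neq w_2$, for a sufficiently fine rational choice $\gamma^L_{p,q} \neq \gamma^R_{p,q}$, contradicting a.s.\ geodesic uniqueness at the fixed pair $(p,q)$. A countable union bound over $(p,q) \in \Q^4 \cap \Rup$ closes the argument.

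The main obstacle is the continuity claim: it is not \emph{a priori} obvious that $\gamma^L_{p,q}$ and $\gamma^R_{p,q}$ track the leftmost and rightmost geodesics from $(u,r_1)$ to $(v,r_2)$ as $p,q$ approach the bubble endpoints, since $(u,r_1),(v,r_2)$ are themselves random points with non-unique point-to-point geodesics. A direct way to justify this is through the monotone coupling of geodesics in their endpoints combined with transversal fluctuation estimates, localizing $\gamma^L_{p,q}$ to pass within any $\varepsilon$ of $(u,r_1)$ at time $r_1$ and of $(v,r_2)$ at time $r_2$ once $p,q$ are chosen close enough. A cleaner alternative, the route taken in \cite{Bha24,Dauvergne-23}, is to use the Airy-sheet description of the splitting/coalescence structure at individual spacetime points: the event that $(u,r_1)$ is a splitting point and $(v,r_2)$ is a coalescence point for the \emph{same} pair of geodesics is of strictly higher codimension than each constraint separately, and after a rational approximation scheme for the parameters $(s^*,t^*,r^*,a,b)$ it becomes null. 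Either implementation reduces the lemma to a.s.\ uniqueness between fixed rational endpoints, which is the known input.
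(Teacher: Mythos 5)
The paper itself does not prove this lemma; it is cited verbatim from \cite{Bha24} and \cite{Dauvergne-23}, so there is no internal proof to compare against. Evaluating your blind attempt on its merits, the primary route you propose has a genuine gap that cannot be patched.

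Your plan is to reduce the existence of an interior bubble to a failure of geodesic uniqueness between some fixed rational pair $(p,q) \in \Q^4 \cap \Rup$, and then invoke a.s.\ uniqueness at fixed endpoints plus a union bound. This strategy cannot succeed, and the reason is exactly the claim you flag as the ``main obstacle,'' but the problem is worse than difficulty of proof: the claim is false. For a rational pair $(p,q)$, the point-to-point geodesic is a.s.\ unique, so $\gamma^L_{p,q} = \gamma^R_{p,q}$ almost surely, and in particular $\gamma^L_{p,q}(r^*) = \gamma^R_{p,q}(r^*)$. These two paths therefore cannot separately track $w_1$ and $w_2$. What actually happens as $p,q$ approach the random bubble endpoints $(u,r_1),(v,r_2)$ is that the \emph{single} geodesic $\gamma_{p,q}$ converges along subsequences to \emph{one} geodesic from $(u,r_1)$ to $(v,r_2)$ (this is Lemma \ref{lem:precompact}), which might be $\gamma_1$ or $\gamma_2$ or something else; nothing forces the left- and right-approximations to latch onto the two different branches of the bubble. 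This is precisely why the exceptional set of pairs admitting non-unique geodesics, although dense and uncountable, a.s.\ contains no rational pair, and why non-uniqueness at random pairs cannot be ``detected'' via rational pairs alone. The monotone-coupling/transversal-fluctuation implementation you sketch does not repair this; monotonicity gives $\gamma^L_{p,q} \le \gamma^L_{p',q'}$ type comparisons when endpoints are ordered, but the limit of the leftmost geodesic from one side and the limit of the rightmost from the other side are only ordered, not forced to be distinct, and for rational $(p,q)$ both limits come from the same unique geodesic.

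Your second alternative (codimension counting via the Airy-sheet structure) is directionally closer to what \cite{Bha24} and \cite{Dauvergne-23} actually do, but as written it is only a heuristic: you would still need to express the bubble event in terms of countably many fixed-parameter events, and the same obstruction recurs because the bubble endpoints $(u,r_1),(v,r_2)$ are themselves random and non-rational. The cited proofs avoid the rational reduction entirely. Dauvergne's argument proceeds through the classification of geodesic networks via the Airy line ensemble and absolute continuity comparisons to Brownian motion (the bubble topology is the ``$(1,1)$'' network, ruled out by a local Brownian-regularity estimate near the splitting and coalescence points), while Bhatia's proof uses a comparison of the landscape to Brownian last-passage percolation and a structural argument about forking geodesics. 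Neither is a consequence of a.s.\ uniqueness between fixed endpoints together with continuity of endpoint dependence; the latter is fundamentally a single-point result and is too weak to control a joint constraint (a split \emph{and} a matched re-merge) at two coupled random spacetime locations.
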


 For a function $f:\R \to \R$ such that maximizers of 
 \be \label{zf}
 z \mapsto f(z) + \Ll(z,s;x,t) \quad \text{over $z \in \R$}
 \ee
 exist for some $(z,s;x,t) \in \Rup$, we call $\gamma:[s,t] \to \R$ an \textbf{$(f,s)$-to-$(x,t)$ geodesic} or \textbf{point-to-line geodesic} if $\gamma$ is a point-to-point geodesic from $(z^\star,s)$ to $(x,t)$ for some maximizer $z^\star$ of \eqref{zf}.

\subsection{Busemann process}
For a fixed direction $\dir$, we define the Busemann function
\[
W^{\dir}(x,s;y,t)=\lim_{r\rightarrow -\infty}\Ll(-r\dir,r;y,t)-\Ll(-r\dir,r;x,s),
\]
which was first studied in \cite{Rahman-Virag-21}. In \cite{Busa-Sepp-Sore-22a}, this was extended to a process
\[
\Bigl(W^{\dir \sig}(x,s;y,t): \dir \in \R, \sigg \in \{-,+\}, (x,s;y,t) \in \R^4\Bigr).
\]
Here, the sign $\sigg \in \{-,+\}$ specifies the left and right-continuous versions of the process. It was shown in \cite{Busa-Sepp-Sore-22a} that the set of discontinuities in the $\dir$ parameter, named $\Xi$, is a random, countably infinite, dense subset of $\R$, and that any fixed $\dir$ lies in $\Xi$ with probability $0$. We now state the relevant results from \cite{Busa-Sepp-Sore-22a} that are needed for the present paper. We point out, that in \cite{Busa-Sepp-Sore-22a} and in \cite{Rahman-Virag-21}, the Busemann functions are defined differently; there, they are defined as differences of values of $\Ll$ to a terminal point traveling forward in time; i.e.,
\[
W^{\dir}(x,s;y,t)=\lim_{r\rightarrow \infty}\Ll(x,s;r\dir,r)-\Ll(y,t;r\dir,r).
\]
Due to flip symmetry of the directed landscape \cite[Lemma 10.2]{Directed_Landscape}, the results for the version of the Busemann process we use follow immediately.  In the theorem below, we only record the items needed in the present paper. 
\begin{proposition} \cite[Theorem 5.1, Theorem 5.3, Lemma 5.12]{Busa-Sepp-Sore-22a} \label{prop:Buse_basic_properties}
On the probability space of the directed landscape $\Ll$, there exists a process
\[
\Bigl(W^{\dir \sig}(x,s;y,t): \dir \in \R, \sigg \in \{-,+\}, (x,s;y,t) \in \R^4\Bigr)
\]
satisfying the following properties:
\begin{enumerate}[label=(\roman*), font=\normalfont]
\item{\rm(Continuity)} \label{itm:general_cts}  As an $\R^4 \to \R$ function,  $(x,s;y,t) \mapsto \W^{\dir \sig}(x,s;y,t)$ is  continuous. 
 \item {\rm(Additivity)} \label{itm:DL_Buse_add} For all $p,q,r \in \R^2$, 
    $\W^{\dir \sig}(p;q) + \W^{\dir \sig}(q;r) = \W^{\dir \sig}(p;r)$.   In particular, \\ $\W^{\dir \sig}(p;q) = -\W^{\dir \sig}(q;p)$ and $\W^{\dir \sig}(p;p) = 0$.
    \item {\rm(Monotonicity along a horizontal line)}
    \label{itm:DL_Buse_gen_mont} Whenever $\dir_1< \dir_2$, $x < y$, and $t \in \R$,
    \[
    \W^{\dir_1 -}(x,t;y,t) \le \W^{\dir_1 +}(x,t;y,t) \le \W^{\dir_2 -}(x,t;y,t) \le \W^{\dir_2 +}(x,t;y,t).
    \]
    \item {\rm(Evolution as the KPZ fixed point)}\label{itm:Buse_KPZ_description} For 
    all $x,y \in \R$ and $s < t$,
    \be\label{W_var}
    \W^{\dir \sig}(x,s;y,t) = \sup_{z \in \R}\{\W^{\dir \sig}(x,s;z,s) +
 \Ll(z,s;y,t)\}.
    \ee
    \item \label{it:Wslope} {\rm(Law and asymptotic slope)} For fixed $\dir \in \R$, $W^{\dir} := W^{\dir +} = W^{\dir -}$ with probability one, and for each $t \in \R$, the random function $x \mapsto W^{\dir}(0,t;x,t)$ is a two-sided Brownian motion with variance $2$ and drift $2\dir$. Furthermore, on a single event of full probability, for all $t \in \R,\dir \in \R$, and $\sigg \in \{-,+\}$,
    \[
    \lim_{|x| \to \infty} \f{W^{\dir \sig}(0,t;x,t)}{x} = 2\dir.
    \]
    \item {\rm(General Busemann limits)}\label{it:Busliminfsup} for all $\dir \in \R$, $t \in \R$, $x < y$, and sequences $\mbf v_n = (x_n,t_n)$ with $t_n \to -\infty$ and $\f{x_n}{|t_n|}\to \dir$,
\begin{align*}
W^{\dir-}(x,t;y,t) &\le \liminf_{n \to \infty} \Ll(\mbf v_n;y,t) - \Ll(\mbf v_n;x,t) \\&\le \limsup_{n \to \infty} \Ll(\mbf v_n;y,t) - \Ll(\mbf v_n;x,t) \le W^{\dir +}(x,t;y,t).
\end{align*}
\end{enumerate}
\end{proposition}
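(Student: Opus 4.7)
Since Proposition \ref{prop:Buse_basic_properties} is a compendium of properties proved in \cite{Busa-Sepp-Sore-22a} and \cite{Rahman-Virag-21}, my plan is to outline how I would reconstruct the process and verify each item, following the general strategy of those works. The starting point is to construct $W^\dir(x,s;y,t)$ for a \emph{fixed} direction $\dir$ via the defining limit \eqref{eq:Buselim1}, using the shape theorem and subadditivity / stationarity properties of $\Ll$ to show the limit exists almost surely simultaneously in $(x,s;y,t)$ on a countable dense set; then extending continuously. Once $W^\dir$ exists on a fixed direction, \ref{itm:DL_Buse_add} is immediate since the differences telescope, and \eqref{W_var} in \ref{itm:Buse_KPZ_description} follows by substituting the metric composition \eqref{L_comp} into the limit and exchanging limit with supremum (justifying the exchange using the uniform tail bounds on the maximizer location, for which the quadratic drift of $\Ll$ in the spatial argument is crucial).

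The next step is to build the full two-parameter process $\{W^{\dir \sig}\}$. I would first fix a countable dense set $\Q$ of directions and construct $W^\dir$ for all $\dir \in \Q$ simultaneously on a single full probability event. The key geometric input is the ordering of point-to-point geodesics: if $\dir_1 < \dir_2$, then geodesics from a point to far-away targets in direction $\dir_2$ lie to the right of those in direction $\dir_1$. Passing this ordering to the limit yields
\[
W^{\dir_1}(x,t;y,t) \le W^{\dir_2}(x,t;y,t) \quad \text{for } \dir_1 < \dir_2,\ x < y,
\]
for $\dir_1,\dir_2 \in \Q$. This monotonicity lets me define, for all $\dir \in \R$,
\[
W^{\dir-}(\cdot) := \lim_{\Q \ni \dir' \uparrow \dir} W^{\dir'}(\cdot), \qquad W^{\dir+}(\cdot) := \lim_{\Q \ni \dir' \downarrow \dir} W^{\dir'}(\cdot),
\]
which gives \ref{itm:DL_Buse_gen_mont} by construction. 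Additivity \ref{itm:DL_Buse_add} and the KPZ evolution \ref{itm:Buse_KPZ_description} pass to these left/right limits. For continuity \ref{itm:general_cts}, I would argue that each $W^\dir$ is continuous in $(x,s;y,t)$ (inherited from continuity of $\Ll$ and tightness of maximizers) and that the monotone limit preserves continuity using a local equicontinuity argument based on the Brownian description in \ref{it:Wslope}.

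For \ref{it:Wslope}, the two-sided Brownian motion with drift $2\dir$ follows from identifying the stationary horizon / Brownian invariant measure for the KPZ fixed point, which is the content of the convergence results underlying the construction of $W^\dir$; the asymptotic slope $2\dir$ simultaneously for all $\dir,t,\sigg$ requires a monotonicity argument that upgrades the fixed-$\dir$ statement to a simultaneous one using the density of $\Q$ in $\R$ and the sandwich $W^{\dir_1+}\le W^{\dir\sig}\le W^{\dir_2-}$ for $\dir_1<\dir<\dir_2$.

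Finally, \ref{it:Busliminfsup} is the main ``soft'' input I would prove last. For the upper bound, fix $\dir'>\dir$ in $\Q$; then for all but finitely many $n$, the endpoint $\mbf v_n=(x_n,t_n)$ lies to the left of the ray from $(x,t)$ in direction $\dir'$, and planarity / monotonicity of geodesics forces
\[
\Ll(\mbf v_n;y,t) - \Ll(\mbf v_n;x,t) \le \Ll(-r\dir',r;y,t) - \Ll(-r\dir',r;x,t) + o(1)
\]
for suitable $r$, giving $\limsup \le W^{\dir'}(x,t;y,t)$; sending $\dir'\downarrow \dir$ in $\Q$ and using right-continuity yields the $W^{\dir+}$ bound. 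The liminf bound is symmetric. The main obstacle throughout is the joint measurability / simultaneous-in-direction control: the process must be constructed on a \emph{single} full-probability event valid for all $(\dir,\sigg,x,s,y,t)$, which is what forces the monotone-limit approach rather than a direct definition for each $\dir$ separately.
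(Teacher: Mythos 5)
This proposition is not proved in the present paper; it is cited directly from \cite{Busa-Sepp-Sore-22a} (Theorems 5.1, 5.3 and Lemma 5.12) and \cite{Rahman-Virag-21}, so there is no in-paper argument to compare against. Your outline correctly reconstructs the strategy of those sources: construct $W^\dir$ for a fixed direction via the Busemann limit, then for a countable dense set $\Q$ of directions simultaneously, use the geodesic-ordering monotonicity to define $W^{\dir\pm}$ as one-sided limits in $\dir$, and inherit additivity, the KPZ evolution, and the general Busemann bounds from these limits; this matches the approach summarized in Section~2.2 of the paper and carried out in detail in \cite{Busa-Sepp-Sore-22a}. One small caution: your claim that monotone limits preserve continuity ``using a local equicontinuity argument based on the Brownian description'' is somewhat circular as written, since the Brownian law is item (v) of the same proposition; in the source the continuity of $W^{\dir\pm}$ in the space-time variables is established by a separate argument combining the monotonicity in $\dir$ with uniform modulus-of-continuity bounds on $\Ll$ and tightness of maximizers, not by first proving the Brownian marginal. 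As a sketch this is fine, but a full write-up would need to order the steps so that no item is used before it is established.
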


In the paper, we shall often refer to the first four properties above as \textbf{continuity, additivity, monotonicity, and evolution,} without reference to the theorem.

\begin{definition}
    We define $\DLBusedc$ to be the random set 
    \[
    \{\xi \in \R: W^{\dir -}(p;q) \neq W^{\dir +}(p;q) \quad\text{for some }p,q \in \R^2\}
    \]
    By Theorem 5.5(iii) in \cite{Busa-Sepp-Sore-22a}, with probability one, $\DLBusedc$ is countably infinite and dense in $\R$.  
\end{definition}

\subsection{Semi-infinite geodesics and Busemann geodesics} \label{sec:SIG} 
A \textbf{semi-infinite geodesic}  rooted at the point $(x,t)$ is an infinite continuous path, encoded by the function $g:(-\infty,t] \to \R$ such that for any $s < t$, the restriction $g|_{[s,t]}$ is a geodesic for $\Ll$. We say the infinite geodesic $g$ has \textbf{direction} $\dir$ if 
\[
\lim_{s \to -\infty} \f{g(s)}{|s|} = \dir.
\]
See Figure \ref{fig:SIG_directions}. We will refer to $g$ as a semi-infinite geodesic, sometimes using it to refer to the function $g:(-\infty,t] \to \R$ and at other times identifying it with its graph $\{(g(s),s):s  \in (-\infty,t]\}$

\begin{figure}
    \centering
    \includegraphics[width=0.4\linewidth]{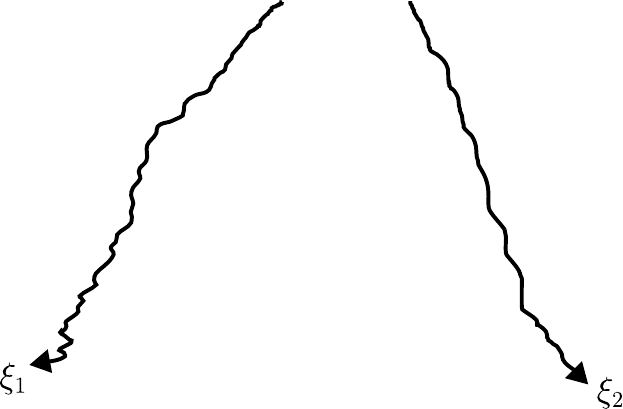}
    \caption{\small Two semi-infinite geodesics in directions $\dir_1 < \dir_2$ (larger directions are farther to the right)}
    \label{fig:SIG_directions}
\end{figure}

The existence of semi-infinite geodesics for the directed landscape was first shown in \cite{Rahman-Virag-21}. It was later shown in \cite{Busa-Sepp-Sore-22a} that there exist semi-infinite geodesics in all directions and from all points in the plane. This was done by constructing the infinite geodesics from the Busemann process. For a given point $(x,t)$, there are two particularly relevant geodesics corresponding to the Busemann function $W^{\dir \sig}$. We denote these as $g_{(x,t)}^{\dir \sig,L}$ and $g_{(x,t)}^{\dir \sig,R}$. They are defined as follows:
\[
\begin{aligned}
&g_{(x,t)}^{\dir \sig,L}(t) = g_{(x,t)}^{\dir \sig,R}(t) = x, \;\; \text{and for }s < t, \\
&g_{(x,t)}^{\dir \sig,L/R}(s) \;\;\text{is the leftmost/rightmost maximizer of} \quad z \mapsto W^{\dir \sig}(0,s;z,s) + \Ll(z,s;x,t) \quad\text{over } z \in \R.\end{aligned}
\]
For fixed $(x,t)$, with probability one, $g_{(x,t)}^{\dir \sig,L}(s) = g_{(x,t)}^{\dir \sig,R}(s)$ for all $s \le t$, $\dir \in \R$, and $\sigg \in \{-,+\}$, but there are exceptional points where this is not the case. This is the set $\NU_{\dir \sig}$ defined in \eqref{eq:NUdef}.

   \begin{lemma}\cite[Theorem 6.1]{Busa-Sepp-Sore-22a} \label{lem:NUcount}
   On a single event of probability one, for every $t \in \R$, the sets $\NU_{t,\dir-}$ and $\NU_{t,\dir +}$ are countably infinite. 
   \end{lemma}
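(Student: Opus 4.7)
The plan is to prove countability and infiniteness of $\NU_{t,\dir+}$ separately, with the $\NU_{t,\dir-}$ case following by the symmetric argument. The central tool is a planar ordering for leftmost and rightmost $\dir+$ geodesics: for $x < y$ and $s < t$,
\[
g_{(x,t)}^{\dir+,R}(s) \le g_{(y,t)}^{\dir+,L}(s).
\]
I would establish this by a standard supermodularity exchange argument applied to the variational problem $z \mapsto W^{\dir+}(0,s;z,s) + \Ll(z,s;w,t)$: assuming for contradiction that $g_{(x,t)}^{\dir+,R}(s) > g_{(y,t)}^{\dir+,L}(s)$, swapping the two candidate maximizers and invoking the reverse triangle inequality \eqref{triangle} (which forces a crossing pair to be strictly dominated by an uncrossed one) contradicts maximality on one of the two sides.

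For countability, for each $x \in \NU_{t,\dir+}$ there is a $\delta_x > 0$ such that the open interval $I_x^s := (g_{(x,t)}^{\dir+,L}(s), g_{(x,t)}^{\dir+,R}(s))$ is nonempty for all $s \in (t-\delta_x,t)$. The ordering above guarantees that, at any fixed $s < t$, the nonempty intervals $\{I_x^s\}_{x \in \NU_{t,\dir+}}$ are pairwise disjoint open subsets of $\R$, so only countably many of them can be nonempty at a given $s$. Writing
\[
\NU_{t,\dir+} = \bigcup_{n \in \N}\Bigl\{x \in \R : g_{(x,t)}^{\dir+,L}(t-1/n) < g_{(x,t)}^{\dir+,R}(t-1/n)\Bigr\}
\]
realizes $\NU_{t,\dir+}$ as a countable union of countable sets. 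Passing to a single full-probability event valid for every $t \in \R$ would be handled by running the above at rational $t$ and using continuity of the geodesic structure together with the time-stationarity of the Busemann process to sweep to all $t$.

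The main obstacle is infiniteness. My plan is to use that $x \mapsto W^{\dir+}(0,t;x,t)$ is a two-sided Brownian motion with drift $2\dir$ (Proposition \ref{prop:Buse_basic_properties}\ref{it:Wslope}), so its profile at any time has Brownian roughness. Plugged into the KPZ fixed point variational formula \eqref{W_var}, this Brownian initial data produces, with probability one, infinitely many shock locations in any bounded interval as one evolves forward by an infinitesimal amount of time; symmetrically, running the variational problem for $\dir+$ geodesics backwards from $(x,t)$ yields infinitely many $x$ for which the maximizer of $z \mapsto W^{\dir+}(0,s;z,s) + \Ll(z,s;x,t)$ is non-unique for every $s$ slightly less than $t$. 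Each such $x$ lies in $\NU_{t,\dir+}$. The technical core is upgrading this infiniteness from a dense set of times to every $t \in \R$ on one event; for this I would use stationarity of the Busemann increment process under shifts combined with a compactness and geodesic-continuity argument to show that the cardinality of $\NU_{t,\dir+}$ cannot drop from infinite to finite as $t$ varies continuously.
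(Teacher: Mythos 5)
The paper does not prove this lemma; it is stated purely as a citation of \cite[Theorem~6.1]{Busa-Sepp-Sore-22a} and folded into the definition of the global full-probability event $\Omega_1$. So there is no in-paper argument against which to match your attempt; you would be reproving the external reference from scratch, and your sketch should be judged on those terms.

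On its own merits, your countability half is sound and essentially canonical, with two remarks. First, the ordering $g_{(x,t)}^{\dir+,R}(s)\le g_{(y,t)}^{\dir+,L}(s)$ for $x<y$ is already recorded as Proposition~\ref{prop:g_basic_prop}\ref{itm:DL_SIG_mont_x}; there is no need to rederive it by an exchange argument. Second, once that ordering holds on a single full-probability event, your disjoint-open-intervals count is purely deterministic and applies to every $t$ simultaneously, so the rational-time sweep and the appeal to time-stationarity are unnecessary for this half. To justify that the union over $n\in\N$ in your display exhausts $\NU_{t,\dir+}$, you should invoke either the no-interior-bubble property (Lemma~\ref{lem:no_bubbles}) or the characterization of $\NU_{\dir\pm}$ from \cite{Bha24} quoted just after \eqref{eq:NUdef}, but that is a bookkeeping point.

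The infiniteness half is where the genuine gap lies. The passage from ``Brownian roughness of the Busemann profile'' to ``infinitely many shock locations in any bounded interval'' to ``infinitely many $x\in\NU_{t,\dir+}$'' is a physical heuristic, not an argument, and as written it targets only a single fixed $t$. More seriously, your proposed upgrade from a dense set of times to every $t\in\R$ on one event, via ``compactness and geodesic-continuity'' showing that ``cardinality cannot drop from infinite to finite as $t$ varies continuously,'' is unsubstantiated and does not obviously hold: the set-valued map $t\mapsto\NU_{t,\dir+}$ is highly irregular, the geodesic trees rooted on different time levels need not nest in any useful way, and there is no a priori semicontinuity of cardinality. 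Establishing infiniteness of $\NU_{t,\dir+}$ for \emph{all} $t$ simultaneously is precisely the nontrivial content of \cite[Theorem~6.1]{Busa-Sepp-Sore-22a} and would require an actual proof, not a gesture.
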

It was later shown in \cite[Theorem 5]{Bha24} that the set of these points in the plane, $\NU_\dir$ has Hausdorff dimension $\f{4}{3}$.

Choosing the rightmost or leftmost maximizer of $z \mapsto W^{\dir \sig}(0,s;z,s) + \Ll(z,s;x,t)$ for all $s < t$ produces a geodesic, but there are potentially other maximizers as well, which also give rise to semi-infinite geodesics. In this case, care has to be made to ensure that the path one builds by choosing maximizers is actually a continuous function. The general form of these geodesics is described in the following proposition.

\begin{proposition}  \cite[Theorem 5.9]{Busa-Sepp-Sore-22a}\label{prop:DL_SIG_cons_intro}
 The following hold on a single event of probability one across all initial points $(x,s) \in \R^2$, times $s < t$, directions $\dir  \in \R$, and signs $\sigg \in \{-,+\}$.
 \begin{enumerate} [label=\rm(\roman{*}), ref=\rm(\roman{*})]  \itemsep=3pt
 \item \label{itm:intro_SIG_bd} 
 All maximizers of $z \mapsto \W^{\dir \sig}(0,s;z,s)+ \Ll(z,s;x,t)$ are finite. Furthermore, as  $x,s,t$ vary over a compact set $K\subseteq \R$ with $s < t$, the set of all maximizers is bounded.
    \item \label{itm:arb_geod_cons}  
    Let $t = s_0 > s_1 > s_2 > \cdots$ be an arbitrary decreasing sequence with $\lim_{n \to \infty} s_n = -\infty$. Set $g(t) = x$, and for each $i \ge 1$, let $g(s_i)$ be \textit{any} maximizer of $z \mapsto W^{\dir \sig}(0,s_{i};z,s_{i}) + \Ll(z,s_i; g(s_{i - 1}),s_{i - 1})$ over $z \in \R$. Then, pick \textit{any} point-to-point geodesic of $\Ll$ from $(g(s_{i}),s_{i})$ to $(g(s_{i-1}),s_{i-1})$, and for $s_{i} < s < s_{i-1}$, let $g(s)$ be the location of this geodesic at time $s$. Then, regardless of the choices made at each step, the following hold.
    \begin{enumerate} [label=\rm(\alph{*}), ref=\rm(\alph{*})]
        \item \label{itm:g_is_geod} The path $g:(-\infty,t]\to \R$ is a semi-infinite geodesic.
        \item \label{itm:weight_of_geod} For all  $ s < u \le t$,
    \be \label{eqn:SIG_weight}
    \Ll(g(s),s;g(u),u) = \W^{\dir \sig}(g(s),s;g(u),u).
    \ee
    \item \label{itm:maxes} For all  $ s < u$ in $(-\infty,s]$, $g(u)$ maximizes $z \mapsto \W^{\dir \sig}(0,s;z,s) + \Ll(z,s;g(u),u) $ over $z \in \R$. 
    \item \label{itm:geo_dir} The geodesic $g$ has direction $\dir$, i.e., $g(s)/|s| \to \dir$ as $s \to -\infty$. 
    \end{enumerate}
    \item \label{itm:DL_all_SIG} For 
    $S \in \{L,R\}$, $g_{(x,t)}^{\dir \sig,S}:(-\infty,t] \to \R$ is a semi-infinite geodesic rooted at $(x,t)$ in direction $\dir$. Moreover, for any $s < u \le t$, we have that 
    \be \label{geod_LR_eq_L}
    \Ll\bigl(g_{(x,t)}^{\dir \sig,S}(s),s;g_{(x,t)}^{\dir \sig,S}(u),u\bigr) = \W^{\dir \sig}\bigl(g_{(x,t)}^{\dir \sig,S}(s),s;g_{(x,t)}^{\dir \sig,S}(u),u\bigr),
    \ee
    and $g_{(x,t)}^{\dir \sig,S}(s)$ is the leftmost/rightmost {\rm(}depending on $S${\rm)} maximizer of \\$ z \mapsto \W^{\dir \sig}(0,s;z,s) + \Ll(z,s;g_{(x,t)}^{\dir \sig,S}(u),u)$ over $z \in \R$. In particular, if $w := g_{(x,t)}^{\dir \sig,S}(u)$, then $g_{(x,t)}^{\dir \sig,S}(s) = g_{(w,u)}^{\dir \sig,S}(s)$.
    \item \label{itm:DL_LRmost_geod} 
    The path $g_{(x,t)}^{\dir \sig,L}$ is the leftmost geodesic between any two of its points, and $g_{(x,t)}^{\dir \sig,R}$ is the rightmost geodesic between any two of its points.
    \end{enumerate}
    \end{proposition}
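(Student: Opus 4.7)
The plan is to prove the items in order, leveraging the Busemann process properties in Proposition \ref{prop:Buse_basic_properties} as the main input. For Item (i), the boundedness of the maximizer set follows from comparing the linear drift of the Busemann function with the parabolic decay of the directed landscape. By Proposition \ref{prop:Buse_basic_properties}\ref{it:Wslope}, $z \mapsto W^{\dir\sig}(0,s;z,s) = 2\dir z + o(|z|)$ almost surely, while standard one-point tail bounds from \cite{Directed_Landscape} yield an upper bound of the form $\Ll(z,s;x,t) \le -(z-x)^2/(t-s) + O(\log(1+|z|))$. Their sum diverges to $-\infty$ as $|z| \to \infty$, giving finiteness; uniform control over compact parameters $(x,s,t)$ extends this to the claimed compact-set boundedness.

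For Items (ii)(a)--(c), first apply the evolution formula \eqref{W_var} with terminal point $(g(s_{i-1}),s_{i-1})$: since $g(s_i)$ is a maximizer,
\[
W^{\dir\sig}(0,s_i;g(s_{i-1}),s_{i-1}) = W^{\dir\sig}(0,s_i;g(s_i),s_i) + \Ll(g(s_i),s_i;g(s_{i-1}),s_{i-1}).
\]
Additivity of $W^{\dir\sig}$ then yields $\Ll = W^{\dir\sig}$ between consecutive sequence points along $g$. Combined with the general inequality $\Ll(p;q) \le W^{\dir\sig}(p;q)$ (obtained from the reverse triangle inequality applied to the limit defining the Busemann process) and the fact that $g$ is a point-to-point geodesic on each interval $[s_i,s_{i-1}]$ by construction, one obtains $\Ll = W^{\dir\sig}$ along every pair of points on $g$, which is Item (ii)(b). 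Item (ii)(a) follows immediately: for any partition $s = t_0 < \cdots < t_k = u$, additivity of $W^{\dir\sig}$ gives $\sum_i \Ll(g(t_{i-1}),t_{i-1};g(t_i),t_i) = W^{\dir\sig}(g(s),s;g(u),u) = \Ll(g(s),s;g(u),u)$. Item (ii)(c) is then a direct consequence of the evolution formula: any other $w$ strictly dominating $g(s)$ in the variational problem with terminal point $(g(u),u)$ would violate the already established identity $W^{\dir\sig}(0,s;g(u),u) = W^{\dir\sig}(0,s;g(s),s) + \Ll(g(s),s;g(u),u)$.

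The main obstacle is Item (ii)(d), establishing that $g$ has direction $\dir$. The plan is a sandwiching argument via monotonicity in the direction parameter. By Proposition \ref{prop:Buse_basic_properties}\ref{itm:DL_Buse_gen_mont}, for $\dir_1 < \dir < \dir_2$, the spatial increment $z \mapsto W^{\dir_2 +}(0,s;z,s) - W^{\dir_1 -}(0,s;z,s)$ is nondecreasing in $z$, and a standard argmax-monotonicity argument then sandwiches the chosen maximizer $g(s)$ between the leftmost maximizer for $\dir_1-$ and the rightmost maximizer for $\dir_2+$ (with the same terminal point $(x,t)$). For $\dir_i \notin \DLBusedc$, the fixed-direction results of \cite{Rahman-Virag-21}, or equivalently Proposition \ref{prop:Buse_basic_properties}\ref{it:Busliminfsup}, give that the corresponding semi-infinite geodesics have direction $\dir_i$. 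Letting $\dir_1 \uparrow \dir$ and $\dir_2 \downarrow \dir$ through a dense subset of $\R \setminus \DLBusedc$ then squeezes $g(s)/|s| \to \dir$ as $s \to -\infty$.

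Items (iii) and (iv) now follow. Item (iii) is immediate from (i) (existence of leftmost and rightmost maximizers) together with (ii); the consistency identity $g_{(x,t)}^{\dir\sig,S}(s) = g_{(w,u)}^{\dir\sig,S}(s)$ for $w = g_{(x,t)}^{\dir\sig,S}(u)$ holds because the variational problem at time $s$ depends only on the terminal point $(w,u)$ and not on the history. For Item (iv), suppose that a geodesic $\wt g$ between two points $(g^{\dir\sig,L}_{(x,t)}(s),s)$ and $(g^{\dir\sig,L}_{(x,t)}(u),u)$ on $g^{\dir\sig,L}_{(x,t)}$ lies strictly to the left of $g^{\dir\sig,L}_{(x,t)}$ at some intermediate time $r$. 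Using Item (ii) to produce a semi-infinite geodesic through $(\wt g(r),r)$ and concatenating it with $g^{\dir\sig,L}_{(x,t)}|_{[r,t]}$ yields a backwards path from $(x,t)$ whose value at time $r$ is strictly less than $g^{\dir\sig,L}_{(x,t)}(r)$, and the intermediate location $\wt g(r)$ is then a maximizer of the variational problem defining $g^{\dir\sig,L}_{(x,t)}(r)$, contradicting the leftmost-maximizer selection. The rightmost case is symmetric.
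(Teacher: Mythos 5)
This statement is cited directly from \cite[Theorem 5.9]{Busa-Sepp-Sore-22a} and is not proved in the present paper, so there is no ``paper's own proof'' to compare against. However, the paper's Appendix \ref{appx:technical} does prove closely analogous statements for general eternal solutions $b$ --- Lemma \ref{lem:L_b}, Lemma \ref{lem:geodesics_from_b}, and Lemma \ref{lem:dir_from_global} --- which specialize to the Busemann case, and these give a natural point of comparison.

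Your proof of Items (ii)(a)--(c), (iii), and (iv) follows essentially the same route as the paper's Lemma \ref{lem:L_b} and Lemma \ref{lem:geodesics_from_b}: extract the inequality $\Ll \le W^{\dir\sig}$ (the paper obtains it from the evolution formula by testing $z$ at the initial point, which is a slightly cleaner derivation than invoking the reverse triangle inequality on the defining limit, since the latter requires a continuity argument to handle the left/right-limit versions $W^{\dir\pm}$), then bootstrap equality from consecutive sequence points to all pairs of points on $g$ via additivity of $W$ and the metric composition property, and deduce the maximizer claims and leftmost/rightmost ordering from that equality. This is sound.

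Where you genuinely diverge from the paper's toolkit is Item (ii)(d), the directedness of $g$. You propose a sandwiching argument via the monotonicity of the Busemann process in the direction parameter, bracketing $g(s)$ between $\dir_1-$ and $\dir_2+$ geodesics with $\dir_i \notin \DLBusedc$ and appealing to the fixed-direction results of \cite{Rahman-Virag-21}. This is a legitimate route (and is indeed how the extension from fixed directions to all directions is structured in \cite{Busa-Sepp-Sore-22a}), but it is not self-contained: it presupposes directedness of geodesics in fixed directions, and it also requires the argmax-monotonicity step to be applied with care, since $g$ is built from arbitrary maximizer choices rather than leftmost/rightmost ones --- you do handle this implicitly by invoking your already-proved (ii)(c) to reduce to a single variational problem with terminal point $(x,t)$. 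By contrast, the paper's Lemma \ref{lem:dir_from_global} proves directedness directly from the slope condition $b(z,s) \approx 2\dir z$ and the parabolic modulus-of-continuity bound of Lemma \ref{lem:Landscape_global_bound}, with an explicit computation showing the maximizer must lie in $[(\dir - \ve^{1/4})|s|, (\dir + \ve^{1/4})|s|]$. That argument is longer but entirely self-contained and applies to arbitrary eternal solutions, which the sandwich argument does not.

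For Item (i), your sketch is the right idea and matches the paper's Lemma \ref{lem:bxs_bd}, though ``one-point tail bounds'' is the wrong phrase: for simultaneity over uncountably many parameters one needs the global modulus of continuity of Lemma \ref{lem:Landscape_global_bound}, not one-point estimates; you do acknowledge the need for uniform control at the end.
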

\begin{definition}
We refer to the  geodesics constructed  in Proposition \ref{prop:DL_SIG_cons_intro}\ref{itm:arb_geod_cons} as $\dir \sig$ \textit{Busemann geodesics}, or simply {\it $\dir \sig$ geodesics}. 
\end{definition} 

An analogue of Proposition \ref{prop:DL_SIG_cons_intro} holds for geodesics constructed from a general eternal solution $b$. This is proved in Lemma \ref{lem:dir_from_global} of Appendix \ref{appx:technical}.

Equation \eqref{eqn:SIG_weight} tells us that whenever $(z,s)$ lies along a $\dir \sig$ Busemann geodesic rooted at $(x,t)$, we have 
\[
\Ll(z,s;x,t) = W^{\dir \sig}(z,s;x,t).
\]
The following states that the converse also holds.
\begin{lemma} \cite[Lemma 5.14]{Busa-Sepp-Sore-22a} \label{lem:Buse_eq}
Let $x \in \R, s < t$, $\dir \in \R$, and $\sigg \in \{-,+\}$. Then,
$
\Ll(z,s;x,t) = W^{\dir \sig}(z,s;x,t)$
if and only if $g(s) = z$ for some $\dir \sigg$ Busemann geodesic rooted at $(x,t)$.
\end{lemma}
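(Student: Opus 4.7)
The plan is to prove the two directions separately. The forward implication, that $g(s) = z$ for some $\dir\sigg$ Busemann geodesic rooted at $(x,t)$ implies $\Ll(z,s;x,t) = W^{\dir\sig}(z,s;x,t)$, is immediate from Proposition \ref{prop:DL_SIG_cons_intro}\ref{itm:arb_geod_cons}\ref{itm:weight_of_geod}, applied with $u = t$ and the fact that $g(t) = x$.

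For the converse, assume $\Ll(z,s;x,t) = W^{\dir\sig}(z,s;x,t)$. I first show that $z$ is a maximizer of the map $w \mapsto W^{\dir\sig}(0,s;w,s) + \Ll(w,s;x,t)$. By additivity (Proposition \ref{prop:Buse_basic_properties}\ref{itm:DL_Buse_add}) applied at the intermediate point $(z,s)$,
\[
W^{\dir\sig}(0,s;x,t) = W^{\dir\sig}(0,s;z,s) + W^{\dir\sig}(z,s;x,t) = W^{\dir\sig}(0,s;z,s) + \Ll(z,s;x,t),
\]
where the second equality uses the hypothesis. On the other hand, by the evolution equation \eqref{W_var},
\[
W^{\dir\sig}(0,s;x,t) = \sup_{w \in \R}\{W^{\dir\sig}(0,s;w,s) + \Ll(w,s;x,t)\}.
\]
Comparing the two identities shows that $w = z$ achieves this supremum.

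Now I construct the required $\dir\sig$ Busemann geodesic via Proposition \ref{prop:DL_SIG_cons_intro}\ref{itm:arb_geod_cons}. Choose an arbitrary decreasing sequence $t = s_0 > s_1 > s_2 > \cdots$ with $s_1 = s$ and $s_n \to -\infty$. Set $g(s_0) = x$ and $g(s_1) = z$; the previous paragraph shows that $g(s_1)$ is a valid choice of maximizer at step $1$. For $i \ge 2$, choose $g(s_i)$ to be any maximizer of $w \mapsto W^{\dir\sig}(0,s_i;w,s_i) + \Ll(w,s_i;g(s_{i-1}),s_{i-1})$ (such maximizers exist and are finite by Proposition \ref{prop:DL_SIG_cons_intro}\ref{itm:intro_SIG_bd}), and interpolate by point-to-point geodesics of $\Ll$ between consecutive points. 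Proposition \ref{prop:DL_SIG_cons_intro}\ref{itm:arb_geod_cons}\ref{itm:g_is_geod} then guarantees that the resulting path $g:(-\infty,t] \to \R$ is a $\dir\sig$ Busemann geodesic rooted at $(x,t)$, and by construction $g(s) = g(s_1) = z$, completing the proof.

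The only nontrivial content is the first paragraph of the converse, which reduces the problem to a maximizer condition; the rest is a direct invocation of the geodesic construction that is already in place. I do not expect any substantial obstacle, as all the required ingredients (additivity, evolution, and the flexibility in constructing Busemann geodesics from arbitrary maximizers) have been established.
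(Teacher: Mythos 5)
The paper cites this lemma from \cite{Busa-Sepp-Sore-22a} (Lemma 5.14 there) and does not reproduce a proof, so there is no in-paper argument to compare against. Your proof is correct and self-contained given the preliminaries stated in this paper: the forward direction is exactly the weight identity in Proposition \ref{prop:DL_SIG_cons_intro}\ref{itm:arb_geod_cons}\ref{itm:weight_of_geod} applied with $u = t$, and the converse correctly combines additivity and the evolution equation from Proposition \ref{prop:Buse_basic_properties} to identify $z$ as a maximizer, then constructs the desired geodesic via the flexibility of Proposition \ref{prop:DL_SIG_cons_intro}\ref{itm:arb_geod_cons} with $s_1 = s$ and $g(s_1) = z$. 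I see no gaps.
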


The main result of the first author's recent work \cite{Busani_N3G} is the following. 
\begin{proposition} \cite[Theorem 1.7]{Busani_N3G} \label{prop:Busani_N3G}
On a single event of probability one, for all $\dir \in \R$, every $\dir$-directed semi-infinite geodesic is either a $\dir-$ or $\dir +$ Busemann geodesic. 
\end{proposition}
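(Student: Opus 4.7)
The plan is to reduce the N3G claim to a statement about equality in Busemann inequalities. Let $g$ be a $\xi$-directed semi-infinite geodesic rooted at $(x,t)$. By Lemma \ref{lem:Buse_eq}, $g$ being a $\xi\sig$ Busemann geodesic is equivalent to the equality
\[
\Ll(g(s),s;x,t) = W^{\xi\sig}(g(s),s;x,t) \quad \text{for every } s < t.
\]
A first observation is that $W^{\xi\sig}(z,s;x,t) \ge \Ll(z,s;x,t)$ always, obtained by setting $y=z$ in the evolution identity \eqref{W_var}. Additivity of $W^{\xi\sig}$ combined with the geodesic property of $g$ shows that the set of $s<t$ for which the above equality holds (for a fixed $\sig$) is downward-closed in a precise sense: if it holds for $s_1 < s_2$, then it holds for all $s \in [s_1, s_2]$. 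So the task is to rule out the possibility of a ``mixed'' geodesic which realizes equality with $\sig=-$ on some semi-infinite time interval and with $\sig=+$ on another.

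Second, I would use the Busemann sandwich from Proposition \ref{prop:Buse_basic_properties}\ref{it:Busliminfsup} applied to the sequence $\mbf v_n = (g(s_n), s_n)$ for $s_n \to -\infty$, which is admissible because $g$ has direction $\xi$. This gives, for every fixed pair $\mbf p, \mbf q$,
\[
W^{\xi-}(\mbf p;\mbf q) \le \liminf_{n\to\infty}\bigl[\Ll(\mbf v_n;\mbf q)-\Ll(\mbf v_n;\mbf p)\bigr] \le \limsup_{n\to\infty}\bigl[\Ll(\mbf v_n;\mbf q)-\Ll(\mbf v_n;\mbf p)\bigr] \le W^{\xi+}(\mbf p;\mbf q).
\]
Pick $\mbf p = (g(u),u)$ and $\mbf q = (x,t)$; since $g$ is a geodesic through $\mbf v_n$ and $\mbf p$, the differences on the left telescope, yielding a two-sided bound on $\Ll(g(u),u;x,t)$ by $W^{\xi\pm}(g(u),u;x,t)$ together with a telescoped Busemann term along $g$. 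The plan is to push this analysis to show that, along any subsequence where $(g(s_n),s_n)$ ``chooses a side,'' the corresponding equality $\Ll=W^{\xi\sig}$ persists forever.

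Third, I would combine the above with the coalescence structure established in \cite{Busa-Sepp-Sore-22a}: all $\xi-$ Busemann geodesics coalesce, and all $\xi+$ Busemann geodesics coalesce. If $g$ were neither a $\xi-$ nor a $\xi+$ geodesic, then we would find an increasing sequence of times $s_n\to -\infty$ and disjoint intervals on which $g$ traces a $\xi-$ geodesic, then peels off to a $\xi+$ geodesic (or vice versa). Each ``switch'' must occur at a point lying in $\NU_{\xi-}\cup \NU_{\xi+}$, and the geodesic ordering together with coalescence should force the switch points to lie between the leftmost $\xi-,L$ geodesic and the rightmost $\xi+,R$ geodesic from $(x,t)$. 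The rigidity comes from a pinching argument: the $\xi-$ and $\xi+$ families, being each individually coalescing, leave only countably many branching/splitting locations at each time slice (Lemma \ref{lem:NUcount}), so a third family of ``mixed'' geodesics cannot coexist without contradicting either the monotonicity of the Busemann process in $\xi$ or the metric composition law.

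The main obstacle is the last step: turning a hypothetical mixed geodesic into a quantitative contradiction. This requires controlling the geometry of the $\xi\pm$ geodesic trees simultaneously over an unbounded range of times, and is essentially the technical heart of \cite{Busani_N3G}. In particular, since $g$ and the associated sign assignments need to be shown to stabilize almost surely on a single event of full probability for all $\xi\in\R$ simultaneously, one needs a careful monotone coupling argument in $\xi$ (using Proposition \ref{prop:Buse_basic_properties}\ref{itm:DL_Buse_gen_mont}) together with a density argument to extend from rational to all real $\xi$, matching the setup of Proposition \ref{prop:DL_SIG_cons_intro}.
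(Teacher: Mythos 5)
This proposition is not proved in the present paper at all; it is imported verbatim from \cite{Busani_N3G} as an external input, so there is no internal proof to compare against. Your proposal is therefore being judged purely on its own merits, and as you yourself concede in the final paragraph, it is a sketch with an unresolved central step rather than a proof. Leaving that aside, there is at least one concrete technical problem in the middle of your argument. You invoke the sandwich of Proposition \ref{prop:Buse_basic_properties}\ref{it:Busliminfsup} for the pair $\mbf p = (g(u),u)$, $\mbf q = (x,t)$ with $u < t$, but that proposition is only stated for pairs of points lying on a common horizontal time level, $\mbf p = (x,t)$ and $\mbf q = (y,t)$ with $x < y$; applying it across different time levels requires an additional argument (via the evolution identity and additivity) which you have not supplied. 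Worse, if the sandwich did apply verbatim to the pair you chose, telescoping along the geodesic would give $W^{\xi-}(g(u),u;x,t) \le \Ll(g(u),u;x,t) \le W^{\xi+}(g(u),u;x,t)$, which combined with the reverse inequalities $\Ll \le W^{\xi\pm}$ from the evolution identity would force $\Ll(g(u),u;x,t) = W^{\xi-}(g(u),u;x,t)$ for every $u$ — i.e., every $\xi$-directed geodesic would be a $\xi-$ geodesic, which is false when $\xi \in \Xi$. So the inequality you are trying to use must not hold in the form you stated it, which is a signal that the step is genuinely wrong as written, not just imprecise.

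More substantively, the hard part of the N3G theorem is precisely what your third step waves at: one must exclude a hypothetical geodesic that agrees with $\xi-$ weights on one region and $\xi+$ weights on another, and the paper's framework of coalescence plus the countability of $\NU_{t,\xi\pm}$ at each time slice is not by itself enough to run the pinching argument. Your claim that switch points lie in $\NU_{\xi-}\cup\NU_{\xi+}$ and that ``a third family... cannot coexist without contradicting monotonicity or metric composition'' is a reasonable intuition, but it is not an argument; the actual proof in \cite{Busani_N3G} is a separate technical development. As written, the proposal identifies a promising reduction (Lemma \ref{lem:Buse_eq} plus the upward-closedness in $s$ of the equality set, which you mislabel ``downward-closed'' but describe correctly in substance) and then stops at the genuinely difficult part. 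It should not be regarded as a proof of the statement.
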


In the following, we also make use of the following results from \cite{Busa-Sepp-Sore-22a}. The first two results are known as \textbf{ordering of geodesics.} 
\begin{proposition} \cite[Theorem 6.3, 7.3]{Busa-Sepp-Sore-22a} \label{prop:g_basic_prop}
The following hold on a single event of full probability. 
\begin{enumerate} [label=\rm(\roman{*}), ref=\rm(\roman{*})]  \itemsep=3pt
    \item \label{itm:DL_mont_dir} For $s < t$, $x \in \R$, $\dir_1 < \dir_2$, and $S \in \{L,R\}$,
    \[
    g_{(x,t)}^{\dir_1 -,S}(s) \le g_{(x,t)}^{\dir_1 +,S}(s) \le g_{(x,t)}^{\dir_2 -,S}(s) \le g_{(x,t)}^{\dir_2 +,S}(s).  
    \]
    \item \label{itm:DL_SIG_mont_x} For all $\dir \in \R$, $\sigg \in \{-,+\}$, $s < t$ and $x < y$, 
    \[
    g_{(x,t)}^{\dir \sig,R}(s) \le g_{(y,t)}^{\dir \sig,L}(s).
    \]
    \item \label{itm:DL_SIG_conv_x} For all $\dir \in \R$, $\sigg \in \{-,+\}$, $S \in \{L,R\}$, $x \in \R$, and $s < t$,
    \be \label{371}
    \lim_{w \nearrow x} g_{(w,t)}^{\dir \sig,S}(s) = g_{(x,t)}^{\dir \sig,L}(s),\qquad\text{and}\qquad \lim_{y \searrow x} g_{(y,t)}^{\dir \sig,S}(s) = g_{(x,t)}^{\dir \sig,R}(s),
    \ee
    \item \label{itm:eventually_less} If $\dir \in \DLBusedc$, then for any $\dir -$ geodesic $g_1$ and any $\dir +$ geodesic $g_2$ (rooted any any two points), there exists $S \in \R$ so that for all $s \le S$,
    \[
    g_1(s) < g_2(s).
    \]
\end{enumerate}
\end{proposition}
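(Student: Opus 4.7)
The plan is to prove the four items in sequence, as each relies on the previous plus the Busemann process structure from Proposition \ref{prop:Buse_basic_properties}. The main ingredient throughout is the basic argmax-monotonicity principle: if $f_1,f_2,L:\R\to\R$ are such that $f_2-f_1$ is nondecreasing, then the leftmost (resp.\ rightmost) maximizer of $f_2+L$ is $\ge$ the leftmost (resp.\ rightmost) maximizer of $f_1+L$.

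For item \ref{itm:DL_mont_dir}, I would start from the horizontal-line monotonicity in Proposition \ref{prop:Buse_basic_properties}\ref{itm:DL_Buse_gen_mont}. Rewriting, it says that for $\dir_1<\dir_2$ and any signs $\sig_1,\sig_2$ with $(\dir_1,\sig_1)\le(\dir_2,\sig_2)$ in the natural order, the function $z\mapsto W^{\dir_2\sig_2}(0,s;z,s)-W^{\dir_1\sig_1}(0,s;z,s)$ is nondecreasing on $\R$ (take $x\to-\infty$ and $y=z$, using additivity). Then the definition of $g_{(x,t)}^{\dir\sig,S}(s)$ as the leftmost or rightmost maximizer of $z\mapsto W^{\dir\sig}(0,s;z,s)+\Ll(z,s;x,t)$ and the argmax-monotonicity lemma give the chain of inequalities directly.

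For item \ref{itm:DL_SIG_mont_x}, the key is the ``quadrangle inequality'' for $\Ll$: for $z_1<z_2$ and $x<y$,
\[
\Ll(z_1,s;y,t)+\Ll(z_2,s;x,t)\le \Ll(z_1,s;x,t)+\Ll(z_2,s;y,t),
\]
which I would prove by a standard path-swap argument: concatenate geodesics $(z_1,s)\to(y,t)$ and $(z_2,s)\to(x,t)$ at their (forced) crossing point and split them differently, then invoke the definition of geodesics. Equivalently, $z\mapsto \Ll(z,s;y,t)-\Ll(z,s;x,t)$ is nondecreasing when $x<y$. Adding the common term $W^{\dir\sig}(0,s;z,s)$ and applying argmax-monotonicity with a small twist (to turn ``rightmost$\le$leftmost'' into the statement) yields $g^{\dir\sig,R}_{(x,t)}(s)\le g^{\dir\sig,L}_{(y,t)}(s)$; obtaining a non-strict inequality between a rightmost and a leftmost argmax from a merely nondecreasing difference is the subtle step, and I would handle it by noting that if strict inequality failed at some common value, both maximizer sets would overlap and the non-strictly ordered choices are consistent.

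For item \ref{itm:DL_SIG_conv_x}, monotonicity in the initial point (from item \ref{itm:DL_SIG_mont_x}) gives, for each fixed $s<t$, that $w\mapsto g_{(w,t)}^{\dir\sig,S}(s)$ is nondecreasing. Hence the one-sided limits in \eqref{371} exist. To identify them, I would combine continuity of $\Ll$ and of $W^{\dir\sig}$ (Proposition \ref{prop:Buse_basic_properties}\ref{itm:general_cts}) with boundedness of maximizers on compact sets (Proposition \ref{prop:DL_SIG_cons_intro}\ref{itm:intro_SIG_bd}) to show that the limiting path is a semi-infinite $\dir\sig$ geodesic rooted at $(x,t)$, by a diagonal argument on a countable dense set of times. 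The left limit is the leftmost such geodesic (by the monotonicity and leftmost-ness of each approximant), giving $g^{\dir\sig,L}_{(x,t)}$; symmetrically for the right limit.

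For item \ref{itm:eventually_less}, which I expect to be the main obstacle, the starting point is that $\dir\in\DLBusedc$ means there exist $p,q\in\R^2$ with $W^{\dir-}(p;q)<W^{\dir+}(p;q)$. Suppose for contradiction that some $\dir-$ geodesic $g_1$ and some $\dir+$ geodesic $g_2$ (rooted at arbitrary points) satisfy $g_1(s_n)=g_2(s_n)$ along a sequence $s_n\to-\infty$. By Proposition \ref{prop:DL_SIG_cons_intro}\ref{itm:arb_geod_cons}(b), evaluating $\Ll$ along these geodesics equals the corresponding Busemann function. Using additivity of each Busemann function and following the concatenations, this forces $W^{\dir-}$ and $W^{\dir+}$ to coincide at differences of points along the geodesics, and, via a Brownian-resampling / shift argument together with the monotonicity from item \ref{itm:DL_mont_dir}, one can propagate this equality to any pair $p,q$ on suitable horizontal lines, contradicting $\dir\in\DLBusedc$. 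The delicate point is that the two geodesics may be rooted far from $p$ and $q$, so some care is needed to transport the Busemann discrepancy to the region where the coincidence occurs; this can be done using the joint stationarity of the Busemann process and continuity of the Busemann functions in the base points.
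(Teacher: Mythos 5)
This proposition is cited verbatim from~\cite[Theorems 6.3, 7.3]{Busa-Sepp-Sore-22a}; the paper gives no proof, so you are reconstructing the cited result from scratch. Your plan for items \ref{itm:DL_mont_dir} and \ref{itm:DL_SIG_conv_x} is standard and correct: Busemann difference monotonicity plus argmax-monotonicity for (i), and monotone limits plus identification via Lemma~\ref{lem:cmcf}-type arguments for (iii).

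For item \ref{itm:DL_SIG_mont_x}, however, the step you flag as ``subtle'' is in fact a genuine gap in your argument. Planar submodularity (the quadrangle inequality) combined with argmax-monotonicity yields only that a \emph{rightmost} maximizer for $(x,t)$ is $\le$ a \emph{rightmost} maximizer for $(y,t)$, and similarly for leftmost; it does not by itself give the cross inequality rightmost-for-$(x,t)$ $\le$ leftmost-for-$(y,t)$. Tracing through the equality case as you suggest, if $a := g^{\dir\sig,R}_{(x,t)}(s) > b := g^{\dir\sig,L}_{(y,t)}(s)$, one deduces that $a$ and $b$ are each maximizers of both objective functions, and this is \emph{not} a contradiction on its own: $a$ can sit to the right of the leftmost maximizer of $F_y$, and $b$ to the left of the rightmost maximizer of $F_x$, without violating the definitions. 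To close the gap you need the additional geometric input that the rightmost $\dir\sig$ geodesic from $(x,t)$ and the leftmost $\dir\sig$ geodesic from $(y,t)$ must cross at some $(z^*,r^*)$ with $s<r^*<t$, then use the consistency in Proposition~\ref{prop:DL_SIG_cons_intro}\ref{itm:DL_all_SIG} to write both restrictions to $[s,r^*]$ as Busemann geodesics rooted at $(z^*,r^*)$, and finally invoke the no-interior-bubble / ``no $\NU$ points strictly below the root'' fact (as in Lemma~\ref{lem:geod_unique}) to conclude those two geodesics coincide, contradicting $a>b$.

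For item \ref{itm:eventually_less}, your high-level idea---that a $\dir-$ and a $\dir+$ geodesic meeting at times $s_n\to-\infty$ forces equality of $W^{\dir-}$ and $W^{\dir+}$ on too large a set---is correct, but ``Brownian-resampling / shift argument'' is not what is needed and suggests you haven't identified the actual mechanism. The right ingredients are: (a) coalescence of all $\dir\sig$ geodesics rooted in a compact set (Proposition~\ref{prop:DL_all_coal}), which lets you reduce to one pair of canonical trajectories; (b) writing $D^\dir_0(x)=W^{\dir+}(0,0;z_s,s)-W^{\dir-}(0,0;z_s,s)$ whenever $s$ is small enough that the $\dir-$ and $\dir+$ geodesics from $(0,0)$ and $(x,0)$ have all coalesced \emph{and} the two trajectories meet at $z_s$, so that the right-hand side is independent of $x$; and (c) the fact that $D^\dir_0(x)\to\pm\infty$ as $x\to\pm\infty$ when $\dir\in\DLBusedc$ (Lemma~\ref{lem:dif_to_infinity}), which contradicts $D^\dir_0$ being constant. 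Item \ref{itm:DL_mont_dir} then upgrades ``not eventually equal'' to the required strict inequality for small $s$. Your sketch gestures at this but attributes the final contradiction to a stationarity argument rather than to Lemma~\ref{lem:dif_to_infinity}, which is the input that actually closes the proof.
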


We say that two semi-infinite geodesics $g_1$ and $g_2$ (rooted at points $(x,t)$ and $(y,u)$; potentially with $u \neq t$) \textbf{coalesce} if there exists $S \in \R$ so that $g_1(s) = g_2(s)$ for all $s \le S$. If $S$ is the maximal such time, then we call $S$ the \textbf{coalescence time}, and $(S,g_1(S))$ the \textbf{coalescence point}. 

The next result states that geodesics with the same direction $\dir$ and sign $\sigg \in \{-,+\}$ coalesce. 

\begin{proposition} \cite[Theorem 7.1]{Busa-Sepp-Sore-22a}\label{prop:DL_all_coal}
On a single event of full probability, the following hold across all directions $\dir \in \R$ and signs $\sigg \in \{-,+\}$. Let $K$ be a compact set. Then, there exists $S < \inf\{t:(x,t) \in K\}$ such that, for any $\dir \sig$ geodesics $g_1$ and $g_2$ rooted at points in $K$, we have $g_1(s) = g_2(s)$ for all $s \le S$. 
\end{proposition}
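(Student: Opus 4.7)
My plan is to reduce the uniform-in-$K$ statement to pairwise coalescence of $\dir\sig$ geodesics, then to establish pairwise coalescence using the Busemann characterization $\Ll = W^{\dir\sig}$ along such geodesics. For the reduction, let $K \subseteq [a_1,a_2]\times[t_1,t_2]$. By the monotonicity of geodesic positions in the starting $x$-coordinate (Proposition~\ref{prop:g_basic_prop}\ref{itm:DL_SIG_mont_x}) applied at each time slice, any $\dir\sig$ geodesic $g$ rooted at $(x,t)\in K$ satisfies
\[
g^{\dir\sig,R}_{(a_1,t)}(s) \le g(s) \le g^{\dir\sig,L}_{(a_2,t)}(s),\qquad s\le t.
\]
This reduces the task to coalescence of a finite collection of ``corner'' geodesics (four corner points $(a_i,t_j)$ for $i,j\in\{1,2\}$, with a continuity-in-$t$ estimate to control intermediate starting times). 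Simultaneity in $\dir\in\R$ and $\sigg\in\{-,+\}$ is handled by fixing a countable dense set of directions together with the monotonicity in $\dir$ from Proposition~\ref{prop:g_basic_prop}\ref{itm:DL_mont_dir} and the inherent left/right-continuity of the $\dir\pm$ construction.

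\textbf{Pairwise coalescence.} Fix points $p_1=(x_1,t_1)$ and $p_2=(x_2,t_2)$ with $x_1<x_2$. The extremal pair of interest is $g_1 := g^{\dir\sig,R}_{p_1}$ and $g_2 := g^{\dir\sig,L}_{p_2}$, which satisfy $g_1\le g_2$ by the same monotonicity. Both have asymptotic direction $\dir$ (Proposition~\ref{prop:DL_SIG_cons_intro}\ref{itm:geo_dir}), so $g_2(s)-g_1(s)=o(|s|)$ as $s\to-\infty$. The central observation is that if $g_1$ and $g_2$ meet at a space-time point $(z,r)$, then both restrictions $g_1|_{(-\infty,r]}$ and $g_2|_{(-\infty,r]}$ are $\dir\sig$ geodesics rooted at $(z,r)$, hence by Proposition~\ref{prop:DL_SIG_cons_intro}\ref{itm:DL_LRmost_geod} each is sandwiched between $g^{\dir\sig,L}_{(z,r)}$ and $g^{\dir\sig,R}_{(z,r)}$. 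Off the set $\NU_{\dir\sig}$, which is countable on each time slice by Lemma~\ref{lem:NUcount}, these extremes coincide, so $g_1=g_2$ below $r$. If $(z,r)\in \NU_{\dir\sig}$ I would pick a time slightly below $r$ outside the $\NU_{\dir\sig}$-slice at that level and apply the same argument.

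\textbf{Main obstacle.} The genuinely hard step is showing that $g_1$ and $g_2$ \emph{must} meet in finite time. Assume for contradiction they stay strictly ordered, $g_1(s)<g_2(s)$ for all $s\le T$. My plan is an exchange/uniqueness argument combining three inputs: the identity $\Ll=W^{\dir\sig}$ along each geodesic (Proposition~\ref{prop:DL_SIG_cons_intro}\ref{itm:weight_of_geod}), the additivity of $W^{\dir\sig}$ (Proposition~\ref{prop:Buse_basic_properties}\ref{itm:DL_Buse_add}), and the leftmost/rightmost maximizer characterization of the $\dir\sig,L/R$ geodesics. Concretely, selecting a sequence $s_n\to-\infty$ at which $(g_1(s_n),s_n)\notin\NU_{\dir\sig}$ (permitted by countability per slice), the $\dir\sig$ geodesic rooted there is unique going backward and yet must also be consistent, in the forward direction, with a $\dir\sig$ geodesic to $(x_2,t_2)$; using that $g_2=g^{\dir\sig,L}_{p_2}$ is the \emph{leftmost} such geodesic forces equality with $g_2$ from time $s_n$ upward, contradicting the standing strict ordering. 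The main technical difficulty, which I expect to be where the bulk of the work lies, is ensuring this exchange is compatible with the sub-linear envelope $g_2-g_1=o(|s|)$ and with the choice of sign $\sig$; alternatively, a Licea--Newman-type planarity argument using the stationary Brownian structure of $x\mapsto W^{\dir\sig}(0,s;x,s)$ across the tube bounded by $g_1,g_2$ should yield the same contradiction. Once pairwise coalescence is in hand, the compact-$K$ statement follows by taking $S$ to be the minimum of the finitely many pairwise coalescence times from the corner-geodesic sandwich above.
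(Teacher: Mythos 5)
This proposition is quoted directly from \cite[Theorem 7.1]{Busa-Sepp-Sore-22a} and is not proved in the present paper, so there is no in-paper argument to compare against; what you have written is a from-scratch attempt at that external result. Two comments on the correctness of the attempt.

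First, the ``merger implies permanent coalescence'' step is correct, but your worry about $(z,r)\in\NU_{\dir\sig}$ is a red herring that you did not need to navigate around. If $g_1=g^{\dir\sig,R}_{p_1}$ and $g_2=g^{\dir\sig,L}_{p_2}$ with $g_1\le g_2$ and they touch at $(z,r)$, then the consistency statement in Proposition~\ref{prop:DL_SIG_cons_intro}\ref{itm:DL_all_SIG} gives $g_1|_{(-\infty,r]}=g^{\dir\sig,R}_{(z,r)}$ and $g_2|_{(-\infty,r]}=g^{\dir\sig,L}_{(z,r)}$; combining $g^{\dir\sig,L}_{(z,r)}\le g^{\dir\sig,R}_{(z,r)}$ with $g_1\le g_2$ forces equality below $r$, regardless of whether $(z,r)\in\NU_{\dir\sig}$. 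So the merger step needs no countability of $\NU$.

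Second, and this is the genuine gap, the ``main obstacle'' step as written is not a proof. The pivotal sentence --- that the backward geodesic rooted at $(g_1(s_n),s_n)$ ``must also be consistent, in the forward direction, with a $\dir\sig$ geodesic to $(x_2,t_2)$'' --- has no justification. There is nothing linking a point on $g_1$ to $(x_2,t_2)$; the backward geodesic from $(g_1(s_n),s_n)$ is simply $g_1|_{(-\infty,s_n]}$ (by consistency), and unless $g_1$ and $g_2$ already meet this gives you nothing about $g_2$. Backward uniqueness at non-$\NU$ points does not create any forward constraint, so the exchange closes no loop. Your fallback --- a Licea--Newman-type crossing argument exploiting the stationary Brownian structure of $x\mapsto W^{\dir\sig}(0,s;x,s)$ --- is the right class of idea, and is essentially where the real work in \cite{Busa-Sepp-Sore-22a} lives, but gesturing at it is not a substitute for carrying it out: one must actually rule out a positive density of noncoalescing geodesic pairs using translation-invariance/ergodicity and the ordering of geodesics, handle the uncountably many directions via monotonicity in $\dir$ and a countable dense family, and then deduce the uniform-over-compacts statement. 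As written, the proposal identifies the correct sandwich reduction and the correct merger lemma, but the coalescence itself, which is the substance of the theorem, is asserted rather than proved.
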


The next result, due to Bhatia, states there are no bi-infinite geodesics in the directed landscape. 
\begin{proposition}\cite[Proposition 34]{Bha24} \label{prop:bigeod_non}
With probability one, there are no bi-infinite geodesics in the directed landscape. That is, there are no continuous functions $g:\R\to\R$ such that $g|_{(-\infty,t]}$ is a semi-infinite geodesic for all $t \in \R$. 
\end{proposition}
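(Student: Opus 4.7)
The plan is to argue by contradiction. Suppose $g:\R\to\R$ is a bi-infinite geodesic for $\Ll$. The goal is to show that the bi-infinite structure forces $g$ to be simultaneously a $\dir-$ and a $\dir+$ Busemann geodesic for the backward asymptotic direction $\dir$ of $g$, after which Proposition \ref{prop:g_basic_prop}\ref{itm:eventually_less} delivers a contradiction.

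For each $t\in\R$ the restriction $g|_{(-\infty,t]}$ is a semi-infinite geodesic rooted at $(g(t),t)$. Semi-infinite geodesics in the directed landscape admit asymptotic directions (\cite{Rahman-Virag-21}; see also Proposition \ref{prop:DL_SIG_cons_intro}\ref{itm:arb_geod_cons}\ref{itm:geo_dir}), and since the restrictions for different $t$ share a common backward tail, they all carry the same direction $\dir\in\R$. The key step is then a Busemann squeeze. An extension of Proposition \ref{prop:Buse_basic_properties}\ref{it:Busliminfsup} to different times, derivable from the same-time version via \eqref{W_var} and the metric composition property of $\Ll$, gives for any sequence $\mathbf v_n=(x_n,t_n)$ with $t_n\to-\infty$ and $x_n/|t_n|\to\dir$ and any $s<t$,
\begin{align*}
W^{\dir-}(g(s),s;g(t),t) &\le \liminf_{n\to\infty}\bigl(\Ll(\mathbf v_n;g(t),t)-\Ll(\mathbf v_n;g(s),s)\bigr) \\
&\le \limsup_{n\to\infty}\bigl(\Ll(\mathbf v_n;g(t),t)-\Ll(\mathbf v_n;g(s),s)\bigr) \le W^{\dir+}(g(s),s;g(t),t).
\end{align*}
Choosing $\mathbf v_n=(g(s_n),s_n)$ with $s_n\to-\infty$, the geodesic property of $g$ makes $\Ll(\mathbf v_n;g(t),t)-\Ll(\mathbf v_n;g(s),s)=\Ll(g(s),s;g(t),t)$ constant in $n$ once $s_n<s$. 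The reverse inequality $W^{\dir\sig}(g(s),s;g(t),t)\ge\Ll(g(s),s;g(t),t)$ for $\sigg\in\{-,+\}$ follows by taking $z=g(s)$ in the supremum in \eqref{W_var}. Combining, $W^{\dir-}(g(s),s;g(t),t) = \Ll(g(s),s;g(t),t) = W^{\dir+}(g(s),s;g(t),t)$ for all $s<t$, and by Lemma \ref{lem:Buse_eq}, $g|_{(-\infty,t]}$ is simultaneously a $\dir-$ and a $\dir+$ Busemann geodesic rooted at $(g(t),t)$, for every $t\in\R$.

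When $\dir\in\DLBusedc$ this is already a contradiction: applying Proposition \ref{prop:g_basic_prop}\ref{itm:eventually_less} to $g_1=g_2=g|_{(-\infty,0]}$, viewed once as a $\dir-$ and once as a $\dir+$ geodesic, forces $g(s)<g(s)$ for $s$ sufficiently small. The principal obstacle is the complementary case $\dir\notin\DLBusedc$, where $W^{\dir-}=W^{\dir+}$ and the separation of Proposition \ref{prop:g_basic_prop}\ref{itm:eventually_less} collapses. For this regime I would invoke the flip symmetry of $\Ll$ to define a forward Busemann process with its own discontinuity set $\wt\DLBusedc$, run the analogous squeeze on $g|_{[0,\infty)}$ and its forward direction $\wt\dir$, and recover the contradiction whenever $\wt\dir\in\wt\DLBusedc$. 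The truly generic subcase where both $\dir\notin\DLBusedc$ and $\wt\dir\notin\wt\DLBusedc$ requires the finer analysis of the coalescing Busemann tree from \cite{Bha24} and is the principal difficulty in a fully self-contained proof.
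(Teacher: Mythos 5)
The paper does not prove this proposition itself; it is cited verbatim from Bhatia \cite[Proposition 34]{Bha24} and is used as an external black-box input (e.g., in Lemma \ref{p:noBub}), so there is no in-paper proof to compare against. Your proposal contains two genuine gaps. The first you acknowledge explicitly: the case where neither the backward direction $\dir$ nor the forward direction lies in the countable discontinuity set $\DLBusedc$ is left open, and this is exactly the regime Bhatia's argument must handle, since for any fixed direction the two Busemann functions coincide almost surely and your separation criterion (Proposition \ref{prop:g_basic_prop}\ref{itm:eventually_less}) collapses. This is not a residual technicality but the main content of the theorem.

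The second gap is in the ``combining'' step even when $\dir\in\DLBusedc$. Granting your squeeze, you obtain $W^{\dir-}(g(s),s;g(t),t)\le\Ll(g(s),s;g(t),t)\le W^{\dir+}(g(s),s;g(t),t)$, while the variational formula \eqref{W_var} gives $W^{\dir\sig}(g(s),s;g(t),t)\ge\Ll(g(s),s;g(t),t)$ for \emph{both} signs. Combining forces only $W^{\dir-}(g(s),s;g(t),t)=\Ll(g(s),s;g(t),t)$; for the $+$ sign the two inequalities point the same way and give nothing beyond $W^{\dir+}\ge\Ll$. So you establish only that $g|_{(-\infty,t]}$ is a $\dir-$ Busemann geodesic — which already follows from Proposition \ref{prop:Busani_N3G} without any squeeze — not that it is simultaneously a $\dir+$ geodesic, and Proposition \ref{prop:g_basic_prop}\ref{itm:eventually_less} is never triggered. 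In addition, the claimed extension of Proposition \ref{prop:Buse_basic_properties}\ref{it:Busliminfsup} to endpoints at distinct times is not a routine consequence of \eqref{W_var} and metric composition: the same-time statement is an ordered inequality conditioned on $x<y$, and Busemann monotonicity is only stated along horizontal lines (Proposition \ref{prop:Buse_basic_properties}\ref{itm:DL_Buse_gen_mont}), so the sign of the squeeze at unequal times is not controlled.
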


\subsection{Busemann difference profile}
We often make use of what we call the \textbf{Busemann difference profile.} For $\dir\in\DLBusedc$ and $t\in\R$ we define the nondecreasing function $x\mapsto  D_t^\dir(x)$ by
   \begin{equation}\label{Dfunc}
       D_t^\dir(x) = W^{\dir+}(0,t;x,t)-W^{\dir-}(0,t;x,t).
   \end{equation}
   The fact that this function is nondecreasing is a corollary of the additivity and monotonicity of Proposition \ref{prop:Buse_basic_properties}. Indeed, for $y > x$,
\begin{align*}
D_s^\dir(y) - D_s^\dir(x) = W^{\dir +}(x,s;y,s) - W^{\dir -}(x,s;y,s) \ge 0.
\end{align*}
The function $D_t^\dir$ is uniformly $0$ whenever $\dir \notin \DLBusedc$. When $\dir \in \DLBusedc$, we have the following:
\begin{lemma}\cite[Theorem 5.5(ii)]{Busa-Sepp-Sore-22a} \label{lem:dif_to_infinity}
On a single event of probability one, $\dir \in \DLBusedc$, if and only if $\lim_{x \to \pm \infty} D_t^\dir(x)= \pm \infty$.
\end{lemma}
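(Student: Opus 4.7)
The ``if'' direction is immediate from the definition of $\DLBusedc$: if $\dir \notin \DLBusedc$, then $W^{\dir-}(p;q) = W^{\dir+}(p;q)$ for every pair $p,q \in \R^2$, so $D_t^\dir \equiv 0$ and in particular its limits at $\pm\infty$ are $0$ rather than $\pm\infty$.

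For the ``only if'' direction, assume $\dir \in \DLBusedc$. The monotonicity discussion preceding the lemma shows $D_t^\dir$ is nondecreasing with $D_t^\dir(0)=0$, so the one-sided limits $L_+ := \lim_{x\to+\infty} D_t^\dir(x) \in [0,\infty]$ and $L_- := \lim_{x\to-\infty} D_t^\dir(x) \in [-\infty,0]$ exist. By the flip symmetry of $\Ll$ it suffices to rule out $L_+ < \infty$. The main tool I would use is the spatial translation invariance of the joint law of $\Ll$ together with the entire Busemann process $\{W^{\dir\sig}(x,t;y,t)\}_{x,y,\dir,\sig}$ under the shift $(x,y) \mapsto (x+a,y+a)$. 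By the additivity property (Proposition \ref{prop:Buse_basic_properties}\ref{itm:DL_Buse_add}),
\[
D_t^\dir(x+a) - D_t^\dir(a) = W^{\dir+}(a,t;x+a,t) - W^{\dir-}(a,t;x+a,t),
\]
which jointly in $(x,\dir)$ has the same law as $D_t^\dir(x)$, while the random set $\DLBusedc$ is itself invariant under this shift.

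I would then argue by contradiction. If $L_+ < \infty$ occurs on a positive-probability subset of $\{\dir \in \DLBusedc\}$, then on that event $D_t^\dir$ is bounded and monotone, so the increment $D_t^\dir(a+K) - D_t^\dir(a) \to 0$ almost surely as $a \to \infty$ for each fixed $K>0$. Distributional shift-equivalence then forces $D_t^\dir \equiv 0$ on a positive-probability subset of $\{\dir \in \DLBusedc\}$, contradicting the very definition of $\DLBusedc$.

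The main obstacle is that $\DLBusedc$ is a random set, so the ``condition on $\dir \in \DLBusedc$'' step needs to be made precise, e.g.\ via a Palm measure for the point process of jumps of $\dir \mapsto W^{\dir\pm}$ at the time-$t$ slice, noting that both the shift invariance in $x$ and the definition of $\DLBusedc$ pass to the Palm version. A more geometric alternative uses the eventual non-coalescence of $\dir-$ and $\dir+$ semi-infinite geodesics at exceptional directions (Proposition \ref{prop:g_basic_prop}\ref{itm:eventually_less}): a bounded $L_+$ would force $W^{\dir-}(0,t;\cdot,t)$ and $W^{\dir+}(0,t;\cdot,t)$ to differ only by a constant on a far-right half-line, which via Lemma \ref{lem:Buse_eq} and the variational description of $\dir\pm$ geodesics would make the $\dir-$ and $\dir+$ semi-infinite geodesics rooted sufficiently far to the right coincide, contradicting eventual non-coalescence on $\DLBusedc$. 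Either route should yield the desired limit.
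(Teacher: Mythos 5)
The paper does not prove this lemma; it is imported verbatim from \cite[Theorem 5.5(ii)]{Busa-Sepp-Sore-22a}, so there is no internal proof to compare against.

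Your handling of the trivial direction is correct. For the hard direction, the geometric route (your Approach B) has a concrete gap: you claim that a finite $L_+ := \lim_{x\to\infty}D_t^\dir(x)$ would force $W^{\dir-}(0,t;\cdot,t)$ and $W^{\dir+}(0,t;\cdot,t)$ to differ only by a constant on a far-right half-line. That does not follow: a bounded, nondecreasing, continuous function need not become eventually constant; it can have a cofinal (but Lebesgue-null) set of points of increase with total increment $L_+ < \infty$, as a Cantor-type function does. In that scenario $D_t^\dir$ never stabilizes, so there is no half-line of starting points on which the $\dir-$ and $\dir+$ Busemann functions agree up to a constant, and the appeal to Proposition \ref{prop:g_basic_prop}\ref{itm:eventually_less} has nothing to work with. (Even if the constancy held, you would still need to propagate the agreement of $W^{\dir-}$ and $W^{\dir+}$ to earlier time slices to conclude the geodesics coincide everywhere; that is a second, unaddressed step.) Your shift-invariance/Palm route (Approach A) is the right circle of ideas, but the step you yourself flag as the ``main obstacle'' is where the entire content lies: equality in law of the full $\dir$-indexed process under spatial shifts does not directly yield a statement about a specific random direction $\dir(\omega) \in \DLBusedc(\omega)$, and the naive version is blocked because for each fixed $\dir$ one already has $D_t^\dir \equiv 0$ almost surely, so no Fubini or fixed-direction argument can reach the simultaneous-over-$\dir$ statement. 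A genuine Palm-type disintegration, or some other structural input such as what the cited reference uses, is required and is not supplied. As written, Approach B fails and Approach A is a plan rather than a proof.
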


The next result characterizes the sets $\Split^L_{t,\dir}, \Split^M_{t,\dir}$ and $\Split^R_{t,\dir}$ from \eqref{Split_sdir} via the nondecreasing function $D_t^\dir$. Theorem 8.2 in \cite{Busa-Sepp-Sore-22a}, states that 
\[
\Split_{t,\dir}^L \cup \Split_{t,\dir}^R = \{x \in \R: D_t^\dir(x - \ve) < D_t^\dir(x + \ve) \text{ for all }\ve > 0\},
\]
but the proof of this theorem actually shows the following stronger result:
\begin{lemma} \cite[Theorem 8.2]{Busa-Sepp-Sore-22a} \label{l:equ}
    On a single event of full probability, for every $\dir\in\DLBusedc$ and $t\in\R$,
    \begin{equation}
        \begin{aligned}
            \Split^L_{t,\dir}&=\{x\in\R:D^\dir_t(x-\varepsilon) < D^\dir_t(x), \,\, \forall \varepsilon>0\}\\
            \Split^R_{t,\dir}&=\{x\in\R:D^\dir_t(x) < D^\dir_t(x+\varepsilon), \,\, \forall \varepsilon>0\}.
        \end{aligned}
    \end{equation}
    In particular,
    \[
    \Split^M_{t,\dir} = \{x\in\R:D^\dir_t(x-\varepsilon) < D^\dir_t(x) < D^\dir_t(x + \ve), \,\, \forall \varepsilon>0\}.
    \]
\end{lemma}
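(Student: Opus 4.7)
The plan is to combine the additivity identity
\begin{align*}
D_t^\dir(x) - D_t^\dir(x-\ve) = W^{\dir+}(x-\ve,t;x,t) - W^{\dir-}(x-\ve,t;x,t)
\end{align*}
from Proposition~\ref{prop:Buse_basic_properties}\ref{itm:DL_Buse_add}, with the variational representation of Busemann functions through leftmost $\dir\sig$ geodesics. By symmetry it suffices to treat the characterization of $\Split^L_{t,\dir}$; the $\Split^R$ case is analogous with rightmost geodesics and points to the right of $x$. The starting observation is that for any $s<t$,
\begin{align*}
W^{\dir\sig}(0,s;x,t) = W^{\dir\sig}(0,s;g^{\dir\sig,L}_{(x,t)}(s),s) + \Ll(g^{\dir\sig,L}_{(x,t)}(s),s;x,t),
\end{align*}
and additivity converts the Busemann differences at time $t$ into expressions at the earlier time $s$.

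For the forward inclusion $\Split^L_{t,\dir} \subseteq \{x:D^\dir_t(x-\ve)<D^\dir_t(x)\,\forall\ve>0\}$, fix $x\in\Split^L_{t,\dir}$ and $\ve>0$. Coalescence of same-sign families (Proposition~\ref{prop:DL_all_coal}), combined with the eventual strict separation of $\dir-$ and $\dir+$ families for $\dir\in\DLBusedc$ (Proposition~\ref{prop:g_basic_prop}\ref{itm:eventually_less}), allows us to choose $s$ so negative that all $\dir-$ Busemann geodesics rooted at points of $[x-\ve,x]\times\{t\}$ pass through a common location $u^-$ at time $s$, all $\dir+$ Busemann geodesics pass through a common $u^+$, and $u^- < u^+$ strictly. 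Cancellation of the shared $W^{\dir\sig}(0,s;u^{\sig},s)$ terms gives
\begin{align*}
D_t^\dir(x) - D_t^\dir(x-\ve) = \bigl[\Ll(u^+,s;x,t) - \Ll(u^+,s;x-\ve,t)\bigr] - \bigl[\Ll(u^-,s;x,t) - \Ll(u^-,s;x-\ve,t)\bigr].
\end{align*}
Writing $h(y):=\Ll(u^+,s;y,t) - \Ll(u^-,s;y,t)$, the RHS is $h(x)-h(x-\ve)$. Non-negativity follows from the monotonicity of $D_t^\dir$. Strict positivity will follow from the fact that $h$ is strictly increasing on $[x-\ve,x]$ whenever $u^-<u^+$: the two strict inequalities $\Ll(u^-,s;x,t)<W^{\dir+}(u^-,s;x,t)$ and $\Ll(u^+,s;x-\ve,t)<W^{\dir-}(u^+,s;x-\ve,t)$ furnished by Lemma~\ref{lem:Buse_eq} (because $u^\mp$ is not a Busemann maximizer of the opposite sign when coalescence forces the opposite-sign maximizer set to be the singleton $\{u^\pm\}$) rule out the flat scenario by chaining through additivity.

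For the reverse inclusion, argue contrapositively: suppose $x\notin\Split^L_{t,\dir}$, so there exists $s_0 < t$ with $g^{\dir-,L}_{(x,t)}(s_0)=g^{\dir+,L}_{(x,t)}(s_0)=u$. Then $u$ is simultaneously the leftmost maximizer of both variational problems defining $W^{\dir\pm}(0,s_0;x,t)$, and the same cancellation yields $D_t^\dir(x)=D_{s_0}^\dir(u)$. Using the monotonicity and left-continuity of $y\mapsto g^{\dir\sig,L}_{(y,t)}(s_0)$ at $x$ (Proposition~\ref{prop:g_basic_prop}\ref{itm:DL_SIG_mont_x}-\ref{itm:DL_SIG_conv_x}), we deduce that for small enough $\ve>0$ the leftmost $\dir\pm$ geodesics from every $(y,t)$ with $y\in[x-\ve,x]$ continue to route through $(u,s_0)$; any deviation for small $\ve$ would force a leftmost geodesic from $(y,t)$ to jump past $u$ as $y\nearrow x$, violating left-continuity. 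The same cancellation then yields $D_t^\dir(y)=D_{s_0}^\dir(u)=D_t^\dir(x)$ for all such $y$, so $D_t^\dir(x-\ve)=D_t^\dir(x)$.

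The main obstacle is the strict inequality in the forward step: non-strictness of $D_t^\dir(x)-D_t^\dir(x-\ve)\ge 0$ is automatic from monotonicity, and upgrading to strict requires carefully converting the strict Lemma~\ref{lem:Buse_eq} inequalities at $u^-,u^+$ into a positive lower bound for $h(x)-h(x-\ve)$ via additivity, ultimately reducing the obstruction to the strict separation $u^-<u^+$ supplied by $\dir\in\DLBusedc$. A secondary technical point in the reverse direction is ensuring that the coalescence of $\dir\pm$ leftmost families at $(u,s_0)$ propagates over a nontrivial left-neighborhood of $x$, which is handled by left-continuity of the leftmost-maximizer maps.
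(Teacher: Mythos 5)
The paper does not supply a proof of this lemma; it cites Theorem~8.2 of \cite{Busa-Sepp-Sore-22a} and remarks that the argument there already yields the stated refinement. There is therefore no in-paper proof to compare against, and your proposal must be judged on its own. As written, both directions contain genuine gaps.

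In the forward direction, the strict positivity of $h(x)-h(x-\ve)$ is asserted rather than proved, and the ingredients you invoke cannot prove it because none of them uses the hypothesis $x\in\Split^L_{t,\dir}$. Once $s$ is pushed far enough into the past to coalesce the $\dir-$ and $\dir+$ families over $[x-\ve,x]\times\{t\}$, the separation $u^-<u^+$ and the two strict Lemma~\ref{lem:Buse_eq} inequalities you cite hold for every $x\in\R$ whenever $\dir\in\DLBusedc$, whether or not $(x,t)$ is a splitting point. A sound deduction from these facts alone would therefore force $D_t^\dir$ to be strictly increasing on all of $\R$, contradicting Proposition~\ref{prop:Haus12}. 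And the deduction in fact fails: chaining $\Ll(u^-,s;x,t)<W^{\dir+}(u^-,s;u^+,s)+\Ll(u^+,s;x,t)$ with $\Ll(u^+,s;x-\ve,t)<-W^{\dir-}(u^-,s;u^+,s)+\Ll(u^-,s;x-\ve,t)$ yields only
\[
h(x)-h(x-\ve)>W^{\dir-}(u^-,s;u^+,s)-W^{\dir+}(u^-,s;u^+,s)\le 0,
\]
which is weaker than the trivial lower bound $h(x)-h(x-\ve)\ge 0$. Some step must genuinely exploit the disjointness of $g^{\dir-,L}_{(x,t)}$ and $g^{\dir+,L}_{(x,t)}$ on the full ray $(-\infty,t)$, and in your write-up no step does.

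In the reverse direction, the propagation of coalescence at $(u,s_0)$ to a left-neighborhood of $x$ is not a consequence of Proposition~\ref{prop:g_basic_prop}\ref{itm:DL_SIG_conv_x}. Left-continuity of $y\mapsto g^{\dir\sig,L}_{(y,t)}(s_0)$ together with the ordering $g^{\dir\sig,L}_{(y,t)}(s_0)\le u$ for $y<x$ is entirely consistent with $g^{\dir\sig,L}_{(y,t)}(s_0)<u$ for every $y<x$ and $g^{\dir\sig,L}_{(y,t)}(s_0)\nearrow u$ continuously as $y\nearrow x$; there is no ``jump'' to rule out, so the claimed contradiction never materializes. What you actually need is an argument producing some $\ve>0$ for which $(u,s_0)$ lies on both a $\dir-$ and a $\dir+$ geodesic from $(x-\ve,t)$ (or from every $(y,t)$ with $y\in[x-\ve,x]$), and left-limits of leftmost maximizers do not by themselves supply it. As a secondary point, the identity $D_t^\dir(x)=D_{s_0}^\dir(u)$ is not literally correct: additivity gives $D_t^\dir(x)=D_{s_0}^\dir(u)+\bigl(W^{\dir+}(0,t;0,s_0)-W^{\dir-}(0,t;0,s_0)\bigr)$. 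The usable conclusion is only that $D_t^\dir$ is constant on the set of $y$ whose $\dir\pm$ leftmost geodesics both route through $(u,s_0)$, and showing that this set contains an interval $[x-\ve,x]$ is exactly the missing step.
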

In Proposition \ref{theorem: local variation and splitting points}, we show that $\Split_{s,\dir} = \Split_{t,\dir}^L \cup \Split_{t,\dir}^R$. Therefore, $\Split_{t,\dir}$ is the set of points of increase of $D_t^\dir$; namely
\[
\Split_{t,\dir} = \{x\in\R:D^\dir_t(x-\varepsilon) < D^\dir_t(x + \ve), \,\, \forall \varepsilon>0\}.
\]
Along a fixed horizontal line, we know the Hausdorff dimension of the set $\Split_{t,\dir}$:
\begin{proposition} \cite[Theorem 2.10]{Busa-Sepp-Sore-22a} \label{prop:Haus12}
    Fix $t \in \R$. Then, for all $\dir \in \DLBusedc$, on a (potentially $t$-dependent, but not $\dir$-dependent) probability one event, the Hausdorff dimension of $\Split_{t,\dir}$ is $\f{1}{2}$. In particular, $\Split_{t,\dir}$ is uncountable.
\end{proposition}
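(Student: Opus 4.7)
The plan is to reduce the Hausdorff dimension computation to studying the support of the Stieltjes measure $\mu_t^\dir := dD_t^\dir$, and then establish matching upper and lower bounds of $1/2$ via Brownian-scaling properties of the Busemann profile. By Lemma \ref{l:equ}, together with the identification $\Split_{t,\dir} = \Split_{t,\dir}^L \cup \Split_{t,\dir}^R$ (appearing as Proposition \ref{theorem: local variation and splitting points}), the set $\Split_{t,\dir}$ is precisely the topological support of $\mu_t^\dir$, and Lemma \ref{lem:dif_to_infinity} guarantees that $\mu_t^\dir$ is locally finite but has unbounded total mass exactly when $\dir \in \DLBusedc$. Since $\DLBusedc$ is almost surely countable, a union bound would reduce matters to handling one exceptional direction at a time, \emph{provided} the estimates can be coupled uniformly across $\dir$; this coupling is the nontrivial part, since any deterministic $\dir$ is almost surely \emph{not} in $\DLBusedc$, so we cannot simply fix a direction and argue.

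For the upper bound $\dim \Split_{t,\dir} \le 1/2$, I would prove a local mass lower bound: for each $x \in \Split_{t,\dir}$, there exists $c_x > 0$ with
\begin{equation*}
\mu_t^\dir\bigl([x - \ve, x + \ve]\bigr) \ge c_x\, \ve^{1/2}
\end{equation*}
for all sufficiently small $\ve > 0$. Combined with local finiteness of $\mu_t^\dir$ and a standard Vitali-type covering argument, this yields $\mathcal{H}^{1/2}(\Split_{t,\dir} \cap [-N,N]) < \infty$. The mass lower bound should be deduced from Proposition \ref{prop:Buse_basic_properties}\ref{it:Wslope}, which gives that $x \mapsto W^{\dir_0}(0,t;x,t)$ is Brownian with drift for each fixed $\dir_0 \notin \DLBusedc$, together with the monotonicity in $\dir$ of Proposition \ref{prop:Buse_basic_properties}\ref{itm:DL_Buse_gen_mont}. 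Sandwiching $W^{\dir \pm}$ between such Brownian profiles at nearby $\dir_0$ transfers Brownian-scale growth of increments to the difference $W^{\dir +} - W^{\dir -}$, at least in a windowed sense.

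For the lower bound $\dim \Split_{t,\dir} \ge 1/2$, I would apply Frostman's energy method to $\mu := \mu_t^\dir|_{[-N,N]}$: it suffices to show that for every $\alpha < 1/2$,
\begin{equation*}
\E\!\left[\iint \frac{d\mu(x)\, d\mu(y)}{|x - y|^\alpha}\right] < \infty,
\end{equation*}
which yields $\dim(\text{supp}\, \mu) \ge \alpha$ on a positive-probability event, and then almost surely by taking $\alpha \nearrow 1/2$. The key ingredient is a correlation estimate $\E[\mu_t^\dir(dx)\, \mu_t^\dir(dy)] \lesssim |x - y|^{-1/2}\, dx\, dy$ on a compact window, which should follow from Brownian scaling of the Busemann profile together with the heuristic that, at exceptional $\dir$, $\mu_t^\dir$ inherits $1/2$-stable correlations analogous to those of Brownian local time at zero.

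The hard part is the simultaneity across $\DLBusedc$. Since no deterministic direction is in $\DLBusedc$ almost surely, the two dimension bounds cannot be applied directly to a fixed $\dir$; instead, one must parameterize the exceptional directions intrinsically, for instance by enumerating jumps of the monotone map $\dir \mapsto W^{\dir +}(0,t;1,t)$ ordered by magnitude on each compact window in space. The local mass lower bound and the energy estimate then need to be established in a form that respects this enumeration, so that the resulting probability-one event is common to all $\dir \in \DLBusedc$. A likely route is to work with joint laws of $W^{\dir -}, W^{\dir +}$ near each jump direction via the coupling constructed in \cite{Busa-Sepp-Sore-22a}, extracting deterministic dimension bounds from a single uniform probabilistic input; verifying this uniformity is where the main technical work will concentrate.
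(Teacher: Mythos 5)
This proposition is cited from \cite[Theorem 2.10]{Busa-Sepp-Sore-22a}; the paper does not re-prove it, so there is no in-paper argument to compare against. Judged on its own terms, your sketch has genuine gaps in all three of its main steps.

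For the upper bound you posit a pointwise mass lower bound $\mu_t^\dir([x-\ve,x+\ve]) \ge c_x \ve^{1/2}$ at \emph{every} point $x$ of the support. This is false in exactly the regime you are working in. The benchmark is Brownian local time at zero: the zero set has Hausdorff dimension $1/2$ and the local time is the canonical measure on it, yet there is no such pointwise lower bound --- there are (Hausdorff-measure-full subsets of) zeros at which the local time grows strictly slower than $\ve^{1/2}$. Upper dimension bounds for such random fractals are instead usually obtained by a first-moment covering count or by controlling the Lebesgue measure of the $\ve$-neighborhood of the set, not by lower-bounding the occupation measure at every support point.

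For the Frostman lower bound, the issue you flag at the outset is fatal as stated and you never dispose of it: for any fixed $\dir$ one has $\Pp(\dir\in\DLBusedc)=0$, so $\mu_t^\dir\equiv 0$ a.s.\ and $\E\bigl[\iint d\mu\,d\mu/|x-y|^\alpha\bigr]=0$ trivially, giving no information. Making the argument run requires something like a Palm (or size-biased) description of the difference profile $D_t^\dir$ conditioned on $\dir$ being a jump direction of the Busemann process, integrated against the random countable set $\DLBusedc$. Your final paragraph gestures at this (``parameterize the exceptional directions intrinsically\ldots'') but this is the entire substance of the proof, not a technical afterthought. The proof in \cite{Busa-Sepp-Sore-22a} goes through precisely such a route: it uses an exact-solvability input --- a distributional identification of the difference profile at exceptional directions with a Brownian-type process whose set of increase points has dimension $1/2$ --- and the simultaneity over all $\dir\in\DLBusedc$ is automatic because the identity holds for the jointly-measurable process, not via a union bound over the countable exceptional set. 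Without an explicit law for $D_t^\dir$ at a jump direction, the covering and energy arguments you outline cannot be closed.
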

\begin{remark} \label{rmk:countable}
It is  fairly straightforward to see that the set $\Split_{t,\dir}^L \triangle \Split_{t,\dir}^R$ is countably infinite. Indeed, by Lemma \ref{l:equ}, this set consists of the endpoints of the intervals $\Intr_{\alpha}^{t,\dir} = [\a^t_\alpha,\b^t_\alpha]$ such that $\a^t_\alpha < \b^t_\alpha$, and there can be only countably many such intervals in $\R$ with mutually disjoint interiors. Since $\Split_{t,\dir}= \Split_{t,\dir}^M \cup \bigl(\Split_{t,\dir}^L \triangle \Split_{t,\dir}^R\bigr)$, the set $\Split_{t,\dir}^M$ also has Hausdorff dimension $\f{1}{2}$.
\end{remark}

\subsection{Event of probability one}
For the remainder of this paper, we work on a single probability one event, which we call $\Omega_1$. Given the necessary probabilistic inputs, the results of this paper are deterministic statements. We define $\Omega_1$ as the event on which the metric composition property for the directed landscape holds, geodesics exist, intersected with the full probability events from all results in Section \ref{sec:prelim} (except Proposition \ref{prop:Haus12}, where we just take the full probability event for $t = 0$), and the full probability events from Lemmas \ref{lem:Landscape_global_bound}, \ref{lem:precompact}, \ref{lem:overlap}, \ref{lem:SIGprecompact}, and \ref{lem:bounded_maxes}.

\section{Mixed \texorpdfstring{$\dir$-}{} Busemann interfaces and the existence of bi-infinite interfaces  } \label{sec:interface_start}
In this section, we develop the fundamental algebraic properties of the mixed $\dir$-Busemann interfaces. The lemmas below are the basis for Proposition \ref{p:rest}, which give a key consistency property for these interfaces. This consistency property comes as a corollary of a semigroup property for more general interfaces, given in Lemma \ref{lem:MP}. The consistency property is then used in Lemma \ref{l:LRSP} and Proposition \ref{p: existence of bi-infinite competetion interfaces} to show the existence of bi-infinite interfaces. 

 Recall that in \eqref{eq: initial condition} for $(x_0,s) \in \R^2$ and $\xi\in \R$, we defined the mixed Busemann initial condition $f^\dir_{(x_0,s)}$. Further, recall that a function $f: \R \rightarrow \R \cup \{ -\infty \}$ is said to be an initial condition if it is upper semi-continuous and satisfies \eqref{eq:IC}.
\begin{lemma}
\label{lemma: valid initial condition}
On the event $\Omega_1$, for all $\dir\in \R$ and $(x_0,s) \in \R^2$, $f^{\dir}_{(x_0,s)}$ is an initial condition.
\end{lemma}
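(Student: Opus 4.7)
The plan is to verify the two defining properties of an initial condition: upper semi-continuity and the linear growth bound $f(z)\le c(1+|z|)$.

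For upper semi-continuity, I would observe that $f^{\dir}_{(x_0,s)}$ is in fact continuous. By the continuity of the Busemann process (Proposition \ref{prop:Buse_basic_properties}\ref{itm:general_cts}), the function $z\mapsto W^{\dir+}(x_0,s;z,s)$ is continuous on $[x_0,\infty)$ and $z\mapsto W^{\dir-}(x_0,s;z,s)$ is continuous on $(-\infty,x_0]$. The only possible discontinuity is at the transition point $z=x_0$, but by the additivity property (Proposition \ref{prop:Buse_basic_properties}\ref{itm:DL_Buse_add}) we have
\[
W^{\dir+}(x_0,s;x_0,s)=0=W^{\dir-}(x_0,s;x_0,s),
\]
so the two pieces agree there. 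Hence $f^{\dir}_{(x_0,s)}$ is continuous on all of $\R$, and in particular upper semi-continuous. Finiteness at some point is immediate since the function equals $0$ at $z=x_0$.

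For the growth bound, I would use the asymptotic slope property of the Busemann functions from Proposition \ref{prop:Buse_basic_properties}\ref{it:Wslope}, which gives, on $\Omega_1$,
\[
\lim_{|z|\to\infty}\frac{W^{\dir\sig}(0,s;z,s)}{z}=2\dir,\qquad \sigg\in\{-,+\}.
\]
By additivity,
\[
W^{\dir\sig}(x_0,s;z,s)=W^{\dir\sig}(0,s;z,s)-W^{\dir\sig}(0,s;x_0,s),
\]
so each of the two Busemann functions in the definition of $f^{\dir}_{(x_0,s)}$ is $2\dir z + o(z)$ as $|z|\to\infty$. Combined with continuity on the compact interval between, say, $-|x_0|-1$ and $|x_0|+1$, there exists a constant $c=c(\dir,x_0,s,\omega)$ such that $|f^{\dir}_{(x_0,s)}(z)|\le c(1+|z|)$ for all $z\in\R$, which gives the required one-sided bound.

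I do not expect a main obstacle here: the two properties follow directly from continuity, additivity, and asymptotic slope of the Busemann process, all of which hold on $\Omega_1$. The only subtle point worth noting is that the growth constant $c$ may depend on $\dir$, $x_0$, $s$, and $\omega$, but the definition of an initial condition in \eqref{eq:IC} only requires the existence of some such constant, so this is not an issue.
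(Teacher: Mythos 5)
Your proof is correct and takes essentially the same approach as the paper: continuity via continuity and additivity of the Busemann process (matching the pieces at $z=x_0$), finiteness trivially, and the growth bound from the asymptotic slope property of Proposition \ref{prop:Buse_basic_properties}\ref{it:Wslope}. If anything you are slightly more careful than the paper in explicitly using additivity to translate the slope statement from base point $(0,s)$ to base point $(x_0,s)$, but the substance is the same.
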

\begin{proof}
     Recall that $z \mapsto W^{\dir \sig}(x_0,s;z,s)$ is continuous. Moreover, \[
     W^{\dir+}(x_0,s;x_0,s)=W^{\dir-}(x_0,s;x_0,s)=0,
     \]
     which implies that $f^{\dir}_{(x_0,s)}$ is continuous. As Busemann functions are finite, $f^{\dir}_{(x_0,s)}$ is finite everywhere. Finally, by  Proposition \ref{prop:Buse_basic_properties}\ref{it:Wslope}, for all $(x_0,s) \in \R^2, \dir \in \R$ and $\sig \in \{+,-\}$
    \begin{align*}
    \lim_{|z|\rightarrow \infty} \frac{W^{\dir\sig}(x_0,s;z,s)}{z}=2\dir.
    \end{align*} 
    Hence, $f^\dir_{(x_0,s)}$  satisfies \eqref{eq:IC}. This completes the proof.
\end{proof}
Recall the definitions of the interfaces $\intc^{\dir-}_{(x_0,s)}, \intc^{\dir,+}_{(x_0,s)}$ from \eqref{eq:Buse_interface} and $d^\xi_{(x_0,s)}(x,t)$ and $\kpzs_{(x_0,s),t}^\dir$ in \eqref{kpzs_abbr}. We record the following two lemmas and cite results in \cite{Rahman-Virag-21}. We make the note here that the results cited in \cite{Rahman-Virag-21} give a probability one event for a fixed initial condition, while we are working with a collection of initial conditions. It suffices to use the modulus of continuity bounds for the directed landscape from Lemma \ref{lem:Landscape_global_bound} and a linear bound for the initial condition. By the asymptotic slopes and monotonicity in Proposition \ref{prop:Buse_basic_properties}\ref{itm:DL_Buse_gen_mont}-\ref{it:Wslope}, on the event $\Omega_1$, each initial condition $f_{(x_0,s)}^{\dir}$ satisfies a bound
\[
|f_{(x_0,s)}^{\dir}(x)| \le A + B|x|
\]
for $A,B > 0$ (depending on $\dir$ and $(x_0,s)$). This suffices for the following two results.   
\begin{lemma}\cite[Lemma 3.3 and Proposition 4.1]{Rahman-Virag-21}
\label{l:continuity of d}
On the event $\Omega_1$, for all $\dir\in \R, (x_0,s) \in \R^2$, and $t>s$, the function $x \mapsto d^{\dir}_{(x_0,s)}(x,t)$ is non-decreasing and continuous. 
\end{lemma}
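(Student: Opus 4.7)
The plan is to prove monotonicity and continuity separately. As shorthand, write $F(u) := \sup_{z \ge x_0}\{f(z) + \Ll(z,s;u,t)\}$ and $G(u) := \sup_{z \le x_0}\{f(z) + \Ll(z,s;u,t)\}$ where $f := f^\dir_{(x_0,s)}$, so that $d^\dir_{(x_0,s)}(u,t) = F(u) - G(u)$. A preliminary step is to show that both suprema are attained on a bounded set which can be chosen uniformly for $u$ in a compact interval. This follows from the linear bound $|f(z)| \le A + B|z|$ on the mixed Busemann initial condition (a consequence of the asymptotic slope of Busemann functions in Proposition \ref{prop:Buse_basic_properties}\ref{it:Wslope}) combined with the parabolic upper bound on $\Ll$ from Lemma \ref{lem:Landscape_global_bound}, which together force $f(z) + \Ll(z,s;u,t) \to -\infty$ as $|z| \to \infty$ uniformly in $u$ over compacts.

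For monotonicity, fix $x < y$ and choose maximizers $z_R \ge x_0$ and $z_L \le x_0$ realizing $F(x)$ and $G(y)$ respectively. Since $z_L \le x_0 \le z_R$ and $x < y$, the ``crossing'' inequality for the directed landscape,
\[
\Ll(z_L,s;x,t) + \Ll(z_R,s;y,t) \ge \Ll(z_L,s;y,t) + \Ll(z_R,s;x,t),
\]
yields $F(x) + G(y) \le F(y) + G(x)$, i.e.\ $d^\dir_{(x_0,s)}(x,t) \le d^\dir_{(x_0,s)}(y,t)$. The crossing inequality itself follows from a standard geodesic-swap argument: pick geodesics $\gamma_L: (z_L,s) \to (y,t)$ and $\gamma_R: (z_R,s) \to (x,t)$; since the endpoints are interlaced ($z_L < z_R$ but $y > x$), they intersect at some space-time point $(w,r)$. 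Splitting each geodesic at $(w,r)$ and recombining the pieces, together with the reverse triangle inequality \eqref{triangle} applied to the recombined paths, gives the claimed inequality.

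For continuity, it suffices to show that both $F$ and $G$ are continuous. Lower semi-continuity of each is immediate from being a supremum of continuous functions of $u$. For upper semi-continuity at a point $u_0$, the uniform-in-$u$ confinement of maximizers to a compact set $K$ (from the preliminary step) reduces $F$ and $G$ near $u_0$ to suprema of $(z,u) \mapsto f(z) + \Ll(z,s;u,t)$ over $K$, and this function is uniformly continuous on the compact product $K \times [u_0 - 1, u_0 + 1]$, so the sup is continuous in $u$. Hence $d^\dir_{(x_0,s)}(\cdot,t) = F - G$ is continuous. The main (mild) obstacle is verifying that the linear bound on $f^\dir_{(x_0,s)}$ and the parabolic bound on $\Ll$ hold simultaneously for every $(x_0,s,\dir)$ on the full-probability event $\Omega_1$; but this is exactly what including Lemma \ref{lem:Landscape_global_bound} and Proposition \ref{prop:Buse_basic_properties}\ref{it:Wslope} in the construction of $\Omega_1$ provides.
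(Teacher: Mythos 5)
Your proof is correct and substantially more self-contained than the paper's treatment: the paper simply cites Lemma~3.3 and Proposition~4.1 of Rahman--Vir\'ag and then, in the surrounding remarks, explains only why the cited fixed-initial-condition result extends uniformly to all $(\dir, x_0, s)$ on $\Omega_1$ (namely, the parabolic modulus of continuity for $\Ll$ from Lemma~\ref{lem:Landscape_global_bound} and the linear bound $|f^\dir_{(x_0,s)}(x)| \le A + B|x|$ coming from Proposition~\ref{prop:Buse_basic_properties}\ref{it:Wslope}). You supply the full argument from scratch. Your monotonicity proof via the geodesic-crossing (``quadrangle'') inequality is the standard mechanism and is carried out correctly: the crossing point lies strictly in $(s,t)$ since the endpoints are strictly interlaced at both time levels, and the degenerate case $z_L = z_R = x_0$ gives equality trivially. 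Your continuity argument is also sound: once the maximizers of $z \mapsto f(z) + \Ll(z,s;u,t)$ are confined to a common compact $K$ for $u$ in a compact neighborhood of $u_0$ (which your preliminary step justifies), $F$ and $G$ become maxima of a jointly continuous function over $K$, hence continuous, which is slightly stronger than the lower-semicontinuity observation you lead with and makes the latter redundant. The only item worth flagging is that the way you present it, you prove both $F$ and $G$ continuous outright, which is what you need since $d = F - G$; the lower-semicontinuity remark is a detour. Otherwise this is a clean, complete proof that matches what the paper implicitly relies on but does not write out.
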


\begin{lemma}[Interfaces are continuous]\cite[Propositions 4.4-4.6]{Rahman-Virag-21} \label{lem:interface_continuous}
On the event $\Omega_1$, for all $\dir\in \R, (x_0,s) \in \R^2$, and $t > s$, $\intc_{(x_0,s)}^{\dir,-}(t)$ and $\intc_{(x_0,s)}^{\dir,+}(t)$ are finite, and the functions
$
\intc^{\dir,\pm}_{f;(x_0,s)}: [s,\infty) \to \R
$
are continuous. 
\end{lemma}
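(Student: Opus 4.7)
Since Lemma~\ref{lemma: valid initial condition} shows $f^\dir_{(x_0,s)}$ is a valid initial condition, and Lemma~\ref{l:continuity of d} gives that $x \mapsto d^\dir_{(x_0,s)}(x,t)$ is nondecreasing and continuous for each fixed $t > s$, the approach is to reduce the statement to Propositions~4.4--4.6 of \cite{Rahman-Virag-21}. The only subtlety is that those results are proved on a probability-one event that depends on the choice of initial condition, while we want a single event $\Omega_1$ on which the conclusion holds simultaneously for all $\dir \in \R$ and $(x_0,s) \in \R^2$. I would carry out the reduction by first establishing finiteness of $\intc^{\dir,\pm}_{(x_0,s)}(t)$ and then its continuity in $t$, in each case checking that the required analytic inputs are available uniformly on $\Omega_1$.

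\textbf{Finiteness.} For fixed $t > s$, I would show that $d^\dir_{(x_0,s)}(x,t) \to -\infty$ as $x \to -\infty$ and $d^\dir_{(x_0,s)}(x,t) \to +\infty$ as $x \to +\infty$, from which finiteness of the infimum/supremum in \eqref{eq:taupm_def} is immediate. The mechanism is localization of the maximizer in \eqref{eqref:kpzs_def}: by Proposition~\ref{prop:Buse_basic_properties}\ref{it:Wslope}, $f^\dir_{(x_0,s)}$ grows linearly in $|z|$, whereas the global modulus of continuity for $\Ll$ given by Lemma~\ref{lem:Landscape_global_bound} forces $\Ll(z,s;x,t)$ to decay like $-(z-x)^2/(t-s)$ up to lower-order corrections. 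Hence any maximizer of $z \mapsto f^\dir_{(x_0,s)}(z) + \Ll(z,s;x,t)$ lies in a window centered at $x$ whose diameter is $o(|x|)$; for $|x|$ large enough this window lies entirely on one side of $x_0$, so one of the two restricted suprema in \eqref{eq:d} is strictly dominated by the other. Because the landscape modulus and the Busemann slope hold uniformly on $\Omega_1$, this localization can be performed with bounds depending continuously on $(\dir,x_0,s,t)$.

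\textbf{Continuity in $t$.} Once finiteness is in hand, continuity of $t\mapsto \intc^{\dir,\pm}_{(x_0,s)}(t)$ follows from joint continuity of $(x,t)\mapsto d^\dir_{(x_0,s)}(x,t)$ combined with the fact that this function is nondecreasing in $x$. Joint continuity in turn comes from continuity of $\Ll$ on $\Rup$, continuity of $f^\dir_{(x_0,s)}$, and the uniform confinement of maximizers obtained above: these together give continuity of both restricted suprema in the definition of $d$. The standard argument then goes as follows: if $t_n \to t$ and $y_n := \intc^{\dir,-}_{(x_0,s)}(t_n)$, the finiteness step applied on the compact interval containing the $t_n$ shows $\{y_n\}$ is bounded, so along a subsequence $y_n \to y_\infty$. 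Joint continuity of $d$ gives $d^\dir_{(x_0,s)}(y_\infty,t) = 0$, while the infimum definition of $\intc^-$ together with monotonicity of $d$ in $x$ pins $y_\infty = \intc^{\dir,-}_{(x_0,s)}(t)$, and similarly for $\intc^+$. Continuity at $t = s$ uses the boundary condition \eqref{intcat0} and the same argument, with the window of maximizers shrinking to $\{x_0\}$ as $t \searrow s$.

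\textbf{Expected main obstacle.} The only place where genuine care is needed is in ensuring the modulus of continuity estimates that underlie the maximizer localization and the joint continuity of $d$ hold on the \emph{single} event $\Omega_1$, rather than on an event that is allowed to depend on $\dir$, $x_0$, or $s$. This is precisely what the global estimates in Lemma~\ref{lem:Landscape_global_bound}, paired with the uniform linear bound on $f^\dir_{(x_0,s)}$ coming from Proposition~\ref{prop:Buse_basic_properties}\ref{itm:DL_Buse_gen_mont}--\ref{it:Wslope}, are designed to supply. Once that bookkeeping is carried through, each step follows the blueprint of \cite[Propositions~4.4--4.6]{Rahman-Virag-21} essentially verbatim, so I would not expect any genuinely new difficulty beyond verifying uniformity.
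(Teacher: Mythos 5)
Your high-level strategy — reduce to \cite[Propositions~4.4--4.6]{Rahman-Virag-21}, then observe that the only probabilistic inputs are the global modulus of continuity of $\Ll$ (Lemma~\ref{lem:Landscape_global_bound}) and a linear bound on the mixed Busemann initial data coming from Proposition~\ref{prop:Buse_basic_properties}\ref{itm:DL_Buse_gen_mont}--\ref{it:Wslope}, both available simultaneously on $\Omega_1$ — is exactly what the paper does, and your finiteness argument via parabolic confinement of the argmax is sound.

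However, your self-contained sketch of the continuity step has a genuine gap. You write that joint continuity of $d$ gives $d^\dir_{(x_0,s)}(y_\infty,t) = 0$, and that the infimum definition of $\intc^-$ together with monotonicity of $d$ in $x$ then "pins" $y_\infty = \intc^{\dir,-}_{(x_0,s)}(t)$. That last step does not follow. For a fixed $t$, because $x\mapsto d^\dir_{(x_0,s)}(x,t)$ is nondecreasing and continuous, the level set $\{x: d^\dir_{(x_0,s)}(x,t)=0\}$ equals the closed interval $[\intc^{\dir,-}_{(x_0,s)}(t),\intc^{\dir,+}_{(x_0,s)}(t)]$, which can have nonempty interior — indeed this is precisely the regime the rest of the paper is concerned with (see Theorem~\ref{thm:int1}\ref{int1:it4}, where the left and right interfaces separate and reconvene). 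So from $d(y_\infty,t)=0$ and monotonicity alone you get only $y_\infty\in[\intc^-(t),\intc^+(t)]$. In fact, an abstract nondecreasing, jointly continuous $d$ can have $t\mapsto\inf\{x:d(x,t)\ge0\}$ be discontinuous, so the purely analytic argument you sketch proves too much. The proof in \cite{Rahman-Virag-21} really does use the geodesic structure of the landscape beyond the analytic properties of $d$: one needs compactness and convergence of point-to-line geodesics as the endpoint varies and, crucially, the absence of interior geodesic bubbles (Lemma~\ref{lem:no_bubbles}) to exclude jumps. Citing \cite{Rahman-Virag-21} and verifying uniformity — as the paper does and as you say you would do — is the right move; the issue is only that the "standard argument" you interpolate at this point is not what their proof can be reduced to.
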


\begin{lemma}
\label{l:MV}
    On the event $\Omega_1$, for all $\dir \in \DLBusedc, (x_0,s) \in \R^2$, and $t>s$, if $f=f^\dir_{(x_0,s)}, \kpzs_{t} = \kpzs_{(x_0,s),t}^\dir$, then
    \begin{enumerate}[label=\textup{(\roman*)}]
    \item \label{it:Bu1}For all $x \geq \intc_{(x_0,s)}^{\dir,-}(t)$, 
    \[
     \kpzs_t(x)=\sup_{z \ge x_0}\{f(z)+\Ll(z, s;x,t)\}=\sup_{z\in\R}\{W^{\dir +}(x_0,s; z, s)+\Ll(z, s;x,t)\}=W^{\dir +}(x_0,s; x,t),
    \]
    \item\label{it:Bu2} For all $x \leq \intc_{(x_0,s)}^{\dir,+}(t)$,
    \[
     \kpzs_t(x)=\sup_{z \le x_0}\{f(z)+\Ll(z, s;x,t)\}=\sup_{z\in\R}\{W^{\dir -}(x_0,s; z, s)+\Ll(z, s;x,t)\}=W^{\dir -}(x_0,s; x,t).
    \]
    \item \label{it:Bu3}
In particular, for $\intc_{(x_0,s)}^{\dir,-}(t)\leq x\leq \intc_{(x_0,s)}^{\dir,+}(t)$,
\begin{equation}
    \kpzs_t(x)=W^{\dir +}(x_0,s; x,t)=W^{\dir -}(x_0,s; x,t).
\end{equation}
    \end{enumerate}
\end{lemma}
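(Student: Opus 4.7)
The plan is to prove (i); statement (ii) follows from a symmetric argument and (iii) is immediate from combining (i) and (ii). The two key ingredients will be the monotonicity in the direction parameter from Proposition \ref{prop:Buse_basic_properties}\ref{itm:DL_Buse_gen_mont}, which yields the pointwise comparison
\[
W^{\dir+}(x_0,s;z,s) \le W^{\dir-}(x_0,s;z,s)  \text{ for }z \le x_0,\qquad W^{\dir-}(x_0,s;z,s)\le W^{\dir+}(x_0,s;z,s) \text{ for }z \ge x_0,
\]
together with the evolution identity from Proposition \ref{prop:Buse_basic_properties}\ref{itm:Buse_KPZ_description}, which says that $W^{\dir\sig}(x_0,s;x,t)$ is itself a Lax-Oleinik sup of $z\mapsto W^{\dir\sig}(x_0,s;z,s)+\Ll(z,s;x,t)$ over all $z\in\R$.

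For (i), fix $x\ge \intc^{\dir,-}_{(x_0,s)}(t)$. By Lemma \ref{l:continuity of d}, $x\mapsto d^\dir_{(x_0,s)}(x,t)$ is continuous and nondecreasing, so $\{x:d^\dir_{(x_0,s)}(x,t)\ge 0\}=[\intc^{\dir,-}_{(x_0,s)}(t),\infty)$, and in particular $d^\dir_{(x_0,s)}(x,t)\ge 0$. By definition of $d$ this means
\[
\kpzs_t(x)=\sup_{z\in\R}\{f(z)+\Ll(z,s;x,t)\}=\sup_{z\ge x_0}\{f(z)+\Ll(z,s;x,t)\},
\]
which establishes the first equality in (i). Since $f(z)=W^{\dir+}(x_0,s;z,s)$ on $[x_0,\infty)$, this last supremum equals $\sup_{z\ge x_0}\{W^{\dir+}(x_0,s;z,s)+\Ll(z,s;x,t)\}$. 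The next step is to extend the supremum to all of $\R$. For $z\le x_0$, the direction-monotonicity combined with additivity (both in Proposition \ref{prop:Buse_basic_properties}) give
\[
W^{\dir+}(x_0,s;z,s)=-W^{\dir+}(z,s;x_0,s)\le -W^{\dir-}(z,s;x_0,s)=W^{\dir-}(x_0,s;z,s)=f(z),
\]
so for such $z$,
\[
W^{\dir+}(x_0,s;z,s)+\Ll(z,s;x,t)\le f(z)+\Ll(z,s;x,t)\le\sup_{z\le x_0}\{f(z)+\Ll(z,s;x,t)\}\le\sup_{z\ge x_0}\{f(z)+\Ll(z,s;x,t)\},
\]
the last inequality by $d^\dir_{(x_0,s)}(x,t)\ge 0$. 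Hence
\[
\sup_{z\in\R}\{W^{\dir+}(x_0,s;z,s)+\Ll(z,s;x,t)\}=\sup_{z\ge x_0}\{W^{\dir+}(x_0,s;z,s)+\Ll(z,s;x,t)\},
\]
which proves the second equality in (i). Finally, the evolution identity of Proposition \ref{prop:Buse_basic_properties}\ref{itm:Buse_KPZ_description} identifies this supremum with $W^{\dir+}(x_0,s;x,t)$, completing (i).

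Item (ii) is entirely analogous, swapping the roles of $+$ and $-$, $x_0$ as a lower versus upper truncation, and using $d^\dir_{(x_0,s)}(x,t)\le 0$ for $x\le \intc^{\dir,+}_{(x_0,s)}(t)$. Item (iii) is then immediate: if $\intc^{\dir,-}_{(x_0,s)}(t)\le x\le \intc^{\dir,+}_{(x_0,s)}(t)$, both (i) and (ii) apply, forcing $\kpzs_t(x)=W^{\dir+}(x_0,s;x,t)=W^{\dir-}(x_0,s;x,t)$. No step looks genuinely delicate here; the main thing to be careful about is using the direction-monotonicity correctly in combination with additivity to reverse the direction of the inequality when the base point is placed first versus second, and to confirm that the set $\{x:d\ge 0\}$ is indeed a closed half-line via continuity and monotonicity of $d$.
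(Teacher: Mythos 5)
Your proof is correct and follows essentially the same route as the paper's: both arguments use that $d^\dir_{(x_0,s)}(x,t)\ge 0$ for $x\ge\intc^{\dir,-}_{(x_0,s)}(t)$ to identify $\kpzs_t(x)$ with the supremum over $z\ge x_0$, substitute the definition of $f$, invoke direction-monotonicity of the Busemann process to show the values over $z\le x_0$ cannot exceed those over $z\ge x_0$, and then apply the evolution identity $W^{\dir+}(x_0,s;x,t)=\sup_{z\in\R}\{W^{\dir+}(x_0,s;z,s)+\Ll(z,s;x,t)\}$. The only cosmetic difference is that you make explicit (via Lemma \ref{l:continuity of d} and finiteness of the interface) why $\{x:d^\dir_{(x_0,s)}(x,t)\ge 0\}$ is a closed half-line, whereas the paper treats this as immediate from the definition of $\intc^-$.
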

\begin{proof}
   We prove Item \ref{it:Bu1}, and Item \ref{it:Bu2} is symmetric, while Item \ref{it:Bu3} follows immediately from the other two items. Assume that $x\geq \intc_{(x_0,s)}^{\dir,-}(t)$. Then, from the definition of the interface \eqref{eq:taupm_def}, we have 
    \begin{equation}
    \begin{aligned}
    \label{eq:RmaxR}
        \kpzs_t(x)&=\sup_{z\geq x_0}\{f(z)+\Ll(z, s;x,t)\}\geq \sup_{z \le x_0}\{f(z)+\Ll(z, s;x,t)\}\\
        &\qquad = \sup_{z\le x_0}\{W^{\dir -}(x_0, s;z,s)+\Ll(z, s;x,t)\}\geq \sup_{z\le x_0}\{W^{\dir +}(x_0,s; z,s)+\Ll(z, s;x,t)\},
        \end{aligned}
    \end{equation}
    where the second equality follows from the definition of $f = f_{(x_0,s)}^{\dir}$ \eqref{eq: initial condition}, and the inequality from the monotonicity of the Busemann functions, using 
    \[
    W^{\dir \sig}(x_0,s;z,s) = -W^{\dir \sig}(z,s;x_0,s).
    \]
    
    Moreover, since $f(z) = W^{\dir +}(x_0,s;z,s)$ for $z \ge x_0$, the first equality and last inequality in \eqref{eq:RmaxR} imply that
    \[
    \begin{aligned}
        \kpzs_t(x)&= \sup_{z\ge x_0}\{W^{\dir +}(x_0,s; z, s)+\Ll(z, s;x,t)\} \\
        &=\sup_{z\in\R}\{W^{\dir +}(x_0,s; z, s)+\Ll(z, s;x,t)\}=W^{\dir +}(x_0,s; x,t).
        \end{aligned}\qedhere
    \]
\end{proof}

Our next result shows that up to an additive constant, the function $x \mapsto \kpzs^\dir_t(x)$ can be determined from $\intc_{(x_0,s)}^{\dir, \sig}(t)$ and $f_{(\cdot,\cdot)}^\dir$.
\begin{lemma}\label{lem:4}
    On the event $\Omega_1$, for all $\dir\in\R, (x_0,s)\in\R^2$, and $\sigg \in \{-,+\}$, if $\intc_t^\sig = \intc_{(x_0,s)}^{\dir,\sig}(t)$, then
    \begin{equation}
        f_{(\intc_t^\sig,t)}^{\dir}(x) = \kpzs^\dir_{(x_0,s),t}(x)-\kpzs_{(x_0,s),t}^\dir(\intc^{\sig}_t), \qquad \forall \,\, x \in \R,\,\, t>s.
    \end{equation}
\end{lemma}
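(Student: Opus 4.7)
This lemma is essentially an algebraic corollary of Lemma \ref{l:MV}, exploiting the inequality $\intc_{(x_0,s)}^{\dir,-}(t) \le \intc_{(x_0,s)}^{\dir,+}(t)$ (which follows from the monotonicity of $x \mapsto d^\dir_{(x_0,s)}(x,t)$ in Lemma \ref{l:continuity of d}) together with the additivity of the Busemann process. I would first dispose of the trivial case $\dir \notin \DLBusedc$, in which $W^{\dir -} = W^{\dir +} =: W^\dir$, so $f^\dir_{(y,u)}(x) = W^\dir(y,u;x,u)$ for every $(y,u)$, and by the KPZ evolution property Proposition \ref{prop:Buse_basic_properties}\ref{itm:Buse_KPZ_description} together with Proposition \ref{prop:Buse_basic_properties}\ref{itm:DL_Buse_add}, $\kpzs^\dir_{(x_0,s),t}(x) = W^\dir(x_0,s;x,t)$; the claim is then immediate from additivity. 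Thus the substance is in the case $\dir \in \DLBusedc$.

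Now fix $\dir \in \DLBusedc$, $(x_0,s)\in\R^2$, $t>s$, and let $\intc^\sig_t = \intc^{\dir,\sig}_{(x_0,s)}(t)$. I would treat $\sigg = -$ and $\sigg = +$ separately, each by splitting into $x \ge \intc^\sig_t$ and $x \le \intc^\sig_t$. Consider $\sigg = -$ first. If $x \ge \intc^-_t$, then Lemma \ref{l:MV}\ref{it:Bu1} gives $\kpzs^\dir_{(x_0,s),t}(x) = W^{\dir +}(x_0,s;x,t)$, and in particular $\kpzs^\dir_{(x_0,s),t}(\intc^-_t) = W^{\dir+}(x_0,s;\intc^-_t,t)$. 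Subtracting and applying additivity yields
\[
\kpzs^\dir_{(x_0,s),t}(x) - \kpzs^\dir_{(x_0,s),t}(\intc^-_t) = W^{\dir+}(\intc^-_t,t;x,t) = f^\dir_{(\intc^-_t,t)}(x),
\]
since the definition \eqref{eq: initial condition} selects $W^{\dir +}$ on the right of $\intc^-_t$. If instead $x \le \intc^-_t$, then $x \le \intc^-_t \le \intc^+_t$, so Lemma \ref{l:MV}\ref{it:Bu2} gives $\kpzs^\dir_{(x_0,s),t}(x) = W^{\dir -}(x_0,s;x,t)$. Moreover, since $\intc^-_t \le \intc^+_t$ also places $\intc^-_t$ in the interval where Lemma \ref{l:MV}\ref{it:Bu3} applies, we obtain $\kpzs^\dir_{(x_0,s),t}(\intc^-_t) = W^{\dir -}(x_0,s;\intc^-_t,t)$. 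Additivity then gives the desired equality.

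The case $\sigg = +$ is entirely symmetric: for $x \le \intc^+_t$, Lemma \ref{l:MV}\ref{it:Bu2} writes $\kpzs^\dir_{(x_0,s),t}$ in terms of $W^{\dir -}$, matching the definition of $f^\dir_{(\intc^+_t,t)}$ on the left of $\intc^+_t$; for $x \ge \intc^+_t$, we have $x \ge \intc^+_t \ge \intc^-_t$, so Lemma \ref{l:MV}\ref{it:Bu1} and Lemma \ref{l:MV}\ref{it:Bu3} together give $\kpzs^\dir_{(x_0,s),t}(x) = W^{\dir +}(x_0,s;x,t)$ and $\kpzs^\dir_{(x_0,s),t}(\intc^+_t) = W^{\dir +}(x_0,s;\intc^+_t,t)$, and additivity closes the argument.

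There is no real obstacle here; the only point that deserves attention is the correct bookkeeping at the endpoint $x = \intc^\sig_t$, where one must use Lemma \ref{l:MV}\ref{it:Bu3} to see that $W^{\dir +}(x_0,s;\intc^\sig_t,t) = W^{\dir -}(x_0,s;\intc^\sig_t,t)$, so that the subtraction $\kpzs^\dir_{(x_0,s),t}(x) - \kpzs^\dir_{(x_0,s),t}(\intc^\sig_t)$ combines cleanly into a single Busemann increment regardless of whether $x$ lies to the left or to the right of $\intc^\sig_t$.
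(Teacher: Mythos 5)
Your proposal is correct and follows essentially the same route as the paper: write $f^\dir_{(\intc^\sig_t,t)}(x)$ as a Busemann increment from $(\intc^\sig_t,t)$, expand by additivity through $(x_0,s)$, and identify the two resulting terms with $\kpzs^\dir_{(x_0,s),t}(x)$ and $\kpzs^\dir_{(x_0,s),t}(\intc^\sig_t)$ via Lemma \ref{l:MV}. The paper's proof only spells out $\sigg=-$ with $x \ge \intc^-_t$ and leaves the $x \le \intc^-_t$ case (and the $\sigg=+$ case) to symmetry, whereas you handle all sub-cases explicitly and also observe that the $\dir\notin\DLBusedc$ case should be separated out since Lemma \ref{l:MV} is only stated for $\dir\in\DLBusedc$; your version is slightly more careful but not a different argument.
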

\begin{proof}
    We show the claim for $\sigg = -$, the proof for $\sigg = +$ is symmetric. We first assume that  $x\geq \intc^-_t$. By definition of $f_{(\intc_t^-,t)}^{\dir}$ \eqref{eq: initial condition} and additivity of the Busemann functions, we have 
    \begin{equation}
        f_{(\intc_t^-,t)}^{\dir}(x) =W^{\xi+}(\intc^-_t, t;x,t) = W^{\dir +}(x_0,s;x,t) - W^{\dir-}(x_0,s;\intc_t^-,t).
    \end{equation}
    Then, by Lemma \ref{l:MV}\ref{it:Bu1}, this is 
    \[
    \kpzs^\dir_{(x_0,s),t}(x)-\kpzs_{(x_0,s),t}^\dir(\intc^{\sig}_t),
    \]
    as desired. 
\end{proof}
 Our next result states that left and right interfaces satisfy a semigroup property.

\begin{lemma}\label{lem:MP}
            Let $(x_0,s)\in\R^2$, and let $f$ be a continuous initial condition satisfying $f(x) \le a + b|x|$ for some constants $a,b >0$. Assume that $\Ll$ satisfies the metric composition property of the directed landscape, as well as the modulus of continuity bounds in Lemma \ref{lem:Landscape_global_bound}. Let $\kpzs_t(x):=\kpzs_t(x,f,s)$ be the KPZ fixed point solution at time $t$ with initial data $f$, and for $\sigg \in \{-,+\}$, let $\intc^{\sig}_t := \intc_{f;(x_0,s)}^\sig(t)$ be the interface defined as in \eqref{eq:taupm_def}. Then, for any $s<r<t$,
            \begin{equation}
                \intc_t^\sig =\intc^{\sig}_{\kpzs_r;(\intc^{\sig}_{r},r)}(t).
            \end{equation}
        \end{lemma}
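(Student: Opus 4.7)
The plan is to reduce the claimed identity to a pointwise sign comparison between $d_{(x_0,s)}(f;\cdot,t)$ and $d_{(y_0,r)}(\kpzs_r;\cdot,t)$, where $y_0:=\intc_r^\sig$. I split the time-$r$ quantity into two halves according to which side of $x_0$ the time-$s$ supremum is taken:
\[
\kpzs_r^R(z) := \sup_{z'\ge x_0}\{f(z')+\Ll(z',s;z,r)\}, \qquad \kpzs_r^L(z) := \sup_{z'\le x_0}\{f(z')+\Ll(z',s;z,r)\},
\]
so that $\kpzs_r(z)=\max(\kpzs_r^L(z),\kpzs_r^R(z))$. Interchanging the two suprema in the metric composition \eqref{L_comp} for $\Ll$ gives
\[
\sup_{z'\ge x_0}\{f(z')+\Ll(z',s;x,t)\} = \sup_{z\in\R}\{\kpzs_r^R(z)+\Ll(z,r;x,t)\},
\]
and likewise for $\le x_0$ and $\kpzs_r^L$. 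Thus the time-$t$ quantities $A' := \sup_z\{\kpzs_r^R+\Ll(\,\cdot\,,r;x,t)\}$ and $B' := \sup_z\{\kpzs_r^L+\Ll(\,\cdot\,,r;x,t)\}$ directly encode $d_{(x_0,s)}(f;x,t)=A'-B'$.

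The second ingredient is the structure of $\kpzs_r$ around the interface at time $r$. Since $z\mapsto d_{(x_0,s)}(f;z,r)=\kpzs_r^R(z)-\kpzs_r^L(z)$ is continuous and nondecreasing (Lemma \ref{l:continuity of d}), the definitions of $\intc_r^-$ and $\intc_r^+$ imply that $\kpzs_r^R<\kpzs_r^L$ on $(-\infty,\intc_r^-)$, $\kpzs_r^R=\kpzs_r^L$ on $[\intc_r^-,\intc_r^+]$, and $\kpzs_r^R>\kpzs_r^L$ on $(\intc_r^+,\infty)$. Since $y_0\in[\intc_r^-,\intc_r^+]$ regardless of $\sigg\in\{-,+\}$, this yields in both cases
\[
\kpzs_r=\kpzs_r^R \text{ and } \kpzs_r^L\le \kpzs_r^R \text{ on } [y_0,\infty), \qquad \kpzs_r=\kpzs_r^L \text{ and } \kpzs_r^R\le \kpzs_r^L \text{ on } (-\infty,y_0].
\]

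With this in hand I compare $A := \sup_{z\ge y_0}\{\kpzs_r+\Ll(\,\cdot\,,r;x,t)\}$ and $B := \sup_{z\le y_0}\{\kpzs_r+\Ll(\,\cdot\,,r;x,t)\}$, which encode $d_{(y_0,r)}(\kpzs_r;x,t)=A-B$, to $A'$ and $B'$. Trivially $A\le A'$ and $B\le B'$. If $A'<B'$, then from $\kpzs_r^L\le\kpzs_r^R$ on $[y_0,\infty)$,
\[
\sup_{z\ge y_0}\{\kpzs_r^L+\Ll(\,\cdot\,,r;x,t)\}\le \sup_{z\ge y_0}\{\kpzs_r^R+\Ll(\,\cdot\,,r;x,t)\}\le A'<B',
\]
so the supremum defining $B'$ must be attained on $(-\infty,y_0)$, where $\kpzs_r^L=\kpzs_r$; this forces $B'\le B$ and hence $B=B'$, giving $A\le A'<B'=B$. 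The symmetric argument (interchanging $L\leftrightarrow R$ and $\ge\leftrightarrow\le$) shows that $A'>B'$ implies $A=A'>B\ge 0$, so that $A>B$; the boundary case $A'=B'$ is handled by continuity in $x$. Consequently $A(x)-B(x)$ and $A'(x)-B'(x)$ share the same sign for every $x$, so $\{x:A(x)\ge B(x)\}=\{x:A'(x)\ge B'(x)\}$ and $\{x:A(x)\le B(x)\}=\{x:A'(x)\le B'(x)\}$; taking the infimum (for $\sigg=-$) and supremum (for $\sigg=+$) of the corresponding sets yields $\intc_t^\sig=\intc^\sig_{\kpzs_r;(y_0,r)}(t)$.

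The main subtlety is precisely the case $\intc_r^-<\intc_r^+$, where the $L$ and $R$ suprema at time $r$ are tied on an entire interval. The argument above navigates this cleanly because on $[\intc_r^-,\intc_r^+]$ all three quantities $\kpzs_r^L,\kpzs_r^R,\kpzs_r$ coincide, so the key identities $\kpzs_r=\kpzs_r^R$ on $[y_0,\infty)$ and $\kpzs_r=\kpzs_r^L$ on $(-\infty,y_0]$ hold uniformly in the choice $y_0\in\{\intc_r^-,\intc_r^+\}$, and the chain of inequalities involves only $\kpzs_r^L$ and $\kpzs_r^R$ rather than any particular geodesic maximizer.
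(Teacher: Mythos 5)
Your decomposition $\kpzs_r=\kpzs_r^L\vee\kpzs_r^R$ and the interchange of suprema via metric composition are a compact repackaging of the paper's $A,B,C,D$ bookkeeping: in your notation, the paper's four quantities are the restrictions of $\kpzs_r^R(\cdot)+\Ll(\cdot,r;x,t)$ and $\kpzs_r^L(\cdot)+\Ll(\cdot,r;x,t)$ to $\{z\ge\intc_r^-\}$ and $\{z\le\intc_r^-\}$, so that $A'=A_{\mathrm{paper}}\vee C$, $B'=B_{\mathrm{paper}}\vee D$, and your observation about where $\kpzs_r$ equals $\kpzs_r^L$ or $\kpzs_r^R$ is precisely the ordering $A\ge D$, $B\ge C$. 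However, the two implications you actually prove, $A'<B'\Rightarrow A<B$ and $A'>B'\Rightarrow A>B$, are both instances of the ``easy'' direction (the first is exactly the paper's contrapositive step $A\ge B\Rightarrow A'\ge B'$, the second is its left--right mirror). Neither of them, nor their contrapositives, gives you the converse $A<B\Rightarrow A'<B'$, equivalently $A'=B'\Rightarrow A=B$, and your proposal silently assumes this when asserting that $A-B$ and $A'-B'$ share the same sign everywhere.

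Concretely, writing $\{A'\ge B'\}=[\intc_t^-,\infty)$ and $\{A'\le B'\}=(-\infty,\intc_t^+]$, your implications only give $\{A\ge B\}\subseteq[\intc_t^-,\infty)$ and $\{A\le B\}\subseteq(-\infty,\intc_t^+]$, hence $\intc_t^-\le\inf\{A\ge B\}$ and $\sup\{A\le B\}\le\intc_t^+$; when $\intc_t^-<\intc_t^+$ (the only nontrivial case) this leaves slack and does not identify $\intc^\sig_{\kpzs_r;(y_0,r)}(t)$ with $\intc_t^\sig$. ``Continuity in $x$'' of $A-B$ and $A'-B'$ closes this only when $\intc_t^-=\intc_t^+$; on a nondegenerate interval where $A'=B'$, nothing you have proved constrains the sign of $A-B$. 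The missing step is the substance of the paper's proof of the direction ``\eqref{eq54}$\Rightarrow$\eqref{eq55}'': assuming $B=C>A$ (in the paper's notation) one uses the continuity of $z\mapsto\sup_{w\ge x_0}[f(w)+\Ll(w,s;z,r)]+\Ll(z,r;x,t)$ (Lemma \ref{lem:sup_continuous}) to push the supremum defining $C$ strictly to $z\le\intc_r^--\delta$ for some $\delta>0$, a region where $d_{(x_0,s)}(f;\cdot,r)<0$ \emph{strictly}, which forces $C<B$ and gives a contradiction. That is a continuity-in-$z$ argument at the intermediate time level $r$, not a continuity-in-$x$ argument at time $t$, and it cannot be dispensed with.
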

        \begin{proof}
        We prove the result for $\sigg = -$, and the proof for $\sigg = +$ is symmetric. The assumptions on $f$ and $\Ll$ ensure that the following are well-defined. Fix $t>r>s$. It suffices to show that
        \begin{equation}\label{eq35}
            d^\dir_{(\intc^-_{r},r)}(\kpzs_r;x,t)< 0 \iff d^\dir_{(x_0,s)}(f;x,t)< 0, \qquad \forall x\in\R.
        \end{equation}
       First, make the following definitions:
\begin{align*}
A := \sup_{w\geq x_0}\Bigg[f(w)+\sup_{z\geq \intc^-_{r}}[\Ll(w,s;z,r)+\Ll(z,r;x,t)]\Bigg] \\
B := \sup_{w \le x_0}\Bigg[f(w)+\sup_{z \le \intc_r^-}[\Ll(w,s;z,r)+\Ll(z,r;x,t)]\Bigg]  \\
C := \sup_{w \ge x_0} \Biggl[f(w) + \sup_{z \le \intc_r^-}[\Ll(w,s;z,r)+\Ll(z,r;x,t)] \Biggr]\\
D := \sup_{w \le x_0} \Bigg[f(w)+\sup_{z\geq \intc^-_{r}}[\Ll(w,s;z,r)+\Ll(z,r;x,t)]\Bigg]. 
\end{align*}
Since the function $z \mapsto d_{(x_0,s)}(f;z,r)$ is nondecreasing,  the definition of the interfaces \eqref{eq:taupm_def} implies that $d_{(x_0,s)}(f;z,r) \le 0$ for $z \le \intc_r^-$ and $d_{(x_0,s)}(f;z,r) \ge 0$ for $z \ge \intc_r^-$. This further implies that 
\begin{equation} \label{ABCD_ineq}
 A \ge D,\qquad \text{and}\qquad  B \ge C,
\end{equation}
and in particular, 
\begin{equation} \label{ABeq}
\begin{aligned}
A &= \sup_{w \in \R}\Bigg[f(w)+\sup_{z\geq \intc^-_{r}}[\Ll(w,s;z,r)+\Ll(z,r;x,t)]\Bigg],\qquad\text{and} \\
B &=\sup_{w \in \R}\Bigg[f(w)+\sup_{z < \intc^-_{r}}[\Ll(w,s;z,r)+\Ll(z,r;x,t)]\Bigg]
\end{aligned}
\end{equation}
By definition,
        \begin{equation}
            d^\dir_{(\intc^-_{r},r)}(\kpzs_r;x,t)< 0\iff \sup_{z\geq \intc^-_{r}}[\kpzs_r(z)+\Ll(z,r;x,t)]< \sup_{z \le  \intc^-_{r}}[\kpzs_r(z)+\Ll(z,r;x,t)],
        \end{equation}
        and the right-hand side is equivalent to 
        \[
             \sup_{z\geq \intc^-_{r}}\Biggl[\sup_{w\in \R} \bigl[f(w)+\Ll(w,s;z,r)\bigr]+\Ll(z,r;x,t)\Biggr] < \sup_{z \le \intc^-_{r}}\Biggl[\sup_{w\in\R} \bigl[f(w)+\Ll(w,s;z,r)\bigr]+\Ll(z,r;x,t)\Biggr],
        \]
        which, by \eqref{ABeq}, is equivalent to 
        \begin{equation} \label{eq54}
        A < B.
        \end{equation}  
        
It suffices to show that \eqref{eq54} is equivalent to 
\begin{equation}\label{eq55}
            \sup_{w\geq x_0}\Bigg[f(w)+\sup_{z\in\R}[\Ll(w,s;z,r)+\Ll(z,r;x,t)]\Bigg]<\sup_{w\leq x_0}\Bigg[f(w)+\sup_{z\in\R}[\Ll(w,s;z,r)+\Ll(z,r;x,t)]\Bigg].
        \end{equation}
        Indeed, it follows immediately from the metric composition property of $\Ll$, that \eqref{eq55} is equivalent to 
        \begin{equation}
            \sup_{w\geq x_0}\big[f(w)+ \Ll(w,s;x,t)\big]< \sup_{w\leq x_0}\big[f(w)+ \Ll(w,s;x,t)\big],
        \end{equation}
        which in turn is equivalent to $d^\dir_{(x_0,s)}(f;x,t)< 0$. 

        We first observe from the definitions of $A,B$, and $C$, that
       \begin{equation} \label{AveeB}
        \begin{aligned}
            A \le \sup_{w\geq x_0}\Bigg[f(w)+\sup_{z\in \R}[\Ll(w,s;z,r)+\Ll(z,r;x,t)]\Bigg]
            =A\vee C \leq A\vee B, 
        \end{aligned}
        \end{equation}
        where the second inequality holds by \eqref{ABCD_ineq}.  Similarly,
        \begin{equation}\label{eq57}
        \begin{aligned}
            B\le \sup_{w\leq x_0}\Bigg[f(w)+\sup_{z\in \R}[\Ll(w,s;z,r)+\Ll(z,r;x,t)]\Bigg]
            = B \vee D \leq B\vee A.
        \end{aligned}
        \end{equation}

\medskip \noindent \textbf{Proof of }
\eqref{eq55} $\implies$ \eqref{eq54}: 

        We prove the contrapositive. Assume that $A \ge B$. Then, $A \vee B = A$, so the inequalities in \eqref{AveeB} are equalities. In particular,
        \be \label{Asup}
        A = \sup_{w\geq x_0}\Bigg[f(w)+\sup_{z\in \R}[\Ll(w,s;z,r)+\Ll(z,r;x,t)]\Bigg].
        \ee
        On the other hand, \eqref{eq57} implies that 
        \[
             A\geq \sup_{w\leq x_0}\Bigg[f(w)+\sup_{z\in \R}[\Ll(w,s;z,r)+\Ll(z,r;x,t)]\Bigg],
        \]
        which, along with \eqref{Asup}, implies that \eqref{eq55} fails.

        \medskip \noindent \textbf{Proof of }
         \eqref{eq54} $\implies$ \eqref{eq55}: Assume $B > A$. Then, $B \vee A = B$, and \eqref{eq57} implies that
        \begin{equation}\label{beq}
             B= \sup_{w\leq x_0}\Bigg[f(w)+\sup_{z\in \R}[\Ll(w,s;z,r)+\Ll(z,r;x,t)]\Bigg].
        \end{equation}
        It suffices to show that $B > C$. Indeed, if this is the case, then $B > A \vee C$, and  the second inequality in  \eqref{AveeB} is strict, which gives 
        \begin{equation}
           B> \sup_{w\geq x_0}\Bigg[f(w)+\sup_{z\in \R}[\Ll(w,s;z,r)+\Ll(z,r;x,t)]\Bigg].
        \end{equation}
        The last display along with \eqref{beq} implies \eqref{eq55}. To complete the proof, we show that $B > A$ implies $B > C$. Assume, by way of contradiction, that this is not the case. Then, by \eqref{ABCD_ineq}, we have $B = C > A$, so by definition of $C$ and $A$, 
        \be \label{eq:C>Asup}
        \begin{aligned}
        &\sup_{z \le \intc_r^-} \Bigl[\sup_{w \ge x_0} [f(w) + \Ll(w,s;z,r)]+\Ll(z,r;x,t) \Bigr] = C  \\
        &\qquad\qquad\qquad\qquad> A \ge \sup_{w \ge x_0} [f(w) + \Ll(w,s;\intc_r^-,r)] + \Ll(\intc_r^-,r;x,t).
        \end{aligned}
        \ee
        We note that the function $z \mapsto \sup_{w \ge x_0}[f(w) + \Ll(w,s;z,r)] + \Ll(z,r;x,t)$ is continuous. This follows by the continuity and growth assumptions on $f$ and $\Ll$, which imply that the supremum can be taken over a common compact set as $z$ varies over a compact set. Then, by \eqref{eq:C>Asup} and the continuity in Lemma \ref{lem:sup_continuous}, there exists $\delta > 0$ so that 
        \[
        C = \sup_{z \le \intc_r^- - \delta} \Bigl[\sup_{w \ge x_0} [f(w) + \Ll(w,s;z,r)]+\Ll(z,r;x,t) \Bigr].
        \]
        Then, by definition of $\intc_r^-$, $d_{(x_0,s)}(f;z,r) < 0$ for all $z \le \intc_r^- - \delta$, so 
        \begin{align*}
        C &= \sup_{z \le \intc_r^- - \delta} \Bigl[\sup_{w \ge x_0} [f(w) + \Ll(w,s;z,r)]+\Ll(z,r;x,t)\Bigr] \\ 
        &< \sup_{z \le \intc_r^- - \delta} \Bigl[\sup_{w \le x_0} [f(w) + \Ll(w,s;z,r)]+\Ll(z,r;x,t)\Bigr] \le B,
        \end{align*}
        where the last inequality follows because of the growth conditions on $f$ and $\Ll$. which allow the supremum over $z$ to be taken over a compact set. This gives a contradiction to the assumption $B = C$. 
        \end{proof}
        Our next result shows that the restriction of a mixed $\dir$-Busemann interface is also a mixed $\dir$-Busemann interface.
\begin{proposition}
\label{p:rest}
On the event $\Omega_1$, for all $\dir \in \R$, $x_0 \in \R$ and $s \le r \le t$, 
\[
\intc_{(x_0,s)}^{\dir,-}(t) = \intc_{(\intc^-_r,r)}^{\dir,-}(t),\quad\text{where}\quad \intc_r^- := \intc_{(x_0,s)}^{\dir,-}(r).
\]
The statement also holds when $-$ is replaced with $+$.
    \end{proposition}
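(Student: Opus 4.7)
The plan is to deduce the result from the semigroup property for general initial conditions in Lemma \ref{lem:MP} together with the identification in Lemma \ref{lem:4} of the KPZ fixed point solution (up to an additive constant) as a mixed $\dir$-Busemann initial condition at the interface point.

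First I would verify that Lemma \ref{lem:MP} applies with $f = f^\dir_{(x_0,s)}$. By Lemma \ref{lemma: valid initial condition}, this $f$ is continuous, and the linear growth bound $|f(x)| \le a + b|x|$ follows from the asymptotic slope statement in Proposition \ref{prop:Buse_basic_properties}\ref{it:Wslope} applied to both $W^{\dir-}$ and $W^{\dir+}$. The metric composition property and modulus of continuity for $\Ll$ hold on $\Omega_1$. Hence Lemma \ref{lem:MP} applied at intermediate time $r$ yields
\[
\intc_{(x_0,s)}^{\dir,-}(t) \;=\; \intc^{-}_{\kpzs^\dir_{(x_0,s),r};\,(\intc_r^-,r)}(t),
\]
i.e., the interface at time $t$ is the same as the leftmost interface starting at $(\intc_r^-,r)$ using the KPZ fixed point solution $\kpzs^\dir_{(x_0,s),r}$ as the new initial condition.

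Next, Lemma \ref{lem:4} (with $\sigg=-$ and $t$ there replaced by our $r$) gives
\[
\kpzs^\dir_{(x_0,s),r}(x) \;=\; f^\dir_{(\intc_r^-,r)}(x) \;+\; C, \qquad C := \kpzs^\dir_{(x_0,s),r}(\intc_r^-).
\]
So the new initial condition $\kpzs^\dir_{(x_0,s),r}$ differs from the mixed $\dir$-Busemann initial condition $f^\dir_{(\intc_r^-,r)}$ only by an additive constant. Inspecting the definition \eqref{eq:d} of $d_{(x_0,s)}(f;x,t)$, which is a difference of two suprema of $f(z)+\Ll(z,s;x,t)$ over the two half-lines $\{z\ge x_0\}$ and $\{z\le x_0\}$, one sees that replacing $f$ by $f+C$ adds $C$ to both suprema and leaves the difference (and therefore the interfaces $\intc^{\pm}$) unchanged. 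Consequently,
\[
\intc^{-}_{\kpzs^\dir_{(x_0,s),r};\,(\intc_r^-,r)}(t)
\;=\; \intc^{-}_{f^\dir_{(\intc_r^-,r)};\,(\intc_r^-,r)}(t)
\;=\; \intc^{\dir,-}_{(\intc_r^-,r)}(t),
\]
and chaining the two equalities gives the claim for $\sigg=-$. The argument for $\sigg=+$ is identical, using Lemma \ref{lem:MP} with $\sigg=+$ and Lemma \ref{lem:4} with $\sigg=+$. The case $r=s$ or $r=t$ is trivial from \eqref{intcat0}.

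I do not anticipate substantive obstacles: the genuine work has already been done in Lemma \ref{lem:MP} (the semigroup property) and Lemma \ref{lem:4} (which realizes $\kpzs^\dir_{(x_0,s),r}$ as a mixed Busemann initial condition up to a constant); what remains is to assemble these two facts and observe the trivial invariance of the interfaces under additive constants. The only small care point is checking that the hypotheses of Lemma \ref{lem:MP} are met uniformly across all $\dir,(x_0,s)$, but this is immediate on $\Omega_1$.
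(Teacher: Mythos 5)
Your proof is correct and follows essentially the same route as the paper: apply the semigroup property of Lemma~\ref{lem:MP} at intermediate time $r$, identify $\kpzs^\dir_{(x_0,s),r}$ with $f^\dir_{(\intc_r^-,r)}$ up to an additive constant via Lemma~\ref{lem:4}, and use that the definition \eqref{eq:taupm_def} of the interface is invariant under adding constants to the initial condition. The only difference is that you spell out the hypothesis check for Lemma~\ref{lem:MP} a bit more explicitly, which the paper leaves implicit.
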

    \begin{proof}
         The cases $r = s$ and $r = t$ are immediate from the definition \eqref{intcat0}, so we assume $s < r < t$. Let $f = f^\dir_{(x_0,s)}$ be the initial condition from \eqref{eq: initial condition}  associated with the point $(x_0,s)$, and let  $\kpzs_t = \kpzs^\dir_{(x_0,s),t}$ be its KPZ fixed point solution at time $t$ as in \eqref{kpzs_abbr}. By Lemma \ref{lem:MP}, for $s < r < t$,
         \be \label{eq:taueq1}
         \intc^{\dir,-}_{(x_0,s)}(t)=\intc^{-}_{\kpzs_r;(\intc^-_{r},r)}(t).
         \ee
        Next, by Lemma \ref{lem:4},  we have that, for all $x \in \R$,
        \[
        \kpzs_r(x) =f_{(\tau_r^-,r)}^\dir(x) + \kpzs_{r}(\tau_r^-)
        \]
        Hence, the functions $x \mapsto \kpzs_r(x)$ and $x \mapsto f_{(\tau_r^-,r)}(x)$ differ by the addition of a constant. We see immediately from the definition \eqref{eq:taupm_def} that interfaces are invariant with respect to additions of constants of the initial functions, so from \eqref{eq:taueq1}, we have 
        \[
        \intc^{\dir,-}_{(x_0,s)}(t)=\intc^-_{\kpzs_r;(\intc^-_{r},r)}(t) = \intc^-_{f^\dir_{(\tau_r^-r)};(\tau_r^-,r)}(t) = \intc_{(\intc^-(r),r)}^{\dir,-}(t),
        \]
        as desired. The proof for $-$ replaced with $+$ is symmetric. 
    \end{proof}
The following two results shows that for any point in $x \in \Split^L_{t,\dir}$ (resp. $\Split^R_{t,\dir}$), there exists an element $\intc \in \Intc^{\dir,-}$ (resp. $\Intc^{\dir,+}$) with $\intc(t) = y$.   
 \begin{lemma}
     \label{l:LRSP}
    On the event $\Omega_1$, the following holds for all $\dir \in \DLBusedc$.
    \begin{enumerate}[label=(\roman*), font=\normalfont]
        \item \label{it5}  Let $t\in\R$ and  $x \in \Split^L_{t,\dir}$.   Then for any $s<t$, there exists  $x_0 \in \Split^L_{s,\dir}$ such that
        \[
        \intc^{\dir,-}_{(x_0,s)}(t)=x.
        \] 
        \item \label{it6} Let $t\in\R$ and  $x \in \Split_{s,\dir}^R$.  Then for any $s<t$, there exists $x_0 \in \Split^R_{t,\dir}$ such that
        \[
       \intc^{\dir,+}_{(x_0,s)}(t)=x.
         \]
        \end{enumerate}
 \end{lemma}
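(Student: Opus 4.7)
The plan is to identify $x_0$ explicitly using a clean identity relating the Busemann difference profiles $D_s^\dir$ and $D_t^\dir$, and then verify both required properties directly.

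\textbf{Key identity.} By additivity of the Busemann process (Proposition \ref{prop:Buse_basic_properties}\ref{itm:DL_Buse_add}), inserting the intermediate point $(0,t)$ into $W^{\dir\pm}(y,s;w,t)$ and then $(0,s)$ into $W^{\dir\pm}(y,s;0,t)$ yields
\[
W^{\dir+}(y,s;w,t)-W^{\dir-}(y,s;w,t)=D_t^\dir(w)-D_s^\dir(y)+K(s,t),
\]
where $K(s,t):=W^{\dir+}(0,s;0,t)-W^{\dir-}(0,s;0,t)$ depends only on $s,t$. I will choose
\[
x_0:=\inf\bigl\{y\in\R:D_s^\dir(y)\ge D_t^\dir(x)+K(s,t)\bigr\}.
\]
By Lemma \ref{lem:dif_to_infinity}, $D_s^\dir$ takes all real values, so this infimum is finite, and by monotonicity/continuity of $D_s^\dir$ we have $D_s^\dir(x_0)=D_t^\dir(x)+K(s,t)$.

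\textbf{Step 1: $x_0\in\Split^L_{s,\dir}$.} If $D_s^\dir(x_0-\ve_0)=D_s^\dir(x_0)$ for some $\ve_0>0$, then $x_0-\ve_0$ also lies in the set defining $x_0$, contradicting the infimum. Hence $D_s^\dir(x_0-\ve)<D_s^\dir(x_0)$ for all $\ve>0$, so $x_0\in\Split^L_{s,\dir}$ by Lemma \ref{l:equ}.

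\textbf{Step 2: $\intc^{\dir,-}_{(x_0,s)}(t)=x$.} I first check that $x_0\in[g^{\dir-,L}_{(x,t)}(s),g^{\dir+,R}_{(x,t)}(s)]$. At $y_-:=g^{\dir-,L}_{(x,t)}(s)$, Lemma \ref{lem:Buse_eq} gives $\Ll(y_-,s;x,t)=W^{\dir-}(y_-,s;x,t)$, while the evolution formula (Proposition \ref{prop:Buse_basic_properties}\ref{itm:Buse_KPZ_description}) yields $W^{\dir+}(y_-,s;x,t)\ge\Ll(y_-,s;x,t)$; combined with the key identity this gives $D_s^\dir(y_-)\le D_t^\dir(x)+K(s,t)$, hence $y_-\le x_0$. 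The analogous computation at $y_+:=g^{\dir+,R}_{(x,t)}(s)$ yields equality, so $x_0\le y_+$. Because $x_0\in[y_-,y_+]$, both half-sups defining $d^\dir_{(x_0,s)}(x,t)$ reach their full Busemann values (the $\dir-$ leftmost geodesic from $(x,t)$ crosses $z=x_0$ from the left, and the $\dir+$ rightmost geodesic crosses from the right), so
\[
d^\dir_{(x_0,s)}(x,t)=W^{\dir+}(x_0,s;x,t)-W^{\dir-}(x_0,s;x,t)=D_t^\dir(x)-D_s^\dir(x_0)+K(s,t)=0.
\]
For $w<x$, monotonicity of semi-infinite geodesics in the initial point (Proposition \ref{prop:g_basic_prop}\ref{itm:DL_SIG_mont_x}) gives $g^{\dir-,L}_{(w,t)}(s)\le g^{\dir-,L}_{(x,t)}(s)\le x_0$, so the left half-sup still equals $W^{\dir-}(x_0,s;w,t)$, whereas the right half-sup is at most $W^{\dir+}(x_0,s;w,t)$. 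Thus $d^\dir_{(x_0,s)}(w,t)\le D_t^\dir(w)-D_t^\dir(x)<0$ since $x\in\Split^L_{t,\dir}$ forces $D_t^\dir(w)<D_t^\dir(x)$ by Lemma \ref{l:equ}. Therefore $\intc^{\dir,-}_{(x_0,s)}(t)=x$.

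\textbf{Main obstacle and symmetric case.} The most delicate step is verifying that the half-sup-vs-full-sup gap is controlled well enough to simultaneously give $d^\dir_{(x_0,s)}(x,t)=0$ and strict negativity to the left; this relies crucially on translating the two splitting properties ($x\in\Split^L_{t,\dir}$ and the resulting $x_0\in\Split^L_{s,\dir}$) into geodesic inequalities via Lemmas \ref{l:equ} and \ref{lem:Buse_eq}. Item \ref{it6} for $\Split^R_{s,\dir}$ and rightmost interfaces follows by the symmetric argument with $\inf$ replaced by $\sup$ and the roles of $+$ and $-$ and of leftmost/rightmost geodesics exchanged.
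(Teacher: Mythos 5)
Your approach is essentially the paper's: you identify $x_0$ as the leftmost point where $W^{\dir+}(\cdot,s;x,t)=W^{\dir-}(\cdot,s;x,t)$ (encoded through the identity $D_s^\dir(x_0)=D_t^\dir(x)+K(s,t)$), verify $x_0\in\Split^L_{s,\dir}$ by minimality, and then verify the interface hits $(x,t)$. The one genuine improvement in presentation is that you close the argument by directly showing $d^\dir_{(x_0,s)}(x,t)=0$ and $d^\dir_{(x_0,s)}(w,t)<0$ for $w<x$, rather than the paper's two-case contradiction argument for $\intc^{\dir,-}_{(x_0,s)}(t)\neq x$.

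However, there is a real gap in Step 2. You derive $D_s^\dir(y_-)\le D_t^\dir(x)+K(s,t)$ and conclude ``hence $y_-\le x_0$'', but this inference does not follow from a non-strict inequality. Since $D_s^\dir$ is merely nondecreasing, if $D_s^\dir(y_-)=D_t^\dir(x)+K(s,t)$ then $y_-$ itself belongs to the set $\{y:D_s^\dir(y)\ge D_t^\dir(x)+K(s,t)\}$, and in the presence of a plateau one could even have $x_0<y_-$; your computation of $d^\dir_{(x_0,s)}(x,t)=0$ would then collapse, since the leftmost maximizer of $z\mapsto W^{\dir-}(x_0,s;z,s)+\Ll(z,s;x,t)$ would lie strictly to the right of $x_0$ and the left half-sup would fall short of $W^{\dir-}(x_0,s;x,t)$. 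What you actually need is the \emph{strict} inequality $\Ll(y_-,s;x,t)<W^{\dir+}(y_-,s;x,t)$, and this is precisely where the hypothesis $x\in\Split^L_{t,\dir}$ must enter: it gives $y_-=g^{\dir-,L}_{(x,t)}(s)<g^{\dir+,L}_{(x,t)}(s)$, so $y_-$ lies strictly to the left of the leftmost maximizer of $z\mapsto W^{\dir+}(0,s;z,s)+\Ll(z,s;x,t)$ and therefore is not a maximizer; Lemma \ref{lem:Buse_eq} then forces the strict inequality, hence $D_s^\dir(y_-)<D_t^\dir(x)+K(s,t)$ and $y_-\le x_0$ (indeed $y_-<x_0$). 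Note that your $y_+=g^{\dir+,R}_{(x,t)}(s)$ cannot serve here, since being strictly below the \emph{rightmost} $\dir+$ exit point does not rule out being a $\dir+$ maximizer; you must compare against the \emph{leftmost} $\dir+$ geodesic. Once this step is inserted, the rest of Step 2 goes through.
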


\begin{proof}
     We prove Item \ref{it5}, and the proof of Item \ref{it6} is similar.
   Fix  $x \in \Split^L_{t,\dir}$ and $s<t$. By Lemma \ref{l:equ}, for all $\varepsilon >0, D_{t}^\dir(x-\varepsilon)<D_{t}^\dir(x)$.  By definition of $ \Split_{t,\dir}^L$ \eqref{Split_sdir}, we have 
    \begin{equation}
 b_2^-:=g^{\dir-,L}_{(x,t)}(s)<g^{\dir+,L}_{(x,t)}(s)=:b_2^+.    
 \end{equation}
We now claim that 
\be \label{eq:-<+}
\Ll(b_2^-,s;x,t)=W^{\dir-}(b_2^-,s;x,t)<W^{\dir+}(b_2^-,s;x,t).
\ee
Indeed,  the equality holds by \eqref{geod_LR_eq_L}. To see that the inequality holds,  note that, otherwise, Lemma \ref{lem:Buse_eq} would imply that $b_2^-$ is a maximizer of $z \rightarrow W^{\dir+}(0,s;z,s)+\Ll(z,s;x,t)$. However, this cannot be the case since $b_2^-<b_2^+$, and $b_2^+$ is the leftmost maximizer   of $z \rightarrow W^{\dir+}(0,s;z,t)+\Ll(z,s;x,t)$ by definition.
Similarly,
\be \label{eq:+<-}
\Ll(b_2^+,s;x,t)=W^{\dir+}(b_2^+,s;x,t) \le W^{\dir-}(b_2^+,s;x,t),
\ee
where here, we only have a weak inequality because $b_2^+$ is to the right of the leftmost maximizer of $z \mapsto W^{\dir -}(0,s;z,t) + \Ll(z,t;x,t)$, so $b_2^+$ could be a maximizer. 
Consider the function 
\[
y \rightarrow \widetilde{F}(y): =W^{\dir+}(y,s;x,t)-W^{\dir-}(y,s;x,t).
\]
Note that $\widetilde{F}$ is continuous by continuity of the Busemann functions, and by \eqref{eq:-<+} and \eqref{eq:+<-}, $\widetilde{F}(b_2^+)\le 0$ and $\widetilde{F}(b_2^-)>0$. Hence,  there exists $b_2 \in (b_2^-,b_2^+]$ such that 
$
W^{\dir+}(b_2,s;x,t)=W^{\dir-}(b_2,s;x,t)
$.
Then, set
\be \label{x0}
x_0=\inf \{b \in (b_2^-,b_2^+]: \widetilde{F}(b) = 0\}.
\ee
Since $\widetilde{F}(b_2^-) > 0$, continuity implies that $x_0 \in (b_2^-,b_2^+]$ and $\widetilde{F}(x_0) = 0$.
For our particular choice of $\dir$, and for $s \in \R$, recall the definition
\begin{equation}
x \rightarrow D_{s}^\dir(x)=W^{\dir+}(0,s;x,s)-W^{\dir-}(0,s;x,s).  \end{equation}
\begin{figure}[t!]
    \includegraphics[width=7 cm]{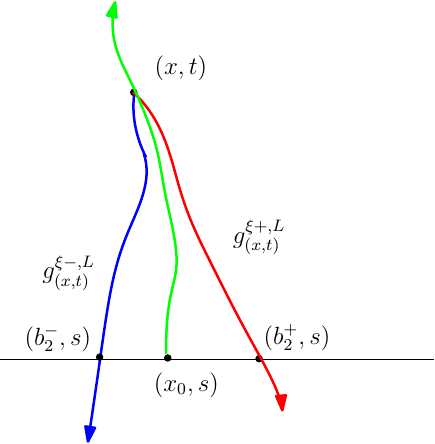}
    \caption{To prove Lemma \ref{l:LRSP}, we fix $(x,t) \in \Split^L_{t,\dir}$ and $s<t$. We argue that we can find $b \in (b_2^-,b_2^+]$ such that $\widetilde{F}(b)=0$. $x_0$ is minimum of such $b$. By the construction the leftmost competition interface (in green) from $(x_0,s)$ passes through $(x,t).$ }
    \label{fig:existence of bi-infinite competetion interface}
\end{figure}
Observe that, using additivity of the Busemann functions, for $b \in \R$, 
\begin{equation}\label{eq37}
 \begin{aligned}
 \widetilde{F}(b) = 0 &\iff W^{\dir+}(b,s;x,t)=W^{\dir-}(b,s;x,t)\\
 &\iff W^{\dir+}(b,s;0,s)+W^{\dir+}(0,s;x,t)=W^{\dir-}(b,s;0,s)+W^{\dir-}(0,s;x,t)\\
 &\iff -W^{\dir+}(0,s;b,s)+W^{\dir+}(0,s;x,t)=-W^{\dir-}(0,s;b,s)+W^{\dir-}(0,s;x,t).\\
 &\iff D_{s}^\dir(b)=W^{\dir+}(0,s;x,t)-W^{\dir-}(0,s;x,t).
\end{aligned}   
\end{equation}

We now claim that  $D^\dir_{s}(x_0-\varepsilon)<D^\dir_{s}(x_0)$, for all $\varepsilon >0$. Indeed, if this were not the case, then by monotonicity of $D_s^\dir$, there exists $y<x_0$  such that $D_s^\dir(b) = D_s^\dir(x_0)$ for $b \in [y,x_0]$, and in particular, $D_s^\dir(b) = D_s^\dir(x_0)$ for some $b \in (b_2^-,x_0)$. Then by \eqref{eq37}, since $\widetilde{F}(x_0) = 0$, we have 
\begin{align*}
    D_{s}^\dir(b)=W^{\dir+}(0,s;x,t)-W^{\dir-}(0,s;x,t) \implies W^{\dir+}(b,s;x,t)=W^{\dir-}(b,s;x,t) \implies \widetilde{F}(b) = 0.
\end{align*}
But as $b\in (b_2^-,x_0)$, this contradicts the definition of $x_0$ \eqref{x0}.  Hence, we indeed have that $D^\dir_{s}(x_0-\varepsilon)<D_{s}^\dir(x_0)$ for all $\varepsilon >0$.
By Lemma \ref{l:equ}, $x_0 \in \Split^L_{s, \dir}.$ It remains to show that 
 \begin{equation}\label{eq44}
     \intc^{\dir,-}_{(x_0,s)}(t)=x.
 \end{equation}
     Let $y: =\intc^{\dir,-}_{(x_0,s)}(t)$, and suppose, by way of contradiction, that $y \neq x$. We consider two cases.

    \medskip \noindent \textbf{Case 1:} $y < x$.
        By the definition of $x_0$ and continuity of Busemann functions, we know that 
        \[
        W^{\dir+}(x_0,s;x,t)=W^{\dir-}(x_0,s;x,t).
        \]
        On the other hand, by Lemma \ref{l:MV}\ref{it:Bu3},
        \[
        W^{\dir+}(x_0,s,y,t)=W^{\dir-}(x_0,s,y,t).
        \]
       By subtracting these two equations and rearranging using additivity, we get
        $
        D_{t}^\dir(x)=D_{t}^\dir(y).
        $
        However, the assumption $x \in  \Split^L_{t, \dir}$ implies $D_t^\dir(y) < D_t^\dir(x)$ for all $y < x$ by Lemma \ref{l:equ}. 

        \medskip \noindent \textbf{Case 2:}  $y > x$: Then,
        \begin{align*}
        &\sup_{z \in \R}\{f_{(x_0,s)}^\dir(z)+\Ll(z,s;x,t)\}\\
        &=W^{\dir-}(x_0,s;x,t) \qquad \text{ by Lemma \ref{l:MV}\ref{it:Bu2}}\\
        &=W^{\dir+} (x_0,s;x,t) \qquad \text{ by definition of } x_0\\
        &=W^{\dir+}(x_0,s;b_2^+,s)+W^{\dir+}(b_2^+,s;x,t) \qquad \text{ by additivity of Busemann functions}\\&=W^{\dir+}(x_0,s;b_2^+,s)+\Ll(b_2^+,s;x,t) \qquad \text{since $b_2^+ = g^{\dir+,L}_{(x,t)}(s)$ and using \eqref{geod_LR_eq_L}} \\&=f^\dir_{(x_0,s)}(b_2^+)+\Ll(b_2^+,s;x,t) \qquad \text{ by definition of $f_{(x_0,s)}^{\dir}$ \eqref{eq: initial condition} because } b_{2}^+\ge x_0.
        \end{align*} 
        This implies that $g^{\dir+,L}_{(x,t)}|_{[s,t]}$ is an $(f^\dir_{(x_0,s)},s)$-to-$(x,t)$ geodesic, with $g^{\dir+,L}_{(x,t)}(s) = b_2^+ \ge x_0$. Hence, $d_{(x_0,s)}^{\dir}(x,t) \geq 0$. However, by the assumption $x < y$ and by definition of $\intc^-$ \eqref{eq:taupm_def} we have  
        \[
        x <  \tau_{(x_0,s)}^{\dir,-}(t) = \inf\{y \in \R: d_{(x_0,s)}^{\dir}(x,t) \geq 0 \},
        \]
        giving a contradiction.
\end{proof}

For the following, we recall the definition of leftmost/rightmost mixed $\dir$-Busemann bi-infinite interfaces from Definition \ref{def:biInf}.
\begin{proposition}[Mixed $\dir$-Busemann bi-infinite interfaces exist]
    \label{p: existence of bi-infinite competetion interfaces}
    On the event $\Omega_1$, for any $\dir \in \DLBusedc$ and $s \in \R$, if $x_0 \in\Split^L_{s,\dir}$,  there exists  $\intc^{\infty} \in \Intc^{\dir,-}$ such that $\intc^{\infty}(s)=x_0$. Similarly, if $x_0 \in\Split^R_{s,\dir}$, then there exists $\intc^\infty \in \Intc^{\dir,+}$ such that $\intc^\infty(s) = x_0$.
\end{proposition}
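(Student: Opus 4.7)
\medskip

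\noindent\textbf{Proof proposal.} The natural strategy is to iteratively apply Lemma \ref{l:LRSP} to extend the starting point $x_0$ backwards in time by one unit at a time, producing a coherent tower of leftmost mixed $\dir$-Busemann interfaces whose union defines the desired bi-infinite interface. The glue that makes this work is the consistency/restriction property of Proposition \ref{p:rest}.

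More precisely, I will construct by induction a sequence $\{y_n\}_{n \ge 0}$ with $y_0 := x_0$ and $y_n \in \Split^L_{s-n,\dir}$ for every $n \ge 0$, such that
\[
\intc^{\dir,-}_{(y_n,\,s-n)}(s-n+1) = y_{n-1}, \qquad n \ge 1.
\]
The base case is the hypothesis $x_0 \in \Split^L_{s,\dir}$. Given $y_{n-1} \in \Split^L_{s-n+1,\dir}$, Lemma \ref{l:LRSP}\ref{it5}, applied with $t = s-n+1$ and with time parameter $s-n$, produces a point $y_n \in \Split^L_{s-n,\dir}$ with $\intc^{\dir,-}_{(y_n, s-n)}(s-n+1) = y_{n-1}$, completing the induction.

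Next I will verify that these interfaces are consistent with each other. For any $n \ge 1$ and any $t \ge s-n+1$, Proposition \ref{p:rest} applied with starting point $(y_n, s-n)$ at intermediate time $r = s-n+1$ gives
\[
\intc^{\dir,-}_{(y_n,\,s-n)}(t) \;=\; \intc^{\dir,-}_{(\intc^{\dir,-}_{(y_n,\,s-n)}(s-n+1),\, s-n+1)}(t) \;=\; \intc^{\dir,-}_{(y_{n-1},\,s-n+1)}(t).
\]
Iterating, for any $m \le n$ and any $t \ge s-m$,
\[
\intc^{\dir,-}_{(y_n,\,s-n)}(t) \;=\; \intc^{\dir,-}_{(y_m,\,s-m)}(t).
\]
Hence the formula $\intc^{\infty}(t) := \intc^{\dir,-}_{(y_n, s-n)}(t)$, for any $n$ large enough that $s-n \le t$, unambiguously defines a function $\intc^{\infty} : \R \to \R$. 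Continuity on each half-line $[s-n, \infty)$ is immediate from Lemma \ref{lem:interface_continuous}, and these continuous pieces agree on overlaps, so $\intc^\infty$ is continuous on all of $\R$. By construction, $\intc^\infty(s) = \intc^{\dir,-}_{(y_1, s-1)}(s) = y_0 = x_0$.

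It remains to check the defining property of a leftmost mixed $\dir$-Busemann bi-infinite interface. Fix any $r \in \R$, choose $n$ with $s-n \le r$, and note that $\intc^\infty(r) = \intc^{\dir,-}_{(y_n,\,s-n)}(r)$. Another application of Proposition \ref{p:rest} with intermediate time $r$ yields, for all $t \ge r$,
\[
\intc^\infty(t) \;=\; \intc^{\dir,-}_{(y_n,\,s-n)}(t) \;=\; \intc^{\dir,-}_{(\intc^{\dir,-}_{(y_n,\,s-n)}(r),\, r)}(t) \;=\; \intc^{\dir,-}_{(\intc^\infty(r),\, r)}(t),
\]
so $\intc^\infty|_{[r,\infty)} = \intc^{\dir,-}_{(\intc^\infty(r), r)}$, which is exactly the condition in Definition \ref{def:biInf}. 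Thus $\intc^\infty \in \Intc^{\dir,-}$ with $\intc^\infty(s) = x_0$. The $\Intc^{\dir,+}$ case is completely analogous, using Item \ref{it6} of Lemma \ref{l:LRSP} and the ``$+$'' half of Proposition \ref{p:rest}. The only real conceptual step is the inductive construction of the consistent tower of base points $y_n$; once produced, Proposition \ref{p:rest} makes the gluing automatic, so I do not anticipate any genuine obstacle.
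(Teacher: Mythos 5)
Your proposal is correct and follows essentially the same approach as the paper: iterate Lemma \ref{l:LRSP} backwards in unit time steps to build a tower of base points $y_n \in \Split^L_{s-n,\dir}$, then glue the resulting interfaces using the consistency property of Proposition \ref{p:rest}. You spell out the well-definedness and the verification of the defining property of $\Intc^{\dir,-}$ in a bit more detail than the paper does, but there is no substantive difference.
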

\begin{proof}
Without loss of generality we assume $s=0$, thus proving the statement for a point $x_0 \in \Split^L_{0,\dir}$. The proof for arbitrary $s$ and $x_0 \in \Split^R_{s,\dir}$ is similar.  The proof consists of constructing a sequence of points $\{(x_n,t_n)\}_{n \geq 1}$ with $0=t_0> t_1>t_2> \dots$, $t_n\rightarrow -\infty$, and such that the interface $\intc^{\dir,-}_{(x_n,t_n)}$ agrees with $\intc^{\dir,-}_{(x_{n+1},t_{n+1})}$ on $[t_{n},\infty)$.  Let $t_i=-i$. 
By Lemma \ref{l:LRSP}, there exists $x_1\in\Split^L_{t_1,\dir}$ such that 
 \begin{equation}
\intc^{\dir,-}_{(x_1,t_1)}(t_0)=x_0.    
 \end{equation}
  Using again Lemma \ref{l:LRSP}, we see that there exists $x_2\in\Split^L_{t_2,\dir}$ such that 
 \begin{equation}
\intc^{\dir,-}_{(x_2,t_2)}(t_1)=x_1.    
 \end{equation}
  More generally, the point $x_i\in\Split^L_{t_i,\dir}$ is chosen such that 
  \begin{equation}\label{eq60}
\intc^{\dir,-}_{(x_i,t_i)}(t_{i-1})=x_{i-1}.    
 \end{equation}
  We now define the leftmost bi-infinite interface $\intc^\infty$ via 
  \begin{equation}
      \intc^\infty(r)=\intc^{\dir,-}_{(x_{i_r},t_{i_r})}(r) \qquad \forall r\in\R.
  \end{equation}
  where $
      i_r=\sup\{i:t_i\leq r\}$. 
  Then, from the definition of $\Intc^{\dir,-}$ (Definition \ref{def:biInf}) and using Proposition \ref{p:rest} and \eqref{eq60}, we have $\intc^\infty \in \Intc^{\dir,-}$.
  \end{proof}

\section{The interaction of geodesics with interfaces}
\label{s:G_and_I}
In this section, we study the interaction between infinite geodesics and mixed $\dir$-Busemann interfaces. Our first  result in this direction, Lemma \ref{lem:geod} below, shows that $f^\dir$-to-point geodesics are the restriction of $\dir\pm$ geodesics. This is the foundation for the proof of Theorem \ref{thm:intGeo}. 

For an initial condition $f$, define
\[
\begin{aligned}
    \chi^L(f,s;x,t)=\min\argmax_z \{f(z)+\Ll(z, s; x,t)\}\\
    \chi^R(f,s;x,t)=\max \argmax_z \{f(z)+\Ll(z, s; x,t)\}
\end{aligned}
\]
be respectively, the leftmost and rightmost exit point of the geodesics associated with the initial conditions $f$. Note that the maximum (and minimum) exists by \cite[Lemma 3.2]{Rahman-Virag-21} (see also \cite[Lemma 5.13]{Busa-Sepp-Sore-22a}) which essentially says that for any initial condition $f$, the leftmost and rightmost geodesics from $f$ to any point exists. The following result connects the exit points of the line-to-point  geodesics and the interfaces. 
\begin{lemma}
    \label{lem:geod}The following holds on the event $\Omega_1$: For all $\dir \in \R,(x_0,s) \in \R^2$ and $t>s$, if $f = f_{(x_0,s)}^\dir$, then any $(f,s)$-to-$(x,t)$ geodesic $g$ satisfying $g(s) \le x_0$ is a $\xi-$ geodesic rooted at $(x_0,s)$, restricted to $[s,t]$, and any $(f,s)$-to-$(x,t)$ geodesic $g$ satisfying $g(s) \ge x_0$ is a $\xi+$ geodesic rooted at $(x_0,s)$, restricted to $[s,t]$. In addition, if we set
    \be \label{tautdef}
    \intc_t^\sig = \intc_{(x_0,s)}^{\dir,\sig}(t),
    \ee
     where the right-hand side is as defined in \eqref{eq:Buse_interface}, then 
    the following holds: 
    \begin{enumerate}[label=(\roman*), font=\normalfont]
        \item \label{it1} If $x<\intc_t^-$ then $\pi$ is an $(f,s)$-to-$(x,t)$ geodesic if and only if it is a restriction to $[s,t]$, of a $\dir-$ geodesic rooted at $(x,t)$. Moreover, in this case,
        \begin{equation}\label{eq74}
            \chi^{R}(f,s;x,t)<x_0
        \end{equation}
        \item \label{it2} If $x > \intc_t^+$ then any $\pi$ is an   $(f,s)$-to-$(x,t)$ geodesic if and only if it is a restriction to $[s,t]$, of a $\dir+$ geodesic rooted at $(x,t)$. Moreover, in this case,
        \begin{equation}
            \chi^{L}(f,s;x,t)>x_0
        \end{equation}
        \item \label{it4} 
        If $\intc_t^-\leq x \leq \intc_t^+$, then the restriction to $[s,t]$ of any $\dir -$ or $\dir +$ geodesic started from $(x,t)$ is an $(f,s)$-to-$(x,t)$ geodesic. Moreover, in this case,
        \begin{equation}
        \label{eq74'}
            \chi^{L}(f,s;x,t)\leq x_0,\,\, \text{ and }\,\, \chi^{R}(f,s;x,t)\geq x_0.
        \end{equation}
    \end{enumerate}
    
\end{lemma}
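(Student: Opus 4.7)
The proof rests on three key inputs from Section \ref{sec:prelim}: Lemma \ref{l:MV}, which identifies $\kpzs^{\dir}_{(x_0,s),t}(x)$ with $W^{\dir-}(x_0,s;x,t)$ when $x \le \intc_t^+$ and with $W^{\dir+}(x_0,s;x,t)$ when $x \ge \intc_t^-$; Lemma \ref{lem:Buse_eq}, which characterizes the time-$s$ locations of $\dir\sig$ Busemann geodesics rooted at $(x,t)$ as precisely those $z$ satisfying $\Ll(z,s;x,t) = W^{\dir\sig}(z,s;x,t)$; and Proposition \ref{prop:DL_SIG_cons_intro}\ref{itm:arb_geod_cons}, which permits extending any such exit point backward into a full $\dir\sig$ Busemann geodesic.

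I first establish the general correspondence. Suppose $g$ is an $(f,s)$-to-$(x,t)$ geodesic with $g(s) = z^* \le x_0$; the case $z^* \ge x_0$ is symmetric. Since $z^*$ is a maximizer of $z \mapsto f(z)+\Ll(z,s;x,t)$ lying in $\{z \le x_0\}$, we have $d^{\dir}_{(x_0,s)}(x,t) \le 0$, so by continuity and monotonicity of $d^{\dir}_{(x_0,s)}(\cdot,t)$ (Lemma \ref{l:continuity of d}) and the definition of $\intc_t^+$, we get $x \le \intc_t^+$. Lemma \ref{l:MV}\ref{it:Bu2} then gives $\kpzs^{\dir}_{(x_0,s),t}(x) = W^{\dir-}(x_0,s;x,t)$. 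Combining this with $f(z^*) = W^{\dir-}(x_0,s;z^*,s)$ (by definition of $f_{(x_0,s)}^{\dir}$), the identity $f(z^*) + \Ll(z^*,s;x,t) = \kpzs^{\dir}_{(x_0,s),t}(x)$, and additivity of the Busemann process yields $\Ll(z^*,s;x,t) = W^{\dir-}(z^*,s;x,t)$. By Lemma \ref{lem:Buse_eq}, $z^*$ is a time-$s$ location of some $\dir-$ Busemann geodesic rooted at $(x,t)$, and Proposition \ref{prop:DL_SIG_cons_intro}\ref{itm:arb_geod_cons} extends the point-to-point geodesic $g|_{[s,t]}$ backward to a full $\dir-$ Busemann geodesic.

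For item \ref{it1}, the definition of $\intc_t^-$ and monotonicity of $d^{\dir}_{(x_0,s)}(\cdot,t)$ imply that $x < \intc_t^-$ forces $d^{\dir}_{(x_0,s)}(x,t) < 0$, so $\sup_{z \ge x_0}\{f(z)+\Ll(z,s;x,t)\} < \sup_{z \le x_0}\{f(z)+\Ll(z,s;x,t)\}$; hence every maximizer lies strictly to the left of $x_0$, giving \eqref{eq74}. The forward direction of \ref{it1} then follows from the general correspondence. For the converse, let $g$ be a $\dir-$ Busemann geodesic rooted at $(x,t)$. Using $\Ll(g(s),s;x,t) = W^{\dir-}(g(s),s;x,t)$ from \eqref{eqn:SIG_weight}, additivity, and Lemma \ref{l:MV}\ref{it:Bu2}, a case analysis on $g(s) \lessgtr x_0$ shows
\[
f(g(s)) + \Ll(g(s),s;x,t) = \kpzs^{\dir}_{(x_0,s),t}(x) + R(g(s)),
\]
where $R(z) = 0$ for $z \le x_0$ and $R(z) = D^{\dir}_s(z) - D^{\dir}_s(x_0) \ge 0$ for $z > x_0$ by monotonicity of $D^{\dir}_s$. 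Since $\kpzs^{\dir}_{(x_0,s),t}(x)$ is a supremum, $R(g(s)) = 0$ and $g(s)$ is a maximizer; combined with \eqref{eq74}, this forces $g(s) < x_0$, and $g|_{[s,t]}$ is indeed an $(f,s)$-to-$(x,t)$ geodesic. Item \ref{it2} follows by the symmetric argument.

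For item \ref{it4}, the condition $\intc_t^- \le x \le \intc_t^+$ forces $d^{\dir}_{(x_0,s)}(x,t) = 0$, so maximizers exist on both sides of $x_0$ (using attainment of maxima, e.g.\ from \cite[Lemma 3.2]{Rahman-Virag-21}), proving \eqref{eq74'}. Lemma \ref{l:MV}\ref{it:Bu3} gives $\kpzs^{\dir}_{(x_0,s),t}(x) = W^{\dir-}(x_0,s;x,t) = W^{\dir+}(x_0,s;x,t)$. Reapplying the identity above to any $\dir-$ Busemann geodesic $g$ from $(x,t)$ shows that $g(s)$ is automatically a maximizer regardless of its position relative to $x_0$, and an analogous expansion (with $R(z) = 0$ for $z \ge x_0$ and $R(z) = D^{\dir}_s(x_0) - D^{\dir}_s(z) \ge 0$ for $z < x_0$) handles $\dir+$ Busemann geodesics. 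The principal technical challenge throughout is the four-way case split $g(s) \lessgtr x_0$ crossed with the sign $\dir\pm$; at each step one must invoke monotonicity of the Busemann process in $\dir$ (Proposition \ref{prop:Buse_basic_properties}\ref{itm:DL_Buse_gen_mont}) to ensure that the remainder term $R$ always has the correct sign, so that the supremum property of $\kpzs$ pins down the identities.
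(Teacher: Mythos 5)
Your proof is correct and follows the same essential approach as the paper: all three key inputs (Lemma \ref{l:MV}, Lemma \ref{lem:Buse_eq}, and Proposition \ref{prop:DL_SIG_cons_intro}\ref{itm:arb_geod_cons}) coincide, and the general correspondence argument is identical. The one place where you depart from the paper is the converse half of item \ref{it1}: the paper proves by contradiction that no maximizer of $z \mapsto W^{\dir-}(x_0,s;z,s)+\Ll(z,s;x,t)$ can lie in $[x_0,\infty)$ (via the monotonicity $W^{\dir-}(x_0,s;z,s) \le W^{\dir+}(x_0,s;z,s) = f(z)$ for $z \ge x_0$), whereas you expand $f(g(s))+\Ll(g(s),s;x,t) = \kpzs^{\dir}_{(x_0,s),t}(x)+R(g(s))$ with $R \ge 0$ and conclude $R=0$ from the supremum bound. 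These are equivalent reformulations of the same monotonicity observation, and your remainder-term formulation has the mild advantage that it handles item \ref{it4} with no change of structure — you just note $R$ changes sign convention for the $\dir+$ case — which is a bit cleaner than the paper's two-case computation there. The only thing worth double-checking in your writeup is that when you apply Lemma \ref{l:MV}\ref{it:Bu2} in the general correspondence, you first verify $x \le \intc_t^+$ from the assumption $g(s) = z^* \le x_0$ (which gives $d^{\dir}_{(x_0,s)}(x,t) \le 0$); you do this explicitly, whereas the paper leaves it implicit, so that is fine.
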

\begin{proof}
Let $g$ be an $(f,s)$-to-$(x,t)$ geodesic, and assume $z_0=g(s) \geq x_0$. By Lemma \ref{l:MV}\ref{it:Bu1},
 \begin{align*}
     &\qquad \, f(z_0)+\Ll(z_0,s;x,t)=W^{\dir+}(x_0,s;x,t)\\
     &\implies W^{\dir+}(x_0,s;z_0,s)+\Ll(z_0,s;x,t)=W^{\dir+}(x_0,s;x,t)\\
     & \implies \Ll(z_0,s;x,t)=W^{\dir+}(z_0,s;x,t),
 \end{align*}
 and the last line implies that $z_0$ is a maximizer of  $z \mapsto W^{\dir+}(0,s;z,s)+\Ll(z,s;x,t)$ (using Lemma \ref{lem:Buse_eq}). Thus, $g$ is the restriction of a $\dir+$ geodesic by Proposition \ref{prop:DL_SIG_cons_intro}\ref{itm:arb_geod_cons}.  The proof for the statement when $g(s) \le x_0$ is symmetric. We now prove the enumerated items.

\textbf{Item \ref{it1}:} If $x < \intc_t^-$, then by definition of $\intc_t^-$, \[
\sup_{z \le x_0}\{f(z) + \Ll(z,s;x,t)\} < \sup_{z \ge x_0}\{f(z) + \Ll(z,s;x,t)\},
\]
so all maximizers of $z \mapsto f(z) +\Ll(z,s;x,t)$ must lie in the set $(-\infty,x_0)$, which implies \eqref{eq74}. Recall that the locations of $\xi-$ geodesics are exactly the maximizers of $z \mapsto W^{\dir -}(x_0,s;z,s) + \Ll(z,s;x,t)$ over $z \in \R$. By definition, $f(z) = W^{\dir-}(x_0,s;z,s)$ for $z < x_0$, so to show that the $(f,s)$-to-$(x,t)$ geodesics are exactly the restrictions of $\xi-$ geodesics, we must show that all maximizers $z \mapsto W^{\dir -}(x_0,s;z,s) + \Ll(z,s;x,t)$ are also in the set $(-\infty,x_0)$, which also proves $\chi^{R}(f,s;x,t) < x_0$. Recall by monotonicity that $W^{\dir-}(x_0,s;z,s) \le W^{\dir+}(x_0,s;z,s)$ for $z \ge x_0$. Suppose, by way of contradiction, that there is a maximzer of $z \mapsto W^{\dir -}(x_0,s;z,s) + \Ll(z,s;x,t)$ in $[x_0,\infty)$, which we shall call $z^\star$. Then, 
\begin{align*}
&\quad \,f(z^\star) + \Ll(z^\star,s;x,t)  \\
&=W^{\dir +}(x_0,s;z^\star,s) + \Ll(z^\star,s;x,t) \\
&\ge W^{\dir -}(x_0,s;z^\star,s) + \Ll(z^\star,s;x,t) \\
&= \sup_{z \in \R}\{W^{\dir -}(x_0,s;z,s) + \Ll(z,s;x,t)\} \\
&\ge \sup_{z \le x_0}\{W^{\dir -}(x_0,s;z,s) + \Ll(z,s;x,t)\} \\
&= \sup_{z \in \R}\{f(z) + \Ll(z,s;x,t)\}.
\end{align*}
Thus, $z^\star$ is a maximizer of $z \mapsto f(z) + \Ll(z,s;x,t)$ over $z \in \R$, a contradiction since we have seen that all maximizers are in $(-\infty,x_0)$.

\medskip \noindent \textbf{Item \ref{it2}:} This has a symmetric proof to Item \ref{it1}.

\medskip \noindent \textbf{Item \ref{it4}:} 
If $\intc_t^-\leq x \leq \intc_t^+$, then by definition of $\intc_t^-$ and $\intc_t^+$, \[\sup_{z \in \R}\{f(z) + \Ll(z,s;x,t)\}=
\sup_{z \le x_0}\{f(z) + \Ll(z,s;x,t)\} = \sup_{z \ge x_0}\{f(z) + \Ll(z,s;x,t)\},
\]
This shows \eqref{eq74'}.

 Next, we show that when $\intc_t^-\leq x \leq \intc_t^+$, the restriction of any $\dir+$ geodesic starting from $(x,t)$ is an $(f,s)$-to-$(x,t)$ geodesic. The proof that $\dir-$ geodesics are $(f,s)$-to-$(x,t)$ geodesics is symmetric. Let $g$ be an arbitrary $\dir +$ geodesic rooted at $(x,t)$.
 \paragraph{\textbf{Case 1:}} $z_0 := g(s) \geq x_0$. Therefore, we have
 \begin{align*}
     &\sup_{z \in \R}\left(f(z)+\Ll(z,s;x,t)\right)=W^{\dir+}(x_0,s;x,t)\qquad \text{ by Lemma } \ref{l:MV}\ref{it:Bu3}\\&=W^{\dir+}(x_0,s;z_0,s)+W^{\dir+}(z_0,s;x,t)\\&=W^{\dir+}(x_0,s;z_0,s)+\Ll(z_0,s;x,t)\qquad \qquad \text{by \eqref{eqn:SIG_weight} } \\&=f(z_0)+\Ll(z_0,s;x,t).
 \end{align*}
By Lemma \ref{lem:Buse_eq}, $g|_{[s,t]}$ is indeed an $(f,s)$-to-$(x,t)$ geodesic.
 \paragraph{\textbf{Case 2:}} $z_0<x_0$. In this case, 
 \begin{align*}
 &\quad \,f(z_0)+\Ll(z_0,s;x,t) \\
 & =W^{\dir-}(x_0,s;z_0,s)+W^{\dir+}(z_0,s;x,t) \qquad\qquad\qquad \qquad\qquad\;\; \text{by \eqref{eqn:SIG_weight} }\\&=W^{\dir-}  (x_0,s;z_0,s)+W^{\dir+}(z_0,s;x_0,s)+W^{\dir+}(x_0,s;x,t) \qquad\text{ by additivity}\\&=W^{\dir-}(x_0,s;z_0,s)+W^{\dir+}(z_0,s;x_0,s)+W^{\dir-}(x_0,s;x,t) \qquad \text{ by Lemma } \ref{l:MV}\ref{it:Bu3}\\& \geq W^{\dir-}(x_0,s;z_0,s)+W^{\dir-}(z_0,s;x_0,s)+W^{\dir-}(x_0,s;x,t) \qquad \text{ by monotonicity }
 \\&=W^{\dir-}(x_0,s;x,t) \qquad\qquad\qquad\qquad\qquad\qquad\qquad\qquad\qquad\;\;\text{ by additivity}\\
 &=\sup_{z \in \R}\{f(z)+\Ll(z,s;x,t)\} \qquad \qquad\qquad\qquad\qquad\qquad\qquad\quad\;\text{by Lemma \ref{l:MV}}.
 \end{align*}
 Hence, $z_0$ is a maximizer of $z \mapsto f(z) + \Ll(z,s;x,t)$, and $g|_{[s,t]}$ is an $(f,s)$-to-$(x,t)$ geodesic by Lemma \ref{lem:Buse_eq}. 
\end{proof}

\begin{corollary} \label{cor:LRmost_pi} 
    In the setting of Lemma \ref{lem:geod}, the following hold.
    \begin{enumerate} [label=(\roman*), font=\normalfont]
    \item \label{it:LR1} If $x \le \tau_t^+$ and $\pi_{(x,t)}^L$ is the leftmost $(f,s)$-to-$(x,t)$ geodesic, then 
    \[
    \pi_{(x,t)}^L = g_{(x,t)}^{\dir-,L}|_{[s,t]}, \quad\text{and}\quad g_{(x,t)}^{\dir-,L}(s) \le x_0. 
    \]
    \item \label{it:LR2} If $x \ge \tau_t^-$ and $\pi_{(x,t)}^R$ is the rightmost $(f,s)$-to-$(x,t)$ geodesic, then 
    \[
    \pi_{(x,t)}^R = g_{(x,t)}^{\dir+,R}|_{[s,t]},\quad\text{and}\quad g_{(x,t)}^{\dir+,R}(s) \ge x_0.
    \]
    \end{enumerate}
\end{corollary}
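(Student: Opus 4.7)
The plan is to prove item \ref{it:LR1}; item \ref{it:LR2} follows by the symmetric argument with leftmost/rightmost, $\dir-$/$\dir+$, and $\tau_t^-$/$\tau_t^+$ interchanged. The initial reduction I would make is this: the leftmost $(f,s)$-to-$(x,t)$ geodesic $\pi_{(x,t)}^L$ must start at $\chi^L(f,s;x,t)$ and be the leftmost point-to-point geodesic from $(\chi^L(f,s;x,t),s)$ to $(x,t)$. On the other hand, by Proposition \ref{prop:DL_SIG_cons_intro}\ref{itm:DL_all_SIG}, $g^{\dir-,L}_{(x,t)}(s)$ is the leftmost maximizer of $z\mapsto W^{\dir-}(0,s;z,s)+\Ll(z,s;x,t)$, and by Proposition \ref{prop:DL_SIG_cons_intro}\ref{itm:DL_LRmost_geod}, $g^{\dir-,L}_{(x,t)}|_{[s,t]}$ is the leftmost point-to-point geodesic between its endpoints. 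Thus the whole statement reduces to verifying the two identities $g^{\dir-,L}_{(x,t)}(s) = \chi^L(f,s;x,t)$ and $g^{\dir-,L}_{(x,t)}(s) \le x_0$.

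Next I would split on the position of $x$. If $x < \tau_t^-$, Lemma \ref{lem:geod}\ref{it1} directly yields \eqref{eq74}, so $\chi^L(f,s;x,t) < x_0$, and further shows that every $(f,s)$-to-$(x,t)$ geodesic is the restriction of a $\dir-$ geodesic rooted at $(x,t)$. Inspecting the proof of that item, the set of maximizers of $z\mapsto f(z)+\Ll(z,s;x,t)$ coincides with the set of maximizers of $z\mapsto W^{\dir-}(x_0,s;z,s)+\Ll(z,s;x,t)$ (which differs by only an additive constant from the functional defining $g^{\dir-,L}_{(x,t)}(s)$), and both sets sit in $(-\infty,x_0)$. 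Hence their leftmost elements coincide, closing this case.

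In the more delicate case $\tau_t^- \le x \le \tau_t^+$, Lemma \ref{lem:geod}\ref{it4} already gives $\chi^L(f,s;x,t) \le x_0$. Using Lemma \ref{l:MV}\ref{it:Bu3},
\[
\sup_{z\in\R}\{W^{\dir-}(x_0,s;z,s)+\Ll(z,s;x,t)\} \;=\; W^{\dir-}(x_0,s;x,t) \;=\; \sup_{z\in\R}\{f(z)+\Ll(z,s;x,t)\},
\]
and since $f = W^{\dir-}(x_0,s;\cdot,s)$ on $(-\infty,x_0]$, a point $z \le x_0$ maximizes the first functional iff it maximizes the second. The fact that $\chi^L(f,s;x,t) \le x_0$ then forces the first supremum to be attained at or to the left of $x_0$, so $g^{\dir-,L}_{(x,t)}(s) \le x_0$, and matching the leftmost maximizers within $(-\infty,x_0]$ gives $g^{\dir-,L}_{(x,t)}(s) = \chi^L(f,s;x,t)$. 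The main obstacle to keep in mind is the boundary behavior at $z = x_0$, where the two branches of $f$ meet and the $\dir-$ and $\dir+$ Busemann slopes can differ: one must rule out a maximizer of the $\dir-$ functional slipping strictly to the right of $x_0$ while the leftmost one stays on the left. Because the supremum of the $\dir-$ functional is already achieved on $(-\infty,x_0]$, its leftmost maximizer is automatically in that set, and the identification goes through cleanly.
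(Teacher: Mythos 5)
Your proof is correct and proceeds along essentially the same lines as the paper: both arguments hinge on combining Items \ref{it1} and \ref{it4} of Lemma \ref{lem:geod} to get $\chi^L(f,s;x,t)\le x_0$, and then identifying $\pi_{(x,t)}^L$ with $g_{(x,t)}^{\dir-,L}|_{[s,t]}$. The only stylistic difference is that the paper concludes at the geodesic level (using the main statement of Lemma \ref{lem:geod} together with the ``if'' direction of Items \ref{it1}/\ref{it4} to say that $\pi_{(x,t)}^L$ is the leftmost among all $\dir-$ geodesics from $(x,t)$), whereas you conclude at the maximizer level by matching the leftmost maximizers of $z\mapsto f(z)+\Ll(z,s;x,t)$ and $z\mapsto W^{\dir-}(x_0,s;z,s)+\Ll(z,s;x,t)$ on $(-\infty,x_0]$—this re-derives, rather than quotes, part of Lemma \ref{lem:geod}, but the content is the same.
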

\begin{proof}
We prove the first statement, with the second following a symmetric proof. By Items \ref{it1} and \ref{it4} of Lemma \ref{lem:geod}, if $x \le \tau_t^+$, then $\chi^{L}(f;s,x,t) \le x_0$, so the main statement of Lemma \ref{lem:geod} implies that the leftmost $(f,s)$-to-$(x,t)$ geodesic,  $\pi_{(x,t)}^L$, is the restriction of a $\dir -$ geodesic to $[s,t]$. Furthermore, by Items \ref{it1} and \ref{it4} of Lemma \ref{lem:geod}, all $\dir -$ geodesics from $(x,t)$ are $(f,s)$-to-$(x,t)$ geodesics. Hence, $\pi_{(x,t)}$ is the restriction of the leftmost $\dir -$ geodesic from $(x,t)$, which gives the desired statement.  
\end{proof}
    Recall the set $\Branch_{t,\dir}$ defined in \eqref{eq:bran_def} which is the collection of all branching points. Our next result gives two basic properties of interfaces. The first property is that points on the interfaces live on points in $\Branch_{t,\dir}$. Note that this does not necessarily imply that $\intc_t^-\in \Split_{t,\dir}$ (resp.\ $\intc_t^+$) as it is possible for the leftmost $\dir-$ and rightmost $\dir+$ geodesics starting from a point to meet even after they separate. The second property is that some geodesics associated with the competition interfaces do not intersect the interface.

\begin{lemma} 
    \label{lem:Splt}
    The following holds on the event $\Omega_1$. Let $(x_0,s)\in\R^2$  and $\dir\in \R$. For $t \ge s$ and $\sigg \in \{-,+\}$, let $\intc_t^\sig = \intc_{(x_0,s)}^{\dir,\sig}(t)$. Then the following holds:
    \begin{enumerate}[label=(\roman*), font=\normalfont]
        \item \label{itm:branching}(interfaces transverse through branching points) For any $t>s$,
        \begin{equation}
            \intc_t^-, \intc_t^+\in \Branch_{t,\dir}.
        \end{equation}
    \item \label{itm:G1} (geodesics do not intersect interfaces) For all $t>r\geq s$,
     \be \label{eq:no_int}
            g^{\dir-,L}_{(\intc_t^-,t)}(r) < \intc_r^-, \qquad \text{and}\qquad  g^{\dir+,R}_{(\intc_t^+,t)}(r) > \intc_r^+.
            \ee
            In particular, for all $x<\intc_t^-$, $y>\intc_t^+$, and $t>r\geq s$,
            \be \label{eq:othpts}
            g^{\dir-,R}_{(x,t)}(r) < \intc_r^-, \qquad \text{and}\qquad \intc_r^+ < g^{\dir+,L}_{(y,t)}(r).
            \ee   
    \end{enumerate}
\end{lemma}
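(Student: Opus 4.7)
The plan is to prove Item~\ref{itm:G1} first and then use it, combined with Corollary~\ref{cor:LRmost_pi}, to deduce Item~\ref{itm:branching}. Proposition~\ref{p:rest} gives $\intc_t^- = \intc_{(\intc_r^-,r)}^{\dir,-}(t)$, so the inequality $g^{\dir-, L}_{(\intc_t^-, t)}(r) < \intc_r^-$ for general $r \in [s, t)$ reduces to the base case $r = s$ applied to the interface freshly restarted at $(\intc_r^-, r)$. The core task is therefore to show $g^{\dir-, L}_{(\intc_t^-, t)}(s) < x_0$, and symmetrically $g^{\dir+, R}_{(\intc_t^+, t)}(s) > x_0$.

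Corollary~\ref{cor:LRmost_pi}\ref{it:LR1} immediately yields the weak inequality $g^{\dir-, L}_{(\intc_t^-, t)}(s) \le x_0$. For the strict inequality, the key input is Lemma~\ref{lem:geod}\ref{it1}: for any $y < \intc_t^-$ we have $d^\dir_{(x_0,s)}(y,t) < 0$, which forces $g^{\dir-, R}_{(y, t)}(s) < x_0$ strictly. Taking $y \nearrow \intc_t^-$ and using the left-continuity in Proposition~\ref{prop:g_basic_prop}\ref{itm:DL_SIG_conv_x} recovers $g^{\dir-, L}_{(\intc_t^-, t)}(s) \le x_0$ in the limit. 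To rule out equality I argue by contradiction: assuming $g^{\dir-, L}_{(\intc_t^-, t)}(s) = x_0$, Lemma~\ref{lem:Buse_eq} together with the restart property in Proposition~\ref{prop:DL_SIG_cons_intro}\ref{itm:DL_all_SIG} identifies $g^{\dir-, L}_{(\intc_t^-, t)}$ with $g^{\dir-, L}_{(x_0, s)}$ on $(-\infty, s]$; comparing against the coalescing family $\{g^{\dir-, R}_{(y, t)}\}_{y \nearrow \intc_t^-}$ via Proposition~\ref{prop:DL_all_coal} then contradicts the defining infimum characterization of $\intc_t^-$. The ``in particular'' bounds for $x < \intc_t^-$ follow at $r = s$ directly from Lemma~\ref{lem:geod}\ref{it1}, and for general $r$ via the restart argument combined with the spatial monotonicity in Proposition~\ref{prop:g_basic_prop}\ref{itm:DL_SIG_mont_x}; the claims involving $\intc_r^+$ and $g^{\dir+, L}_{(y, t)}$ are obtained by mirror-image reasoning.

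For Item~\ref{itm:branching}, once Item~\ref{itm:G1} is in hand, Corollary~\ref{cor:LRmost_pi}\ref{it:LR2} applied at the restart point $(\intc_r^-, r)$ yields $g^{\dir+, R}_{(\intc_t^-, t)}(r) \ge \intc_r^-$ for all $r \in [s, t)$, so
\[
g^{\dir-, L}_{(\intc_t^-, t)}(r) < \intc_r^- \le g^{\dir+, R}_{(\intc_t^-, t)}(r) \qquad \text{for all } r \in [s, t).
\]
Taking any $\delta \in (0, t - s]$ then exhibits $\intc_t^- \in \Branch_{t, \dir}$, and the case of $\intc_t^+$ is symmetric. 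I expect the main obstacle to be the strict inequality in Item~\ref{itm:G1}: the weak bound is an immediate consequence of the available corollaries, but the strict version requires ruling out the degenerate scenario in which the leftmost $\dir-$ Busemann geodesic from $(\intc_t^-, t)$ threads back exactly through the interface root $(x_0, s)$, a configuration that is incompatible only after invoking coalescence together with the infimum characterization of $\intc_t^-$.
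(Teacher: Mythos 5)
Your decomposition — prove Item~\ref{itm:G1} first, then deduce Item~\ref{itm:branching} from the chain
$g^{\dir-,L}_{(\intc_t^-,t)}(r) < \intc_r^- \le g^{\dir+,R}_{(\intc_t^-,t)}(r)$ obtained from Item~\ref{itm:G1} and Corollary~\ref{cor:LRmost_pi}\ref{it:LR2} at restart points — is valid and actually cleaner than the paper's route, which proves Item~\ref{itm:branching} directly via a separate contradiction argument. The restart reduction of Item~\ref{itm:G1} to the base case $r=s$ using Proposition~\ref{p:rest} is also correct, and the ``in particular'' bounds \eqref{eq:othpts} indeed follow at general $r$ from Lemma~\ref{lem:geod}\ref{it1} combined with the restart property.

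The gap is in the core strict inequality $g^{\dir-,L}_{(\intc_t^-,t)}(s) < x_0$. Your plan is to assume equality, use Lemma~\ref{lem:Buse_eq} and Proposition~\ref{prop:DL_SIG_cons_intro}\ref{itm:DL_all_SIG} to extend $g^{\dir-,L}_{(\intc_t^-,t)}$ below $s$ as $g^{\dir-,L}_{(x_0,s)}$, and then derive a contradiction by comparing against the family $g^{\dir-,R}_{(y,t)}$ with $y\nearrow \intc_t^-$ ``via Proposition~\ref{prop:DL_all_coal}.'' But coalescence alone does not produce a contradiction here. For each $y<\intc_t^-$ you have $g^{\dir-,R}_{(y,t)}(s) < x_0 = g^{\dir-,L}_{(\intc_t^-,t)}(s)$, and by ordering $g^{\dir-,R}_{(y,t)}$ stays weakly to the left of $g^{\dir-,L}_{(\intc_t^-,t)}$ everywhere below $t$; Proposition~\ref{prop:DL_all_coal} only tells you these geodesics merge with $g^{\dir-,L}_{(\intc_t^-,t)}$ at some time strictly below $s$. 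That picture is perfectly consistent and conflicts with neither the infimum characterization of $\intc_t^-$ nor anything else you have quoted. What actually kills the scenario — and what the paper uses — is that the rightmost approximating geodesics $\pi_{p_n} = g^{\dir-,R}_{(p_n,t)}|_{[s,t]}$ must agree with $g^{\dir-,L}_{(\intc_t^-,t)}$ at an interior time level $r^*\in(s,t)$ for $n$ large, which requires the Hausdorff convergence of $\pi_{p_n}$ to $g^{\dir-,L}_{(\intc_t^-,t)}|_{[s,t]}$, the no--interior--bubbles Lemma~\ref{lem:no_bubbles} to get uniqueness on an interior subinterval, and the overlap Lemma~\ref{lem:overlap}; once that interior touching is secured, a geodesic swap replaces $\pi_{p_n}|_{[s,r^*]}$ by $g^{\dir-,L}_{(\intc_t^-,t)}|_{[s,r^*]}$ to produce an $(f,s)$-to-$p_n$ geodesic exiting at $x_0$, contradicting $\chi^R(f,s;p_n)<x_0$ from Lemma~\ref{lem:geod}\ref{it1}. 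Without establishing the interior intersection, your contradiction does not close.
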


        \begin{proof} \textbf{Item \ref{itm:branching}:}
            We shall prove the statement for $\intc^-_t$. The case for $\intc^+_t$ will follow similarly. Let $f = f_{(x_0,s)}^\dir$. By Lemma \ref{lem:geod}\ref{it4},  for all $s<t$, $\sigg \in \{-,+\}$, and $S \in \{L,R\}$, we have that
$g_{(\intc^-_t,t)}^{\dir\sig,S}|_{[s,t]}$ is an $f$ to $(\intc_t^-,t)$ geodesic. 

Suppose, in contradiction to the statement of the lemma, that there exists $\delta>0$ so that
            \begin{equation}
                g_{(\intc^-_t,t)}^{\dir -, L}=g_{(\intc^-_t,t)}^{\dir +, R} \text{ on $[t-\delta,t]$}.
            \end{equation}
Proposition \ref{prop:DL_SIG_cons_intro}\ref{itm:DL_LRmost_geod} gives that $g_{(\intc^-_t,t)}^{\dir -, L}$ (resp.\ $g_{(\intc^-_t,t)}^{\dir +, R}$) are leftmost (resp.\ rightmost) geodesics between any of their endpoints. Let $x_1=g_{(\intc^-_t,t)}^{\dir -, L}(s)$ and $x_2=g_{(\intc^-_t,t)}^{\dir +, R}(s).$ By ordering of geodesics (Proposition \ref{prop:g_basic_prop}, \ref{itm:DL_mont_dir}), we have $x_1 \leq x_2$. Now, for $n\in\N$, define the points (see Figure \ref{fig:Splitting points})
            \begin{equation} \label{pn_def}
            \begin{aligned}
                p_n&=(\intc^-_t-n^{-1},t)\\
            \end{aligned}
            \end{equation}
            \begin{figure}[t!]
                \includegraphics[width=7 cm]{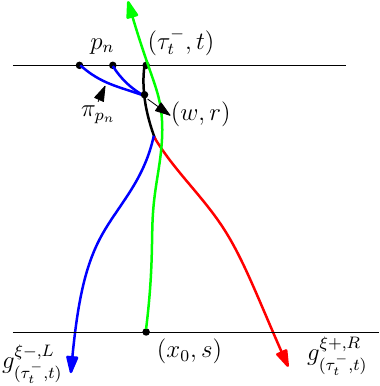}
                \caption{To prove that $(\intc^-_{t},t)$ is a branching point, we use contradiction. We consider the sequence $p_n$ and if $g_{(\intc^-_t,t)}^{\dir -, L}$ and $g_{(\intc^-_t,t)}^{\dir +, R}$ coincide for some time, then there will be $w$ on $g_{(\intc^-_t,t)}^{\dir +, R}$ as in the figure such that the following happens: the rightmost $(f,s)$-to-$p_{n}$ will all intersect $(w,r)$ for large enough $n$. But the $(f,s)$-to-$(w,r)$ geodesic $g_{(\intc^-_t,t)}^{\dir +, R}|_{[s,t-\delta]}$ starts on the right of $x_0.$ Hence, $\pi_{p_{n}}(s) \geq x_0.$ This is a contradiction to the definition of $\intc_t^-.$}
                \label{fig:Splitting points}
            \end{figure}
            For $n \ge 1$, let $\pi_n$ be the rightmost $(f,s)$-to-$p_n$ geodesic. As $p_n \rightarrow (\intc^-_t,t)$, by Lemma \cite[Lemma 3.4]{Rahman-Virag-21}, every subsequence $n_k$ contains a further subsequence $n_{k_\ell}$ such that $\pi_{p_{n_{k_\ell}}}$ converges to an $(f,s)$-to-$(\intc_t^-,t)$ geodesic, where the convergence holds in the Hausdorff topology on the set of paths. Now, by Corollary \ref{cor:LRmost_pi},\, for all $n$,
            \[
            \pi_{p_n}=g^{\dir-,R}_{p_n}|_{[s,t]}.
            \]
            By the ordering of geodesics in Proposition \ref{prop:g_basic_prop}\ref{itm:DL_SIG_mont_x}, for all $r \in [s,t]$,
            \[
            \pi_{p_n}(r) \leq g^{\dir-,L}_{(\intc_t^-,t)}(r),
            \]
            so any subsequential limit of $\pi_{p_n}$ lies to the left of $g_{(\intc_t^-,t)}^{\dir-,L}|_{[s,t]}$. But we have also noted that all subsequential limits (in the Hausdorff topology) of $\pi_{p_n}$ are $(f,s)$-to-$(\intc_t^-,t)$ geodesics, and by Corollary \ref{cor:LRmost_pi}, $g_{(\intc_t^-,t)}^{\dir-,L}|_{[s,t]}$ is the leftmost $(f,s)$-to-$(\intc_t^-,t)$ geodesic.  
             Thus, $\{\pi_{p_{n}}\}_{n \geq 1}$ converges to $g_{(\intc^-_t,t)}^{\dir-,L}|_{[s,t]}$ in the Hausdorff topology.  Next, define $w_0 :=g_{(\intc^-_t,t)}^{\dir -, L}(t-\delta)=g_{(\intc^-_t,t)}^{\dir +, R}(t-\delta)$. By the assumption that $g_{(\intc^-_t,t)}^{\dir -, L}=g_{(\intc^-_t,t)}^{\dir +, R}$ on $[t-\delta,t]$, and the fact that $g_{(\intc_t^-,t)}^{\dir -,L}$ and $g_{(\intc_t^+,t)}^{\dir +,R}$ are the leftmost and rightmost geodesics, respectively, between any two of their points, we see that the restriction of $g_{(\intc^-_t,t)}^{\dir -, L}$ (resp.\ $g_{(\intc^-_t,t)}^{\dir +, R}$) on $[t-\delta, t]$ is the unique geodesic between $(w_0, t-\delta)$ and $(\intc^-_t, t)$. Denote this geodesic by $\pi_{(w_0, t-\delta),(\intc^-_t,t)}$. Since the convergence of $\{\pi_{p_{n}}\}_{n \geq 1}$ holds in the Hausdorff topology, $\pi_{p_n}(t-\delta)$ converges to $w_0$ as $n \to \infty$. Since also $p_n \to (\intc_t^-,t)$, Lemma \ref{lem:overlap} implies that the overlap of $\pi_{p_n}$ and $g_{(\tau_t^-,t)}^{\dir-,L}|_{[s,t]}$ is an interval whose endpoints converge to $s$ and $t$. 
             In particular, if we choose $r \in 
        (t-\delta, t)$, then, for all large enough $n$, 
            \begin{equation} 
            \label{eq: geodesics meet}
            w := \pi_{p_{n}}(r)=\pi_{(w_0, t-\delta),(\intc^-_t,t)}(r) = g_{(\intc_t^-,t)}^{\dir+,R}(r) .
            \end{equation}
        Since $g_{(\intc_t^-,t)}^{\dir+,R}|_{[s,t]}$ is an $(f,s)$-to-$(\intc_t^-,t)$ geodesic, this implies that $g_{(\intc_t^-,t)}^{\dir+,R}|_{[s,r]}$ is an $(f,s)$-to-$(w,r)$ geodesic. Hence, by \cite[Lemma 3.1]{Rahman-Virag-21},
        if we consider $\pi_{p_{n}}$ and replace $\pi_{p_{n}}|_{[s,r]}$ by $g_{(\intc_t^-,t)}^{\dir+,R}|_{[s,r]}$ then the resulting path is an $(f,s)$-to-$p_{n}$ geodesic. But note that by Corollary \ref{cor:LRmost_pi}, $g_{(\intc_t^-,t)}^{\dir+,R}|_{[s,r]}(s)\geq x_0$. Hence, for sufficiently large $n$, $ \chi^{R}(f,s;p_{n}) \geq x_0$. This contradicts Equation \eqref{eq74} of Lemma \ref{lem:geod}\ref{it1}.

            \medskip \noindent 
        \textbf{Item \ref{itm:G1}:} Note that \eqref{eq:othpts} follows immediately from \eqref{eq:no_int} by ordering of geodesics (Proposition \ref{prop:g_basic_prop}, \ref{itm:DL_SIG_mont_x}).
        As we did in \ref{itm:branching}, we will prove the statement for $\intc_t^-$. The other case will follow similarly. We start with $r=s$, i.e., we show that 
        \[
        g^{\dir-,L}_{(\intc_t^-,t)}(s)<x_0.\]
        Assume, by contradiction, that this fails. Then, we can consider the same sequence $p_{n}$ defined in \eqref{pn_def}. As we have shown that for large enough $n$, $\pi_{p_{n}}$ intersects $g^{\dir-,L}_{(\intc_t^-,t)}$, we can apply the same swapping of geodesics as before to conclude that for large enough $n$,
        \[
        \pi_{p_{n}}(s)\geq x_0.
        \]
        This is a contradiction to Equation \eqref{eq74} of Lemma \ref{lem:geod}\ref{it1}.
        
        Now we consider the case when $t>r>s$. By way of contradiction assume that there exists $t>r_0>s$ such that
        \[
        g^{\dir-,L}_{(\intc_t^-,t)}(r_0) \geq \intc^-_{r_0}.
        \]
         Choose $r_1 \in (r_0,t)$, and consider the sequence $p_{n}$ defined in \eqref{pn_def}. Then since geodesics do not form interior bubbles (Lemma \ref{lem:no_bubbles}),  $g^{\dir-,L}_{(\intc_t^-,t)}|_{[r_0,r_1]}$ is the unique geodesic between $(g^{\dir-,L}_{(\intc_t^-,t)}(r_0),r_0)$ and $(g^{\dir-,L}_{(\intc_t^-,t)}(r_1),r_1)$. Now, as $\pi_{p_{n}}$ converges to $g^{\dir-,L}_{(\intc_t^-,t)}|_{[s,t]}$ in the Hausdorff topology, the restrictions $\pi_{p_{n}}|_{[r_0,r_1]}$ also converge in the same topology to $g^{\dir-,L}_{(\intc_t^-,t)}|_{[r_0,r_1]}$, which is the unique geodesic between its two endpoints. By Lemma \ref{lem:overlap}, this implies that the overlap of $\pi_{p_{n}}|_{[r_0,r_1]}$ with $g^{\dir-,L}_{(\intc_t^-,t)}|_{[r_0,r_1]}$ is an interval whose endpoints are converging to $[r_0,r_1]$. So, for sufficiently large $n$, $\pi_{p_{n}}|_{[r_0,r_1]}$ intersects $g^{\dir-,L}_{(\intc_t^-,t)}|_{[r_0,r_1]}$ (see Figure \ref{fig:geodesicsdonotintersectinterface}). By Lemma \ref{lem:geod}\ref{it1}, we know that $\pi_{p_{n}}$ are $\dir-$ geodesics.
        \begin{figure}[t!]
            \includegraphics[width=7cm]{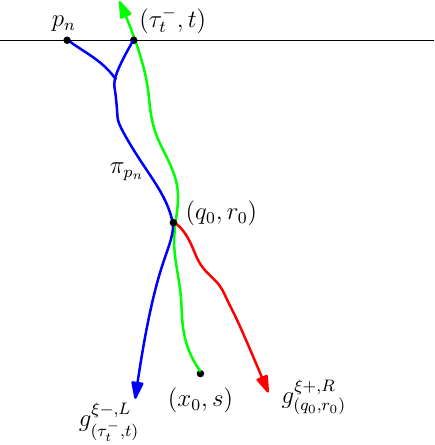}
            \caption{To prove Lemma \ref{lem:Splt}\ref{itm:G1}, we consider the sequence $p_{n}$ as in the first part. As the rightmost $\dir-$ geodesics  $\pi_{p_{n}}$ to $p_{n}$ converge to $g^{\dir-,L}_{(\intc_t^-,t)}|_{[s,t]}$, for large enough $n$ these geodesics have to meet $g^{\dir-,L}_{(\intc_t^-,t)}|_{[s,t]}$ before time level $r_0$ and can not separate again. Now, if $g^{\dir-,L}_{(\intc_t^-,t)}|_{[s,t]}$ intersects $\intc^-|_{[s,t]}$ then we can swap $\pi_{p_{n}}$ on $[s,r_0]$ by $g^{\dir+,R}_{(q_0,r_0)}$ to get an $(f,s)$-to-$p_{n}$ geodesic whose value at time $s$ is at least $x_0$. This is a contradiction as $p_{n}<\intc_t^-.$}
            \label{fig:geodesicsdonotintersectinterface}
        \end{figure}
        By Proposition \ref{prop:DL_all_coal}, all  $\dir-$ geodesics coalesce, so for all large enough $n$, there exists $r_n' \in [r_0,r_1]$ such that
        \[
        \pi_{p_{n}}(r)=g^{\dir-,L}_{(\intc_t^-,t)}(r) \qquad \text{ for all } r \leq r_n'.
        \]
        In particular, for all $n$ large enough (see Figure \ref{fig:geodesicsdonotintersectinterface}), 
        \[
        \pi_{p_{n}}(r_0)=g^{\dir-,L}_{(\intc_t^-,t)}(r_0)\geq \intc^-_{r_0}.
        \]
        Now, let $q_0=g^{\dir-,L}_{(\intc_t^-,t)}(r_0).$ As $q_0 \geq \intc^-_{r_0}$ we know by Lemma \ref{lem:geod}\ref{it2} that $g^{\dir+,R}_{(q_0,r_0)}|_{[s,r_0]}$ is an $f$ to $(q_0,r_0)$ geodesic with 
        \[
        g^{\dir+,R}_{(q_0,r_0)}(s) \geq x_0.
        \]
        As $(g^{\dir+,R}_{(q_0,r_0)}(r_0),r_0)$ lies on all $\pi_{p_{n}}$ for all large enough $n$, by \cite[Lemma 3.1]{Rahman-Virag-21} we can swap $\pi_{p_{n}}$ on $[s,r_0]$ by $g^{\dir+,R}_{(q_0,r_0)}|_{[s,r_0]}$ to obtain an $f$ to $p_{n}$ geodesic $\pi'_{p_{n}}$. Then, $\pi'_{p_{n}}(s) \geq x_0$. But by Equation \eqref{eq74} of Lemma \ref{lem:geod}\ref{it1}, for all $n$,
        \[
        \chi^{R}(f,s;p_{n})<x_0,
        \]
        giving a contradiction.
        \end{proof}
        
    Propositions \ref{p:IntfDist} and \ref{p:>interface} below  consider the case where the leftmost and rightmost interfaces split at a certain point and do not meet again for a certain amount of time. This creates a 'funnel'. Together the proposition show that from any point within the funnel, the $\dir-$ and $\dir+$ geodesics agree within the funnel, trapped in it, and exit through the splitting point (see Figure \ref{fig:funnel}).

    \begin{proposition}
        \label{p:IntfDist}
        The following holds on the event $\Omega_1$: Let $\dir \in \R, (x_0,s) \in \R^2, t>s$, and assume that
        \[
        \intc_{(x_0,s)}^{\dir,-}(r) < \intc_{(x_0,s)}^{\dir,+}(r)\qquad \text{for all }r \in (s,t].
        \]
       Then, letting $\tau_t^{\sig} = \tau_{(x_0,s)}^{\dir,\sig}(t)$ for $\sigg \in \{-,+\}$, we have
        \begin{align}
        \label{eq:5.1eq1}g_{(\intc_t^-,t)}^{\dir+,R}|_{[s,t]}&=g_{(\intc_t^-,t)}^{\dir-,R}|_{[s,t]},\\
        \label{eq:5.1eq2}g_{(\intc_t^+,t)}^{\dir+,L}|_{[s,t]}&=g_{(\intc_t^+,t)}^{\dir-,L}|_{[s,t]}, \qquad\text{and} \\
        \label{eq:5.1eq3} g^{\dir+,R}_{(\intc_t^-,t)}(s)&=g^{\dir-,L}_{(\intc_t^+,t)}(s) = x_0.
        \end{align}
    \end{proposition}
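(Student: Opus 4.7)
The plan is to prove \eqref{eq:5.1eq3} first and then deduce \eqref{eq:5.1eq1} and \eqref{eq:5.1eq2} from it. Throughout, set $f := f^{\dir}_{(x_0,s)}$. By Lemma \ref{lem:geod}\ref{it4} applied at $x = \tau_t^-$ and $x = \tau_t^+$, each of the eight geodesics $g^{\dir\sig,S}_{(\tau_t^{\sig'},t)}$ (with $\sigg,\sigg' \in \{-,+\}$ and $S \in \{L,R\}$) restricts to an $(f,s)$-to-point geodesic on $[s,t]$.  Combined with Corollary \ref{cor:LRmost_pi}, the leftmost $(f,s)$-to-$(\tau_t^{\sig'},t)$ geodesic is $g^{\dir-,L}_{(\tau_t^{\sig'},t)}|_{[s,t]}$ (with endpoint at time $s$ at most $x_0$), and the rightmost is $g^{\dir+,R}_{(\tau_t^{\sig'},t)}|_{[s,t]}$ (with endpoint at least $x_0$).

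The next step is to constrain the ``inward-facing'' geodesics.  Using Lemma \ref{lem:geod}\ref{it1}-\ref{it2}, one checks that for every $r\in[s,t]$,
\[
g^{\dir+,R}_{(\tau_t^-,t)}(r) \ge \tau_r^-, \qquad g^{\dir-,L}_{(\tau_t^+,t)}(r) \le \tau_r^+.
\]
Indeed, if $g^{\dir+,R}_{(\tau_t^-,t)}(r) < \tau_r^-$ for some $r$, the restriction $g^{\dir+,R}_{(\tau_t^-,t)}|_{[s,r]}$ is an $(f,s)$-to-$(w,r)$ geodesic with $w < \tau_r^-$; Lemma \ref{lem:geod}\ref{it1} then forces its endpoint at time $s$ to be strictly below $x_0$, contradicting that $g^{\dir+,R}_{(\tau_t^-,t)}(s) \ge x_0$.

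Now set $g_1 := g^{\dir-,L}_{(\tau_t^+,t)}$ and $g_2 := g^{\dir+,R}_{(\tau_t^-,t)}$, so $g_1(s) \le x_0 \le g_2(s)$ while $g_2(t) = \tau_t^- < \tau_t^+ = g_1(t)$.  By the intermediate value theorem there exists a crossing time $r^* := \sup\{r \in [s,t] : g_1(r) \ge g_2(r)\} \in [s,t]$, and at the common value $w^* := g_1(r^*) = g_2(r^*)$ both geodesics meet.  The heart of the proof is to show $r^* = s$, which forces $g_1(s) = g_2(s) = x_0$ and hence \eqref{eq:5.1eq3}.  Suppose for contradiction $r^* > s$, and set $z_1 := g_1(s), z_2 := g_2(s)$ with $z_1 \le x_0 \le z_2$ and $z_1 < z_2$.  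By Proposition \ref{prop:DL_SIG_cons_intro}\ref{itm:DL_LRmost_geod}, $g_1|_{[s,r^*]}$ is the leftmost geodesic from $(z_1,s)$ to $(w^*,r^*)$ and $g_2|_{[s,r^*]}$ is the rightmost geodesic from $(z_2,s)$ to $(w^*,r^*)$, and by Proposition \ref{prop:DL_SIG_cons_intro}\ref{itm:DL_all_SIG} the restrictions are themselves $g^{\dir-,L}_{(w^*,r^*)}|_{[s,r^*]}$ and $g^{\dir+,R}_{(w^*,r^*)}|_{[s,r^*]}$ respectively.  Thus $w^* \in \Split_{r^*,\dir}$.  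The swap argument now produces the concatenated paths $\widetilde g_1 := g_1|_{[s,r^*]} \cdot g_2|_{[r^*,t]}$ and $\widetilde g_2 := g_2|_{[s,r^*]} \cdot g_1|_{[r^*,t]}$ with $\Ll(\widetilde g_1)+\Ll(\widetilde g_2) = \Ll(g_1)+\Ll(g_2)$.  Since $\widetilde g_2$ goes from $(z_2,s)$ to $(\tau_t^+,t)$, adding $f(z_2)$ produces an $(f,s)$-to-$(\tau_t^+,t)$ path of value $f(z_2)+\Ll(\widetilde g_2)$, which by the rightmost property of $g^{\dir+,R}_{(\tau_t^+,t)}$ and the triangle inequality must be strictly smaller than the maximum $\sup_z\{f(z)+\Ll(z,s;\tau_t^+,t)\}$ unless $\widetilde g_2$ is itself a geodesic; and symmetrically for $\widetilde g_1$. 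The strict gap comes from the fact that $g_1|_{[r^*,t]}$ is a $\dir-$ (not $\dir+$) geodesic segment, while starting from $z_2 \ge x_0$ the $(f,s)$-to-$(\tau_t^+,t)$ geodesic continuation must be a $\dir+$ geodesic by Lemma \ref{lem:geod}; matching this with Lemma \ref{lem:no_bubbles} (no bubbles) and Lemma \ref{lem:Buse_eq} yields a contradiction.

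Once \eqref{eq:5.1eq3} is established, \eqref{eq:5.1eq1} follows: both $g^{\dir-,R}_{(\tau_t^-,t)}$ and $g^{\dir+,R}_{(\tau_t^-,t)}$ are $(f,s)$-to-$(\tau_t^-,t)$ geodesics, both must land at $x_0$ at time $s$ (the former by the ordering $g^{\dir-,R} \le g^{\dir+,R}$ together with the endpoint bound $g^{\dir-,R}(s)\le x_0$ and \eqref{eq:5.1eq3}, which forces equality at $x_0$), and both are rightmost geodesics between any two of their points by Proposition \ref{prop:DL_SIG_cons_intro}\ref{itm:DL_LRmost_geod}; uniqueness of the rightmost geodesic between $(x_0,s)$ and $(\tau_t^-,t)$ thus yields equality on $[s,t]$.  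Equation \eqref{eq:5.1eq2} follows by the symmetric argument at $(\tau_t^+,t)$.  The main obstacle in executing this plan is Step 4: producing a clean contradiction at a hypothetical crossing time $r^* > s$.  The swapped-paths weight identity gives only a non-strict FKG-type inequality, so the strict contradiction must come from combining the $\dir-$ versus $\dir+$ type information at $(w^*,r^*)$ with the no-bubbles principle.
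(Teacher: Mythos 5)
Your proposal takes a genuinely different route: prove \eqref{eq:5.1eq3} first by a crossing-plus-swap argument, then deduce \eqref{eq:5.1eq1}--\eqref{eq:5.1eq2}. The paper does it in the opposite order, proving \eqref{eq:5.1eq1} \emph{assuming} $g^{\dir+,R}_{(\intc_t^-,t)}(s)=x_0$ (a short argument: Lemma \ref{l:MV}\ref{it:Bu3} gives $W^{\dir+}(x_0,s;\intc_t^-,t)=W^{\dir-}(x_0,s;\intc_t^-,t)$, so by Lemma \ref{lem:Buse_eq} the path $g^{\dir+,R}_{(\intc_t^-,t)}|_{[s,t]}$ is simultaneously a $\dir-$ geodesic restriction and hence the rightmost one), and then proves $g^{\dir+,R}_{(\intc_t^-,t)}(s)=x_0$ by contradiction using \emph{interface-hitting times}: if the endpoint exceeds $x_0$, the geodesic must cross $\intc^+|_{[s,t]}$, and the resulting type-switching at interface crossings forces a bubble.

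Two concrete gaps keep your version from closing. First, after locating the crossing point $(w^*,r^*)$ (note the set in your $\sup$ should be $\{r: g_1(r)\le g_2(r)\}$, not $\ge$), you assert $w^*\in\Split_{r^*,\dir}$. But $\Split$ requires \emph{disjointness} of the two geodesics on all of $(-\infty,r^*)$, and you only know $g_1(r^*)=g_2(r^*)$; since $r^*$ is a $\sup$, you have no information about whether $g_1$ and $g_2$ meet or cross again below $r^*$. This claim is not established and is not needed for the paper's argument, which never invokes $\Split$ here. Second, and as you yourself flag, the swap identity delivers only that $\widetilde g_1,\widetilde g_2$ are $(f,s)$-to-point geodesics (a non-strict conclusion), and the promised contradiction from ``$\dir-$ vs.\ $\dir+$ type plus no-bubbles'' is left as a plan, not a proof. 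The paper obtains its strictness by a different mechanism: Lemma \ref{lem:Splt}\ref{itm:G1} guarantees $g^{\dir+,R}_{(y_0,r_0)}(r)>\intc_r^+$ strictly below the hitting time, which combined with the coalescence of $\dir+$ geodesics and the second hitting time $r_1<r_0$ produces two geodesics that agree near both endpoints but disagree in between --- a genuine bubble. Your framework does not yet have an analogous strict separation to feed into Lemma \ref{lem:no_bubbles}. To salvage your approach you would likely need to prove that $\widetilde g_2$ and $g^{\dir+,R}_{(\intc_t^+,t)}$ differ at $r^*$ (say by showing $w^*<\intc_{r^*}^+$) but coalesce lower down, which essentially reconstitutes the paper's interface-hitting argument under a different name.
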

    \begin{proof}
    We will show \eqref{eq:5.1eq1} and $g^{\dir+,R}_{(\intc_t^-,t)}(s)=x_0$ together. Then, the proof of \eqref{eq:5.1eq2} and $g^{\dir-,L}_{(\intc_t^+,t)}(s) = x_0$ follows a symmetric argument. Let $f = f_{(x_0,s)}^\dir$.
By Corollary \ref{cor:LRmost_pi}, $g^{\dir+,R}_{(\intc_t^-,t)}|_{[s,t]}$ is the rightmost $f$ to $(\intc_t^-,t)$ geodesic, and 
    \be \label{eq:g>x0}
    g^{\dir+,R}_{(\intc_t^-,t)}(s) \geq x_0.
    \ee 
    To prove the proposition, we first show that, if $g^{\dir+,R}_{(\intc_t^-,t)}(s) = x_0$, then \eqref{eq:5.1eq1} holds. Then, we prove by way of contradiction that we cannot have $g^{\dir+,R}_{(\intc_t^-,t)}(s) > x_0$.
    \paragraph{\textbf{Step 1: Proving \eqref{eq:5.1eq1} under the assumption $g^{\dir+,R}_{(\intc_t^-,t)}(s) = x_0$. }}
    Under this assumption, we have
    \[
    \Ll(x_0,s;\intc_t^-,t)=W^{\dir+}(x_0,s;\intc_t^-,t)=W^{\dir-}(x_0,s;\intc_t^-,t),
    \]
    where the first equality holds because  $g^{\dir+,R}_{(\intc_t^-,t)}(s) = x_0$ (using Lemma \ref{lem:Buse_eq}) and the second equality holds by Lemma \ref{l:MV}\ref{it:Bu3}. Thus, by Lemma \ref{lem:Buse_eq}, $g_{(\intc_t^-,t)}^{\dir+,R}|_{[s,t]}$ is also a restriction of $\dir-$ geodesic. By ordering of geodesics, $g_{(\intc_t^-,t)}^{\dir+,R}$ lies weakly to the right of every $\dir-$ geodesic, so it is in fact the rightmost $\dir-$ geodesic on $[s,t]$, and 
    \[
g_{(\intc_t^-,t)}^{\dir+,R}|_{[s,t]}=g_{(\intc_t^-,t)}^{\dir-,R}|_{[s,t]}.
    \]
    \paragraph{\textbf{Step 2: Proving $g^{\dir+,R}_{(\intc_t^-,t)}(s)= x_0$}} Assume, by way of contradiction, that this equality does not hold. We will demonstrate in this case that an interior geodesic bubble forms, which we know cannot occur. By \eqref{eq:g>x0}, we must have
    \[
    g^{\dir+,R}_{(\intc_t^-,t)}(s) >  x_0.
    \]
     Let us consider the restriction of the rightmost interface $\tau^+$, denoted by $\intc^+|_{[s,t]}$. Because $\intc_t^-<\intc_t^+$ and $g^{\dir+,R}_{(\intc_t^-,t)}(s)>x_0$, continuity of geodesics and continuity of the interface $\intc^-$ (Lemma \ref{lem:interface_continuous}) ensure that $g^{\dir+,R}_{(\intc_t^-,t)}|_{[s,t]}$ intersects $\intc^+|_{[s,t]}$ (see Figure \ref{fig:unique starting point}). Let
    \begin{figure}[t!]
        \includegraphics[width=10cm]{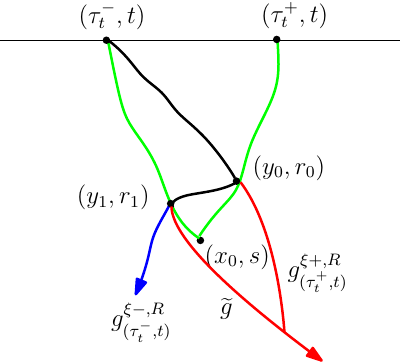}
        \caption{Proof of \eqref{eq:5.1eq3}: If $g^{\dir+,R}_{(\intc_t^-,t)}(s)>x_0$, then under the hypothesis of Proposition \ref{lem:Splt} $g^{\dir+,R}_{(\intc_t^-,t)}$ must intersect $\intc^+|_{[s,t]}$ at some $(y_0,r_0)$. By Lemma \ref{l:MV} $g^{\dir+,R}_{(\intc_t^-,t)}|_{[r_0,t]}$ and $g^{\dir-,R}_{(\intc_t^-,t)}|_{[r_0,t]}$ coincide (denoted by black in the figure). As $g^{\dir-,R}_{(\intc_t^-,t)}(s) \leq x_0$ it must be the case that $g^{\dir-,R}_{(\intc_t^-,t)}$ intersects $\intc^-$ at some point $(y_1,r_1)$. But by Lemma 3.3 again the restriction of $g^{\dir-,R}_{(\intc_t^-,t)}$ between $(y_1,r_1)$ $(y_0,r_0)$ is also restriction of a $\dir+$ geodesic (denoted by black in the figure). But this will lead to a situation where two $\dir+$ geodesics stay together in the beginning and then separate (the geodesic $\widetilde{g}$ in the figure). This will form a geodesic bubble which is not possible.}
        \label{fig:unique starting point}
    \end{figure}
    \[
    r_0=\sup\{r \in [s,t]: g^{\dir+,R}_{(\intc_t^-,t)}(r) = \tau_r^+ \},
    \]
    and set $y_0:=\intc^+_{r_0}$. Since $g^{\dir+,R}_{(\intc_t^-,t)}(s)>x_0 = \tau_{s}^+$, 
    continuity ensures that $r_0 > s$.
    Geometrically, $(y_0,r_0)$ is the first point where $g^{\dir+,R}_{(\intc_t^-,t)}|_{[s,t]}$ and  $\intc^+|_{[s,t]}$  intersect when traveling downward from time $t$. 
    
With this point $(y_0,r_0)$ defined, make the observation that 
    \be \label{eq:LW+-y0r0}
    \Ll(y_0,r_0;\intc_t^-,t)=W^{\dir+}(y_0,r_0;\intc_t^-,t)=W^{\dir-}(y_0,r_0;\intc_t^-,t),
    \ee
    where, as before,  the first equality holds because $ g^{\dir+,R}_{(\intc_t^-,t)}(r_0) = y_0$, and second equality follows by additivity and Lemma \ref{l:MV}\ref{it:Bu3}, which gives
    \[
    W^{\dir+}(x_0,s;\intc^-_t,t)=W^{\dir-}(x_0,s;\intc^-_t,t) \quad\text{and}\quad  W^{\dir+}(x_0,s;y_0,r_0)=W^{\dir-}(x_0,s;y_0,r_0).
    \]
    Recall that $y_0 = g_{(\tau_t^-,t)}^{\dir+,R}(r_0)$, which by definition, is the rightmost maximizer of 
    \[
    z \mapsto W^{\dir +}(0,r_0;z,r_0) + \Ll(z,r_0;\tau_t^-,t) \quad\text{over } z \in \R.
    \]
    By \eqref{eq:LW+-y0r0} and Lemma \ref{lem:Buse_eq}, $y_0$ is also a maximizer of 
    \[
    z \mapsto W^{\dir -}(0,r_0;z,r_0) + \Ll(z,r_0;\tau_t^-,t)\quad\text{over } z \in \R.
    \]
    But by ordering of geodesics, the rightmost such maximizer, $g_{(\tau_t^-,t)}^{\dir-,R}(r_0)$ is no greater than \\ $g_{(\tau_t^-,t)}^{\dir+,R}(r_0) = y_0$. Hence, 
    \[
    g_{(\tau_t^-,t)}^{\dir-,R}(r_0) = y_0 = g_{(\tau_t^-,t)}^{\dir+,R}(r_0).
    \]
    Since these geodesics are the rightmost geodesics between their points (Proposition \ref{prop:DL_SIG_cons_intro}\ref{itm:DL_LRmost_geod}), we have the following (see Figure \ref{fig:unique starting point})
    \be \label{r0teq}
g^{\dir+,R}_{(\intc_t^-,t)}|_{[r_0,t]}=g^{\dir-,R}_{(\intc_t^-,t)}|_{[r_0,t]}.
    \ee
    Next, by Lemma \ref{lem:Splt}\ref{itm:G1}, we know that 
    \be \label{eq:taurgr}
    \intc_r^+ <  g^{\dir+,R}_{(y_0,r_0)}(r)\quad\text{for all }\quad r \in [s,r_0).
    \ee
    In contrast, we now show that \be \label{eq:1567}
    g^{\dir-,R}_{(\intc_t^-,t)}(s) \leq x_0.
    \ee
    To see this, assume by way of contradiction that $g^{\dir-,R}_{(\intc_t^-,t)}(s)>x_0.$ Then pick $x \in (\intc_t^-,\intc_t^+).$ By ordering of $\dir-$ geodesics, we have
    \begin{equation}
    \label{eq: contradicting equation}
    x_0<g^{\dir-,R}_{(\intc_t^-,t)}(s) \leq g^{\dir-,L}_{(x,t)}(s),
    \end{equation}
    and by Corollary \ref{cor:LRmost_pi}, $g^{\dir-,L}_{(x,t)}|_{[s,t]}$ is the leftmost $(f,s)$-to-$(x,t)$ geodesic. Hence, $\chi^{L}(f,s;x,t) > x_0$. However, since $x \in (\intc_t^-,\intc_t^+)$,  we must have  $\chi^{L}(f,s;x,t) \leq x_0$, giving a contradiction. Thus, \eqref{eq:1567} holds. Now, as $g_{(\intc_t^-,t)}^{\dir-,R}(s) \leq x_0$ and $\intc_t^-<\intc_t^+$ on $[s,r_0]$ it must happen that $g_{(\intc_t^-,t)}^{\dir-,R}|_{[s,r_0]}$ intersects $\intc^-|_{[s,r_0]}$ (see Figure \ref{fig:unique starting point}).  Let
    \be \label{eq:r1def}
    r_1=\sup \{r \in [s,r_0]:  g_{(\intc_t^-,t)}^{\dir-,R}(r) = \intc_r^- \},
    \ee
    and set $y_1=\intc^-_{r_1}$. By continuity and since $y_0 = \tau_{r_0}^+ > \tau_{r_0}^-$ (because we showed $r_0 > s$), we have  $r_1<r_0.$ By a similar argument to the proof of \eqref{eq:LW+-y0r0}, we get that 
    \be \label{eq:LW+-y1r1}
    \Ll(y_1,r_1;y_0,r_0)=W^{\dir-}(y_1,r_1;y_0,r_0)=W^{\dir+}(y_1,r_1;y_0,r_0).
    \ee
    In particular, the first equality holds because $g_{(\intc_t^-,t)}^{\dir-,R}(r_0) = y_0$ and $g_{(\intc_t^-,t)}^{\dir-,R}(r_1) = y_1$, and the second equality holds by additivity of the Busemann functions and by Lemma \ref{l:MV}\ref{it4}, which gives
    \[
    W^{\dir+}(x_0,s;y_1,r_1)=W^{\dir-}(x_0,s;y_1,r_1) \quad \text{and}\quad W^{\dir+}(x_0,s;y_0,r_0)=W^{\dir-}(x_0,s;y_0,r_0).
    \]
    Then, \eqref{eq:LW+-y1r1} implies that $y_1$ maximizes 
    \[
    z \mapsto W^{\dir+}(0,r_1;z,r_1)+\Ll(z,r_1;y_0,r_0) \quad\text{over }z \in \R.
    \]
    Hence, $g^{\dir-,R}_{(\intc_t^-,t)}|_{[r_1,t]}$ is  a restriction of a $\dir+$ geodesic rooted at $(\intc_t^-,t)$. We will let $\wt g$ denote this $\dir +$ geodesic. By coalescence of $\dir +$ geodesics (Proposition \ref{prop:DL_all_coal}), there exists $r_2 < t$ such that $\wt g(r) = g^{\dir+,R}_{(\intc_t^-,t)}(r)$ for $r \le r_2$. Combined with \eqref{r0teq},\eqref{eq:taurgr}, along with the definition that $y_1 = \tau_{r_1}^-$ \eqref{eq:r1def}, we have 
    \[
    \begin{aligned}
    \wt g(r) &= g^{\dir+,R}_{(\intc_t^-,t)}(r)\qquad \text{for } r \in [r_0,t] \\
    \wt g(r_1) &= y_1 = \tau_{r_1}^- \le \tau_{r_1}^+ < g_{(y_0,r_0)}^{\dir +,R}(r_1), \qquad\text{and} \\
    \wt g(r) &= g^{\dir+,R}_{(\intc_t^-,t)}(r) \qquad \text{for }r \le r_2.
    \end{aligned}
    \]
    (see Figure \ref{fig:unique starting point}).

    In particular, since $r_2 < r_1 < r_0 < t$, there are two distinct geodesics from $(\wt g(r_2 - 1),r_2 - 1)$ to $(\intc_t^-,t)$ that agree in neighborhoods of both endpoints. This is a contradiction to Lemma \ref{lem:no_bubbles}.
   \end{proof}
   As a corollary, we get the following. 
   \begin{corollary}
       \label{c:in between points}
       The following holds on the event $\Omega_1$: Let $\dir \in \R, (x_0,s) \in \R^2$ and   $t>s$ be such that $\intc_{(x_0,s)}^{\dir,-}(r) < \intc_{(x_0,s)}^{\dir,+}(r)$ for all $r \in (s,t]$. Let $f = f_{(x_0,s)}^\dir$ and $\intc_t^\sig = \intc_{(x_0,s)}^{\dir,\sig}(t)$. Then, for any $x \in (\intc^-_{t},\intc_t^+)$ and for any $(f,s)$-to-$(x,t)$ geodesic $\pi_{(x,t)}$, 
       \[
       \pi_{(x,t)}(s)=x_0.   \]
       Moreover, if $\pi_{(x,t)}$ is any $(f,s)$-to-$(x,t)$ geodesic, it is the restriction of a $\dir-$ geodesic and a $\dir+$ geodesic to $[s,t]$. 
   \end{corollary}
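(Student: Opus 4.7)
The plan is to use Proposition~\ref{p:IntfDist} together with the ordering of geodesics to pin down where $\dir\pm$ geodesics from points $(x,t)$ with $x\in(\intc_t^-,\intc_t^+)$ can exit at time $s$, and then conclude via Lemma~\ref{lem:geod}. The key intermediate step is the following squeezing claim:
\[
g^{\dir-,L}_{(x,t)}(s) \;=\; g^{\dir-,R}_{(x,t)}(s) \;=\; g^{\dir+,L}_{(x,t)}(s) \;=\; g^{\dir+,R}_{(x,t)}(s) \;=\; x_0, \qquad x\in(\intc_t^-,\intc_t^+).
\]

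To prove the squeeze, I would first invoke Proposition~\ref{p:IntfDist}, which under the standing hypothesis gives $g^{\dir+,R}_{(\intc_t^-,t)}(s) = g^{\dir-,L}_{(\intc_t^+,t)}(s) = x_0$, and in fact the rightmost $\dir+$ and $\dir-$ geodesics from $(\intc_t^-,t)$ coincide, and symmetrically for the leftmost geodesics from $(\intc_t^+,t)$. Then for $\intc_t^- < x < \intc_t^+$ and $\sigg\in\{-,+\}$, the ordering of geodesics in the spatial parameter (Proposition~\ref{prop:g_basic_prop}\ref{itm:DL_SIG_mont_x}) gives
\[
g^{\dir\sig,R}_{(\intc_t^-,t)}(s) \;\le\; g^{\dir\sig,L}_{(x,t)}(s) \;\le\; g^{\dir\sig,R}_{(x,t)}(s) \;\le\; g^{\dir\sig,L}_{(\intc_t^+,t)}(s).
\]
The equalities from Proposition~\ref{p:IntfDist} force the outer two endpoints to equal $x_0$ (using \eqref{eq:5.1eq1}--\eqref{eq:5.1eq3}, which tell us that the coinciding rightmost geodesics from $(\intc_t^-,t)$ exit at $x_0$ and similarly for the coinciding leftmost geodesics from $(\intc_t^+,t)$). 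Hence all four values above collapse to $x_0$.

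Now let $\pi_{(x,t)}$ be any $(f,s)$-to-$(x,t)$ geodesic. By Lemma~\ref{lem:geod}, if $\pi_{(x,t)}(s)\le x_0$ then $\pi_{(x,t)}$ is the restriction of a $\dir-$ geodesic from $(x,t)$, so by leftmost/rightmost ordering,
\[
x_0 \;=\; g^{\dir-,L}_{(x,t)}(s) \;\le\; \pi_{(x,t)}(s) \;\le\; g^{\dir-,R}_{(x,t)}(s) \;=\; x_0,
\]
forcing $\pi_{(x,t)}(s)=x_0$. The symmetric case $\pi_{(x,t)}(s)\ge x_0$ handled via the $\dir+$ geodesics gives the same conclusion. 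This proves the first assertion.

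For the second assertion, fix any $(f,s)$-to-$(x,t)$ geodesic $\pi_{(x,t)}$. Since $\pi_{(x,t)}(s)=x_0$, the hypothesis $\pi_{(x,t)}(s)\le x_0$ in Lemma~\ref{lem:geod} is satisfied, so $\pi_{(x,t)}$ is the restriction to $[s,t]$ of a $\dir-$ geodesic rooted at $(x,t)$; likewise, the hypothesis $\pi_{(x,t)}(s)\ge x_0$ also holds, and the same lemma yields that $\pi_{(x,t)}$ is the restriction to $[s,t]$ of a $\dir+$ geodesic rooted at $(x,t)$. I do not anticipate a significant obstacle here: once Proposition~\ref{p:IntfDist} has been established, the proof is just a book-keeping argument combining that proposition with the ordering statements and the classification in Lemma~\ref{lem:geod}. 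The only subtlety to guard against is making sure the ordering inequalities apply at the strict endpoints $\intc_t^-$ and $\intc_t^+$; since we assume $x$ lies strictly between them, the inequalities $\intc_t^- < x < \intc_t^+$ make Proposition~\ref{prop:g_basic_prop}\ref{itm:DL_SIG_mont_x} directly applicable.
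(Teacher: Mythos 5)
Your proof is correct and follows essentially the same approach as the paper: both rely on Proposition~\ref{p:IntfDist} plus geodesic ordering to squeeze the exit point to $x_0$, and then invoke Lemma~\ref{lem:geod} to classify the geodesics. Your treatment of the second assertion is slightly cleaner than the paper's — once $\pi_{(x,t)}(s)=x_0$ is established, you simply observe that both hypotheses $\pi_{(x,t)}(s)\le x_0$ and $\pi_{(x,t)}(s)\ge x_0$ of Lemma~\ref{lem:geod} hold, whereas the paper redoes an explicit $\Ll = W^{\dir\pm}$ computation via \eqref{eqn:SIG_weight}, Lemma~\ref{l:MV}, and Lemma~\ref{lem:Buse_eq} — but this is a minor streamlining of the same argument.
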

   \begin{proof}
       By Lemma \ref{lem:geod}, we know that $\pi_{(x,t)}$ is either a restriction of a $\dir+$ geodesic or a $\dir-$ geodesic. 
        By ordering of  geodesics along with \eqref{eq:5.1eq1}--\eqref{eq:5.1eq2} in Proposition \ref{p:IntfDist},  we have, for all $r \in [s,t]$,
       \[
       g^{\dir+,R}_{(\intc_t^-,t)}(r)=g^{\dir-,R}_{(\intc_t^-,t)}(r) \leq \pi_{(y,t)}(r)\leq g^{\dir+,L}_{(\intc_t^+,t)}(r)=g^{\dir-,L}_{(\intc_t^+,t)}(r),
       \]
       By Equation \eqref{eq:5.1eq3} of Proposition \ref{p:IntfDist}, 
       \[
       g^{\dir+,R}_{(\intc_t^-,t)}(s)=g^{\dir-,L}_{(\intc_t^+,t)}(s)=x_0.
       \]
       Therefore,$
       \pi_{(x,t)}(s)=x_0$, as desired. 
     
      To prove the second statement, assume that $\pi_{(x,t)}$ is an $(f,s)$-to-$(x,t)$ geodesic. By Lemma \ref{lem:geod}, $\pi_{(y,t)}$ is either a restriction of $\dir+$ geodesic or $\dir-$ geodesic. Now, if it is a restriction of $\dir-$ geodesic then 
      \[
      \Ll(x_0,s;y,t)=W^{\dir-}(x_0,s;y,t)=W^{\dir+}(x_0,s;y,t),
      \]
      where the first equality holds by our assumption that $\pi_{(y,t)}$ is a $\dir +$ geodesic (using \eqref{eqn:SIG_weight}) and the second equality holds by Lemma \ref{l:MV}\ref{it:Bu2}. By Lemma \ref{lem:Buse_eq}, $\pi_{(x,t)}$ is also a restriction of $\dir+$ geodesic. If $\pi_{(y,t)}$ is a restriction of $\dir+$ geodesic, then a symmetric argument follows. This concludes the proof. 
   \end{proof}

  \begin{proposition}
       \label{p:>interface}
       The following holds on the event $\Omega_1$: for all $\dir \in \R, (x_0,s) \in \R^2$ the following is true: assume that there is a $t>s$ such that $\intc_{(x_0,s)}^{\dir,-}(r) < \intc_{(x_0,s)}^{\dir,+}(r)$ for all $r \in (s,t]$. Then, letting $\intc^\sig_r = \tau_{(x_0,s)}^{\dir,-}(r)$, we have
       \begin{align}
       \label{eq:5.3eq1}
           &g^{\dir-,R}_{(\intc_t^-,t)}(r) = g^{\dir+,R}_{(\intc_t^-,t)}(r)>\intc_r^- \qquad \text{ for all } r \in (s,t), \quad\text{and}\\
           \label{eq:5.3eq2}
           &g^{\dir+,L}_{(\intc_t^+,t)}(r) = g^{\dir-,L}_{(\intc_t^+,t)}(r) <\intc_r^+ \qquad \text{ for all } r \in (s,t).
       \end{align}
   \end{proposition}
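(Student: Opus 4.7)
The equalities in \eqref{eq:5.3eq1} and \eqref{eq:5.3eq2} are immediate from Proposition \ref{p:IntfDist}, so the task reduces to the strict inequalities. By symmetry I will focus on proving $\pi(r) > \intc_r^-$ on $(s,t)$, where $\pi := g^{\dir+,R}_{(\intc_t^-,t)}$; the inequality $g^{\dir-,L}_{(\intc_t^+, t)}(r) < \intc_r^+$ will follow by an entirely analogous argument using Lemma \ref{lem:geod}\ref{it2} in place of Lemma \ref{lem:geod}\ref{it1} and swapping the roles of the two interfaces.

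The first step is the weak inequality $\pi(r) \ge \intc_r^-$ on $[s,t]$. By Proposition \ref{p:IntfDist}, $\pi(s) = x_0$, and by Corollary \ref{cor:LRmost_pi}\ref{it:LR2}, $\pi|_{[s,t]}$ is the rightmost $(f,s)$-to-$(\intc_t^-,t)$ geodesic. Fix $r \in (s,t)$ and set $z := \pi(r)$. Since $\pi$ is a geodesic through $(z,r)$, metric composition gives $\Ll(x_0,s;\intc_t^-,t) = \Ll(x_0,s;z,r) + \Ll(z,r;\intc_t^-,t)$. Combining this with the KPZ recursion $\kpzs_t(\intc_t^-) = \sup_{z'}\{\kpzs_r(z') + \Ll(z',r;\intc_t^-,t)\}$ and the bound $\kpzs_r(z) \ge f(x_0) + \Ll(x_0,s;z,r) = \Ll(x_0,s;z,r)$ forces $\kpzs_r(z) = \Ll(x_0,s;z,r)$, so $x_0$ is a maximizer of $y \mapsto f(y) + \Ll(y,s;z,r)$. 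If we had $z < \intc_r^-$, Lemma \ref{lem:geod}\ref{it1} would give $\chi^R(f,s;z,r) < x_0$, contradicting $\pi(s)=x_0$; hence $\pi(r) \ge \intc_r^-$.

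For strictness, suppose for contradiction that $\pi(r_0) = \intc_{r_0}^-$ for some $r_0 \in (s,t)$. By Proposition \ref{prop:DL_SIG_cons_intro}\ref{itm:DL_all_SIG}, $\pi|_{(-\infty,r_0]}$ coincides with $g^{\dir+,R}_{(\intc_{r_0}^-,r_0)}|_{(-\infty,r_0]}$. Since the hypothesis of Proposition \ref{p:IntfDist} is valid on $(s,r_0]$, applying it with $r_0$ in place of $t$ identifies $\pi|_{[s,r_0]}$ as simultaneously a $\dir+,R$ and a $\dir-,R$ geodesic from $(\intc_{r_0}^-,r_0)$. Additivity of Busemann functions together with Lemma \ref{l:MV}\ref{it:Bu3} applied at both $(x_0,s;\intc_{r_0}^-,r_0)$ and $(x_0,s;\intc_t^-,t)$ then yields $\Ll(\intc_{r_0}^-,r_0;\intc_t^-,t) = W^{\dir+}(\intc_{r_0}^-,r_0;\intc_t^-,t) = W^{\dir-}(\intc_{r_0}^-,r_0;\intc_t^-,t)$, so $\pi|_{[r_0,t]}$ is also a $\dir-$ geodesic from $(\intc_t^-,t)$.

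The contradiction is extracted via a geodesic bubble argument modeled on the proof of Lemma \ref{lem:Splt}\ref{itm:branching}. One approaches $(\intc_{r_0}^-, r_0)$ from within the funnel via $p_n = (\intc_{r_0}^- + n^{-1}, r_0)$, which lie in $(\intc_{r_0}^-, \intc_{r_0}^+)$ for large $n$. By Corollary \ref{c:in between points} the rightmost $(f,s)$-to-$p_n$ geodesics pass through $(x_0,s)$, and by Proposition \ref{prop:g_basic_prop}\ref{itm:DL_SIG_conv_x} they converge to $\pi|_{[s,r_0]}$. Combining this with the strict inequality $g^{\dir-,L}_{(\intc_t^-,t)}(r) < \intc_r^-$ on $[s,t)$ from Lemma \ref{lem:Splt}\ref{itm:G1} and the branching property at $(\intc_{r_0}^-, r_0)$ (Lemma \ref{lem:Splt}\ref{itm:branching}), a geodesic swap of the type used in Lemma \ref{lem:Splt}\ref{itm:G1} produces two distinct geodesics between common endpoints that agree in neighborhoods of both endpoints, contradicting Lemma \ref{lem:no_bubbles}. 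The main obstacle lies precisely in this final bubble construction: the touching point $(\intc_{r_0}^-, r_0)$ sits on the interface boundary, and the rightmost $\dir+,R$ structure of $\pi$ on $[r_0,t]$ must be reconciled with the branching structure at $(\intc_{r_0}^-, r_0)$ through a careful limiting procedure tailored to this setting.
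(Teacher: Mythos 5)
Your reduction to the strict inequality, the use of Proposition \ref{p:IntfDist} for the equalities, and the observation that $\pi|_{[r_0,t]}$ becomes a $\dir-$ geodesic once $\pi$ touches $\intc^-$ at $(\intc_{r_0}^-,r_0)$ are all sound. But the proof does not close: you explicitly defer the decisive step to ``a careful limiting procedure tailored to this setting,'' which means the contradiction is never actually extracted. That missing step is exactly where the work is. Your choice of approximating points $p_n=(\intc_{r_0}^-+n^{-1},r_0)$ at the intermediate level $r_0$ also makes the bubble argument awkward, because the identity $\pi_{p_n}(s)=x_0$ from Corollary \ref{c:in between points} is harder to play off against the convergence of these geodesics.

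The paper's argument is simpler and avoids the bubble construction entirely, largely because Lemma \ref{lem:Splt}\ref{itm:G1} already encapsulates the needed geometric separation. Working by contradiction directly from $\pi(r_0)\le\intc_{r_0}^-$ (no need to reduce to equality, so the weak-inequality preliminary you derive is unnecessary), one approximates $(\intc_t^-,t)$ from inside the funnel at the \emph{top} time level: pick $x_n\searrow\intc_t^-$ with $x_n\in(\intc_t^-,\intc_t^+)$ and $x_n\notin\NU_{t,\dir-}$, and let $\pi_{p_n}$ be the leftmost $(f,s)$-to-$(x_n,t)$ geodesic. Corollary \ref{cor:LRmost_pi} identifies $\pi_{p_n}=g^{\dir-,L}_{(x_n,t)}|_{[s,t]}$, and the $\NU$ avoidance makes $\pi_{p_n}$ the unique geodesic between its endpoints, so Lemma \ref{lem:overlap} applies. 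Since $\pi_{p_n}\to g^{\dir+,R}_{(\intc_t^-,t)}|_{[s,t]}$ (using \eqref{eq:5.1eq1}) with overlap expanding to all of $(s,t)$, for large $n$ one has $\pi_{p_n}(r_0)=g^{\dir+,R}_{(\intc_t^-,t)}(r_0)\le\intc_{r_0}^-$. Then ordering of geodesics plus Lemma \ref{lem:Splt}\ref{itm:G1} gives $\pi_{p_n}(s)\le g^{\dir-,L}_{(\intc_{r_0}^-,r_0)}(s)<x_0$, contradicting $\pi_{p_n}(s)=x_0$ from Corollary \ref{c:in between points}. So while your first two paragraphs are correct, you need to replace the unfinished bubble construction with this overlap-and-ordering argument (or actually carry out the construction you outline), and you can delete the weak-inequality preliminary.
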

   \begin{figure}[t!]
    \includegraphics[width=7cm]{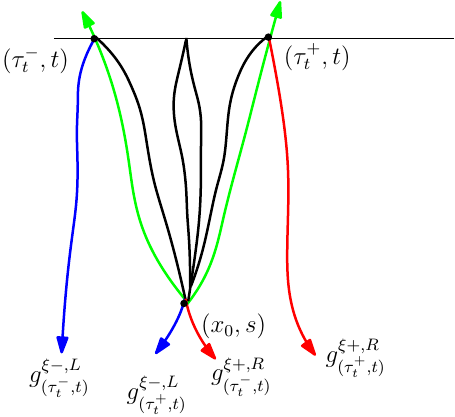}
       \caption{Proposition \ref{p:IntfDist}, Corollary \ref{c:in between points}, Proposition \ref{p:>interface}: When the leftmost interfaces and rightmost interfaces after starting from a point do not meet again for some time then a funnel like the above figure should occur. $g^{\dir-,L}_{(\intc_t^-,t)}$ and $g^{\dir+,R}_{(\intc_t^+,t)}$ will never touch the interfaces (this is proved in Lemma \ref{lem:Splt}, \ref{itm:G1}), $g^{\dir+,R}_{(\intc_t^-,t)}$ and $g^{\dir-,L}_{(\intc_t^+,t)}$ both have to pass through $(x_0,s)$ (this is proved in Proposition \ref{p:IntfDist}). $g^{\dir+,R}_{(\intc_t^-,t)}$ and $g^{\dir-,L}_{(\intc_t^+,t)}$ will not touch the interface except at $(x_0,s)$ (this is proved in Proposition \ref{p:>interface}).  Geodesics to all the in between points in the funnel will be restrictions of both $\dir-$ and $\dir+$ geodesics and they all pass through $(x_0,s)$. These geodesics are coloured in black. This is proved in Corollary \ref{c:in between points}}
       \label{fig:funnel}
   \end{figure}
   \begin{remark}
       Before proving Proposition \ref{p:>interface}, we comment that this result is of a similar flavor, but distinct, from Lemma \ref{lem:Splt}\ref{itm:G1}. Note that Lemma \ref{lem:Splt}\ref{itm:G1} holds without the assumption that $\intc_t^- < \intc_t^+$. Furthermore, Lemma \ref{lem:Splt}\ref{itm:G1} proves that the $\dir -,L$ geodesic from $(\intc^-_t,t)$ lies to the left of $\intc_r^-$ for $r \in [s,t)$, and the $\dir +,R$ geodesic from $(\intc_t^+,t)$ lies to the right of $\intc_r^+$ for $r \in [s,t)$. In Proposition \ref{p:>interface}, we evaluate the $\dir +,R$ and $\dir -,R$  geodesics from $(\intc_t^-,t)$ and the $\dir -,L$ and $\dir +,L$ geodesics from $(\intc_t^+,t)$. Furthermore, unlike Lemma \ref{lem:Splt}\ref{itm:G1}, Equations \eqref{eq:5.3eq1}-\eqref{eq:5.3eq2} do \textit{not} extend to $r = s$. Indeed, Equation \eqref{eq:5.1eq3} in Proposition \ref{p:IntfDist} states that
       \[
       g^{\dir-,R}_{(\intc_t^-,t)}(s) = g^{\dir+,R}_{(\intc_t^-,t)}(s)= g^{\dir+,L}_{(\intc_t^+,t)}(s) = g^{\dir-,L}_{(\intc_t^+,t)}(s) = x_0 = \tau_s^{+} = \tau_s^- .
       \]
       See Figure \ref{fig:funnel}. This shows that $(f_{(x_0,s)}^{\dir},s)$-to-$(x,t)$ geodesics can intersect mixed $\dir$-Busemann interfaces only at points where the leftmost and rightmost interfaces split.
   \end{remark}
   \begin{proof}[Proof of Proposition \ref{p:>interface}]  The equalities in \eqref{eq:5.3eq1}-\eqref{eq:5.3eq2} are a consequence of Equations \eqref{eq:5.1eq1}-\eqref{eq:5.1eq2} in Proposition \ref{p:IntfDist}. We prove the inequality in \eqref{eq:5.3eq1}, and the inequality in \eqref{eq:5.3eq2} follows a symmetric argument. By way of contradiction, assume that there exists $r_0 \in (s,t)$ such that 
   \begin{equation}\label{eq75}
     g^{\dir+,R}_{(\intc_t^-,t)}(r_0) \leq \intc_{r_0}^-   
   \end{equation}
   \begin{figure}[t!]
           \includegraphics[width=10cm]{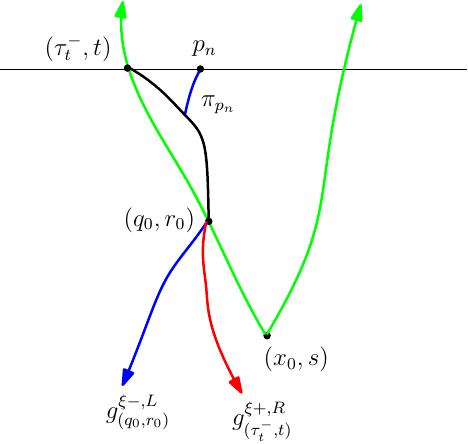}
           \caption{To prove Proposition \ref{p:>interface} we see that if $g^{\dir+,R}_{(\intc_t^-,t)}$ intersects the leftmost interface as in the above figure then there are $p_n$ such that for sufficiently large $n, p_{n}$ the leftmost $(f,s)$-to-$p_{n}$ geodesic $\pi_{p_{n}}$ coalesces with $g^{\dir+,R}_{(\intc_t^-,t)}$ at a time greater than $r_0$ (before $r_0$ when traveling south). So, one can swap $\pi_{p_{n}}$ on $[s,r_0]$ by $g^{\dir-,L}_{(q_0,r_0)}|_{[s,r_0]}$ to obtain another geodesic $\pi'_{p_{n}}$ with $\pi'_{p_{n}}(s)<x_0.$ This is a contradiction to Corollary \ref{c:in between points}.}
           \label{fig:geodesics and splitting points}
       \end{figure}
   We  use an argument similar to the that in the proof of Lemma \ref{lem:Splt}. Let $p_n=(x_n,t)$ be a sequence of points such that $x_n \in (\intc_t^-,\intc_t^+)$ and $x_n \notin \NU_{t,\dir -}$ for all $n \ge 1$, and $x_n \rightarrow \intc_t^-$, where $\NU_{t,\dir-}$ is defined in \eqref{eq:NUdef}. This is possible since $\NU_{t,\dir-}$ is countable on each time level (Lemma \ref{lem:NUcount}). Let $f = f_{(x_0,s)}^\dir$, and, for $n \ge 1$, let $\pi_{p_n}$ be the leftmost  $f$ to $p_n$ geodesic. By Corollary \ref{cor:LRmost_pi}, $\pi_{p_n}=g^{\dir-,L}_{p_n}|_{[s,t]}$. Since we also chose $p_n \notin \NU_{\dir -}$,  we have $g^{\dir-,L}_{p_n} = g^{\dir -,R}_{p_n}$. Since $L$ geodesics are the leftmost geodesics between their points and $R$ geodesics are the rightmost geodesics between their points (Proposition \ref{prop:DL_SIG_cons_intro}\ref{itm:DL_LRmost_geod}), $\pi_{p_n}$ is the unique geodesic between any of its points. By ordering of $\dir-$ geodesics, we get that, for all $r \in [s,t]$,
   \[
   g^{\dir+,R}_{(\intc_t^-,t)}(r)=g^{\dir-,R}_{(\intc_t^-,t)}(r)\leq \pi_{p_n}(r),
   \]
   where the first equality is due to \eqref{eq:5.1eq1}.
   By arguing as before we see that $\pi_{p_{n}}$ converges in the Hausdorff topology to the rightmost  $f$ to $(\intc_t^-,t)$ geodesic, i.e.,\ 
   \begin{equation}
       \pi_{p_{n}}\rightarrow g^{\dir+,R}_{(\intc_t^-,t)}|_{[s,t]}=g^{\dir-,R}_{(\intc_t^-,t)}|_{[s,t]}.
   \end{equation}
    Since $\pi_{p_n}$ is the unique geodesic between its points, Lemma \ref{lem:overlap} implies that the overlap between $\pi_{p_n}$ and $g^{\dir+,R}_{(\intc_t^-,t)}|_{[s,t]}$ is an interval whose endpoints converge to $s$ and $t$. Since $r_0 \in (s,t)$,  \eqref{eq75} implies that for all sufficiently large $n$ (see Figure \ref{fig:geodesics and splitting points}),
   \[
    \pi_{p_{n}}(r_0) =g^{\dir+,R}_{(\intc_t^-,t)}(r_0) \leq \intc^-_{r_0}.
   \]
   This in turn, by ordering of geodesics and Lemma \ref{lem:Splt}\ref{itm:G1} implies
   \[
       \pi_{p_{n}}(s)\leq g^{\dir-,L}_{(\intc_{r_0}^-,r_0)}(s)<x_0.
   \]
    However, since $p_{n}=(x_{n},t)$ where $\intc^-_t<x_{n}$, Corollary \ref{c:in between points} implies $\pi_{p_n}(s) = x_0$, a contradiction.
   \end{proof}

   \section{The interface network}
   \label{s:int_for}
This section is split into two subsections. In Section \ref{sec:interface_dynamics}, we show in Lemma \ref{p:noBub} that left and right $\dir$-mixed Busemann interfaces started from the same initial point intersect infinitely often, and we show in Lemma \ref{lem:coal_int} that interfaces from different initial points on the same time level intersect if and only if they lie in the same interval $\Intr_\alpha^{t,\dir}$ defined in \eqref{eq77}. This is the key to Theorem \ref{thm:int1}\ref{BiInt:it3}-\ref{itm:disj_int_disj}. In Section \ref{sec:along_interface}, we show that the points lying along bi-infinite $\dir$-mixed Busemann interfaces are exactly the points $\Split_{t,\dir}$. This is the key to Theorem \ref{thm:BiInt}. 
   
\subsection{Interface dynamics} \label{sec:interface_dynamics}
 Fix $\dir \in \R$, and let 
   \begin{equation}
       \mathfrak{t}^\dir_{(x_0,s)}(t)=\inf\{r>t:\intc^{\dir,-}_{(x_0,s)}(r)=\intc^{\dir,+}_{(x_0,s)}(r)\} 
   \end{equation}
   be the first time after $t$ that the leftmost and rightmost interfaces meet. 
   \begin{lemma}[Interfaces which meet must meet infinitely often]\label{p:noBub}
       On the event $\Omega_1$, for every $\dir\in\R$, $(x_0,s)\in\R^2$ and $t>s$,
       \begin{equation}\label{eq49}
           \mathfrak{t}^\dir_{(x_0,s)}(t)<\infty.
       \end{equation}
   \end{lemma}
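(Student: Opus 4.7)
The plan is to argue by contradiction, using Proposition \ref{prop:bigeod_non}: assuming $\mathfrak{t}^\dir_{(x_0,s)}(t) = \infty$, I will construct a bi-infinite geodesic for $\Ll$.

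Step 1 (reduction). Abbreviate $\intc^\sig_r := \intc^{\dir,\sig}_{(x_0,s)}(r)$ for $\sigg\in\{-,+\}$. The assumption is that $\intc^-_r < \intc^+_r$ for all $r > t$. The set $A = \{r \in [s,t] : \intc^-_r = \intc^+_r\}$ is nonempty (it contains $s$) and closed by continuity of the interfaces (Lemma \ref{lem:interface_continuous}). Let $s' := \sup A$ and $x_0' := \intc^-_{s'} = \intc^+_{s'}$; by construction and by the assumption on $r > t$, we have $\intc^-_r < \intc^+_r$ for every $r > s'$. By the consistency in Proposition \ref{p:rest}, $\intc^{\dir,\pm}_{(x_0,s)}|_{[s',\infty)} = \intc^{\dir,\pm}_{(x_0',s')}|_{[s',\infty)}$, so after replacing $(x_0,s)$ by $(x_0',s')$ we may assume $\intc^-_r < \intc^+_r$ for all $r > s$.

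Step 2 (funnel-trapped geodesics). Under this assumption the hypotheses of Propositions \ref{p:IntfDist} and \ref{p:>interface} hold for every $u > s$. Equation \eqref{eq:5.1eq3} yields $g^{\dir+,R}_{(\intc^-_u,u)}(s) = x_0$, while Equation \eqref{eq:5.3eq1}, the ordering of $\dir+$ geodesics in the root (Proposition \ref{prop:g_basic_prop}\ref{itm:DL_SIG_mont_x}), and Equation \eqref{eq:5.3eq2} together give
\[
\intc^-_r \;<\; g^{\dir+,R}_{(\intc^-_u,u)}(r) \;\le\; g^{\dir+,L}_{(\intc^+_u,u)}(r) \;<\; \intc^+_r, \qquad r \in (s,u).
\]
Fix a sequence $u_n \to \infty$ and set $\pi_n := g^{\dir+,R}_{(\intc^-_{u_n},u_n)}$. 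For every compact $[s,T]$ and every $n$ with $u_n > T$, the inequalities above together with continuity of $\intc^\pm$ give uniform spatial bounds for $\pi_n|_{[s,T]}$. For $r \le s$, the consistency in Proposition \ref{prop:DL_SIG_cons_intro}\ref{itm:DL_all_SIG} applied at the point $\pi_n(s)=x_0$ forces $\pi_n|_{(-\infty,s]} = g^{\dir+,R}_{(x_0,s)}|_{(-\infty,s]}$ for every $n$, so the sequence $\{\pi_n\}$ is uniformly bounded on every compact subset of $\R$.

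Step 3 (limit and contradiction). Combining the uniform spatial bounds with the landscape modulus-of-continuity and point-to-point geodesic precompactness estimates (Lemmas \ref{lem:Landscape_global_bound}, \ref{lem:SIGprecompact}), a diagonal Arzel\`a--Ascoli extraction yields a subsequence $\pi_{n_k}$ converging uniformly on compact sets to a continuous path $\pi^\infty : \R \to \R$ with $\pi^\infty(s)=x_0$. For each $r_1 < r_2$ and all $k$ with $u_{n_k} \ge r_2$, the weight identity \eqref{eqn:SIG_weight} gives
\[
\Ll\bigl(\pi_{n_k}(r_1),r_1;\pi_{n_k}(r_2),r_2\bigr) = W^{\dir+}\bigl(\pi_{n_k}(r_1),r_1;\pi_{n_k}(r_2),r_2\bigr).
\]
Passing to the limit, using continuity of $\Ll$ and $W^{\dir+}$, the identity persists for $\pi^\infty$. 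Combined with additivity of $W^{\dir+}$ (Proposition \ref{prop:Buse_basic_properties}\ref{itm:DL_Buse_add}), we get, for any $r_1 < r < r_2$,
\[
\Ll\bigl(\pi^\infty(r_1),r_1;\pi^\infty(r),r\bigr) + \Ll\bigl(\pi^\infty(r),r;\pi^\infty(r_2),r_2\bigr) = \Ll\bigl(\pi^\infty(r_1),r_1;\pi^\infty(r_2),r_2\bigr),
\]
so $\pi^\infty|_{[r_1,r_2]}$ is a point-to-point geodesic of $\Ll$. Hence $\pi^\infty$ is a bi-infinite geodesic, contradicting Proposition \ref{prop:bigeod_non}. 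The main technical obstacle is the compactness step, which requires enough uniform regularity of the point-to-point geodesics $\pi_n|_{[s,u_n]}$ across the varying endpoints $(\intc^-_{u_n},u_n)\to\infty$; this is exactly what the funnel trap combined with the landscape regularity estimates delivers.
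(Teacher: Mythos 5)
Your proof is correct and follows essentially the same strategy as the paper's: exploit the funnel structure from Propositions \ref{p:IntfDist} and \ref{p:>interface}, extract a uniformly convergent subsequence of $\dir\sig$ geodesics trapped in the funnel with common anchor point $(x_0,s)$, and then use the Busemann weight identity \eqref{eqn:SIG_weight} together with additivity to identify the limit as a bi-infinite geodesic, contradicting Proposition \ref{prop:bigeod_non}. The only cosmetic differences are that you take $g^{\dir+,R}$ rooted at the left interface location $(\intc^-_{u_n},u_n)$ (so Equation \eqref{eq:5.1eq3} pins $\pi_n(s)=x_0$ directly), whereas the paper uses $g^{\dir-,L}$ rooted at midpoints; and your Step 1 reduction is written slightly more carefully than the paper's ``without loss of generality $t=0$.'' One minor citation note: for the compactness in Step 3, Lemma \ref{lem:SIGprecompact} (convergence of semi-infinite geodesics rooted at converging points) is not the most apt reference since the roots $(\intc^-_{u_n},u_n)$ diverge; what is really used is precompactness of finite point-to-point geodesic segments on compact time intervals (Lemma \ref{lem:precompact}, which is what the paper invokes) together with the fact that $\pi_n|_{(-\infty,s]}$ is the same for all $n$.
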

   \begin{proof}
       Fix $\dir\in \R$ and $(x_0,s)\in \R^2$. For ease of notation, we let $(x_0,s)=(0,0)$. For shorthand notation, for $r > 0$, we define $\intc^{\sig}_r = \intc_{(0,0)}^{\dir,\sig}(r)$ for $\sigg \in \{-,+\}$.
       Suppose, by way of contradiction, that $ \mathfrak{t}_{(0,0)}^\dir(t)=\infty$ for some $t \ge 0$. By consistency of interfaces in Proposition \ref{p:rest}, we may assume, without loss of generality, that $t = 0$. For $k \in \Z_{>0}$, let $x_k=(\intc^-_k+\intc^+_k)/2$, and define $g_k=g^{\dir-,L}_{(x_k,k)}$.  Note that, by our assumption, $\mathfrak{t}_{(0,0)}^\dir(t) = \infty$, we can apply Proposition \ref{p:>interface} and ordering of geodesics to conclude that, for $k >  1$,
       \begin{equation} \label{eq:t1pmbd}
           \intc^-_{1}<g^{\dir-,R}_{(\intc^-_k,k)}(1) \leq g_k(1) \leq g^{\dir-,L}_{(\intc^+_k,k)}(1)<\intc^+_{1}.
       \end{equation}
       Using ordering of geodesics again, this further implies that 
       \begin{equation}
           g^{\dir-,L}_{(\intc^-_1,1)}(r)\leq g_k(r)\leq g^{\dir-,R}_{(\intc^+_1,1)}(r) \qquad \forall r\in [0,1]. 
       \end{equation}
By \eqref{eq:t1pmbd}, Equation \eqref{eq:5.1eq3} of Proposition \ref{p:IntfDist} implies that $g_k(0) = 0$ for all $k \ge 1$. By compactness, there exists a subsequence $k^1_{\ell}$ such that \
       \begin{equation}
           g_{k^1_l}|_{[0,1]}\rightarrow \Tilde{g}_1,
       \end{equation}
       uniformly, where The limiting object $\Tilde{g}_1$ is a geodesic by Lemma \ref{lem:precompact}. $\Tilde{g}_1$ is  geodesic on $[0,1]$. By a similar argument, we can find a subsequence  $\{k^2_{\ell}\}\subseteq \{k^1_\ell\}$ such that \ 
       \begin{equation}
           g_{k^2_{\ell}}|_{[0,2]}\rightarrow \Tilde{g}_2,
       \end{equation}
       uniformly on compact sets,  where $\Tilde{g}_2$ is a geodesic on $[t,t+2]$ and $\Tilde{g}_2|_{[0,1]}=\Tilde{g}_1$.  Using a diagonal argument, we can find a sequence $g_{k^\ell_{\ell}}$ such that \ $g_{k^\ell_{\ell}}\rightarrow \Tilde{g}_\infty$ as $\ell\rightarrow \infty$. Since we showed that $g_k(0) = 0$ for all $k \ge 0$, we have   $\Tilde{g}_\infty(0)=0$. We now define
       \begin{equation}
           g_{\infty}(r)=
           \begin{cases}
               \Tilde{g}_\infty(r) & r\geq 0\\
               g^{\dir-,L}_{(0,0)}(r) & r<0.
           \end{cases}
       \end{equation}
       We claim $g_\infty$ is a bi-infinite geodesic. Since $g^{\dir-,L}_{(0,0)}$ is a $\dir-$ geodesic, by Lemma \ref{lem:Buse_eq} and the construction of general $\dir-$ geodesics in Proposition \ref{prop:DL_SIG_cons_intro}\ref{itm:arb_geod_cons}, it suffices to show that, for all $r' > 0$,
       \[
       \Ll(0,0;r',g_\infty(r')) = W^{\dir-}(0,0;r',g_\infty(r')).
       \]
       Indeed, since $g_k =g_{(x_k,k)}^{\dir-,L}$, by \eqref{geod_LR_eq_L}, we have, for all $k \ge r'$,
       \[
       \Ll(0,0;r',g_k(r')) = W^{\dir-}(0,0;r',g_k(r')).
       \]
       and the result follows by taking limits and using continuity of the Busemann functions. This contradicts the non-existence of bi-infinite geodesics in the directed landscape (Proposition \ref{prop:bigeod_non}).
   \end{proof}
We now prove an intermediate lemma, which is a restatement of Theorem \ref{thm:int1}\ref{int1:it3}.
\begin{lemma} \label{lem:int_order}The following holds on the event $\Omega_1$. Let $\sigg \in \{-,+\}$, $x_0 < y_0$ and $s \in \R$. Then, 
           \begin{equation}\label{eq61}
                \intc^{\dir,\sig}_{(x_0,s)}(t)\leq \intc^{\dir,\sig}_{(y_0,s)}(t) \qquad \forall t\geq s.
           \end{equation}
           Furthermore, if $\intc^{\dir,\sig}_{(x_0,s)}(r) = \intc^{\dir,\sig}_{(y_0,s)}(r)$ for some $r \ge s$, then $\intc^{\dir,\sig}_{(x_0,s)}(t) = \intc^{\dir,\sig}_{(y_0,s)}(t)$ for all $t \ge r$.
           \end{lemma}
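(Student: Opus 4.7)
The plan is to leverage the consistency (restriction) property from Proposition~\ref{p:rest} together with the continuity of the interfaces from Lemma~\ref{lem:interface_continuous}. The key observation is that if two interfaces of the same sign take the same value at the same time, then Proposition~\ref{p:rest} forces them to coincide from that time onward; this alone handles the second claim, and a short first-meeting-time argument promotes it to the monotonicity statement. Since Proposition~\ref{p:rest} is valid for both signs, everything below will be written once for a generic $\sigg \in \{-,+\}$.

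I would first prove the ``stay together'' claim. Assuming $\intc^{\dir,\sig}_{(x_0,s)}(r) = \intc^{\dir,\sig}_{(y_0,s)}(r) =: z$ for some $r \ge s$, apply Proposition~\ref{p:rest} to each of the two interfaces with intermediate time $r$ to obtain, for every $t \ge r$,
\[
\intc^{\dir,\sig}_{(x_0,s)}(t) \;=\; \intc^{\dir,\sig}_{(z,r)}(t) \;=\; \intc^{\dir,\sig}_{(y_0,s)}(t),
\]
which is exactly the second assertion of the lemma.

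For \eqref{eq61}, I would consider the set
\[
E := \{\, r \ge s : \intc^{\dir,\sig}_{(x_0,s)}(r) = \intc^{\dir,\sig}_{(y_0,s)}(r) \,\},
\]
which is closed in $[s,\infty)$ by continuity of both interfaces. Because $\intc^{\dir,\sig}_{(x_0,s)}(s) = x_0 < y_0 = \intc^{\dir,\sig}_{(y_0,s)}(s)$, we have $s \notin E$. If $E = \emptyset$, the intermediate value theorem applied to the continuous difference $\intc^{\dir,\sig}_{(y_0,s)}(\cdot) - \intc^{\dir,\sig}_{(x_0,s)}(\cdot)$ forces the strict inequality $\intc^{\dir,\sig}_{(x_0,s)}(r) < \intc^{\dir,\sig}_{(y_0,s)}(r)$ for all $r \ge s$. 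Otherwise, let $r^\star = \min E > s$; the same intermediate-value argument gives strict inequality on $[s,r^\star)$, while the ``stay together'' step just proved yields equality on $[r^\star,\infty)$. Either way \eqref{eq61} holds.

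I do not anticipate any real obstacle: Proposition~\ref{p:rest} does essentially all the work, and the only thing to verify is that a first-meeting time is well defined when $E$ is nonempty, which is immediate from closedness of $E$. The only small point to be careful about is that Proposition~\ref{p:rest} is stated for interfaces started at the common time $s$ and evaluated at $t \ge r$, which matches our application exactly; no further modification is needed.
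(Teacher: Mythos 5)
Your proposal is correct and follows essentially the same approach as the paper: both use the initial ordering $\intc^{\dir,\sig}_{(x_0,s)}(s) = x_0 < y_0 = \intc^{\dir,\sig}_{(y_0,s)}(s)$, the consistency property of Proposition~\ref{p:rest} to deduce that once the interfaces meet they coincide thereafter, and then continuity (via the intermediate value theorem) to propagate the ordering to all $t \ge s$.
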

           \begin{proof}
           By definition, we have
           \be \label{intc_inits}
    \intc^{\dir,\sig}_{(x_0,s)}(s) = x_0 < y_0 = \intc^{\dir,\sig}_{(y_0,s)}(s),
           \ee
           and if $\intc^{\dir,\sig}_{(x_0,s)}(r) = \intc^{\dir,\sig}_{(y_0,s)}(r)$ for some $r > s$, then $\intc^{\dir,\sig}_{(x_0,s)}(t) = \intc^{\dir,\sig}_{(y_0,s)}(t)$ for all $t \ge r$ by by Proposition \ref{p:rest}. The inequality \eqref{eq61} now follows from continuity of $\intc_{(x_0,s)}^{\dir,\sig}$ and  $\intc_{(y_0,s)}^{\dir,\sig}$.
\end{proof}
   
   The following result shows the dynamics of interfaces if they meet (see Figure \ref{fig:intf_ord}). Later, in Lemma \ref{lem:coal_int}, we characterize the pairs of points for which the interfaces meet.  
   \begin{figure}[t!]
       \includegraphics[width=10cm]{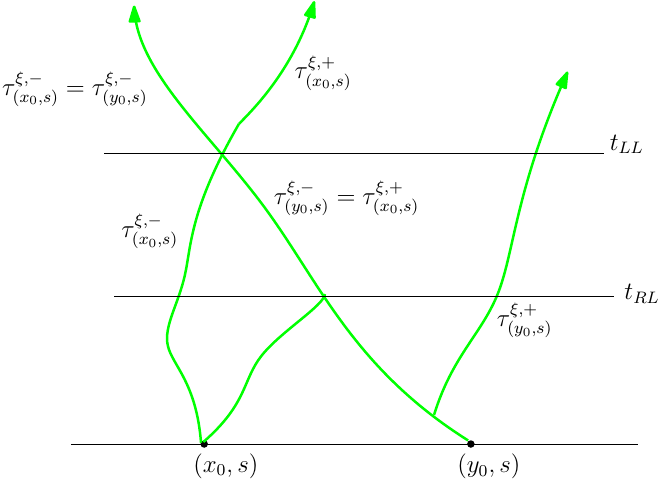}
       \caption{  
       A depiction of Lemma \ref{l:ordrIn}. Four interfaces leave from two ordered points on the line. At time $\mathfrak{t}_{RL}$, $\intc^{\dir,-}_{(y_0,s)}$ meets $\intc^{\dir,+}_{(x_0,s)}$ after which they continue together until $\intc^{\dir,-}_{(x_0,s)}$ and $\intc^{\dir,-}_{(y_0,s)}$ coalesce or $\intc^{\dir,+}_{(x_0,s)}$ and $\intc^{\dir,+}_{(y_0,s)}$ coalesce, whichever happens earlier. After $\mathfrak{t}_{LL} \wedge \mathfrak{t}_{RR}$, it is possible that $\intc^{\dir,+}_{(x_0,s)}$ will branch off.}
       \label{fig:intf_ord} 
   \end{figure}
    \begin{lemma}[Order of interfaces] \label{l:ordrIn}
       The following holds on the event $\Omega_1$ for all $\dir\in\R$, $s\in\R$, and $x_0 <  y_0 \in\R$. For these choices, define
           \begin{equation}\label{eq71}
           \begin{aligned}
                 \mathfrak t_{RL}&=\inf\{r>s:\intc^{\dir,+}_{(x_0,s)}(r)=\intc^{\dir,-}_{(y_0,s)}(r)\}\\
               \mathfrak t_{LL}&=\inf\{r>s:\intc^{\dir,-}_{(x_0,s)}(r)=\intc^{\dir,-}_{(y_0,s)}(r)\}\\
               \mathfrak t_{RR}&=\inf\{r>s:\intc^{\dir,+}_{(x_0,s)}(r)=\intc^{\dir,+}_{(y_0,s)}(r)\}.
           \end{aligned}
           \end{equation}
           Then, 
       \begin{align}
       \label{eq70}
           \intc^{\dir,+}_{(x_0,s)}(r)&\geq \intc^{\dir,-}_{(y_0,s)}(r) \quad\text{for}\quad r\in [\mathfrak t_{RL},\infty), \quad\text{and, moreover,} \\
       \label{eq50}
           \intc^{\dir,+}_{(x_0,s)}(r) &=  \intc^{\dir,-}_{(y_0,s)}(r) \quad\text{for}\quad r\in [\mathfrak t_{RL},\mathfrak t_{LL}\wedge \mathfrak t_{RR}] 
       \end{align}
       \end{lemma}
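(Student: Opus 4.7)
The plan is to deduce both claims from the consistency of the interfaces (Proposition \ref{p:rest}) together with the funnel structure from Section \ref{s:G_and_I}.

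For \eqref{eq70}, I would set $z := \intc^{\dir,+}_{(x_0,s)}(\mathfrak{t}_{RL}) = \intc^{\dir,-}_{(y_0,s)}(\mathfrak{t}_{RL})$ and use Proposition \ref{p:rest} to rewrite the two interfaces on $[\mathfrak{t}_{RL},\infty)$ as $\intc^{\dir,+}_{(z,\mathfrak{t}_{RL})}$ and $\intc^{\dir,-}_{(z,\mathfrak{t}_{RL})}$, respectively. Since the left and right mixed $\dir$-Busemann interfaces from a common root are always ordered $\intc^{\dir,-} \le \intc^{\dir,+}$ (an immediate consequence of the definition \eqref{eq:taupm_def} and the monotonicity from Lemma \ref{l:continuity of d}), \eqref{eq70} follows.

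For \eqref{eq50}, I would argue by contradiction. Assume there exists $r_0 \in (\mathfrak{t}_{RL}, \mathfrak{t}_{LL}\wedge\mathfrak{t}_{RR}]$ at which $\intc^{\dir,+}_{(x_0,s)}(r_0) > \intc^{\dir,-}_{(y_0,s)}(r_0)$, and define the latest previous meeting time
\[
r^* := \sup\{u \in [\mathfrak{t}_{RL}, r_0] : \intc^{\dir,+}_{(x_0,s)}(u) = \intc^{\dir,-}_{(y_0,s)}(u)\},
\]
with meeting point $z' := \intc^{\dir,+}_{(x_0,s)}(r^*) = \intc^{\dir,-}_{(y_0,s)}(r^*)$, which is well-defined by continuity of the interfaces (Lemma \ref{lem:interface_continuous}). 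By Proposition \ref{p:rest}, the restrictions to $[r^*,\infty)$ equal $\intc^{\dir,+}_{(z',r^*)}$ and $\intc^{\dir,-}_{(z',r^*)}$, and these are strictly separated on $(r^*,r_0]$. This places us in the setting of Proposition \ref{p:IntfDist}, Corollary \ref{c:in between points}, and Proposition \ref{p:>interface} with $(x_0,s)=(z',r^*)$ and $t=r_0$. Setting $\intc^\pm_{r_0} := \intc^{\dir,\pm}_{(z',r^*)}(r_0)$, these propositions yield $g^{\dir-,R}_{(\intc^-_{r_0},r_0)}(r^*) = z' = g^{\dir+,L}_{(\intc^+_{r_0},r_0)}(r^*)$ via \eqref{eq:5.1eq1}--\eqref{eq:5.1eq3}. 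On the other hand, since $r_0 < \mathfrak{t}_{LL}$ implies $\intc^{\dir,-}_{(x_0,s)}(u) < \intc^{\dir,-}_{(y_0,s)}(u)$ on $[r^*, r_0]$, Lemma \ref{lem:Splt}\ref{itm:G1} applied to the interface $\intc^{\dir,-}_{(y_0,s)}$ forces $g^{\dir-,L}_{(\intc^-_{r_0},r_0)}(r^*) < z'$, so $\intc^-_{r_0} \in \NU_{r_0,\dir-}$; the symmetric argument using $r_0 < \mathfrak{t}_{RR}$ gives $\intc^+_{r_0} \in \NU_{r_0,\dir+}$.

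The main obstacle is turning these splittings into a genuine contradiction, since mere $\NU$-membership is permitted. I expect the resolution to mirror the bi-infinite-geodesic argument used in the proof of Lemma \ref{p:noBub}: Corollary \ref{c:in between points} forces every $\dir-$ geodesic rooted at a funnel-interior point $(w,u)$, $u \in (r^*, r_0]$, to coalesce at $(z', r^*)$, and then continue as a single $\dir-$ geodesic below $r^*$. By iterating the split/meet argument upward (using that the hypotheses propagate to each split), one obtains a sequence of $\dir-$ geodesics rooted at arbitrarily large times that all pass through $(z', r^*)$; passing to a diagonal limit, as in Lemma \ref{p:noBub}, one extracts a bi-infinite $\dir-$-directed geodesic, contradicting Proposition \ref{prop:bigeod_non}. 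An alternative route would exploit coalescence of $\dir-$ geodesics (Proposition \ref{prop:DL_all_coal}) together with the no-bubble property (Lemma \ref{lem:no_bubbles}) applied to the leftmost and rightmost $\dir-$ geodesics from $(\intc^-_{r_0}, r_0)$, which coincide at $r_0$ and at their coalescence time but differ at $r^*$.
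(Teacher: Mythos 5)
Your proof of \eqref{eq70} is correct and matches the paper's: rewrite the two interfaces on $[\mathfrak t_{RL},\infty)$ via the consistency Proposition \ref{p:rest} as left and right mixed interfaces from the common point $(z,\mathfrak t_{RL})$, and conclude by the trivial ordering of leftmost and rightmost interfaces.

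For \eqref{eq50}, however, there is a genuine gap that you yourself flag. You correctly set up a funnel from $(z',r^*)$ and deduce, via Proposition \ref{p:IntfDist} and Lemma \ref{lem:Splt}\ref{itm:G1}, that $\intc^-_{r_0}\in\NU_{r_0,\dir-}$ and $\intc^+_{r_0}\in\NU_{r_0,\dir+}$. But, as you note, this is exactly what Lemma \ref{lem:NU} already tells you for any funnel, so no contradiction follows from $\NU$-membership alone, and you have not yet used the hypothesis $r_0<\mathfrak t_{LL}\wedge\mathfrak t_{RR}$ in any decisive way. Neither of your two proposed escape routes closes the gap. The ``iterate upward to build a bi-infinite geodesic'' route requires the hypothesis $r<\mathfrak t_{LL}\wedge\mathfrak t_{RR}$ to propagate to arbitrarily large $r$, which is not available once $r$ passes $\mathfrak t_{LL}\wedge\mathfrak t_{RR}$. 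The ``no-bubble from $(\intc^-_{r_0},r_0)$'' route fails because the leftmost and rightmost $\dir-$ geodesics from a point of $\NU_{r_0,\dir-}$ agree only at the single terminal time $r_0$, not on a neighborhood of it, so Lemma \ref{lem:no_bubbles} does not apply to them.

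The paper's argument for \eqref{eq50} takes a different and decisive route: it picks a point $z$ \emph{strictly between} $\intc^{\dir,-}_{(y_0,s)}(r_0)$ and $\intc^{\dir,+}_{(x_0,s)}(r_0)$, and considers the single geodesic $g^{\dir+,R}_{(z,r_0)}$ on $[\mathfrak t_{RL},r_0]$. Since the two bounding interfaces coincide at $\mathfrak t_{RL}$ while $z$ lies strictly between them at $r_0$, the intermediate value theorem forces the geodesic to touch one of $\intc^{\dir,+}_{(x_0,s)}$ or $\intc^{\dir,-}_{(y_0,s)}$ at some $r_1\in[\mathfrak t_{RL},r_0)$. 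The paper then shows (via \eqref{maxletRL} and its mirror image, ordering of geodesics, and Proposition \ref{p:>interface}) that both touchings are impossible. The crucial ingredient your argument is missing is precisely this IVT-plus-no-crossing structure: rather than working from the endpoints of the inner funnel, you need to trap a geodesic from an interior point between two interfaces and apply Proposition \ref{p:>interface} to each of the $(x_0,s)$ and $(y_0,s)$ funnels separately.
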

       \begin{remark}
           In words, Lemma \ref{l:ordrIn} says the following: we look at the interfaces from two ordered vertices $(x_0,s)$ and $(y_0,s)$ with $x_0<y_0$. Then all the terms defined in the lemma could be infinite. i.e., interfaces from $(x_0,s)$ never meet interfaces from $(y_0,s)$. When the two interfaces do meet, $\mathfrak{t}_{RL}$ is finite and  $\tau^{\dir,+}_{(x_0,s)}$ will always stay to the right of $\tau^{\dir,-}_{(y_0,s)}$ once they meet. In addition if either $\mathfrak{t}_{LL}$ or $\mathfrak{t}_{RR}$ is also finite then $\tau^{\dir,+}_{(x_0,s)}$ and $\tau^{\dir,-}_{(y_0,s)}$ cannot separate until $\mathfrak{t}_{LL} \wedge \mathfrak{t}_{RR}$, i.e. they will stay together until either the two leftmost or rightmost interfaces meet (see Figure \ref{fig:intf_ord}).
       \end{remark}
       \begin{proof}[Proof of Lemma \ref{l:ordrIn}]
       We first observe that $\mathfrak t_{LL}\wedge \mathfrak t_{RR} \geq \mathfrak t_{RL}$. Indeed, if $\intc^{\dir,+}_{(x_0,s)}(t) = \intc^{\dir,+}_{(y_0,s)}(t)$ for some $t > s$, then
           \[
           \intc^{\dir,-}_{(y_0,s)}(t) \le \intc^{\dir,+}_{(y_0,s)}(t) = \intc^{\dir,+}_{(x_0,s)}(t).
           \]
            Hence, since $\intc_{(x,s)}^{\dir,\sig}(s)=x$ for all $x \in \R$, $\sigg \in \{-,+\}$,
            continuity of interfaces (Lemma \ref{lem:interface_continuous}) and the intermediate value theorem imply that $\intc^{\dir,-}_{(y_0,s)}(r) = \intc^{\dir,+}_{(x_0,s)}(r)$ for some $r \in (s,t]$. Thus, $\mathfrak t_{RR} \ge \mathfrak t_{RL}$. A similar argument shows that $\mathfrak t_{LL} \ge \mathfrak t_{RL}$.

            We now prove \eqref{eq70}. By definition of $\mathfrak t_{RL}$ and continuity, we have 
            \[
            \hat x := \intc_{(x_0,s)}^{\dir,+}(\mathfrak t_{RL}) = \intc_{(y_0,s)}^{\dir,-}(\mathfrak t_{RL}),
            \]
            so by the consistency of the interfaces in Proposition \ref{p:rest}, for $r \ge \mathfrak t_{RL}$,
            \[
            \intc^{\dir,+}_{(x_0,s)}(r) = \intc_{(\hat x,\mathfrak t_{RL})}^{\dir,+}(r)  \geq \intc_{(\hat x,\mathfrak t_{RL})}^{\dir,-}(r) = \intc^{\dir,-}_{(y_0,s)}(r),
            \]
            giving \eqref{eq70}. 
            
            We now prove \eqref{eq50}. By definition, the equality holds for $r = \mathfrak t_{RL}$. Then, by continuity, it suffices to show the equality holds for all $r$ in the (possibly empty) set $(\mathfrak t_{RL},\mathfrak t_{LL} \wedge \mathfrak t_{RR})$. Assume, by way of  contradiction, that there exists $r_0\in (\mathfrak t_{RL},\mathfrak t_{LL}\wedge \mathfrak t_{RR})$ such that $\intc^{\dir,-}_{(y_0,s)}(r_0) \neq \intc^{\dir,+}_{(x_0,s)}(r_0)$. Then, by \eqref{eq70}, we have $\intc^{\dir,+}_{(x_0,s)}(r_0) > \intc^{\dir,-}_{(y_0,s)}(r_0)$, so we can choose
    \begin{equation}\label{eq63}
           z\in\bigl(\intc^{\dir,-}_{(y_0,s)}(r_0),\intc^{\dir,+}_{(x_0,s)}(r_0)\bigr).    
           \end{equation}
           Since $r_0 < \mathfrak t_{LL} \wedge \mathfrak t_{RR}$, Lemma \ref{lem:int_order} and continuity of interfaces implies that 
         \begin{equation} \label{eq:taupmz_order}
         \begin{aligned}
             \intc^{\dir,-}_{(x_0,s)}(r_0) &< \intc^{\dir,-}_{(y_0,s)}(r_0)  < z < \intc^{\dir,+}_{(x_0,s)}(r_0) < \intc^{\dir,+}_{(y_0,s)}(r_0).
         \end{aligned}
            \end{equation}
We now argue that this implies there exists $r_1 \in [\mathfrak t_{RL},r_0)$ so that 
\be \label{ghits_one}
\text{either}\quad g_{(z,r_0)}^{\dir +R}(r_1)  = \intc_{(x_0,s)}^{\dir,+}(r_1),\quad\text{or}\quad g_{(z,r_0)}^{\dir +R}(r_1)  = \intc_{(y_0,s)}^{\dir,-}(r_1).
\ee
See Figure \ref{fig:LR figure}.
\begin{figure}[t!]
    \includegraphics[width=10cm]{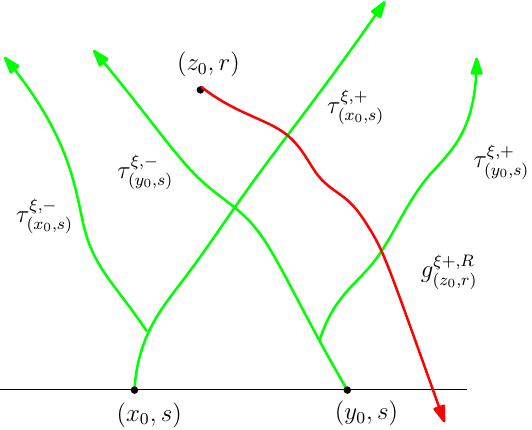}
    \caption{We show a contradiction if the equality in \eqref{eq50} does not hold. If there is a $z$ as in the figure then $g^{\dir+,R}_{(z,r_0)}$ must intersect $\tau^{\dir,+}_{(x_0,s)}$ or $\tau^{\dir,-}_{(y_0,s)}$ in $[\mathfrak{t}_{RL},\infty)$ and the interfaces will look like as in the figure. But then this is a contradiction to Proposition \ref{p:>interface}.}
    \label{fig:LR figure}
\end{figure} 

Indeed, if $g_{(z,r_0)}^{\dir +R}(r) \neq \intc_{(x_0,s)}^{\dir,+}(r)$ for all $r \in [\mathfrak t_{RL},r_0)$, then continuity of geodesics and interfaces along with \eqref{eq:taupmz_order} implies that $g_{(z,r_0)}^{\dir +,R}(r) < \intc^{\dir,+}_{(x_0,s)}(r)$ for all $r \in [\mathfrak t_{RL},r_0)$. Then, we observe that 
\[
\intc_{(y_0,s)}^{\dir,-}(\mathfrak t_{RL}) = \intc_{(x_0,s)}^{\dir,+}(\mathfrak t_{RL}) >  g_{(z,r_0)}^{\dir +R}(\mathfrak t_{RL}).
\]
However, from \eqref{eq:taupmz_order}, we have
\[
\tau_{(y_0,s)}^{\dir,-}(r_0) < z = g_{(z,r_0)}^{\dir +R}(r_0)< \tau^{\dir,+}_{(x_0,s)}(r_0),
\]
so continuity of geodesics and interfaces, along with the intermediate value theorem, imply that the second equality of \eqref{ghits_one} holds for some $r_1 \in (\mathfrak t_{RL},r_0)$.

Now, we obtain a contradiction by showing that \eqref{ghits_one} cannot be true. We rule out each equality in separate cases. 

\medskip \noindent \textbf{Case 1: Showing $g_{(z,r_0)}^{\dir +,R}(r_1) < \intc_{(x_0,s)}^{\dir,+}(r_1)$ for all $r_1 \in [\mathfrak t_{RL},r_0)$.}
We first claim that (see Figure \ref{fig:LR figure}) 
\be \label{maxletRL}
\sup\{r \in [s,r_0): \intc_{(x_0,s)}^{\dir,-}(r) = \intc_{(x_0,s)}^{\dir,+}(r)\} < \mathfrak t_{RL}.
\ee
Indeed, if \eqref{maxletRL} fails, then there exists $r \in[\mathfrak t_{RL},r_0)$ so that 
\[
\intc_{(x_0,s)}^{\dir,-}(r) = \intc_{(x_0,s)}^{\dir,+}(r) \ge \intc_{(y_0,s)}^{\dir,-}(r),
\]
where the inequality follows by \eqref{eq70}. But then, continuity of interfaces and the definition of $\mathfrak t_{LL}$ implies that $ r \ge \mathfrak t_{LL}$, a contradiction since $r < r_0 < \mathfrak t_{LL} \wedge \mathfrak t_{RR}$.

By \eqref{maxletRL} and consistency of interfaces (Proposition \ref{p:rest}), we may apply \eqref{eq:taupmz_order} and ordering of geodesics (Proposition \ref{prop:g_basic_prop}\ref{itm:DL_SIG_mont_x}), followed by Equation \eqref{eq:5.3eq2} of Proposition \ref{p:>interface} to conclude
\[
g_{(z,r_0)}^{\dir +,R}(r_1) \le g_{(\intc_{(x_0,s)}^{\dir,+}(r_0),r_0)}^{\dir +,L}(r_1) < \intc_{(x_0,s)}^{\dir,+}(r_1),
\]
and this proves the desired statment for this case. 

\medskip \noindent \textbf{Case 2: Showing $g_{(z,r_0)}^{\dir +R}(r_1)  > \intc_{(y_0,s)}^{\dir,-}(r_1)$ for all $r_1 \in [\mathfrak t_{RL},r_0)$. } The proof of this case is similar. A symmetric argument shows that 
\[
\sup\{r \in [s,r_0): \intc_{(y_0,s)}^{\dir,-}(r) = \intc_{(y_0,s)}^{\dir,+}(r)\} < \mathfrak t_{RL},
\]
and this allows us to use Equation \eqref{eq:5.3eq1} of Proposition \ref{p:>interface} (after an application of geodesic monotonicity) to conclude
\[
g_{(z,r_0)}^{\dir +,R}(r_1) \ge g_{(\intc_{(y_0,s)}^-(r_0),r_0)}^{\dir +,R}(r_1) > \intc_{(y_0,s)}^{\dir,-}(r_1). \qedhere
\]
       \end{proof}
       For the next result, recall the definition of the sets $\Intr^{s,\dir}_\alpha$ defined below \eqref{eq77}. 
       
   \begin{lemma}[Meeting of interfaces]
   \label{lem:coal_int}
       On the event $\Omega_1$, the following hold for all $\dir\in \DLBusedc$ and $s\in\R$:
       \begin{enumerate} [label=\rm(\roman{*}), ref=\rm(\roman{*})]
           \item \label{Itco1}If $x_0,y_0\in \Intr^{s,\dir}_\alpha$ for some $x_0 \le y_0$ and $\alpha \in \I^{t,\dir}$, then $\mathfrak t_{RL},\mathfrak t_{LL},\mathfrak t_{RR},\mathfrak t_{LR}$ are all finite, where $\mathfrak t_{RL},\mathfrak t_{LL},\mathfrak t_{RR}$ are defined in \eqref{eq71}, and
       \[
          \mathfrak t_{LR} =  \inf\{t > s: \intc_{(x_0,s)}^{\dir,-}(t) = \intc_{(y_0,s)}^{\dir,+}(t)\}.
       \]
       \item \label{Itco2} If $x_0\in\Intr^{s,\dir}_\alpha$ and $y_0\in \Intr^{s,\dir}_\beta$ for $x_0 < y_0$ and $\alpha\neq \beta$, then 
       \begin{equation}
           \intc^{\dir,+}_{(x_0,s)}(r)<\intc^{\dir,-}_{(y_0,s)}(r), \qquad \forall r\geq s.
       \end{equation}
       \end{enumerate}
   \end{lemma}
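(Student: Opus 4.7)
I would argue by contradiction. Suppose $\intc^{\dir,+}_{(x_0,s)}$ and $\intc^{\dir,-}_{(y_0,s)}$ meet at a common value $\hat x$ at some $t^*>s$. Then $\intc^{\dir,-}_{(x_0,s)}(t^*)\le\hat x\le\intc^{\dir,+}_{(x_0,s)}(t^*)$ and analogously for the $y_0$-interfaces, so Lemma~\ref{l:MV}\ref{it:Bu3} applied to both pairs of interfaces at $\hat x$ gives
$W^{\dir+}(x_0,s;\hat x,t^*)=W^{\dir-}(x_0,s;\hat x,t^*)$ and $W^{\dir+}(y_0,s;\hat x,t^*)=W^{\dir-}(y_0,s;\hat x,t^*)$. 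Subtracting via Busemann additivity yields $W^{\dir+}(x_0,s;y_0,s)=W^{\dir-}(x_0,s;y_0,s)$, i.e., $D^\dir_s(x_0)=D^\dir_s(y_0)$, contradicting the fact that the intervals $\Intr^{s,\dir}_\gamma$ are the maximal closed intervals of constancy of $D^\dir_s$ and $\alpha\neq\beta$.

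\textbf{Item (i), reduction.} Assuming WLOG $x_0<y_0$, I would first reduce to the single claim $\mathfrak t_{RL}<\infty$. Given this, Lemma~\ref{l:ordrIn} forces $\intc^{\dir,+}_{(x_0,s)}\equiv\intc^{\dir,-}_{(y_0,s)}$ on $[\mathfrak t_{RL},\mathfrak t_{LL}\wedge\mathfrak t_{RR}]$. Lemma~\ref{p:noBub} applied at $(x_0,s)$ produces some $t'>\mathfrak t_{RL}$ at which the left and right interfaces from $(x_0,s)$ coincide; combining these equalities yields $\intc^{\dir,-}_{(x_0,s)}(t')=\intc^{\dir,-}_{(y_0,s)}(t')$, so $\mathfrak t_{LL}\le t'$. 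A subsequent application of Lemma~\ref{p:noBub} at $(y_0,s)$ together with consistency past $\mathfrak t_{LL}$ (Proposition~\ref{p:rest}) gives $\mathfrak t_{RR}<\infty$, and one more application of Lemma~\ref{p:noBub} at $(y_0,s)$ after $\mathfrak t_{LL}$ gives $\mathfrak t_{LR}<\infty$.

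\textbf{Main step: $\mathfrak t_{RL}<\infty$.} Since $x_0,y_0\in\Intr^{s,\dir}_\alpha$, $D^\dir_s$ is constant on $[x_0,y_0]$ and $W^{\dir+}(x_0,s;y_0,s)=W^{\dir-}(x_0,s;y_0,s)=:C$. A short case analysis on the sign of $z-x_0$ and $z-y_0$ in the definition of $f^\dir_{(\cdot,s)}$, together with additivity and the constancy of $D^\dir_s$ on $[x_0,y_0]$, shows $f^\dir_{(y_0,s)}=f^\dir_{(x_0,s)}-C$ pointwise, so $\kpzs^\dir_{(y_0,s),t}=\kpzs^\dir_{(x_0,s),t}-C$ for all $t>s$ and the two families of $(f,s)$-to-$(v,t)$ geodesics coincide. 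Suppose for contradiction $\mathfrak t_{RL}=\infty$. Let $t_k\to\infty$ be meeting times from Lemma~\ref{p:noBub} at $(x_0,s)$, so $z_k:=\intc^{\dir,\pm}_{(x_0,s)}(t_k)<w_k:=\intc^{\dir,-}_{(y_0,s)}(t_k)$, and pick $v_k\in(z_k,w_k)$. By Lemma~\ref{lem:geod}\ref{it2} with pivot $x_0$ and Lemma~\ref{lem:geod}\ref{it1} with pivot $y_0$, the identical set of $(f,s)$-to-$(v_k,t_k)$ geodesics consists simultaneously of $\dir+$ Busemann geodesic restrictions with exit $>x_0$ and $\dir-$ Busemann geodesic restrictions with exit $<y_0$. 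Combined with Lemma~\ref{lem:Splt}\ref{itm:G1}, the geodesic $g_k:=g^{\dir-,L}_{(v_k,t_k)}$ satisfies $g_k(s)\in(x_0,y_0)$.

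\textbf{Bi-infinite geodesic extraction.} By coalescence of $\dir-$ Busemann geodesics rooted in the compact set $[x_0,y_0]\times\{s\}$ (Proposition~\ref{prop:DL_all_coal}), there exist $S_0<s$ and a single $\dir-$ semi-infinite geodesic $g_\star$ so that $g_k|_{(-\infty,S_0]}=g_\star|_{(-\infty,S_0]}$ for every $k$. On each interval $[S_0,t_k]$, $g_k$ is the leftmost point-to-point geodesic from $(g_\star(S_0),S_0)$ to $(v_k,t_k)$. Using the modulus-of-continuity bounds for geodesics (Lemma~\ref{lem:precompact}) and a diagonal extraction identical in structure to that of Lemma~\ref{p:noBub}, we obtain along a subsequence a limit $g_\infty:[S_0,\infty)\to\R$ that is a geodesic on every compact subinterval. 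The concatenation of $g_\star|_{(-\infty,S_0]}$ with $g_\infty$ is the pointwise limit of the semi-infinite geodesics $g_k$ on every compact time interval and is therefore a bi-infinite geodesic, contradicting Proposition~\ref{prop:bigeod_non}. The main obstacle is precisely this last extraction: ensuring the subsequential limit exists on all of $[S_0,\infty)$ and that gluing it to $g_\star$ produces a genuine bi-infinite geodesic; the coalescence below $S_0$ plus uniform precompactness above $S_0$ is exactly what makes this work, mirroring the strategy of Lemma~\ref{p:noBub}.
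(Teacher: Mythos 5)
Your proof of Item~(ii) is correct and takes a genuinely different route from the paper. The paper works with the interface function $d^\dir$ directly and shows that $d^\dir_{(x_0,s)}(z_0,r)\geq d^\dir_{(y_0,s)}(z_0,r)+\Delta$ for a well-chosen $z_0$, forcing strict separation. You instead argue by contradiction at a meeting point $\hat x$ and invoke Lemma~\ref{l:MV}\ref{it:Bu3} twice, then Busemann additivity, to force $D^\dir_s(x_0)=D^\dir_s(y_0)$. This is cleaner and more transparent. One should just note that the contradiction at a touch-point, combined with $\intc^{\dir,+}_{(x_0,s)}(s)=x_0<y_0=\intc^{\dir,-}_{(y_0,s)}(s)$ and continuity, rules out any crossing.

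For Item~(i), your reduction to $\mathfrak t_{RL}<\infty$ and the algebraic observation $f^\dir_{(y_0,s)}=f^\dir_{(x_0,s)}-C$ follow the paper. However, the ``uniform precompactness above $S_0$'' that you acknowledge as the main obstacle is left unjustified, and this is a genuine gap. You establish $g_k(s)\in(x_0,y_0)$, but to extract a bi-infinite geodesic by the diagonal argument you need, for every fixed $T>s$, a $k$-uniform spatial bound on $g_k(r)$ for $r\in[S_0,T]$. Lemma~\ref{lem:precompact} is not a modulus-of-continuity statement; it requires geodesics whose endpoints converge, so it can only be applied to $g_k|_{[S_0,T]}$ once you know the endpoint $g_k(T)$ stays in a compact set. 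Coalescence of $\dir-$ geodesics in $[x_0,y_0]\times\{s\}$ only controls times $\le S_0$ and does not propagate forward. What closes the gap is exactly the interface sandwich the paper derives in equation~\eqref{eq67}: the same application of Lemma~\ref{lem:Splt}\ref{itm:G1} and the identity $g^{\dir-,L}_{(v_k,t_k)}|_{[s,t_k]}=g^{\dir+,L}_{(v_k,t_k)}|_{[s,t_k]}$ give
\[
\intc^{\dir,+}_{(x_0,s)}(r)<g_k(r)<\intc^{\dir,-}_{(y_0,s)}(r) \qquad \text{for all } r\in[s,t_k),
\]
not only at $r=s$, and since the interfaces are continuous in $r$ this yields the needed compactness on $[s,T]$. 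You already have every ingredient; you just stopped the sandwich at the bottom time. Restating it for all $r$ and then truncating to $[S_0,T]$ for a diagonal extraction, as in Lemma~\ref{p:noBub}, completes the argument.
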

   \begin{proof}
        \textbf{Item \ref{Itco1}:} If $x_0 = y_0$, then the result follows from Proposition \ref{p:noBub}, so we may assume $x_0 < y_0$. Let $\mathfrak t_{RL},\mathfrak t_{RR},\mathfrak t_{LL}$ be defined as in \eqref{eq71}. We begin by showing the weaker statement that $\mathfrak t_{RL}<\infty$.  As $x_0,y_0\in \Intr^{s,\dir}_\alpha$, monotonicity of $D_s^\dir$ implies $D_s^\dir(z) = D_s^\dir(x_0)$ for all $z \in [x_0,y_0]$. Rearranging, this gives us that 
        \[
        W^{\dir -}(x_0,s;z,s) = W^{\dir +}(x_0,s;z,s), \quad z \in [x_0,y_0].
        \]
        Then, from the definition of $f_{(y_0,s)}^{\dir}$ \eqref{eq: initial condition},
        \begin{align*}
        W^{\dir -}(x_0,s;y_0,s)  + f_{(y_0,s)}^{\dir} &=  
    \begin{cases}
         W^{\dir -}(x_0,s;y_0,s)  + W^{\dir-}(y_0,s;z,s) & z\leq  y_0,\\
         W^{\dir -}(x_0,s;y_0,s)  + W^{\dir+}(y_0,s;z,s) & z  \ge y_0.
     \end{cases} \\
     &= \begin{cases}
     W^{\dir -}(x_0,s;z,s) &z \le y_0 \\
     W^{\dir +}(x_0,s;z,s) &z \ge y_0 
     \end{cases} \\
     &= \begin{cases}
     W^{\dir -}(x_0,s;z,s) &z \le x_0 \\
     W^{\dir -}(x_0,s;z,s) &x_0 \le z \le y_0 \\
     W^{\dir +}(x_0,s;z,s) &z \ge y_0 
     \end{cases} \\
     &= \begin{cases}
     W^{\dir -}(x_0,s;z,s) &z \le x_0 \\
     W^{\dir +}(x_0,s;z,s) &z \ge x_0 
     \end{cases}
        \end{align*}
        and this last line equals $f_{(x_0,s)}^\dir$. Hence, the functions $f_{(y_0,s)}^\dir$ and $f_{(x_0,s)}^\dir$ agree, up to the addition of a constant, so the maximizers of $z \mapsto f_{(x_0,s)}^\dir + \Ll(z,s;y,t)$ and $z \mapsto f_{(y_0,s)}^\dir + \Ll(z,s;y,t)$ agree.

        In particular, for all $(y,t) \in \R^2$,
       \begin{equation}\label{eq66}
            \chi^{L/R}(f^\dir_{(x_0,s)},s;y,t)= \chi^{L/R}(f^\dir_{(y_0,s)},s;y,t).
       \end{equation}
       Suppose, by way of contradiction, that $\mathfrak t_{RL}=\infty$. Then, for any sequence of real numbers $\{r_k\}$ with $r_k \ge s$ and $r_k \to \infty$, we have
       \begin{equation}
           \intc^{\dir,+}_{(x_0,s)}(r_k)<\intc^{\dir,-}_{(y_0,s)}(r_k) \qquad k\in\N.
       \end{equation}
       Choose such a sequence, and set 
       \begin{equation}
       x_k=\big(\intc^{\dir,+}_{(x_0,s)}(r_k)+\intc^{\dir,-}_{(y_0,s)}(r_k)\big)/2
       \end{equation}
       Then, since $\intc^{\dir,+}_{(x_0,s)}(r_k)<x_k<\intc^{\dir,-}_{(y_0,s)}(r_k)$,  Lemma \ref{lem:geod}\ref{it1}-\ref{it2} implies that a geodesic is an $f_{(y_0,s)}^\dir$-to-$(x_k,r_k)$ geodesic if and only if it is the restriction of a $\dir-$ geodesic to $[s,t]$, and a geodesic is a $f_{(x_0,s)}^{\dir}$-to-$(x_k,r_k)$ geodesic if and only if it is the restriction of a $\dir +$ geodesic to $[s,t]$. Then, by \eqref{eq66}, we have 
       \[
g^{\dir-,S}_{(x_k,r_k)}|_{[s,r_k]}=g^{\dir+,S}_{(x_k,r_k)}|_{[s,r_k]}, \qquad S \in \{L,R\}.
       \]
       Then, by Lemma \ref{lem:Splt}\ref{itm:G1}
        and ordering of Busemann geodesics, for $r \in [s,r_k)$, 
       \[
       \intc^{\dir,+}_{(x_0,s)}(r) < g^{\dir +,R}_{(\intc^{\dir,+}_{(x_0,s)}(r_k),r_k)}(r) \le g^{\dir +,S}_{(x_k,r_k)}(r) = g^{\dir -,S}_{(x_k,r_k)}(r)  \le g^{\dir -,L}_{(\intc^{\dir,-}_{(y_0,s)}(r_k),r_k)}(r) < \intc^{\dir,-}_{(y_0,s)}(r).
       \]
      By removing the second and fifth term above, we extend the strict inequality to all $r \in [s,r_k]$: that is,
       \be \label{eq67}
        \intc^{\dir,+}_{(x_0,s)}(r) <  g^{\dir +,S}_{(x_k,r_k)}(r) = g^{\dir -,S}_{(x_k,r_k)}(r)  < \intc^{\dir,-}_{(y_0,s)}(r).
       \ee
       Next, we continue similarly to the proof of Lemma \ref{p:noBub}, to construct a bi-infinite geodesic. We set $g_k=g^{\dir-,L}_{(x_k,r_k)}$. From \eqref{eq67} and a diagonal argument similar to the proof of Lemma \ref{p:noBub}, we see that there exists a subsequence $\{k_\ell\}$ such that $g_{k_\ell}$ converges to a bi-infinite geodesic. The only difference is that we may not always have the same value of $g_k(s)$, but in general,  from \eqref{eq67}, $x_0 < g_k(s) < y_0$, so by compactness, can still obtain the necessary convergent subsequences.  This leads to a contradiction, giving $\mathfrak t_{RL}<\infty$.

       Next we show that $\mathfrak t_{LL}<\infty$ and $\mathfrak t_{RR} < \infty$. We do this by first showing that $\mathfrak t_{LL} \wedge \mathfrak t_{RR} < \infty$. Suppose, by way of contradiction, that this is not the case. Then,  Lemma \ref{lem:int_order} and Equation \eqref{eq50} of Lemma \ref{l:ordrIn} implies that, for all $t \ge \mathfrak t_{RL}$,
       \[
       \intc_{(x_0,s)}^{\dir,-}(t) < \intc_{(y_0,s)}^{\dir,-}(t) = \intc_{(x_0,s)}^{\dir,+} < \intc_{(y_0,s)}^{\dir,+}(t).
       \]
       Then, $\mathfrak t_{(x_0,s)}^{\dir}(\mathfrak t_{RL}) = \infty$, contradicting Lemma \ref{p:noBub}.

       Now that we have shown $\mathfrak t_{LL} \wedge \mathfrak t_{RR} < \infty$, we show that $\mathfrak t_{LL} \vee \mathfrak t_{RR} < \infty$. Assume, without loss of generality that $\mathfrak t_{LL} < \infty$. By consistency of interfaces in Proposition \ref{p:rest}, $\intc_{(x_0,s)}^{\dir,-}(t) = \intc_{(x_0,s)}^{\dir,+}(t)$ for all $t \ge \mathfrak t_{LL}$. Then, using Equation \eqref{eq70} of Lemma \ref{l:ordrIn}, for $t > \mathfrak t_{LL}$,
       \be \label{eq:572}
       \intc_{(x_0,s)}^{\dir,-}(t) = \intc_{(x_0,s)}^{\dir,+}(t) \ge \intc_{(y_0,s)}^{\dir,-}(t).
       \ee
       By Lemma \ref{p:noBub}, there exists $t > \mathfrak t_{LL}$ such that $\intc_{(y_0,s)}^{\dir,+}(t) = \intc_{(y_0,s)}^{\dir,+}(t)$. Combined with \eqref{eq:572}, we have $\intc_{(y_0,s)}^{\dir,+}(t) \le \intc_{(x_0,s)}^{\dir,+}(t)$ for such $t$, so $\mathfrak t_{RR} < \infty$ by the intermediate value theorem. 

       Lastly, we show $\mathfrak t_{LR} < \infty$. For all $t > \mathfrak t_{LL} \vee \mathfrak t_{RR}$, 
       \[
       \intc_{(x_0,s)}^{\dir,-}(t) = \intc_{(y_0,s)}^{\dir,-}(t),\quad\text{and}\quad \intc_{(x_0,s)}^{\dir,+}(t) = \intc_{(y_0,s)}^{\dir,+}(t),
       \]
       and by Lemma \ref{p:noBub}, there exists $t > \mathfrak t_{LL} \vee \mathfrak t_{RR}$ such that these are all equal. 

       \medskip \noindent \textbf{Item \ref{Itco2}:}
        Recall the definition of $f_{(x,s)}^\dir$ \eqref{eq: initial condition}. As $x_0$ and $ y_0$ lie on different intervals in $\Intr^{s,\dir}$, the monotonicity of $D_s^\dir$ implies that $\Delta := D_s^\dir(y_0)-D_s^\dir(x_0)>0$. By additivity of the Busemann functions, for all $w \ge y_0$,
       \begin{equation}
       \label{eq5.6.1}
       \begin{aligned}
    f_{(x_0,s)}^\dir(w) &= W^{\dir +}(x_0,s;w,s)\\
           &= \bigl(W^{\dir +}(x_0,s;y_0,s) - W^{\dir -}(x_0,s;y_0,s)\bigr) + \bigl(W^{\dir -}(x_0,s;y_0,s) + W^{\dir +}(y_0,s;w,s)\bigr) \\
           &= \Delta + W^{\dir -}(x_0,s;y_0,s) + f^\dir_{(y_0,s)}(w).
           \end{aligned}
       \end{equation}
    Let $r\in[s,\infty)$. From the continuity of $z\mapsto d^\dir_{(y_0,s)}(z,r)$, there exists $z_0 < \intc^-_{(y_0,s)}$ such that \ 
       \begin{equation}
           d^\dir_{(y_0,s)}(z_0,r)\in (-\Delta,0).
       \end{equation}
       Recall the definition of $d_{(x_0,s)}^\dir(z_0,r)$ below \eqref{eq:d}:
       \begin{equation}\label{eq73}
           d^\dir_{(x_0,s)}(z_0,r)=\sup_{w\geq x_0}[f_{(x_0,s)}^\dir(w)+\Ll(w,s;z_0,r)]-\sup_{w\leq x_0}[f_{(x_0,s)}^\dir(w)+\Ll(w,s;z_0,r)].
       \end{equation}
       From \eqref{eq5.6.1}, we get
\be \label{eq72p}
\begin{aligned}
&\quad \, \sup_{w\geq x_0}[f_{(x_0,s)}^\dir(w)+\Ll(w,s;z_0,r)] \\&\geq \sup_{w\geq y_0}[f_{(x_0,s)}^\dir(w)+\Ll(w,s;z_0,r)] \\
&= \sup_{w\geq y_0}[f_{(y_0,s)}^\dir(w)+\Ll(w,s;z_0,r)]+\Delta + W^{\dir -}(x_0,s;y_0,s).
\end{aligned}
\ee
Next, observe that for $w \le x_0$, 
\[
f_{(y_0,s)}^\dir(w) + W^{\dir -}(x_0,s;y_0,s) = W^{\dir -}(y_0,s;w,s) + W^{\dir -}(x_0,s;y_0,s) = W^{\dir -}(x_0,s;w,s) = f_{(x_0,s)}^\dir(w).
\]
Hence,
\begin{equation}\label{eq72}
           \begin{aligned}
               &\quad \, \sup_{w\leq x_0}[f_{(x_0,s)}^\dir(w)+\Ll(w,s;z_0,r)]  \\&=\sup_{w\leq x_0}[f_{(y_0,s)}^\dir(w)+\Ll(w,s;z_0,r)] + W^{\dir -}(x_0,s;y_0,s) \\
               &\leq \sup_{w\leq y_0}[f_{(y_0,s)}^\dir(w)+\Ll(w,s;z_0,r)] + W^{\dir -}(x_0,s;y_0,s).
           \end{aligned}
       \end{equation}
       Substituting \eqref{eq72p} and \eqref{eq72} into \eqref{eq73}, we obtain
       \begin{equation}
           d^\dir_{(x_0,s)}(z_0,r)\geq d^\dir_{(y_0,s)}(z_0,r)+\Delta>0,
       \end{equation}
       and this implies
       \begin{equation}
           z_0>\intc^{\dir,+}_{(x_0,s)}(r).
       \end{equation}
       This completes the proof since we chose $z_0 < \intc^{\dir,-}_{(y_0,s)}$.
   \end{proof}
  
  \subsection{The points along interfaces} \label{sec:along_interface}
Proposition \ref{p: existence of bi-infinite competetion interfaces} shows the existence of mixed $\dir$-Busemann bi-infinite interfaces. Later, Lemma \ref{lem:Splt}\ref{itm:branching} shows that the all interfaces (not necessarily bi-infinite ones) traverse through the branching points $\Branch_{t,\dir}$. We now show that $(x,t)$ lies on a leftmost \textit{bi-infinite} mixed $\dir$-Busemann interface if and only if $x \in \Split_{t,\dir}^L \subseteq \Branch_{t,\dir}$ and lies on a rightmost such interface if and only if $x \in \Split_{t,\dir}^R$.
\begin{proposition}
    \label{Cond_bi-inf} 
    On the event $\Omega_1$, for all $\dir \in \DLBusedc$ and $(y,t) \in \R^2$, there exists $\intc \in \Intc^{\dir,-}$ (resp. $\Intc^{\dir +}$) satisfying $\tau(t) = y$, if and only if $y \in \Split^L_{t,\dir}$ (resp. $y \in \Split^R_{t,\dir}$).
\end{proposition}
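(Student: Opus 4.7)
The forward implication ($y \in \Split^L_{t,\dir}$ implies existence of $\intc \in \Intc^{\dir,-}$ with $\intc(t) = y$, and analogously for $+$) is exactly Proposition \ref{p: existence of bi-infinite competetion interfaces}. My plan for the reverse implication is to establish the following monotonicity-type lemma and then reduce to it:
\begin{equation*}
\text{if } x_0 \in \Split^L_{s,\dir} \text{ and } y := \intc^{\dir,-}_{(x_0,s)}(t), \text{ then } y \in \Split^L_{t,\dir}.
\end{equation*}

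Granting the lemma, for any $\intc \in \Intc^{\dir,-}$ with $\intc(t) = y$, pick any $s < t$, let $x_s := \intc(s)$, and let $x_0 := \a_\alpha^s$ be the left endpoint of the interval $\Intr^{s,\dir}_\alpha$ containing $x_s$. By Lemma \ref{l:equ} one has $x_0 \in \Split^L_{s,\dir}$, and since $D^\dir_s(x_0) = D^\dir_s(x_s)$ the initial conditions $f^\dir_{(x_0,s)}$ and $f^\dir_{(x_s,s)}$ differ only by the additive constant $W^{\dir-}(x_0,s;x_s,s)$ (the same computation that opens the proof of Lemma \ref{lem:coal_int}\ref{Itco1}), so they generate identical interfaces forward in time. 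Thus $\intc^{\dir,-}_{(x_0,s)}(t) = \intc^{\dir,-}_{(x_s,s)}(t) = \intc(t) = y$, and the monotonicity lemma delivers $y \in \Split^L_{t,\dir}$.

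The proof of the monotonicity lemma is the heart of the argument. Using Lemma \ref{l:equ}, $y \in \Split^L_{t,\dir}$ is equivalent to $W^{\dir+}(y-\varepsilon,t;y,t) > W^{\dir-}(y-\varepsilon,t;y,t)$ for every $\varepsilon > 0$, and Lemma \ref{l:MV}\ref{it:Bu3} supplies the endpoint identity $W^{\dir+}(x_0,s;y,t) = W^{\dir-}(x_0,s;y,t)$; by additivity the target reduces to
\begin{equation*}
W^{\dir+}(x_0,s;y-\varepsilon,t) < W^{\dir-}(x_0,s;y-\varepsilon,t) \qquad \text{for every } \varepsilon > 0.
\end{equation*}
Writing $h_\sig(z) := W^{\dir\sig}(x_0,s;z,s) + \Ll(z,s;y-\varepsilon,t)$, Busemann evolution identifies $\sup_z h_\sig = W^{\dir\sig}(x_0,s;y-\varepsilon,t)$, while Lemma \ref{l:MV}\ref{it:Bu2} gives $\sup_z h_-(z) = \kpzs^\dir_{(x_0,s),t}(y-\varepsilon) = \sup_z[f^\dir_{(x_0,s)}(z) + \Ll(z,s;y-\varepsilon,t)]$. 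The hypothesis $x_0 \in \Split^L_{s,\dir}$, combined with additivity and Lemma \ref{l:equ}, yields the \emph{strict} pointwise inequality $h_+(z) < h_-(z) = f^\dir_{(x_0,s)}(z) + \Ll(z,s;y-\varepsilon,t)$ for every $z < x_0$ (with equality for $z \ge x_0$). Meanwhile, $y-\varepsilon < y = \intc^{\dir,-}_{(x_0,s)}(t)$ forces $d^\dir_{(x_0,s)}(y-\varepsilon,t) < 0$, so every maximizer of $h_-$ lies strictly in $\{z < x_0\}$. Together these two facts imply $h_+(z) < \sup_z h_-(z)$ for every $z \in \R$; the continuity of $h_+$ and the fact that it attains its supremum then upgrade this to the required strict inequality of suprema.

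The analogous statement for $\Intc^{\dir,+}$ and $\Split^R$ follows by a symmetric argument that swaps the roles of $-$ and $+$ (and of $\{z < x_0\}$ with $\{z > x_0\}$), using Lemma \ref{l:MV}\ref{it:Bu1} in place of \ref{it:Bu2} and the right endpoint $\b_\alpha^s$ in place of $\a_\alpha^s$. I expect the main technical obstacle to be the final step in the monotonicity lemma---upgrading the pointwise strict inequality $h_+(z) < \sup_z h_-(z)$ to strict inequality of the suprema. This requires attainability of $\sup_z h_+$, which in turn follows from the linear growth of the Busemann functions and the parabolic spatial decay of $\Ll$ recorded in Section \ref{sec:prelim}.
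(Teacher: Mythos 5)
Your forward direction is the paper's, and your reverse direction is a genuinely different strategy: you try to prove a ``$\Split^L$ propagates forward'' monotonicity lemma by a Busemann-variational argument, whereas the paper argues by contradiction using geodesic geometry (coalescence of $\dir+$ geodesics, the intermediate value theorem, and the separation inequality from Lemma \ref{lem:Splt}\ref{itm:G1}). Your proof of the monotonicity lemma itself looks sound: the strict inequality $h_+(z)<h_-(z)$ for $z<x_0$ from $x_0\in\Split^L_{s,\dir}$, the observation that $d^\dir_{(x_0,s)}(y-\varepsilon,t)<0$ forces every maximizer of $h_-$ strictly below $x_0$, and the upgrade to strict inequality of suprema via attainability are all correct.

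The gap is in the reduction. The assertion ``since $D^\dir_s(x_0)=D^\dir_s(x_s)$ the initial conditions $f^\dir_{(x_0,s)}$ and $f^\dir_{(x_s,s)}$ differ only by an additive constant \dots so they generate identical interfaces forward in time'' is false. An additive shift of the initial function preserves the interface only when the cut point is held fixed (this is precisely what Proposition \ref{p:rest} exploits); here you change the cut point from $x_s$ to $x_0=\a^s_\alpha<x_s$. Writing $\phi(z):=f^\dir_{(x_0,s)}(z)+\Ll(z,s;x,t)$ and $M:=\sup_{z\in[x_0,x_s]}\phi$, $A:=\sup_{z\ge x_s}\phi$, $B:=\sup_{z\le x_0}\phi$, one has $d_{(x_0,s)}=M\vee A-B$ while $d_{(x_s,s)}=A-M\vee B$, and when $M>A\vee B$ these have opposite signs; so $\intc^{\dir,-}_{(x_0,s)}(t)$ need not equal $\intc^{\dir,-}_{(x_s,s)}(t)$. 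What is actually true is Lemma \ref{lem:coal_int}\ref{Itco1}: the two interfaces coalesce at some finite time $\mathfrak t_{LL}$, but $\mathfrak t_{LL}$ depends on $s$ and there is no control to ensure $\mathfrak t_{LL}\le t$. Thus you cannot conclude $\intc^{\dir,-}_{(x_0,s)}(t)=y$, and you cannot instead apply the lemma directly at $x_s=\intc(s)$ without circularly assuming $\intc(s)\in\Split^L_{s,\dir}$. The paper avoids this obstruction entirely by working with the bi-infinite interface $\intc$ itself: coalescence of $\dir+$ geodesics forces $g^{\dir+,R}_{(y-a_0,t)}$ to cross $\intc$ at some finite $t_0<t$, and the Busemann identities then show $g^{\dir-,R}_{(y-a_0,t)}(t_0)=\intc(t_0)$, contradicting $g^{\dir-,R}_{(y-a_0,t)}(r)<\intc(r)$ for $r<t$ from Lemma \ref{lem:Splt}\ref{itm:G1}.
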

\begin{proof}
   Proposition \ref{p: existence of bi-infinite competetion interfaces} implies one direction of this statement; namely, if $y \in \Split_{t,\dir}^L$, then there exists $\intc \in \Intc^{\dir,-}$ satisfying $\intc(t) = y$, and analogously for $y \in \Split_{t,\dir}^R$. We now prove the converse. Suppose, by way of contradiction, that there exists $(y,t)$ with $y \notin \Split_{t,\dir}^L$ and $\intc \in \Intc^{\dir,-}$ so that $\intc(t) = y$. 
    By Lemma \ref{l:equ}, we see that there exists $a_0>0$ such that 
    \[
        D_t^\dir(y-x)=D_t^\dir(y), \qquad \forall x\in [0,a_0].
     \]
Written out using the definition of $D_t^\dir$ \eqref{Dfunc}, and taking $x = a_0$, we have 
\[
W^{\dir +}(0,t;y-a_0,t) - W^{\dir -}(0,t;y - a_0,t) = W^{\dir +}(0,t;y,t) - W^{\dir -}(0,t;y,t),
\]
which, when rearranged using the additivity of Busemann functions, implies that 
\be \label{eq69}
W^{\dir +}(y-a_0,t;y,t) = W^{\dir -}(y-a_0,t;y,t).
\ee
Since $\intc \in \Intc^{\dir,-}$, we have $y = \intc_{(\intc(s),s)}^{\dir,-}(t)$, for all $s \leq t$. So Lemma \ref{l:MV}\ref{it:Bu3} implies that 
\[
W^{\dir-}(\intc(s),s;y,t) = W^{\dir +}(\intc(s),s;y,t).
\]
Combined with the additivity of the Busemann functions and \eqref{eq69}, we see that for all $s \leq t$
\be \label{eq:Wxipmaeq}
    \begin{aligned}
        &\quad\; W^{\dir+}(\intc(s),s;y-a_0,t) \\&=W^{\dir+}(\intc(s),s;y,t) + W^{\dir+}(y,t;y-a_0,t)\\
        &=W^{\dir-}(\intc(s),s;y,t) + W^{\dir-}(y,t;y-a_0,t)\\
        &=W^{\dir-}(\intc(s),s;y-a_0,t).
    \end{aligned}
    \ee
Next,  Corollary \ref{cor:LRmost_pi}\ref{it:LR2} implies that $g^{\dir+,R}_{(y,t)}(s) \geq \intc(s)$ for all $s \leq t$. Additionally, by Proposition \ref{prop:DL_all_coal}, $g^{\dir+,R}_{(y-a_0,t)}$ and $g^{\dir+}_{(y,t)}$ coalesce. Hence, there exists $s_0 < t$ such that 
 \be \label{eq:coalyy-a0}
 g^{\dir+,R}_{(y-a_0,t)}(s_0) = g^{\dir+,R}_{(y,t)}(s_0) \ge \intc(s_0)
 \ee
 Since $\intc:\R \to \R$ and $g^{\dir+,R}_{(y-a_0,t)}:(-\infty,t] \to \R$ are continuous function with $\intc(t) = y > y-a_0 = g^{\dir+,R}_{(y-a_0,t)}(t)$, along with \eqref{eq:coalyy-a0}, the intermediate value theorem implies that  there exists $t_0 \in [s_0,t)$ such that 
   \be \label{eq:intc_eq_geod}
   \intc\left( t_0\right)=g^{\dir+,R}_{(y - a_0,t)}(t_0).
   \ee
   See Figure \ref{fig:conv-left-binf}. 
   By \eqref{geod_LR_eq_L} followed by \eqref{eq:Wxipmaeq}, we have
   \[
   \Ll\bigl(g^{\dir+}_{(y,t)}(t_0),t_0;y-a_0,t \bigr)=W^{\dir+}\bigl(g^{\dir+}_{(y,t)}(t_0),t_0;y-a_0,t \bigr)=W^{\dir-}\bigl(g^{\dir+}_{(y,t)}(t_0),t_0;y-a_0,t \bigr).
   \]
   By Lemma \ref{lem:Buse_eq},  $g_{(y-a_0,t)}^{\dir+,R}|_{[t_0,t]}$ is also the restriction of $\dir-$ geodesic. Since $g^{\dir-,R}_{(y-a_0,t)}$ is the rightmost $\dir -$ geodesic rooted at $(y-a_0,t)$, the ordering of geodesics (Proposition \ref{prop:g_basic_prop}\ref{itm:DL_mont_dir}), followed by \eqref{eq:intc_eq_geod} implies
   \be \label{eq:gdir-t0}
   g^{\dir-,R}_{(y-a_0,t)}(t_0) = g_{(y-a_0,t)}^{\dir+,R}(t_0) = \intc(t_0).
   \ee
However, this gives a contradiction. Indeed, Equation \eqref{eq:othpts} of Lemma \ref{lem:Splt}\ref{itm:G1} implies that
   \[
   g^{\dir-,R}_{(y-a_0,t)}(r)<\intc(r), \quad\forall r < t.
   \]
    The analogous statement for  $\Intc^{\dir,+}$ and $\Split_{t,\dir}^R$ follows by a symmetric proof.   
    \begin{figure}[t!]
        \includegraphics[width=10 cm]{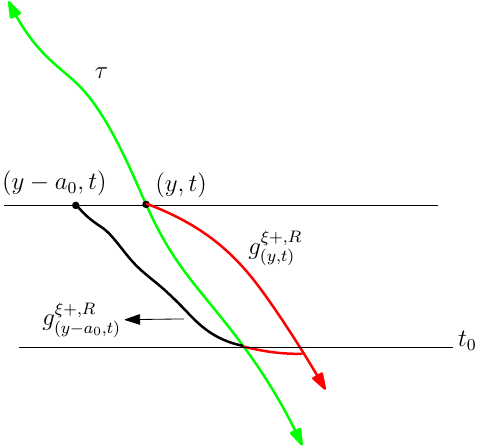}
        \caption{The figure shows that if a point $(y,t)$ lies on an interface $\intc \in \Intc^{\dir,-}$, then it must lie in the set $\Split_{t,\dir}^L$, This is because, since all $\dir+$ geodesics coalesce, $g^{\dir+,R}_{(y-a,t)}$ and $g_{(y,t)}^{\dir+,R}$ coalesce for all $a$. Thus for all $a, g^{\dir+,R}_{(y-a,t)}$ has to intersect $\intc$ at some time level below $t$. Now if $y \notin \Split^L_t$ then one can find $a_0$ and $t_0<t$ such that $g_{(y-a_0,t)}^{\dir+}|_{[t_0,t]}$ is also a restriction of $\dir-$ geodesic. But this is a contradiction to Equation \eqref{eq:othpts} of Lemma \ref{lem:Splt}\ref{itm:G1}.}
        \label{fig:conv-left-binf}
    \end{figure}
\end{proof}

    The following result shows that, for each $t \in \R$, the points on $\Split^L_{t,\dir}$ are in one-to-one correspondence with $\Intc^{\dir,-}$. \begin{corollary}\label{cor:disInt}
       On the event $\Omega_1$, the following holds  for every $\dir\in\DLBusedc$. \vspace{2pt}\\ 
       \begin{enumerate}[label=(\roman*), font=\normalfont]
           \item \label{it:SplitT_bij} For every $t\in\R$, there exist bijections $\sigma_t^-: \Split_{t,\dir}^L \to \Intc^{\dir,-}$ and $\sigma_t^+: \Split_{t,\dir}^R \to \Intc^{\dir,+}$  such that, for any $x\in \Split^L_{t,\dir}$,  $\sigma_{(x,t)}^- :=\sigma_t^-(x)$ is the unique element of $\Intc^{\dir,-}$ satisfying   $\sigma_{(x,t)}^-(t) = x$, and for $x \in \Split_{t,\dir}^R$, $\sigma_{(x,t)}^+ :=\sigma_t^+(x)$ is the unique element of $\Intc^{\dir,+}$ satisfying  $\sigma_{(x,t)}^+(t) = x$.  
           \item \label{it:Split_disj} For any $x<y$ in $\Split^L_{t,\dir}$ (resp $\Split_{t,\dir}^R$), we have
           \begin{equation}
               \sigma_{(x,t)}^-(r)<\sigma_{(y,t)}^-(r), \quad\text{\rm(resp. $\sigma_{(x,t)}^+(r)<\sigma_{(y,t)}^+(r)$)} \qquad \forall r\in\R.
           \end{equation}
           In other words, leftmost (resp. rightmost) bi-infinite interfaces associated with different points in $\Split^L_{t,\dir}$ (resp. $\Split^R_{t,\dir}$) never meet. In particular, the elements in $\Intc^{\dir,\pm}$ are disjoint.  
       \end{enumerate}
   \end{corollary}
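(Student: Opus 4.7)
The plan is to deduce both items from Proposition \ref{Cond_bi-inf} together with Lemma \ref{lem:coal_int}\ref{Itco2}. Existence of an interface $\intc \in \Intc^{\dir,-}$ with $\intc(t) = x$ for each $x \in \Split^L_{t,\dir}$ is exactly Proposition \ref{Cond_bi-inf}, and surjectivity of $\sigma_t^-$ is the converse direction of the same proposition applied at time $t$ to each $\intc \in \Intc^{\dir,-}$. The content is therefore uniqueness (Item \ref{it:SplitT_bij}) and the strict ordering (Item \ref{it:Split_disj}), and the main tool for both will be Lemma \ref{lem:coal_int}\ref{Itco2}.

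A key preliminary observation, which I would establish first, is that at each time level the set $\Split^L_{s,\dir}$ contains at most one representative from each interval $\Intr^{s,\dir}_\alpha = [\a^s_\alpha,\b^s_\alpha]$. Indeed, if $\a^s_\alpha < \b^s_\alpha$, then $D^\dir_s$ is constant on this interval, so for any $z \in (\a^s_\alpha,\b^s_\alpha]$ there is $\ve > 0$ with $D^\dir_s(z-\ve) = D^\dir_s(z)$, and Lemma \ref{l:equ} shows $z \notin \Split^L_{s,\dir}$. Hence distinct points of $\Split^L_{s,\dir}$ lie in distinct intervals of $\Intr^{s,\dir}$.

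To prove uniqueness, suppose $\intc_1,\intc_2 \in \Intc^{\dir,-}$ both pass through $(x,t)$. The consistency in Definition \ref{def:biInf} combined with uniqueness of the forward interface $\intc^{\dir,-}_{(x,t)}$ forces $\intc_1 = \intc_2$ on $[t,\infty)$, so if they disagree it must be at some $s < t$. At such $s$, set $y_i = \intc_i(s)$, WLOG $y_1 < y_2$. Applying Proposition \ref{Cond_bi-inf} at time $s$ gives $y_1,y_2 \in \Split^L_{s,\dir}$, so by the preliminary observation they sit in distinct intervals of $\Intr^{s,\dir}$. Lemma \ref{lem:coal_int}\ref{Itco2} then yields $\intc^{\dir,+}_{(y_1,s)}(t) < \intc^{\dir,-}_{(y_2,s)}(t)$, and sandwiching $\intc^{\dir,-}_{(y_1,s)} \le \intc^{\dir,+}_{(y_1,s)}$ produces $x = \intc_1(t) < \intc_2(t) = x$, a contradiction. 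The argument for $\sigma^+_t$ is symmetric using $\Split^R$.

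For Item \ref{it:Split_disj}, fix $x < y$ in $\Split^L_{t,\dir}$. For $r \ge t$ the strict inequality follows directly by applying Lemma \ref{lem:coal_int}\ref{Itco2} to the pair $(x,t), (y,t)$, which lie in distinct $\Intr^{t,\dir}$-intervals by the preliminary observation. For $r < t$ I would argue by contradiction: set $y_1 = \sigma^-_{(x,t)}(r)$, $y_2 = \sigma^-_{(y,t)}(r)$; injectivity from (i) rules out $y_1 = y_2$, and the reversed order $y_1 > y_2$ would (by the same preliminary observation and Lemma \ref{lem:coal_int}\ref{Itco2} applied at time $r$) imply $y = \sigma^-_{(y,t)}(t) \le \intc^{\dir,+}_{(y_2,r)}(t) < \intc^{\dir,-}_{(y_1,r)}(t) = x$, a contradiction. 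Hence $y_1 < y_2$. The main obstacle, modest as it is, is ensuring that the backward-in-time strict ordering at $r < t$ is controlled: Lemma \ref{lem:coal_int}\ref{Itco2} only describes forward dynamics, so one must leverage it by reading the interfaces forward from $r$ up to $t$ and using the known values at time $t$ to rule out the wrong order at time $r$.
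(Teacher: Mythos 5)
Your proof is correct and takes essentially the same route as the paper's: Proposition \ref{Cond_bi-inf} supplies well-definedness and surjectivity, Lemma \ref{lem:coal_int}\ref{Itco2} (via the fact that distinct points of $\Split^L_{s,\dir}$ lie in distinct $\Intr^{s,\dir}$-intervals) gives injectivity, and Item \ref{it:Split_disj} is then deduced from Item \ref{it:SplitT_bij}. The only cosmetic difference is that in Item \ref{it:Split_disj} you argue by a direct case analysis at each time $r$, whereas the paper invokes continuity and the intermediate value theorem to produce a meeting time and then contradicts the bijection; both work for the same reason.
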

   \begin{proof}
       We prove the bijection between $\Split_{t,\dir}^-$ and $\Intc^{\dir,-}$, with the other bijection following similarly. Fix $t\in\R$. We first construct the inverse map $\intc^{\dir,-} \to \Split_{t,\dir}^L$, which we call $\theta_t$. For $\intc \in \Intc^{\dir,-}$, define $\theta_t(\intc) = \intc(t)$. By Proposition \ref{Cond_bi-inf}, $\theta_t$ is a well-defined surjective map $\Intc^{\dir,-} \to \Split_{t,\dir}^L$.

       Now, assume, by way of contradiction, that $\theta_t$ is not a bijection. Then, $\theta_t(\intc^1) = \theta_t(\intc^2)$ for some distinct $\intc^1,\intc^2$ in $\Intc^{\dir,-}$, meaning that $\intc^1(t) = \intc^2(t)$, but $\intc^1(s) \neq \intc^2(s)$ for some $s \in \R$. By the consistency in Proposition \ref{p:rest}, $\intc^1(r) = \intc^2(r)$ for all $r \ge t$, so $\intc^1(s) \neq \intc^2(s)$ for some $s < t$.   
       
       Proposition \ref{Cond_bi-inf} implies that $\intc^1(s),\intc^2(s) \in \Split^L_{s,\dir}$, and since these are distinct points, they belong to different intervals in $\Intr^{s,\dir}$. 
        Lemma \ref{lem:coal_int}\ref{Itco2} implies that $
           \intc^{1}(r)<\intc^{2}(r)$ for all $r \ge s$, 
       in contradiction to the assumption that $\intc^{1}(t)=\intc^{2}(t)$.

We complete the construction of the bijection by defining $\sigma_{(x,t)}^- := \theta_t^{-1}$.

\textbf{Item \ref{it:SplitT_bij}} holds by construction. \textbf{Item \ref{it:Split_disj} } holds by Item \ref{it:SplitT_bij}, continuity of the interfaces and the intermediate value theorem. More specifically, if $\sigma_{(x,t)}^-(r)\ge \sigma_{(y,t)}^-(r)$ for some $x < y$ in $\Split^L_{t,\dir}$ and $r \in \R$, then since
\[
\sigma_{(x,t)}^-(t) = x < y = \sigma^-_{(y,t)}(t),
\]
we have that $\sigma_{(x,t)}^-(s) = \sigma_{(y,t)}^-(s)$  for some $s \in \R$, contradicting Item \ref{it:SplitT_bij}. 
       \end{proof}

We finish this section by answering a question left open in \cite{Busa-Sepp-Sore-22a}. 
   
    \begin{proposition}
    \label{theorem: local variation and splitting points}
    On the event $\Omega_1$, for every $\dir\in\DLBusedc$ and $t\in\R$,
    \[
    \Split_{t, \dir}=\Split^L_{t,\dir} \cup \Split^R_{t,\dir}.
    \]
        
    \end{proposition}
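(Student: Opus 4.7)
The plan is to prove the two inclusions separately. The inclusion $\Split_{t,\dir}^L\cup\Split_{t,\dir}^R\subseteq\Split_{t,\dir}$ is immediate from the definitions: if $x\in\Split_{t,\dir}^L$, then $g_{(x,t)}^{\dir-,L}$ and $g_{(x,t)}^{\dir+,L}$ are two semi-infinite $\dir$-directed geodesics rooted at $(x,t)$ that differ at every $s<t$ and hence are disjoint, so $x\in\Split_{t,\dir}$; the case $x\in\Split_{t,\dir}^R$ is symmetric. The substantive content is the reverse inclusion.

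For the reverse inclusion, I would start from $x\in\Split_{t,\dir}$ and fix two disjoint $\dir$-directed semi-infinite geodesics $g_1,g_2$ rooted at $(x,t)$. By continuity and disjointness the ordering of $g_1(s)$ and $g_2(s)$ is the same for every $s<t$, so we may assume $g_1<g_2$ strictly on $(-\infty,t)$. By Proposition \ref{prop:Busani_N3G}, each $g_i$ is either a $\dir-$ or a $\dir+$ Busemann geodesic. The two pure sub-cases (both $\dir-$ or both $\dir+$) can be discarded immediately by Proposition \ref{prop:DL_all_coal}: any two same-sign Busemann geodesics rooted at the same point must agree on $(-\infty,S]$ for some $S<t$, contradicting disjointness. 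Hence one of $g_1,g_2$ is a $\dir-$ and the other a $\dir+$ Busemann geodesic, and Proposition \ref{prop:g_basic_prop}\ref{itm:eventually_less} combined with $g_1<g_2$ forces $g_1$ to be the $\dir-$ and $g_2$ the $\dir+$ one.

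The remaining work is to show that in this mixed case one must already have $x\in\Split_{t,\dir}^L\cup\Split_{t,\dir}^R$. I would argue by contradiction: if $x$ lies in neither set, then there exist $s_1,s_2<t$ with
\[
g_{(x,t)}^{\dir-,L}(s_1)=g_{(x,t)}^{\dir+,L}(s_1)\qquad\text{and}\qquad g_{(x,t)}^{\dir-,R}(s_2)=g_{(x,t)}^{\dir+,R}(s_2).
\]
The key observation is that the leftmost/rightmost point-to-point characterization in Proposition \ref{prop:DL_SIG_cons_intro}\ref{itm:DL_LRmost_geod} promotes each pointwise equality to an interval identity: both $g_{(x,t)}^{\dir-,L}|_{[s_1,t]}$ and $g_{(x,t)}^{\dir+,L}|_{[s_1,t]}$ are the unique leftmost point-to-point geodesic between their common endpoints, hence agree on all of $[s_1,t]$, and similarly for the rightmost pair on $[s_2,t]$. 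Setting $s^\star=s_1\vee s_2<t$ I obtain
\[
y_L:=g_{(x,t)}^{\dir-,L}(s^\star)=g_{(x,t)}^{\dir+,L}(s^\star),\qquad y_R:=g_{(x,t)}^{\dir-,R}(s^\star)=g_{(x,t)}^{\dir+,R}(s^\star).
\]

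The contradiction then comes from a squeeze at $s^\star$. Monotonicity in the direction parameter (Proposition \ref{prop:g_basic_prop}\ref{itm:DL_mont_dir}) gives $y_R=g_{(x,t)}^{\dir-,R}(s^\star)\le g_{(x,t)}^{\dir+,L}(s^\star)=y_L$, which combined with $y_L\le y_R$ forces $y_L=y_R$. On the other hand, the ordering of Busemann geodesics traps $g_1$ and $g_2$ between their respective extremes, giving $y_L\le g_1(s^\star)$ and $g_2(s^\star)\le y_R$, while disjointness yields $g_1(s^\star)<g_2(s^\star)$, so $y_L<y_R$. This contradicts $y_L=y_R$. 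The main obstacle I anticipate is precisely this synchronization step: the defects ``$x\notin\Split_{t,\dir}^L$'' and ``$x\notin\Split_{t,\dir}^R$'' are a priori recorded at two possibly different times $s_1$ and $s_2$, and the uniqueness of leftmost/rightmost point-to-point geodesics is what allows one to propagate both equalities forward to a common time $s^\star$ where direction monotonicity becomes usable; once this is done, coalescence together with the N3G classification does the rest.
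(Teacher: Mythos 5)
Your easy inclusion $\Split^L_{t,\dir}\cup\Split^R_{t,\dir}\subseteq\Split_{t,\dir}$, the classification of $g_1,g_2$ via Proposition \ref{prop:Busani_N3G} and coalescence, and the synchronization step (promoting a one-time equality of leftmost geodesics to an interval identity, then taking $s^\star=s_1\vee s_2$) are all sound and closely parallel the paper's setup. However, the final squeeze step fails: Proposition \ref{prop:g_basic_prop}\ref{itm:DL_mont_dir} only orders geodesics carrying the same marker $S\in\{L,R\}$; it says nothing about the comparison of $g^{\dir-,R}$ with $g^{\dir+,L}$, so it does \emph{not} give $y_R\le y_L$. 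In fact, the opposite strict inequality holds at $s^\star$: since $x\in\Split_{t,\dir}$ forces $g^{\dir-,L}_{(x,t)}(s)<g^{\dir+,R}_{(x,t)}(s)$ for all $s<t$ (these are the extremal $\dir$-directed geodesics), the two pointwise equalities at $s^\star$ yield $y_L=g^{\dir+,L}(s^\star)=g^{\dir-,L}(s^\star)<g^{\dir+,R}(s^\star)=g^{\dir-,R}(s^\star)=y_R$. So $y_L<y_R$ strictly, your hoped-for conclusion $y_L=y_R$ is false, and the contradiction never materializes.

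The phenomenon you are missing is exactly the one the paper exploits. Near $t$, as just shown, $g^{\dir+,L}(s)<g^{\dir-,R}(s)$; as $s\to-\infty$, coalescence of the $\dir-$ geodesics and of the $\dir+$ geodesics (Proposition \ref{prop:DL_all_coal}), together with Proposition \ref{prop:g_basic_prop}\ref{itm:eventually_less}, flips this to $g^{\dir-,R}(s)<g^{\dir+,L}(s)$. The intermediate value theorem then yields a crossing time $s'<s^\star$ where $g^{\dir+,L}(s')=g^{\dir-,R}(s')$. From that crossing point the paper extracts Busemann-function equalities via Lemma \ref{lem:Buse_eq}, concatenates $g^{\dir+,L}|_{[s',t]}$ with a $\dir-$ geodesic below $s'$ to form a new $\dir-$ geodesic $\wt g$, and shows $\wt g$ together with $g^{\dir-,L}_{(x,t)}$ closes an interior geodesic bubble, contradicting Lemma \ref{lem:no_bubbles}. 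Your proposal has no substitute for this machinery, and the naive monotonicity you invoke does not provide one.
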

     
    \begin{proof}
     By way of contradiction, assume that  $x \in \Split_{t,\dir}$ but $x \notin \Split^L_{t,\dir} \cup \Split^R_{t,\dir}$. Since, $x \in \Split_{t,\dir}$ it must be the case that $g^{\dir+,R}_{(x,t)}$ and $g^{\dir-,L}_{(x,t)}$ are disjoint since these are the rightmost and leftmost $\xi$-directed geodesics by Propositions \ref{prop:Busani_N3G} and \ref{prop:g_basic_prop}\ref{itm:DL_mont_dir}. Now, as $x \notin \Split^L_{t,\dir} \cup \Split^R_{t,\dir}$, it must be the case that $g^{\dir-,L}_{(x,t)}$ and $g^{\dir+,L}_{(x,t)}$ stay together for some time and $g^{\dir-,R}_{(x,t)}$ and $g^{\dir+,R}_{(x,t)}$ stay together for some time near the initial point $(x,t)$. Precisely, there exist $\delta_1,\delta_2>0$ so that
     \begin{equation}
    \begin{aligned}
        & \label{eq46}g^{\dir-,L}_{(x,t)}(s) < g^{\dir+,R}_{(x,t)}(s) \qquad \text{ for all } s<t;\\
        & g^{\dir-,L}_{(x,t)}(s)=g^{\dir+,L}_{(x,t)}(s)\qquad  \text{  for all $s \in [t-\delta_1,t]$, and }\qquad g^{\dir-,L}_{(x,t)}(s)<g^{\dir+,L}_{(x,t)} \text{ for all } s<t-\delta_1. \\
        & g^{\dir-,R}_{(x,t)}(s)=g^{\dir+,R}_{(x,t)}(s) \qquad\text{ for all $s \in [t-\delta_2,t]$, and }\qquad g^{\dir-,R}_{(x,t)}(s)<g^{\dir+,R}_{(x,t)} \text{ for all } s<t-\delta_2 .
    \end{aligned}
    \end{equation}
    Thus, for $s_1 = (t - \delta_1) \vee (t - \delta_2)$,
    \be \label{eq:gineq1}
    g_{(x,t)}^{\dir +,L}(s_1) = g_{(x,t)}^{\dir -,L}(s_1) < g^{\dir +,R}_{(x,t)}(s_1) = g_{(x,t)}^{\dir -,R}(s_1)
    \ee
    Now, $g^{\dir-,L}_{(x,t)}$ and $g^{\dir-,R}_{(x,t)}$ eventually coalesce and $g^{\dir+,L}_{(x,t)}$ and $g^{\dir+,R}_{(x,t)}$ eventually coalesce by Proposition \ref{prop:DL_all_coal}. Hence, we may choose $s_2$ sufficiently less than $t$ so that
    \be \label{eq:gineq2}
    g_{(x,t)}^{\dir -,R}(s_2) = g_{(x,t)}^{\dir -,L}(s_2) < g^{\dir+,R}_{(x,t)}(s_2) = g_{(x,t)}^{\dir +,L}(s_2).
    \ee
    Comparing \eqref{eq:gineq1} and \eqref{eq:gineq2}, by continuity of geodesics and the intermediate value theorem, there exists $s' < (t - \delta_1) \vee (t - \delta_2)$ such that
    \be \label{eq:ypt}
    y:= g^{\dir+,L}_{(x,t)}(s')=g^{\dir-,R}_{(x,t)}(s').
    \ee
   (see Figure \ref{fig:local variation and splitting points_1}).
    \begin{figure}[t!]
        \centering
        \includegraphics[width=7 cm]{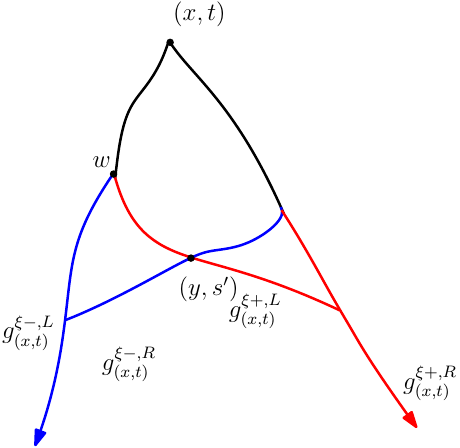}
        \caption{We prove that the situation in the figure can not arise. We argue that, if it does then the restriction of $g^{\dir+,L}_{(x,t)}$ on $[s',t]$  is actually a restriction of an $\dir-$ geodesic. Hence, following $g^{\dir-,R}_{(x,t)}$ after $g^{\dir+,L}_{(x,t)}|_{[s',t]}$ we will get a $\dir-$ geodesic. But this implies that there will be an interior geodesic bubble, which is not possible by Lemma \ref{lem:no_bubbles}.}
        \label{fig:local variation and splitting points_1}
    \end{figure}
    As $s' < (t - \delta_1) \vee (t - \delta_2)$, we assume, without loss of generality, that $t-\delta_1 \ge t - \delta_2$ so that $s' < t - \delta_1$.
    By \eqref{eq:ypt} and the equality of Busemann functions to $\Ll$ along semi-infinite geodesics (Lemma \ref{lem:Buse_eq}),
    \be \label{Leq1}
    \Ll(y,s';x,t)=W^{\dir+}(y,s';x,t)=W^{\dir-}(y,s';x,t).
    \ee
    Now, define $w :=g^{\dir+,L}_{(x,t)}(t-\delta_1)=g^{\dir-,L}_{(x,t)}(t-\delta_1)$. Then, again using Lemma \ref{lem:Buse_eq},
    \be \label{Leq2}
    \Ll(w,t-\delta_1;x,t)=W^{\dir+}(w,t-\delta_1;x,t)=W^{\dir-}(w,t-\delta_1;x,t).
    \ee
    Subtracting the second equality of \eqref{Leq2} from the second equality of \eqref{Leq2} and using additivity of the Busemann functions, we get
    \[
    W^{\dir+}(y,s';w,t-\delta_1)=W^{\dir-}(y,s';w,t-\delta_1).
    \]
    But also as $g^{\dir+,L}_{(x,t)}(t - \delta_1) = w$ and $g^{\dir +,L}_{(x,t)}(s') = y$, we see that (again by Lemma \ref{lem:Buse_eq}),
    \[
    \Ll(y,s';w,t-\delta_1)=W^{\dir+}(y,s';w,t-\delta_1)=W^{\dir-}(y,s';w,t-\delta_1).
    \]
    Then by Lemma \ref{lem:Buse_eq},  we know that $y$ is a maximizer of $z \mapsto W^{\dir-}(0,s;z,s)+\Ll(z,s;w,t-\delta_1)$ over $z \in \R$. Now by Proposition \ref{prop:DL_SIG_cons_intro}\ref{itm:arb_geod_cons}, this implies that if we consider the geodesic $\wt g$ defined as 
    \begin{align*}
    &\wt g(r)=g^{\dir+,L}_{(x,t)}(r) \qquad \text{ for } s'\leq r \leq t.\\
    &\wt g(r)=g^{\dir-,R}_{(x,t)}(r) \qquad \text{ for } r \leq s',
    \end{align*}
    then $g$ is a $\dir-$ geodesic.

    Consider the geodesics $\wt g$ and $g^{\dir-,L}_{(x,t)}$. We know they must coalesce because they are both $\dir -$ geodesics (Proposition \ref{prop:DL_all_coal}). Along with \eqref{eq46}, this implies that there exists $s_0 < s'< t - \delta_1$ so that 
    \begin{align*}
    g^{\dir-,L}_{(x,t)}(r)  &= \wt g(r) \qquad \text{for }t - \delta_1 \le r \le t \\
    g^{\dir -,L}_{(x,t)}(s')  &< \wt g(s') = g_{(x,t)}^{\dir -,R}(s') \\
    g^{\dir-,L}_{(x,t)}(r) &= \wt g(r) \qquad \text{for } r \le s_0
    \end{align*}
    Then, there are two distinct geodesics between the points $(g'(s_0),s_0)$ and $(x,t)$ that agree in neighborhoods of both endpoints.  This is a contradiction to  the fact that geodesics do not form interior bubbles (recorded here as Lemma \ref{lem:no_bubbles}). 
        \end{proof}

   \section{General \texorpdfstring{$\dir$}{}-eternal solutions} \label{sec:gen_soln}
In this section, we prove statements about general $\dir$-eternal solutions to the KPZ fixed point: a precursor to the bijection in Theorem \ref{thm:mr}. Let $\dir \in \R$. We recall that $\dir$-eternal solutions are continuous functions $b:\R^2 \to \R$ which satisfy the following properties. 

\paragraph{\textbf{KPZ Fixed Point Evolution:}}For each $x,s,t \in \R$ with $s < t$, 
\be \label{eq:var}
b(x,t) = \sup_{z \in \R}\{b (z,s)+\Ll(z,s;x,t)\},
\ee 
\paragraph{\textbf{Asymptotic Slope Condition:}}
The function $x \mapsto b(x,0)$ satisfies the asymptotic
\be \label{eq:b_slope}
\lim_{|x| \to \infty} \f{b(x,0)}{x} = 2\dir.
\ee
 We define the following subset of $\dir$-eternal solutions to the KPZ fixed point.
    \[
    \fxi:=\{b: \R^2 \rightarrow \R: b \text { is continuous and satisfies } \eqref{eq:var} \text{ and } \eqref{eq:b_slope} \}.
    \]
Note that the functions $s \mapsto W^{\dir -}(0,0;x,s)$ and $s \mapsto W^{\dir +}(0,0;x,s)$ both belong to $\fxi$ by Proposition \ref{prop:Buse_basic_properties}. By Lemma \ref{lem:KPZ_preserve_lim}, 
 \[
 \text{If }\lim_{|x| \to \infty }\f{b(x,s)}{x} = 2\dir,\quad\text{then}\quad \lim_{|x| \to \infty }\f{b(x,t)}{x} = 2\dir \quad \text{for all }t > s.
 \]
 While the assumption \eqref{eq:b_slope} doesn't immediately imply the slope condition for negative $t$, it is sufficient for the following proofs. A major key in this section is Lemma \ref{lem:dir_from_global} in the Appendix, which shows that, for any $b \in \fxi$, the associated $b$-geodesics are $\dir$-directed.

 For $b \in \fxi$, $(x,t) \in \R^2$, and $s \in \R$, let $g_{(x,t)}^{b,L/R}(s)$ be the leftmost/rightmost maximizers of the function
\[
z \mapsto b(z,s)+\Ll(z,s;x,t)
\]
over $z \in \R$. Also define $g_{(x,t)}^{b,L/R}(t) = x$. 
By Lemma \ref{lem:geodesics_from_b}\ref{itm:geod}, on the event $\Omega_1$, each of the functions $s \mapsto g_{(x,t)}^{b,L}(s)$ and $s \mapsto g_{(x,t)}^{b,R}(s)$ is a semi-infinite geodesic and hence a  continuous function $(-\infty,t] \to \R$.

\begin{lemma}
\label{lem:geod_IC}
The following holds on the event $\Omega_1$. Let $\dir \in \R$, and $b\in \fxi$. Then for each $(x,t) \in \R^2$, either
\[
g_{(x,t)}^{b,L}(t) = g_{(x,t)}^{\dir -,L}(s) \quad\, \forall s \le t,\qquad\text{or}\qquad g_{(x,t)}^{b,L}(s) = g_{(x,t)}^{\dir +,L}(s)\quad\, \forall s\le t.
\]
The same holds for $R$ in place of $L$. 
\end{lemma}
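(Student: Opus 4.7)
The plan is to combine the classification of $\dir$-directed geodesics with a leftmost-maximizer argument driven by coalescence. First, Lemma \ref{lem:geodesics_from_b}\ref{itm:geod} implies that $g := g^{b,L}_{(x,t)}$ is a semi-infinite geodesic rooted at $(x,t)$, and Lemma \ref{lem:dir_from_global} gives that it has direction $\dir$. Proposition \ref{prop:Busani_N3G} then forces $g$ to be a $\dir\sig$ Busemann geodesic for some $\sig \in \{-,+\}$. Since $g^{\dir\sig,L}_{(x,t)}$ is, by construction, the leftmost such geodesic rooted at $(x,t)$, we already have $g^{\dir\sig,L}_{(x,t)}(s) \le g(s)$ for all $s \le t$.

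To obtain the reverse inequality, I would show that $g^{\dir\sig,L}_{(x,t)}$ is itself a $b$-geodesic; then leftmost-ness of $g$ as a $b$-geodesic would force $g(s) \le g^{\dir\sig,L}_{(x,t)}(s)$, closing the loop. Fix $s_0 < t$ and set $y_0 = g^{\dir\sig,L}_{(x,t)}(s_0)$, $y_1 = g(s_0)$. By Proposition \ref{prop:DL_all_coal} applied to the two $\dir\sig$ geodesics rooted at the common point $(x,t)$, there exist $r_0 < s_0$ and a common value $w := g(r_0) = g^{\dir\sig,L}_{(x,t)}(r_0)$ such that the two geodesics agree on $(-\infty, r_0]$. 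Since $g$ is a $b$-geodesic passing through $(w,r_0)$, the variational formula \eqref{eq:var} evaluated at the endpoint $(x,t)$ gives $b(w,r_0) + \Ll(w,r_0;x,t) = b(x,t)$. Since $g^{\dir\sig,L}_{(x,t)}|_{[r_0,t]}$ is a point-to-point geodesic from $(w,r_0)$ to $(x,t)$ passing through $(y_0,s_0)$, the metric composition property gives
\[
\Ll(w,r_0;x,t) = \Ll(w,r_0;y_0,s_0) + \Ll(y_0,s_0;x,t).
\]
Applying the lower bound $b(y_0,s_0) \ge b(w,r_0) + \Ll(w,r_0;y_0,s_0)$ (which is \eqref{eq:var} at time level $s_0$ tested with $z = w$), we get $b(y_0,s_0) + \Ll(y_0,s_0;x,t) \ge b(x,t)$. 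The reverse bound is automatic from \eqref{eq:var}, so $y_0$ is a maximizer of $z \mapsto b(z,s_0) + \Ll(z,s_0;x,t)$. Leftmost-ness of $y_1$ among such maximizers then yields $y_1 \le y_0$, and combined with $y_0 \le y_1$ this gives $g(s_0) = g^{\dir\sig,L}_{(x,t)}(s_0)$ for every $s_0 < t$. The $R$-statement is established by an identical argument, with leftmost replaced by rightmost throughout.

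The only real subtlety I anticipate is the coalescence step: one needs that $g^{b,L}_{(x,t)}$ and $g^{\dir\sig,L}_{(x,t)}$ are both $\dir\sig$ Busemann geodesics rooted at the same point so that Proposition \ref{prop:DL_all_coal} applies, but this is exactly what the dichotomy from Proposition \ref{prop:Busani_N3G} provides. Once coalescence is in hand the argument is pure bookkeeping between the variational formula for $\Ll$ (the triangle equality along the shared tail) and the variational formula for $b$.
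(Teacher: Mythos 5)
Your proposal is correct and takes essentially the same route as the paper: identify $g^{b,L}_{(x,t)}$ as a $\dir$-directed semi-infinite geodesic via Lemma \ref{lem:dir_from_global}, invoke Proposition \ref{prop:Busani_N3G} to classify it as a $\dir\sig$ Busemann geodesic, and then use coalescence (Proposition \ref{prop:DL_all_coal}) together with leftmost-ness of both $g^{b,L}$ and $g^{\dir\sig,L}$ to force equality. Your second paragraph is an explicit variational unwinding of what the paper compresses into its appeal to uniqueness of leftmost geodesics between a pair of points along the shared tail.
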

\begin{remark} \label{rmk:eqgoeodesics}
Note that Lemma \ref{lem:geod_IC} holds for all $\dir \in \R$, not just $\dir \in \DLBusedc$. In the case $\dir \notin \DLBusedc$, the conclusion simply states that 
\[
g_{(x,t)}^{b,L}(s) = g_{(x,t)}^{\dir-,L}(s) = g_{(x,t)}^{\dir +,L}(s) \quad \forall s \le t. 
\]
\end{remark}
\begin{proof}[Proof of Lemma \ref{lem:geod_IC}] Note that $g^{b,L/R}_{(x,t)}$ are each infinite geodesics that are the leftmost and rightmost geodesics, respectively, between any of their points. By the asymptotic slope condition on $b$ \eqref{eq:b_slope} and Lemma \ref{lem:dir_from_global}, these geodesics are $\dir$-directed. Hence, each must coalesce with either $g_{(x,t)}^{\dir -,L/R}$ or $g_{(x,t)}^{\dir +,L/R}$ by Propositions \ref{prop:Busani_N3G} and \ref{prop:DL_all_coal}. But because of the property that the $L$ geodesics are the leftmost geodesics between their points and the $R$ geodesics are the rightmost geodesics between their points (Proposition \ref{prop:DL_SIG_cons_intro}\ref{itm:DL_LRmost_geod} and Lemma \ref{lem:geodesics_from_b}\ref{itm:geod}), we have the following: For each $(x,t) \in \R^2$, either
\[
g_{(x,t)}^{b,L}(t) = g_{(x,t)}^{\dir -,L}(s) \quad\, \forall s \le t\qquad\text{or}\qquad g_{(x,t)}^{b,L}(s) = g_{(x,t)}^{\dir +,L}(s)\quad\, \forall s\le t.
\]
The same is also true if we replace $L$ with $R$.
\end{proof}

The next lemma gives a monotonicity result that defines an interface between the $\dir -$ and $\dir +$ geodesics. 
\begin{lemma}
\label{lem:+-ord} On the event $\Omega_1$, for all $\dir \in \DLBusedc$ and $b \in \fxi$, 
 \begin{enumerate}[label=(\roman*), font=\normalfont]
   \item If $x_1 < x_2$ and $g_{(x_1,t)}^{b,L}(s) = g_{(x_1,t)}^{\dir +,L}(s)$ for all $s \le t$, then $g_{(x_2,t)}^{b,L}(s) = g_{(x_2,t)}^{\dir +,L}(s)$ for all $s \le t$.
   \item  if $x_1 < x_2$ and $g_{(x_2,t)}^{b,L}(t) = g_{(x_2,s)}^{\dir -,L}(t)$ for all $s \le t$, then also $g_{(x_1,t)}^{b,L}(t) = g_{(x_1,t)}^{\dir -,L}(s)$ for all $s \le t$. 
    \end{enumerate}
    The same holds if we replace $L$ with $R$.
\end{lemma}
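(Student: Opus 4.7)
The plan is to combine three inputs. First, Lemma \ref{lem:geod_IC} forces each leftmost $b$-geodesic rooted at any $(x,t)$ to agree pointwise on $(-\infty,t]$ with either $g^{\dir-,L}_{(x,t)}$ or $g^{\dir+,L}_{(x,t)}$. Second, Proposition \ref{prop:g_basic_prop}\ref{itm:eventually_less} (applicable since $\dir\in\DLBusedc$) states that any $\dir-$ geodesic is eventually strictly to the left of any $\dir+$ geodesic. Third, I will use monotonicity in the spatial starting coordinate of leftmost $b$-geodesics: for $x_1<x_2$ and any $s\le t$,
\[
g^{b,L}_{(x_1,t)}(s) \le g^{b,L}_{(x_2,t)}(s).
\]

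The monotonicity is a standard leftmost-maximizer crossing argument. Writing $z_i = g^{b,L}_{(x_i,t)}(s)$ and supposing $z_1>z_2$ toward a contradiction, the leftmost-maximizer property gives the strict inequality $b(z_1,s)+\Ll(z_1,s;x_1,t) > b(z_2,s)+\Ll(z_2,s;x_1,t)$ together with the weak inequality $b(z_2,s)+\Ll(z_2,s;x_2,t) \ge b(z_1,s)+\Ll(z_1,s;x_2,t)$. Adding these yields
\[
\Ll(z_1,s;x_1,t)+\Ll(z_2,s;x_2,t) > \Ll(z_1,s;x_2,t)+\Ll(z_2,s;x_1,t),
\]
which contradicts the quadrangle inequality obtained by swapping the necessarily crossing geodesics from $(z_1,s)$ to $(x_2,t)$ and from $(z_2,s)$ to $(x_1,t)$ at their intersection point. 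This is a clean consequence of the metric composition property of $\Ll$ (and can also be extracted from the general properties of $b$-geodesics established via Lemma \ref{lem:geodesics_from_b}).

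With monotonicity in hand, item (i) is immediate. Assume $g^{b,L}_{(x_1,t)} = g^{\dir+,L}_{(x_1,t)}$ on $(-\infty,t]$. By Lemma \ref{lem:geod_IC}, either the conclusion holds or, for contradiction, $g^{b,L}_{(x_2,t)} = g^{\dir-,L}_{(x_2,t)}$. In the latter case, Proposition \ref{prop:g_basic_prop}\ref{itm:eventually_less} supplies $S\in\R$ with $g^{\dir-,L}_{(x_2,t)}(s) < g^{\dir+,L}_{(x_1,t)}(s)$ for all $s\le S$, which translates to $g^{b,L}_{(x_2,t)}(s) < g^{b,L}_{(x_1,t)}(s)$, violating the monotonicity above. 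Item (ii), read as $g^{b,L}_{(x_2,t)}(s) = g^{\dir-,L}_{(x_2,t)}(s)$ for all $s\le t$ implying the analogous statement at $(x_1,t)$, is symmetric: if $g^{b,L}_{(x_1,t)}$ were instead $g^{\dir+,L}_{(x_1,t)}$, eventual separation would again place the $b$-geodesic from $(x_1,t)$ strictly to the right of that from $(x_2,t)$ at sufficiently negative times, violating monotonicity. The replacement of $L$ by $R$ is identical, using the rightmost versions of the classification and the analogous monotonicity. The main substantive ingredients are Lemma \ref{lem:geod_IC} and Proposition \ref{prop:g_basic_prop}\ref{itm:eventually_less}, both already in place; the crossing-based monotonicity presents no real obstacle.
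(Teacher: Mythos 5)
Your proof is correct and follows essentially the same route as the paper: classify the $b$-geodesic at each root via Lemma \ref{lem:geod_IC}, invoke Proposition \ref{prop:g_basic_prop}\ref{itm:eventually_less} for eventual strict separation of $\dir-$ and $\dir+$ geodesics, and derive a contradiction with spatial monotonicity of leftmost $b$-geodesics. The two minor deviations are that the paper obtains the needed monotonicity by citing Corollary \ref{cor:b_geod_mont} (already proved via continuity, the intermediate-value theorem, and the consistency property in Lemma \ref{lem:geodesics_from_b}\ref{itm: consis}) rather than re-deriving it with a quadrangle-inequality crossing argument as you do, and that the paper applies the eventual-separation result to the two geodesics rooted at the common point $(x_2,t)$ and then transfers the conclusion to $(x_1,t)$ via coalescence (Proposition \ref{prop:DL_all_coal}), whereas you apply Proposition \ref{prop:g_basic_prop}\ref{itm:eventually_less} directly to the differently-rooted pair, which that proposition explicitly allows and so is a slight shortcut.
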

\begin{proof}We only prove (i). The proof of (ii) follows from a symmetric argument. Assume that $g_{(x_1,t)}^{b,L}(s) = g_{(x_1,t)}^{\dir +,L}(s)$ for all $s \le t$, and assume, by way of contradiction, that the conclusion  of (i) fails. Then, by Lemma \ref{lem:geod_IC}, $g_{(x_2,t)}^{b,L}(s) = g_{(x_2,t)}^{\dir -,L}(s)$ for all $s \le t$. Furthermore, by Proposition \ref{prop:g_basic_prop}\ref{itm:eventually_less}, the $\dir -$ geodesics are eventually to the left of the $\dir +$ geodesics. That is, for all $s$ sufficiently less than $t$,
\be \label{gb1}
g_{(x_2,t)}^{b,L}(s) = g_{(x_2,t)}^{\dir -,L}(s) < g_{(x_2,t)}^{\dir +,L}(s).
\ee
Further, by Proposition \ref{prop:DL_all_coal}, $g_{(x_1,t)}^{\dir +,L}$ coalesces with $g_{(x_2,t)}^{\dir +,L}$, so for all $s$ sufficiently less than $t$,
\be \label{gb2}
g_{(x_1,t)}^{b,L}(s) = g_{(x_1,t)}^{\dir +,L}(s) = g_{(x_2,t)}^{\dir +,L}(s).
\ee
Comparing \eqref{gb1} and \eqref{gb2}, we have $g_{(x_2,t)}^{b,L}(s) < g_{(x_1,t)}^{b,L}(s)$ for $s$ sufficiently less than $t$. Since $x_1 < x_2$,  Corollary \ref{cor:b_geod_mont} gives a contradiction.
\end{proof}
In light of Lemma \ref{lem:+-ord}, when $\dir \in \DLBusedc$ and $b \in \fxi$, we define the functions $\intc_t^{b,L/R}:\R\rightarrow\R$ by
\begin{equation} \label{tauLR_Def}
\begin{aligned}
    & \intc_t^{b,L}:=\sup\{x \in \R: g^{b,L}_{(x,t)}=g^{\dir-,L}_{(x,t)} \} = \inf\{x \in \R:g^{b,L}_{(x,t)}=g^{\dir+,L}_{(x,t)} \},\qquad \text{and}\\
    &\intc_t^{b,R}:= \sup\{x \in \R: g^{b,R}_{(x,t)}=g^{\dir-,R}_{(x,t)} \} = \inf\{x \in \R:g^{b,R}_{(x,t)}=g^{\dir+,R}_{(x,t)} \},
\end{aligned}
\end{equation}
where the equality of the $\sup$ and $\inf$ is a consequence of Lemma \ref{lem:+-ord}. Note that $\intc_t^{b,R}$ and $\intc_t^{b,L}$ can be $+\infty$ or $-\infty$. We also emphasize here that the $\intc_t^{b,L/R}$ notation is built from a general eternal solution $b$ and is not the same as the $\intc_{(x_0,s)}^{\dir,\pm}$ notation in \eqref{eq:Buse_interface}. However, we will see in Proposition \ref{p:correspondence}\ref{it:Requiveq} that $\intc_t^{b,R}$ is a leftmost bi-infinite interface, and $\intc_t^{b,L}$ is a rightmost interface.

We have the following result which is a corollary of Lemma \ref{lem:+-ord}.
\begin{corollary}
\label{c:Im_cutoff}
    On the event $\Omega_1$, for all $\dir \in \Xi$, $b \in \fxi$, and $t \in \R$, we have $
    \intc_t^{b,R} \leq \intc_t^{b,L}$. 
\end{corollary}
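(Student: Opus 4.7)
The strategy is to argue by contradiction: if $\intc_t^{b,R} > \intc_t^{b,L}$, then we can choose an intermediate point $x$ whose $b$-geodesics behave incompatibly with the eventual ordering of $\dir-$ and $\dir+$ geodesics guaranteed by Proposition \ref{prop:g_basic_prop}\ref{itm:eventually_less}.

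Concretely, suppose for contradiction that $\intc_t^{b,R} > \intc_t^{b,L}$, and pick any $x \in (\intc_t^{b,L}, \intc_t^{b,R})$. By the $\inf$ characterization of $\intc_t^{b,L}$ in \eqref{tauLR_Def}, there exists $x_1 \in (\intc_t^{b,L}, x)$ such that $g^{b,L}_{(x_1,t)} = g^{\dir+,L}_{(x_1,t)}$; applying Lemma \ref{lem:+-ord}(i) with $x_1 < x_2 = x$ then gives
\[
g^{b,L}_{(x,t)}(s) = g^{\dir+,L}_{(x,t)}(s) \quad \text{for all } s \le t.
\]
Symmetrically, by the $\sup$ characterization of $\intc_t^{b,R}$, we can find $x_2 \in (x, \intc_t^{b,R})$ such that $g^{b,R}_{(x_2,t)} = g^{\dir-,R}_{(x_2,t)}$, and applying Lemma \ref{lem:+-ord}(ii) (in its $R$-version) yields
\[
g^{b,R}_{(x,t)}(s) = g^{\dir-,R}_{(x,t)}(s) \quad \text{for all } s \le t.
\]

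Since the leftmost $b$-geodesic from $(x,t)$ lies weakly to the left of the rightmost $b$-geodesic from $(x,t)$ at every time level, we obtain
\[
g^{\dir+,L}_{(x,t)}(s) \le g^{\dir-,R}_{(x,t)}(s) \quad \text{for all } s \le t.
\]
However, $\dir \in \DLBusedc$, and Proposition \ref{prop:g_basic_prop}\ref{itm:eventually_less} asserts that for $s$ sufficiently small every $\dir-$ geodesic lies strictly to the left of every $\dir+$ geodesic (both rooted at $(x,t)$), so $g^{\dir-,R}_{(x,t)}(s) < g^{\dir+,L}_{(x,t)}(s)$ for all $s$ small enough. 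This contradicts the previous display, completing the proof. The only step that is even mildly delicate is confirming that the witness points $x_1, x_2$ exist and that Lemma \ref{lem:+-ord} transfers the geodesic equalities to the intermediate $x$; everything else is immediate from the ordering of $\dir\pm$ geodesics.
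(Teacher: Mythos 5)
Your proof is correct and takes essentially the same approach as the paper's: both argue by contradiction, pick an intermediate point $x \in (\intc_t^{b,L}, \intc_t^{b,R})$, deduce $g^{b,L}_{(x,t)} = g^{\dir+,L}_{(x,t)}$ and $g^{b,R}_{(x,t)} = g^{\dir-,R}_{(x,t)}$ from the definitions in \eqref{tauLR_Def} together with Lemma \ref{lem:+-ord}, and then contradict Proposition \ref{prop:g_basic_prop}\ref{itm:eventually_less} using the trivial ordering $g^{b,L}_{(x,t)}(s) \le g^{b,R}_{(x,t)}(s)$. The only difference is stylistic: you exhibit explicit witness points $x_1, x_2$ near the two thresholds before transferring the geodesic identities to $x$ via Lemma \ref{lem:+-ord}, whereas the paper applies Lemma \ref{lem:+-ord} (and the $\inf$/$\sup$ definitions it underwrites) to $x$ directly; either route is valid since the relevant sets are up-closed and down-closed half-lines.
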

\begin{remark}
It may seem strange that $\intc_t^{b,R}$ is to the left of $\intc_t^{b,L}$, but the reason is that $\intc_t^{b,R}$ separates rightmost geodesics, while $\intc_t^{b,L}$ separates leftmost geodesics.
\end{remark}
\begin{proof}
    Assume, by way of contradiction, that $-\infty \le \intc_t^{b,L} < \intc_t^{b,R} \le \infty$, and choose $x \in (\intc_t^{b,L}, \intc_t^{b,R})$.  Then by Lemma \ref{lem:+-ord}, 
    \begin{equation} \label{eq:gLRxi-+}
g^{b,L}_{(x,t)}=g_{(x,t)}^{\dir+,L} \qquad \text{and} \qquad g^{b,R}_{(x,t)}=g_{(x,t)}^{\dir-,R}.
    \end{equation}
    By definition of $g^{b,L/R}_{(x,t)}$ as leftmost and rightmost maximizers, $
     g^{b,L}_{(x,t)}(s) \leq g^{b,R}_{(x,t)}(s)$ for all $s \leq t$, so this implies 
    \[
    g_{(x,t)}^{\dir+,L}(s) \leq g_{(x,t)}^{\dir-,R}(s) \qquad \text{for all } s \leq t.
    \]
    But  by Proposition \ref{prop:g_basic_prop}\ref{itm:eventually_less}, this inequality fails for $s$ sufficiently less than $t$, giving a contradiction. 
\end{proof}
We state another corollary.
\begin{corollary} 
    \label{c:cutoff_geod}
    The following holds on the event $\Omega_1$. Let $\dir \in \Xi$, and $b \in \fxi$. If for some $t \in \R, \intc_t^{b,L} \in \R$, then
    \be \label{gtaueq1}
    g_{(\intc_t^{b,L},t)}^{b,L}(s) =g_{(\intc_t^{b,L},t)}^{\dir -,L}(s) \qquad\text{for all }s \le t, 
    \ee
    Additionally, if for some $t \in \R, \intc_t^{b,R} \in \R$, then
    \be \label{gtaueq2}
    g_{(\intc_t^{b,R},t)}^{b,R}(s) =g_{(\intc_t^{b,R},t)}^{\dir +,R}(s) \qquad\text{for all }s \le t. 
    \ee 
\end{corollary}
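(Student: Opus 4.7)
The plan is to prove \eqref{gtaueq1} by exploiting the supremum characterization of $\intc_t^{b,L}$ together with the left-continuity of leftmost geodesics in the spatial variable; \eqref{gtaueq2} will then follow by the symmetric argument using the infimum characterization of $\intc_t^{b,R}$ and right-continuity of rightmost geodesics.

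For \eqref{gtaueq1}, set $x^* := \intc_t^{b,L}$ and pick any sequence $x_n \nearrow x^*$ with $x_n < x^*$. By the definition $x^* = \sup\{x : g^{b,L}_{(x,t)} = g^{\dir-,L}_{(x,t)}\}$ together with the downward-closedness of that set coming from Lemma \ref{lem:+-ord}(ii), I will have $g^{b,L}_{(x_n,t)}(s) = g^{\dir-,L}_{(x_n,t)}(s)$ for every $s \le t$ and every $n$. Passing to the limit on the right-hand side is immediate from Proposition \ref{prop:g_basic_prop}\ref{itm:DL_SIG_conv_x}, giving $\lim_n g^{\dir-,L}_{(x_n,t)}(s) = g^{\dir-,L}_{(x^*,t)}(s)$. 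The heart of the matter is therefore to show that the left-hand side converges to $g^{b,L}_{(x^*,t)}(s)$, i.e.\ that $x \mapsto g^{b,L}_{(x,t)}(s)$ is left-continuous at $x^*$.

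To establish this left-continuity I will run the same upper-semicontinuity-of-argmax scheme that underlies Proposition \ref{prop:g_basic_prop}\ref{itm:DL_SIG_conv_x}. Spatial monotonicity of leftmost $b$-geodesics (Corollary \ref{cor:b_geod_mont}) gives $g^{b,L}_{(x_n,t)}(s) \le g^{b,L}_{(x^*,t)}(s)$ for each $n$ and guarantees that the monotone limit $L(s):=\lim_n g^{b,L}_{(x_n,t)}(s)$ exists. Passing to the limit in
\begin{equation*}
b\bigl(g^{b,L}_{(x_n,t)}(s),s\bigr) + \Ll\bigl(g^{b,L}_{(x_n,t)}(s),s;x_n,t\bigr) = b(x_n,t),
\end{equation*}
and using joint continuity of $b$ and $\Ll$ together with $b(x_n,t) \to b(x^*,t)$, I see that $L(s)$ is a maximizer of $z\mapsto b(z,s)+\Ll(z,s;x^*,t)$; since $g^{b,L}_{(x^*,t)}(s)$ is the \emph{leftmost} such maximizer, $L(s)\ge g^{b,L}_{(x^*,t)}(s)$, and combined with $L(s)\le g^{b,L}_{(x^*,t)}(s)$ from monotonicity this gives equality. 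Combining the two limits yields \eqref{gtaueq1}.

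The proof of \eqref{gtaueq2} is the mirror image: take $y_n \searrow \intc_t^{b,R}$ with $y_n > \intc_t^{b,R}$, invoke the infimum characterization $\intc_t^{b,R} = \inf\{x: g^{b,R}_{(x,t)} = g^{\dir+,R}_{(x,t)}\}$ and the upward-closedness of that set coming from Lemma \ref{lem:+-ord}(i) to get $g^{b,R}_{(y_n,t)}(s) = g^{\dir+,R}_{(y_n,t)}(s)$, and then pass to the limit using right-continuity of $g^{\dir+,R}_{(\cdot,t)}(s)$ and the analogous right-continuity of $g^{b,R}_{(\cdot,t)}(s)$ (now forcing the monotone limit to equal the \emph{rightmost} maximizer). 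The main obstacle in the whole argument is this spatial continuity of $g^{b,L/R}_{(\cdot,t)}(s)$; it is conceptually routine, but to run the argmax limit cleanly I will need the maximizer sets of $z\mapsto b(z,s)+\Ll(z,s;x,t)$ to be nonempty and to stay in a common compact set as $x$ ranges over a neighborhood of $x^*$, a fact I will import from the general theory of $b$-geodesics in Lemma \ref{lem:geodesics_from_b}.
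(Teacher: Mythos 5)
Your argument is correct and follows essentially the same route as the paper's proof. The paper applies Lemma \ref{lem:+-ord} in exactly the way you do to conclude $g_{(x,t)}^{b,L} = g_{(x,t)}^{\dir-,L}$ for all $x < \intc_t^{b,L}$, and then passes to the limit $x \nearrow \intc_t^{b,L}$ using Proposition \ref{prop:g_basic_prop}\ref{itm:DL_SIG_conv_x} on the right-hand side and the left-continuity of $x \mapsto g_{(x,t)}^{b,L}(s)$ on the left-hand side; the one difference is that the paper has already packaged this left-continuity as Corollary \ref{cor:b_geod_lim} (whose proof is precisely the monotone-limit-plus-argmax scheme you sketch, with compactness of maximizer sets coming from Corollary \ref{cor:b_geod_mont} rather than Lemma \ref{lem:geodesics_from_b}), so you are re-deriving a fact you could have cited. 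One small point worth making explicit in your write-up: the hypothesis $\intc_t^{b,L}\in\R$ guarantees the set $\{x : g^{b,L}_{(x,t)} = g^{\dir-,L}_{(x,t)}\}$ is nonempty (else the supremum would be $-\infty$), and together with the downward-closedness from Lemma \ref{lem:+-ord}(ii) this is what makes the approximating sequence $x_n\nearrow x^*$ actually land inside the set.
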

\begin{remark}
    Since $\tau_t^{b,R} \le \tau_t^{b,L}$ (Corollary \ref{c:Im_cutoff}), the monotonicity in Lemma \ref{lem:+-ord} also implies that 
    \begin{align*}
    g_{(\intc_t^{b,R},t)}^{b,L}(s) &=g_{(\intc_t^{b,R
    },t)}^{\dir -,L}(s) \qquad\text{for all }s \le t,\quad\text{and} \\
    g_{(\intc_t^{b,L},t)}^{b,R}(s) &=g_{(\intc_t^{b,L},t)}^{\dir +,R}(s)\qquad\text{for all }s \le t.
    \end{align*}
\end{remark}
\begin{proof}
    We prove \eqref{gtaueq1}. By Lemma \ref{lem:+-ord} and definition of $\intc_t^{b,L}$, for all $x < \intc_t^{b,L}$,
    \be \label{eq:gbxieq}
    g_{(x,t)}^{b,L}(s) = g^{\dir -,L}_{(x,t)}(s) \quad\text{ for all }s \le t.
    \ee
    Then, by Corollary \ref{cor:b_geod_lim} and  Proposition \ref{prop:g_basic_prop}\ref{itm:DL_SIG_conv_x}, for each $s \le t$,
    \be \label{eq:xi-lim}
     g_{(\intc_t^{b,L},t)}^{b,L}(s) = \lim_{x \nearrow \intc_t^{b,L}}g^{b,L}_{(x,t)}(s) =\lim_{x \nearrow \intc_t^{b,L}}g^{\dir -,L}_{(x,t)}(s) = g_{(\intc_t^{b,L},t)}^{\dir -,L}(s).
    \ee 
    The proof of\eqref{gtaueq2} is symmetric, this time taking limits from the right. 
\end{proof}
As mentioned earlier potentially, $\intc_t^{b,L/R}$ could be $\pm \infty$. When $b$ is not equal to $W^{\dir -}$ or $W^{\dir +}$, we prove that this cannot be the case in a series of lemmas. The first one shows that the increments of $b$ to the left and right of $\intc_t^{b,L/R}$  are equal to the $\pm$ Busemann functions. 
\begin{lemma}
\label{lem:eq_split}
The following holds on the event $\Omega_1$, for all $\dir \in \DLBusedc$ and all $b \in \fxi$.
\begin{enumerate}[label=(\roman*), font=\normalfont]
\item \label{it:bW1} For $t \in \R$ and all $x,y \ge \intc_t^{b,R}$, $b(y,t) - b(x,t) = W^{\dir +}(x,t;y,t)$. For all $x,y \le \intc_t^{b,L}$, $b(y,t) - b(x,t) = W^{\dir -}(x,t;y,t)$.
\item \label{it:bW2} If for all $t \in \R, \tau_t^{b,R}$ are finite, then for all $t,s \in \R$, 
\[
b(\tau_t^{b,R},t) - b(\tau_s^{b,R},s) =W^{\dir+}(\tau_s^{b,R},s;\tau_t^{b,R},t).
\]
\item \label{it:bW3}  If for all $t \in \R, \tau_t^{b,L}$ are finite, then for all $t,s \in \R$, 
\[
b(\tau_t^{b,L},t) - b(\tau_s^{b,L},s) =W^{\dir-}(\tau_s^{b,L},s;\tau_t^{b,L},t).
\]
\end{enumerate}
\end{lemma}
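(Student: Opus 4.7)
To prove (i), I would fix $t\in\R$ and $x,y\geq\intc_t^{b,R}$. By Corollary \ref{c:cutoff_geod} and Lemma \ref{lem:+-ord}, the rightmost $b$-geodesics from $(x,t)$ and $(y,t)$ coincide respectively with $g^{\dir+,R}_{(x,t)}$ and $g^{\dir+,R}_{(y,t)}$. Coalescence of $\dir+$ geodesics (Proposition \ref{prop:DL_all_coal}) supplies some $s<t$ and a common point $w$ lying on both, so that the variational identity \eqref{eq:var} applied simultaneously to $b(x,t)$ and $b(y,t)$ yields
\[
b(x,t)=b(w,s)+\Ll(w,s;x,t),\qquad b(y,t)=b(w,s)+\Ll(w,s;y,t).
\]
Then \eqref{eqn:SIG_weight} identifies each $\Ll$-value with the corresponding Busemann value, and additivity collapses $b(y,t)-b(x,t)$ to $W^{\dir+}(x,t;y,t)$. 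The analogous statement for $x,y\leq\intc_t^{b,L}$ is handled symmetrically using $\dir-$ geodesics.

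For (ii), by antisymmetry $W^{\dir+}(p;q)=-W^{\dir+}(q;p)$ we may assume $s<t$. Corollary \ref{c:cutoff_geod} identifies the maximizer in the variational formula for $b(\intc_t^{b,R},t)$: setting $w:=g^{\dir+,R}_{(\intc_t^{b,R},t)}(s)$, equations \eqref{eq:var} and \eqref{eqn:SIG_weight} give $b(\intc_t^{b,R},t)=b(w,s)+W^{\dir+}(w,s;\intc_t^{b,R},t)$. The crucial substep is the claim $w\geq\intc_s^{b,R}$. Granted this, Part (i) supplies $b(w,s)-b(\intc_s^{b,R},s)=W^{\dir+}(\intc_s^{b,R},s;w,s)$, and additivity of the Busemann functions finishes the proof. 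Part (iii) is symmetric, swapping rightmost with leftmost and $\dir+$ with $\dir-$.

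The main obstacle is establishing the claim $w\geq\intc_s^{b,R}$, which I would prove by contradiction. If $w<\intc_s^{b,R}$, then the defining property of $\intc_s^{b,R}$ forces $g^{b,R}_{(w,s)}=g^{\dir-,R}_{(w,s)}$. On the other hand, Lemma \ref{lem:geodesics_from_b} (the appendix analogue of Proposition \ref{prop:DL_SIG_cons_intro}) gives that the restriction of $g^{b,R}_{(\intc_t^{b,R},t)}$ to $(-\infty,s]$ is itself a semi-infinite $b$-geodesic rooted at $(w,s)$; since $g^{b,R}_{(w,s)}$ is the rightmost such geodesic,
\[
g^{b,R}_{(\intc_t^{b,R},t)}(r)\leq g^{b,R}_{(w,s)}(r)\quad\text{for all }r\leq s.
\]
Combining this with the semigroup property $g^{\dir+,R}_{(\intc_t^{b,R},t)}|_{(-\infty,s]}=g^{\dir+,R}_{(w,s)}$ from Proposition \ref{prop:DL_SIG_cons_intro}\ref{itm:DL_all_SIG} yields $g^{\dir+,R}_{(w,s)}(r)\leq g^{\dir-,R}_{(w,s)}(r)$ for all $r\leq s$, which contradicts Proposition \ref{prop:g_basic_prop}\ref{itm:eventually_less} for $\dir\in\DLBusedc$. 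The delicate point is that $b$-geodesics arise from a general eternal solution rather than a Busemann function, so the subpath/restriction structure from the appendix is essential for comparing them with the rigid coalescence structure of $\dir\pm$ geodesics.
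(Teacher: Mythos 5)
Your argument is correct. For Item \ref{it:bW1}, the route is essentially the same as the paper's: both use Corollary \ref{c:cutoff_geod} to identify the $b$-geodesics with $\dir+$ geodesics, coalescence (Proposition \ref{prop:DL_all_coal}) to produce a common point in the far past, and then the variational formula; the paper compares $b(\ebullet)-W^{\dir+}(0,0;\ebullet)$ at $x$ and $y$, whereas you compute $b(y,t)-b(x,t)$ directly via \eqref{eqn:SIG_weight} and additivity, a minor variant. For Items \ref{it:bW2} and \ref{it:bW3}, however, you take a genuinely different route. The paper runs the same coalescence argument again, this time for the $\dir+,R$ geodesics rooted at $(\tau_t^{b,R},t)$ and $(\tau_s^{b,R},s)$, and shows that $b(\ebullet)-W^{\dir+}(0,0;\ebullet)$ takes the same value at both points; no ordering between $g^{\dir+,R}_{(\tau_t^{b,R},t)}(s)$ and $\tau_s^{b,R}$ is ever needed. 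You instead assume $s<t$, follow the geodesic down to $w=g^{\dir+,R}_{(\tau_t^{b,R},t)}(s)$, establish the auxiliary geometric claim $w\ge\tau_s^{b,R}$ by a contradiction that combines Lemma \ref{lem:geodesics_from_b}\ref{itm: consis}, Proposition \ref{prop:DL_SIG_cons_intro}\ref{itm:DL_all_SIG}, and Proposition \ref{prop:g_basic_prop}\ref{itm:eventually_less}, and then bootstrap from Item \ref{it:bW1}. Your contradiction argument is sound (in fact the inequality $g^{b,R}_{(\intc_t^{b,R},t)}(r)\le g^{b,R}_{(w,s)}(r)$ in the intermediate step is an equality by the consistency lemma, which only strengthens the conclusion). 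What each approach buys: the paper's version is shorter and uniform across the three items, while yours isolates a monotonicity fact --- that the $\dir+,R$ geodesic from $(\tau_t^{b,R},t)$ never crosses to the left of the curve $s\mapsto\tau_s^{b,R}$ --- which is of independent geometric interest and indeed appears in related form elsewhere in the paper (compare the surjectivity proof of Proposition \ref{p:correspondence} and Lemma \ref{lem:Splt}\ref{itm:G1}).
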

\begin{remark} 
Recall from Corollary \ref{c:Im_cutoff} that $\intc_t^{b,R} \le \intc_t^{b,L}$, so if $x,y \in [\intc_t^{b,R},\intc_t^{b,L}]$, Lemma \ref{lem:eq_split}\ref{it:bW1} implies that 
\[
b(y,t) - b(x,t) = W^{\dir +}(x,t;y,t) = W^{\dir -}(x,t;y,t). 
\]
\end{remark}
\begin{proof}
\textbf{Item \ref{it:bW1}:} We prove the first statement, and the proof of the second is analogous.  Let $x,y \ge \intc_t^{b,R}$. By Corollary \ref{c:cutoff_geod}, $g_{(x,t)}^{b,R}(s) = g_{(x,t)}^{\dir +,R}(s)$  for all $s \le t$, and the same holds for $y$ in place of $x$. By coalescence of the $\dir +$ geodesics (Proposition \ref{prop:DL_all_coal}), we may let $s$ be sufficiently large and negative so that $g_{(y,t)}^{\dir +,R}(s) = g_{(x,t)}^{\dir +,R}(s)$. For such $s$, use $g(s)$ to denote this point for shorthand. Then, 
\begin{align*}
 b(x,t) - W^{\dir +}(0,0;x,t) &= \sup_{z \in \R}\{b(z,s)+\Ll(z,s;x,t)\} - \sup_{z \in \R}\{W^{\dir+}(0,0;z,s)+\Ll(z,s;x,t)\} \\
&=b(g(s),s)+\Ll(g(s),s;x,t) - W^{\dir +}(0,0;g(s),s)-\Ll(g(s),s;x,t) \\
&= b(g(s),s) - W^{\dir +}(0,0;g(s),s).
\end{align*}
The same holds for $y$ in place of $x$. Therefore, 
\[
b(x,t) - W^{\dir +}(0,0;x,t) = b(y,t) - W^{\dir +}(0,0;y,t),
\]
and the result  follows by rearranging.

\medskip \noindent \textbf{Item \ref{it:bW2}:} 
By Corollary \ref{c:cutoff_geod}, $g_{(\tau_t^{b,R},t)}^{b,R}(r) = g_{(\tau_t^{b,R},t)}^{\dir +,R}(r)$ for all $r \le t$, and the same holds for $s$ in place of $t$. By coalescence of the $\dir +$ geodesics, we may let $r$ be sufficiently large and negative so that $g_{(\tau_t^{b,R},t)}^{\dir +,R}(r) = g_{(\tau_s^{b,R},s)}^{\dir +,R}(r)$. For such $r$, use $g(r)$ to denote this point for shorthand. Then,
\begin{align*}
 &\quad \, b(\tau_t^{b,R},t) - W^{\dir +}(0,0;\tau_t^{b,R},t) \\
 &= \sup_{z \in \R}\{b(z,r)+\Ll(z,r;\tau_t^{b,R},t)\} - \sup_{z \in \R}\{W^{\dir+}(0,0;z,r)+\Ll(z,r;\tau_t^{b,R},t)\} \\
&=b(g(r),r)+\Ll(g(r),r;\tau_t^{b,R},t) - W^{\dir +}(0,0;g(r),r)-\Ll(g(r),r;\tau_t^{b,R},t) \\
&= b(g(r),r) - W^{\dir +}(0,0;g(r),r).
\end{align*}
The same holds when $s$ is replaced by $t$.
\[
b(\tau_t^{b,R},t) - W^{\dir +}(0,0;\tau_t^{b,R},t) = b(g(r),r) - W^{\dir +}(0,0;g(r),r) = b(\tau_s^{b,R},s) - W^{\dir +}(0,0;\tau_s^{b,R},s),
\]
and the result follows by rearranging and the additivity of $W^{\dir +}$ and the increments of $b$.

\medskip \noindent \textbf{Item \ref{it:bW3}:} This has a symmetric proof to Item \ref{it:bW2}. 
\end{proof}

\begin{lemma} \label{lem:mont} 
The following holds on the event $\Omega_1$, for all $\dir \in \R$. Whenever $\dir \notin \DLBusedc$ and $b \in \fxi$, we have that 
\be \label{eq:bW}
b(y,t) - b(x,s) = W^{\dir}(x,s;y,t),  \quad \forall (x,s;y,t) \in \R^4.
\ee
In general, for $\dir \in \R$ and $b \in \fxi$, $t \in \R$, and $x < y$, 
\be \label{b_between-+}
W^{\dir -}(x,t;y,t) \le b(y,t) - b(x,t) \le W^{\dir +}(x,t;y,t).
\ee
\end{lemma}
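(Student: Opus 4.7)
\medskip
\noindent \textit{Proof proposal.} My plan is to prove the sandwich inequality \eqref{b_between-+} first, and then deduce the identity \eqref{eq:bW} from it.

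For the sandwich inequality, fix $\dir\in\R$, $b\in\fxi$, $t\in\R$, and $x<y$. The key observation is that by Lemma \ref{lem:dir_from_global}, each $b$-geodesic is $\dir$-directed. For each $s<t$, set $z_y(s):=g^{b,L}_{(y,t)}(s)$. Since $z_y(s)$ is a maximizer of $z\mapsto b(z,s)+\Ll(z,s;y,t)$, the evolution \eqref{eq:var} applied at the two endpoints gives
\[
b(y,t)-b(x,t) \le \bigl[b(z_y(s),s)+\Ll(z_y(s),s;y,t)\bigr] - \bigl[b(z_y(s),s)+\Ll(z_y(s),s;x,t)\bigr] = \Ll(z_y(s),s;y,t)-\Ll(z_y(s),s;x,t).
\]
Because $z_y(s)/|s|\to\dir$ as $s\to-\infty$, the sequence $\mathbf v_s=(z_y(s),s)$ is a $\dir$-directed sequence in the sense of Proposition \ref{prop:Buse_basic_properties}\ref{it:Busliminfsup}, and passing to the $\limsup$ yields $b(y,t)-b(x,t)\le W^{\dir+}(x,t;y,t)$. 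The lower bound $b(y,t)-b(x,t)\ge W^{\dir-}(x,t;y,t)$ is obtained symmetrically, this time using $z_x(s):=g^{b,L}_{(x,t)}(s)$ and the reverse inequality from the supremum definition.

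For \eqref{eq:bW}, assume $\dir\notin\DLBusedc$ so that $W^{\dir-}=W^{\dir+}=W^{\dir}$. The sandwich just established immediately gives the identity for $s=t$: $b(y,t)-b(x,t)=W^\dir(x,t;y,t)$ for all $x,y,t\in\R$. To extend across time levels, I will use the KPZ fixed point evolution of $W^\dir$ from Proposition \ref{prop:Buse_basic_properties}\ref{itm:Buse_KPZ_description}. For $s<t$, substitute $b(z,s)=b(0,s)+W^\dir(0,s;z,s)$ into the evolution equation for $b$:
\[
b(x,t)=\sup_z\{b(z,s)+\Ll(z,s;x,t)\}=b(0,s)+\sup_z\{W^\dir(0,s;z,s)+\Ll(z,s;x,t)\}=b(0,s)+W^\dir(0,s;x,t),
\]
and similarly for $b(y,t)$. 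Subtracting and using the equal-time identity $b(x,s)=b(0,s)+W^\dir(0,s;x,s)$ together with additivity of $W^\dir$ gives $b(y,t)-b(x,s)=W^\dir(x,s;y,t)$. The case $s>t$ is symmetric, playing the same trick at time $t$ instead of $s$.

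I expect the main technical point to be the justified use of Proposition \ref{prop:Buse_basic_properties}\ref{it:Busliminfsup}, which requires the direction of the sequence $(z_y(s),s)$ as $s\to-\infty$; this is exactly what Lemma \ref{lem:dir_from_global} (which exploits the slope assumption \eqref{eq:b_slope}) supplies. No other step should present difficulty: the evolution equation for $b$, the supremum-majorization argument, and the Busemann evolution identity \eqref{W_var} each plug in directly.
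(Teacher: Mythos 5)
Your proof is correct, and it takes a genuinely different route than the paper's. You establish the sandwich inequality \eqref{b_between-+} first, by a direct supremum-majorization argument combined with the general Busemann $\liminf/\limsup$ bounds (Proposition~\ref{prop:Buse_basic_properties}\ref{it:Busliminfsup}) and the directedness of $b$-geodesics (Lemma~\ref{lem:dir_from_global}), and then you deduce \eqref{eq:bW} from the sandwich together with the KPZ evolution identity for $W^\dir$. The paper proceeds in the opposite order: it proves \eqref{eq:bW} directly via coalescence of the $\dir$-geodesics (using Lemma~\ref{lem:geod_IC}/Remark~\ref{rmk:eqgoeodesics} and Proposition~\ref{prop:DL_all_coal} to find a common point $g(r)$ for the two $b$-geodesics, then computes the difference using \eqref{geod_LR_eq_L}), and it proves \eqref{b_between-+} for $\dir\in\DLBusedc$ by a three-case analysis based on the location of $\intc_t^{b,R}$, invoking the structural Lemma~\ref{lem:eq_split}\ref{it:bW1}. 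The trade-off: your argument is more self-contained and elementary, relying only on the evolution inequality, the slope condition, and the general Busemann limit bounds, and avoids the $\intc_t^{b,L/R}$ machinery entirely; the paper's argument fits naturally into the interface-structure development of Section~\ref{sec:gen_soln} and, as a byproduct, identifies exactly \emph{where} each inequality is an equality (depending on whether $x,y$ lie to the left, right, or across $\intc_t^{b,R}$). Both are valid; one small point worth making explicit in your write-up is that the inequality $b(y,t)-b(x,t)\le \Ll(z_y(s),s;y,t)-\Ll(z_y(s),s;x,t)$ holds for \emph{every} $s<t$, so passing to the $\limsup$ over any sequence $s_n\to-\infty$ (to which Proposition~\ref{prop:Buse_basic_properties}\ref{it:Busliminfsup} applies, since it holds simultaneously over all $\dir$-directed sequences on $\Omega_1$) yields the bound on the constant left-hand side.
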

\begin{proof}
We start by proving \eqref{eq:bW}. When $\dir \notin \DLBusedc$, Lemma \ref{lem:geod_IC} and Remark \ref{rmk:eqgoeodesics} states that $g_{p}^{b,R/L} = g_{p}^{\dir,R/L}$ for all $p \in \R^2$, where we have removed the $\pm$ distinction in the superscript. By Proposition \ref{prop:DL_all_coal}, the $\dir$-directed geodesics from all points $p$ coalesce. Let $r$ be sufficiently less than $s \wedge t$ so that 
\[
g(r) := g_{(x,s)}^{\dir,L}(r) = g_{(y,t)}^{\dir,L}(r).
\]
Then,
\begin{align*}
b(y,t) - b(x,s) &= \sup_{z \in \R} \{b(z,r) + \Ll(z,r;y,t)\}- \sup_{z \in \R} \{b(z,r) + \Ll(z,r;x,s)\} \\
&= b\bigl(g(r),r)\bigr) + \Ll\bigl(g(r),r;y,t\bigr) - \Bigl[b\bigl(g(r),r\bigr) + \Ll\bigl(g(r),r;x,s\bigr)\Bigr] \\
&\overset{\eqref{geod_LR_eq_L}}{=} W^{\dir}\bigl(g(r),r;y,t\bigr) - W^{\dir}\bigl(g(r),r;x,s\bigr) = W^\dir(x,s;y,t). 
\end{align*}
We now turn to \eqref{b_between-+}. Since \eqref{eq:bW} holds in the case $\dir \notin \DLBusedc$, we may assume $\dir \in \DLBusedc$. We will show the second inequality, as the first follows similarly. We consider 3 cases.\\
\paragraph{\textbf{Case 1:}} $\intc_t^{b,R} \leq x<y.$ By Lemma \ref{lem:eq_split}\ref{it:bW1}, $b(y,t) - b(x,t)=W^{\dir+}(x,t;y,t)$, so the inequality is an equality in this case.

\paragraph{\textbf{Case 2:}} $\intc_t^{b,R} \in (-\infty, \infty)$ and $x \leq \intc_t^{b,R} \le y$. for shorthand, let $\intc_t = \intc_t^{b,R}$ and note that we also have 
 $x \le \intc_t \le \intc_t^{b,L}$ by Corollary \ref{c:Im_cutoff}. Then,
\begin{align*}
    &\quad \; b(y,t) - b(x,t) \\
    &=b(\intc_t,t) - b(x,t)+b(y,t) - b(\intc_t,t) \\
    &= W^{\dir-}(x,t;\intc_t,t)+W^{\dir+}(\intc_t,t;y,t) \qquad \text{ by Lemma \ref{lem:eq_split}\ref{it:bW1}}\\
    &\leq W^{\dir+}(x,t;\intc_t,t)+W^{\dir+}(\intc_t,t;y,t) \qquad \text{ by monotonicity of the Busemann functions}.\\
    &=W^{\dir+}(x,t;y,t) \qquad\qquad \qquad\qquad \quad \;\; \, \text{ by additivity of } W^{\dir +}.
\end{align*}
\paragraph{\textbf{Case 3:}} $x<y \leq \intc_t^{b,R}$. In this case, by monotonicity of the Busemann functions, 
\[
b(y,t) - b(x,t)=W^{\dir-}(x,t;y,t) \leq W^{\dir+}(x,t;y,t). \qedhere
\] 
\end{proof}

\begin{lemma}\label{lem:neq} 
The following holds on the event $\Omega_1$.
For all $\dir \in \Xi, \sig \in \{+,- \}$, assume that $b \in \fxi$ is such that
\[
b(x_0,t_0) - b(0,0) \neq W^{\dir \sig}(0,0;x_0,t_0),\quad\text{for some}\quad (x_0,t_0) \in \R^2.
\]
Then, for every $t \in \R, \sig \in \{+,-\}$, there exist $x_1 \in \R$ such that
\[
W^{\dir \sig}(0,t;x_1,t) \neq b(x_1,t) - b(0,t).
\]
\end{lemma}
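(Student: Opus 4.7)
The plan is to prove the contrapositive. Assume there exists $t_\ast \in \R$ such that $b(x,t_\ast) - b(0,t_\ast) = W^{\dir\sig}(0,t_\ast;x,t_\ast)$ for every $x \in \R$; I will show that $b(x,t) - b(0,0) = W^{\dir\sig}(0,0;x,t)$ for every $(x,t) \in \R^2$, contradicting the hypothesis. Without loss of generality take $\sig = -$; the case $\sig = +$ is symmetric. Setting $k := b(0,t_\ast) - W^{\dir-}(0,0;0,t_\ast)$ and using additivity, the contrapositive assumption becomes $b(x,t_\ast) = k + W^{\dir-}(0,0;x,t_\ast)$ for all $x \in \R$, and the goal is to upgrade this to $b(x,t) = k + W^{\dir-}(0,0;x,t)$ for every $(x,t) \in \R^2$.

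For $t \ge t_\ast$ I apply \eqref{eq:var} to $b$ and Proposition \ref{prop:Buse_basic_properties}\ref{itm:Buse_KPZ_description} to $W^{\dir-}$: since the initial data at time $t_\ast$ agree up to the additive constant $k$, the KPZ fixed point solutions also agree up to $k$, giving
\[ b(x,t) = k + W^{\dir-}(0,0;x,t), \qquad t \ge t_\ast,\ x \in \R. \]
Next I claim $\intc_t^{b,R} = +\infty$ for every $t \ge t_\ast$: otherwise Lemma \ref{lem:eq_split}\ref{it:bW1} would give $b(y,t) - b(x,t) = W^{\dir+}(x,t;y,t)$ for $x,y \ge \intc_t^{b,R}$, while the display above gives $W^{\dir-}(x,t;y,t)$, forcing $D_t^\dir(y) - D_t^\dir(x) = 0$ for arbitrarily large $y$, which contradicts Lemma \ref{lem:dif_to_infinity}. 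By Lemma \ref{lem:+-ord} this means $g^{b,R}_{(y,t)} = g^{\dir-,R}_{(y,t)}$ for every $y \in \R$ and every $t \ge t_\ast$.

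To propagate the formula to $t_0 < t_\ast$, fix such $t_0$ and, for each $y \in \R$, set $z_y := g^{\dir-,R}_{(y,t_\ast+1)}(t_0)$. By the preceding paragraph $z_y = g^{b,R}_{(y,t_\ast+1)}(t_0)$ is a maximizer of $z \mapsto b(z,t_0) + \Ll(z,t_0;y,t_\ast+1)$, so
\[ b(y,t_\ast+1) = b(z_y,t_0) + \Ll(z_y,t_0;y,t_\ast+1). \]
Substituting $b(y,t_\ast+1) = k + W^{\dir-}(0,0;y,t_\ast+1)$ from the forward step and $\Ll(z_y,t_0;y,t_\ast+1) = W^{\dir-}(z_y,t_0;y,t_\ast+1)$ from Lemma \ref{lem:Buse_eq}, and then applying additivity of $W^{\dir-}$, I obtain $b(z_y,t_0) = k + W^{\dir-}(0,0;z_y,t_0)$.

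It remains to show the image of $y \mapsto z_y$ is dense in $\R$. The map is nondecreasing by Proposition \ref{prop:g_basic_prop}\ref{itm:DL_SIG_mont_x} and agrees with its left-continuous version off a countable set by Proposition \ref{prop:g_basic_prop}\ref{itm:DL_SIG_conv_x} together with Lemma \ref{lem:NUcount}, so it suffices to verify $z_y \to \pm\infty$ as $y \to \pm\infty$. This follows by comparing the linear asymptote $b(y,t_\ast+1) \sim 2\dir y$ (from Proposition \ref{prop:Buse_basic_properties}\ref{it:Wslope}) with the quadratic landscape upper bound of Lemma \ref{lem:Landscape_global_bound}: if $z_y$ stayed bounded as $|y| \to \infty$, then $b(z_y,t_0)$ would be bounded while $\Ll(z_y,t_0;y,t_\ast+1)$ would decay quadratically in $y$, so the right-hand side of the previous display would diverge to $-\infty$ quadratically, contradicting the linear growth of the left-hand side. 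Once density is in hand, continuity of $b$ and $W^{\dir-}$ extends the identity $b(z,t_0) = k + W^{\dir-}(0,0;z,t_0)$ to all $z \in \R$; combined with the forward step this yields $b \equiv k + W^{\dir-}(0,0;\cdot,\cdot)$, so in particular $b(x,t) - b(0,0) = W^{\dir-}(0,0;x,t)$ for all $(x,t)$, contradicting the hypothesis. The main technical hurdle is this density check via the quadratic-versus-linear asymptotic comparison.
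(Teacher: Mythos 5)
Your opening moves are sound: the reduction to the contrapositive, the forward propagation of agreement along the KPZ evolution from level $t_\ast$, the observation that $\intc_t^{b,R}=+\infty$ for $t\ge t_\ast$ via Lemma \ref{lem:eq_split}\ref{it:bW1} and Lemma \ref{lem:dif_to_infinity}, and the identity $b(z_y,t_0)=k+W^{\dir-}(0,0;z_y,t_0)$ for $z_y=g^{\dir-,R}_{(y,t_\ast+1)}(t_0)$ are all correct. The forward step is essentially the paper's first observation (the paper phrases it as: disagreement at $(x_0,t_0)$ forces disagreement at every $s<t_0\wedge 0$).

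The gap is in the density claim, and it is a genuine one. The map $y\mapsto z_y=g^{\dir-,R}_{(y,t_\ast+1)}(t_0)$ is nondecreasing, right-continuous, and by Proposition \ref{prop:g_basic_prop}\ref{itm:DL_SIG_conv_x} its left limit at $y$ equals $g^{\dir-,L}_{(y,t_\ast+1)}(t_0)$. Whenever $g^{\dir-,L}_{(y,t_\ast+1)}(t_0)<g^{\dir-,R}_{(y,t_\ast+1)}(t_0)$ — which happens for those $y\in\NU_{t_\ast+1,\dir-}$ whose leftmost and rightmost geodesics have not yet coalesced by time $t_0$ — there is a jump, and the open interval $\bigl(g^{\dir-,L}_{(y,t_\ast+1)}(t_0),g^{\dir-,R}_{(y,t_\ast+1)}(t_0)\bigr)$ is \emph{excluded} from the range. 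Your inference that ``nondecreasing, unbounded at $\pm\infty$, and agrees with its left-continuous version off a countable set'' implies a dense range is false; a jump at a single point already produces a missing open interval (e.g.\ $f(y)=y+\lfloor y\rfloor$ has all three properties and misses $\bigcup_n(2n,2n+1)$). Countability of the jump set, which is what Lemma \ref{lem:NUcount} gives you, controls the number of missing intervals, not their size. More fundamentally, if the range were dense then a compactness argument (Lemma \ref{lem:SIGprecompact}) would force every point of the line $t=t_0$ to lie on some $\dir-$ geodesic rooted at level $t_\ast+1$, which coalescence rules out. So the identity $b(\cdot,t_0)=k+W^{\dir-}(0,0;\cdot,t_0)$ is only established on a strict subset of $\R$, and the contrapositive is not proved.

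The paper avoids this entirely by never trying to propagate backward along geodesics. Instead, it assumes agreement on a level $s_1\ge t_0\wedge 0$, picks a strictly earlier level $s_0\le t_0\wedge 0$ where a disagreement $b(0,s_0;w,s_0)\neq W^{\dir\sig}(0,s_0;w,s_0)$ already holds, and forms the difference of the two KPZ-evolved profiles $f(x)=b(0,s_0;x,s_1)-W^{\dir\sig}(0,s_0;x,s_1)$. Agreement at $s_1$ forces $f$ to be constant. But Lemma \ref{lem:mont} gives that $z\mapsto W^{\dir\sig}(0,s_0;z,s_0)-b(0,s_0;z,s_0)$ is nonnegative and nondecreasing on $[0,\infty)$ and nonpositive and nonincreasing on $(-\infty,0]$, with a strict sign at $w$; combining this with the localization of maximizers (via the modulus-of-continuity bounds of Lemma \ref{lem:Landscape_global_bound}) shows $f(x)<0$ for $x$ large positive and $f(x)\ge 0$ for $x$ large negative, contradicting constancy. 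This asymptotic comparison of suprema is the step your geodesic argument is standing in for, and it is the part you would need to supply in place of the density claim.
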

\begin{proof}
We prove the statement when $\sig=+.$ The $\sig=-$ case follows similarly. In this proof, we use the shorthand notation 
\[
b(x,s;y,t) := b(y,t) - b(x,s).
\]
By additivity and \eqref{eq:var}, we have that, for $s < (t_0 \wedge 0)$,
\begin{align*}
b(0,0;x_0,t_0) &= \sup_{z \in \R}\{b(z,s)+\Ll(z,s;x_0,t_0)\} - \sup_{z \in \R}\{b(z,s)+\Ll(z,s;0,0)\} \\
&=  \sup_{z \in \R}\{b(0,s;z,s)+\Ll(z,s;x_0,t_0)\} - \sup_{z \in \R}\{b(0,s;z,s)+\Ll(z,s;0,0)\}
\end{align*}
and the same holds for $W^{\dir +}$ in place of $b$. Hence, if $b(0,0;x_0,t_0) \neq W^{\dir +}(0,0;x_0,t_0)$,  then for all $s < (t_0 \wedge 0)$, $b^\dir(0,s;z,s) \neq W^{\dir +}(0,s;z,s)$ for some $z \in \R$. Now, by way of contradiction, assume that there exists $s_1 \geq (t_0 \wedge 0)$ such that, for all $x \in \R$,
\be \label{b=W}
b(0,s_1;x,s_1)=W^{\dir+}(0,s_1;x,s_1).
\ee
We fix $s_0 \leq (t_0 \wedge 0)$ and choose $w \in \R$ so that
\begin{equation}\label{eq78}
    b(0,s_0;w,s_0) \neq W^{\dir+}(0,s_0;w,s_0).
\end{equation}
We assume, without loss of generality, that $w >0$, and consider the function 
\begin{align} \label{f_Def}
&x \mapsto f(x):=b(0,s_0;x,s_1)-W^{\dir+}(0,s_0;x,s_1).
\end{align}
We may write \eqref{b=W} as 
\[
b(0,s_0;x,s_1) - b(0,s_0;0,s_1)  = W^{\dir +}(0,s_0,x,s_1) - W^{\dir+}(0,s_0;0,s_1),
\]
and since we assume this holds for all $x$, rearranging gives
\be \label{fdef}
f(x) = b(0,s_0;0,s_1) - W^{\dir +}(0,s_0;0,s_1).
\ee
Hence, $f$ is constant in $x$. Now, by Lemma \ref{lem:mont} and additivity we have for all $0<z<Z \in \R$,
\[
0 \leq W^{\dir+}(0,s_0;z,s_0)-b(0,s_0;z,s_0) \leq W^{\dir+}(0,s_0;Z,s_0)-b(0,s_0;Z,s_0).
\]
Similarly, when $Z<z<0$, we have 
\begin{equation}
\label{eq:neg}
0 \geq W^{\dir+}(0,s_0;z,s_0)-b(0,s_0;z,s_0) \geq W^{\dir+}(0,s_0;Z,s_0)-b(0,s_0;Z,s_0).
\end{equation} 
Then, by \eqref{eq78}, for all $Z \geq w$, we have 
\begin{equation}
\label{eq:pos}
0< W^{\dir+}(0,s_0;w,s_0)-b(0,s_0;w,s_0) \leq W^{\dir+}(0,s_0;Z,s_0)-b(0,s_0;Z,s_0).
\end{equation}
Since $b$ and $W^{\dir +}$ are both eternal solutions to the KPZ fixed point, we get
\be \label{bW_global}
\begin{aligned}
b(0,s_0;x,s_1) &= \sup_{z \in \R}\{b(0,s_0;z,s_0)+\Ll(z,s_0;x,s_1)\},\quad\text{and} \\
W^{\dir +}(0,s_0;x,s_1) &= \sup_{z \in \R}\{W^{\dir +}(0,s_0;z,s_0)+\Ll(z,s_0;x,s_1)\},
\end{aligned}
\ee
 by definition of the function $f$ \eqref{f_Def}, we get
\[
f(x)=\sup_{z \in \R}\{b(0,s_0;z,s_0)+\Ll(z,s_0;x,s_1)\}-\sup_{z \in \R}\{W^{\dir+}(0,s_0;z,s_0)+\Ll(z,s_0;x,s_1)\}.
\]
By the asymptotic slope condition on $b$, we know that $|b(0,s_0;z,s_0)| \le A + B |z|$ for some constants $A,B$. Combined with the modulus of continuity bounds on the directed landscape (Lemma \ref{lem:Landscape_global_bound}), wee see that for sufficiently large positive $x > w$,
\[
\sup_{z \le w}\{b(0,s_0;z,s_0)+\Ll(z,s_0;x,s_1)\} \le b(0,x_0,x,s_0) +\Ll(x,s_0,x, s_1).
\]
Hence,  for sufficiently large positive $x$,
\begin{align*}
\sup_{z \in \R}\{b(0,s_0;z,s_0)+\Ll(z,s_0;x,s_1)\}=\sup_{z \geq w}\{b(0,s_0;z,s_0)+\Ll(z,s_0;x,s_1)\},
\end{align*}
and similarly, for large enough $x$, 
\begin{align*}\sup_{z \in \R}\{W^{\dir+}(0,s_0;z,s_0)+\Ll(z,s_0;x,s_1)\}=\sup_{z \geq w}\{W^{\dir+}(0,s_0;z,s_0)+\Ll(z,s_0;x,s_1)\}.
\end{align*}
By \eqref{eq:pos} we get $f(x)<0$ for sufficiently large $x.$ By a similar argument, for sufficiently large negative $x<0$,
\begin{align*}
 &   \sup_{z \in \R}\{b(0,s_0;z,s_0)+\Ll(z,s_0;x,s_1)\}=\sup_{z \leq 0}\{b(0,s_0;z,s_0)+\Ll(z,s_0;x,s_1)\}.\\
 & \sup_{z \in \R}\{W^{\dir+}(0,s_0;z,s_0)+\Ll(z,s_0;x,s_1)\}=\sup_{z \leq 0}\{W^{\dir+}(0,s_0;z,s_0)+\Ll(z,s_0;x,s_1)\}.
\end{align*}
Then, for such $x$, by definition of $f$ \eqref{fdef} and \eqref{bW_global}, followed by \eqref{eq:neg},
\[
f(x) = \sup_{z \leq 0}\{b(0,s_0;z,s_0)+\Ll(z,s_0;x,s_1)\} - \sup_{z \leq 0}\{W^{\dir+}(0,s_0;z,s_0)+\Ll(z,s_0;x,s_1)\} \ge 0,
\]
 But since $f(x) < 0$ for large $x$, the function $f$ cannot be constant, giving a contradiction.
\end{proof}

\begin{lemma}\label{lem:bfin}
The following holds on the event $\Omega_1$: for all $\dir \in \DLBusedc$, whenever $ b \in \fxi$ is such that $b(x_0,t_0) - b(0,0) \neq W^{\dir -}(0,0;x_0,t_0)$ for some $(x_0,t_0) \in \R^2$, and $b(x_1,t_1) \neq W^{\dir -}(0,0;x_1,t_1)$  for some $(x_1,t_1) \in \R^2$, we have that $\intc_t^{b,L}$ and $\intc_t^{b,R}$ are finite for all $t \in \R$. 
\end{lemma}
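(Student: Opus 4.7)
The plan is to argue by contradiction: if some $\tau_t^{b,L}$ or $\tau_t^{b,R}$ is infinite, then on the entire time level $t$ the increments of $b$ must coincide with those of one of the two Busemann functions $W^{\dir\pm}$, which will contradict Lemma \ref{lem:neq}. The whole argument then reduces to translating ``$\tau$ is infinite'' into ``all $b$-geodesics from height $t$ are of a single sign,'' and then using coalescence to convert this into a Busemann identity for the increments of $b$.

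First I would treat the case $\tau_t^{b,L}=+\infty$. By the definition \eqref{tauLR_Def} and Lemma \ref{lem:+-ord}\textup{(ii)}, this means that $g_{(x,t)}^{b,L}(s)=g_{(x,t)}^{\dir-,L}(s)$ for every $x\in\R$ and every $s\le t$. Now fix $x<y$ in $\R$. By Proposition \ref{prop:DL_all_coal}, the $\dir-$ geodesics $g_{(x,t)}^{\dir-,L}$ and $g_{(y,t)}^{\dir-,L}$ coalesce, so one can pick $s$ sufficiently less than $t$ such that $z:=g_{(x,t)}^{\dir-,L}(s)=g_{(y,t)}^{\dir-,L}(s)$. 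Since $g_{(x,t)}^{b,L}$ and $g_{(y,t)}^{b,L}$ are $b$-geodesics, the point $z$ is a maximizer of $w\mapsto b(w,s)+\Ll(w,s;x,t)$ and also of $w\mapsto b(w,s)+\Ll(w,s;y,t)$, so
\[
b(x,t)=b(z,s)+\Ll(z,s;x,t),\qquad b(y,t)=b(z,s)+\Ll(z,s;y,t).
\]
Subtracting and using \eqref{geod_LR_eq_L} gives
\[
b(y,t)-b(x,t)=\Ll(z,s;y,t)-\Ll(z,s;x,t)=W^{\dir-}(z,s;y,t)-W^{\dir-}(z,s;x,t)=W^{\dir-}(x,t;y,t),
\]
where the last equality is additivity. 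Setting $x=0$ yields $b(y,t)-b(0,t)=W^{\dir-}(0,t;y,t)$ for all $y\in\R$. Applying Lemma \ref{lem:neq} to the hypothesis $b(x_0,t_0)-b(0,0)\ne W^{\dir-}(0,0;x_0,t_0)$, however, produces some $y\in\R$ at the same time $t$ for which this equality fails --- a contradiction.

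Next, the case $\tau_t^{b,L}=-\infty$ is handled symmetrically. Using the ``$\inf$'' characterization in \eqref{tauLR_Def} together with Lemma \ref{lem:+-ord}\textup{(i)}, one obtains $g_{(x,t)}^{b,L}=g_{(x,t)}^{\dir+,L}$ for every $x\in\R$. Repeating the coalescence argument above with $\dir+$ in place of $\dir-$ yields $b(y,t)-b(0,t)=W^{\dir+}(0,t;y,t)$ for all $y$, contradicting Lemma \ref{lem:neq} applied to the second hypothesis (with $W^{\dir+}$). The same reasoning, now working with rightmost maximizers and $g_{(\cdot,t)}^{b,R}$, rules out $\tau_t^{b,R}\in\{\pm\infty\}$.

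The only real subtlety I anticipate is the step ``$z$ is a maximizer for $b$,'' which is where the definition of the $b$-geodesics (Lemma \ref{lem:geodesics_from_b}) is used; once this is in hand, the rest is purely algebraic. No obstacle beyond careful bookkeeping is expected.
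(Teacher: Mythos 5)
Your proof is correct and follows essentially the same route as the paper's: the paper simply cites Lemma~\ref{lem:eq_split}\ref{it:bW1} (which already establishes that $b(y,t)-b(x,t)=W^{\dir+}(x,t;y,t)$ for $x,y\ge\intc_t^{b,R}$ and $=W^{\dir-}(x,t;y,t)$ for $x,y\le\intc_t^{b,L}$, so an infinite endpoint forces the Busemann identity on the whole line) and then immediately invokes Lemma~\ref{lem:neq}, whereas you re-derive that lemma's coalescence argument inline. Both are fine; you could shorten your write-up by appealing directly to Lemma~\ref{lem:eq_split}\ref{it:bW1} together with Corollary~\ref{c:Im_cutoff} (to reduce the four infinite cases $\tau_t^{b,L/R}=\pm\infty$ to two).
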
 
\begin{proof}
If not, then for some $t \in \R$, Lemma \ref{lem:eq_split}\ref{it:bW1} implies that that, either $b(x,t) - b(0,t) = W^{\dir +}(0,t;x,t)$ for all $x \in \R$, or $b(x,t) - b(0,t) = W^{\dir -}(0,t;x,t)$ for all $x \in \R$. This contradicts Lemma \ref{lem:neq}. 
\end{proof}

    \section{Eternal solutions and interfaces} \label{sec:global_proofs}
     This Section completes the proofs of the main theorems in the introduction, with the exception of Theorem \ref{thm:int1}, which is proved in Section \ref{sec:geom_fine_prop}.

      We define an equivalence relation on the set$\fxi$ as follows: we say $b_1 \sim b_2$ if and only if $b_1(x,t)=b_2(x,t)+C$ for all $(x,t) \in \R^2$ and for a constant $C$. For $b \in \fxi$ let $[b]$ denote the equivalence class of $b$.  We denote the set of these equivalence classes as
    \[
    \fxi/\sim:=\{[b]: b \in \fxi \}.
    \]
  Since maximizers are preserved under constant shifts, the following is immediate.
\begin{remark}
If $b_1 \sim b_2$, then for all $(x,t) \in \R^2$, $g_{(x,t)}^{b_1,L} = g_{(x,t)}^{b_2,L}$ and $g_{(x,t)}^{b_1,R} = g_{(x,t)}^{b_2,R}$. In particular, for $\dir \in \DLBusedc$ and $b_1 \sim b_2 \in \fxi$, $\intc_t^{b_1,L/R} = \intc_t^{b_2,L/R}$ for all $t \in \R$. 
\end{remark}
The first two items of the following proposition is a restatement of Theorem \ref{thm:mr}.
    \begin{proposition}
    \label{p:correspondence}
        The following holds on the event $\Omega_1$: For every $\dir\in \DLBusedc$,  there are bijections $\iota^-:\overline \Intc^{\dir,-} \;\to \fxi/\sim$ and $\iota^+:\overline \Intc^{\dir,+} \to \fxi/\sim$ having the following properties:
        \begin{enumerate} [label=(\roman*), font=\normalfont]
        \item \label{itm:triv_action} $\intc^-(\ltriv)$ and $\iota^+(\ltriv)$ are both equal to the equivalence class of the function $(x,t) \mapsto W^{\dir +}(0,0;x,t)$, and and $\iota^-(\rtriv)$ and $\iota^+(\rtriv)$ are both equal to equivalence class of the function $(x,t) \mapsto W^{\dir -}(0,0;x,t)$.
        \item \label{it:boutput} If $\intc \in \Intc^{\dir,-}$ (resp. $\intc \in \Intc^{\dir +}$), then the action $\iota^-(\intc)$ (resp. $\iota^+(\intc)$) is the equivalence class of the function
        \be \label{eq:bxs_fun}
        b(x,t)=\begin{cases}
            W^{\dir-}(\intc_0,0;x,t) \text{ if } x \leq \intc_t\\
             W^{\dir+}(\intc_0,0;x,t) \text{ if } x \geq \intc_t,
        \end{cases}
        \ee
        where $\intc_s = \intc(s)$ for $s \in \R$.
        \item \label{it:Requiveq} If $\intc^- \in  \Intc^{\dir,-}$ and $b$ is a representative of the equivalence class $\iota^-(\intc)$, then for all $t \in \R$,
        \[
        \intc^-(t) = \intc_t^{b,R},
        \]
        where $\intc_t^{b,R}$ is defined as in \eqref{tauLR_Def}.  If $\intc^+ \in  \Intc^{\dir,+}$ and $b \in \fxi$ is a representative of the equivalence class  $\iota^+(\intc)$, then for all $t \in \R$,
        \[
        \intc^+(t) = \intc_t^{b,L}. 
        \]
        \end{enumerate} 
       
    \end{proposition}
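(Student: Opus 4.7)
The plan is to construct $\iota^-$ and $\iota^+$ explicitly by the formulas in items (i)--(ii), verify well-definedness, deduce item (iii) by direct computation of $b$-geodesics, and then invert the construction to prove surjectivity. I will argue in detail for $\iota^-$, with $\iota^+$ following by a symmetric argument using rightmost interfaces and the $L$-counterpart of Corollary \ref{cor:LRmost_pi}.

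For well-definedness of \eqref{eq:bxs_fun} with $\tau \in \Intc^{\dir,-}$ and $\tau_s := \tau(s)$, continuity at $x = \tau_t$ follows from the bi-infinite property $\tau_t = \tau_{(\tau_s,s)}^{\dir,-}(t)$ (valid for any $s \le t$), which by Lemma \ref{l:MV}\ref{it:Bu3} gives $W^{\dir-}(\tau_s,s;\tau_t,t) = W^{\dir+}(\tau_s,s;\tau_t,t)$; additivity propagates this equality to base $(\tau_0,0)$, with an intermediate $s < t \wedge 0$ used when $t<0$. Writing $b(z,s) = C_s + f^\dir_{(\tau_s,s)}(z)$ for the constant $C_s := W^{\dir\pm}(\tau_0,0;\tau_s,s)$, the KPZ sup at level $s$ becomes $C_s + \kpzs^\dir_{(\tau_s,s),t}(x)$, which Lemma \ref{l:MV}\ref{it:Bu1}--\ref{it:Bu2} identifies as $W^{\dir+}(\tau_s,s;x,t)$ for $x \ge \tau_t$ and $W^{\dir-}(\tau_s,s;x,t)$ for $x \le \tau_t$; additivity then recovers $b(x,t)$ in both cases, and the slope condition is inherited from Proposition \ref{prop:Buse_basic_properties}\ref{it:Wslope}. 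Item (i) is an immediate specialization to the trivial interfaces.

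Turning to item (iii), fix non-trivial $\tau \in \Intc^{\dir,-}$ and $b = \iota^-(\tau)$. Since $b(\cdot,s)$ is a constant shift of $f^\dir_{(\tau_s,s)}$, every $b$-geodesic from $(x,t)$ coincides on $[s,t]$ with an $(f^\dir_{(\tau_s,s)},s)$-to-$(x,t)$ geodesic. Corollary \ref{cor:LRmost_pi}\ref{it:LR2} makes the rightmost such geodesic equal to $g^{\dir+,R}_{(x,t)}|_{[s,t]}$ whenever $x \ge \tau_t$, while for $x < \tau_t$ Lemma \ref{lem:geod}\ref{it1} forces $\chi^R < \tau_s$ and gives $g^{b,R}_{(x,t)} = g^{\dir-,R}_{(x,t)}$. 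Eventual divergence of $\dir-$ and $\dir+$ geodesics when $\dir \in \DLBusedc$ (Proposition \ref{prop:g_basic_prop}\ref{itm:eventually_less}) forces $g^{\dir+,R}_{(\tau_t,t)} \neq g^{\dir-,R}_{(\tau_t,t)}$, hence $\tau_t^{b,R} = \tau_t$. This yields item (iii) for $\iota^-$, and injectivity follows because $[b]$ now determines $\tau$ through $\tau(t) = \tau_t^{b,R}$; the trivial cases $\tau = \ltriv, \rtriv$ are identified by $\tau^{b,R} \equiv \pm\infty$. The parallel argument with Corollary \ref{cor:LRmost_pi}\ref{it:LR1} yields $\tau_t^{b,L} = \tau(t)$ for $\iota^+$.

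For surjectivity, given $[b] \in \fxi/\sim$: if $b \sim W^{\dir\pm}$, map from the appropriate trivial interface; otherwise Lemma \ref{lem:bfin} provides a finite $\tau(t) := \tau_t^{b,R}$ for every $t \in \R$. The crucial step is to apply Lemma \ref{lem:eq_split}\ref{it:bW1} on the ranges $(-\infty,\tau_s^{b,L}]$ and $[\tau_s^{b,R},\infty)$---whose union is $\R$ by Corollary \ref{c:Im_cutoff}, with the consistent overlap forcing $W^{\dir-} = W^{\dir+}$ based at $\tau_s^{b,R}$ on $[\tau_s^{b,R},\tau_s^{b,L}]$---to conclude that $b(z,s) = b(\tau(s),s) + f^\dir_{(\tau(s),s)}(z)$ for every $z \in \R$. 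Rerunning the geodesic analysis of the previous paragraph with base $\tau(s)$ now yields $\tau_t^{b,R} = \tau_{(\tau(s),s)}^{\dir,-}(t)$ for all $t \ge s$, which is exactly the consistency defining $\Intc^{\dir,-}$; continuity of $t \mapsto \tau(t)$ is inherited from Lemma \ref{lem:interface_continuous}. Lemma \ref{lem:eq_split}\ref{it:bW2} then shows that $b - \iota^-(\tau)$ is a time-independent constant, giving $[b] = \iota^-(\tau)$. The main obstacle throughout is this last identification: it requires $b(\cdot,s)$ to be a \emph{global} (not merely piecewise) shift of the mixed Busemann initial condition $f^\dir_{(\tau(s),s)}$, and the asymmetric pairing of $\tau^{b,R}$ with the leftmost bi-infinite interface is precisely what makes Corollary \ref{c:Im_cutoff} coherent with the two increment formulas in Lemma \ref{lem:eq_split}\ref{it:bW1}.
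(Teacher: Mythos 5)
Your proof is correct and covers all three items. The construction of $\iota^{\pm}$, the verification of well-definedness and the variational formula, and the surjectivity argument (define $\tau := \tau^{b,R}_{\cdot}$, show it lies in $\Intc^{\dir,-}$, then use Lemma \ref{lem:eq_split} to obtain $\iota^-(\tau) = [b]$) all match the paper's strategy. The genuine divergence is in how item~(iii) and injectivity are organized. You prove item~(iii) directly: using Corollary \ref{cor:LRmost_pi}\ref{it:LR2} and Lemma \ref{lem:geod}\ref{it1} together with consistency of rightmost geodesics, you identify $g^{b,R}_{(x,t)}$ with $g^{\dir+,R}_{(x,t)}$ or $g^{\dir-,R}_{(x,t)}$ according as $x \geq \tau_t$ or $x < \tau_t$, and then Proposition \ref{prop:g_basic_prop}\ref{itm:eventually_less} gives $\tau^{b,R}_t = \tau_t$; injectivity is then immediate since $[b] \mapsto (\tau^{b,R}_t)_t$ is a left inverse (with $\pm\infty$ recovering the trivial interfaces). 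The paper instead proves injectivity by an algebraic route: it shows a non-trivial $\iota^-(\tau)$ cannot coincide with either Busemann function via Lemma \ref{lem:dif_to_infinity} (the difference profile $D^\dir_t$ diverges), and distinguishes non-trivial $\tau_1 \neq \tau_2$ by combining the disjointness of bi-infinite interfaces (Corollary \ref{cor:disInt}) with Lemma \ref{lem:coal_int}\ref{Itco1} to force $D^\dir_0(\tau_1(0)) < D^\dir_0(\tau_2(0))$; item~(iii) then drops out of the surjectivity construction together with injectivity. Both routes are valid. Your approach makes the geometric content of item~(iii) do the injectivity work, at the cost of repeating much of the geodesic analysis in the surjectivity step (for general $b$, as opposed to $b = \iota^-(\tau)$). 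The paper's approach keeps the injectivity argument purely algebraic and defers all the geodesic work to surjectivity. One small but important detail you handled carefully and correctly is the use of $\tau_s^{b,R} \leq \tau_s^{b,L}$ from Corollary \ref{c:Im_cutoff} to get both halves of Lemma \ref{lem:eq_split}\ref{it:bW1} applied at the common base $\tau(s) = \tau_s^{b,R}$, which is exactly what makes $b(\cdot,s)$ a global shift of $f^\dir_{(\tau(s),s)}$ and not merely a piecewise one.
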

    \begin{remark}
    Recall by Corollary \ref{c:Im_cutoff} that $\intc_t^{b,R} \le \intc^{b,L}$. This explains why $\intc_t^{b,R}$ is associated with $\iota^-(\intc)$ and $\intc_t^{b,L}$ is associated with $\iota^+(\intc)$. 
    \end{remark}
    \begin{proof}
        We construct the bijection $\iota^-:\overline \Intc^{\dir,-} \to \fxi/\sim$, and the other statement follows symmetrically.  Fix $\intc\in \overline \Intc^{\dir,-}$.  For ease of notation in this proof we will use $\tau_t$ to denote $\tau(t)$.
        Recalling the definition of the set $\overline \Intc^{\dir,-}$ (Definition \ref{def:biInf}), if  $\intc = \ltriv$ or $\intc = \rtriv$, then define $\iota^-(\intc)$ by the statement in Item \ref{itm:triv_action}.
        
        If $\intc \in \Intc^{\dir,-}$, then let
        \begin{equation}
            f^\intc_t=f^{\xi}_{(\intc_t,t)} \text{ for all } t \in \R,
        \end{equation}
        where $f^\xi_{(\tau_t,t)}$ is as defined in \eqref{eq: initial condition}. For $t > s$, define $\mathfrak h^\dir_{(\tau_s,s),t}(x)$ as in \eqref{kpzs_abbr}. 
       For $t \in \R$, define 
        \begin{equation}
            \ell_t^\intc:=
                W^{\dir+}(\intc_0,0;\intc_t,t)= W^{\dir-}(\intc_0,0;\intc_t,t),
        \end{equation}
        where the equality holds by Lemma \ref{l:MV}.
        For $(x,t) \in \R^2$, define $b(x,t)=f_t^\tau(x)+\ell_t^\intc$, and let $\iota^-(\intc) = [b]$. We first claim that $ b \in \mathcal{F}^\dir$.  
        First, observe that by definition and additivity of the Busemann functions, $b(x,t)$ is equal to the function in \eqref{eq:bxs_fun}.
        
        The function $b:\R^2 \to \R$ is continuous because by Lemma \ref{l:MV}\ref{it:Bu3}, 
        \[
         W^{\dir-}(\intc_0,0,\intc_t,t)=W^{\dir+}(\intc_0,0;\intc_t,t).
        \]Choose $s < t$, and observe that
        \begin{align*}
       &\sup_{z \in \R} \{b(z,s)+\Ll(z,s;x,t) \}=\sup_{z \in \R}\{f^\dir_{(\tau_s,s)}(z)+\ell_s^\tau+\Ll(z,s;x,t)\}=\sup_{z \in \R}\{f^{\dir}_{(\tau_s,s)}(z)+\Ll(z,s;x,t) \}+\ell_s^\tau\\
       &=\kpzs^\dir_{(\tau_s,s),t}(x)+W^{\dir+}(\intc_0,0;
       \tau_s,s)=f^\dir_{(\tau_t,t)}+W^{\dir+}(\tau_s,s;\tau_t,t)+W^{\dir+}(\intc_0,0;
       \tau_s,s) \\&=f_t^\tau+W^{\dir+}(\intc_0,0;\tau_t,t)
    =f_t^\tau+\ell_t^\intc=b(x,t),
        \end{align*}
        where the first equality on the second line follows from Lemma \ref{lem:4} and Lemma \ref{l:MV}\ref{it:Bu1}. 
        Finally, \eqref{eq:b_slope} is satisfied because of the asymptotic slope condition for Busemann functions.

                \medskip \noindent \textbf{Proving the map $\iota^-$ is injective:}
                 We do this in two steps:
                \begin{enumerate}
                \item Show that $\iota^-(\ltriv) \neq \iota^-(\intc) \neq \iota^-(\rtriv) \neq \iota^-(\ltriv)$ whenever $\intc \in \Intc^{\dir,-}$.
                \item Show that $\iota^-$ is injective on the smaller set $\Intc^{\dir,-}$. 
                \end{enumerate}
                To prove (1), we show that $\iota(\intc) \neq \iota^-(\rtriv)$, and the others follow similarly. Let $b$ be defined as in \eqref{eq:bxs_fun} so that $\iota^-(\intc) = [b]$. Pick $t \in \R$. Then, for all $x \ge \intc_t$, by additivity of the Busemann functions,
                \begin{align*}
                b(x,t) - W^{\dir -}(0,0;x,t) &= W^{\dir +}(\intc_0,0;x,t)  - W^{\dir -}(0,0;x,t) \\
                &=W^{\dir +}(0,t;x,t) - W^{\dir -}(0,t;x,t) + W^{\dir +}(\intc_0,0;0,t) - W^{\dir -}(0,0;0,t) \\
                &= D_t^\dir(x) + W^{\dir +}(\intc_0,0;0,t) - W^{\dir -}(0,0;0,t).
                \end{align*}
                Since $\dir \in \DLBusedc$, Lemma \ref{lem:dif_to_infinity} implies that, for fixed $t$, the difference $ b(x,t) - W^{\dir -}(0,0;x,t)$ goes to $\infty$ as $x \to \infty$. Hence, $(x,t) \mapsto b(x,t)$ and $(x,t) \mapsto W^{\dir -}(0,0;x,t)$ are not in the same equivalence class, so $\iota^-(\intc) \neq \iota^-(\rtriv)$.

                We move on to proving (2). From Corollary \ref{cor:disInt}, we know that distinct elements in $\Intc^{\dir,-}$ are disjoint. Thus,  without loss of generality, we assume  that $\intc_1,\intc_2 \in \Intc^{\dir,-}$, and
                \[
                \tau_1(s)<\tau_2(s) \text{ for all } s \in \R.
                \]
                By Lemma \ref{lem:coal_int}\ref{Itco1}, for each $s \in \R$, we must have that $\intc_1(s) \in \Intr^{s,\dir}_\alpha$ and $\intc_2(s) \in \Intr^{s,\dir}_\beta$ for indices $\alpha \neq \beta$, where the intervals $\Intr^{s,\dir}_\alpha$ are defined below \eqref{eq77}. 
                By definition of these intervals, we have 
                \begin{equation}
                \label{eq:difference function}
                D^\dir_s(\tau_1(s))<D^\dir_s(\tau_2(s)) \text{ for all } s \in \R.
                \end{equation}
             Let $b_1,b_2$ be the functions as defined in \eqref{eq:bxs_fun} corresponding to the interfaces $\tau_1$ and $\tau_2$ respectively. By definition, we have
                \be
                \begin{aligned}
                \label{eq:b_1b_2}
                b_1(\tau_1(0),0) &=0,\quad \text{and}\quad b_2(\tau_1(0),0)=W^{\dir-}(\tau_2(0),0;\tau_1(0),0)\\
                b_2(\tau_2(0),0)&=0, \quad \text{and}\quad b_1(\tau_2(0),0)=W^{\dir+}(\tau_1(0),0;\tau_2(0),0).
                \end{aligned}
                \ee
                Assume by way of contradiction, that $[b_1]=[b_2]$. Then there exists a constant $C \in \R$ such that
                \[
                b_2(x,0)=b_1(x,0)+C \text{ for all } x \in \R.
                \]
                But then, by \eqref{eq:b_1b_2}, we have
                \begin{align*}
                 -W^{\dir -}(\intc_1(0),0;\intc_2(0),0) &= b_2(\intc_1(0),0) = b_1(\intc_1(0),0) + C = C,\quad\text{and}\\
                W^{\dir +}(\intc_1(0),0;\intc_2(0),0) &= b_1(\intc_2(0),0) = b_2(\intc_2(0),0) - C = -C.
                \end{align*}
                Hence,
                \[
                W^{\dir+}(\tau_1(0),0;\tau_2(0),0)=W^{\dir-}(\tau_1(0),0;\tau_2(0),0).
                \]
                But this implies that
                \[
    D^\dir_0(\tau_1(0))=D^\dir_0(\tau_2(0)),
                \]
                 a contradiction to \eqref{eq:difference function}. Thus $\iota^-$ is injective.

                \medskip \noindent \textbf{Proving that $\iota^-$ is surjective:} 
                 Let $b \in \fxi$. We defined $\iota^-(\rtriv)$ to be the equivalence class of the function $(x,t) \mapsto W^{\dir -}(0,0;x,t)$ and $\iota^-(\rtriv)$ to be the equivalence class of the function $(x,t) \mapsto W^{\dir +}(0,0;x,t)$, so  we may assume that $b$ is not equivalent to either of those functions. Recall the definition of $\tau_t^{b,R}$ from \eqref{tauLR_Def}, and define
                \be \label{taut_taubR_def}
                \tau_t :=\tau_t^{b,R}.
                \ee
                By Lemma \ref{lem:bfin} and our assumption on $b$, $\tau_t \in \R$ for all $t$.
                
                We now claim that $\tau:=\{\tau_t\}_{t \in \R} \in \Intc^{\dir,-}$ and $\iota^-(\tau)=[b]$. To show that $\intc \in \Intc^{\dir,-}$, we need to show that for $s < t$,
                \be \label{is_interface}
                \tau^{\dir,-}_{(\tau_s,s)}(t)=\tau_t.
                \ee 
                Define $p_t := \tau^{\dir,-}_{(\tau_s,s)}(t)$, and assume, by way of contradiction, that $p_t \neq \tau_t$. We consider two cases.

                \medskip \noindent 
                \textbf{Case 1: $p_t<\tau_t$.}  By definition of $\tau_t = \tau_t^{b,R}$ \eqref{tauLR_Def}, we have 
                \[
                g^{b,R}_{(p_t,t)}=g^{\dir-,R}_{(p_t,t)}.
                \]
                Now notice the following: by Corollary \ref{cor:LRmost_pi}\ref{it:LR2},
                \be \label{ysgets}
                y_s:=g^{\dir+,R}_{(p_t,t)}(s) \geq \tau_s.
                \ee
                Further note that, by Lemma \ref{lem:eq_split}\ref{it:bW1},
                \be \label{bxs_fxi_def}
                b(x,s)=\begin{cases}b(\tau_s,s)+W^{\dir+}(\tau_s,s;x,s) \text{ if } x \geq \tau_s.\\
                b(\tau_s,s)+W^{\dir-}(\tau_s,s;x,s) \text{ if } x \leq \tau_s.
                \end{cases}
                \ee
                
                Recall the definition of the initial condition $f^\dir_{(\tau_s,s)}$. From \eqref{bxs_fxi_def}, we see that $x \mapsto b(x,s)$ is a constant shift of the function  $f^\dir_{(\tau_s,s)}$ defined in \eqref{eq: initial condition}. Hence, the maximizers of the functions 
                \[
                z \mapsto b(z,s)+\Ll(z,s;p_t,t)\quad\text{and}\quad z \mapsto f^\dir_{(\tau_s,s)}(z) + \Ll(z,s;p_t,t)
                \]
                are the same. By definition, $g^{b,R}_{(p_t,t)}(s)$ is the rightmost maximizer of the first function, and recalling $p_t = \tau_{(\tau_s,s)}^-(t)$, we have that $g^{\dir+,R}_{(p_t,t)}(s)$ is the rightmost maximizer of the second function by Corollary \ref{cor:LRmost_pi}\ref{it:LR2}. Since both $g^{\dir+,R}_{(p_t,t)}$ and $g^{b,R}_{(p_t,t)}$ are the rightmost geodesics between any two of their points (Lemma \ref{lem:geodesics_from_b} and Proposition \ref{prop:DL_SIG_cons_intro}\ref{itm:DL_LRmost_geod}), we have
                \be \label{gbReqgxi}
                g^{b,R}_{(p_t,t)}|_{[s,t]}=g^{\dir+,R}_{(p_t,t)}|_{[s,t]}.
                \ee

                 Further, by \eqref{ysgets}, definition of $\tau_t = \tau_t^{b,R}$, \eqref{tauLR_Def}, and Corollary \ref{c:cutoff_geod}, we have $g^{b,R}_{(y_s,s)}=g^{\dir+,R}_{(y_s,s)}$. 

                Since $g^{b,R}_{(p_t,t)}(s)=y_s$ by definition, consistency of the choice of rightmost geodesic (Lemma \ref{lem:geodesics_from_b}\ref{itm: consis}) combined with \eqref{gbReqgxi} implies that the equality in \eqref{gbReqgxi} extends to $(-\infty,t]$; that is,
                \[
                g^{b,R}_{(p_t,t)}=g^{\dir+,R}_{(p_t,t)}.
                \]
                But since $p_t<\tau_t$, this is a contradiction to the definition of $\tau_t$ \eqref{tauLR_Def}. 
                
                \medskip \noindent \textbf{Case 2: $p_t>\tau_t$.}  Fix $\tau_t<x<p_t.$ By Lemma \ref{lem:geod}\ref{it1}, we know that $g^{\dir-,R}_{(x,t)}|_{[s,t]}$ is the rightmost $(f^\dir_{(\tau_s,s)},s)$-to-$(x,t)$ geodesic, and furthermore, $z_s:=g^{\dir-,R}_{(x,t)}(s)<\tau_s$. As in the previous case,  \eqref{bxs_fxi_def}, implies that $z_s$ maximizes $z \mapsto b(z,s)+\Ll(z,s;x,t)$ over $z \in \R$. Further,  by \eqref{tauLR_Def} and Corollary \ref{c:cutoff_geod} we have $g^{b,R}_{(z_s,s)}=g^{\dir-,R}_{(z_s,s)}$. This gives us
                \begin{align*}
        g^{b,R}_{(x,t)}|_{[s,t]}=g^{\dir-,R}_{(x,t)} \quad\text{and} \quad g^{b,R}_{(z_s,s)}|_{(-\infty,s]}=g^{\dir-,R}_{(z_s,s)}|_{(-\infty,s]}.
                \end{align*}
                Since $z_s:=g^{\dir-,R}_{(x,t)}(s)$, combining the above and using consistency of the geodesics in Lemma \ref{lem:geodesics_from_b}\ref{itm: consis}, we have
                \[
                g^{b,R}_{(x,t)}=g^{\dir-,R}_{(x,t)}.
                \]
                But since $x>\tau_t$, this is a contradiction to \eqref{tauLR_Def}. 

\medskip \noindent 
                Combing the two cases, we have shown that $p_t=\tau_t$. Thus $\intc \in \Intc^{\dir,-}$, as desired. The only thing remaining to show is $\iota^-(\tau)=[b]$.
        For $t \in \R$, \eqref{bxs_fxi_def} and additivity gives us 
        \begin{align*}
        &b(x,t)=\begin{cases}b(\tau_t,t)+W^{\dir+}(\tau_t,t;\intc_0,0)+W^{\dir+}(\intc_0,0;x,t) \text{ if } x \geq \tau_t\\
        b(\tau_t,t)+W^{\dir-}(\tau_t,t;\intc_0,0)+W^{\dir-}(\intc_0,0;x,t) \text{ if } x \leq \tau_t.
        \end{cases}
        \end{align*}
        Comparing to the definition of an element of the equivalence class $\iota^-(\intc)$ \eqref{eq:bxs_fun}, to show $\iota^-(\tau)=[b]$, it suffices to show that, for all $t,s \in \R$,
        \be \label{eq:Bconst}
        \begin{aligned}
        &b(\tau_t,t)+W^{\dir+}(\tau_t,t;\intc_0,0)=b(\tau_t,t)+W^{\dir-}(\tau_t,t;\intc_0,0)  \\&=b(\tau_s,s)+W^{\dir+}(\tau_s,s;\intc_0,0)=b(\tau_s,s)+W^{\dir-}(\tau_s,s;\intc_0,0)
        \end{aligned}
        \ee
         The first and final equalities of \eqref{eq:Bconst} follow by Lemma \ref{l:MV}\ref{it:Bu3}. It then suffices to show equality of the first and third terms; i.e,
        \[
        b(\tau_t,t)+W^{\dir+}(\tau_t,t;\intc_0,0)=b(\tau_s,s)+W^{\dir+}(\tau_s,s;\intc_0,0).
        \]
        Rearranged, this equation gives
        \[
        b(\tau_t,t) - b(\tau_s,s) =W^{\dir+}(\tau_s,s;\tau_t,t),
        \]
       and since $\tau_t = \tau_t^{b,R}$ by definition \eqref{taut_taubR_def}, this follows by Lemma \ref{lem:eq_split}\ref{it:bW2}, completing the proof. 
         \end{proof}

\subsection{Proof of Corollary \ref{cor:mr}, Theorem \ref{thm:intGeo}, and Theorem \ref{thm:BiInt}:} \label{sec:most_main_proofs}
As noted in the introduction, Theorem \ref{thm:mr} is exactly the first two items of Proposition \ref{p:correspondence}. We move to proving the other main results, with the exception of Theorem \ref{thm:int1}, which is proved in Section \ref{sec:geom_proof}.  

        \begin{proof}[Proof of Corollary \ref{cor:mr}]
        \textbf{Item \ref{it:1F1S}:} If $\dir \notin \DLBusedc$, then Equation \eqref{eq:bW} of Lemma \ref{lem:mont} gives us that, for all $\dir$-eternal solutions $b$ and all $(x,t) \in \R^2$, 
        \[
        b(x,t) = b(0,0) + W^{\dir}(0,0;x,t),
        \]
        so each $\dir$-eternal solution is equivalent to $(x,t) \mapsto W^\dir(0,0;x,t)$.

        \medskip \noindent \textbf{Item \ref{it:1FinfS}:} When $\dir \in \DLBusedc$, then Proposition \ref{p:correspondence} gives us that the set of equivalence classes of $\dir$-eternal solutions, $\fxi/\sim$ are in bijection with $\overline \Intc^{\dir,-}$. This set has two more elements than the set $\Intc^{\dir,-}$, which by Corollary \ref{cor:disInt}\ref{it:SplitT_bij}, is in bijection with the set $\Split_{0,\dir}^L$.  By Proposition \ref{prop:Haus12}, $\Split_{0,\dir}$ is uncountable, and by Remark \ref{rmk:countable}, $\Split_{0,\dir}^L \setminus \Split_{0,\dir}^R$ is countable. By Proposition \ref{theorem: local variation and splitting points}, $\Split_{0,\dir} = \Split_{0,\dir}^L \cup \Split_{0,\dir}^R$, so $\Split_{0,\dir}^L$ is uncountable, and the set $\fxi/\sim$ is therefore also uncountable. 

        \medskip \noindent \textbf{Item \ref{it:not_constant}:} Let $b_1$ and $b_2$ be two non-equivalent $\dir$-eternal solutions. By Proposition \ref{p:correspondence}, there exist distinct $\intc_1,\intc_2 \in \overline \Intc^{\dir,-}$ such that $\iota^-(\intc_1) = [b_1]$ and $\iota^+(\intc_2) = [b_2]$.  Recall that $\overline \Intc^{\dir,-} = \Intc^{\dir,-} \cup\{\ltriv,\rtriv\}$ We first handle the case where at least one of $\intc^1$ or $\intc^2$ is either $\ltriv$ or $\rtriv$. We assume $\intc^1 = \ltriv$, and the other cases follow similarly. In this case, Proposition \ref{p:correspondence}\ref{itm:triv_action} states that  $\iota^-(\ltriv)$  is the equivalence class of $(x,t) \mapsto W^{\dir +}(0,0;x,t)$.
        
        Since $b_1$ and $b_2$ are not equivalent, there exists $(x_0,t_0) \in \R^2$ so that 
             \[
             b^2(x_0,t_0) - b^2(0,0) \neq  b^1(x_0,t_0) - b^1(0,0) = W^{\dir +}(0,0;x_0,t_0).
             \]
             Lemma \ref{lem:neq} implies that, for all $t \in \R$, there exists $x_1 \in \R$ so that 
             \[
             b^2(x_1,t) - b^2(0,t) \neq W^{\dir +}(0,t;x_1,t) = b^1(x_1,t) - b^1(0,t).
             \]
             But then, the function $x \mapsto b^1(x,t) -b^2(x,t) $ is not constant because
             \[
             b^1(x_1,t) - b^2(x_1,t) = \Bigl[b^1(x_1,t) - b^1(0,t) - \bigl(b^2(x_1,t) - b^2(0,t)\bigr)\Bigr] + b^1(0,t) -b^2(0,t) \neq b^1(0,t) -b^2(0,t).
             \]

             With this case handled, we now assume that $\intc^1,\intc^2 \in \Intc^{\dir,-}$. Let $\intc^i_t := \intc^i(t)$ for $t \in \R$ and $i = 1,2$. By Corollary \ref{cor:disInt}\ref{it:Split_disj}, we may, without loss of generality  assume that 
             \begin{equation}
                 \intc^1_t<\intc^2_t, \qquad \forall t\in \R.
             \end{equation}
              Fix $t\in \R$ and set $y_j=\intc^j_t$ for $j\in\{1,2\}$ so that $y_1 < y_2$. The conclusion of the corollary is equivalent to to the statement that
             \begin{equation}\label{eq81}
                x\mapsto b^{1}(x,t)-b^{1}(y_1,t)-[b^{2}(x,t)-b^{2}(y_1,t)] 
             \end{equation}
             is not constant. We show this as follows.  Since $\intc^i \in \Intc^{\dir,-}$ for $i = 1,2$, and $y_i = \intc_t^i$, Proposition \ref{Cond_bi-inf} implies that $y_i \in \Split_{t,\dir}^L$ for $i = 1,2$. In particular, 
             \be \label{eq:Dy2big}
             D_t^\dir(y_2) > D_t^\dir(x) \quad \text{for all }x < y_2.
             \ee 
             
             From the description of the image $\iota^-(\intc)$ in Proposition \ref{p:correspondence}\ref{it:boutput}, for $x\in(y_1,y_2)$,
             \begin{equation}\label{eq80}
             \begin{aligned}
                 &\quad \; b^{\intc_1}(x,t)-b^{\intc_1}(y_1,t)-[b^{\intc_2}(x,t)-b^{\intc_2}(y_1,t)] \\&=W^{\dir+}(y_1,t;,x,t)\
                 -W^{\dir-}(y_1,t;x,t)\
                 =D^\dir_t(x)-D^\dir_t(y_1).
             \end{aligned}
             \end{equation}
             Then, if the function in \eqref{eq81} were constant, this function must be constant on $[y_1,y_2]$, and then \eqref{eq80} would imply that $D_t^\dir(x) = D_t^\dir(y_2)$ for all $x \in [y_1,y_2]$. This contradicts \eqref{eq:Dy2big}. 
        \end{proof}

        \begin{proof}[Proof of Theorem \ref{thm:intGeo}]
        Let $b \in \fxi$, and let  $\intc^-$ and $\intc^+$ be the associated elements of $\overline \Intc^{\dir,-}$ and $\overline \Intc^{\dir,+}$. Let $(x,t) \in \R^2$, and let $g:(-\infty,t] \to \R$ be a $b$-geodesic. By Lemma \ref{lem:geodesics_from_b} and Remark \ref{rmk:b_global_solutions}, for each $s < t$, $g(s)$ is a maximizer of 
        \be \label{b_geod}
        z \mapsto b(z,s) + \Ll(z,s;x,t).
        \ee

If $b$ is equivalent to the function $(x,t) \mapsto W^{\dir-}(0,0;x,t)$, then Theorem \ref{thm:mr}\ref{it:mainthmit1} states that $\intc^-(t) = \intc^+(t) = \rtriv(t) = +\infty$ for all $t \in \R$, so the statement of the theorem reduces to saying that all $b$-geodesics are $\dir-$ geodesics, which is immediate. We similarly get the same when $b$ is equivalent to $(x,t) \mapsto W^{\dir+}(0,0;x,t)$.

We now assume that $b$ is not equivalent to either of the two Busemann functions, so that $\intc^-\in \Intc^{\dir,-}$ and $\intc^+ \in \Intc^{\dir,+}$. Recalling the definition of the functions $f_{(x_0,s)}^\dir$ \eqref{eq: initial condition}, Theorem \ref{thm:mr}\ref{it:mainthmit2} implies that the function $z\mapsto b(z,s)$ is equal to, up to the addition of an $s$-dependent constant, to the function $f_{(\intc^-(s),s)}^\dir$ (and also $f_{(\intc^+(s),s)}^\dir$). Hence, the restrictions of all $b$-geodesics rooted at $(x,t)$ are exactly the $\bigl(f_{(\intc^-(s),s)}^\dir,s\bigr)$-to-$(x,t)$ geodesics (equivalently $\bigl(f_{(\intc^+(s),s)}^\dir,s\bigr)$-to-$(x,t)$ geodesics). The proof now follows from Lemma \ref{lem:geod}.         
       \end{proof}

       \begin{proof}[Proof of Theorem \ref{thm:BiInt}]
       \textbf{Item \ref{itm:Interface_splitting}:} This follows immediately from Corollary \ref{cor:disInt}.

        \medskip \noindent \textbf{Item \ref{BiInt:it4}:} Let $b \in \fxi$, and let $\intc^- = (\iota^-)^{-1}([b])$ and $\intc^+ = (\iota^+)^{-1}([b])$, where $\iota^-$ and $\iota^+$ are the bijections in Proposition \ref{p:correspondence}. Let $t \in \R$. Assuming that neither of $\iota^-$ and $\iota^+$ are $\ltriv$ or $\rtriv$, we seek to show that $\intc^-(t) = \a_\alpha^t$ and $\intc^+(t) = \b_\alpha^t$ for some index $\alpha$.  By Proposition \ref{Cond_bi-inf}, $\intc^-(t)  \in \Split_{t,\dir}^L$ and $\intc^+(t) \in \Split_{t,\dir}^R$. Hence, $\intc^-(t) = \a_\alpha^t$ and $\intc^+(t) = \b_\beta^t$ for some indices $\alpha$ and $\beta$. We show that $\alpha = \beta$. Suppose, by way of contradiction, that $\alpha \neq \beta$. Then, by Proposition \ref{Cond_bi-inf}, there exists an interface $\wt \intc \in \Intc^{\dir,+}$ with $\wt \intc \neq \intc^+$ such that $\wt \intc(t) = \b_\alpha^t$. Let $\wt b$ be a representative of the equivalence class $\iota^+(\wt \intc)$. Since $\iota^+$ is a bijection, $[\wt b] \neq [b]$. By Proposition \ref{p:correspondence}\ref{it:Requiveq},
           \be \label{3intceq}
           \intc^-(t) = \intc_t^{b,R},\quad \intc^+(t) = \intc_t^{b,L}, \quad \text{and}\quad \wt \intc(t) = \intc_t^{\wt b,L}.
           \ee
By Corollary \ref{c:Im_cutoff}, $\intc_t^{b,R} \le \intc_t^{b,L}$. By  \eqref{3intceq}, $\intc_t^{b,R} = \a_\alpha^t$ and $\intc_t^{b,L} = \b_\beta^t$ for $\beta \neq \alpha$, and $\intc_t^{\wt b,L} = \b_\alpha^t$,  so 
\be \label{tau_order}
\intc_t^{\wt b,R} < \intc_t^{b,R} \le \intc_t^{\wt b,L} <  \intc_t^{b,L}.
\ee
To get the strict inequalities above, note that Corollary \ref{cor:disInt}\ref{it:SplitT_bij} implies that $\intc_t^{\wt b,R} \neq \intc_t^{b,R}$ because $b$ and $\wt b$ are non-equivalent $\dir$-eternal solutions. We also assumed that $\intc_t^{b,R} = \a_\alpha^t$ and $\intc_t^{\wt b,L} = \b_\alpha^t$, so by definition of the intervals in $\Intr^{t,\dir}$, $\intc_t^{b,R}$ is the maximal element of $\Split_{t,\dir}^L$ less than or equal to $\intc_t^{\wt b,L}$. Hence, since $\intc_t^{\wt b,R} \le \intc_t^{\wt b,L}$ in general. the first strict inequality in \eqref{tau_order} must hold, and the other follows symmetrically. 

Now, choose $z \in [\intc_t^{b,R},\intc_t^{\wt b,L}]$.  By \eqref{tau_order}, when $x \le z$, we have $x \le\intc_t^{\wt b,L}$ and $x \le \intc_t^{b,L}$, so Lemma \ref{lem:eq_split}\ref{it:bW1} implies that
           \[
           b(x,t) - b(z,t)  = W^{\dir -}(z,t;x,t)= \wt b(x,t) - \wt b(z,t).
           \]
           Similarly, when $x \ge z$, $x \ge \intc_t^{\wt b,R}$ and $x \ge \intc_t^{b,R}$, so Lemma \ref{lem:eq_split}\ref{it:bW1} implies
           \[
           b(x,t) - b(z,t) = W^{\dir +}(z,t;x,t) = \wt b(x,t) - \wt b(z,t) 
           \]
           In particular, for all $x \in \R$,
           \[
           b(x,t) - \wt b(x,t) = b(z,t) - \wt b(z,t),
           \]
           so $x \mapsto b(x,t) - \wt b(x,t)$ is constant. But since $[b] \neq [\wt b]$, this contradicts Corollary \ref{cor:mr}\ref{it:not_constant}. 
       \end{proof}

\section{Finer properties of interfaces and exceptional behavior} \label{sec:geom_fine_prop}
The goal of this Section is to complete the proof of all Items of Theorem \ref{thm:int1}. This proof is completed in Section \ref{sec:geom_proof}. The key remaining ingredients handled in this section are Proposition \ref{p:BP}, which shows that the set $\Branch_{t,\dir}$ is exactly the set of the points through which mixed $\dir$-Busemann interfaces pass at time $t$, and Propositions \ref{p: interfaces do not meet} and \ref{p:fin_bubble}, which combine to allow us to prove that the bi-infinite interfaces in $\Intc^{\dir,-}$ and $\Intc^{\dir,+}$ form interior bubbles.

The first intermediate result of this section gives a sufficient condition for  a point on the plane to have distinct left most and right most interfaces on a fixed interval in time. 
        \begin{lemma}
        \label{lem:CondInt}
            On the event $\Omega_1$, the following holds. Let $\dir \in \R$, and let $(x,s)$ and $(y,t)$ be such that $t>s$ and  $x = g^{\dir+,R}_{(y,t)}(s)=g^{\dir-,L}_{(y,t)}(s)$. Then, $\intc^{\dir,-}_{(x,s)}(t) <y< \intc^{\dir,+}_{(x,s)}(t)$. 
        \end{lemma}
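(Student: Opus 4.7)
The plan is to prove this in two steps: first establish the weak inclusion $\intc^{\dir,-}_{(x,s)}(t) \le y \le \intc^{\dir,+}_{(x,s)}(t)$ by showing that $d^\dir_{(x,s)}(y,t) = 0$, and then upgrade both inequalities to strict ones by invoking Lemma \ref{lem:Splt}\ref{itm:G1}.

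For the first step, note that the hypothesis together with the ordering of geodesics in Proposition \ref{prop:g_basic_prop}\ref{itm:DL_mont_dir} collapses all four extremal geodesics at time $s$, namely $g^{\dir \sig,S}_{(y,t)}(s) = x$ for every $\sig \in \{-,+\}$ and $S \in \{L,R\}$. By Lemma \ref{lem:Buse_eq} applied to both signs, this yields
\[
\Ll(x,s;y,t) = W^{\dir +}(x,s;y,t) = W^{\dir -}(x,s;y,t).
\]
Using the evolution property of Busemann functions (Proposition \ref{prop:Buse_basic_properties}\ref{itm:Buse_KPZ_description}), the global supremum $\sup_{z\in\R}\{W^{\dir +}(x,s;z,s) + \Ll(z,s;y,t)\}$ equals $W^{\dir +}(x,s;y,t)$, and its leftmost and rightmost maximizers are $g^{\dir +,L/R}_{(y,t)}(s)$, both equal to $x$. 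Since $z = x$ lies in the constraint set $z \ge x$, the restricted supremum coincides with the global one; recalling that $f^\dir_{(x,s)}(z) = W^{\dir +}(x,s;z,s)$ for $z \ge x$, I get $\sup_{z\ge x}\{f^\dir_{(x,s)}(z) + \Ll(z,s;y,t)\} = W^{\dir +}(x,s;y,t)$. The symmetric argument on the $\dir -$ side gives $\sup_{z\le x}\{f^\dir_{(x,s)}(z) + \Ll(z,s;y,t)\} = W^{\dir -}(x,s;y,t)$. Subtracting yields $d^\dir_{(x,s)}(y,t) = 0$, so by the definition \eqref{eq:taupm_def} of the interfaces and monotonicity/continuity of $d^\dir_{(x,s)}(\cdot,t)$, $y$ lies in the closed interval $[\intc^{\dir,-}_{(x,s)}(t),\intc^{\dir,+}_{(x,s)}(t)]$.

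For the second step, I argue by contradiction to rule out the endpoints. If $y = \intc^{\dir,-}_{(x,s)}(t)$, then Lemma \ref{lem:Splt}\ref{itm:G1} applied to the interface $\intc^{\dir,-}_{(x,s)}$ at $r = s$ gives $g^{\dir-,L}_{(y,t)}(s) < \intc^{\dir,-}_{(x,s)}(s) = x$, contradicting the hypothesis $g^{\dir-,L}_{(y,t)}(s) = x$. Symmetrically, $y = \intc^{\dir,+}_{(x,s)}(t)$ would force $g^{\dir+,R}_{(y,t)}(s) > x$, again contradicting the hypothesis. Combined with Step 1, this gives the strict inequality $\intc^{\dir,-}_{(x,s)}(t) < y < \intc^{\dir,+}_{(x,s)}(t)$.

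There is no genuinely difficult step here; once one recognizes that the hypothesis forces every $\dir \pm$ geodesic from $(y,t)$ to pass through $(x,s)$, the computation of $d^\dir_{(x,s)}(y,t)$ is forced by the evolution property, and the strictness is exactly the content of Lemma \ref{lem:Splt}\ref{itm:G1}. The only mildly delicate point is ensuring that the constraints $z \ge x$ and $z \le x$ do not cut off the global maximizer of either Busemann-plus-landscape functional, which is precisely what the collapse of extremal maximizers to the single point $x$ guarantees.
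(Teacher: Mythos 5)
Your proof is correct. The key step—upgrading the closed-interval inclusion to strict inequalities via Lemma \ref{lem:Splt}\ref{itm:G1}—is identical to the paper's argument. Where you differ is in the first step: the paper establishes $y \in [\intc^{\dir,-}_{(x,s)}(t),\intc^{\dir,+}_{(x,s)}(t)]$ by contradiction, invoking Lemma \ref{lem:geod}\ref{it1}--\ref{it2} (if $y$ fell outside the closed interval, the relevant one-sided Busemann geodesic from $(y,t)$ would be forced strictly to one side of $x$ at time $s$, contradicting the hypothesis). You instead compute $d^{\dir}_{(x,s)}(y,t)=0$ directly: the collapse of all four extremal geodesics to $x$ at time $s$, combined with Lemma \ref{lem:Buse_eq}, gives $\Ll(x,s;y,t)=W^{\dir\pm}(x,s;y,t)$, and then the evolution property together with the fact that the unique maximizer $x$ lies in each half-line $\{z\ge x\}$, $\{z\le x\}$ pins both restricted suprema to $W^{\dir+}(x,s;y,t)=W^{\dir-}(x,s;y,t)$. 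Both derivations use the same underlying inputs (Lemma \ref{lem:Buse_eq}, geodesic ordering); yours is more constructive in that it exhibits the vanishing of $d^{\dir}_{(x,s)}(y,t)$ explicitly rather than ruling out the complement, which makes the mechanism slightly more transparent but requires the small observation that the constraints $z\gtrless x$ do not truncate the global maximizer—a point you correctly flag and justify.
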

        \begin{proof}
            First, we will show that
            \begin{equation}\label{eq31}
            y \in [\intc^{\dir,-}_{(x,s)}(t),\intc^{\dir,+}_{(x,s)}(t)].    \end{equation}
             Let $f = f_{(x,s)}^\dir$. Assume by way of contradiction that $y< \intc^{\dir,-}_{(x,s)}(t)$. Then by Lemma \ref{lem:geod}\ref{it1}, we have that $g^{\dir-,L}_{(y,t)}|_{[s,t]}$ is an $f$ to $(y,t)$ geodesic and $\chi^{R}(f,s;y,t)<x$. This is a contradiction since $g^{\dir-,L}_{(y,t)}(s) =x$. Hence, $y \ge \intc_{(x,s)}^{\dir,-}(t)$. By a symmetric argument,  $y \leq \intc^{\dir,+}_{(x,s)}$, giving \eqref{eq31}. To finish the proof, assume by way of contradiction, that $y = \intc_{(x,s)}^{\dir,-}(t)$. Then, Lemma \ref{lem:Splt}\ref{itm:G1} implies that 
             \[
             g_{(y,t)}^{\dir-,L}(s) < \tau_{(x,s)}^{\dir,-}(s) = x,
             \]
             contradicting the assumption $g_{(y,t)}^{\dir-,L}(s) = x$. A symmetric argument shows that $y \neq \intc_{(x,s)}^{\dir,+}(t)$.
        \end{proof}

   \begin{lemma}
       \label{lem:NU}
       The following holds on the event $\Omega_1$. For all $\dir \in \R, (x_0,s) \in \R^2$, if  $\intc^{\dir,-}_{(x_0,s)}(t)<\intc^{\dir,+}_{(x_0,s)}(t)$ for some $t > s$, then $\intc_{(x_0,s)}^{\dir,-}(t) \in \NU_{t,\dir-}$ and $\intc_{(x_0,s)}^{\dir,+}(t)\in \NU_{t,\dir+}$.
   \end{lemma}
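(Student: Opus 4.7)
My plan is to prove both conclusions simultaneously by reducing to the hypothesis of Proposition \ref{p:>interface} on a sub-interval ending at $t$, and then combining that proposition with Lemma \ref{lem:Splt}\ref{itm:G1}. Write $\intc_r^\sig := \intc_{(x_0,s)}^{\dir,\sig}(r)$ for $r \in [s,t]$. Since $\intc_r^- \le \intc_r^+$ always, the hypothesis $\intc_t^- < \intc_t^+$ together with continuity of the interfaces (Lemma \ref{lem:interface_continuous}) gives that the set
\[
A = \{r \in [s,t] : \intc_r^- = \intc_r^+\}
\]
is a closed subset of $[s,t]$ containing $s$ but not $t$. Define $s_1 := \sup A$; then $s \le s_1 < t$, continuity forces $s_1 \in A$ (so $\intc_{s_1}^- = \intc_{s_1}^+ =: x_0'$), and by maximality $\intc_r^- < \intc_r^+$ for every $r \in (s_1, t]$. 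By consistency of interfaces (Proposition \ref{p:rest}), $\intc_r^\sig = \intc_{(x_0',s_1)}^{\dir,\sig}(r)$ for all $r \ge s_1$, so the pair $(x_0',s_1)$ and the time $t$ satisfy exactly the hypothesis of Proposition \ref{p:>interface}.

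Applying Equation \eqref{eq:5.3eq1} of Proposition \ref{p:>interface} yields
\[
g^{\dir-,R}_{(\intc_t^-,t)}(r) \;=\; g^{\dir+,R}_{(\intc_t^-,t)}(r) \;>\; \intc_r^- \qquad \text{for all } r \in (s_1,t).
\]
On the other hand, Lemma \ref{lem:Splt}\ref{itm:G1} applied to the original initial point $(x_0,s)$ gives $g^{\dir-,L}_{(\intc_t^-,t)}(r) < \intc_r^-$ for all $r \in [s,t)$, in particular for all $r \in (s_1,t)$. Combining these two strict inequalities on the common interval $(s_1,t)$,
\[
g^{\dir-,L}_{(\intc_t^-,t)}(r) \;<\; \intc_r^- \;<\; g^{\dir-,R}_{(\intc_t^-,t)}(r) \qquad \text{for all } r \in (s_1,t),
\]
so with $\delta := t - s_1 > 0$ the definition \eqref{eq:NUdef} shows $\intc_t^- \in \NU_{t,\dir-}$.

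The claim $\intc_t^+ \in \NU_{t,\dir+}$ is proved by the mirror argument using Equation \eqref{eq:5.3eq2} of Proposition \ref{p:>interface} (which gives $g^{\dir+,L}_{(\intc_t^+,t)}(r) = g^{\dir-,L}_{(\intc_t^+,t)}(r) < \intc_r^+$ on $(s_1,t)$) together with the other half of Lemma \ref{lem:Splt}\ref{itm:G1} (which gives $g^{\dir+,R}_{(\intc_t^+,t)}(r) > \intc_r^+$ on $[s,t)$). The only step requiring any care is the construction of the point $(x_0',s_1)$: we cannot simply invoke Proposition \ref{p:>interface} at the original $(x_0,s)$ because the interfaces may coincide somewhere in $(s,t)$ before splitting again, but the maximality argument above handles this by selecting the latest coincidence time before $t$, at which point consistency of interfaces lets us restart the funnel from a single point and apply Proposition \ref{p:>interface} cleanly.
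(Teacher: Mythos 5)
Your proof is correct and follows essentially the same route as the paper's: both locate the last coincidence time $s_1$ of the two interfaces, restart the funnel at $(x_0',s_1)$ via consistency (Proposition \ref{p:rest}), and sandwich the interface between $g^{\dir-,L}_{(\intc_t^-,t)}$ (from Lemma \ref{lem:Splt}\ref{itm:G1}) and $g^{\dir-,R}_{(\intc_t^-,t)}$ (from the funnel proposition) on $(s_1,t)$. The only difference is cosmetic: you invoke Equation \eqref{eq:5.3eq1} of Proposition \ref{p:>interface}, which already bundles the equality $g^{\dir-,R}=g^{\dir+,R}$ with the strict inequality over $\intc_r^-$, whereas the paper cites Proposition \ref{p:IntfDist} and Proposition \ref{p:>interface} separately and then chains them; both give the same chain of inequalities.
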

   \begin{proof}
   Let $\tau_t^\sig = \tau_{(x_0,s)}^{\dir,\sig}(t)$ for $\sigg \in \{-,+\}$. Assume that $\intc_t^- < \intc_t^+$. We prove that $\intc^-_t \in \NU_{t,\dir-}$, and the proof that $\intc^+_t \in \NU_{t,\dir+}$ is symmetric.
       Let 
       \[
       s_1:=\sup \{r \in [s,t]:  \intc^-_r=\intc^+_r\}.
       \]
       Since $\intc^-_s = \intc^+_s = x_0$, the set over which the supremum is taken is nonempty.
       Since mixed $\dir$-Busemann interfaces are continuous (Lemma \ref{lem:interface_continuous}), we have
       \be \label{eq:tr-+}
       \intc^-_r<\intc^+_r \text{ for all } r \in (s_1,t],\quad \text{and}\quad \intc^-_{s_1}=\intc^+_{s_1}.
       \ee
       By Proposition \ref{p:rest}, for $r \in [s,t)$,
       \be \label{tau_agree}
       \tau_{(\tau_{r}^-,r)}(t) = \tau_t^-,\quad\text{and}\quad \tau_{(\tau_{r}^+,r)}(t) = \tau_t^+,
       \ee
       so, combined with \eqref{eq:tr-+}, Equation \eqref{eq:5.1eq1} of Proposition \ref{p:IntfDist} implies that
       \be \label{gdir-+eq}
      g^{\dir-,R}_{(\intc^-_t,t)}|_{[s_1,t]}= g^{\dir+,R}_{(\intc^-_t,t)}|_{[s_1,t]}.
       \ee
     By Lemma \ref{lem:Splt}\ref{itm:G1} followed by Proposition \ref{p:rest}, then Proposition \ref{p:>interface}, then \eqref{gdir-+eq}, we obtain 
       \[
        g^{\dir-,L}_{(\intc^-_t,t)}(r)<  \intc^-_r = \tau_{(\tau_{s_1}^-,s_1)}^{\dir,-}(r) <g^{\dir+,R}_{(\intc^-_t,t)}(r) = g^{\dir -,R}_{(\tau_t^-,t)}(r), \qquad \forall r \in (s_1,t).
       \]
       Thus, $\intc_t^- \in \NU_{t,\dir-}$, completing the proof.
   \end{proof}
   In Lemma \ref{l:LRSP}, we  show that if $x \in \Split_{t,\dir}$, then there exists a mixed $\dir$-Busemann bi-infinite interface which passes through $(x,t)$. The next proposition shows that, the set of locations for general $\dir$-Busemann interfaces (not necessarily bi-infinite) is the larger set $\Branch_{t,\dir}$. 
    \begin{proposition} 
    \label{p:BP}
         The following holds on the event $\Omega_1$ for all $\dir \in \R$. Assume $y \in \mathcal{B}_{t,\dir}\setminus \Split_{t,\dir}$ for some $t \in \R$. Then, there exists $s_0<t$ and $x_0 \in \R$ such that 
         \be \label{eq:ytpm}
         y=\intc^{\dir,-}_{(x_0,s_0)}(t)=\intc^{\dir,+}_{(x_0,s_0)}(t).
         \ee
    \end{proposition}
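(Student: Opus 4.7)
The plan is to use the bubble structure of the $\dir\pm$ geodesics at $(y,t)$ to locate a starting point $(x_0,s_0)$ whose two mixed $\dir$-Busemann interfaces both terminate at $y$ at time $t$. Since $y \in \Branch_{t,\dir} \setminus \Split_{t,\dir}$, the discussion preceding Theorem \ref{thm:int1} places $y \in \NU_{t,\dir-} \cap \NU_{t,\dir+}$ and furnishes $\delta > 0$ so that for all $s \in (t-\delta, t)$,
\[
g^{\dir-,L}_{(y,t)}(s) = g^{\dir+,L}_{(y,t)}(s) =: x_-(s) < x_+(s) := g^{\dir-,R}_{(y,t)}(s) = g^{\dir+,R}_{(y,t)}(s).
\]
Fix any $s_0 \in (t-\delta,t)$, with $x_0$ to be chosen from the interior of $[x_-(s_0),x_+(s_0)]$, and set $f = f^\dir_{(x_0,s_0)}$.

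The central identity on which the argument rests is that on $[x_-(s_0),x_+(s_0)]$,
\[
\Ll(z,s_0;y,t) = W^{\dir-}(z,s_0;y,t) = W^{\dir+}(z,s_0;y,t).
\]
The equality of the two Busemann values comes from additivity together with the monotonicity of $z \mapsto D_{s_0}^\dir(z)$: applying Lemma \ref{lem:Buse_eq} at the endpoints gives $D_{s_0}^\dir(x_-(s_0))=D_{s_0}^\dir(x_+(s_0))$, and since $D_{s_0}^\dir$ is nondecreasing it is constant on the interval. Equality with $\Ll$ at interior $z$ comes from the fact that the fan of $\dir-$ (hence $\dir+$) semi-infinite geodesics rooted at $(y,t)$ fills out $[x_-(s_0),x_+(s_0)]$ at time $s_0$: each such $z$ lies on a $\dir-$ geodesic from $(y,t)$, obtained by concatenating a point-to-point $\Ll$-geodesic from $(y,t)$ to $(z,s_0)$ (confined to the strip between $g^{\dir-,L}_{(y,t)}$ and $g^{\dir-,R}_{(y,t)}$ by non-crossing) with a $\dir-$ Busemann extension below $s_0$ via Proposition \ref{prop:DL_SIG_cons_intro}\ref{itm:arb_geod_cons}; Proposition \ref{prop:Buse_basic_properties}\ref{it:Busliminfsup} applied along the coalesced $\dir-$ geodesic then saturates the triangle inequality in the Busemann limit.

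With this identity in hand, I would pick any $x_0 \in (x_-(s_0),x_+(s_0))$ strictly interior. At $x=y$, the sup in the definition of $d^\dir_{(x_0,s_0)}(y,t)$ over $z \ge x_0$ is attained at $z=x_+(s_0)$ and equals $W^{\dir+}(x_0,s_0;y,t)$, while the sup over $z \le x_0$ is attained at $z=x_-(s_0)$ and equals $W^{\dir-}(x_0,s_0;y,t)$; these coincide by the identity, so $d^\dir_{(x_0,s_0)}(y,t)=0$. For $x > y$, geodesic monotonicity (Proposition \ref{prop:g_basic_prop}\ref{itm:DL_SIG_mont_x}) forces every $\dir\pm$ maximizer from $(x,t)$ at time $s_0$ to lie at a position $\ge x_+(s_0) > x_0$; the restricted sup over $z \le x_0$ is therefore strictly less than the unrestricted $W^{\dir-}(x_0,s_0;x,t)$, while the sup over $z \ge x_0$ equals $W^{\dir+}(x_0,s_0;x,t) \ge W^{\dir-}(x_0,s_0;x,t)$, yielding $d^\dir_{(x_0,s_0)}(x,t) > 0$. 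A symmetric argument yields $d^\dir_{(x_0,s_0)}(x,t) < 0$ for $x < y$. Combined with $d^\dir_{(x_0,s_0)}(y,t) = 0$, the definition \eqref{eq:taupm_def} forces $\intc^{\dir,-}_{(x_0,s_0)}(t) = \intc^{\dir,+}_{(x_0,s_0)}(t) = y$.

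The hard part is the interior identity: rigorously showing that the $\dir-$ geodesic tree at $(y,t)$ deposits a full interval of geodesic locations at time $s_0$, so that $\Ll = W^{\dir\pm}$ throughout $(x_-(s_0),x_+(s_0))$ rather than only at the endpoints. Endpoints are handled immediately by Lemma \ref{lem:Buse_eq}, but passing to interior $z$ relies on the combination of non-crossing of geodesics from $(y,t)$, coalescence of $\dir-$ geodesics, and a careful use of Proposition \ref{prop:Buse_basic_properties}\ref{it:Busliminfsup}; packaging this into a self-contained argument is the most delicate step.
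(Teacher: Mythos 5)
Your direct approach is genuinely different from the paper's and, with two small patches, it works. The paper's proof is indirect: it starts from the base $(x,s)$ of the geodesic bubble, applies Lemma \ref{lem:CondInt} to show $\intc_{(x,s)}^{\dir,-}(t) < y < \intc_{(x,s)}^{\dir,+}(t)$, sandwiches the geodesics between the interfaces, and then rules out $\intc^{\dir,-}_{(x_0,s_0)}(t) < \intc^{\dir,+}_{(x_0,s_0)}(t)$ by a contradiction argument built on the funnel structure (Propositions \ref{p:IntfDist}, \ref{p:>interface}, Corollary \ref{c:in between points}). You instead compute the sign of $d^\dir_{(x_0,s_0)}(x,t)$ directly for $x<y$, $x=y$, $x>y$ using geodesic ordering and the Busemann equalities at the geodesic locations $x_\pm(s_0)$, which is cleaner and more self-contained.

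Two remarks, neither fatal. First, the stated ``central identity'' $\Ll(z,s_0;y,t) = W^{\dir\pm}(z,s_0;y,t)$ for \emph{interior} $z\in(x_-(s_0),x_+(s_0))$ is almost surely false (the maximizer set of $z\mapsto W^{\dir-}(0,s_0;z,s_0)+\Ll(z,s_0;y,t)$ has $x_\pm(s_0)$ as its extreme elements, but is generically not a full interval). However, your argument never uses the interior $\Ll$-equality. For $d^\dir_{(x_0,s_0)}(y,t)=0$ you only need $W^{\dir+}(x_0,s_0;y,t)=W^{\dir-}(x_0,s_0;y,t)$, and this follows purely from the \emph{endpoint} identities (Lemma \ref{lem:Buse_eq}) together with the constancy of $D^\dir_{s_0}$ on $[x_-(s_0),x_+(s_0)]$, the latter being a direct consequence of the two endpoint identities and additivity. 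So the ``hard part'' you flag at the end is a red herring; you can simply drop the interior claim.

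Second, in the $x>y$ case you write the inequality $W^{\dir+}(x_0,s_0;x,t)\ge W^{\dir-}(x_0,s_0;x,t)$ as though it were the monotonicity in Proposition \ref{prop:Buse_basic_properties}\ref{itm:DL_Buse_gen_mont}, but that lemma is horizontal ($s=t$) and doesn't directly apply to a space--time pair. You do need it, and it is true here, but it requires a short extra step: let $z^*$ be a $\dir-$ maximizer of $z\mapsto W^{\dir-}(x_0,s_0;z,s_0)+\Ll(z,s_0;x,t)$, note that by geodesic ordering $z^*\ge x_+(s_0)>x_0$, then
\[
W^{\dir-}(x_0,s_0;x,t)=W^{\dir-}(x_0,s_0;z^*,s_0)+\Ll(z^*,s_0;x,t)\le W^{\dir+}(x_0,s_0;z^*,s_0)+\Ll(z^*,s_0;x,t)\le W^{\dir+}(x_0,s_0;x,t),
\]
where the first inequality \emph{is} horizontal monotonicity (applied at time $s_0$ with $z^*>x_0$) and the second is the evolution formula \eqref{W_var}. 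With this inserted (and its mirror for $x<y$), the sign computation is complete and the conclusion $\intc^{\dir,-}_{(x_0,s_0)}(t)=\intc^{\dir,+}_{(x_0,s_0)}(t)=y$ follows from the definitions \eqref{eq:taupm_def}.
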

    \begin{remark} \label{rem:Branchiff}
    Since $\Split_{t,\dir} \subseteq \Branch_{t,\dir}$ Lemmas \ref{lem:Splt}\ref{itm:branching} and \ref{l:LRSP} combine to show that for each $t \in \R$, $y \in \Branch_{t,\dir}$ if and only if there is mixed $\dir$-Busemann interface passing through $(y,t)$. However, the condition $y \in \Branch_{t,\dir} \setminus \Split_{t,\dir}$ is \textit{not} necessary for the existence of a point $(x_0,s)$ to satisfy the equality of interfaces at $(y,t)$, as in \eqref{eq:ytpm}. Indeed, as we will see below in Proposition \ref{p: interfaces do not meet}, there is a certain subset of $\Split_{\dir}$ having this property. 
    \end{remark}
    \begin{proof}[Proof of Proposition \ref{p:BP}]
        The assumption $y \in \mathcal{B}_{t,\dir}$ implies that there exists $s<t$ such that
        \be \label{eq:gsplit1}
    g^{\dir-,L}_{(y,t)}(r) < g^{\dir+,R}_{(y,t)}(r) \qquad \text{for all } r \in (s,t).
        \ee
        But since we also assumed that $y \notin \Split_{t,\dir}$, the geodesics $g^{\dir-,L}_{(y,t)}$ and $g^{\dir+,R}_{(y,t)}$ are not disjoint, so  we may make the choice of $s$ such that  
        \be \label{gpmeq}
    x: =g^{\dir-,L}_{(y,t)}(s)=g^{\dir+,R}_{(y,t)}(s).
        \ee
        See Figure \ref{fig:Branchingpoints}. By \eqref{eq:gsplit1}, we may choose $s_0 \in (s,t)$ and $x_0 \in \R$ such that
         \be \label{x0s0choice}
         g^{\dir-,L}_{(y,t)}(s_0)<x_0<g^{\dir+,R}_{(y,t)}(s_0).
         \ee
         We claim that \eqref{eq:ytpm} holds for this choice of $(x_0,s_0)$ (see Figure \ref{fig:Branchingpoints}). We first make a few observations.
         
         By \eqref{gpmeq} and Lemma \ref{lem:CondInt}, we have
        \begin{equation}\label{eq76}
            \intc^{\dir,-}_{(x,s)}(t)<y<\intc^{\dir,+}_{(x,s)}(t).
        \end{equation}
        We now show that
        \begin{equation}
        \label{eq:5.5eq1}
        \intc^{\dir,-}_{(x,s)}(r) \leq g^{\dir-,L}_{(y,t)}(r) < g^{\dir+,R}_{(y,t)}(r)\leq \intc^{\dir,+}_{(x,s)}(r) \qquad \text{ for all } r \in (s,t).
        \end{equation}
        The middle inequality is \eqref{eq:gsplit1}, so we prove the outer inequalities.  
        First assume, by way of contradiction, that $g^{\dir-,L}_{(y,t)}(r)<\intc^{\dir,-}_{(x,s)}(r)$ for some $r \in (s,t)$. Then, Lemma \ref{lem:geod}\ref{it1} implies that $g^{\dir-,L}_{(y,t)}|_{[s,r]}$ is an $(f_{(x,s)}^\dir,s)$-to-$(g^{\dir-,L}_{(y,t)}(r), r)$ geodesic, and Equation \eqref{eq74} of Lemma \ref{lem:geod}\ref{it1} implies that
        $
    g^{\dir-,L}_{(y,t)}(s) < x$.
     This contradicts the definition of $x$, which proves the first inequality of \eqref{eq:5.5eq1}. The last inequality of \eqref{eq:5.5eq1} follows by a symmetric proof. Note that, if we replace the middle inequality with a weak inequality, then \eqref{eq:5.5eq1} holds for $r = t$ as well.

          \begin{figure}[t!]
             \includegraphics[width=6cm]{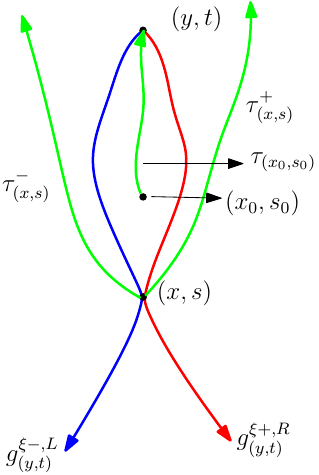}
             \caption{To prove Proposition \ref{p:BP} we consider the branching point $(y,t)$ which is not a splitting point. Then we consider the point $(x_0,s_0)$ as in the figure. We show that $y=\intc^-_{(x_0,s_0)}(t)=\intc^+_{(x_0,s_0)}(t).$ }
             \label{fig:Branchingpoints}
         \end{figure}
         
         Thus, by Lemma \ref{l:MV}\ref{it:Bu3}, for all $r \in (s,t]$,
        \begin{equation}\label{eq:eq5.5eq2}\begin{aligned}
         &W^{\dir+}(x,s;g^{\dir-,L}_{(y,t)}(r),r)=W^{\dir-}(x,s;g^{\dir-,L}_{(y,t)}(r),r),\quad\text{and}\\
         & W^{\dir+}(x,s;g^{\dir+,R}_{(y,t)}(r),r)=W^{\dir-}(x,s;g^{\dir+,R}_{(y,t)}(r),r).
         \end{aligned}
         \end{equation}
         In particular, when $r = t$, we get 
         \be \label{eq:eq5.5eq2.1}
            W^{\dir+}(x,s;y,t)=W^{\dir-}(x,s;y,t).
         \ee
         Applying Equation \eqref{geod_LR_eq_L} of Proposition \ref{prop:DL_SIG_cons_intro} \ref{itm:DL_all_SIG}, together with additivity of the Busemann functions from \eqref{eq:eq5.5eq2} and \eqref{eq:eq5.5eq2.1}, we get that, for all $r \in (s,t)$,
        
         \begin{equation}
         \label{eq5.5eq4}
             \begin{aligned}
                 &\Ll(g^{\dir-,L}_{(y,t)}(r),r;y,t)=W^{\dir+}(g^{\dir-,L}_{(y,t)}(r),r;y,t)=W^{\dir-}(g^{\dir-,L}_{(y,t)}(r),r;y,t).\\
                 &\Ll(g^{\dir+,R}_{(y,t)}(r),r;y,t)= W^{\dir+}(g^{\dir+,R}_{(y,t)}(r),r;y,t)=W^{\dir-}(g^{\dir+,R}_{(y,t)}(r),r;y,t).
             \end{aligned}
         \end{equation}
         By Lemma \ref{lem:Buse_eq}, it follows that for all $r \in (s,t], g^{\dir-,L}_{(y,t)}|_{[r,t]}$ (resp.\ $g^{\dir+,R}_{(y,t)}|_{[r,t]}$) is the restriction of a $\dir +$ (resp.\ $\dir -$) geodesic to $[r,t]$.

         We turn to proving \eqref{eq:ytpm}. We first prove that 
         \be \label{eq:ybetweentau}
         \intc^{\dir,-}_{(x_0,s_0)}(t) \le y \le \intc^{\dir,+}_{(x_0,s_0)}(t)
         \ee
 Assume, by way of contradiction that $y<\intc^{\dir,-}_{(x_0,s_0)}(t)$. We have shown that $g^{\dir+,R}_{(y,t)}|_{[s_0,t]}$ is a restriction of a $\dir-$ geodesic, and since $y<\intc^{\dir,-}_{(x_0,s_0)}(t)$,  Lemma \ref{lem:geod}\ref{it1} implies that $g^{\dir+,R}_{(y,t)}|_{[s_0,t]}$ is an $(f_{(x_0,s)}^\dir,s)$-to-$(y,t)$ geodesic satisfying $g^{\dir+,R}_{(y,t)}(s_0) < x_0$. But this is a contradiction to the choice of $x_0$ and $s_0$ in \eqref{x0s0choice}. This proves the first inequality in \eqref{eq:ybetweentau}. The second inequality follows by a symmetric argument.  

From \eqref{eq:ybetweentau}, it now suffices to show that $\intc_{(x_0,s)}^{\dir,-}(t) = \intc_{(x_0,s)}^{\dir,+}(t)$. By way of contradiction, assume that $\intc_{(x_0,s)}^{\dir,-}(t) < \intc_{(x_0,s)}^{\dir,+}(t)$.  Then, let
\[
s_1 = \sup\{r \in [s_0,t):\intc^{\dir,-}_{(x_0,s_0)}(r)=\intc^{\dir,+}_{(x_0,s_0)}(r) \}.
\]
By continuity, we have $s_1 < t$, and 
         \begin{align*}
         &\intc^{\dir,-}_{(x_0,s_0)}(r)<\intc^{\dir,+}_{(x_0,s_0)}(r)\quad \forall r \in (s_1,t],\quad \text{while} \\ &x_1:=\intc^{\dir,-}_{(x_0,s_0)}(s_1)=\intc^{\dir,+}_{(x_0,s_0)}(s_1).
         \end{align*}
        By the consistency in Proposition \ref{p:rest}, combined with \eqref{eq:ybetweentau},
         \[
\intc^{\dir,-}_{(x_1,s_1)}(t)=\intc^{\dir,-}_{(x_0,s_0)}(t) \le y \le \intc^{\dir,+}_{(x_0,s_0)}(t) \le  \intc^{\dir,+}_{(x_1,s_1)}(t).
         \]
         By Lemma \ref{lem:geod}\ref{it4}, the restriction of every $\dir -$ geodesic and every $\dir +$ geodesic to $[s_1,t]$ is a $(f_{(x_1,s_1)}^{\dir},s)$-to $(y,t)$ geodesic. In particular, $g^{\dir-,L}_{(y,t)}|_{[s_1,t]}$ and $g^{\dir+,R}_{(y,t)}|_{[s_1,t]}$ both are $(f_{(x_1,s_1)}^{\dir},s)$-to-$(y,t)$ geodesics. By Corollary \ref{c:in between points}, 
         \[
         g^{\dir-,L}_{(y,t)}(s_1)=g^{\dir+,R}_{(y,t)}(s_1)=x_1.
         \]
         contradicting \eqref{eq:gsplit1} since $s_1 \ge s_0 > s$. 
         \end{proof}
   Next, we show that it is possible for left and right interfaces to separate after traveling together for some time.
   \begin{proposition}
       \label{p: interfaces do not meet}
       The following holds on the event $\Omega_1$: let $\dir \in \DLBusedc$ and assume that $(x,t) \notin \Split_{\dir}^L$. Let 
       \be \label{t1y1def}
       t_1 := \inf\{r \le t: g_{(x,t)}^{\dir-,L}(r) = g_{(x,t)}^{\dir+,L}(r)\},\quad\text{and set}\quad y_1 := g_{(x,t)}^{\dir-,L}(t_1) = g_{(x,t)}^{\dir+,L}(t_1).
       \ee
       In words, $(y_1,t_1)$ is the splitting time of the two geodesics, as in Figure \ref{fig:interfaces can split}. Alternatively, we can assume that $(x,t) \notin \Split_\dir^R$ and define $(y_1,t_1)$ similarly as above with $L$ replaced by $R$. Then, $t_1 < t$, $y_1 \in \Split_{t_1,\dir}^L \cap \Split_{t_1,\dir}^R$, and there exists $(x_1,s_1) \in \Split_\dir^L$ and $(x_2,s_2) \in \Split_{\dir}^R$ with $s_1\vee s_2 < t$ such that, for  $i = 1,2$,
       \be \label{eq:tausplit}
       \intc_{(x_i,s_i)}^{\dir,-}(t_1) = \intc_{(x_i,s_i)}^{\dir,+}(t_1), \quad\text{and}\quad \intc_{(x_i,s_i)}^{\dir,-}(r) < \intc_{(x_i,s_i)}^{\dir,+}(r),\quad\forall r \in (t_1,t).
       \ee
       In words, the two interfaces agree at time $t_1$, but afterwards split. 
   \end{proposition}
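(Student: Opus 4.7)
I focus on the case $(x,t) \notin \Split_\dir^L$; the other case is symmetric. The proof divides into three stages: (i) show $t_1 < t$ and $y_1 \in \Split^L_{t_1,\dir}$; (ii) upgrade to $y_1 \in \Split^R_{t_1,\dir}$; and (iii) construct the points $(x_i,s_i)$ and verify the interface equalities.

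Stage (i) is bookkeeping. By hypothesis some $s<t$ satisfies $g^{\dir-,L}_{(x,t)}(s)=g^{\dir+,L}_{(x,t)}(s)$, so $t_1\le s<t$. Since both $g^{\dir\pm,L}_{(x,t)}$ are leftmost geodesics between any two of their points (Proposition \ref{prop:DL_SIG_cons_intro}\ref{itm:DL_LRmost_geod}), agreement at any single time of $[t_1,t]$ propagates to all of $[t_1,t]$; the infimum definition of $t_1$ then forbids agreement at any $r<t_1$. Denoting the common path on $[t_1,t]$ by $\gamma$, the restriction property in Proposition \ref{prop:DL_SIG_cons_intro}\ref{itm:DL_all_SIG} identifies $g^{\dir\pm,L}_{(x,t)}|_{(-\infty,t_1]}$ with $g^{\dir\pm,L}_{(y_1,t_1)}$, giving $y_1\in\Split^L_{t_1,\dir}$.

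Stage (ii) is the main technical obstacle. I would proceed by contradiction: if $y_1\notin\Split^R_{t_1,\dir}$, Lemma \ref{l:equ} supplies $\delta>0$ with $D^\dir_{t_1}(y_1)=D^\dir_{t_1}(y_1+\delta)$, equivalently $W^{\dir-}(y_1,t_1;y_1+\delta,t_1)=W^{\dir+}(y_1,t_1;y_1+\delta,t_1)$. Because $\gamma$ is simultaneously a $\dir-$ and a $\dir+$ Busemann geodesic, Lemma \ref{lem:Buse_eq} gives $W^{\dir-}(y_1,t_1;x,t)=W^{\dir+}(y_1,t_1;x,t)$; additivity (Proposition \ref{prop:Buse_basic_properties}\ref{itm:DL_Buse_add}) then extends this to $W^{\dir-}(y_1+\delta,t_1;x,t)=W^{\dir+}(y_1+\delta,t_1;x,t)$. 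The idea is to exploit this identity, together with the fact that $y_1$ is the leftmost $\dir\pm$ Busemann maximizer for $(x,t)$ at time $t_1$, to produce a second Busemann geodesic emanating from a point strictly to the right of $y_1$ and reaching $(x,t)$; concatenating this geodesic with $\gamma$ then yields an interior geodesic bubble, contradicting Lemma \ref{lem:no_bubbles}. This is in the spirit of the bubble-exclusion arguments in Propositions \ref{p:>interface} and \ref{theorem: local variation and splitting points}. The delicate subcase is when $y_1\in\NU_{t_1,\dir\pm}$, where leftmost and rightmost Busemann geodesics from $(y_1,t_1)$ must be tracked separately, using ordering from Proposition \ref{prop:g_basic_prop}\ref{itm:DL_mont_dir} together with the hypothesis $(x,t)\notin\Split^L_{t,\dir}$.

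For stage (iii), taking $(x_1,s_1)=(x_2,s_2)=(y_1,t_1)$ is the natural choice: $y_1\in\Split^L_{t_1,\dir}\cap\Split^R_{t_1,\dir}$ places the point in both $\Split^L_\dir$ and $\Split^R_\dir$, and $s_i=t_1<t$. The equality $\intc^{\dir,-}_{(y_1,t_1)}(t_1)=\intc^{\dir,+}_{(y_1,t_1)}(t_1)=y_1$ is immediate. For the strict separation on $(t_1,t)$, I would combine: (a) Lemma \ref{l:MV} showing that $\gamma(r)$ is a zero of $d^\dir_{(y_1,t_1)}(\cdot,r)$, hence $\gamma(r)\in[\intc^{\dir,-}_{(y_1,t_1)}(r),\intc^{\dir,+}_{(y_1,t_1)}(r)]$ for $r\in[t_1,t]$; (b) Proposition \ref{Cond_bi-inf} giving that the bi-infinite leftmost extension of $\intc^{\dir,-}_{(y_1,t_1)}$ lands in $\Split^L_{t,\dir}$ at time $t$, whereas $x\notin\Split^L_{t,\dir}$, forcing $\intc^{\dir,-}_{(y_1,t_1)}(t)<\gamma(t)=x$ and hence strict separation on a left neighborhood of $t$; and (c) ruling out any internal re-meeting inside $(t_1,t)$ via Lemma \ref{p:noBub} together with the funnel structure from Propositions \ref{p:IntfDist}-\ref{p:>interface}, which would otherwise conflict with the presence of $\gamma$ and the endpoint constraint in (b). The crux of the entire proof is stage (ii).
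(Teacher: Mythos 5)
Your Stage (i) is correct and matches the paper's opening moves. But Stage (ii), which you yourself flag as the crux, is missing the decisive observation that collapses the whole difficulty: because $t_1 < t$, the point $(y_1,t_1)$ lies in the \emph{interior} of the semi-infinite geodesic $g_{(x,t)}^{\dir\pm,L}$, and Lemma~\ref{lem:geod_unique} (coalescence of $\dir\sig$ geodesics plus the no-interior-bubbles property, Lemma~\ref{lem:no_bubbles}) then gives directly that $(y_1,t_1)\notin\NU_{\dir-}\cup\NU_{\dir+}$. Once you have that, $g^{\dir\sig,L}_{(y_1,t_1)}=g^{\dir\sig,R}_{(y_1,t_1)}$ for both signs, so the separation $g^{\dir-,L}_{(y_1,t_1)}(s)<g^{\dir+,L}_{(y_1,t_1)}(s)$ for $s<t_1$ (your Stage (i)) also holds with $L$ replaced by $R$ everywhere, and $y_1\in\Split^R_{t_1,\dir}$ follows immediately. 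The contradiction route you sketch via $D^\dir_{t_1}$, Busemann additivity and a constructed bubble is not worked out, and the ``delicate subcase $y_1\in\NU_{t_1,\dir\pm}$'' you defer is exactly what Lemma~\ref{lem:geod_unique} rules out at a stroke—there is no such subcase.

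The same lemma also streamlines Stage (iii). Since $(z_r,r)\notin\NU_{\dir\pm}$ for every $r\in[t_1,t)$, one has $g^{\dir+,R}_{(z_r,r)}(t_1)=g^{\dir-,L}_{(z_r,r)}(t_1)=y_1$, so Lemma~\ref{lem:CondInt} delivers the strict straddle $\intc^{\dir,-}_{(y_1,t_1)}(r)<z_r<\intc^{\dir,+}_{(y_1,t_1)}(r)$ for all $r\in(t_1,t)$ in a single step. This replaces your items (a)--(c): (a) is not actually a consequence of Lemma~\ref{l:MV} as stated and would in any case give only weak inclusion, (b) only controls a left neighborhood of $t$, and (c) is not argued. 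Finally, while your choice $(x_1,s_1)=(x_2,s_2)=(y_1,t_1)$ does formally satisfy the stated conclusion (with the equality at time $t_1$ holding trivially), the paper deliberately uses Lemma~\ref{l:LRSP} to produce $(x_i,s_i)$ with $s_i<t_1$ and then establishes $\intc^{\dir,-}_{(x_i,s_i)}(t_1)=\intc^{\dir,+}_{(x_i,s_i)}(t_1)$ nontrivially via Lemma~\ref{lem:NU} (again powered by $(y_1,t_1)\notin\NU_{\dir\pm}$); this is essential for the downstream proof of Theorem~\ref{thm:int1}\ref{int1:it4}, which needs the interfaces from $(x_i,s_i)$ to coincide on a nontrivial interval $[t_1-\delta,t_1]$, and is vacuous under your degenerate choice $s_i=t_1$.
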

\begin{remark}
Proposition \ref{p: interfaces do not meet} implies that bi-infinite interfaces split and then come back together. Indeed, as $(x_1,s_1) \in \Split_\dir^L$ and $(x_2,s_2) \in \Split_\dir^R$, we know from Proposition \ref{p: existence of bi-infinite competetion interfaces} that there exists an element of $\Intc^{\dir,-}$ that agrees with $\intc_{(x_1,s_1)}^{\dir,-}$ on $[s_1,\infty)$, and an element of $\Intc^{\dir,+}$ that agrees with $\intc_{(x_2,s_2)}^{\dir,+}$ on $[s_2,\infty)$
\end{remark}

   \begin{proof}   

   We will assume $(x,t) \notin \Split_{\dir}^L$; the case where $(x,t) \notin \Split_{\dir}^R$ follows symmetrically. This assumption 
 implies $t_1 < t$, and since $\dir \in \DLBusedc$, Proposition \ref{prop:g_basic_prop}\ref{itm:eventually_less} implies that $t_1 > -\infty$.
   \begin{figure}[t!]
       \includegraphics[width=6cm]{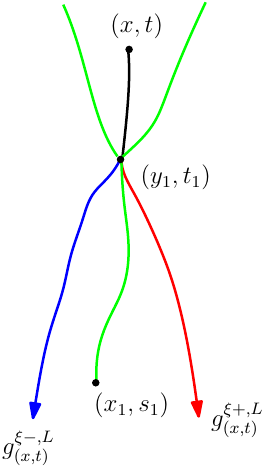}
       \caption{ The above figure shows that it is possible for interfaces to split even after they meet. $(x,t) \notin \Split^L_{\dir}$. Then is such that $g^{\dir-,L}_{(x,t)}$ and $g^{\dir+,L}_{(x,t)}$ coincide in $[t_1,t]$. Then as $(y_1,t_1) \in g^{\dir-,L}_{(x,t)} \cap g^{\dir+,L}_{(x,t)}$, by Lemma \ref{lem:CondInt} $\intc^-_{(y_1,t_1)}(t) \neq \intc^+_{(y_1,t_1)}(t)$. Also $y_1 \in \Split_{t_1,\dir}$. By Lemma \ref{l:LRSP}, we can construct an interface going through $(y_1,t_1)$ starting from $(x_1,s_1)$ and one can argue that interface will be unique at time level $t_1.$ The interfaces are coloured green in the figure.}
       \label{fig:interfaces can split}
   \end{figure}
Given the definition of $t_1$, continuity of geodesics  gives us the second equality stated in \eqref{t1y1def}, namely 
\be \label{y1eq}
y_1 = g_{(x,t)}^{\dir-,L}(t_1) = g_{(x,t)}^{\dir+,L}(t_1).
\ee
Since $g_{(x,t)}^{\dir\pm,L}$ are the leftmost geodesics between their points (Proposition \ref{prop:DL_SIG_cons_intro}\ref{itm:DL_LRmost_geod}), we get 
\[
z_r := g_{(x,t)}^{\dir-,L}(r) = g_{(x,t)}^{\dir+,L}(r),\quad\forall r \in [t_1,t]
\]
By Lemma \ref{lem:geod_unique}, $(z_r,r) \notin \NU_{\dir-} \cup \NU_{\dir +}$ for all $r \in [t_1,t)$. 
This along with \eqref{y1eq} and the consistency of geodesics in Proposition \ref{prop:DL_SIG_cons_intro}\ref{itm:DL_all_SIG} gives us  
\be \label{eq:gLR}
\begin{aligned}
&g_{(z_r,r)}^{\dir+,R}(s) =g_{(z_r,r)}^{\dir+,L}(s) = g_{(x,t)}^{\dir+,L}(s) ,\quad\text{and} \\ &g_{(x,t)}^{\dir-,L}(s) = g_{(z_r,r)}^{\dir-,L}(s) = g_{(z_r,r)}^{\dir-,R}(s),\quad \forall r \in [t_1,t)\text{ and } s \le r.
\end{aligned}
\ee
In particular, setting $s = t_1$ in \eqref{eq:gLR} and using \eqref{y1eq}, we obtain
\[
y_1 = g_{(z_r,r)}^{\dir+,R}(t_1) = g_{(z_r,r)}^{\dir-,L}(t_1), \quad\forall r \in [t_1,t).
\]
Then, by Lemma \ref{lem:CondInt}, we have 
   \be \label{eq:tausep}
   \intc^{\dir,-}_{(y_1,t_1)}(r) < z_r <  \intc^{\dir,+}_{(y_1,t_1)}(r),\quad\forall r \in (t_1,t).
   \ee
   Setting $r =t_1$ in \eqref{eq:gLR} and using the definition of $t_1$ in the inequality below, we obtain, 
   \[
   g_{(y_1,t_1)}^{\dir-,R} =g_{(y_1,t_1)}^{\dir-,L}(s) = g_{(x,t)}^{\dir-,L}(s) < g_{(x,t)}^{\dir+,L}(s) = g_{(y_1,t_1)}^{\dir+,L}(s) =g_{(y_1,t_1)}^{\dir+,R}(s) , \quad\forall s < t_1.
   \]
   so we see by definition of $\Split_{t_1,\dir}^{L/R}$ \eqref{Split_sdir} that $y_1\in \Split_{t_1,\dir}^L \cap \Split_{t_1,\dir}^R$. See Figure \ref{fig:interfaces can split}.

Then, by Lemma \ref{l:LRSP}, there exists $(x_1,s_1)\in \Split_\dir^L$ and $(x_2,s_2) \in \Split_\dir^R$ so that
   \[
   y_1=\intc^{\dir,-}_{(x_1,s_1)}(t_1) = \intc^{\dir,+}_{(x_2,s_2)}(t_1).
   \]
We now show that 
\be \label{eq:taueq}
\intc^{\dir,-}_{(x_1,s_1)}(t_1) = \intc^{\dir,+}_{(x_1,s_1)}(t_1),\quad\text{and}\quad \intc^{\dir,-}_{(x_2,s_2)}(t_1) = \intc^{\dir,+}_{(x_2,s_2)}(t_2).
\ee
We prove the first equality, and the second follows symmetrically. If, by way of contradiction, $\intc^{\dir,-}_{(x_1,s_1)}(t_1) < \intc^{\dir,+}_{(x_1,s_1)}(t_1)$, then  Lemma \ref{lem:NU} implies that $(y_1,t_1) \in \NU_{\dir-}$. But we have already shown that $(z_r,r) \notin \NU_{\dir-}\cup \NU_{\dir+}$ for $r \in [t_1,t)$. In particular, when $r = t_1$, $(y_1,t_1) \notin \NU_{\dir-}\cup \NU_{\dir+}$, giving a contradiction. 

To complete the proof, observe that \eqref{eq:taueq} is the first equality of \eqref{eq:tausplit}. For the second equality, the consistency of interfaces (Proposition \ref{p:rest}) and \eqref{eq:tausep} give us
\[
\intc_{(x_i,s_i)}^{\dir,-}(r) = \intc_{(y_1,t_1)}^{\dir,-}(r) < \intc_{(y_1,t_1)}^{\dir,+}(r) = \intc_{(x_i,s_i)}^{\dir,+}(r),\quad\forall r \in (t_1,t),
\]
as desired. 
\end{proof}
   Next we show that if two interfaces starting from a point meet at some time level $t$, then they can not immediately split again. In particular, this shows that two consecutive ``bubble" regions cannot occur (see Figure \ref{fig:fin_bbles}).
   \begin{proposition}
       \label{p:fin_bubble}
       The following holds on the event $\Omega_1$. Let $\dir \in \DLBusedc$ and $(x_0,s) \in \R^2$ be such that 
       \begin{equation}
      \label{eq: bbl_cond}
      \tau^{\dir,-}_{(x_0,s)}(r) < \tau^{\dir,+}_{(x_0,s)}(r)\quad \text{ for all } r \in (s,t), \text{ and }\quad \tau^{\dir,-}_{(x_0,s)}(t)=\tau^{\dir,+}_{(x_0,s)}(t).
       \end{equation}
      
      Then there exists $\delta>0$ such that 
       \begin{equation}
       \label{eq: fin_step_bubble}
       \tau^{\dir,-}_{(x_0,s)}(r)=\tau^{\dir,-}_{(x_0,s)}(r) \text{ for all } r \in [t,t+\delta].
       \end{equation}
   \end{proposition}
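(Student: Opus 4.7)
The plan is to argue by contradiction. Suppose no such $\delta>0$ exists. Using continuity of the interfaces, one can extract a sequence $r_n\searrow t$ with $\tau^-_{r_n}<\tau^+_{r_n}$, where $\tau^{\sig}_r:=\tau^{\dir,\sig}_{(x_0,s)}(r)$. I would first observe that, by passing to a subsequence if necessary, one may assume that strict separation holds on the entire interval $(t,r_n]$ for each $n$: otherwise, setting $r'_n:=\sup\{r\in[t,r_n]:\tau^-_r=\tau^+_r\}$, continuity would give $r'_n>t$ with $\tau^-=\tau^+$ throughout $[t,r'_n]$, which is the conclusion we are trying to prove.

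Write $y_t:=\tau^{\dir,\pm}_{(x_0,s)}(t)$. Below $t$, I apply Proposition~\ref{p:IntfDist} on every interval $[s,t']$ with $t'\in(s,t)$ (the hypothesis of the proposition being satisfied there). This gives $g^{\dir+,R}_{(\tau^-_{t'},t')}(s)=x_0$, together with the identification $g^{\dir+,R}_{(\tau^-_{t'},t')}|_{[s,t']}=g^{\dir-,R}_{(\tau^-_{t'},t')}|_{[s,t']}$. By Lemma~\ref{lem:Buse_eq} this forces
\[
\Ll(x_0,s;\tau^-_{t'},t')=W^{\dir+}(x_0,s;\tau^-_{t'},t')=W^{\dir-}(x_0,s;\tau^-_{t'},t'),
\]
and letting $t'\nearrow t$ via continuity of $\Ll$ and $W^{\dir\pm}$ (Proposition~\ref{prop:Buse_basic_properties}\ref{itm:general_cts}) yields the corresponding identity at the point $(y_t,t)$. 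Another application of Lemma~\ref{lem:Buse_eq} then places $(x_0,s)$ simultaneously on a $\dir+$ and a $\dir-$ Busemann geodesic rooted at $(y_t,t)$; combined with the chain of inequalities $g^{\dir-,L}_{(y_t,t)}\le g^{\dir-,R}_{(y_t,t)}\le g^{\dir+,L}_{(y_t,t)}\le g^{\dir+,R}_{(y_t,t)}$ from Proposition~\ref{prop:g_basic_prop}\ref{itm:DL_mont_dir}, this pins down $x_0=g^{\dir-,R}_{(y_t,t)}(s)=g^{\dir+,L}_{(y_t,t)}(s)$.

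Above $t$, a mirror application of Proposition~\ref{p:IntfDist} to $(y_t,t)$ on the interval $[t,r_n]$ (valid under the strict-separation reduction in the first paragraph) produces a common finite geodesic from $(\tau^-_{r_n},r_n)$ through $(y_t,t)$ which is simultaneously $g^{\dir+,R}_{(\tau^-_{r_n},r_n)}|_{[t,r_n]}$ and $g^{\dir-,R}_{(\tau^-_{r_n},r_n)}|_{[t,r_n]}$. Extending below $t$ via the consistency in Proposition~\ref{prop:DL_SIG_cons_intro}\ref{itm:DL_all_SIG}, these continuations are $g^{\dir+,R}_{(y_t,t)}$ and $g^{\dir-,R}_{(y_t,t)}$ respectively, yielding two semi-infinite geodesics from $(\tau^-_{r_n},r_n)$ that coincide on $[t,r_n]$ but differ below $t$.

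The plan is to combine these two pictures to manufacture an interior geodesic bubble, contradicting Lemma~\ref{lem:no_bubbles}. Concretely, the Step~1 identity $x_0=g^{\dir-,R}_{(y_t,t)}(s)$ and the consistent extension shows that the $\dir-,R$-extension of the common upward piece passes through $(x_0,s)$; similarly the $\dir+,L$-extension passes through $(x_0,s)$ and through $(y_t,t)$. These two produce distinct point-to-point geodesics between $(x_0,s)$ and $(\tau^-_{r_n},r_n)$ that agree on the whole interval $[t,r_n]$ (hence on a neighborhood of $r_n$). The main obstacle, and the step I expect to be hardest, is upgrading agreement at the single point $(x_0,s)$ into agreement on a full one-sided neighborhood of $s$, which is required to invoke Lemma~\ref{lem:no_bubbles}. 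I would try to obtain this by iterating the funnel-rigidity of Corollary~\ref{c:in between points}: since below $s$ both extensions remain $\dir+$ and $\dir-$ Busemann geodesics rooted at $(y_t,t)$ passing through $(x_0,s)$, the Step~1 identification forces them to coincide with the leftmost/rightmost Busemann geodesics from $(x_0,s)$ on a small interval, locking them together near $s$ and producing the forbidden bubble.
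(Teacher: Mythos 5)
Your overall strategy---tracking the common geodesic through $(y_t,t)$ above and below $t$, and manufacturing an interior bubble to invoke Lemma~\ref{lem:no_bubbles}---is recognizably the same as the paper's, but the implementation has gaps at two places that you flag and one you do not.

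First, the chain of inequalities you cite is misquoted. Proposition~\ref{prop:g_basic_prop}\ref{itm:DL_mont_dir} gives, for fixed $S\in\{L,R\}$, only $g^{\dir-,S}\le g^{\dir+,S}$; it does not give the cross-ordering $g^{\dir-,R}\le g^{\dir+,L}$ (and that is false in general, e.g.\ at points of $\NU_{\dir-}$). Consequently the assertion $x_0=g^{\dir-,R}_{(y_t,t)}(s)=g^{\dir+,L}_{(y_t,t)}(s)$ is not established. What your limit argument together with Lemma~\ref{lem:Buse_eq} actually delivers is that $x_0$ lies on \emph{some} $\dir-$ geodesic and \emph{some} $\dir+$ geodesic from $(y_t,t)$, hence $g^{\dir+,L}_{(y_t,t)}(s)\le x_0\le g^{\dir-,R}_{(y_t,t)}(s)$, not the equality you want.

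Second, and more seriously, the step you flag as hardest is the genuine gap: having the two constructed geodesics pass through the single point $(x_0,s)$ gives agreement at that time only, not on a one-sided neighborhood, and ``iterating the funnel-rigidity of Corollary~\ref{c:in between points}'' does not repair this. Below $s$ the two extensions are $g^{\dir-,R}_{(x_0,s)}$ and $g^{\dir+,L}_{(x_0,s)}$, and nothing forces them to agree on $[s-\varepsilon,s]$---indeed, if $(x_0,s)\in\NU_{\dir-}\cap\NU_{\dir+}$ they split immediately below $s$. The paper avoids this entirely by an asymmetric construction: one path, after $(y_t,t)$, follows the \emph{leftmost} $\dir-$ geodesic $g^{\dir-,L}_{(y_t,t)}$, which by Lemma~\ref{lem:Splt}\ref{itm:G1} lies \emph{strictly} to the left of $\tau^-$ and hence $g^{\dir-,L}_{(y_t,t)}(s)<x_0$; the other passes through $(x_0,s)$ and then follows $g^{\dir-,L}_{(x_0,s)}$. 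Both are $\dir-$ geodesics, so coalescence (Proposition~\ref{prop:DL_all_coal}) gives agreement below some $T<s$. Agreement on $[t,t_1]$ above, agreement below $T$, and strict disagreement at time $s$ produce the forbidden interior bubble. This removes the need for any local agreement near $s$.

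Finally, a small logic slip in your first paragraph: if $r'_n:=\sup\{r\in[t,r_n]:\tau^-_r=\tau^+_r\}>t$, continuity gives $\tau^-_{r'_n}=\tau^+_{r'_n}$ but \emph{not} equality throughout $[t,r'_n]$; the interfaces could meet and split repeatedly inside that interval. The paper instead invokes Proposition~\ref{p:noBub} to produce the first re-meeting time $t_1>t$ after $(y_t,t)$ and works on $(t,t_1)$.
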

   \begin{figure}
       \centering
       \includegraphics[width=4cm]{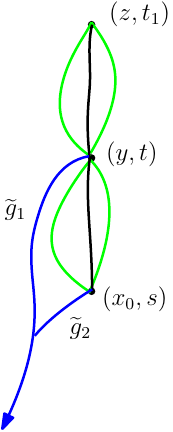}
       \caption{We show that if two interfaces starting from a point meet after splitting then they cannot split immediately again. If they do then one can construct two distinct $\dir-$ geodesics $\widetilde{g}_1$ and $\widetilde{g}_2$ starting from $(z,t_1)$ which stays together in $[t,t_1].$ $\widetilde{g}_1$ is the geodesic that follows the black path in $[t,t_1]$ and then the blue path starting from $(y,t).$ $\widetilde{g}_2$ is the geodesic that follows the black path in $[s,t]$ and then the blue path starting from $(x_0,s).$}
       \label{fig:fin_bbles}
   \end{figure}
   \begin{proof}
   
        Assume, by way of contradiction, that \eqref{eq: bbl_cond} holds, but there exists no such $\delta>0$ as in the statement. Let $y= \tau^{\dir,-}_{(x_0,s)}\left(t\right)=\tau^{\dir,+}_{(x_0,s)}\left (t \right)$. Proposition \ref{p:noBub} and our assumption implies that there exists $t_1>t$ such that 
       \[
       \tau^{\dir,-}_{(y,t)}(t_1)=\tau^{\dir,+}_{(y,t)}(t_1),\quad \text{and}\quad \tau^{\dir,-}_{(y,t)}(r)<\tau^{\dir,+}_{(y,t)}(r) \quad \forall r \in \left( t,t_1 \right ).
       \]
       Let $z=\tau^{\dir,-}_{(y,t)}(t_1)=\tau^{\dir,+}_{(y,t)}(t_1)$. Take an increasing sequence $r_n \nearrow t_1$ with $t < r_n < t_1$ for all $n$. By continuity of interfaces, we have $(z_n ,r_n) \rightarrow (z,t_1)$, where $z_n = \tau^{\dir,-}_{(y,t)}(r_n)$. By Lemma \ref{lem:SIGprecompact}, the sequence of geodesics $g^{\dir+,R}_{(z_n ,r_n)}$ has a subsequence converging to a $\dir$-directed geodesic $g_{(z,t_1)}$ rooted at $(z,t_1)$.  Further, note that by Proposition \ref{p:IntfDist}, for all $n$, 
       \[
       g^{\dir+,R}_{(z_n,r_n)}(t)=y.
       \]
       By taking limits,  $g_{(z,t_1)}(t)=y$.
       Now, by Proposition \ref{prop:Busani_N3G}, we know that $g_{(z,t_1)}$ is either a $\dir-$ geodesic or a $\dir+$ geodesic.  By \eqref{eqn:SIG_weight} and Lemma \ref{l:MV}\ref{it:Bu3} we have 
       \[
       \Ll(y,t;z,t_1)=W^{\dir-}(y,t;z,t_1)=W^{\dir+}(y,t;z,t_1).
       \]
       Therefore,  by Lemma \ref{lem:Buse_eq}, $g_{(z,t_1)}|_{[t,t_1]}$ is the restriction of both a $\dir-$ and $\dir+$ geodesic.
      
       By a similar argument (this time taking a sequence $r_n \nearrow t)$, there  exists  $\dir$-directed geodesic $g_{(y,t)}$ from $(y,t)$ such that 
      \[
       g_{(y,t)}(s)=x_0.
       \]
      Now by Proposition \ref{prop:Busani_N3G} we know that this geodesic will either be a $\dir-$ or a $\dir+$ geodesic.
      By \eqref{eqn:SIG_weight} and Lemma \ref{l:MV}\ref{it:Bu3} we have 
       \[
       \Ll(x_0,s;y,t)=W^{\dir-}(x_0,s;y,t)=W^{\dir+}(x_0,s;y,t).
       \]
       Therefore, $g_{(y,t)}|_{[s,t]}$ is a restriction of $\dir-$ geodesic. 
       We now define the following two backwards infinite paths $\wt g_1,\wt g_2$ rooted at $(z,t_1)$ as follows (see Figure \ref{fig:fin_bbles}) 
       \begin{align*}
           &\widetilde{g}_1|_{[t,t_1]}:=g_{(z,t_1)}|_{[t,t_1]},\quad \text{and}\quad \widetilde{g}_1|_{(-\infty,t]}=g^{\dir-,L}_{(y,t)}|_{(-\infty,t]},\\
           & \widetilde{g}_2|_{[t,t_1]}=g_{(z,t_1)}|_{[t,t_1]},\quad  \widetilde{g}_1|_{[s,t]}=g_{(y,t)}|_{[s,t]},\quad \text{ and }\quad  \widetilde{g}_2|_{(-\infty,s]}=g^{\dir-,L}_{(x_0,s)}|_{(-\infty,s]}.
       \end{align*}
      As a concatenation of $\dir-$ geodesics, Lemma \ref{lem:Buse_eq} and the additivity of Busemann functions implies that $\wt g_1$ and $\wt g_2$ are also $\dir-$ geodesics. By definition, $\wt g_1 = \wt g_2$ on $[t,t_1]$, any by coalescence of $\dir-$ geodesics (Proposition \ref{prop:DL_all_coal}), there exists $T < s$ such that $\wt g_1(r) = \wt g_2(r)$ for all $r \le T$. However, Lemma \ref{lem:Splt}\ref{itm:G1} implies that 
       \[
    \wt g_1(s) = g^{\dir-,L}_{(y,t)}(s)<x_0 = \wt g_1(s).
       \]
       This creates an interior geodesic bubble, a contradiction to Lemma \ref{lem:no_bubbles} (see Figure \ref{fig:fin_bbles}).
   \end{proof}

   \subsection{Proof of Theorem \ref{thm:int1}} \label{sec:geom_proof}
    \begin{proof}[Proof of Theorem \ref{thm:int1}]
    
    \textbf{Item \ref{int1:it1}:} As mentioned below the statement of the theorem, Equation \ref{eq:forw_dir} is \cite[Corollary 4.14]{Rahman-Virag-21}. There, it is stated with probability one for a fixed initial condition, but the only necessary inputs are the modulus of continuity bounds on the directed landscape from Lemma \ref{lem:Landscape_global_bound} and the slope condition 
    \[
    \lim_{x \to \infty} \f{f_{(x_0,s)}^{\dir}(x)}{x} = 2\dir.
    \]
    Since this holds on the event $\Omega_1$ for all such initial conditions (Proposition \ref{prop:Buse_basic_properties}\ref{it:Wslope}), the same proof holds.

    If $\intc \in \Intc^{\dir,-} \cup \Intc^{\dir,+}$, then Corollary \ref{cor:LRmost_pi} implies that, for all $t < 0$,
    \[
    g_{(\intc(0),0)}^{\dir-,L}(t) \le \intc(t) \le g_{(\intc(0),0)}^{\dir+,R}(t).
    \]
    Equation \eqref{eq:back_dir} now follows from the directedness of $g_{(\intc(0),0)}^{\dir-,L}$ and $g_{(\intc(0),0)}^{\dir+,R}$ (Proposition \ref{prop:DL_SIG_cons_intro}\ref{itm:arb_geod_cons}\ref{itm:geo_dir}).

    \medskip \noindent \textbf{Item \ref{int1:it2}:} As discussed in Remark \ref{rem:Branchiff}, this follows by Lemmas \ref{lem:Splt}\ref{itm:branching} and \ref{l:LRSP}. 

    \medskip \noindent \textbf{Item \ref{int1:it3}:} This was stated in Lemma \ref{lem:int_order}.

    \medskip \noindent \textbf{Item \ref{BiInt:it3}:} Let $\dir \in \DLBusedc$, $s \in \R$, $\alpha \in \I^{s,\dir}$, and $x_0,y_0 \in \Intr_\alpha^{s,\dir}$. Lemma \ref{lem:coal_int}\ref{Itco1} implies that, for $\sigg \in \{-,+\}$, 
    \[
    \intc_{(x_0,s)}^{\dir,\sig}(t)= \intc_{(x_0,s)}^{\dir,\sig}(t),\quad \text{for some }t > s.
    \]
    Then, Lemma \ref{lem:int_order} implies coalescence of these interfaces. Additionally, Lemma \ref{lem:coal_int}\ref{Itco1} implies that $\intc_{(x_0,s)}^{\dir,-}(t) = \intc_{(y_0,s)}^{\dir,+}(t)$ for some $t > s$. Then, the consistency of interfaces in Proposition \ref{p:rest} along with Lemma \ref{p:noBub} implies that, for each $n \in \N$, there exists $t_n > n$ such that 
    $
    \intc_{(x_0,s)}^{\dir,-}(t_n) = \intc_{(y_0,s)}^{\dir,+}(t_n).
    $

    \medskip \noindent \textbf{Item \ref{itm:disj_int_disj}:} This follows immediately from Lemmas \ref{lem:int_order} and \ref{lem:coal_int}\ref{Itco2}.

    \medskip \noindent \textbf{Item \ref{int1:it4}:} Take $(x,t) \notin \Split_\dir^L$, and construct a point $(x_0,s) := (x_1,s_1)$ (alternatively $(x_0,s) := (x_2,s_2))$ as in Proposition \ref{p: interfaces do not meet}. Proposition \ref{eq:tausplit} states that there is $t_1 < t$ so that
    \[
    y_1 := \intc_{(x_0,s)}^{\dir,-}(t_1) = \intc_{(x_0,s)}^{\dir,+}(t_1),\quad\text{and}\quad \intc_{(x_0,s)}^{\dir,-}(r) < \intc_{(x_0,s)}^{\dir,+}(r),\quad \forall r \in (t_1,t).
    \]
    Proposition \ref{p:fin_bubble} implies there must exist $\delta > 0$ so that $\intc_{(x_0,s)}^{\dir,-}(r) = \intc_{(x_0,s)}^{\dir,+}(r)$ for all $r \in [t_1-\delta,t_1]$. Next, define
    \[
    t_2 = \inf\{r > t_1:\intc_{(x_0,s)}^{\dir,-}(r) = \intc_{(x_0,s)}^{\dir,+}(r) \}.
    \]
    By Lemma \ref{p:noBub}, $t_2 < \infty$. By definition of $t_2$ and consistency of interfaces,
    \[
    \intc_{(y_1,t_1)}^{\dir,-}(r) = \intc_{(x_0,s)}^{\dir,-}(r) <\intc_{(x_0,s)}^{\dir,-}(r) = \intc_{(y_1,t_1)}^{\dir,-}(r),\quad\forall r \in (t_1,t_2),\quad\text{and}\quad \intc_{(x_0,s)}^{\dir,-}(t_2) <\intc_{(x_0,s)}^{\dir,-}(t_2).  
    \]
    By Proposition \ref{p:fin_bubble}, there exists $\delta > 0$ (redefining the previous $\delta$ to be smaller if necessary) such that 
    \[
    \intc_{(x_0,s)}^{\dir,-}(r) = \intc_{(x_0,s)}^{\dir,+}(r),\quad\forall r \in [t_2,t_2 + \delta]. \qedhere
    \]

       \end{proof}

\section{Eternal solutions as Busemann limits} \label{sec:lim_proofs}
In this section, we prove the results in Section \ref{sec:limits}. For two points $\mbf p,\mbf q$, we let $g_{\mbf p,\mbf q}^{L/R}$ denote the leftmost/rightmost geodesic between them.  We first prove the following intermediate lemma. 
\begin{lemma} \label{lem:gvk_lim}
The following holds on the event $\Omega_1$.  Let $\mbf v_n  = (x_n,t_n)$ be a sequence with $t_n \to - \infty$ and $\f{x_n}{|t_n|} \to \dir$. Then, for each $t \in \R$, each compact set $K \subseteq \R$, and $s < \inf\{t: (x,t) \in K\}$, there exists $N \in \N$ such that, for all $n \ge N$ and $\mbf p = (x,t) \in K$,
\[
\text{Either }\quad g_{\mbf v_n,\mbf p}^L|_{[s,t]} = g_{\mbf p}^{\dir-,L}|_{[s,t]},\quad\text{or}\quad   g_{\mbf v_n,\mbf p}^L|_{[s,t]} = g_{\mbf p}^{\dir+,L}|_{[s,t]}.
\]
The same holds if we replace $L$ everywhere with $R$. 
\end{lemma}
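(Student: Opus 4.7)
The strategy is to use coalescence of Busemann geodesics to reduce the claim to a statement about the position of a single leftmost maximizer. Via Proposition \ref{prop:DL_all_coal}, I will choose $S < s$ such that all $\dir-$ (resp.\ $\dir+$) Busemann geodesics rooted at points of $K$ coincide on $(-\infty, S]$; let $a_-$ (resp.\ $a_+$) denote the common value of these coalesced families at time $S$. By Proposition \ref{prop:DL_SIG_cons_intro}\ref{itm:DL_LRmost_geod}, $g^{\dir\pm, L}_{\mbf p}|_{[S,t]}$ is the leftmost geodesic from $(a_\pm,S)$ to $\mbf p$ for every $\mbf p \in K$; consequently, the desired equality on $[s,t] \subseteq [S,t]$ will follow once it is shown that, for all large $n$ and every $\mbf p \in K$, the leftmost maximizer $z_n := g^L_{\mbf v_n, \mbf p}(S)$ is equal to $a_-$ or $a_+$ exactly.

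\textbf{Limit identification and swap argument.} I will argue by contradiction. If the reduced claim fails, there exist sequences $n_k \to \infty$ and $\mbf p_k \in K$ with $z_k \notin \{a_-, a_+\}$. Using compactness of $K$ and precompactness of geodesics (Lemma \ref{lem:SIGprecompact}), pass to a subsequence so that $\mbf p_k \to \mbf p^* \in K$ and $g^L_{\mbf v_{n_k}, \mbf p_k}$ converges in Hausdorff topology on every compact subinterval to a backward semi-infinite geodesic $g^*$ rooted at $\mbf p^*$. Since $\mbf v_n$ has direction $\dir$, $g^*$ is $\dir$-directed, hence by Proposition \ref{prop:Busani_N3G} it is a $\dir-$ or $\dir+$ Busemann geodesic from $\mbf p^*$; by the coalescence at $S$, $g^*(S) \in \{a_-, a_+\}$, so $z_k \to a_-$ or $z_k \to a_+$. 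Treating the representative case $z_k \to a_-$ with $z_k < a_-$ (others are analogous), set $r^*_k := \inf\{r \in [S,t] : g^L_{\mbf v_{n_k}, \mbf p_k}(r) = g^{\dir-, L}_{\mbf p_k}(r)\}$. A swap argument — replacing $g^L_{\mbf v_{n_k}, \mbf p_k}|_{[r^*_k,t]}$ by $g^{\dir-, L}_{\mbf p_k}|_{[r^*_k,t]}$ and using the leftmost-between-endpoints property of both paths (Proposition \ref{prop:DL_SIG_cons_intro}\ref{itm:DL_LRmost_geod}) — forces $g^L_{\mbf v_{n_k}, \mbf p_k}|_{[r^*_k, t]} = g^{\dir-, L}_{\mbf p_k}|_{[r^*_k, t]}$.

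\textbf{Overlap growth to close the main case.} By continuity of the leftmost point-to-point geodesic in its left endpoint approached from below (an analogue of Proposition \ref{prop:g_basic_prop}\ref{itm:DL_SIG_conv_x} for point-to-point geodesics) and continuity of Busemann geodesics in their root, both $g^L_{\mbf v_{n_k}, \mbf p_k}|_{[S, t]}$ and $g^{\dir-, L}_{\mbf p_k}|_{[S, t]}$ converge in Hausdorff topology to $g^{\dir-, L}_{\mbf p^*}|_{[S,t]}$. Applying Lemma \ref{lem:overlap} to each sequence produces overlap intervals with each of the two limits whose left endpoints tend to $S$; intersecting the two overlaps yields $r^*_k \to S$, so $r^*_k < s$ for all large $k$, contradicting the hypothesis $g^L_{\mbf v_{n_k}, \mbf p_k}|_{[s, t]} \ne g^{\dir-, L}_{\mbf p_k}|_{[s, t]}$. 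The $R$-version of the lemma follows by the same argument with leftmost replaced by rightmost throughout.

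\textbf{Main obstacle.} The delicate subcase is $z_k \in (a_-, a_+)$ with $z_k \searrow a_-$ (and, symmetrically, $z_k > a_+$ with $z_k \searrow a_+$): here the left endpoint of the leftmost point-to-point geodesic approaches $a_-$ from the \emph{right}, so the Hausdorff limit of $g^L_{\mbf v_{n_k}, \mbf p_k}|_{[S, t]}$ is $g^{\dir-, R}_{\mbf p^*}|_{[S, t]}$, which differs from $g^{\dir-, L}_{\mbf p^*}|_{[S, t]}$ precisely when $\mbf p^* \in \NU_{\dir-}$. Closing this case requires ruling out that $z_n$ lies strictly in the open gap $(a_-, a_+)$ for large $n$, equivalently that $a_-$ is eventually a maximizer of $z \mapsto \Ll(\mbf v_n; z, S) + \Ll(z, S; \mbf p)$. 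I expect this rigidity to follow from the Busemann sandwich bound of Proposition \ref{prop:Buse_basic_properties}\ref{it:Busliminfsup} together with the fact that the maximizer of $z \mapsto W^{\dir \pm}(0, S; z, S) + \Ll(z, S; \mbf p)$ is exactly $a_\pm$ (at both the leftmost and rightmost level, by the coalescence at $S$), but verifying this carefully will constitute the most technical portion of the proof.
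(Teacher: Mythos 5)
Your plan correctly isolates the core obstacle, but the obstacle you flag in the last paragraph is a genuine unclosed gap, not a technicality. When $z_k := g^L_{\mbf v_{n_k},\mbf p_k}(S)$ approaches $a_-$ from the right (i.e., $z_k \in (a_-,a_+)$, $z_k \searrow a_-$), the Hausdorff limit of $g^L_{\mbf v_{n_k},\mbf p_k}|_{[S,t]}$ is $g^{\dir-,R}_{\mbf p^*}|_{[S,t]}$, not $g^{\dir-,L}_{\mbf p^*}|_{[S,t]}$. If $\mbf p^* \in \NU_{\dir-}$ these differ on an interval that can extend above $s$, so the two sequences $g^L_{\mbf v_{n_k},\mbf p_k}|_{[S,t]}$ and $g^{\dir-,L}_{\mbf p_k}|_{[S,t]}$ converge to different limits and Lemma \ref{lem:overlap} gives you no control on $r^*_k$. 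Your proposed fix would establish that $z_n \in \{a_-,a_+\}$ eventually, but this is \emph{strictly stronger} than the lemma: the leftmost point-to-point geodesic from a $z_n$ strictly inside $(a_-,a_+)$ can still merge with $g^{\dir-,L}_{\mbf p}$ before time $s$ (the split can happen entirely on $[S,s]$), so the rigidity you hope for might fail even when the lemma holds. As stated, the sketch does not close this case.

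The paper's argument takes a route that never needs to localize $z_n$ at a coalescence time. From the contradiction hypothesis it immediately upgrades the disagreement $g^L_{\mbf v_{n_\ell},\mbf p_\ell}|_{[s,t]} \neq g^{\dir\pm,L}_{\mbf p_\ell}|_{[s,t]}$ to pointwise disagreement at every $r \le s$ (all four paths are leftmost geodesics through the common endpoint $\mbf p_\ell$). It then builds a subsequential limit $g$ of the $g^L_{\mbf v_{n_\ell},\mbf p_\ell}$ which is a $\dir$-directed semi-infinite geodesic, uses the overlap lemma on the \emph{interior} of that infinite geodesic (where uniqueness is automatic, no bubbles) to transfer the pointwise disagreement to $g$ on some $(-\infty, s_1]$, and invokes coalescence to replace the $\ell$-dependent Busemann geodesics by a fixed $g^{\dir\pm,L}_{\mbf q}$ below a time $T$. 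For $r \le T \wedge s_1$, $g(r)$ differs from both $g^{\dir-,L}_{\mbf q}(r)$ and $g^{\dir+,L}_{\mbf q}(r)$, which contradicts Propositions \ref{prop:Busani_N3G} and \ref{prop:DL_all_coal}. This sidesteps the $\NU$-point and left-vs-right limit issue entirely — that distinction is precisely what your approach gets stuck on.
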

\begin{proof}
Suppose not. Then, there exists a subsequence $n_{\ell}$ such that, for all $\ell \ge 1$, there exists $\mbf p_\ell = (x,t) \in K$ ($x,t$ depending on $\ell$) such that  $\quad g_{\mbf v_{n_\ell},\mbf p_\ell}^L|_{[s,t]} \neq g_{\mbf p_\ell}^{\dir-,L}|_{[s,t]},\quad\text{and}\quad   g_{\mbf v_{n_\ell},\mbf p_\ell}^L|_{[s,t]}\neq g_{\mbf p_\ell}^{\dir+,L}|_{[s,t]}$. 
Since all four of these geodesics are the leftmost geodesic between their points (Proposition \ref{prop:DL_SIG_cons_intro}\ref{itm:DL_LRmost_geod}), and they start at the common point $\mbf p_\ell$, we have  
\be \label{eq:gpneq}
\quad g_{\mbf v_{n_\ell},\mbf p_\ell}^L(r) \neq g_{\mbf p_\ell}^{\dir-,L}(r),\quad\text{and}\quad   g_{\mbf v_{n_\ell},\mbf p_\ell}^L(r)\neq g_{\mbf p_\ell}^{\dir+,L}(r),\quad\text{for all }r \le s,\quad\text{and}\quad \ell \ge 1.
\ee
Take a sequence $s =  s_0 > s_1 > s_2 > s_3 > \cdots$ with $s_i \to -\infty$. Let $\ve > 0$, and define
\[
A_\ve := \inf\{g_{\mbf p}^{(\dir-\ve)-,L}(s_0): \mbf p \in K\},\quad\text{and}\quad B_\ve := \sup \{g_{\mbf p}^{(\dir  +\ve)+,R}(s_0): \mbf p \in K\}.
\]
By Lemma \ref{lem:bounded_maxes}, $-\infty < A_\ve \le B_\ve < \infty$. By directedness of $\mbf v_n$, for all $\ve > 0$ and $i \ge 1$, when $\ell$ is sufficiently large, we have 
\[
g_{(A_\ve,s_0)}^{(\dir- \ve)-,L}(t_{n_\ell}) \le x_{n_\ell} \le g_{(B_\ve,s_0)}^{(\dir- \ve)+,L}(t_{n_\ell}),
\]
and since the geodesics are the leftmost geodesics between their points, this implies that for such large $\ell$,
\be \label{eq:dir}
g_{(A_\ve,s_0)}^{(\dir- \ve)-,L}(r) \le g_{\mbf v_{n_\ell},\mbf p_\ell }^L(r) \le g_{(B_\ve,s_0)}^{(\dir+ \ve)+,L}(r),\quad \text{for all }r \in [t_{n_\ell},s_0],
\ee
where the first and last equalities follow by ordering of geodesics. 
By compactness and Lemma \ref{lem:precompact}, there exists a further subsequence of $\{n_\ell\}$ along which $g_{\mbf v_{n_\ell},\mbf p_\ell}^L|_{[s_1,t]}$ converges to a geodesic on $[s_1,s_0]$ in the uniform topology. Take a further subsequence along which we have convergence to a geodesic on $[s_2,s_0]$, and so on. By a diagonal argument, there exists a subsequence along which $g_{\mbf v_{n_\ell},\mbf p_\ell}^L(r)$ converges, in the topology of uniform convergence on compact sets, to an infinite geodesic, which we call $g$. We claim that
\be \label{eq:gnotin}
g(r) \notin \bigl\{g_{\mbf p_\ell}^{\dir-,L}(r),g_{\mbf p_\ell}^{\dir+,L}(r)\bigr\},\quad\text{for all }r \le s_1.
\ee
 To see this, first observe that for any $r_1 < r_2 < s_0$, $g$ is the unique geodesic between $(g(r_1),r_1)$ and $(g(r_2),r_2)$. Otherwise, an interior geodesic bubble would form, a contradiction to Lemma \ref{lem:no_bubbles}. Recall that $g$ is the uniform-on-compact limit of $g_{\mbf v_{n_\ell},\mbf p_\ell}$ along some subsequence $n_{\ell_m}$. Then, by Lemma \ref{lem:overlap}, since $s_1 < s_0$, for each $S < s_1$, we have that, for all sufficiently large $m$, 
\[
g_{\mbf v_{k_{\ell_m}},\mbf p_{\ell_m}}(r) = g(r),\quad\forall r \in [S,s_1]. 
\]
In particular, for each $r \le s_1$, \eqref{eq:gpneq} implies \eqref{eq:gnotin}. 

Lastly, fix $\mbf q \in K$. By Proposition \ref{prop:DL_all_coal}, there exists $T < s$ such that, 
\be \label{gq_coal}
g_{\mbf q}^{\dir -,L}(r) = g_{\mbf p_\ell}^{\dir -,L}(r),\quad\text{and}\quad g_{\mbf q}^{\dir +,L}(r) = g_{\mbf p_\ell}^{\dir +,L}(r), \quad \text{for all $r \le T$ and $\ell \ge 1$.}
\ee
By \eqref{eq:dir} and directedness of geodesics, the infinite geodesic $g$ has direction $\dir$. By Propositions \ref{prop:Busani_N3G} and \ref{prop:DL_all_coal}, the geodesic $g$ either coalesces with  $g_{\mbf q}^{\dir-}$ or $g_{\mbf q}^{\dir+}$. But this is not possible by \eqref{eq:gnotin} and \eqref{gq_coal}, giving a contradiction.
\end{proof}

\begin{proof}[Proof of Theorem \ref{thm:bn_gen_lim}]
\textbf{Item \ref{itm:bn_precompact}:} By the Arzel\'a-Ascoli theorem, we must show that $b_n$ is pointwise bounded and that, for every compact set $K \subseteq \R^2$, $b_n$ is uniformly equicontinuous on $K$. Without loss of generality, we may assume that the compact set $K$ contains the point $(0,0)$. 

By \cite[Theorem 7.1]{Busa-Sepp-Sore-22a}, there exists $T < \inf \{t: (x,t) \in K\}$ such that, for any two $\dir-$ geodesics $g_1,g_2$ rooted in the set $K$, $g_1|_{(-\infty,T]} = g_2|_{(-\infty,T]}$, and the same is true of any $\dir +$ geodesics. For $u \le T$, let $g^{\dir-}(u)$ and $g^{\dir+}(u)$ denote the common values of the $\dir-$ and $\dir+$ geodesics rooted in the set $K$.  By Lemma \ref{lem:gvk_lim},  there exists $N = N(K) \in \N$ such that, for all $n \ge N$, and $\mbf p= (x,t) \in K$,
\be \label{gpq_choices}
\text{Either }\quad g_{\mbf v_n,\mbf p}^L|_{[T,t]} = g_{\mbf p}^{\dir-,L}|_{[T,t]},\quad\text{or}\quad   g_{\mbf v_n,\mbf p}^L|_{[T,t]} = g_{\mbf p}^{\dir+,L}|_{[T,t]}.
\ee
We now claim that for $n \ge N$ and $\mbf p,\mbf q \in K$,
\be \label{eq:bnpq_bd}
\begin{aligned}
&\quad \,\min\{\Ll(g^{\dir-}(T),T;\mbf p) - \Ll(g^{\dir-}(T),T;\mbf q),\Ll(g^{\dir+}(T),T;\mbf p) - \Ll(g^{\dir+}(T),T;\mbf q) \} \\ &\le b_n(\mbf p) - b_n(\mbf q)   \\ &\le \max\{\Ll(g^{\dir-}(T),T;\mbf p) - \Ll(g^{\dir-}(T),T;\mbf q),\Ll(g^{\dir+}(T),T;\mbf p) - \Ll(g^{\dir+}(T),T;\mbf q) \}
\end{aligned}
\ee
Since $T < \min\{t:(x,t) \in K\}$ and $K$ is compact, the continuity of $\Ll$ and \eqref{eq:bnpq_bd} gives uniform equicontinuity of $b_n$ on $K$. Setting $\mbf q = (0,0)$, \eqref{eq:bnpq_bd} and continuity of $\Ll$ also gives pointwise boundedness of $b_n$. Given $\mbf p = (x,t),\mbf q = (y,s) \in K$, we now prove \eqref{eq:bnpq_bd} using three cases (recalling  \eqref{gpq_choices}).

\medskip \noindent \textbf{Case 1:} $g_{\mbf v_n,\mbf p}^L(T) = g^{\dir-}(T)= g_{\mbf v_n,\mbf q}^L(T)$. Then,
\begin{align*}
b_n(\mbf p) - b_n(\mbf q) &= \Ll(\mbf v_n;\mbf p) - \Ll(\mbf v_n;\mbf q) \\
&= \Ll(\mbf v_n;g^{\dir-}(T),T) + \Ll(g^{\dir-}(T),T;\mbf p) - [\Ll(\mbf v_n;g^{\dir-}(T),T) + \Ll(g^{\dir-}(T),T;\mbf q)] \\
&= \Ll(g^{\dir-}(T),T;\mbf p) - \Ll(g^{\dir-}(T),T;\mbf q).
\end{align*}

\medskip \noindent \textbf{Case 2:} $g_{\mbf v_n,\mbf p}^L(T) = g^{\dir+}(T)= g_{\mbf v_n,\mbf q}^L(T)$. Similarly as in Case 1, we have 
\[
b_n(\mbf p) - b_n(\mbf q) =\Ll(g^{\dir+}(T),T;\mbf p) - \Ll(g^{\dir+}(T),T;\mbf q).
\]

\medskip \noindent \textbf{Case 3:} $g_{\mbf v_n,\mbf p}^L(T) \neq g_{\mbf v_n,\mbf q}^L(T)$. By \eqref{gpq_choices}, we may, without loss of generality, assume that $g_{\mbf v_n,\mbf p}^L(T) = g^{\dir-}(T)$ and $g_{\mbf v_n,\mbf q}^L(T) = g^{\dir+}(T)$. Then,
\begin{align*}
\Ll(\mbf v_n;\mbf p) &= \Ll(\mbf v_n;g^{\dir-}(T),T) + \Ll(g^{\dir-}(T),T; \mbf p), \quad\text{and}\\ \Ll(\mbf v_n;\mbf q) &= \Ll(\mbf v_n;g^{\dir+}(T),T) + \Ll(g^{\dir+}(T),T;\mbf q).
\end{align*}
Then, using the reverse triangle inequality for $\Ll$, we have 
\begin{align*}
&\quad \,\Ll(g^{\dir+}(T),T; \mbf p) - \Ll(g^{\dir+}(T),T; \mbf q) \\
&\le [\Ll(\mbf v_n;g^{\dir+}(T),T) + \Ll(g^{\dir+}(T),T; \mbf p)] - [\Ll(\mbf v_n;g^{\dir+}(T),T) + \Ll(g^{\dir+}(T),T; \mbf q)] \\
&\le\Ll(\mbf v_n;\mbf p) - \Ll(v_n;\mbf q)  \\
&\le [\Ll(\mbf v_n;g^{\dir-}(T),T) + \Ll(g^{\dir-}(T),T; \mbf p)] - [\Ll(\mbf v_n;g^{\dir-}(T),T) + \Ll(g^{\dir-}(T),T; \mbf q)] \\
&\le \Ll(g^{\dir-}(T),T; \mbf p) - \Ll(g^{\dir-}(T),T; \mbf q),
\end{align*}
and this completes the proof of \eqref{eq:bnpq_bd}.

\medskip \noindent \textbf{Item \ref{itm:limits_eternal}:} Let $b$ be a subsequential limit of the sequence $b_n$ with respect to the uniform-on-compact topology. To ease the notation, we assume $b_n \to b$ as $n \to \infty$. We need to prove two things:
\begin{enumerate}[label=(\alph*), font=\normalfont]
\item \label{itm:blim_eternal}
For $s < t$ and $x \in \R$,
\be \label{eq:bxt_lim_global}
b(x,t) = \sup_{z \in \R}[b(z,s) + \Ll(z,s;x,t)].
\ee
\item  \label{itm:blim}$\lim_{|x| \to \infty} \f{b(x,0)}{x} = 2\dir$.
\end{enumerate}

To prove \ref{itm:blim_eternal}, we first observe that, for $t > s > t_n + 1$,
\be \label{eq:bn_prelimit}
\begin{aligned}
b_n(x,t) &= \Ll(\mbf v_n;x,t) - \Ll(\mbf v_n;0,0) \\
&= \sup_{z \in \R}[\Ll(\mbf v_n;z,s) + \Ll(z,s;x,t)] - \Ll(\mbf v_n;0,0) \\
&= \sup_{z \in \R}[b_n(z,s) + \Ll(z,s;x,t)].
\end{aligned}
\ee
By Lemma \ref{lem:gvk_lim}, there exists $N \in \N$ (depending on $x,t,s$) so that, for all $n \ge N$,
\[
\text{Either }\quad g_{\mbf v_n,(x,t)}^L(s) = g_{(x,t)}^{\dir-,L}(s) =: z_-,\quad\text{or}\quad   g_{\mbf v_n,(x,t)}^L(s) = g_{(x,t)}^{\dir+,L}(s) =:z_+.
\]
Then, from \eqref{eq:bn_prelimit},
\be \label{eq:bneqmax}
\begin{aligned}
b_n(x,t) &= \max\Bigl\{\Ll(\mbf v_n; z_-,s) + \Ll(z_-,s;x,t),\Ll(\mbf v_n; z_+,s) + \Ll(z_+,s;x,t)\Bigr\} - \Ll(\mbf v_n;0,0) \\
&= \max\Bigl\{b_n(z_-,s) + \Ll(z_-,s;x,t),b_n(z_+,s) + \Ll(z_+,s;x,t)\Bigr\},
\end{aligned}
\ee
and therefore, for all $z \in \R$,
\be \label{bngeq}
\max\Bigl\{b_n(z_-,s) + \Ll(z_-,s;x,t),b_n(z_+,s) + \Ll(z_+,s;x,t)\Bigr\} \ge b_n(z,s) + \Ll(z,s;x,t).
\ee
Taking limits as $n \to \infty$ in \eqref{eq:bneqmax} and \eqref{bngeq}, we have, for all $z \in \R$,
\[
b(x,t) = \max\Bigl\{b(z_-,s) + \Ll(z_-,s;x,t),b(z_+,s) + \Ll(z_+,s;x,t)\Bigr\} \ge b(z,s) + \Ll(z,s;x,t).
\]
Therefore, \eqref{eq:bxt_lim_global} holds, as desired. 

Next, we prove \ref{itm:blim}. By Proposition \ref{prop:Buse_basic_properties}\ref{it:Busliminfsup}, we have 
\[
\begin{aligned}
&W^{\dir-}(0,0;x,0) \le b(x,0) \le W^{\dir+}(0,0;x,0),\quad\text{for }x \ge 0,\quad\text{and} \\
&W^{\dir+}(0,0;x,0) \le b(x,0) \le W^{\dir-}(0,0;x,0),\quad\text{for }x \le 0.
\end{aligned}
\]
The result now follows by the asymptotic slopes of $W^{\dir-}$ and $W^{\dir +}$ (Proposition \ref{prop:Buse_basic_properties}\ref{it:Wslope}).
\end{proof}

   We complete this section with the proof of Theorem \ref{thm:existence_of_v_seq}.

\begin{proof}[Proof of Theorem \ref{thm:existence_of_v_seq}]
    Let $b$ be a $\dir$-eternal solution. If $b$ is equivalent to $(x,t) \mapsto W^{\dir-}(0,0;x,t)$, then for any compact interval $[x,y] \subseteq \R$ and $t \in \R$, Proposition \ref{prop:DL_all_coal} states that there is a time $S < t$ such that all $\dir-$ geodesics rooted in the set $[x,y] \times \{t\}$ have coalesced by time $S$. For $s \le S$, let $g^{\dir-}(s)$ be the location of this common geodesic, and take a sequence $s \ge t_n \to -\infty$ and $x_n  = g^{\dir-}(t_n)$. Then, for all $n \ge 1$, and $w,z \in [x,y]$, \eqref{geod_LR_eq_L} and additivity of the Busemann functions imply
    \[
    \Ll(\mbf v_n;z,t) - \Ll(\mbf v_n;w,t) = W^{\dir-}(w,t;z,t).
    \]
    We follow a symmetric procedure if $b$ is equivalent to $(x,t) \mapsto W^{\dir+}(0,0;x,t)$. 

    We will now assume that $b$ is not equivalent to either $(x,t) \mapsto W^{\dir-}(0,0;x,t)$ or $(x,t) \mapsto W^{\dir+}(0,0;x,t)$. Let $\intc^- \in \Intc^-$ be the associated leftmost mixed $\dir$-Busemann bi-infinite interface from the bijection in Theorem \ref{thm:mr}. Let $t \in \R$, and let $[x,y] \subseteq \R$. Set $z = \intc^-(t)$. 
    
    By Theorem \ref{it:mainthmit2} and additivity of Busemann functions,
    \be \label{eq:b_written}
    b(w,t) - b(x,t) = \begin{cases}
W^{\dir-}(x,t;w,t),& x \le w \le z, \\
W^{\dir+}(x,t;w,t), &z \le x \le w, \\
W^{\dir-}(x,t;z,t) + W^{\dir+}(z,t;w,t), &x \le z \le w. \\
    \end{cases}
    \ee
    The last case above comes by writing
    \begin{align*}
    &\quad\, W^{\dir +}(\intc_0,0;w,t) - W^{\dir-}(\intc_0,0;x,t) \\
    &= W^{\dir +}(\intc_0,0;w,t) - W^{\dir+}(\intc_0,0;\intc_t,t) - [W^{\dir-}(\intc_0,0;x,t) - W^{\dir-}(\intc_0,0;\intc_t,t)],
    \end{align*}
    where we have used the equality in Lemma \ref{l:MV}\ref{it:Bu3}. 
     By Proposition \ref{prop:DL_all_coal}, we may choose $S < t$ so that, for all $w \in [x,y] \cup \{z\}$,
    \be \label{g_coal}
    \begin{aligned}
    g^{\dir-}(s) &:= g_{(w,t)}^{\dir-,L}(s) = g_{(w,t)}^{\dir-,R}(s) = g_{(z,t)}^{\dir-,L}(s),\; \text{and}\\
    g^{\dir+}(s)&:= g_{(w,t)}^{\dir+,L}(s) = g_{(w,t)}^{\dir+,R}(s) = g_{(z,t)}^{\dir+,L}(s) ,\, \text{for  }s \le S.
    \end{aligned}
    \ee
    By Theorem \ref{thm:BiInt}\ref{itm:Interface_splitting}, $z \in \Split_{t,\dir}^L$, so 
    \be \label{eq:gztsplit}
    g_{(z,t)}^{\dir-,L}(s) < g_{(z,t)}^{\dir+,L}(s),\quad\text{for all }s < t.
    \ee
    We now claim that there exists $T < S$ such that, for all $s \le T$ and all $w \in [g^{\dir-}(s),g^{\dir+}(s)]$, we have 
    \be \label{eq:g_in_set}
        g_{(w,s),(z,t)}^L(S) \in \{g^{\dir-}(S),g^{\dir+}(S)\}.
    \ee
    To see this, first note that, by ordering of geodesics,
    \[
    g_{(w,s),(z,t)}^L(r) \in [g_{(z,t)}^{\dir-,L}(r),g_{(z,t)}^{\dir+,L}(r)], \quad\text{for all }r \in [s,t],
    \]
     and since each is a leftmost geodesic to the point $(z,t)$, if \eqref{eq:g_in_set} fails, then using \eqref{g_coal},
     \[
     g^{\dir-}(r) < g_{(w,s),(z,t)}^L(r) < g^{\dir+}(r),\quad\text{for all }r \in [s,S].
     \]
     Then, by a similar procedure as in the proof of Lemma \ref{lem:gvk_lim}, if such a $T$ did not exist, we could construct a $\dir$-directed geodesic $g$ along a subsequence of point-to-point geodesics such that $g$ does not coalesce with $g_{(z,t)}^{\dir-,L}$ or $g_{(z,t)}^{\dir+,L}$, a contradiction to Propositions \ref{prop:Busani_N3G} and \ref{prop:DL_all_coal}. 
     \begin{figure}
         \includegraphics[width=8 cm]{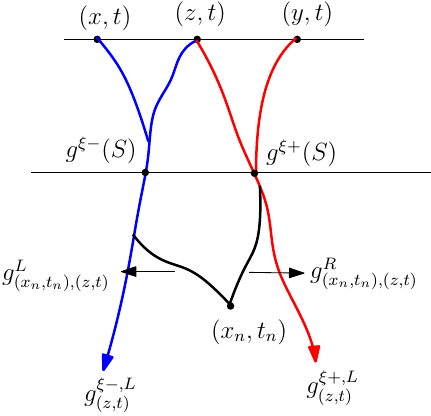}
         \caption{To prove Theorem \ref{thm:existence_of_v_seq} we choose $S$ such that all $\dir-$ (resp.\ all $\dir+$) geodesics starting from the interval $[x,y]$ have coalesced before time $S$. Now we choose a sequence of time $\{t_n\}$ such that for all $n$ and for all $w \in [g^{\dir-}(t_n),g^{\dir+}(t_n)], g^{L}_{(w,t_n),(z,t)}(S) \in \{g^{\dir-}(S),g^{\dir+}(S) \}$. $x_n \in [g^{\dir-}(t_n),g^{\dir+}(t_n)]$ is the maximum value in $[g^{\dir-}(t_n),g^{\dir+}(t_n)]$ such that $g^L_{(x_n,t_n),(z,t)}(S)=g^{\dir-}(S).$ Finally if we set $\mbf v_n:=(x_n,t_n),$ then $\mbf v_n$ is $\dir$ directed and satisfies $b(w,t)-b(x,t)=\Ll \left(\mbf v_n;w,t)-\Ll(\mbf v_n;x,t) \right)$ for all $w \in [x,y]$ and for all sufficiently large $n$.}
         \label{fig:converse_subsequence_problem}
     \end{figure}

     Now, for this choice of $T < S$, let $T > t_1 > t_2 > \cdots$ be an arbitrary sequence. For $n \ge 1$, define
     \be \label{xn_def}
     x_n = \sup\{w \in [g^{\dir-}(t_n),g^{\dir+}(t_n)]: g_{(w,t_n),(z,t)}^L(S) = g^{\dir-}(S)\}.
     \ee
     The set over which the supremum is taken is nonempty because it contains the left endpoint $g^{\dir -}(t_n) = g_{(z,t)}^{\dir-,L}(t_n)$ \eqref{g_coal}. By ordering of geodesics and Equations \eqref{g_coal} and \eqref{eq:g_in_set},
     \be \label{eq:g_options}
     \begin{aligned}
     g_{(w,t_n),(z,t)}^L(S) &= g^{\dir-}(S),\quad\text{for }w < x_n,\quad\text{and} \\
     g_{(w,t_n),(z,t)}^L(S) &= g^{\dir+}(S),\quad\text{for }w > x_n.
     \end{aligned}
     \ee
     Set $\mbf v_n = (x_n,t_n)$. By \eqref{xn_def} and directedness of $\dir \sig$ geodesics, $\f{x_n}{|t_n|} \to \dir$ as $n \to \infty$. We now claim that, for all $n$, 
     \be \label{eq:g_split_geod}
     g_{\mbf v_n,(z,t)}^L(S) = g^{\dir-}(S),\quad\text{and}\quad g_{\mbf v_n,(z,t)}^R(S) \ge g^{\dir+}(S). 
     \ee
     (see Figure \ref{fig:converse_subsequence_problem}).

     To see this, first observe that, by Lemma \ref{lem:precompact}, as $w \nearrow x_n$, the geodesic $g^L_{(w,t_n),(z,t)}$ converges uniformly, along some subsequence, to a geodesic $g^L$, between the points $\mbf v_n = (x_n,t_n)$ and $(z,t)$. By \eqref{eq:g_options}, the limiting geodesic satisfies $g^L(S) = g^{\dir-}(S)$, so by \eqref{eq:g_in_set}, the same is true of the leftmost such geodesic, that is, $g_{\mbf v_n,(z,t)}^L(S) = g^{\dir-}(S)$. Similarly, as $w \searrow x_n$, the geodesic $g^L_{(w,t_n),(z,t)}$ converges uniformly, along some subsequence, to a geodesic $g^R$, between the points $\mbf v_n$ and $(z,t)$, and the limiting geodesic satisfies $g^R(S) = g^{\dir+}(S)$. In the case $x_n = g^{\dir-}(t_n)$ or $g^{\dir+}(t_n)$, the proof is the same, except we only need to take limits from one side. Hence, $g_{\mbf v_n,(z,t)}^R(S) \ge g^{\dir+}(S)$ since $g_{\mbf v_n,(z,t)}^R$  is the rightmost geodesic between $\mbf v_n$ and $(z,t)$.

     Next, Lemma \ref{lem:gvk_lim}  and \eqref{g_coal} imply that there exists $N \in \N$ so that, for $n \ge N$, and each $w \in [x,y]$,
     \[
     \text{Either}\quad g_{\mbf v_n,(w,t)}^L(S) = g^{\dir-}(S),\quad\text{or}\quad g_{\mbf v_n,(w,t)}^L(S) = g^{\dir+}(S),
     \]
     and the same holds for $R$ in place of $L$. Ordering of geodesics, \eqref{eq:g_split_geod}, and $g^{\dir-}(S) < g^{\dir +}(S)$ \eqref{eq:gztsplit} imply that 
     \be \label{eq:6745}
     \begin{aligned}
        g_{\mbf v_n,(w,t)}^L(S) &= g^{\dir-}(S),\quad\text{for }w \le z,\quad\text{and} \\
        g_{\mbf v_n,(w,t)}^R(S) &= g^{\dir+}(S),\quad\text{for }w \ge z.
       \end{aligned} 
     \ee
     Then, when $n \ge N$, we claim that $\Ll(\mbf v_n;w,t) - \Ll(\mbf v_n;x,) = b(w,t) - b(x,t)$. We prove this in three cases.
     
     \medskip \noindent \textbf{Case 1: $x \le w \le z$:}
   Then, rearranging and using \eqref{g_coal}, and \eqref{geod_LR_eq_L} gives
     \begin{align*}
     &\quad \, \Ll(\mbf v_n;w,t) - \Ll(\mbf v_n;x,t) \\
     &= \Ll(\mbf v_n;g^{\dir-}(S),S) + \Ll(g^{\dir-}(S),S; w,t) - \bigl[\Ll(\mbf v_n;g^{\dir-}(S),S) + \Ll(g^{\dir-}(S),S; x,t)\bigr] \\
     &= \Ll(g^{\dir-}(S),S;w,t) - \Ll(g^{\dir-}(S),S;x,t) \\
     &= W^{\dir-}(g^{\dir-}(S),S;w,t) - W^{\dir-}(g^{\dir-}(S),S;x,t) = W^{\dir-}(x,t;w,t),
     \end{align*}

     \medskip \noindent \textbf{Case 2: $z \le x \le w$:} A symmetric argument shows that $\Ll(\mbf v_n;w,t) - \Ll(\mbf v_n;x,t) = W^{\dir+}(x,t;w,t)$.

     \medskip \noindent \textbf{Case 3: $x \le z \le w$:} In this case,
     \begin{align*}
     &\quad \, \Ll(\mbf v_n;w,t) - \Ll(\mbf v_n;x,t) \\
     &= \Ll(\mbf v_n;w,t) - \Ll(\mbf v_n;z,t) + \Ll(\mbf v_n;z,t) - \Ll(\mbf v_n;x,t) \\
     &= W^{\dir+}(z,t;w,t) + W^{\dir-}(x,t;z,t),
     \end{align*}
     where the last equality follows by applied both of the previous cases.
     In all three cases, this equals $b(w,t) - b(x,t)$ by \eqref{eq:b_written}.
\end{proof}

\appendix

         \section{Technical inputs and results} \label{appx:technical}
        The following was also stated in \cite[Lemma A.3]{Busa-Sepp-Sore-22a}. 
        \begin{lemma} \label{lem:cmcf}
Let $S \subseteq \R^n$, and let  $f_n:S \rightarrow \R$ be a sequence of continuous functions, converging uniformly to the function $f:S \rightarrow \R$. Assume that there exists a sequence $\{c_n\}$, of maximizers of $f_n$, converging to some $c \in S$. Then, $c$ is a maximizer of $f$. 
\end{lemma}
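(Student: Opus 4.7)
The plan is to pass to the limit in the defining inequality $f_n(c_n) \ge f_n(x)$, using uniform convergence on the left-hand side and pointwise convergence on the right-hand side, to conclude $f(c) \ge f(x)$ for every $x \in S$.

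First I would observe that $f$ is continuous on $S$, since uniform limits of continuous functions are continuous. Next, fix an arbitrary $x \in S$. For every $n$, the fact that $c_n$ is a maximizer of $f_n$ gives
\[
f_n(c_n) \ge f_n(x).
\]
The right-hand side converges to $f(x)$ by uniform (hence pointwise) convergence. For the left-hand side I would estimate
\[
|f_n(c_n) - f(c)| \;\le\; |f_n(c_n) - f(c_n)| + |f(c_n) - f(c)|.
\]
The first term tends to $0$ by uniform convergence of $f_n$ to $f$, and the second tends to $0$ by continuity of $f$ at $c$ together with $c_n \to c$. Thus $f_n(c_n) \to f(c)$.

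Passing to the limit in $f_n(c_n) \ge f_n(x)$ yields $f(c) \ge f(x)$. Since $x \in S$ was arbitrary, $c$ is a maximizer of $f$.

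There is no substantive obstacle here; the only mildly delicate point is the two-term split used to deduce $f_n(c_n) \to f(c)$, which requires simultaneously invoking uniform convergence (to replace $f_n(c_n)$ by $f(c_n)$) and continuity of the limit (to replace $f(c_n)$ by $f(c)$). Pointwise convergence alone would not suffice.
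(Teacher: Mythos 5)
Your proof is correct and follows essentially the same route as the paper's: pass to the limit in $f_n(c_n) \ge f_n(x)$ using the two-term decomposition $|f_n(c_n) - f(c)| \le |f_n(c_n) - f(c_n)| + |f(c_n) - f(c)|$, controlling the first term by uniform convergence and the second by continuity of the limit at $c$. You spell out a couple of steps the paper leaves implicit (that $f$ is continuous as a uniform limit of continuous functions, and the pointwise convergence on the right-hand side), but the argument is the same.
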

\begin{proof}
$f_n(c_n) \ge f_n(x)$ for all $x \in S$, so  it suffices to show that $f_n(c_n) \rightarrow f(c)$. This follows from the uniform convergence of $f_n$ to $f$, the continuity of $f$, and
\[
|f_n(c_n) - f(c)| \le |f_n(c_n) - f(c_n)| +|f(c_n) - f(c)|. \qedhere
\]
\end{proof}

\begin{lemma} \label{lem:sup_continuous}
Let $f:\R \to \R$ be a continuous function such that $\sup_{-\infty < z \le 0} f(z) < \infty$. Then, the function $g:\R \to \R$ defined by 
\[
g(x) = \sup_{-\infty < z \le x} f(z)
\]
is continuous.
\end{lemma}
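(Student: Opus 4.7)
The plan is to first verify that $g$ takes finite values, then prove right-continuity and left-continuity separately, using monotonicity of $g$ together with continuity of $f$ at the target point.

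For finiteness of $g$, I would split according to the sign of $x$. For $x \le 0$, we have $g(x) \le \sup_{z \le 0} f(z) < \infty$ by hypothesis. For $x > 0$, I would write $g(x) = \max\{g(0), \sup_{0 \le z \le x} f(z)\}$; the second term is finite because continuous functions on the compact interval $[0,x]$ are bounded. Monotonicity of $g$ is immediate from the definition as a supremum over a nested family of sets, and monotone real-valued functions automatically have one-sided limits at every point, so I only need to pin these down.

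For right-continuity at a fixed $x \in \R$, take $x_n \searrow x$. Monotonicity gives $g(x_n) \ge g(x)$, so it suffices to show $\limsup g(x_n) \le g(x)$. I would argue by contradiction: if $\limsup g(x_n) > g(x) + 2\ve$ for some $\ve > 0$, then passing to a subsequence and using that $g(x_n) = \max\{g(x), \sup_{x < z \le x_n} f(z)\}$, we can pick $z_n \in (x,x_n]$ with $f(z_n) > g(x) + \ve$. Since $z_n \to x$, continuity of $f$ forces $f(z_n) \to f(x) \le g(x)$, a contradiction.

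For left-continuity, take $x_n \nearrow x$ and set $L = \lim g(x_n) \le g(x)$. I would split into two cases depending on whether $f(x) = g(x)$ or $f(x) < g(x)$. In the first case, continuity of $f$ gives $f(x_n) \to f(x) = g(x)$, and $g(x_n) \ge f(x_n)$ then yields $L \ge g(x)$. In the second case, $g(x) = \sup_{z < x} f(z)$, so for any $\ve > 0$ there exists $z^\star < x$ with $f(z^\star) > g(x) - \ve$; for $n$ large enough $z^\star \le x_n$, hence $g(x_n) \ge f(z^\star) > g(x) - \ve$, and taking $\ve \to 0$ gives $L \ge g(x)$. In both cases $L = g(x)$, completing the proof. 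There is no real obstacle here; the only mildly subtle point is ensuring the $\ve$-maximizer argument handles the case when the supremum defining $g(x)$ is approached only through points accumulating at $x$, which is precisely why the split into $f(x)=g(x)$ versus $f(x)<g(x)$ is convenient.
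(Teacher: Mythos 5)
Your proof is correct and rests on the same core observation as the paper's: the increment $g(x+\delta) - g(x) = \bigl(\sup_{[x,x+\delta]} f - g(x)\bigr)\vee 0 \le \bigl(\sup_{[x,x+\delta]} f - f(x)\bigr) \vee 0$ vanishes as $\delta \to 0$ by continuity of $f$. The paper packages this as a single two-line estimate that handles both one-sided limits at once (applied at $x$ for right-continuity and at $x-\delta$ for left-continuity), while you unpack it into separate sequential arguments with a contradiction for the right side and a case split $f(x) = g(x)$ versus $f(x) < g(x)$ for the left side, but the mathematical content is the same.
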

\begin{proof}
Note that the condition $g(0) < \infty$ implies that $g(x) < \infty$ for all $x \in \R$ since $g(x) < g(0)$ for $x < 0$ and $g(x) = g(0) \vee \sup_{0 \le z \le x} f(z)$ for $x > 0$. Let $x \in \R$ and $\delta > 0$. Note that $g(x+\delta) \ge g(x)$, and
\[
g(x+ \delta) - g(x) = \Bigl(\sup_{x \le z \le x + \delta}f(z) - g(x)\Bigr)\vee 0 \le \Bigl(\sup_{x \le z \le x + \delta}f(z) - f(x)\Bigr)\vee 0,
\]
and the continuity  now follows from continuity of $f$. 
\end{proof}
        
        We now state some results on the modulus of continuity for the directed landscape. 
        \begin{lemma}[\cite{Directed_Landscape}, Corollary 10.7] \label{lem:Landscape_global_bound}
There exists a random constant $C_{\text{DL}}$ such that for all $v = (x,s;y,t) \in \Rup$, we have 
\[
\Bigl|\Ll(x,s;y,t) + \f{(x - y)^2}{t - s}\Bigr| \le C_{\text{DL}} (t - s)^{1/3} \log^{4/3} \Bigl(\f{2(\|v\| + 2)}{t - s}\Bigr)\log^{2/3}(\|v\| + 2),
\]
where $\|v\|$ is the Euclidean norm.
\end{lemma}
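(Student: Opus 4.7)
The plan is to prove this uniform modulus-of-continuity bound by combining the known one-point tail estimates for $\Ll$ with the scaling and stationarity of the directed landscape, discretizing over a dyadic grid, and using a Borel–Cantelli argument to produce the random constant $C_{\mathrm{DL}}$. The statement asserts a global deterministic shape for the fluctuations of $\Ll(x,s;y,t) + (x-y)^2/(t-s)$, so the heart of the matter is controlling how these fluctuations grow as we take $\|v\|$ large and/or $t-s$ small.

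First I would recall (from \cite{Directed_Landscape}) the basic symmetries: the translation invariance in $(x,s)\mapsto(x+a,s+b)$, together with the ``KPZ scaling'' $\Ll(x,s;y,t)\deq \alpha \Ll(\alpha^{-2}x,\alpha^{-3}s;\alpha^{-2}y,\alpha^{-3}t)$ for $\alpha>0$, and the one-point Tracy–Widom upper-tail bound
\[
\Pp\Bigl(\Ll(0,0;0,1) > a\Bigr) \le C e^{-c a^{3/2}}, \qquad \Pp\Bigl(\Ll(0,0;0,1) < -a\Bigr)\le C e^{-c a^{3}},
\]
together with its translation to arbitrary $(x,s;y,t)$ via the recentering $\Ll(x,s;y,t)+(x-y)^2/(t-s)\deq(t-s)^{1/3}\Ll(0,0;0,1)$. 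These are the three inputs that do all the work.

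Next I would set up a dyadic net. For each pair of integers $(m,n)\in\Z_{\geq 0}^2$, consider the slab of vertices $v=(x,s;y,t)\in\Rup$ with $2^{n-1}\le\|v\|+2<2^n$ and $2^{-m-1}\le t-s<2^{-m}$. On this slab I would place a $\delta_{m,n}$-grid of sample points, with $\delta_{m,n}=2^{-m/3}2^{-n}$ chosen so that the expected number of ``standard boxes'' (of size equal to the scaling units $\sim 2^{-2m/3}$ spatially and $\sim 2^{-m}$ in time) intersecting the slab is at most polynomial in $2^{m+n}$. Applying the one-point tail bound at each grid vertex together with a union bound, and choosing the threshold in the tail bound to be a multiple of $(t-s)^{1/3}\log^{4/3}(2(\|v\|+2)/(t-s))\log^{2/3}(\|v\|+2)$, I would arrange for the probability of failure on slab $(m,n)$ to decay like $e^{-c(m+n)}$, which is summable. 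Borel–Cantelli then yields a (random, a.s.\ finite) slab index $N$ beyond which the desired bound holds at every grid point.

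To promote this from grid points to all of $\Rup$, I would use a standard modulus-of-continuity result for $\Ll$ inside boxes of standard size (itself proved in \cite{Directed_Landscape} using KPZ scaling and chaining). Filling in the remaining finitely many slabs with a larger constant absorbs the base case. The main obstacle is bookkeeping: one must choose $\delta_{m,n}$ and the threshold so that (i) the total probability over all slabs is finite, (ii) the increment of $\Ll$ between adjacent grid points is controlled by the same $\log$-corrected envelope, and (iii) the polynomial growth of the entropy at scale $2^{-m}$ (in time) and $2^{n}$ (in $\|v\|$) matches the $(\log(\|v\|+2)/(t-s))^{4/3}\cdot(\log(\|v\|+2))^{2/3}$ form on the right-hand side. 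The exponent $4/3$ comes from the fact that the $3/2$-power upper-tail decay requires $(\log N)^{2/3}$ worth of threshold to union-bound $N$ points, combined with an extra $(\log N)^{2/3}$ from the second layer of chaining used to bridge grid points to continuous $v$; the exponent $2/3$ for $\log(\|v\|+2)$ is precisely a single layer of union-bound slack.
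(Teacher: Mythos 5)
The paper does not give a proof of this lemma: it is quoted verbatim as Corollary~10.7 of \cite{Directed_Landscape}, so there is no internal argument to compare against. Your proposal outlines the strategy actually used in that reference (KPZ scaling and stationarity, one-point tail estimates, a dyadic partition in both $\|v\|$ and $t-s$, a union bound, Borel--Cantelli, and a chaining step to pass from a net to all of $\Rup$), so the plan is in the right spirit.

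Where the proposal falls short is the exponent bookkeeping, which is the actual content of the estimate. A small inconsistency first: you set $\delta_{m,n}=2^{-m/3}2^{-n}$ but in the same sentence identify the correct KPZ spatial unit at temporal separation $\sim 2^{-m}$ as $2^{-2m/3}$; as written it is unclear what $\delta_{m,n}$ is spacing. The more serious issue is the claim that the exponent $4/3$ decomposes as $2/3+2/3$, one $2/3$ from a union bound and one from ``a second layer of chaining.'' That is not how chaining aggregates: chaining over geometrically decaying dyadic increments telescopes into a convergent geometric series and does not systematically inject an extra multiplicative $\log^{2/3}$ on top of the union-bound threshold. In the cited proof, the $\log^{4/3}$ is already present in the local modulus-of-continuity estimate (Proposition~10.5 / Theorem~1.5 of \cite{Directed_Landscape}), which is obtained by its own multi-scale chaining argument carried out on the prelimiting Brownian last-passage model; the global Corollary~10.7 then layers a dyadic-slab Borel--Cantelli step on top, and that step is responsible for the $\log^{2/3}(\|v\|+2)$ factor and the almost-surely finite random constant. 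Your plan to invoke ``a standard modulus-of-continuity result for $\Ll$ inside boxes of standard size'' quietly black-boxes the very estimate that carries the $\log^{4/3}$, so your heuristic for the $4/3$ does not actually derive it. To turn this into a proof you would need either to carry out the local chaining argument in full, or to cite the local estimate explicitly and execute only the dyadic-slab union bound, being careful that the number of grid points on the $(m,n)$-slab grows like $2^{c(m+n)}$ so that the summability in Borel--Cantelli is genuine.
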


\begin{lemma}[\cite{Dauvergne-Sarkar-Virag-2022}, Lemma 3.1] \label{lem:precompact}
The following holds on a single event of probability one. Let $(p_n;q_n) \to (p,q) = (x,s;y,t) \in \Rup$, and let $g_n$ be any sequence of geodesics from $p_n$ to $q_n$. Then, the sequence of graphs $\graph g_n$ is precompact in the Hausdorff metric, and any subsequential limit of $\graph g_n$ is the graph of a geodesic from $p$ to $q$. In particular, since geodesics continuous functions, the convergence of geodesics holds uniformly.  
\end{lemma}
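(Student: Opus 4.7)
\medskip

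The plan is to first establish that, for any Hausdorff-convergent subsequence, the limit is well-defined (as a closed subset of $\R^2$), then argue that it is the graph of a continuous function $g:[s,t]\to\R$, and finally use the continuity of $\Ll$ to show $g$ is a geodesic from $p$ to $q$.

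The first step is to show that, on the event from Lemma \ref{lem:Landscape_global_bound}, the graphs $\graph g_n$ are eventually contained in a common compact rectangle $K \subseteq \R^2$. Since $(p_n;q_n) \to (p;q)$, the time coordinates are eventually trapped in $[s-1,t+1]$, so it suffices to bound the spatial coordinates. If $g_n(r)=z$ for some $r\in[s_n,t_n]$, then the geodesic property gives $\Ll(p_n;z,r)+\Ll(z,r;q_n)=\Ll(p_n;q_n)$. Applying the parabolic modulus of continuity bound from Lemma \ref{lem:Landscape_global_bound} to each of the three terms, and using that $\Ll(p_n;q_n)\to \Ll(p;q)$ is bounded, one sees that if $|z|$ is sufficiently large, the sum $\Ll(p_n;z,r)+\Ll(z,r;q_n)$ becomes strictly less than $\Ll(p_n;q_n)$ (the two parabolas $-(x_n-z)^2/(r-s_n)$ and $-(z-y_n)^2/(t_n-r)$ grow faster than the logarithmic error terms). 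This forces $|z|\le M$ for some deterministic $M$ depending on $K$ and $C_{\mathrm{DL}}$, giving the uniform confinement. Since the closed subsets of a compact metric space form a compact space under Hausdorff distance (Blaschke selection theorem), the sequence $\graph g_n$ is precompact.

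Next, suppose along a subsequence $\graph g_{n_k}\to F$ in the Hausdorff metric, where $F\subseteq K$ is closed. The key structural claim is that $F$ is the graph of a continuous function $g:[s,t]\to\R$. The projection of $F$ onto the time axis is closed and contains the endpoints, and from Hausdorff convergence $(p_{n_k},q_{n_k})\in \graph g_{n_k}$ it contains $(x,s)$ and $(y,t)$. To see $F$ contains at least one point above every $r\in[s,t]$, take $r_k\to r$ with $r_k\in[s_{n_k},t_{n_k}]$ and extract a limit of $(g_{n_k}(r_k),r_k)\in \graph g_{n_k}\subseteq K$. To see $F$ contains \emph{at most} one point above each $r$, suppose $(z_1,r)$ and $(z_2,r)$ both lie in $F$ with $z_1<z_2$. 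Approximating these by points on $\graph g_{n_k}$ and using the geodesic identity together with continuity of $\Ll$, we would obtain two distinct splits of $\Ll(p;q)$ through $(z_1,r)$ and $(z_2,r)$, and a short argument with the parabolic bound rules this out (any such split would force the graphs to oscillate wildly and fail the geodesic property in the limit). This yields $F=\graph g$ for a function $g$; continuity of $g$ then follows because Hausdorff limits of graphs of uniformly bounded monotone-free continuous functions, once shown to be single-valued, are automatically continuous.

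Finally, to see $g$ is a geodesic from $p$ to $q$, pick any $s\le r_1<r_2\le t$ and choose $r_1^{(k)},r_2^{(k)}$ with $(g_{n_k}(r_i^{(k)}),r_i^{(k)})\to(g(r_i),r_i)$. By the geodesic property of $g_{n_k}$,
\[
\Ll(p_{n_k};g_{n_k}(r_1^{(k)}),r_1^{(k)})+\Ll(g_{n_k}(r_1^{(k)}),r_1^{(k)};g_{n_k}(r_2^{(k)}),r_2^{(k)})+\Ll(g_{n_k}(r_2^{(k)}),r_2^{(k)};q_{n_k})=\Ll(p_{n_k};q_{n_k}),
\]
and passing to the limit using continuity of $\Ll$ on $\Rup$ (applied to the three pieces whose time gaps are eventually positive) gives the analogous identity for $g$ with endpoints $p,q$, which is exactly the defining property of a geodesic. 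The uniform convergence $g_{n_k}\to g$ on $[s,t]$ follows from Hausdorff convergence of the graphs together with continuity of $g$.

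The main obstacle I expect is Step~2, ruling out multiple-valued limits of the form $(z_1,r),(z_2,r)\in F$. The spatial variable is not monotone along the geodesic in general, so one cannot appeal to an order argument; instead one must quantitatively exploit the parabolic decay in Lemma \ref{lem:Landscape_global_bound} to contradict the geodesic equality, which is the technical heart of the argument.
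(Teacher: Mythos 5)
The paper cites this result from \cite{Dauvergne-Sarkar-Virag-2022} without reproducing its proof, so there is no in-paper argument to compare against; I am evaluating your proposal on its own merits. Steps 1 and 3 (uniform confinement of the graphs via the parabolic modulus of continuity bound, and passing to the limit in the geodesic identity using continuity of $\Ll$ on compacts in $\Rup$) are sound. The genuine gap is in Step 2, which you flag as the technical heart but do not actually prove: the sentence claiming that a multi-valued limit ``would force the graphs to oscillate wildly and fail the geodesic property in the limit'' is not a correct reason, and no argument is supplied. Oscillation is not the obstruction.

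The argument you need is a direct parabolic-penalty estimate on a middle segment of $g_{n_k}$. Suppose $(z_1,r),(z_2,r)\in F$ with $z_1<z_2$ and $s<r<t$. Choose $r_k^{(1)}\le r_k^{(2)}$ converging to $r$ with $g_{n_k}(r_k^{(i)})\to z_i$. The geodesic identity gives
\[
\Ll(p_{n_k};q_{n_k})=\Ll\bigl(p_{n_k};g_{n_k}(r_k^{(1)}),r_k^{(1)}\bigr)+\Ll\bigl(g_{n_k}(r_k^{(1)}),r_k^{(1)};g_{n_k}(r_k^{(2)}),r_k^{(2)}\bigr)+\Ll\bigl(g_{n_k}(r_k^{(2)}),r_k^{(2)};q_{n_k}\bigr).
\]
The left side converges to the finite value $\Ll(p;q)$, and the two outer terms are bounded above uniformly in $k$ by Lemma \ref{lem:Landscape_global_bound} (the parabolic contribution is nonpositive and the logarithmic error is bounded on the compact set $K$, with time gaps bounded away from $0$ since $s<r<t$). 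But the middle term is at most $-\bigl(g_{n_k}(r_k^{(2)})-g_{n_k}(r_k^{(1)})\bigr)^2/(r_k^{(2)}-r_k^{(1)})$ plus an error of order $(r_k^{(2)}-r_k^{(1)})^{1/3}$ up to logarithms; since the numerator tends to $(z_2-z_1)^2>0$ while $r_k^{(2)}-r_k^{(1)}\to 0$, this term tends to $-\infty$, a contradiction. The endpoint cases $r\in\{s,t\}$ are handled identically, with the middle segment replaced by the segment from $p_{n_k}$ (or to $q_{n_k}$). Once this is in place the rest of your proof works. As an aside, an alternative and perhaps cleaner route—closer to what is typically done—is to use the same parabolic-penalty estimate applied to pairs of nearby points on a single $g_n$ to derive an equicontinuity modulus for the geodesics directly, and then invoke Arzel\`a--Ascoli; this avoids having to rule out multi-valued Hausdorff limits as a separate step.
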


\begin{lemma}[\cite{Dauvergne-Sarkar-Virag-2022}, Lemma 3.3] \label{lem:overlap}
The following holds on a single event of full probability. Let $(p_n;q_n) = (x_n,s_n;y_n,u_n) \in \Rup \to (p;q) = (x,s;y,u) \in \Rup$, and let $g_n$ be any sequence of geodesics from $p_n$ to $q_n$. Suppose that either
\begin{enumerate} [label=\rm(\roman{*}), ref=\rm(\roman{*})]  \itemsep=3pt
    \item \label{uniqn} For all $n$, $g_n$ is the unique geodesic from $(x_n,s_n)$ to $(y_n,u_n)$ and $\graph g_n \to \graph g$ for some geodesic $g$ from $p$ to $q$, or
    \item \label{uniqueg} There is a unique geodesic $g$ from $p$ to $q$.
    \end{enumerate}
    Then, the \textbf{overlap}
    \[
    O(g_n,g) := \{t \in [s_n,u_n]\cap [s,u]: g_n(t) = g(t)\}    
    \]
    is an interval for all $n$ whose endpoints converge to $s$ and $u$. 
\end{lemma}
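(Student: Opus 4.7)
The plan is to establish the two assertions in turn: first that $O(g_n,g)$ is an interval, and then that its endpoints tend to $s$ and $u$.

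For the interval property, I would take $t_1 < t_2$ both lying in $O(g_n,g)$, so that $g_n(t_i) = g(t_i)$ for $i = 1,2$; the two restrictions $g_n|_{[t_1,t_2]}$ and $g|_{[t_1,t_2]}$ are then geodesics sharing the common endpoints $(g(t_1),t_1)$ and $(g(t_2),t_2)$. Under hypothesis (i), the metric composition property \eqref{L_comp} together with uniqueness of $g_n$ from $p_n$ to $q_n$ forces $g_n$ to be the unique geodesic between any two of its points: if the restriction of $g$ differed from that of $g_n$ on $[t_1,t_2]$, splicing $g|_{[t_1,t_2]}$ into $g_n$ over this sub-interval would produce a second geodesic from $p_n$ to $q_n$ of the same $\Ll$-length. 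Under hypothesis (ii) one splices symmetrically, inserting $g_n|_{[t_1,t_2]}$ into $g$ to produce a second geodesic from $p$ to $q$, violating uniqueness of $g$. In either case $g_n = g$ throughout $[t_1,t_2]$, so $O(g_n,g)$ is an interval.

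For the endpoint convergence, the first step is to upgrade the precompactness of Lemma \ref{lem:precompact} to uniform convergence $g_n \to g$ on compact subsets of $(s,u)$: under hypothesis (ii), every subsequential Hausdorff limit of $\graph g_n$ is a geodesic from $p$ to $q$, hence equals $\graph g$ by uniqueness; under (i) this convergence is the standing hypothesis. Writing $a_n = \inf O(g_n,g)$ and $b_n = \sup O(g_n,g)$, I would prove $a_n \to s$ by contradiction: if $a_n \ge s+\varepsilon$ along a subsequence, then $g_n(t) \ne g(t)$ for all $t \in [s_n \vee s,\, s+\varepsilon)$, so the continuous difference $h_n := g_n - g$ has constant sign on that interval while tending uniformly to zero on its interior. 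The statement $b_n \to u$ is symmetric.

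The main obstacle is converting this persistent separation into a contradiction with uniqueness. I would fix an interior time $t_0 \in (s,s+\varepsilon)$; under hypothesis (ii), the value $g(t_0)$ is the unique maximizer of $z \mapsto \Ll(p,(z,t_0)) + \Ll((z,t_0),q)$, because any second maximizer would yield a second geodesic from $p$ to $q$ via leftmost/rightmost concatenation. Combined with Lemma \ref{lem:cmcf} and the continuity of $\Ll$, any sequence of maximizers of the analogous function for $(p_n,q_n)$ converges to $g(t_0)$, and in particular $g_n(t_0) \to g(t_0)$. The delicate remaining step is to replace this approximate convergence by exact equality at some time close to $s$: I would do this by choosing auxiliary times $t_0 < t_0'$ inside the putative gap, producing a geodesic from $p_n$ to $q_n$ visiting a point on $\graph g$, and invoking Lemma \ref{lem:no_bubbles} to rule out an interior bubble between this geodesic and $g_n$; the absence of bubbles forces exact coincidence on a subinterval near the boundary. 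A parallel argument handles hypothesis (i) using uniqueness of $g_n$ in place of that of $g$, and arranging the splice so that equality (rather than approximation) arises from the metric composition property is the subtlest part of the proof.
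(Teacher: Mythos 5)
The paper does not prove this lemma; it cites it directly from \cite{Dauvergne-Sarkar-Virag-2022}, with the remark afterward only explaining why replacing the rationality hypothesis of that reference by hypothesis~\ref{uniqn} is harmless. So the appropriate benchmark is simply whether your argument is self-contained and correct.

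Your treatment of the interval property is sound. The splicing argument is exactly right: if $g_n(t_1)=g(t_1)$ and $g_n(t_2)=g(t_2)$, then $g|_{[t_1,t_2]}$ and $g_n|_{[t_1,t_2]}$ are both geodesics between the common points $(g(t_1),t_1)$ and $(g(t_2),t_2)$, hence have equal $\Ll$-length, so substituting one for the other inside $g_n$ (case~\ref{uniqn}) or inside $g$ (case~\ref{uniqueg}) produces a competing geodesic between the original endpoints; uniqueness then forces the two restrictions to coincide.

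The endpoint-convergence half has a genuine gap at precisely the point you flag as the ``subtlest part.'' You propose to ``produce a geodesic from $p_n$ to $q_n$ visiting a point on $\graph g$'' at a time $t_0$ strictly inside the putative gap, and then rule out a bubble between this geodesic and $g_n$ using Lemma~\ref{lem:no_bubbles}. But under hypothesis~\ref{uniqn} there is \emph{only one} geodesic from $p_n$ to $q_n$, namely $g_n$ itself, so producing a geodesic from $p_n$ to $q_n$ through $(g(t_0),t_0)$ is equivalent to proving $g_n(t_0)=g(t_0)$ — exactly the conclusion you want, so the step is circular. Under hypothesis~\ref{uniqueg} there may be other geodesics from $p_n$ to $q_n$, but you give no reason any of them should pass through a point of $\graph g$; the convergence $g_n(t_0)\to g(t_0)$ that you derive from Lemma~\ref{lem:cmcf} does not upgrade to $g_n(t_0)=g(t_0)$, and Lemma~\ref{lem:no_bubbles} cannot bridge the gap because it concerns two geodesics that agree in \emph{neighborhoods} of both endpoints, not two geodesics that merely share endpoints or are uniformly close. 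Converting Hausdorff closeness into exact agreement on a large subinterval requires a different mechanism (for example, a coalescence statement for geodesics emanating from a common point, or the ordered-geodesic structure used in \cite{Dauvergne-Sarkar-Virag-2022}); as written, the passage from ``$g_n(t_0)\to g(t_0)$'' to ``$g_n(t_0)=g(t_0)$ for large $n$'' is asserted rather than proved.
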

\begin{remark}
We note that condition~\ref{uniqn} is slightly different from that stated in~\cite{Dauvergne-Sarkar-Virag-2022}. There, it is assumed instead that $(x_n,s_n;y_n,u_n) \in \Q^4 \cap \Rup$ for all $n$. The only use of this requirement in the proof is to ensure that there is a unique geodesic from $(x_n,s_n)$ to $(y_n,u_n)$ for all $n$, so there is no additional justification needed for the statement we use here. 
\end{remark}

The next lemma gives a result on precompactness of semi-infinite geodesics. 
\begin{lemma}\cite[Lemma 14]{Bha24} \label{lem:SIGprecompact}
On an event of probability one, for all $\theta \in \R$, $\mbf p \in \R^2$, and any sequence of points $\mbf p_n \to \mbf p$, if $g_n$ is a sequence of $\dir$-directed semi-infinite geodesics rooted at $p_n$, then the sequence $\{g_n\}$ is precompact in the uniform-on-compact topology, and every subsequential limit is a $\dir$-directed geodesic rooted at $\mbf p$. 
\end{lemma}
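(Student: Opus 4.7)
The plan is to prove precompactness by Arzelà--Ascoli-type reasoning: first bound $g_n$ uniformly on compact time intervals, then identify subsequential limits as geodesics via Lemma \ref{lem:precompact}. I would begin by using Proposition \ref{prop:Busani_N3G}, which says every $\xi$-directed semi-infinite geodesic is either a $\xi-$ or $\xi+$ Busemann geodesic. Combined with the ordering of Busemann geodesics (Proposition \ref{prop:g_basic_prop}\ref{itm:DL_mont_dir}), this yields
$$g^{\xi-,L}_{\mbf p_n}(s) \;\le\; g_n(s) \;\le\; g^{\xi+,R}_{\mbf p_n}(s), \qquad s \le t_n.$$
The set $K := \{\mbf p_n : n \ge 1\} \cup \{\mbf p\}$ is compact with $\inf\{t : (x,t) \in K\} > -\infty$, so Proposition \ref{prop:DL_all_coal} gives $S < \inf_n t_n$ after which all $\xi-$ Busemann geodesics rooted in $K$ coincide at every time $\le S$ (call this common path $g^-$), and analogously $g^+$ for $\xi+$. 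Thus $g^-(s) \le g_n(s) \le g^+(s)$ for all $n$ and all $s \le S$. For $s$ in a compact subset of $[S, t]$, note that by the consistency statement in Proposition \ref{prop:DL_SIG_cons_intro}\ref{itm:DL_all_SIG} the restrictions $g^{\xi\pm,L/R}_{\mbf p_n}|_{[S,t_n]}$ are point-to-point geodesics from $(g^\pm(S), S)$ to $\mbf p_n$, so Lemma \ref{lem:precompact} applied to each of the four sequences yields a uniform-in-$n$ bound on $g_n(s)$.

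Next, for subsequential limits, I would fix $S_0 < T_0 < t$; for $n$ large enough, $t_n > T_0$, and $g_n|_{[S_0,T_0]}$ is a point-to-point geodesic whose endpoints lie in a fixed compact set by the previous step. Passing to a subsequence along which $g_n(S_0)$ and $g_n(T_0)$ converge, Lemma \ref{lem:precompact} gives uniform convergence on $[S_0,T_0]$ to a geodesic. A diagonal argument along sequences $S_k \searrow -\infty$ and $T_k \nearrow t$ extracts a subsequence converging uniformly on every compact subset of $(-\infty, t)$ to a continuous $g$ whose restriction to every compact subinterval is a geodesic, hence a semi-infinite geodesic. To extend to $s = t$, observe that since $g_n(t_n) = x_n \to x$ and $t_n \to t$, the modulus-of-continuity bound of Lemma \ref{lem:Landscape_global_bound} controls the oscillation of $g_n$ on the terminal strip $[T_0, t_n]$ uniformly as $T_0 \nearrow t$; consequently the limit extends continuously to $g(t) = x$, so $g$ is rooted at $\mbf p$.

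Finally, $\xi$-directedness of the limit follows from the sandwich $g^-(s) \le g(s) \le g^+(s)$ for $s \le S$: both $g^\pm$ are $\xi$-directed by Proposition \ref{prop:DL_SIG_cons_intro}\ref{itm:arb_geod_cons}\ref{itm:geo_dir}, so dividing by $|s|$ and sending $s \to -\infty$ gives $g(s)/|s| \to \xi$. The main obstacle is the time-varying domain: unlike the fixed-$\mbf p$ case, each $g_n$ lives on $(-\infty, t_n]$ with $t_n$ converging to $t$, so one cannot directly compare $g_n$ at time $t$. Uniform control on the terminal strip via the landscape's modulus of continuity is the key technical point that lets the diagonally extracted limit be extended continuously across $s = t$; everything else reduces to standard precompactness of point-to-point geodesics combined with Busemann coalescence for the uniform far-past bound.
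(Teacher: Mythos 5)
The paper does not give its own proof of this lemma; it is cited verbatim from \cite[Lemma 14]{Bha24}, so there is no internal argument to compare against. Judged on its own merits, your plan is sound in outline: the far-past sandwich $g^{\dir-,L}_{\mbf p_n}(s)\le g_n(s)\le g^{\dir+,R}_{\mbf p_n}(s)$ obtained from Proposition \ref{prop:Busani_N3G} together with the $L/R$ characterization of extreme maximizers and the direction-monotonicity of Proposition \ref{prop:g_basic_prop}\ref{itm:DL_mont_dir} is valid, coalescence over the compact set $K=\{\mbf p_n\}\cup\{\mbf p\}$ gives the uniform bound $g^-(s)\le g_n(s)\le g^+(s)$ for $s\le S$, and directedness of the limit does follow by passing to the limit in that sandwich. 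Note, however, that the route through the N3G theorem is in some sense heavier machinery than necessary: the paper's own proof of Lemma \ref{lem:gvk_lim} gets a comparable far-past bound by bracketing $\dir$-directed geodesics between $(\dir-\ve)$- and $(\dir+\ve)$-Busemann geodesics using only the directedness hypothesis and ordering of point-to-point geodesics, which avoids appealing to the full classification of $\dir$-directed geodesics.

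There is a genuine gap in the treatment of the terminal endpoint. Your diagonal argument produces a subsequential limit $g$ on $(-\infty,t)$ whose restriction to each compact interval $[S_k,T_k]\subset(-\infty,t)$ is a geodesic, and you then use a modulus-of-continuity estimate to argue that $g$ extends continuously to $s=t$ with $g(t)=x$. But a continuous extension is not enough: to conclude that $g$ is a semi-infinite geodesic rooted at $\mbf p$ you must verify that $g|_{[s,t]}$ is a geodesic (i.e.\ satisfies the $\Ll$-additivity identity including the endpoint $t$) for every $s<t$, and your argument never establishes this. The fix is both simpler and closes the gap: do not truncate at $T_0<t$. Apply Lemma \ref{lem:precompact} directly to the full restrictions $g_n|_{[S_0,t_n]}$, which are point-to-point geodesics from $(g_n(S_0),S_0)$ to $\mbf p_n$; after extracting a subsequence along which $g_n(S_0)$ converges, the hypothesis $(g_n(S_0),S_0;\mbf p_n)\to (g(S_0),S_0;\mbf p)$ holds, and Lemma \ref{lem:precompact} (which is formulated in the Hausdorff topology of graphs and so already accommodates the varying domains $[S_0,t_n]$) yields convergence to a geodesic from $(g(S_0),S_0)$ to $\mbf p$, giving continuity at $t$, $g(t)=x$, and the geodesic property on $[S_0,t]$ in one step. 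Running the diagonal argument only in $S_k\searrow-\infty$ then completes the construction without any separate oscillation control on the terminal strip.
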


\begin{lemma} \label{lem:bounded_maxes}
The following holds on an event of full probability. Let $K \subseteq \R$ be a compact set,   $\dir \in \R$ and $\sigg \in \{-,+\}$.  Then, there exists a random $Z = Z(\dir \sig,K)  \in (0,\infty)$ such that for all $x,s,t \in K$ with $s < t$ and $S \in \{L,R\}$, $|g_{(x,s)}^{\dir \sig,S}(t)| \le Z$.
\end{lemma}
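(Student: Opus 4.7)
The plan is to argue by contradiction using the precompactness provided by Lemmas \ref{lem:SIGprecompact} and \ref{lem:precompact}. Assuming no such $Z$ exists for the compact set $K$, there must be sequences $(x_n, s_n) \in K \times K$, $t_n \in K$ with $t_n \le s_n$ (interpreting the stated inequality $s < t$ as a typo, since $g_{(x,s)}^{\dir \sig, S}$ is only defined on $(-\infty, s]$), and $S_n \in \{L, R\}$ along which $|g_{(x_n, s_n)}^{\dir \sig, S_n}(t_n)| \to \infty$. By compactness of $K$ and finiteness of $\{L, R\}$, I would pass to a subsequence with $S_n \equiv L$ (the $R$ case being symmetric), $(x_n, s_n) \to (x, s) \in K \times K$, and $t_n \to t \in K$ with $t \le s$.

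Each $g_n := g_{(x_n, s_n)}^{\dir \sig, L}$ is a $\dir$-directed semi-infinite geodesic by Proposition \ref{prop:DL_SIG_cons_intro}\ref{itm:arb_geod_cons}\ref{itm:geo_dir}. Lemma \ref{lem:SIGprecompact} then gives precompactness of $\{g_n\}$ in the uniform-on-compact topology, with any subsequential limit being a $\dir$-directed semi-infinite geodesic $g$ rooted at $(x, s)$ --- in particular, continuous and finite-valued on $(-\infty, s]$. After extracting a further subsequence along which $g_n \to g$ uniformly on compact subsets of $(-\infty, s)$, I aim to derive a contradiction with $|g_n(t_n)| \to \infty$.

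The case $t < s$ is immediate: for $\delta := (s - t)/2$, the window $[t - \delta, t + \delta]$ lies in $(-\infty, s_n]$ for all sufficiently large $n$, so uniform convergence on this compact set yields $g_n(t_n) \to g(t) \in \R$. The main obstacle is the boundary case $t = s$, where $t_n$ approaches the moving root time $s_n$ and uniform-on-compact convergence inside $(-\infty, s)$ need not control $g_n(t_n)$. To handle it, I plan to consider the point-to-point restrictions $g_n|_{[s - 1, s_n]}$, which are geodesics between endpoints $(g_n(s - 1), s - 1) \to (g(s - 1), s - 1)$ and $(x_n, s_n) \to (x, s)$. Applying Lemma \ref{lem:precompact} to these restrictions gives Hausdorff precompactness of their graphs, with every subsequential limit being a continuous bounded geodesic from $(g(s - 1), s - 1)$ to $(x, s)$. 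A routine subsequence argument then forces the graphs $\graph(g_n|_{[s - 1, s_n]})$ to be eventually contained in a common bounded subset of $\R^2$, so $g_n(t_n)$ stays bounded, contradicting $|g_n(t_n)| \to \infty$ and completing the proof.
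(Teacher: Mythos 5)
The paper in fact states this lemma without proof; it is one of the technical inputs used to define the full-probability event $\Omega_1$. The intended justification appears to be that it is an immediate restatement of Proposition~\ref{prop:DL_SIG_cons_intro}\ref{itm:intro_SIG_bd}, already cited from \cite{Busa-Sepp-Sore-22a}: that item says precisely that as $x,s,t$ vary over a compact $K$ with $s<t$, the set of \emph{all} maximizers of $z\mapsto W^{\dir\sig}(0,s;z,s)+\Ll(z,s;x,t)$ is bounded, and $g_{(x,t)}^{\dir\sig,L/R}(s)$ is by definition the leftmost/rightmost such maximizer. So the one-line proof is simply to invoke that proposition.

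Your proof is a correct and genuinely different route: rather than quoting the uniform boundedness of maximizers, you derive it from the precompactness statements of Lemmas~\ref{lem:SIGprecompact} and~\ref{lem:precompact}. You correctly note the typo in the statement (the geodesic $g_{(x,s)}^{\dir\sig,S}$ is defined only on $(-\infty,s]$, so the evaluation time must satisfy $t\le s$, not $s<t$), and your splitting into the interior case $t<s$ (direct from uniform-on-compacts convergence of the infinite geodesics) and the boundary case $t=s$ (handled via Hausdorff precompactness of the finite restrictions $g_n|_{[s-1,s_n]}$) is the right decomposition; the boundary case is exactly where a naive uniform-on-compacts argument fails. Your argument is self-contained and sound, just longer than the citation the paper implicitly uses; one could also remark that either approach ultimately rests on the same input, since Proposition~\ref{prop:DL_SIG_cons_intro}\ref{itm:intro_SIG_bd} is itself essentially a uniform compactness statement for Busemann geodesics.
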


         \begin{lemma}\cite[arXiv version, Lemma B.9]{Busa-Sepp-Sore-22a} \label{lem:KPZ_preserve_lim}
The following holds simultaneously for all initial data and all $t > s$ on the event of probability one from Lemma~\ref{lem:Landscape_global_bound}. Let $\h \in \UC$ (the space of upper semi-continuous functions $\R \to \R$) be initial data for the KPZ fixed point, sampled at time $s$. 
For $t > s$, let 
\[
h_t(y;\h) = \sup_{x \in \R}\{\h(x) + \Ll(x,s;y,t)\}.
\]
Then, simultaneously for all $t > s$,
\be \label{hliminfbd1}
\liminf_{x \to +\infty} \f{h_t(x;\h)}{x} \ge \liminf_{x \to + \infty} \f{\h(x)}{x},\qquad\text{and}\qquad \limsup_{x \to - \infty} \f{h_t(x;\h)}{x} \le \limsup_{x \to -\infty} \f{\h(x)}{x}.
\ee

Furthermore, assuming that $\h:\R \to \R$ is \textbf{continuous} and satisfies
\be \label{liminfsupfinite}
\liminf_{x \to \pm \infty} \f{\h(x)}{x} > -\infty \qquad\text{and}\qquad \limsup_{x \to \pm \infty} \f{\h(x)}{x} < +\infty,
\ee
then also
\be \label{hliminfbd2}
\limsup_{x \to + \infty} \f{h_t(x;\h)}{x} \le \limsup_{x \to +\infty} \f{\h(x)}{x},\qquad\text{and}\qquad \liminf_{x \to -\infty} \f{h_t(x;\h)}{x} \ge \liminf_{x \to - \infty} \f{\h(x)}{x}.
\ee

In particular, for \textbf{continuous} initial data $\h$ satisfying~\eqref{liminfsupfinite}, if either \rm{(}or both\rm{)} of the limits
$
\lim_{x \to \pm \infty} \f{\h(x)}{x}
$
exist \rm{(}potentially with different limits on each side\rm{)}, then for $t > s$,
\[
\lim_{x \to \pm \infty} \f{h_t(x;\h)}{x} = \lim_{x \to \pm \infty} \f{\h(x)}{x}.
\]
\end{lemma}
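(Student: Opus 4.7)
Both bounds will be extracted from the parabolic estimate in Lemma \ref{lem:Landscape_global_bound}, which says we can write
\[
\Ll(z,s;y,t) = -\frac{(z-y)^2}{t-s} + E_t(z,y),
\]
where the error $E_t(z,y)$ is controlled by $C_{\mathrm{DL}}(t-s)^{1/3}$ times a $\log^{2}$-type expression in $\|(z,s;y,t)\|$. The upshot is that the quadratic penalty in $|z-y|$ strictly dominates any \emph{linear} growth of $\h$, so the Lax--Oleinik sup is effectively localized to $z$ near $y$ (plus a bounded drift). The lower bound \eqref{hliminfbd1} uses only that one can plug in a single competitor, while the upper bound \eqref{hliminfbd2} requires this localization together with the linear two-sided control on $\h$ coming from \eqref{liminfsupfinite}. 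The simultaneous statement in $t$ is automatic since Lemma \ref{lem:Landscape_global_bound} already holds on a single full-probability event with a uniform random constant.

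\textbf{Lower bound \eqref{hliminfbd1}.} For any $y$, by taking $z = y$ in the sup defining $h_t(y;\h)$,
\[
h_t(y;\h) \;\ge\; \h(y) + \Ll(y,s;y,t).
\]
Applying Lemma \ref{lem:Landscape_global_bound} with $v = (y,s;y,t)$ gives $|\Ll(y,s;y,t)| = O\bigl((\log(|y|+2))^{2}\bigr) = o(|y|)$ as $|y| \to \infty$. Dividing by $y$ and taking $\liminf$ as $y \to +\infty$ yields the first half of \eqref{hliminfbd1}; dividing by $y < 0$ reverses the inequality and produces the second half upon taking $\limsup$ as $y \to -\infty$.

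\textbf{Upper bound \eqref{hliminfbd2}.} Assume \eqref{liminfsupfinite}, which gives a two-sided linear bound $|\h(z)| \le A + B|z|$ on all of $\R$. Set $\alpha_+ := \limsup_{x \to +\infty} \h(x)/x$, fix $\varepsilon > 0$, and choose $X$ so that $\h(z) \le (\alpha_+ + \varepsilon) z + C$ for all $z \ge X$. For $y \gg 1$, split the sup according to whether $z \ge X$ or $z < X$. For $z \ge X$, using the parabolic bound,
\[
\h(z) + \Ll(z,s;y,t) \;\le\; (\alpha_+ + \varepsilon) z - \frac{(z-y)^2}{t-s} + E_t(z,y) + C \;\le\; (\alpha_+ + \varepsilon) y + \tfrac{(\alpha_+ + \varepsilon)^2 (t-s)}{4} + E_t(z,y) + C,
\]
after maximizing the quadratic-plus-linear expression in $z$; the approximate optimizer is $z^\star = y + (\alpha_+ + \varepsilon)(t-s)/2$, where $E_t(z^\star,y) = O((\log y)^{2}) = o(y)$. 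Away from $z^\star$, the quadratic penalty grows faster than $E_t$, so the same bound persists uniformly in $z \ge X$. For $z < X$, use $\h(z) \le A + B|z|$; since $(z-y)^2/(t-s) \gtrsim y^2/(t-s)$ dwarfs both $B|z|$ and the polylog error $E_t(z,y)$, this half-line contributes negligibly. Combining, $h_t(y;\h) \le (\alpha_+ + \varepsilon) y + o(y)$, and $\varepsilon > 0$ being arbitrary gives the first inequality of \eqref{hliminfbd2}; the case $y \to -\infty$ is analogous using the lower envelope of $\h$.

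\textbf{Main obstacle and the ``in particular'' statement.} The only real subtlety is uniform control of $E_t(z,y)$ as $z$ ranges over $\R$, since the bound in Lemma \ref{lem:Landscape_global_bound} degrades with $\|v\|$. The key observation, applied repeatedly above, is that the quadratic parabolic term strictly dominates any polylogarithmic error once $|z-y|$ is even moderately large, so the sup reduces to a bounded window around the optimizer and an irrelevant tail. All estimates are uniform on compact ranges of $t-s$, which handles the simultaneous-in-$t$ claim. Finally, if the limits $\lim_{x \to \pm\infty} \h(x)/x$ exist and equal $\alpha^{\pm}$, then $\liminf = \limsup = \alpha^{\pm}$, and combining \eqref{hliminfbd1} with \eqref{hliminfbd2} squeezes $\lim_{x \to \pm\infty} h_t(x;\h)/x = \alpha^{\pm}$, as claimed.
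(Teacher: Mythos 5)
This lemma is cited verbatim from \cite[arXiv version, Lemma B.9]{Busa-Sepp-Sore-22a}; the present paper does not reprove it, so there is no internal proof to compare your argument against. Evaluated on its own merits, your proof is correct and follows the natural route. For the lower bound \eqref{hliminfbd1}, plugging in the single competitor $z=y$ and invoking Lemma \ref{lem:Landscape_global_bound} with $v=(y,s;y,t)$ indeed gives $|\Ll(y,s;y,t)| = o(|y|)$, and the sign bookkeeping when dividing by $y<0$ is handled correctly. For the upper bound \eqref{hliminfbd2}, you correctly observe that \eqref{liminfsupfinite} together with continuity (hence boundedness on compacts) yields a two-sided linear envelope $|\h(z)| \le A + B|z|$, and that the sup localizes because the negative parabola dominates any polylogarithmic error. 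The only place that deserves a slightly more careful write-up is the displayed inequality for $z \ge X$: one cannot simply substitute the optimizer $z^\star$ into $E_t(z,y)$, since $E_t$ grows (polylogarithmically) in $|z|$ as well. The fix, which you sketch in your final paragraph, is to split into $|z-z^\star| \le \log^2 y$, where $E_t(z,y)=O(\log^2 y)=o(y)$, and $|z-z^\star| > \log^2 y$, where $-(z-z^\star)^2/(t-s)$ swamps $E_t$; with that explicit dichotomy in place the argument is airtight. The uniformity in $t$ and the ``in particular'' sandwich at the end are both correct.
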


         To prove Lemma \ref{lem:geod_IC}, we need to prove the following:
         \begin{lemma}\label{lem:dir_from_global}
         Let $b \in \mathcal{F}_{\text{KPZ}}^\xi$.
         For each point $(x,t) \in \R^2$ and $s < t$, let $g_{(x,t)}^{b,L/R}(s)$ be the leftmost/rightmost maximizer of 
         \[
         z \mapsto b(z,s) + \Ll(z,s;x,t) \quad \text{over } z \in \R.
         \]
         Then, for each $s < t$, $g_{(x,t)}^{b,L/R}(s)$ is finite, and
         \be \label{eq:gxtlim}
         \lim_{s \to -\infty} \f{g_{(x,t)}^{b,L/R}(s)}{|s|} = \dir.
         \ee
         \end{lemma}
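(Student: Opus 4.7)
My plan is to first establish finiteness of the maximizers and their bounded direction, then identify the limiting direction via a variational comparison against Busemann limits.

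For finiteness, the key input is a linear upper bound on $b(\cdot, s)$ valid for each fixed $s$. Applying the KPZ fixed-point variational identity \eqref{eq:var} at the single test-point $z$ gives $b(z, 0) \ge b(z, s) + \Ll(z, s; z, 0)$, hence $b(z, s) \le b(z, 0) - \Ll(z, s; z, 0)$. The slope condition \eqref{eq:b_slope} yields $b(z, 0) = 2\dir z + o(|z|)$, and Lemma \ref{lem:Landscape_global_bound} shows $|\Ll(z, s; z, 0)|$ is sublinear in $|z|$ for fixed $s$. Combined with the bound $\Ll(z, s; x, t) \le -(z - x)^2/(t - s) + o(|z|)$ from the same lemma, we deduce that $z \mapsto b(z, s) + \Ll(z, s; x, t) \to -\infty$ as $|z| \to \infty$, so the supremum is attained and $g^{b, L/R}_{(x, t)}(s)$ is finite. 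The same quadratic penalty applied to $z_s := g^{b, L}_{(x, t)}(s)$ itself forces $z_s^2/|s|$ to be bounded (else the equality $b(x, t) = b(z_s, s) + \Ll(z_s, s; x, t)$ would drive the right side to $-\infty$), so $z_s/|s|$ has only real subsequential limits as $s \to -\infty$.

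For the direction, let $\dir_0 \in \R$ be any subsequential limit of $z_s/|s|$, attained along $s_n \to -\infty$. Combining the maximality equality $b(x, t) = b(z_{s_n}, s_n) + \Ll(z_{s_n}, s_n; x, t)$ with the variational inequality $b(x', 0) \ge b(z_{s_n}, s_n) + \Ll(z_{s_n}, s_n; x', 0)$ valid for any $x' \in \R$, and subtracting, yields
\[
b(x', 0) - b(x, t) \ge \Ll(z_{s_n}, s_n; x', 0) - \Ll(z_{s_n}, s_n; x, t).
\]
Since $\mbf v_n := (z_{s_n}, s_n)$ is a direction-$\dir_0$ sequence going to $-\infty$, applying Proposition \ref{prop:Buse_basic_properties}\ref{it:Busliminfsup}---extended from same-time to differing-time endpoints via additivity of the Busemann process (Proposition \ref{prop:Buse_basic_properties}\ref{itm:DL_Buse_add}) and decomposition $\Ll(\mbf v_n;x',0) - \Ll(\mbf v_n;x,t) = [\Ll(\mbf v_n;x',0) - \Ll(\mbf v_n;x,0)] + [\Ll(\mbf v_n;x,0) - \Ll(\mbf v_n;x,t)]$ through the intermediate point $(x, 0)$, together with the superadditivity $\liminf(a_n + b_n) \ge \liminf a_n + \liminf b_n$---gives $\liminf_n [\Ll(\mbf v_n; x', 0) - \Ll(\mbf v_n; x, t)] \ge W^{\dir_0, -}(x, t; x', 0)$. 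Hence $b(x', 0) - b(x, t) \ge W^{\dir_0, -}(x, t; x', 0)$ holds for every $x' \in \R$.

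To conclude, I take $x' \to +\infty$ and divide by $x'$: the left side has limit $2\dir$ by \eqref{eq:b_slope}, while the right side has limit $2\dir_0$ by the asymptotic slope of $W^{\dir_0, -}(0, 0; \cdot, 0)$ from Proposition \ref{prop:Buse_basic_properties}\ref{it:Wslope}, yielding $\dir \ge \dir_0$. The symmetric argument with $x' \to -\infty$ (the inequality flipping when dividing by a negative quantity) produces $\dir \le \dir_0$, so $\dir_0 = \dir$. As every subsequential limit coincides with $\dir$, the full limit $\lim_{s \to -\infty} z_s/|s| = \dir$ follows, and the same reasoning applies verbatim to $g^{b, R}_{(x, t)}$. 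The most delicate technical step is the extension of the Busemann limit inequality to endpoints at different times, a minor but non-immediate consequence of additivity that must be verified carefully to control the sublinear error terms from Lemma \ref{lem:Landscape_global_bound}.
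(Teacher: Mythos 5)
Your proof takes a genuinely different route from the paper's. The paper proves a sharp parabolic upper bound on $b(z,s)+\Ll(z,s;0,0)$ by feeding the \emph{shifted} test point $b(z-\dir|s|,0)$ into the variational identity (this is Lemma \ref{lem:bxs_bd}), and then shows directly that the maximizer must lie in $[(\dir-\ve^{1/4})|s|,(\dir+\ve^{1/4})|s|]$ by evaluating the bound at the endpoints. You use the unshifted test point $b(z,0)$, which yields a weaker bound --- sufficient for finiteness and boundedness of $z_s/|s|$, but not sharp enough to pin down the direction --- and then close the argument by comparing against Busemann slopes via Proposition \ref{prop:Buse_basic_properties}\ref{it:Busliminfsup}. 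When it works, this is an elegant shortcut that avoids computing the parabolic bound at two endpoints.

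However, the step you flag as ``a minor but non-immediate consequence of additivity'' is in fact a genuine gap that additivity alone cannot fill. Additivity (Proposition \ref{prop:Buse_basic_properties}\ref{itm:DL_Buse_add}) is a property of the \emph{limit} object $W^{\dir\sig}$, whereas \ref{it:Busliminfsup} gives a bound on pre-limit differences only for \emph{same-time} endpoints. After your decomposition through $(x,0)$, the spatial leg $\Ll(\mbf v_n;x',0)-\Ll(\mbf v_n;x,0)$ is correctly controlled by \ref{it:Busliminfsup}, but the temporal leg $\Ll(\mbf v_n;x,0)-\Ll(\mbf v_n;x,t)$ is outside the scope of that proposition, and your cited claim that its liminf is $\ge W^{\dir_0-}(x,t;x,0)$ is unjustified. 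Fortunately, your final slope argument actually needs much less: only that this liminf is $>-\infty$, since it is a constant in $x'$. For $(x,t)=(0,0)$ --- the case the paper itself writes out --- the temporal leg is trivially zero. For $t<0$, the reverse triangle inequality gives $\Ll(\mbf v_n;x,0)-\Ll(\mbf v_n;x,t)\ge\Ll(x,t;x,0)>-\infty$. For $t>0$, the reverse triangle inequality only gives an upper bound on this leg, not a lower bound, and you would need a further argument (e.g.\ the geodesic consistency in Lemma \ref{lem:geodesics_from_b}\ref{itm: consis} to reduce to the $t\le 0$ case, or a quantitative bound on $\Ll(\mbf v_n;x,t)-\Ll(\mbf v_n;x,0)$ which amounts to a baby version of Theorem \ref{thm:bn_gen_lim}\ref{itm:bn_precompact}). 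Also note that the sublinear errors from Lemma \ref{lem:Landscape_global_bound}, which you single out in your final sentence, are not the obstruction here --- they only enter the finiteness step. Once the temporal-leg boundedness is supplied, the rest of the argument, including the liminf/limsup bookkeeping for $x'\to\pm\infty$ (with the $W^{\dir_0-}$/$W^{\dir_0+}$ roles correctly swapping depending on the sign of $x'-x$), is sound.
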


         To prove this lemma, we first prove the following. Here, we assume that $\dir > 0$, but we can obtain analogous lemmas for $\dir < 0$ and $\dir = 0$. 
         \begin{lemma}\label{lem:bxs_bd}
         Let $b \in \mathcal{F}_{\text{KPZ}}^\xi$. Then, on the event of probability one from Lemma \ref{lem:Landscape_global_bound}, for every $\ve > 0$, there exists a constant $C_{\ve} > 0$ (random, depending on the particular $b$ and $\dir$) so that, for all $x \in \R$ and $s < 0$,
         \[
         b(x,s)  \le C_{\ve} + 2\dir x -\dir^2  s + \ve |x| + \ve |\dir s| + C_{\text{DL}}|s|^{1/3} \log^2\Bigl(2\sqrt{2 x^2 + (1 + 2\dir^2) s^2} + 4\Bigr),
         \]
         where $C_{\text{DL}}$ is the constant from Lemma \ref{lem:Landscape_global_bound}. 
         \end{lemma}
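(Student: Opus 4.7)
The plan is to run the KPZ fixed-point evolution backwards from time $0$ to time $s<0$ along a deterministic characteristic in direction $\dir$, exploiting the asymptotic slope hypothesis on $b(\cdot,0)$ together with the modulus of continuity from Lemma~\ref{lem:Landscape_global_bound}.

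Fix $\ve>0$. The slope condition \eqref{eq:b_slope} combined with continuity of $b(\cdot,0)$ (which makes $b(\cdot,0)$ bounded on compacts) yields a random constant $C'_\ve>0$ depending only on $b$, $\dir$, and $\ve$ such that $b(y,0)\le 2\dir\,y+\ve|y|+C'_\ve$ for all $y\in\R$. For any $y\in\R$, applying \eqref{eq:var} at $t=0$ gives $b(y,0)\ge b(x,s)+\Ll(x,s;y,0)$, hence
\[
b(x,s)\le 2\dir\,y+\ve|y|+C'_\ve+\f{(x-y)^2}{|s|}+C_{\text{DL}}|s|^{1/3}\log^{4/3}\!\Bigl(\f{2(\|v\|+2)}{|s|}\Bigr)\log^{2/3}(\|v\|+2),
\]
where $v=(x,s;y,0)\in\R^4$ and the last inequality uses Lemma~\ref{lem:Landscape_global_bound}.

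The next step is to choose the characteristic $y:=x+\dir s$. Direct computation gives $(x-y)^2/|s|=\dir^2 s^2/|s|=-\dir^2 s$ and $2\dir\,y=2\dir\,x+2\dir^2 s$, so
\[
2\dir\,y+\f{(x-y)^2}{|s|}=2\dir\,x+\dir^2 s\le 2\dir\,x-\dir^2 s,
\]
the last inequality holding because $\dir^2 s\le 0\le-\dir^2 s$ for $s<0$. The triangle inequality $|y|=|x+\dir s|\le|x|+|\dir s|$ produces the $\ve|x|+\ve|\dir s|$ piece, and Cauchy-Schwarz gives $\|v\|^2=x^2+s^2+(x+\dir s)^2\le 3x^2+(1+2\dir^2)s^2$, which will be used to control the log term.

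It remains to dominate the double-logarithmic factor by $\log^2\bigl(2\sqrt{2x^2+(1+2\dir^2)s^2}+4\bigr)$ up to constants absorbed into $C_{\text{DL}}$ and $C_\ve$. Writing $A=2(\|v\|+2)/|s|$ and $B=\|v\|+2$, so that $\log A=\log B+\log(2/|s|)$, Young's inequality yields $\log^{4/3}(A)\log^{2/3}(B)\le 2\log^2 B+2\log^2(1+2/|s|)$; the second term multiplied by $|s|^{1/3}$ is uniformly bounded over $s<0$ (it vanishes both as $|s|\to 0$ and as $|s|\to\infty$) and is absorbed into $C_\ve$, while $\log^2 B=\log^2(\|v\|+2)$ is bounded, via $\|v\|\le\sqrt{3/2}\,\sqrt{2x^2+(1+2\dir^2)s^2}$, by the target log up to a fixed constant factor. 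The main obstacle is precisely this final bookkeeping step: the modulus of continuity involves two logarithms with distinct arguments (one divided by $|s|$), whereas the target isolates a single clean $\log^2$ multiplied by the original $C_{\text{DL}}$, so one must carefully split off the $|s|$-dependent piece and absorb all fixed constant factors.
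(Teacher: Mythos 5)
Your proof follows the paper's argument exactly: fix the characteristic $y = x + \dir s = x - \dir|s|$, run \eqref{eq:var} backwards from time $0$ to get $b(x,s)\le b(y,0) - \Ll(x,s;y,0)$, bound $b(y,0)$ with the slope hypothesis, and bound $-\Ll$ by Lemma~\ref{lem:Landscape_global_bound}. You are in fact more explicit than the paper about the reduction of $\log^{4/3}(A)\log^{2/3}(B)$ to a single $\log^2$; the paper silently writes $\log^2$ in its intermediate display without comment.

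The one place your bookkeeping is loose is the line deriving $\log^{4/3}(A)\log^{2/3}(B)\le 2\log^2 B+2\log^2(1+2/|s|)$ from $\log A=\log B+\log(2/|s|)$ and Young's inequality: the identity produces $\log(2/|s|)$, not $\log(1+2/|s|)$, and $|s|^{1/3}\log^2(2/|s|)$ is \emph{not} bounded as $|s|\to\infty$, so the ``absorb the second term into $C_\ve$'' step cannot be applied to what the expansion actually yields. The claimed inequality is nevertheless true; the clean route is a case split. Since $s$ is a coordinate of $v$, one has $\|v\|\ge|s|$, hence $B=\|v\|+2\ge|s|+2$ and $A=2B/|s|\ge 2$. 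Thus for $|s|\ge 2$ one has $2\le A\le B$, so $\log^{4/3}(A)\log^{2/3}(B)\le\log^2 B$ directly, with no decomposition needed. On the compact range $0<|s|<2$ one has $\log(2/|s|)>0$, your expansion together with Young does give $2\log^2 B+2\log^2(2/|s|)$, and $|s|^{1/3}\log^2(2/|s|)$ is bounded there. With this small repair the argument is correct and substantively identical to the paper's (which has the same implicit gap and leaves it entirely unaddressed).
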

         \begin{proof}
         First, we make the observation that for $x \in \R$ and $s < 0$,
         \begin{align*}
         b(x - \dir |s|,0) = \sup_{z \in \R} [b(z,s) + \Ll(z,s;x - \dir |s|,0)]\ge b(x,s) + \Ll(x,s;x - \dir |s|,0),
         \end{align*}
         and by rearranging,
         \be \label{eq:bxs_bd}
         b(x,s) \le b(x - \dir |s|,0) - \Ll(x,s;x - \dir |s|,0).
         \ee
         By the asymptotic assumption $\lim_{|z| \to +\infty} \f{b(z,0)}{z} = 2\dir$,  for each $\ve > 0$, exists a constant $C_{\ve} > 0$ so that $b(z,0) \le C_{\ve} + 2\dir z + \ve |z|$ for all $z \in \R$. By substituting this bound, along with the bound on $\Ll$ from Lemma \ref{lem:Landscape_global_bound} into \eqref{eq:bxs_bd}, we get
         \[
         b(x,s) \le C_{\ve} + 2\dir x -\dir^2 |s| + \ve |x| + \ve \dir|s| + C_{\text{DL}}|s|^{1/3} \log^2\Bigl(2\sqrt{x^2 + s^2 + (x - \dir|s|)^2} + 4\Bigr).
         \]
         The final bound comes from $(a+b)^2 \le 2a^2 + 2b^2$
         \end{proof}

         \begin{proof}[Proof of Lemma \ref{lem:dir_from_global}]
         For ease of notation, we prove the lemma for the point $(x,t) = (0,0)$. We note that the method of proof works  to show that the limits \eqref{eq:gxtlim} hold for all $(x,t) \in \R^2$, simultaneously on a single event of probability one, as the key inputs are the bounds from Lemmas \ref{lem:Landscape_global_bound} and \ref{lem:dir_from_global}, which are stated for all parameters simultaneously. We will also assume that $\dir > 0$, with the cases $\dir < 0$ and $\dir = 0$ following analogous proofs. The finiteness of maximizers comes directly from the bounds in Lemmas \ref{lem:bxs_bd} and \ref{lem:Landscape_global_bound} (while the bounds in Lemma \ref{lem:bxs_bd} are stated for $s < 0$, we can get them for arbitrary $s < t$ using the same proof because $b(x,t)$ for $t > 0$ satisfies the asymptotic slope $\lim_{|x| \to \infty}\f{b(x,t)}{x} = 2\dir$ by Lemma \ref{lem:KPZ_preserve_lim}.

         By assumption, for all $s < 0$,
         \be \label{eq:b00}
         b(0,0) = \sup_{z \in \R}[b(z,s) + \Ll(z,s;0,0)].
         \ee
         In particular, the right-hand side does not depend on $s$, and in other words, is $O(1)$ as $s \to -\infty$. It therefore suffices to prove that for all $\ve' > 0$,
         \be \label{eq:bL_lim}
         \limsup_{s \to -\infty} \sup_{z \notin [(\dir - \ve')|s|,(\dir + \ve')|s| ]}[b(z,s) + \Ll(z,s;0,0)] = -\infty.
         \ee
         Let $\ve > 0$ be given (this will be different than $\ve'$ used above). We will assume that $\ve$ is sufficiently small   so that $\ve < \dir \wedge 1$ and 
         \be \label{eps_condition}
         2\ve \dir + \ve^{5/4} - \ve^{1/2} < 0
         \ee
         (this will be different than the choice of $\ve'$ above).
         By combining Lemmas \ref{lem:Landscape_global_bound} and \ref{lem:bxs_bd}, we have that, for all $z \in \R$,
         \begin{align} \label{eq:bzs_bd}
             b(z,s) + \Ll(z,s;0,0) \le M_U(z,s) := C_{\ve} + 2\dir z - \dir^2 |s| + \ve |z| + \ve \dir |s| - \f{z^2}{|s|}  +  F(z,s),
         \end{align}
         where $C_{\ve} > 0$ is the constant from Lemma \ref{lem:bxs_bd}, and
         \begin{align} \label{F}
         F(z,s) := 2C_{\text{DL}}|s|^{1/3} \log^2\Bigl(2\sqrt{2 z^2 + (1 + 2\dir^2)s^2} + 4\Bigr).
         \end{align}
         Observe that 
         \begin{align} \label{eq:F_derivative}
         \partial_z F(z,s) = \f{8 C_{\text{DL}}|s|^{1/3} z\log\Bigl(2\sqrt{2 z^2 + (1 + 2\dir^2)s^2} + 4\Bigr)}{\Bigl(2\sqrt{2 z^2 + (1 + 2\dir^2)s^2} + 4\Bigr)\sqrt{2z^2 + (1+ 2\dir^2)s^2} } 
         \end{align}
        Note that, for all $s < 0$ and $z \in \R$, $|z| \Bigl(2z^2 + (1+ 2\dir^2)s^2\Bigr)^{-1/2} \le \f{1}{\sqrt 2} < 1$  and that $\log(x)/x$ is decreasing for $x > e$. Then, since $2\sqrt{2z^2 + (1+2\dir^2)s^2} + 4 \ge 2|s|$, we have, for all $s < 0$ and $z \in \R$, 
         \be \label{partial_z_bd}
        |\partial_z F(z,s)| \le \f{ 4 C_{\text{DL}} |s|^{1/3}\log(2|s|)   }{|s|}.
         \ee
         In particular, this bound does not depend on $z$ and decays to $0$ as $|s| \to \infty$. We also make the observation from \eqref{eq:F_derivative} that 
         \be \label{eq:Fp_sign}
            \partial_z F(z,s)  \begin{cases} > 0, &z > 0 \\
            < 0, &z < 0.
            \end{cases}
         \ee
Now, from \eqref{eq:bzs_bd}, we have 
\begin{align*}
&\quad \, \sup_{z \le 0}[b(z,s) +\Ll(z,s;0,0)] \\&\le C_{\ve} - \dir^2 |s| + \ve \dir |s| + \sup_{z \le 0} \Bigl[2\dir z -2 \ve z - \f{z^2}{|s|}\Bigr] + \sup_{z \le 0}[\ve z + F(z,s)] \\
&=C_{\ve} - \dir^2 |s| + \ve \dir |s| + (\dir - \ve)^2|s| + F(0,s) \\
&=C_{\ve} + (\ve^2 -\ve \dir)|s| + F(0,s) \to -\infty \text{ as }s \to -\infty,
\end{align*}
where the first equality follows by computing the first supremum and noting that the second supremum occurs at $z = 0$ (for sufficiently large $|s|$) since $\partial_z [\ve z + F(z,s)] = \ve + \partial_z F(z,s)$, which, by \eqref{partial_z_bd}, is positive for all $z$ when $|s|$ is sufficiently large. The convergence to $-\infty$ holds by the assumption $\ve < \dir$ and since $F(0,s) = o(s)$ by definition \eqref{F}. Thus, it now suffices to prove \eqref{eq:bL_lim} where we only consider the supremum for positive $z$. 

Consider the function $M_U(z,t)$ defined in \eqref{eq:bzs_bd}. $M_U(z,s)$ is a smooth function in $z$ and converges to $-\infty$  as $|z| \to \infty$ so that maximizers over $z > 0$ exist and occur when $\partial_z M_U(z,s) = 0$. Note that, for $z > 0$,
\be \label{eq:MU_prime}
\partial_z M_U(z,s) = 2\dir  + \ve - \f{2z}{|s|} + \partial_z F(z,s).
\ee
By \eqref{eq:Fp_sign} and \eqref{partial_z_bd}, we may choose $s < 0$ with $|s|$ sufficiently large so that $0 < \partial_z F(z,s) < \ve$ uniformly for all $z \ge 0$. Hence, any maximizer of $M_U(z,s)$ over $z \ge 0$ must lie in the interval $[(\dir + \f{\ve}{2})|s|,(\dir + \ve)|s|]$. Furthermore from \eqref{eq:MU_prime}, for such sufficiently large $|s|$, $z \mapsto M_U(z,s)$ is increasing for $0 \le z \le (\dir + \f{\ve}{2})|s|$ and decreasing for $z \ge (\dir + \ve)|s|$. Now, define the interval
\[
I_{s,\ve} := [(\dir - \ve^{1/4})|s|,(\dir + \ve^{1/4})|s|],
\]
and note that for sufficiently small $\ve$,
\[
[(\dir + \f{\ve}{2})|s|,(\dir + \ve)|s|] \subseteq I_{s,\ve}.
\]
As we have seen, the function $M_U(z,s)$ is increasing to the left of the interval $I_{s,\ve}$ and decreasing to the right, so to find the maximum of $M_U(z,t)$ outside of $I_{s,\ve}$, it suffices to compute the value at the endpoints of the interval. First, note that 
\begin{align*}
&\quad \, M_U((\dir - \ve^{1/4})|s|)  \\
&= C_\ve + 2\dir\bigl((\dir - \ve^{1/4})|s|\bigr) - \dir^2|s| + \ve (\dir - \ve^{1/4})|s| + \ve \dir |s| - \f{\bigl((\dir - \ve^{1/4})|s|\bigr)^2}{|s|} + F((\dir - \ve^{1/4})|s|,s) \\
&= C_\ve + \bigl(2\ve \dir - \ve^{5/4} - \ve^{1/2}\bigr)|s| + F((\dir - \ve^{1/4})|s|,s).
\end{align*}
On the other hand,
\begin{align*}
&\quad \, M_U\bigl((\dir + \ve^{1/4})|s|\bigr) \\
&= C_\ve + 2\dir\bigl((\dir + \ve^{1/4})|s|\bigr) - \dir^2|s| + \ve (\dir + \ve^{1/4})|s| + \ve \dir |s| - \f{\bigl((\dir + \ve^{1/4})|s|\bigr)^2}{|s|} + F((\dir + \ve^{1/4})|s|,s) \\
&= C_\ve + \bigl(2\ve \dir + \ve^{5/4} - \ve^{1/2}   \bigr)|s| + F((\dir + \ve^{1/4})|s|,s). 
\end{align*}
Hence, by the assumption \eqref{eps_condition} on $\ve$ and since $F((\dir \pm \ve^{1/4})|s|,s) = o(s)$ as $s \to -\infty$ \eqref{F}, we have that 
\[
\limsup_{s \to -\infty} \sup_{z \notin I_{s,\ve}} [b(z,s) + \Ll(z,s;0,0)] \le \limsup_{s \to -\infty} \sup_{z \notin I_{s,\ve}}M_U(z,s) = -\infty, 
\]
and this proves \eqref{eq:bL_lim}, upon setting $\ve' = \ve^{1/4}$. 
         \end{proof}

We now prove the following lemma, which is similar to \cite[Theorem 5.9]{Busa-Sepp-Sore-22a}. Here, we are constructing infinite geodesics from an arbitrary eternal solution $b$.

\begin{lemma}\label{lem:geodesics_from_b}
Let $b:\R^2 \to \R$ be a function satisfying, for all $s < t$, and $x \in \R$
\be \label{eq:global_appx}
b(x,t) = \sup_{z \in \R} [b(z,s) + \Ll(z,s;x,t)], 
\ee
and assume that for each $t > s$ and $x \in \R$, leftmost and rightmost maximizers in the equation above exist in $\R$. Define $g_{(x,t)}^{b,L/R}(s)$ to be the leftmost/rightmost maximizers, and set 
\[
g_{(x,t)}^{b,L/R}(t) = x.
\]
Then, the following hold
\begin{enumerate} [label=(\roman*), font=\normalfont]
\item \label{itm:geod} $g_{(x,t)}^{b,L}:(-\infty,t] \to \R$ \rm{(}resp. $g_{(x,t)}^{b,R}:(-\infty,t] \to \R$\rm{)} is a semi-infinite geodesic that is the leftmost (resp. rightmost) directed landscape geodesic between any two of its points. In particular, $g_{(x,t)}^{b,L}$ and $g_{(x,t)}^{b,R}$ are continuous functions $(-\infty,t] \to \R$.
\item \label{itm: consis} Let $s < r < t$ and $x \in \R$, and let $
w = g_{(x,t)}^{b,L}(r)$.
Then,
\[
g_{(x,t)}^{b,L}(s) = g_{(w,r)}^{b,L}(s),
\]
and the same holds when replacing $L$ with $R$. 
\item \label{itm: general}
More generally, let $t = s_0 > s_1 > \cdots$ be any decreasing sequence with $\lim_{n \to \infty} s_n = -\infty$. Set $g(t) = x$, and for each $i \ge 1$, let $g(s_i)$ be \textit{any} maximizer of $z \mapsto b(z,s_{i}) + \Ll(z,s_i; g(s_{i - 1}),s_{i - 1})$ over $z \in \R$. Then, pick \textit{any} point-to-point geodesic of $\Ll$ from $(g(s_{i}),s_{i})$ to $(g(s_{i-1}),s_{i-1})$, and for $s_{i} < s < s_{i-1}$, let $g(s)$ be the location of this geodesic at time $s$. Then, $g:(-\infty,t] \to \R$ is a semi-infinite geodesic. 
\end{enumerate}
\end{lemma}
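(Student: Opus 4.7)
The plan is to parallel the proof of Proposition \ref{prop:DL_SIG_cons_intro}, adapting arguments that rely on specific Busemann properties to the general eternal solution $b$. The key ingredients are the evolution equation \eqref{eq:global_appx}, the metric composition and reverse triangle inequalities for $\Ll$, and Lemma \ref{lem:dir_from_global}, which ensures all relevant maximizers are finite. I begin with item (iii) as a general statement. On each subinterval $[s_i, s_{i-1}]$ the path $g$ is a point-to-point geodesic; it matches at the breakpoints, so $g$ is continuous. The maximizer condition at each $s_i$ gives $b(g(s_{i-1}), s_{i-1}) = b(g(s_i), s_i) + \Ll(g(s_i), s_i; g(s_{i-1}), s_{i-1})$, and telescoping yields, for $i < j$,
\[
b(g(s_i), s_i) - b(g(s_j), s_j) = \sum_{k=i+1}^{j} \Ll(g(s_k), s_k; g(s_{k-1}), s_{k-1}) \le \Ll(g(s_j), s_j; g(s_i), s_i)
\]
by the reverse triangle inequality, while the opposite inequality is immediate from \eqref{eq:global_appx} at $(s_j, s_i)$. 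Equality throughout shows $g$ is a geodesic between sequence points; inserting arbitrary intermediate times into the sequence and repeating the argument extends this to all pairs $t_1 < t_2$ in $(-\infty, t]$, and also shows $g(s)$ is a maximizer of $z \mapsto b(z, s) + \Ll(z, s; x, t)$ for every $s < t$.

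For item (ii), one direction $g_{(w,r)}^{b,L}(s) \ge g_{(x,t)}^{b,L}(s)$ is immediate from (iii) applied to the sequence $t, r, s, \ldots$ with $g(r) = w$ and $g(s) = g_{(w,r)}^{b,L}(s)$, which produces a $b$-geodesic through $(g_{(w,r)}^{b,L}(s), s)$. For the reverse, assume $z_1 := g_{(x,t)}^{b,L}(s) < z_2 := g_{(w,r)}^{b,L}(s)$ and consider any point-to-point geodesic $\pi_1$ from $(z_1, s)$ to $(x, t)$ together with the concatenation $\pi_2$ of point-to-point geodesics from $(z_2, s)$ to $(w, r)$ and from $(w, r)$ to $(x, t)$. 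Applying (iii) shows $\pi_1(r)$ is a maximizer of $z \mapsto b(z, r) + \Ll(z, r; x, t)$ and hence $\pi_1(r) \ge w = \pi_2(r)$, while $\pi_1(s) = z_1 < z_2 = \pi_2(s)$. By continuity and the intermediate value theorem, $\pi_1$ and $\pi_2$ meet at some $\tau \in (s, r]$, and swapping their portions at $\tau$ produces a new path from $(z_1, s)$ to $(x, t)$ of the same total $\Ll$-length, which passes through $(w, r)$. Applying (iii) to this swapped geodesic forces $z_1$ to be a maximizer of $z \mapsto b(z, s) + \Ll(z, s; w, r)$, contradicting leftmostness at $(w, r)$ since $z_1 < z_2$.

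For item (i), the idea is to realize $g_{(x,t)}^{b,L}$ as the path built by (iii) using leftmost maximizers at the sequence points and leftmost point-to-point geodesic interpolations in between. By (ii), the value at each intermediate time $s$ is independent of the sequence choice and agrees with the leftmost maximizer there, so this construction produces a continuous function that matches the lemma's definition of $g_{(x,t)}^{b,L}$ at every $s < t$. The geodesic property between any two of its points follows immediately from (iii). For the leftmost geodesic property between $(g_{(x,t)}^{b,L}(s_1), s_1)$ and $(g_{(x,t)}^{b,L}(s_2), s_2)$, if another geodesic $\pi$ between these endpoints satisfied $\pi(r) < g_{(x,t)}^{b,L}(r)$ at some intermediate $r$, concatenating $\pi$ with $g_{(x,t)}^{b,L}$ outside $[s_1, s_2]$ yields a $b$-geodesic along which $\pi(r)$ would be a maximizer of $z \mapsto b(z, r) + \Ll(z, r; x, t)$, contradicting the leftmost definition at time $r$. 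The main obstacle will be rigorously executing the swap and concatenation arguments—particularly verifying that the swapped paths retain the correct $\Ll$-length and that the leftmost-of-leftmost construction really recovers the leftmost-maximizer definition at every time, a step where the fact established in the (iii) argument that maximizer identities propagate telescopically is essential.
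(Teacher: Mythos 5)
Your route reverses the paper's order: the paper proves (i) directly, then deduces (ii) and (iii), while you prove (iii) first via telescoping, derive (ii) by a swapping argument, and then attempt to deduce (i). Your treatments of (iii) and (ii) are correct (the telescoping identity plus the reverse triangle inequality gives the geodesic property, the insertion of intermediate times shows $g(s)$ is a maximizer of $z\mapsto b(z,s)+\Ll(z,s;x,t)$, and the swap argument for (ii) works once one checks that the concatenated path $\pi_2$ is itself a geodesic from $(z_2,s)$ to $(x,t)$, which follows from the two maximizer identities plus the reverse triangle inequality). These rely on the same core ingredient as the paper, namely the observation recorded as Lemma \ref{lem:L_b} that $\Ll(x,s;y,t)\le b(y,t)-b(x,s)$ with equality iff $x$ is a maximizer.

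The gap is in (i), and it is exactly the one you flag at the end: you assert that ``the value at each intermediate time $s$ is independent of the sequence choice and agrees with the leftmost maximizer there,'' but nothing in (ii) or (iii) delivers this for times that are not sequence points. What (iii) gives is that the iterative path $h_S$ (with leftmost choices at the sequence points of $S$) satisfies $h_S(r)\ge g_{(x,t)}^{b,L}(r)$ for every $r$, since $h_S(r)$ is \emph{a} maximizer and $g_{(x,t)}^{b,L}(r)$ is the leftmost; the opposite inequality is not addressed. Without it, the pointwise-defined $g_{(x,t)}^{b,L}$ is not yet known to be continuous, and your concatenation argument for leftmostness literally uses ``$g_{(x,t)}^{b,L}$ outside $[s_1,s_2]$'' as a path, which is circular at this stage. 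The missing ingredient is exactly the $\le$ direction: use (ii) and Lemma \ref{lem:L_b} to check that $\Ll(g(s_1),s_1;g(r),r)+\Ll(g(r),r;g(s_2),s_2)=\Ll(g(s_1),s_1;g(s_2),s_2)$, so that $(g(r),r)$ lies on \emph{some} geodesic between $(g(s_1),s_1)$ and $(g(s_2),s_2)$, hence the leftmost interpolating geodesic $\pi^L$ satisfies $\pi^L(r)\le g(r)$; combined with your concatenation bound this gives $\pi^L(r)=g(r)$, which yields both continuity and the leftmost identification. The paper sidesteps all of this by proving directly that $g(r)$ is the leftmost maximizer of $z\mapsto\Ll(g(s),s;z,r)+\Ll(z,r;x,t)$ and invoking the leftmost point-to-point geodesic characterization of \cite[Lemma 13.2]{Directed_Landscape}, which immediately delivers continuity; that is the cleaner route.
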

\begin{remark} \label{rmk:b_global_solutions}
Given the eternal solution $b$, we call the geodesics constructed in Lemma \ref{lem:geodesics_from_b} \textbf{$b$-geodesics}.
\end{remark}

Before proving Lemma \ref{lem:geodesics_from_b} we prove the following intermediate lemma.
\begin{lemma}
\label{lem:L_b}
Assume that $b$ satisfies \eqref{eq:global_appx}. Then, for $s < t$ and $x,y \in \R$,
\be \label{eq:Lleb}
\Ll(x,s;y,t) \le b(y,t) - b(x,s),
\ee
and
\be \label{eq:iff_eq}
\text{Equality occurs if and only if } x \text{ maximizes } z \mapsto b(z,s) +\Ll(z,s;y,t) \text{ over } z \in \R.
\ee
\end{lemma}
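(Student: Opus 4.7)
The statement is a direct unpacking of the eternal solution property \eqref{eq:global_appx}, so the proof will be essentially one line in each direction, with the only step being a careful rearrangement.

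For the inequality \eqref{eq:Lleb}, the plan is to take $z = x$ as a candidate in the supremum defining the right-hand side of \eqref{eq:global_appx}. This yields
\[
b(y,t) = \sup_{z \in \R}\bigl[b(z,s) + \Ll(z,s;y,t)\bigr] \ge b(x,s) + \Ll(x,s;y,t),
\]
and rearranging gives $\Ll(x,s;y,t) \le b(y,t) - b(x,s)$, which is \eqref{eq:Lleb}.

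For the equivalence \eqref{eq:iff_eq}, observe that the above candidate bound is tight precisely when $x$ realizes the supremum. More explicitly, equality $\Ll(x,s;y,t) = b(y,t) - b(x,s)$ is equivalent, after moving $b(x,s)$ to the left, to
\[
b(x,s) + \Ll(x,s;y,t) = b(y,t) = \sup_{z \in \R}\bigl[b(z,s) + \Ll(z,s;y,t)\bigr],
\]
which is precisely the statement that $x$ is a maximizer of $z \mapsto b(z,s) + \Ll(z,s;y,t)$ over $z \in \R$. Thus both implications in \eqref{eq:iff_eq} are immediate from the same rearrangement.

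There is no serious obstacle here; the only subtle point worth flagging is that the existence of a maximizer is not actually required for either direction of \eqref{eq:iff_eq} — one needs only that the value $b(x,s) + \Ll(x,s;y,t)$ equals the supremum, which is an identity of numbers rather than an assertion about attainment. So the proof will be a short two-line argument that applies verbatim for any function $b$ satisfying \eqref{eq:global_appx}, independently of the additional assumption of existence of leftmost/rightmost maximizers made in Lemma \ref{lem:geodesics_from_b}.
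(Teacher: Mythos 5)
Your proof is correct and follows the same approach as the paper's: plug $z=x$ into the supremum defining $b(y,t)$ to get the inequality, and note that equality holds precisely when $x$ attains the supremum. Your closing remark about not needing attainment of the supremum is accurate but immaterial, since the paper's proof likewise never invokes existence of maximizers.
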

\begin{proof}
Write
\[
b(y,t) = \sup_{z \in \R}[b(z,s) + \Ll(z,s;y,t)]  \ge b(x,s) + \Ll(x,s;y,t),
\]
with equality holding if and only if $x$ is a maximizer. The result now follows by subtracting $b(x,s)$ from both sides.  
\end{proof}

\begin{proof}[Proof of Lemma \ref{lem:geodesics_from_b}] 
\textbf{Item \ref{itm:geod}}: We prove the statement for the leftmost maximizers; the proof for rightmost maximizers is symmetric. For simplicity of notation, we fix $(x,t)$ and let $g = g_{(x,t)}^{b,L}$. To prove the first part, it suffices to show the following: for $s < r < t$, $g(r)$ is the leftmost maximizer of 
\be \label{zLfun}
z \mapsto \Ll(g(s),s;z,r) + \Ll(z,r;x,t).
\ee
This is sufficient because it implies $(g(r),r)$ lies on the leftmost geodesic from $(g(s),s))$ to $(x,t)$ by \cite[Lemma 13.2]{Directed_Landscape}, so the path $g$ is a semi-infinite geodesic and in particular, the leftmost geodesic between any two of its points. Continuity of the path then follows from continuity of the leftmost and rightmost directed landscape geodesics \cite[Lemma 13.2]{Directed_Landscape}. In particular, the continuity also holds at $s = t$. 

By \eqref{eq:iff_eq}, 
\[
\Ll(g(s),s;x,t) = b(x,t) - b(g(s),s)
\]
By the reverse triangle inequality for the directed landscape and \eqref{eq:Lleb} ,
\be \label{b_bd}
\begin{aligned}
b(x,t) - b(g(s),s) &= \Ll(g(s),s;x,t) \\
&\le \Ll(g(s),s;g(r),r) + \Ll(g(r),r;x,t) \\
&\le [b(g(r),r) - b(g(s),s)] + [b(x,t) - b(g(r),r)] \\ &= b(x,t) - b(g(s),s)
\end{aligned}
\ee
so the inequalities above are all equalities. Since the function in \eqref{zLfun} is bounded above by $\Ll(g(s),s;x,t)$ for all $z$, $g(r)$ is a maximizer. It remains to show that $g(r)$ is the leftmost maximizer. Let $z_r$ be the leftmost maximizer of the function in \eqref{zLfun}. Since $g(r)$ is also a maximizer $z_r \le g(r)$. Then,
\begin{align*}
b(x,t) - b(g(s),s) &= \Ll(g(s),s;x,t) \\
&= \Ll(g(s),s;z_r,r) + \Ll(z_r,r;x,t) \\
& \le [b(z_r,r) - b(g(s),s)] + [b(x,t) - b(z_r,r)],
\end{align*}
where in the last step, we applied \eqref{eq:Lleb} to both terms. But then the right-hand side equals the left side, so the two inequalities are equalities, and $\Ll(z_r,r;x,t) = b(x,t) - b(z_r,r)$. By \eqref{eq:iff_eq}, $z = z_r$ maximizes $z \mapsto b(z,r) + \Ll(z,r;x,t)$ over $z \in \R$. Since $g(r)$ is the leftmost such maximizer by definition, we have $g(r) \le z_r$, which completes the other inequality.  

\medskip \noindent \textbf{Item \ref{itm: consis}:}  From \eqref{b_bd}, we see that 
\[
\Ll(g(s),s;g(r),r) = b(g(r),r) - b(g(s),s).
\]
By \eqref{eq:iff_eq}, $g(s)$ maximizes 
\be \label{zr}
z \mapsto b(z,s) + \Ll(z,s;g(r),r) \text{ over } z \in \R.
\ee
We wish to show $g(s)$ is the leftmost maximizer of \eqref{zr}; i.e., $g(s) = g_{(w,r)}^L(s)$, where $w = g(r)$. For simplicity of notation, let $g'(s) = g_{(w,r)}^L(s)$. We have established that $g'(s) \le g(s)$. Observe that 
\begin{align*}
b(x,t) - b(g'(s),s) &= [b(g(r),r) - b(g'(s),s)] + [b(x,t) - b(g(r),r)]  \\
&= \Ll(g'(s),s;g(r),r) + \Ll(g(r),r;x,t)  \\
&\le \Ll(g'(s),s;x,t),
\end{align*}
where the second equality comes from\eqref{eq:iff_eq}, noting $g(r)$ maximizes $z \mapsto b(z,r) + \Ll(z,r;x,t)$  and $g'(s)$ maximizes $z \mapsto b(z,s) + \Ll(z,s;g(r),r)$ by definition. But we also know by \eqref{eq:Lleb} that $b(x,t) - b(g'(s),s) \ge \Ll(g'(s),s;x,t)$, so we in fact have equality of both sides. By another application of \eqref{eq:iff_eq}, $g'(s)$ is a maximizer of $z \mapsto b(z,s) + \Ll(z,s;x,t)$. Since $g(s)$ is the leftmost such maximizer, $g(s) \le g'(s)$, completing the proof. 

\medskip \noindent \textbf{Item \ref{itm: general}:} Lemma \ref{lem:L_b} implies that, for $i \ge 1$,
\[
\Ll\bigl(g(s_i),s_i; g(s_{i-1}),s_{i-1}\bigr) = b\bigl(g(s_{i-1}),s_{i-1}\bigr) -b\bigl(g(s_{i}),s_{i}\bigr). 
\]
Similar reasoning as in the similar items implies that 
\[
\Ll\bigl(g(s),s; g(u),u\bigr) = b\bigl(g(s),s\bigr) -b\bigl(g(u),u\bigr)
\]
for \textit{any} $s < u \le t$. As inequality holds in general by Lemma \ref{lem:L_b}, the path must be a semi-infinite geodesic by similar reasoning as in the previous items. 
\end{proof}

\begin{corollary}\label{cor:b_geod_mont}
Let $b:\R^2 \to \R$ satisfy the assumptions of Lemma \ref{lem:geodesics_from_b}, and let $g_{(x,t)}^{b,L/R}(s)$ be the associated leftmost/rightmost maximizers for $s < t$. Then, for all $s <t$ and $x < y$
\[
g_{(x,t)}^{b,L}(s) \le g_{(y,t)}^{b,L}(s)\qquad\text{and}\qquad g_{(x,t)}^{b,R}(s) \le g_{(y,t)}^{b,R}(s).
\]
\end{corollary}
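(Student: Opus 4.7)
The plan is a standard crossing/swap argument, combining the variational characterization of the maximizers with the reverse triangle inequality for $\Ll$. I treat the $L$ case; the $R$ case is symmetric.

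Suppose, for contradiction, that there exist $s<t$ and $x<y$ with
$z_1 := g_{(x,t)}^{b,L}(s) > g_{(y,t)}^{b,L}(s) =: z_2$.
By Lemma \ref{lem:geodesics_from_b}\ref{itm:geod}, the paths $g_{(x,t)}^{b,L}$ and $g_{(y,t)}^{b,L}$ are continuous on $[s,t]$ with $g_{(x,t)}^{b,L}(t)=x<y=g_{(y,t)}^{b,L}(t)$. Since their difference changes sign on $[s,t]$, the intermediate value theorem produces $r\in(s,t)$ and $w\in\R$ with
\[
g_{(x,t)}^{b,L}(r)\;=\;g_{(y,t)}^{b,L}(r)\;=\;w.
\]
Because $(w,r)$ lies on both $\Ll$-geodesics (Lemma \ref{lem:geodesics_from_b}\ref{itm:geod}),
\[
\Ll(z_1,s;x,t)=\Ll(z_1,s;w,r)+\Ll(w,r;x,t),\qquad
\Ll(z_2,s;y,t)=\Ll(z_2,s;w,r)+\Ll(w,r;y,t).
\]
Applying the reverse triangle inequality \eqref{triangle} to the ``swapped'' pairs $(z_1,s)\to(y,t)$ through $(w,r)$ and $(z_2,s)\to(x,t)$ through $(w,r)$, then summing and using the two equalities above, I obtain the deterministic geometric bound
\begin{equation}\label{eq:swap-ineq}
\Ll(z_1,s;y,t)+\Ll(z_2,s;x,t)\;\ge\;\Ll(z_1,s;x,t)+\Ll(z_2,s;y,t).
\end{equation}

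Now I bring in the variational property. By definition $z_1$ is the \emph{leftmost} maximizer of $z\mapsto b(z,s)+\Ll(z,s;x,t)$; since $z_2<z_1$, the point $z_2$ cannot be a maximizer, so
\[
b(z_2,s)+\Ll(z_2,s;x,t)\;<\;b(z_1,s)+\Ll(z_1,s;x,t).
\]
On the other hand $z_2$ is a maximizer of $z\mapsto b(z,s)+\Ll(z,s;y,t)$, hence
\[
b(z_1,s)+\Ll(z_1,s;y,t)\;\le\;b(z_2,s)+\Ll(z_2,s;y,t).
\]
Adding these and cancelling the $b$ terms yields
\[
\Ll(z_1,s;y,t)+\Ll(z_2,s;x,t)\;<\;\Ll(z_1,s;x,t)+\Ll(z_2,s;y,t),
\]
which directly contradicts \eqref{eq:swap-ineq}. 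Hence $g_{(x,t)}^{b,L}(s)\le g_{(y,t)}^{b,L}(s)$.

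The same argument establishes the $R$ case: assuming $g_{(x,t)}^{b,R}(s) > g_{(y,t)}^{b,R}(s)$ and setting $z_1,z_2$ accordingly, one obtains \eqref{eq:swap-ineq} verbatim; this time the strict inequality comes from the $y$-equation (since $z_1 > z_2$ and $z_2$ is the rightmost maximizer at $y$, $z_1$ cannot be a maximizer there), while the $x$-equation gives a weak inequality, and the sum again strictly contradicts \eqref{eq:swap-ineq}. The only step requiring any care is handling the asymmetry of ``leftmost'' vs.\ ``rightmost'' so that in each case one obtains a strict inequality on exactly one side to produce the required contradiction; all other ingredients (continuity of the maximizer paths, the metric composition property at the crossing, and the reverse triangle inequality) are immediate from Lemma \ref{lem:geodesics_from_b} and the definition of the directed landscape.
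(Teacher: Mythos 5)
Your proof is correct, but it takes a genuinely different route than the paper. The paper observes that it suffices to show the two paths cannot cross, and gets that directly from the consistency property of Lemma~\ref{lem:geodesics_from_b}\ref{itm: consis}: if $g_{(x,t)}^{b,L}(r)=g_{(y,t)}^{b,L}(r)=w$, then for every $s<r$ both equal $g_{(w,r)}^{b,L}(s)$, so the paths agree below the first meeting time and can never be out of order. You instead run the classical planarity (quadrangle‐inequality) argument from last‐passage percolation: find a crossing point $(w,r)$ by the intermediate value theorem, derive
\[
\Ll(z_1,s;y,t)+\Ll(z_2,s;x,t)\ge\Ll(z_1,s;x,t)+\Ll(z_2,s;y,t)
\]
from metric composition along the two geodesics through $(w,r)$ plus the reverse triangle inequality, and contradict this with the strict inequality coming from the left(resp. right)most‐maximizer property. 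Your inequality bookkeeping is right, including the careful placement of the single strict inequality in each of the $L$ and $R$ cases. The tradeoff: your argument only uses Lemma~\ref{lem:geodesics_from_b}\ref{itm:geod} (that the paths are $\Ll$-geodesics), whereas the paper's uses both items (i) and (ii) of that lemma; on the other hand, the paper's route is shorter and also establishes the stronger fact that the paths coalesce once they meet, which is precisely what \ref{itm: consis} encodes. Both are valid, and the swap argument is arguably more portable since it doesn't rely on the specific consistency structure of these $b$-geodesics.
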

\begin{proof}
We show the statement for $L$, with the statement for $R$ following an analogous proof. From Lemma \ref{lem:geodesics_from_b}, \ref{itm:geod}, $s \mapsto g_{(x,t)}^{b,R}(s)$ and $s \mapsto g_{(x,t)}^{b,L}(s)$ are continuous functions $(-\infty,t] \to \R$ satisfying 
\[
g_{(x,t)}^{b,L}(t) = x < y = g_{(y,t)}^{b,L}(t) = y.
\]
Hence, it suffices to show that, if $g_{(x,t)}^{b,L}(r) =  g_{(y,t)}^{b,L}(r)$ for some $r < t$, then $
g_{(x,t)}^{b,L}(s) =  g_{(y,t)}^{b,L}(s)
$
for all $s \le r$ (i.e., if the two paths ever intersect, they stay together after their first meeting point). This is a direct consequence
Lemma  \ref{lem:geodesics_from_b}\ref{itm: consis}: if 
\[
w := g_{(x,t)}^{b,L}(r) = g_{(y,t)}^{b,L}(r)
\]
for some $r < t$, then for $s < r$,
\[
g_{(x,t)}^{b,L}(s) = g_{(w,r)}^{b,L}(s) = g_{(y,t)}^{b,L}(s).  
\]
\end{proof}

\begin{corollary} \label{cor:b_geod_lim}
Assume the same as Corollary \ref{cor:b_geod_mont}. Then, for all $s < t$ and $x \in \R$,
\[
\lim_{y \nearrow x} g_{(y,t)}^{b,L}(s) = g_{(x,t)}^{b,L}(s)\qquad\text{and}\qquad\lim_{y \searrow x} g_{(y,t)}^{b,R}(s) = g_{(x,t)}^{b,R}(s). 
\]
\end{corollary}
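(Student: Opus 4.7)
The plan is to prove the left limit statement for $g_{(x,t)}^{b,L}(s)$; the right limit statement for $g_{(x,t)}^{b,R}(s)$ follows by a symmetric argument. Fix $s<t$ and $x\in\R$, and consider $y\nearrow x$. By Corollary \ref{cor:b_geod_mont}, the map $y \mapsto g_{(y,t)}^{b,L}(s)$ is non-decreasing, and it is bounded above by $g_{(x,t)}^{b,L}(s)$. Hence the monotone limit
\[
\ell \;:=\; \lim_{y\nearrow x} g_{(y,t)}^{b,L}(s)
\]
exists in $\R$, and satisfies $g_{(y_0,t)}^{b,L}(s)\le \ell \le g_{(x,t)}^{b,L}(s)$ for any fixed $y_0<x$. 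The goal is to show $\ell = g_{(x,t)}^{b,L}(s)$.

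First, I would confine the maximizers to a common compact interval. Setting $y_0 := x-1$ and $K := [g_{(x-1,t)}^{b,L}(s),\, g_{(x,t)}^{b,L}(s)]$, the monotonicity in Corollary \ref{cor:b_geod_mont} gives $g_{(y,t)}^{b,L}(s)\in K$ for every $y\in[x-1,x]$. Since $g_{(y,t)}^{b,L}(s)$ is a maximizer of $z\mapsto b(z,s)+\Ll(z,s;y,t)$ over $z\in\R$, we obtain
\[
\sup_{z\in K}\bigl\{b(z,s)+\Ll(z,s;y,t)\bigr\} \;=\; \sup_{z\in\R}\bigl\{b(z,s)+\Ll(z,s;y,t)\bigr\}
\]
for every $y\in[x-1,x]$, and by \eqref{eq:global_appx} the same equality holds at $y=x$. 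Thus the problem reduces to comparing maximizers of a sequence of continuous functions on the fixed compact set $K$.

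Next, continuity of $b$ on $\R^2$ and of $\Ll$ on $\Rup$, combined with the compactness of $K\times\{s\}\times[x-1,x]\times\{t\}$, gives that the functions
\[
F_y(z) := b(z,s)+\Ll(z,s;y,t),\qquad z\in K,
\]
converge uniformly on $K$ to $F_x(z) := b(z,s)+\Ll(z,s;x,t)$ as $y\nearrow x$. Choose any sequence $y_n\nearrow x$; by construction $g_{(y_n,t)}^{b,L}(s)\in K$ is a maximizer of $F_{y_n}$ and converges to $\ell\in K$. Lemma \ref{lem:cmcf} then implies $\ell$ is a maximizer of $F_x$ on $K$, hence a maximizer of $z\mapsto b(z,s)+\Ll(z,s;x,t)$ on all of $\R$ by the first step. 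Since $g_{(x,t)}^{b,L}(s)$ is by definition the leftmost such maximizer and $\ell\le g_{(x,t)}^{b,L}(s)$, we conclude $\ell = g_{(x,t)}^{b,L}(s)$.

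The only genuinely nontrivial step is the uniform control needed to invoke Lemma \ref{lem:cmcf}; the bookkeeping to pin the maximizers inside the compact interval $K$ is what lets monotonicity (Corollary \ref{cor:b_geod_mont}) do the heavy lifting, and once that is in place the rest is routine. The right-limit statement for $g_{(x,t)}^{b,R}$ proceeds identically, with the compact interval taken to be $[g_{(x,t)}^{b,R}(s),\,g_{(x+1,t)}^{b,R}(s)]$ and $\ell$ being the leftmost-limit analogue's mirror; this time the opposite direction of monotonicity guarantees $\ell\ge g_{(x,t)}^{b,R}(s)$, and the rightmost-maximizer property of $g_{(x,t)}^{b,R}(s)$ forces equality.
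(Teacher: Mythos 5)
Your proof is correct and takes essentially the same approach as the paper's: establish the monotone limit via Corollary \ref{cor:b_geod_mont}, confine the maximizers to a common compact interval, use uniform convergence of $z\mapsto b(z,s)+\Ll(z,s;y,t)$ on that interval to apply Lemma \ref{lem:cmcf}, and conclude by the leftmost-maximizer property. The only cosmetic difference is that the paper takes $g_{(x,t)}^{b,R}(s)$ as the right endpoint of the compact interval while you take $g_{(x,t)}^{b,L}(s)$; both are sufficient since you only need a maximizer (not all maximizers) to lie in the interval.
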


\begin{proof}
    We prove the first limit, and the second follows symmetrically.  By the monotonicity in Lemma \ref{cor:b_geod_mont}, the limits exist, and
    \be \label{eq:gbL_bd}
    \lim_{y \nearrow x} g_{(y,t)}^{b,L}(s) \le g_{(x,t)}^{b,L}(s).
    \ee 
    Furthermore, this monotonicity guarantees that, for $y \in [x-1,x]$, all maximizers of the function
    \[
    z \mapsto b(z,s) + \Ll(z,s;y,t)
    \]
    lie in the common compact interval
    \[
    I := [g_{(x-1,t)}^{b,L}(s),g_{(x,t)}^{b,R}(s)].
    \]
    By continuity of the directed landscape (Lemma \ref{lem:Landscape_global_bound}), as $y \nearrow x$, the function
    \[
    z \mapsto b(z,s) +\Ll(z,s;y,t)
    \]
    converges uniformly on $I$ to $z \mapsto b(z,s) + \Ll(z,s;x,t)$. By Lemma \ref{lem:cmcf}, $\lim_{y \nearrow x} g_{(y,t)}^{b,L}(s) $ maximizes $z \mapsto b(z,s) + \Ll(z,s;x,t)$ over $z \in \R$. By \eqref{eq:gbL_bd}, the limit lies to the left of the leftmost such maximizer, so \eqref{eq:gbL_bd} is in fact an equality. 
\end{proof}

\begin{lemma} \label{lem:geod_unique}
For $\sigg \in \{-,+\}$, $S \in \{L,R\}$, $(x,t) \in \R^2$ and $s < t$, if $x_0 = g_{(x,t)}^{\dir \sig,S}(s)$, then 
\[
g_{(x_0,s)}^{\dir \sig,L} = g_{(x_0,s)}^{\dir \sig,R}.
\]
In other words, $(x_0,s) \notin \NU_{\dir\sig}$.
\end{lemma}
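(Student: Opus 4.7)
The plan is to prove the case $S=L$ by contradiction; the case $S=R$ follows by a symmetric argument. Assume $x_0 = g_{(x,t)}^{\dir\sig,L}(s)$ but $(x_0,s)\in\NU_{\dir\sig}$, so there exists $r<s$ with
\[
y_0 := g_{(x_0,s)}^{\dir\sig,L}(r) < g_{(x_0,s)}^{\dir\sig,R}(r) =: y_0'.
\]
The consistency statement in Proposition \ref{prop:DL_SIG_cons_intro}\ref{itm:DL_all_SIG}, applied with $w=x_0$ and $u=s$, gives $g_{(x,t)}^{\dir\sig,L}(r) = g_{(x_0,s)}^{\dir\sig,L}(r) = y_0$. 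The strategy is to build a second $\dir\sig$ geodesic $\tilde g$ rooted at $(x,t)$ that agrees with $g_{(x,t)}^{\dir\sig,L}$ on $[s,t]$ but visits $(y_0',r)$, and then derive a contradiction from Lemma \ref{lem:no_bubbles}.

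I will construct $\tilde g$ via Proposition \ref{prop:DL_SIG_cons_intro}\ref{itm:arb_geod_cons} with a sequence $s_0=t>s_1=s>s_2=r>s_3>\cdots$ tending to $-\infty$. The choices $\tilde g(s_1)=x_0$ and $\tilde g(s_2)=y_0'$ are legitimate maximizer selections because $x_0$ is the leftmost maximizer of $z\mapsto W^{\dir\sig}(0,s;z,s)+\Ll(z,s;x,t)$ and $y_0'$ is the rightmost maximizer of $z\mapsto W^{\dir\sig}(0,r;z,r)+\Ll(z,r;x_0,s)$; for $i\ge 3$, pick maximizers arbitrarily. On $[s,t]$, fill in $\tilde g$ with the path $g_{(x,t)}^{\dir\sig,L}|_{[s,t]}$, which is a point-to-point geodesic from $(x_0,s)$ to $(x,t)$ by Proposition \ref{prop:DL_SIG_cons_intro}\ref{itm:DL_LRmost_geod}; fill in the remaining time intervals by any point-to-point geodesics. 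By Proposition \ref{prop:DL_SIG_cons_intro}\ref{itm:arb_geod_cons}\ref{itm:g_is_geod}, $\tilde g$ is a $\dir\sig$ geodesic rooted at $(x,t)$.

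By construction $\tilde g = g_{(x,t)}^{\dir\sig,L}$ on $[s,t]$, while $\tilde g(r)=y_0'>y_0=g_{(x,t)}^{\dir\sig,L}(r)$. Proposition \ref{prop:DL_all_coal} ensures these two $\dir\sig$ geodesics coalesce, so
\[
T_{\max} := \sup\bigl\{r' \le t : \tilde g(r'') = g_{(x,t)}^{\dir\sig,L}(r'') \text{ for all } r'' \le r'\bigr\}
\]
is finite, and $T_{\max}<r$ because the two geodesics differ at $r$. Fix any $T<T_{\max}$, set $\delta := \min(T_{\max}-T,\, t-s)>0$, and let $z^*:=\tilde g(T)=g_{(x,t)}^{\dir\sig,L}(T)$. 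The restrictions $\tilde g|_{[T,t]}$ and $g_{(x,t)}^{\dir\sig,L}|_{[T,t]}$ are both point-to-point geodesics from $(z^*,T)$ to $(x,t)$; they agree on the right-neighborhood $[T,T+\delta]\subseteq[T,T_{\max}]$ of $T$ and on the left-neighborhood $[t-\delta,t]\subseteq[s,t]$ of $t$, yet are distinct because $T+\delta\le T_{\max}<r<s\le t-\delta$ places $r$ strictly between these agreement windows, where the two paths disagree. This contradicts Lemma \ref{lem:no_bubbles}, completing the proof. The principal subtlety is marrying coalescence in the past (from Proposition \ref{prop:DL_all_coal}) with the forced agreement on $[s,t]$ built into $\tilde g$, so that both endpoint-neighborhood conditions required by the no-bubble principle are simultaneously satisfied while the two geodesics remain distinct.
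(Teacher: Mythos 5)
Your proof is correct and follows essentially the same approach as the paper's (very terse, one-sentence) proof: both rely on coalescence of $\dir\sig$ geodesics (Proposition \ref{prop:DL_all_coal}) and the no-interior-bubble property (Lemma \ref{lem:no_bubbles}), exploiting that $(x_0,s)$ is a strictly interior point of the geodesic to $(x,t)$. Your version simply fills in the details the paper leaves implicit: explicitly constructing the second $\dir\sig$ geodesic $\tilde g$ through $y_0'$ via Proposition \ref{prop:DL_SIG_cons_intro}\ref{itm:arb_geod_cons}, using the consistency property in item \ref{itm:DL_all_SIG} to identify $g_{(x,t)}^{\dir\sig,L}(r) = y_0$, and carefully tracking the coalescence time $T_{\max}$ so that both endpoint-agreement windows required by the no-bubble lemma are exhibited with $r$ strictly between them.
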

\begin{proof} On any point on the geodesic $(x_0,s)$ with $s < t$, there is a unique semi-infinite geodesic from that point since all $\dir \sig$ geodesics coalesce (Proposition \ref{prop:DL_all_coal}) and interior bubbles cannot form (Lemma \ref{lem:no_bubbles}).
\end{proof}

\bibliographystyle{plain}
\bibliography{arxivsubmission}

\begin{thebibliography}{10}

\bibitem{Ahlberg_Hoffman}
Daniel {Ahlberg} and Christopher {Hoffman}.
\newblock {Random coalescing geodesics in first-passage percolation}.
\newblock {\em Preprint: arXiv:1609.02447}, 2016.

\bibitem{Akian-Gaubert-Walsh-2009}
Marianne Akian, St\'{e}phane Gaubert, and Cormac Walsh.
\newblock The max-plus {M}artin boundary.
\newblock {\em Doc. Math.}, 14:195--240, 2009.

\bibitem{Bakhtin-Cator-Konstantin-2014}
Yuri Bakhtin, Eric Cator, and Konstantin Khanin.
\newblock Space-time stationary solutions for the {B}urgers equation.
\newblock {\em J. Amer. Math. Soc.}, 27(1):193--238, 2014.

\bibitem{bakhtin2019thermodynamic}
Yuri Bakhtin and Liying Li.
\newblock Thermodynamic limit for directed polymers and stationary solutions of the burgers equation.
\newblock {\em Communications on Pure and Applied Mathematics}, 72(3):536--619, 2019.

\bibitem{bala-busa-sepp-20}
M\'{a}rton Bal\'{a}zs, Ofer Busani, and Timo Sepp\"{a}l\"{a}inen.
\newblock Non-existence of bi-infinite geodesics in the exponential corner growth model.
\newblock {\em Forum Math. Sigma}, 8:Paper No. e46, 34, 2020.

\bibitem{SlyNonexistenceOB}
Riddhipratim Basu, Christopher Hoffman, and Allan Sly.
\newblock Nonexistence of bigeodesics in planar exponential last passage percolation.
\newblock {\em Comm. Math. Phys.}, 389(1):1--30, 2022.

\bibitem{Bates-Fan-Seppalainen-2025}
Erik Bates, Wai-Tong~(Louis) Fan, and Timo Sepp\"{a}l\"{a}inen.
\newblock Intertwining the {B}usemann process of the directed polymer model.
\newblock {\em Electron. J. Probab.}, 30:--, 2025.

\bibitem{Bha24}
Manan Bhatia.
\newblock {Duality in the Directed Landscape and Its Applications to Fractal Geometry}.
\newblock {\em Int. Math. Res. Not. IMRN}, 2024(12):9633--9662, 04 2024.

\bibitem{Bhatia-2025}
Manan {Bhatia}.
\newblock {Exceptional times when bigeodesics exist in dynamical last passage percolation}.
\newblock {\em Preprint:arXiv:2504.12293}, 2025.

\bibitem{Busani_N3G}
Ofer {Busani}.
\newblock {Non-existence of three non-coalescing infinite geodesics with the same direction in the directed landscape}, 2023.

\bibitem{Busani-Seppalainen-2020}
Ofer {Busani} and Timo {Sepp{\"a}l{\"a}inen}.
\newblock {Non-existence of bi-infinite polymer Gibbs measures}.
\newblock {\em arXiv e-prints}, page arXiv:2010.11279, October 2020.

\bibitem{Busa-Sepp-Sore-22a}
Ofer Busani, Timo Sepp\"{a}l\"{a}inen, and Evan Sorensen.
\newblock The stationary horizon and semi-infinite geodesics in the directed landscape.
\newblock {\em Ann. Probab.}, 52(1):1--66, 2024.

\bibitem{Damron_Hanson2016}
Michael Damron and Jack Hanson.
\newblock Bigeodesics in first-passage percolation.
\newblock {\em Comm. Math. Phys.}, 349(2):753--776, 2017.
\newblock URL: \url{https://doi.org/10.1007/s00220-016-2743-3}.

\bibitem{Dauvergne-23}
Duncan {Dauvergne}.
\newblock {The 27 geodesic networks in the directed landscape}.
\newblock {\em Preprint: arXiv:2302.07802}, 2023.

\bibitem{Directed_Landscape}
Duncan Dauvergne, Janosch Ortmann, and B\'{a}lint Vir\'{a}g.
\newblock The directed landscape.
\newblock {\em Acta Math.}, 229(2):201--285, 2022.

\bibitem{Dauvergne-Sarkar-Virag-2022}
Duncan Dauvergne, Sourav Sarkar, and B{\'a}lint Vir{\'a}g.
\newblock {Three-halves variation of geodesics in the directed landscape}.
\newblock {\em Ann. Probab.}, 50(5):1947 -- 1985, 2022.

\bibitem{dunlap2021stationary}
Alexander Dunlap, Cole Graham, and Lenya Ryzhik.
\newblock Stationary solutions to the stochastic burgers equation on the line.
\newblock {\em Communications in Mathematical Physics}, 382(2):875--949, 2021.

\bibitem{Dunlap-Sorensen-2024}
Alexander {Dunlap} and Evan {Sorensen}.
\newblock {Viscous shock fluctuations in KPZ}.
\newblock {\em Preprint:arXiv:2406.06502}, 2024.

\bibitem{EKMS-2000}
Weinan E, K.~Khanin, A.~Mazel, and Ya. Sinai.
\newblock Invariant measures for {B}urgers equation with stochastic forcing.
\newblock {\em Ann. of Math. (2)}, 151(3):877--960, 2000.

\bibitem{EKMS-1997}
Weinan E, Konstantin Khanin, Alexandre Mazel, and Yakov Sinai.
\newblock Probability distribution functions for the random forced burgers equation.
\newblock {\em Phys. Rev. Lett.}, 78:1904--1907, 1997.

\bibitem{Ferrari-Martin-Pimentel-2009}
Pablo~A. Ferrari, James~B. Martin, and Leandro P.~R. Pimentel.
\newblock A phase transition for competition interfaces.
\newblock {\em Ann. Appl. Probab.}, 19(1):281--317, 2009.

\bibitem{GIKP-2005}
Diogo Gomes, Renato Iturriaga, Konstantin Khanin, and Pablo Padilla.
\newblock Viscosity limit of stationary distributions for the random forced {B}urgers equation.
\newblock {\em Mosc. Math. J.}, 5(3):613--631, 743, 2005.

\bibitem{Groathouse-Janjigian-Rassoul-21}
Sean {Groathouse}, Christopher {Janjigian}, and Firas {Rassoul-Agha}.
\newblock {Non-existence of non-trivial bi-infinite geodesics in Geometric Last Passage Percolation}.
\newblock {\em Preprint: arXiv:2112.00161}, 2021.

\bibitem{GRASS-23}
Sean Groathouse, Firas Rassoul-Agha, Timo Sepp\"{a}l\"{a}inen, and Evan Sorensen.
\newblock Jointly invariant measures for the {K}ardar--{P}arisi--{Z}hang equation.
\newblock {\em Probab. Theory Related Fields}, 192(1-2):303--372, 2025.

\bibitem{Iturriaga-Khanin-2003}
R.~Iturriaga and K.~Khanin.
\newblock Burgers turbulence and random {L}agrangian systems.
\newblock {\em Comm. Math. Phys.}, 232(3):377--428, 2003.

\bibitem{Janj-Rass-Sepp-22}
Christopher {Janjigian}, Firas {Rassoul-Agha}, and Timo {Sepp{\"a}l{\"a}inen}.
\newblock {Ergodicity and synchronization of the Kardar-Parisi-Zhang equation}.
\newblock {\em Preprint:arXiv:2211.06779}, 2022.

\bibitem{Janjigian-Rassoul-Seppalainen-19}
Christopher Janjigian, Firas Rassoul-Agha, and Timo Sepp\"{a}l\"{a}inen.
\newblock Geometry of geodesics through {B}usemann measures in directed last-passage percolation.
\newblock {\em J. Eur. Math. Soc. (JEMS)}, 25(7):2573--2639, 2023.

\bibitem{licea1996}
Cristina Licea and Charles~M. Newman.
\newblock Geodesics in two-dimensional first-passage percolation.
\newblock {\em Ann. Probab.}, 24(1):399--410, 1996.

\bibitem{KPZfixed}
Konstantin Matetski, Jeremy Quastel, and Daniel Remenik.
\newblock The {KPZ} fixed point.
\newblock {\em Acta Math.}, 227(1):115--203, 2021.

\bibitem{Newman-97}
Charles~M. Newman.
\newblock {\em Topics in disordered systems}.
\newblock Lectures in Mathematics ETH Z\"{u}rich. Birkh\"{a}user Verlag, Basel, 1997.

\bibitem{pimentel2016}
Leandro P.~R. Pimentel.
\newblock Duality between coalescence times and exit points in last-passage percolation models.
\newblock {\em Ann. Probab.}, 44(5):3187--3206, 2016.

\bibitem{KPZ_equation_convergence}
Jeremy Quastel and Sourav Sarkar.
\newblock Convergence of exclusion processes and the {KPZ} equation to the {KPZ} fixed point.
\newblock {\em J. Amer. Math. Soc.}, 36(1):251--289, 2023.

\bibitem{Rahman-Virag-21}
Mustazee {Rahman} and Balint {Virag}.
\newblock {Infinite geodesics, competition interfaces and the second class particle in the scaling limit}.
\newblock {\em Ann. Inst. Henri Poincar\'{e} Probab. Stat.}, 61(2):1075--1126, 2025.

\bibitem{Sweeney-Rassoul-2024}
Firas {Rassoul-Agha} and Mikhail {Sweeney}.
\newblock {Shocks and instability in Brownian last-passage percolation}.
\newblock {\em Preprint:arXiv:2407.07866}, 2024.

\bibitem{TR22}
Tommaso~C. Rosati.
\newblock Synchronization for {KPZ}.
\newblock {\em Stoch. Dyn.}, 22(4):Paper No. 2250010, 46, 2022.

\bibitem{Timo_Coalescence}
Timo Sepp\"{a}l\"{a}inen.
\newblock Existence, uniqueness and coalescence of directed planar geodesics: proof via the increment-stationary growth process.
\newblock {\em Ann. Inst. Henri Poincar\'{e} Probab. Stat.}, 56(3):1775--1791, 2020.

\bibitem{Seppalainen-Sorensen-21}
Timo Sepp\"{a}l\"{a}inen and Evan Sorensen.
\newblock Busemann process and semi-infinite geodesics in {B}rownian last-passage percolation.
\newblock {\em Ann. Inst. Henri Poincar\'{e} Probab. Stat.}, 59(1):117--165, 2023.

\bibitem{Seppalainen-Sorensen-21b}
Timo Sepp\"{a}l\"{a}inen and Evan Sorensen.
\newblock Global structure of semi-infinite geodesics and competition interfaces in {B}rownian last-passage percolation.
\newblock {\em Probab. Math. Phys.}, 4(3):667--760, 2023.

\bibitem{Sinai1996}
Ya.~G. Sinai.
\newblock {\em Burgers system driven by a periodic stochastic flow}, pages 347--353.
\newblock Springer, Tokyo, 1996.

\bibitem{Sinai1991}
Ya.~G. Sina\u{\i}.
\newblock Two results concerning asymptotic behavior of solutions of the {B}urgers equation with force.
\newblock {\em J. Statist. Phys.}, 64(1-2):1--12, 1991.

\bibitem{heat_and_landscape}
B{\'a}lint V{\'i}rag.
\newblock {The heat and the landscape I}.
\newblock {\em Preprint:arXiv:2008.07241}, 2020.

\bibitem{wehr97}
Jan Wehr.
\newblock On the number of infinite geodesics and ground states in disordered systems.
\newblock {\em J. Statist. Phys.}, 87(1-2):439--447, 1997.

\bibitem{Wu-23}
Xuan {Wu}.
\newblock {The KPZ equation and the directed landscape}.
\newblock {\em Preprint:arXiv:2301.00547}, 2023.

\end{thebibliography}
\end{document}